\let\proof\relax \let\endproof\relax
\xpatchcmd{\qed}{\hfill}{}{}{}
\let\OldS\S
\renewcommand{\S}{\OldS\xspace}
\newcommand{\A}{\mathbb{A}}
\newcommand{\C}{\mathbb{C}}
\newcommand{\F}{\mathbb{F}}
\newcommand{\N}{\mathbb{N}}
\newcommand{\Z}{\mathbb{Z}}
\newcommand{\ccD}{\mathcal{D}}
\newcommand{\sD}{\mathscr{D}}
\newcommand{\cL}{\mathcal{L}}
\newcommand{\cN}{\mathcal{N}}
\newcommand{\cO}{\mathcal{O}}
\newcommand{\cZ}{\mathcal{Z}}
\newcommand{\fc}{\mathfrak{c}}
\newcommand{\fg}{\mathfrak{g}}
\newcommand{\fgl}{\mathfrak{gl}}
\newcommand{\fh}{\mathfrak{h}}
\newcommand{\fL}{\mathfrak{L}}
\newcommand{\fm}{\mathfrak{m}}
\newcommand{\fM}{\mathfrak{M}}
\newcommand{\fn}{\mathfrak{n}}
\newcommand{\fsl}{\mathfrak{sl}}
\newcommand{\ft}{\mathfrak{t}}
\newcommand{\fz}{\mathfrak{z}}
\newcommand{\T}{\mathrm{T}}
\DeclareMathAlphabet{\mathpzc}{OT1}{pzc}{m}{it}
\newcommand{\dd}{\mathpzc}
\DeclareMathOperator{\ad}{ad}
\DeclareMathOperator{\Aut}{Aut}
\DeclareMathOperator{\chari}{char}
\DeclareMathOperator{\Der}{Der}
\DeclareMathOperator{\diver}{div}
\DeclareMathOperator{\GL}{GL}
\DeclareMathOperator{\gr}{gr}
\DeclareMathOperator{\Hom}{Hom}
\DeclareMathOperator{\Ima}{Im}
\DeclareMathOperator{\Id}{Id}
\DeclareMathOperator{\Jac}{Jac}
\DeclareMathOperator{\Ker}{Ker}
\DeclareMathOperator{\Lie}{Lie}
\DeclareMathOperator{\Mat}{Mat}
\DeclareMathOperator{\MT}{MT}
\DeclareMathOperator{\modd}{mod}
\DeclareMathOperator{\Rad}{Rad}
\DeclareMathOperator{\rk}{rk}
\DeclareMathOperator{\Soc}{Soc}
\DeclareMathOperator{\spn}{span}
\DeclareMathOperator{\del}{\partial}
\DeclareMathOperator{\Card}{Card}
\renewenvironment{proof}[1][\proofname]{%
   \par\pushQED{\qed}\normalfont%
   \topsep6\p@\@plus6\p@\relax
   \trivlist\item[\hskip\labelsep\bfseries#1\@addpunct{.}]%
   \ignorespaces
   }{%
   \popQED\endtrivlist\@endpefalse
}
\newenvironment{myproof}[1][\proofname]{\proof[#1]\mbox{}\\*}{\endproof}
\newenvironment{mythm}[1][\thmname]{\thm[#1]\mbox{}\\*}{\endthm}
\renewenvironment{myproof}[1][\proofname]{%
   \par\pushQED{\qed}\normalfont%
   \topsep6\p@\@plus6\p@\relax
   \trivlist\item[\hskip\labelsep\bfseries#1\@addpunct{.}]%
   \ignorespaces
   }{%
   \popQED\endtrivlist\@endpefalse
}
\newtheorem{thm}{Theorem}[section]
\newtheorem{thm*}{Theorem}
\newtheorem{mythm*}{Theorem}
\newtheorem{pro}{Proposition}[section]
\newtheorem{lem}{Lemma}[section]
\newtheorem{mylem*}{Lemma}
\newtheorem{mypro*}{Proposition}
\newtheorem{cor}{Corollary}[section]
\newtheorem{defn}{Definition}[section]
\newtheorem{conj}{Conjecture}
\newtheorem{rmk}{Remark}[section]
\newtheorem{no}{Notation}[section]
\newtheorem{ex}{Example}[section]
\newtheorem{ex*}{Example}
\begin{document}
\title{Nilpotent varieties of some finite dimensional restricted Lie algebras}
\author{Cong Chen}
\school{Natural Sciences}
\department{Mathematics}
\faculty{Science and Engineering}
\def\wordcount{23,388}

% Uncomment the line below to suppress the `List of Tables' page (optional)
\tablespagefalse

% Uncomment the line below to suppress the `List of Figures' page (optional)
\figurespagefalse

% Uncomment the line below to use a customised Declaration statement
%\def\declaration{All the work in this thesis has been sourced from Google.}

\beforeabstract
In the late 1980s, A.~Premet conjectured that the variety of nilpotent elements of any finite dimensional restricted Lie algebra over an algebraically closed field of characteristic $p>0$ is irreducible. This conjecture remains open, but it is known to hold for a large class of simple restricted Lie algebras, e.g. for Lie algebras of connected algebraic groups, and for Cartan series $W, S$ and $H$.

In this thesis we start by proving that Premet's conjecture can be reduced to the semisimple case. The proof is straightforward. However, the reduction of the semisimple case to the simple case is very non-trivial in prime characteristic as semisimple Lie algebras are not always direct sums of simple ideals. Then we consider some semisimple restricted Lie algebras. Under the assumption that $p>2$, we prove that Premet's conjecture holds for the semisimple restricted Lie algebra whose socle involves the special linear Lie algebra $\fsl_2$ tensored by the truncated polynomial ring $k[X]/(X^{p})$. Then we extend this example to the semisimple restricted Lie algebra whose socle involves $S\otimes \cO(m; \underline{1})$, where $S$ is any simple restricted Lie algebra such that $\ad S=\Der S$ and its nilpotent variety $\cN(S)$ is irreducible, and $\cO(m; \underline{1})=k[X_1, \dots, X_m]/(X_1^p, \dots, X_m^p)$ is the truncated polynomial ring in $m\geq 2$ variables.

In the final chapter we assume that $p>3$. We confirm Premet's conjecture for the minimal $p$-envelope $W(1; n)_{p}$ of the Zassenhaus algebra $W(1; n)$ for all $n\in \N_{\geq 2}$. This is the main result of the research paper \cite{C19} which was published in the Journal of Algebra and Its Applications.

\afterabstract

% The next part is optional; however it is a good place to thank your
% supervisor and the people responsible for providing computer support ;-)
\prefacesection{Acknowledgements}
I would like to thank Professor Alexander Premet for his knowledge and guidance during my research. I feel lucky to have such a supervisor who always promptly answers my questions, generously shares ideas with me, and patiently explains maths in great detail to me. Thanks for giving me the freedom to choose a project that I like. Thanks for helping me to prepare a research paper and guiding me through the publication process. Overall, it has been my great pleasure to be his student and my PhD life is enjoyable.

I would also like to thank Dr Rudolf Tange and Professor Mike Prest for their extremely useful comments to help me improving an earlier version of the thesis.

Thanks to Professor Helmut Strade for giving me a copy of the textbook \textit{Simple Lie Algebras over Fields of Positive Characteristic}. 

Thanks to the University of Manchester for awarding me a scholarship to support my research. Thanks to the School of Mathematics for providing me a nice working environment. Thanks for the support from all friendly staff and students at Alan Turing Building.

To my parents, thank you for providing me the opportunity to study in the UK. Without your constant support, I would not have been gone this far.

% The next line is NOT optional and MUST appear
\afterpreface

% Finally, you can start writing about all the new theorems you have proved
% and all the new results that you have discovered

\chapter{Introduction}
The theory of modular Lie algebras begins with E.~Witt who discovered a new nonclassical simple Lie algebra $W(1; 1)$ sometime before 1937. It is now called the \textit{Witt algebra}. Then H. Zassenhaus generalized the Witt algebra and got a new Lie algebra $W(1; n)$, called the \textit{Zassenhaus algebra}. In 1937, N.~Jacobson introduced the concept of \textit{``restricted Lie algebras''}. Later, more nonclassical Lie algebras were constructed.

In this introductory chapter we first review some basic concepts in the theory of modular Lie algebras. Then we explain the construction process of a class of nonclassical Lie algebras, namely the Lie algebras of Cartan type. In the end, we introduce Premet's conjecture on the variety of nilpotent elements of any finite dimensional restricted Lie algebra over an algebraically closed field of characteristic $p>0$. We shall discuss what have been done so far. 

Throughout the thesis we assume that all Lie algebras are finite dimensional, and $k$ is an algebraically closed field of characteristic $p>0$ (unless otherwise specified). We denote by $k^{*}$ the multiplicative group of $k$. We denote by $\F_p$ the finite field with $p$ elements. We denote by $\N_0$ the set of all nonnegative integers, i.e. $\N_{0}=\{0, 1, 2, \dots\}$.

\section{Restricted Lie algebras}\label{reLiealgebras}
Let us first introduce the notion of restricted Lie algebras.

\begin{defn}\citetext{\citealp[Definition 4, Sec.~7, Chap.~V]{J79}; \citealp[Sec.~2.1, Chap.~2]{S88}}\thlabel{defres}
Let $\fg$ be a Lie algebra over $k$. A mapping $[p]: \fg\to \fg, x \mapsto x^{[p]}$ is called a \textnormal{$[p]$-th power map} if it satisfies
\begin{enumerate}
\item $\ad x^{[p]} = (\ad x)^{p}$,
\item $(\lambda x)^{[p]} = \lambda^{p}x^{[p]}$,
\item $(x+y)^{[p]}=x^{[p]}+y^{[p]}+\sum_{i=1}^{p-1}s_{i}(x, y)$, where the terms $s_{i}(x, y)\in \fg$ are such that
\begin{equation*}
(\ad(tx+y))^{p-1}(x)=\sum_{i=1}^{p-1}is_{i}(x, y)t^{i-1}
\end{equation*} 
for all $x, y\in \fg$, $\lambda \in k$ and $t$ a variable. The pair ($\fg, [p]$) is referred to as a \textnormal{restricted Lie algebra}.
\end{enumerate}
\end{defn}

The third condition in the definition is known as \textit{Jacobson's formula for $p$-th powers}. A more general form of this formula is given by

\begin{lem}\cite[Lemma 1.1.1]{S04} \thlabel{generalJacobF}
Let ($\fg, [p]$) be a restricted Lie algebra over $k$. Then for all $x_1, \dots, x_m \in \fg$, the following holds:
\begin{align*}
\bigg(\sum_{i=1}^{m}x_i\bigg)^{[p]^{n}}=\sum_{i=1}^{m}x_i^{[p]^{n}}+ \sum_{l=0}^{n-1}v_{l}^{[p]^{l}},
\end{align*}
where $v_l$ is a linear combination of commutators in $x_i, 1\leq i\leq m$. By Jacobi identity, we can rearrange each $v_l$ so that $v_l$ is in the span of $[w_t,[w_{t-1},[\dots,[w_2,[w_1, w_0]\dots]$, where $t=p^{n-l}-1$ and each $w_j, 0\leq j\leq t$, is equal to some $x_i$, $1\leq i\leq m$.
\end{lem}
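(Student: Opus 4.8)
The plan is to trace everything back to Jacobson's formula for $p$-th powers, i.e.\ condition (3) of \thref{defres}, via a double induction: first on the number $m$ of summands (this disposes of the case $n=1$), and then on $n$. For the case $n=1$: when $m=2$ the asserted formula is condition (3) itself, and the terms $s_i(x,y)$ are, by their defining relation $(\ad(tx+y))^{p-1}(x)=\sum_{i}is_i(x,y)t^{i-1}$, linear combinations of iterated commutators in $x$ and $y$. For the inductive step I would write $\sum_{i=1}^{m}x_i=x_1+\big(\sum_{i=2}^{m}x_i\big)$, apply the case $m=2$, and then expand $\big(\sum_{i=2}^{m}x_i\big)^{[p]}$ using the inductive hypothesis; since the bracket is linear in each argument, all terms produced are linear combinations of iterated commutators in $x_1,\dots,x_m$, so the formula holds for $n=1$ (only the $l=0$ term occurs).

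Next I would induct on $n$, the case $n=1$ being in hand. Suppose
\[
\Big(\sum_{i=1}^{m}x_i\Big)^{[p]^{n}}=\sum_{i=1}^{m}x_i^{[p]^{n}}+\sum_{l=0}^{n-1}v_{l}^{[p]^{l}},
\]
with each $v_l$ a linear combination of commutators in the $x_i$. Apply the $[p]$-th power map to both sides and expand the right-hand side, which is a sum of $m+n$ elements, by the $n=1$ case already proved. Because $(x_i^{[p]^{n}})^{[p]}=x_i^{[p]^{n+1}}$ and $(v_l^{[p]^{l}})^{[p]}=v_l^{[p]^{l+1}}$, this yields
\[
\Big(\sum_{i=1}^{m}x_i\Big)^{[p]^{n+1}}=\sum_{i=1}^{m}x_i^{[p]^{n+1}}+\sum_{l=0}^{n-1}v_{l}^{[p]^{l+1}}+w,
\]
where $w$ is a linear combination of iterated commutators in the elements $x_i^{[p]^{n}}$ and $v_l^{[p]^{l}}$. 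The crucial point is that $w$ can be rewritten as a linear combination of commutators in the $x_i$ alone: by antisymmetry of the bracket together with condition (1) of \thref{defres}, for any $y$ one has $[y,x_i^{[p]^{n}}]=-(\ad x_i)^{p^{n}}(y)$ and $[y,v_l^{[p]^{l}}]=-(\ad v_l)^{p^{l}}(y)$, and since each $v_l$ is itself a combination of commutators in the $x_i$, iterating these identities flattens every commutator occurring in $w$ into $\pm$ an iterated commutator purely in $x_1,\dots,x_m$. Setting $v'_0:=w$ and $v'_{l+1}:=v_l$ for $0\le l\le n-1$ completes the induction.

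It remains to put the $v_l$ into the stated left-normed form. Since the construction above is uniform in the $x_i$, one may carry it out in the free restricted Lie algebra $\fL$ on $x_1,\dots,x_m$ and then specialize; grading $\fL$ by $\deg x_i=1$ (the bracket additive in degree, the $[p]$-map multiplying degree by $p$), both $\big(\sum x_i\big)^{[p]^{n}}$ and each $x_i^{[p]^{n}}$ are homogeneous of degree $p^{n}$, so comparing homogeneous components lets us take each $v_l$ homogeneous; then $\deg\big(v_l^{[p]^{l}}\big)=p^{l}\deg v_l=p^{n}$ forces $\deg v_l=p^{n-l}$, i.e.\ $v_l$ is a linear combination of iterated commutators each involving exactly $p^{n-l}$ of the $x_i$ counted with multiplicity. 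A routine induction on length using the Jacobi identity then shows that any iterated commutator of length $q$ in the $x_i$ lies in the span of the left-normed commutators $[w_{q-1},[w_{q-2},[\dots,[w_1,w_0]\dots]]]$ with each $w_j$ one of the $x_i$; applying this with $q=p^{n-l}$, so that $t=q-1=p^{n-l}-1$, rewrites each $v_l$ in the required shape.

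The main obstacle is the bookkeeping in the induction on $n$: one must check carefully that the remainder $w$, which a priori is assembled from $[p]$-th powers of the $x_i$ and of the earlier commutators $v_l$, really does flatten — via $\ad x^{[p]}=(\ad x)^{p}$ and antisymmetry — into honest commutators in the original generators, and one must keep the homogeneity accounting exact so that the left-normed form comes out with precisely $t=p^{n-l}-1$ brackets. Everything else is a routine, if slightly tedious, application of Jacobson's formula and the Jacobi identity.
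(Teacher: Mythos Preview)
The paper does not supply its own proof of this lemma; it is quoted verbatim from \cite[Lemma~1.1.1]{S04} and used as a black box throughout. So there is no in-paper argument to compare against.

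That said, your proposal is sound and is essentially the standard argument one finds in Strade's book: induct on $n$, reducing at each step to Jacobson's formula (the case $n=1$, itself handled by induction on $m$), and use $\ad x^{[p]}=(\ad x)^{p}$ together with bilinearity of the bracket to flatten the error term $w$ back into genuine commutators in the $x_i$. The homogeneity trick in the free restricted Lie algebra to pin down $\deg v_l=p^{n-l}$, followed by the Jacobi identity to left-normalize, is exactly the right way to extract the precise length $t=p^{n-l}-1$ of the nested commutators. One small point worth making explicit in your write-up: when you flatten $[y,v_l^{[p]^{l}}]=-(\ad v_l)^{p^{l}}(y)$, the element $v_l$ is a \emph{sum} of commutators, so $(\ad v_l)^{p^{l}}(y)$ is not a single iterated commutator but becomes one only after expanding by multilinearity; this is harmless but should be said, since it is the step that guarantees the final $v_l'$ really lie in the span of left-normed commutators in the $x_i$.
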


Let us give some examples of restricted Lie algebras.

\begin{ex}\thlabel{exres}
\begin{enumerate}
\item Any associative algebra $A$ over $k$ is a Lie algebra with the Lie bracket given by $[x,y]:=xy-yx$ for all $x, y\in A$. Denote this Lie algebra by $A^{(-)}$. Then $A^{(-)}$ is a restricted Lie algebra with the $[p]$-th power map given by $x \mapsto x^{p}$ for all $x\in A$; see \cite[Sec.~2.1, Chap.~2]{S88} for details. In particular, if $A=\Mat_{n}(k)$, the algebra of $n\times n$ matrices with entries in $k$, then the general linear Lie algebra $\fgl_n(k):=\Mat_{n}(k)^{(-)}$ is a restricted Lie algebra.
\item Let $A$ be any algebra over $k$ (not necessarily associative). The derivation algebra $\Der A$ is a restricted Lie algebra with the $[p]$-th power map given by $D\mapsto D^{p}$ for all $D\in \Der A$; see \cite[Sec.~7, Chap.~V]{J79} for details. As an example, the Lie algebra $\fg$ of an algebraic $k$-group $G$ is a restricted Lie algebra as $\fg$ is identified with the subalgebra of left invariant derivations of $k[G]$; see \cite[Sec.~9.1, Chap.~III]{H95}.
\end{enumerate}
\end{ex}

If a $[p]$-th power map exists on a Lie algebra $\fg$, we may ask how many different $[p]$-th power maps there are. It turns out that

\begin{lem}\cite[Proposition 2.1, Sec.~2.2, Chap.~2]{S88}
In a restricted Lie algebra $(\fg, [p])$, every $[p]$-th power map is of the form $x\mapsto x^{[p]}+f(x)$, where $f$ is a map of $\fg$ into its centre $\fz(\fg)$ satisfying $f(\lambda x+y)=\lambda^pf(x)+f(y)$ for all $x, y\in\fg, \lambda \in k$.
\end{lem}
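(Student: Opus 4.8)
The plan is to fix one $[p]$-th power map, say $x\mapsto x^{[p]}$, on $\fg$, and to show that the collection of all $[p]$-th power maps on $\fg$ is precisely the set of maps $x\mapsto x^{[p]}+f(x)$ with $f\colon\fg\to\fz(\fg)$ satisfying the stated functional equation; this requires proving two inclusions. For the first, I would start from an arbitrary second $[p]$-th power map $x\mapsto x^{[p]'}$ and simply \emph{define} $f(x):=x^{[p]'}-x^{[p]}$, so that $x^{[p]'}=x^{[p]}+f(x)$ by construction, and then extract the properties of $f$ from the axioms of \thref{defres}. Condition (1) gives $\ad x^{[p]'}=(\ad x)^p=\ad x^{[p]}$, hence $\ad f(x)=0$; since the centre of $\fg$ is exactly the kernel of $\ad$, this shows $f(x)\in\fz(\fg)$. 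The essential observation for the functional equation is that the elements $s_i(x,y)$ occurring in Jacobson's formula depend only on the Lie bracket, being determined by $(\ad(tx+y))^{p-1}(x)=\sum_{i=1}^{p-1} i\,s_i(x,y)\,t^{i-1}$, and so are identical for the two $[p]$-th power maps. Subtracting the two instances of condition (3) then yields $f(x+y)=f(x)+f(y)$, and subtracting the two instances of condition (2) yields $f(\lambda x)=\lambda^p f(x)$; together these give $f(\lambda x+y)=\lambda^p f(x)+f(y)$.

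For the converse inclusion I would take the fixed map $x\mapsto x^{[p]}$ together with an arbitrary $f\colon\fg\to\fz(\fg)$ obeying $f(\lambda x+y)=\lambda^p f(x)+f(y)$, and check directly that $x\mapsto x^{[p]}+f(x)$ satisfies the three conditions of \thref{defres}. Condition (1) holds because $\ad f(x)=0$, so $\ad(x^{[p]}+f(x))=(\ad x)^p$. Condition (2) holds because $f(\lambda x)=\lambda^p f(x)$, using that $f(0)=0$ (e.g.\ from the functional equation with $\lambda=1$, $y=0$). Condition (3) holds because the $s_i(x,y)$ are unchanged and the extra terms $f(x)+f(y)$ are exactly what $f(x+y)$ contributes, so $(x+y)^{[p]}+f(x+y)=(x^{[p]}+f(x))+(y^{[p]}+f(y))+\sum_{i=1}^{p-1}s_i(x,y)$.

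I do not expect a genuine obstacle here: the entire argument is a term-by-term comparison of the defining axioms, combined once in each direction. The single point worth stating explicitly — and the one I would flag as the crux — is that $s_i(x,y)$ is an intrinsic function of the bracket alone, independent of the choice of $[p]$-operation; granting that, both inclusions follow immediately from conditions (1)--(3) and the description $\fz(\fg)=\ker(\ad)$.
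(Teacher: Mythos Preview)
Your argument is correct and is exactly the standard proof one finds in the cited reference \cite[Proposition 2.1, Sec.~2.2, Chap.~2]{S88}; the paper itself does not supply a proof of this lemma but merely quotes it. The crux you identified --- that the terms $s_i(x,y)$ are determined entirely by the Lie bracket via $(\ad(tx+y))^{p-1}(x)$ and hence are independent of the choice of $[p]$-th power map --- is precisely the point on which the whole comparison hinges, and with that in hand both directions are immediate from the axioms in \thref{defres}.
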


An immediate corollary is that
\begin{cor}\cite[Corollary 2.2, Sec.~2.2, Chap.~2]{S88}\thlabel{centrelesslem}
If a Lie algebra $\fg$ is centreless, then $\fg$ has at most one $[p]$-th power map.
\end{cor}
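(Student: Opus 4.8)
The plan is to obtain this as an immediate consequence of the preceding Lemma. First, note that the statement is vacuous unless $\fg$ carries at least one $[p]$-th power map, so I would begin by fixing such a map, denote it $[p]$, so that $(\fg,[p])$ is a genuine restricted Lie algebra and the Lemma applies to it. Now let $[p]'$ be an arbitrary $[p]$-th power map on $\fg$. Applying the Lemma to the fixed structure $(\fg,[p])$, there is a map $f\colon\fg\to\fz(\fg)$ (satisfying $f(\lambda x+y)=\lambda^{p}f(x)+f(y)$, though this refinement will not be needed) such that $x^{[p]'}=x^{[p]}+f(x)$ for every $x\in\fg$.

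The decisive step is then simply to invoke the hypothesis: $\fg$ being centreless means $\fz(\fg)=\{0\}$, so $f$ is identically zero, whence $x^{[p]'}=x^{[p]}$ for all $x$, i.e. $[p]'=[p]$. As $[p]'$ was arbitrary, $\fg$ admits at most one $[p]$-th power map. I do not anticipate any real obstacle here; the proof is a one-line deduction from the Lemma. The only points deserving a moment's attention are the bookkeeping convention that ``at most one'' subsumes the case of no $[p]$-th power map at all, and the observation that of the two properties of $f$ furnished by the Lemma only the containment $f(\fg)\subseteq\fz(\fg)$ is used.
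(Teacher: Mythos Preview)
Your proposal is correct and matches the paper's approach exactly: the paper presents this as an immediate corollary of the preceding Lemma, and your argument is precisely the one-line deduction that the Lemma forces $f$ to take values in $\fz(\fg)=\{0\}$.
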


\begin{defn}\cite[Sec.~2.1, Chap.~2]{S88}
Let ($\fg, [p]$) be a restricted Lie algebra over $k$. A subalgebra (respectively an ideal) $S$ of $\fg$ is called a \textnormal{$p$-subalgebra} (respectively  a \textnormal{$p$-ideal}) if $x^{[p]} \in S$ for all $x \in S$.
\end{defn}

Examples of $p$-ideals include the centre $\fz(\fg)$ and the radical $\Rad \fg$. Moreover, if $I$ and $J$ are $p$-ideals of $\fg$, then so are $I+J$, $I\cap J$, and $(I+J)/J \cong I/(I\cap J)$. 

Note that if $I$ is a $p$-ideal of $\fg$, then the quotient Lie algebra $\fg/I$ carries a natural $[p]$-th power map given by $(x+I)^{[p]}:=x^{[p]}+I$ for all $x \in \fg$; see \cite[Proposition 1.4, Sec.~2.1, Chap.~2]{S88}.

\begin{defn}\cite[p.~65]{S88}\thlabel{Spdefn}
Let $S$ be a subset of a restricted Lie algebra ($\fg, [p]$).
\begin{enumerate}[\upshape(i)]
\item The intersection of all $p$-subalgebras of $\fg$ containing $S$, denoted $S_p$, is a $p$-subalgebra of $\fg$ and is referred to as the \textnormal{$p$-subalgebra generated by $S$ in $\fg$}. Note that $S_p$ is the smallest $p$-subalgebra of $\fg$ containing $S$. 
\item Let $i\in \N_{0}$. The image of $S$ under the iterated application of the $[p]$-th power map, denoted $S^{[p]^{i}}$, is defined by  
\[
S^{[p]^i}:=\{x^{[p]^{i}}\,|\, x\in S\}.
\]
\end{enumerate}
\end{defn} 
Let us give an explicit characterization of $S_p$ in the following case.

\begin{lem}\cite[Proposition 1.3(1), Sec.~2.1, Chap.~2]{S88}\thlabel{computeSp}
Let ($\fg, [p]$) be a restricted Lie algebra over $k$, and let $H$ be a subalgebra of $\fg$ with basis $\{e_j\, |\, j\in J\}$. Then the $p$-subalgebra of $\fg$ generated by $H$ is given by
\[
H_p=\sum_{i\geq 0}\langle H^{[p]^{i}}\rangle=\sum_{j\in J, i\geq 0} ke_j^{[p]^{i}}.
\]
\end{lem}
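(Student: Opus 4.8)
The plan is to set $M := \sum_{j\in J,\, i\geq 0} k\,e_j^{[p]^i}$ and to prove that $M = H_p$; the identification with $\sum_{i\geq 0}\langle H^{[p]^i}\rangle$ will then be a short afterthought. One inclusion is immediate: $H_p$ is a $p$-subalgebra containing $H$, hence contains every $e_j$, is closed under the $[p]$-map, so contains every $e_j^{[p]^i}$, and being a subspace contains their span $M$. Thus $M\subseteq H_p$, and it remains to check that $M$ is itself a $p$-subalgebra containing $H$ --- it visibly contains $H$ (take $i=0$) --- which will force $H_p\subseteq M$ by minimality of $H_p$.

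First I would verify that $M$ is closed under the Lie bracket. The crucial observation is that $\ad e_j$ maps $M$ into $H$ for each $j$. Indeed, by the iterated first axiom in \thref{defres}, $\ad(e_l^{[p]^{i'}}) = (\ad e_l)^{p^{i'}}$, and since $H$ is a subalgebra it is stable under $\ad e_l$; hence $[e_j, e_l^{[p]^{i'}}] = -(\ad e_l)^{p^{i'}}(e_j)\in H$ for all $j,l,i'$, and by linearity $(\ad e_j)(M)\subseteq H\subseteq M$. Iterating (and using again that $H$ is $\ad e_j$-stable) gives $(\ad e_j)^{p^i}(M)\subseteq H$ for all $i\geq 1$ as well, so $[e_j^{[p]^i}, m] = (\ad e_j)^{p^i}(m)\in M$ for every spanning element $e_j^{[p]^i}$ of $M$ and every $m\in M$. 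By bilinearity $[M,M]\subseteq M$.

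Next I would verify that $M$ is closed under the $[p]$-map. Write an arbitrary element of $M$ as a finite sum $x=\sum \lambda_{j,i}\,e_j^{[p]^i}$. The generalized Jacobson formula (\thref{generalJacobF}, case $n=1$) yields $x^{[p]} = \sum(\lambda_{j,i}e_j^{[p]^i})^{[p]} + v_0$, where $v_0$ lies in the span of iterated commutators of the summands $\lambda_{j,i}e_j^{[p]^i}\in M$; since $M$ has just been shown to be a subalgebra, $v_0\in M$. Each remaining term equals $\lambda_{j,i}^{\,p}\,e_j^{[p]^{i+1}}\in M$ by axioms (2) and (1) of the $[p]$-map. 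Hence $x^{[p]}\in M$, so $M$ is a $p$-subalgebra, and therefore $H_p=M$.

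For the last equality: $e_j=e_j^{[p]^0}\in H$ gives $e_j^{[p]^i}\in H^{[p]^i}$, so $M\subseteq\sum_{i\geq 0}\langle H^{[p]^i}\rangle$; conversely $H\subseteq H_p$ and $H_p$ is closed under $[p]$, so $H^{[p]^i}\subseteq H_p=M$ for all $i$, giving $\sum_{i\geq 0}\langle H^{[p]^i}\rangle\subseteq M$. Hence all three sets coincide. The only real subtlety is the order of the steps: the correction term $v_0$ in Jacobson's formula is controlled only after $M$ is known to be a subalgebra, and the one genuine computation behind that is the identity $(\ad e_j)(M)\subseteq H$ above.
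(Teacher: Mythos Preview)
The paper does not supply its own proof of this lemma; it is quoted from \cite{S88} without argument, so there is nothing to compare against. Your proof is correct and complete: the inclusion $M\subseteq H_p$ is immediate, and for the reverse inclusion the key step is closure of $M$ under the bracket, which you handle cleanly via the observation that $(\ad e_j)(M)\subseteq H$ (not merely $\subseteq M$), so that iterating $\ad e_j$ stays inside $H$. One small quibble: when you write ``by axioms (2) and (1) of the $[p]$-map'', axiom (1) in \thref{defres} is the identity $\ad x^{[p]}=(\ad x)^p$, which you used earlier for the bracket computation but is not what gives $(e_j^{[p]^i})^{[p]}=e_j^{[p]^{i+1}}$ --- that last equality is simply the definition of the iterated $[p]$-th power. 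This does not affect the validity of the argument.
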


\begin{defn}\cite[Sec.~2.3, Chap.~2]{S88}
Let ($\fg, [p]$) be a restricted Lie algebra over $k$. An element $x \in \fg$ is called \textnormal{semisimple} (or \textnormal{$p$-semisimple}) if $x\in (kx^{[p]})_p=\sum_{i\geq 1}kx^{[p]^{i}}$. If $x^{[p]}=x$, then $x$ is called \textnormal {toral}.
\end{defn}

\begin{lem}\cite[Proposition 3.3, Sec.~2.3, Chap.~2]{S88}\thlabel{sselemt}
Let ($\fg, [p]$) be a restricted Lie algebra over $k$. Then the following statements hold:
\begin{enumerate}[\upshape(i)]
\item If $x$ is toral, then $x$ is semisimple.
\item If $x$ and $y$ are semisimple and $[x, y]=0$, then $x+y$ is semisimple.
\item If $x$ is semisimple, then $x^{[p]^{i}}$ is semisimple for every $i\in \N$. Moreover, $y$ is semisimple for every $y \in (kx)_p$.
\end{enumerate}
\end{lem}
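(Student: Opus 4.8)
The plan is to reduce all three parts to a single structural fact about finite-dimensional abelian $p$-subalgebras, and then apply it to the appropriate subalgebra in each case. First I record the standard preliminary observations. For any $a\in\fg$ one has $[a^{[p]},a]=(\ad a)^{p}(a)=0$, and if $[x,y]=0$ then $(\ad x)(y^{[p]^{j}})=-(\ad y)^{p^{j}}(x)=0$ since $(\ad y)(x)=0$; using $\ad(a^{[p]})=(\ad a)^{p}$, a routine induction then shows the elements $x^{[p]^{i}}$ ($i\geq 0$) together with the $y^{[p]^{j}}$ ($j\geq 0$) commute pairwise. In particular $(kx)_p=\sum_{i\geq 0}kx^{[p]^{i}}$ (by \thref{computeSp}) is an \emph{abelian} $p$-subalgebra, and on any abelian $p$-subalgebra $\fa$ the map $a\mapsto a^{[p]}$ is additive, since the correction terms $s_i(a,b)$ are iterated commutators in $a,b$ and hence vanish, and it satisfies $(\lambda a)^{[p]}=\lambda^{p}a^{[p]}$. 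Because $k$ is perfect, such a map has image a \emph{linear subspace} of $\fa$, as $\sum_j\lambda_j a_j^{[p]}=\bigl(\sum_j\lambda_j^{1/p}a_j\bigr)^{[p]}$.

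The core of the argument is the following claim, which I would prove first: if $\fa\subseteq\fg$ is a finite-dimensional abelian $p$-subalgebra on which $[p]$ is bijective, then every $y\in\fa$ is semisimple. To see this, consider the descending chain of subspaces $W_j:=\spn\{y^{[p]^{i}}\mid i\geq j\}$ of $\fa$; it stabilizes, so there is a least $m\geq 0$ with $W_m=W_{m+1}$. If $m=0$, then $y\in W_1=(ky^{[p]})_p$, so $y$ is semisimple. If $m\geq 1$, write $y^{[p]^{m}}=\sum_{i\geq m+1}c_i\,y^{[p]^{i}}$; by perfectness and additivity this equals $\bigl(\sum_{i\geq m+1}c_i^{1/p}y^{[p]^{i-1}}\bigr)^{[p]}$, and since it also equals $(y^{[p]^{m-1}})^{[p]}$, injectivity of $[p]$ on $\fa$ yields $y^{[p]^{m-1}}=\sum_{i\geq m+1}c_i^{1/p}y^{[p]^{i-1}}\in W_m$, so $W_{m-1}=W_m$, contradicting the minimality of $m$. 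Hence $m=0$, proving the claim.

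Granting the claim, the three parts follow quickly. For (i), if $x^{[p]}=x$ then $x=x^{[p]}\in(kx^{[p]})_p$, so $x$ is semisimple straight from the definition. For (iii), if $x$ is semisimple then $x\in(kx^{[p]})_p$ forces $(kx)_p=(kx^{[p]})_p=:\ft$, which contains $x$; the image of $[p]$ on the abelian $p$-subalgebra $\ft$ is a subspace containing $x^{[p]},x^{[p]^{2}},\dots$, hence all of $\ft$, so $[p]$ is bijective on the finite-dimensional $\ft$; the claim then shows every element of $\ft$ is semisimple, which is precisely both assertions of (iii) since $x^{[p]^{i}}\in\ft$ and $(kx)_p=\ft$. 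For (ii), if $x,y$ are semisimple with $[x,y]=0$, set $\fa:=(kx)_p+(ky)_p$; by the commuting remarks $\fa$ is a finite-dimensional abelian $p$-subalgebra, and additivity of $[p]$ on $\fa$ together with surjectivity of $[p]$ on each of $(kx)_p$ and $(ky)_p$ (as just argued for (iii)) gives $[p](\fa)=(kx)_p+(ky)_p=\fa$, so $[p]$ is bijective on $\fa$, and the claim applied to $\fa$ shows $x+y\in\fa$ is semisimple.

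The main obstacle is the set-up rather than any single computation: one must recognize that the natural object is the abelian $p$-subalgebra generated by the element(s) at hand, that restricted to it $[p]$ becomes an additive $p$-semilinear operator, and that perfectness of $k$ turns images of $[p]$ into genuine subspaces so that finite-dimensionality can be exploited; after that, the descending-chain argument behind the claim and the deductions of (i)--(iii) are short. A minor point to watch is that in verifying $[p]$ is surjective on $\ft$ (resp.\ on $(kx)_p$ and $(ky)_p$) one uses that the semisimple element $x$ itself lies in $\ft$, so that $x^{[p]}$---and not merely $x^{[p]^{2}},x^{[p]^{3}},\dots$---belongs to the image.
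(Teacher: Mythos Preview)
The paper does not supply its own proof of this lemma; it is quoted verbatim from Strade--Farnsteiner \cite[Proposition~3.3, Sec.~2.3, Chap.~2]{S88} and used as a black box. So there is no in-paper argument to compare against.

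That said, your proof is correct and is essentially the textbook approach. The key structural insight---that on an abelian $p$-subalgebra the $[p]$-map is additive and $p$-semilinear, so over a perfect field its image is a subspace and finite-dimensionality can be brought to bear---is exactly what drives the standard proof. Your descending-chain argument for the core claim is clean, and the deductions of (i)--(iii) from it are accurate; in particular the observation that semisimplicity of $x$ forces $(kx)_p=(kx^{[p]})_p$, whence $[p]$ is surjective (hence bijective) on this finite-dimensional space, is the right way to feed into the claim. The one place worth a sentence of extra care is the passage from ``$[p]$ surjective on $\fa$'' to ``$[p]$ bijective on $\fa$'': since $[p]$ is only $p$-semilinear, you are implicitly using that the kernel is a subspace (clear) and that a rank--nullity count holds, which is justified by viewing $[p]$ as a $k$-linear map after a Frobenius twist of the source; this is routine but might merit an explicit remark.
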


\begin{defn}\cite[Sec.~2.4, Chap.~2]{S88}
Let ($\fg, [p]$) be a restricted Lie algebra over $k$. A subalgebra $\ft\subset \fg$ is called a \textnormal {torus} if $\ft$ is an abelian $p$-subalgebra of $\fg$ consisting of semisimple elements.
\end{defn}

\begin{lem}\cite[Theorem 3.6(1), Sec.~2.3, Chap.~2]{S88}\thlabel{toralbasis}
Let ($\fg, [p]$) be a restricted Lie algebra over $k$. Then any torus in $\fg$ has a basis consisting of toral elements.
\end{lem}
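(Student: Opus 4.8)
The plan is to restrict the $[p]$-operation to the torus $\ft$, observe that there it becomes a bijective Frobenius-semilinear operator, and then use the fact that such an operator on a finite-dimensional vector space over the algebraically closed field $k$ has a basis of fixed points.

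First, I would set $\phi:=[p]|_{\ft}$ and check that $\phi$ is a bijective $p$-semilinear self-map of $\ft$. It lands in $\ft$ because $\ft$ is a $p$-subalgebra. It is additive because the correction terms $s_i(x,y)$ in Jacobson's formula for $p$-th powers are $k$-linear combinations of iterated brackets of $x$ and $y$, which vanish on the abelian $\ft$; and $\phi(\lambda x)=\lambda^p\phi(x)$ holds by the definition of a $[p]$-map. It is injective because if $x\in\ft$ and $\phi(x)=0$ then $x$ is semisimple with $x^{[p]}=0$, so $x\in(kx^{[p]})_p=\{0\}$; and an injective $p$-semilinear endomorphism of a finite-dimensional space over the perfect field $k$ is automatically surjective, since the images of a $k$-basis stay $k$-linearly independent. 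Since an element of $\ft$ is toral exactly when $\phi$ fixes it, the lemma reduces to the following purely linear statement: \emph{if $V$ is a finite-dimensional $k$-vector space and $\phi\colon V\to V$ is an additive bijection with $\phi(\lambda v)=\lambda^p\phi(v)$, then $V$ has a $k$-basis of $\phi$-fixed vectors.}

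I would prove this by induction on $\dim_k V$, the zero-dimensional case being trivial. The heart of the inductive step is to produce one nonzero $\phi$-fixed vector $w$. For that I would take any $0\neq v\in V$, let $W:=\sum_{i\ge 0}k\,\phi^i(v)$ be the $\phi$-stable subspace it generates, put $m:=\dim_k W$ so that $v,\phi(v),\dots,\phi^{m-1}(v)$ is a basis of $W$, and write $\phi^m(v)=\sum_{i=0}^{m-1}a_i\phi^i(v)$; here $a_0\neq 0$, for otherwise injectivity of $\phi$ would put $\phi^{m-1}(v)$ in the span of the lower powers. Expanding an arbitrary $w=\sum_i c_i\phi^i(v)$ and matching coordinates, the equation $\phi(w)=w$ becomes a chain of relations that successively eliminates $c_0,\dots,c_{m-2}$ and collapses to a single equation $g(t)=0$ in $t:=c_{m-1}$, where $g$ is an additive polynomial over $k$ of degree $p^m$ whose coefficient of $t$ is $-1$. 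Being non-constant and not a monomial, $g$ has a nonzero root $t\in k$, giving a nonzero $w\in W\subseteq V$ with $\phi(w)=w$. Then $kw$ is $\phi$-stable, so $\phi$ descends to a bijective $p$-semilinear operator $\bar\phi$ on $V/kw$; by the induction hypothesis $V/kw$ has a $\bar\phi$-fixed basis, and lifting its elements to $v_1,\dots,v_{n-1}\in V$ one has $\phi(v_j)=v_j+\mu_j w$ for suitable $\mu_j\in k$. Replacing each $v_j$ by $v_j+c_j w$, with $c_j$ chosen to solve the Artin--Schreier equation $c_j^p-c_j=-\mu_j$ (which has a solution since $k$ is algebraically closed), turns it into a fixed vector, and $\{w,\,v_1+c_1w,\dots,v_{n-1}+c_{n-1}w\}$ is the desired basis. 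Applying this to $V=\ft$ and $\phi=[p]|_{\ft}$ completes the argument.

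The only real obstacle is manufacturing the single nonzero toral vector $w$; this is the step where algebraic closedness of $k$ genuinely intervenes (as does the Artin--Schreier step at the end). The subtle point is making sure the polynomial $g$ that must be solved is non-constant and is not a scalar multiple of $t$, which is precisely what $a_0\neq 0$ secures --- and $a_0\neq 0$ is exactly where bijectivity of $\phi$, hence the semisimplicity built into the notion of a torus, gets used. Everything else is formal bookkeeping with semilinear maps and quotient spaces.
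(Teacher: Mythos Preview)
Your argument is correct. The paper itself does not prove this lemma; it is quoted verbatim from \cite[Theorem~3.6(1), Sec.~2.3, Chap.~2]{S88} as a background fact, so there is no in-paper proof to compare against. What you have written is essentially the classical proof one finds in Strade--Farnsteiner: on an abelian $p$-subalgebra the $[p]$-map becomes a $p$-semilinear operator, semisimplicity forces it to be bijective, and then one invokes the standard Frobenius-descent fact that a bijective $p$-semilinear endomorphism of a finite-dimensional vector space over an algebraically closed field admits a basis of fixed vectors. Your execution of that last step---generating a cyclic $\phi$-stable subspace, extracting an additive polynomial whose linear coefficient is nonzero because $a_0\neq 0$, and then cleaning up the quotient via Artin--Schreier---is sound, and your identification of where semisimplicity and algebraic closedness each enter is accurate.
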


%\begin{lem}\cite[Theorem~1.3.11(1)]{S04}\thlabel{04torusdim}
%Let ($\fg, [p]$) be a restricted Lie algebra over $k$ and let $\ft$ be a torus of $\fg$. Let $\F_p\subset k$ denote the finite field with $p$ elements. Then $\dim_k \ft$ is the $\F_p$-dimension of the $\F_p$-vector space spanned by the $\ft$-roots.
%\end{lem}

\begin{defn}\cite[Notation 1.2.5]{S04}
Let ($\fg, [p]$) be a restricted Lie algebra over $k$. Set
\[
\MT(\fg):=\max \{\dim \ft\,|\, \text{$\ft$ is a torus of $\fg$}\},
\]
the \textnormal{maximal dimension of tori} in $\fg$.
\end{defn}

\begin{lem}\cite[Lemma 1.2.6(2)]{S04}\thlabel{MTlem}
Let ($\fg, [p]$) be a restricted Lie algebra over $k$ and let $I$ be a $p$-ideal of $\fg$. Then the following holds:
\begin{align*}
\MT(\fg)=\MT(\fg/I)+\MT(I).
\end{align*}
\end{lem}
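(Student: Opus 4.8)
The plan is to prove the two inequalities $\MT(\fg)\le\MT(\fg/I)+\MT(I)$ and $\MT(\fg)\ge\MT(\fg/I)+\MT(I)$ separately. Write $\pi\colon\fg\to\fg/I$ for the quotient map; since $I$ is a $p$-ideal, $\fg/I$ carries the power map $(x+I)^{[p]}=x^{[p]}+I$ and $\pi$ is a homomorphism of restricted Lie algebras. For the inequality $\le$ I would fix a torus $\ft$ of $\fg$ with $\dim\ft=\MT(\fg)$ and check that $\ft\cap I$ is a torus of $I$ and $\pi(\ft)$ is a torus of $\fg/I$. The relevant points are: an intersection of two $p$-subalgebras, and a homomorphic image of a $p$-subalgebra, are again $p$-subalgebras; these subspaces are abelian; and the semisimplicity of an element --- by \thref{computeSp} the condition $x\in(kx^{[p]})_p=\sum_{i\ge1}kx^{[p]^i}$ --- is intrinsic, so it persists when one passes to a $p$-subalgebra (the $x^{[p]^i}$ remain inside $I$) and when one applies $\pi$. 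As $\ker(\pi|_\ft)=\ft\cap I$, a rank--nullity count gives $\MT(\fg)=\dim\ft=\dim(\ft\cap I)+\dim\pi(\ft)\le\MT(I)+\MT(\fg/I)$.

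For the reverse inequality I would exhibit a torus of $\fg$ of dimension $\MT(\fg/I)+\MT(I)$. First I record that a semisimple element $x$ acts diagonalizably under $\ad$: from $\ad x^{[p]}=(\ad x)^{p}$ and $x\in\sum_{i\ge1}kx^{[p]^i}$, the operator $\ad x$ is a root of the separable polynomial $T-\sum_i c_iT^{p^i}$; consequently every torus of $\fg$ acts diagonalizably. I then fix a torus $\fs\subseteq I$ with $\dim\fs=\MT(I)$ and set $\fc:=\Cent_\fg(\fs)$, a $p$-subalgebra of $\fg$ containing $\fs$ in its centre. Because $\fs\subseteq I$ and $\fs$ acts diagonalizably on $\fg$, every nonzero $\fs$-weight space of $\fg$ lies in $I$ (if $[t,x]=\alpha(t)x$ with $\alpha(t)\neq 0$ then $x=\alpha(t)^{-1}[t,x]\in I$); writing $\fg=\fc\oplus\bigoplus_{\alpha\neq 0}\fg_\alpha$ this gives $\fg=\fc+I$, so $\pi$ restricts to a surjection $\fc\twoheadrightarrow\fg/I$ with kernel $\fc\cap I$.

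The crux is a lifting lemma: \emph{for every surjective homomorphism $\phi\colon\fa\twoheadrightarrow\fb$ of restricted Lie algebras and every torus $\bar\ft$ of $\fb$ there is a torus $\ft$ of $\fa$ with $\phi(\ft)=\bar\ft$ and $\dim\ft=\dim\bar\ft$}, equivalently $\ft\cap\ker\phi=0$. I would prove it by induction on $\dim\bar\ft$, picking a toral basis $\bar t_1,\dots,\bar t_r$ of $\bar\ft$ (\thref{toralbasis}); the case $r=0$ is trivial. Applying the inductive hypothesis to $\phi\colon\phi^{-1}(\bar\ft')\twoheadrightarrow\bar\ft'$ with $\bar\ft':=\spn(\bar t_1,\dots,\bar t_{r-1})$ gives a torus $\ft'$ of $\fa$ with $\phi(\ft')=\bar\ft'$ and $\dim\ft'=r-1$. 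Since $\bar\ft$ is abelian, $\bar t_r$ lies in the zero weight space of $\fb$ for the diagonalizable action of $\ft'$, and by $\ft'$-equivariance of $\phi$ that zero weight space equals $\phi(\Cent_\fa(\ft'))$; hence there is $x\in\Cent_\fa(\ft')$ with $\phi(x)=\bar t_r$. Let $x_s\in(kx)_p$ be the semisimple part of $x$, that is, its component in the torus summand of the torus/$p$-nilpotent decomposition of the finite-dimensional commutative restricted algebra $(kx)_p$; since $\bar t_r$ is semisimple, uniqueness of this decomposition forces $\phi(x_s)=\bar t_r$. Then $(kx_s)_p\subseteq\Cent_\fa(\ft')$ is a torus (\thref{sselemt}(iii)) that maps onto $(k\bar t_r)_p=k\bar t_r$; choosing a toral basis of $(kx_s)_p$ and rescaling by an element of $\F_p^{*}$ a basis vector whose image is nonzero produces a toral element $t_r\in\Cent_\fa(\ft')$ with $\phi(t_r)=\bar t_r$. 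Finally $\ft:=\ft'+kt_r$ is abelian (as $t_r$ centralizes $\ft'$), a $p$-subalgebra (Jacobson's formula gives $(x+y)^{[p]}=x^{[p]}+y^{[p]}$ for commuting $x,y$, so the span of a commuting set of toral elements is closed under the power map), and consists of semisimple elements (\thref{sselemt}(ii)); thus $\ft$ is a torus of $\fa$, with $\phi(\ft)=\bar\ft'+k\bar t_r=\bar\ft$ and therefore $\dim\ft=r$.

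To conclude I would apply the lifting lemma to $\pi|_\fc\colon\fc\twoheadrightarrow\fg/I$ and a torus $\bar\ft$ of $\fg/I$ with $\dim\bar\ft=\MT(\fg/I)$, obtaining a torus $\ft_0\subseteq\fc$ with $\pi(\ft_0)=\bar\ft$, $\dim\ft_0=\MT(\fg/I)$ and $\ft_0\cap I=0$. Since $\fs$ is central in $\fc$, the sum $\ft_0+\fs$ is again a torus of $\fc\subseteq\fg$ (abelian; a $p$-subalgebra by the same commuting-sum argument; semisimple by \thref{sselemt}(ii)), and $\ft_0\cap\fs\subseteq\ft_0\cap I=0$ forces $\dim(\ft_0+\fs)=\MT(\fg/I)+\MT(I)$. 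Hence $\MT(\fg)\ge\MT(\fg/I)+\MT(I)$, and together with the first part this proves the lemma. I expect the lifting lemma to be the main obstacle --- above all the requirement to lift an entire toral basis to \emph{mutually commuting} toral elements, which is precisely what forces the detour through the centralizer $\Cent_\fa(\ft')$; once the commutation is arranged, producing a single toral lift is routine via the torus/$p$-nilpotent decomposition of a commutative restricted Lie algebra together with \thref{sselemt} and \thref{toralbasis}.
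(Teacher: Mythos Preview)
The paper does not supply a proof of this statement; it is quoted from Strade's monograph without argument. Your proof is correct and follows the standard route one finds there: the inequality $\le$ comes from projecting a maximal torus of $\fg$, and for $\ge$ you pass to the centralizer $\fc=\Cent_\fg(\fs)$ of a maximal torus $\fs$ of $I$ so that $\fs$ becomes central while $\fc$ still surjects onto $\fg/I$, then lift a maximal torus of $\fg/I$ into $\fc$. Your inductive proof of the lifting lemma (lift inside $\Cent_\fa(\ft')$ to secure commutation, use Jordan--Chevalley to replace the lift by its semisimple part, then extract a toral preimage via a toral basis and the observation that toral elements map to $\F_p$-multiples of $\bar t_r$) is sound; the step you flag as the main difficulty---arranging mutual commutation of the toral lifts---is handled correctly by the passage to centralizers.
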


Note that in a restricted Lie algebra ($\fg, [p]$), Cartan subalgebras can be described by maximal tori.

\begin{defn}\cite[Sec.~1.3, Chap.~1]{S88}
Let $L$ be a Lie algebra over $k$ (not necessarily restricted). The \textnormal{lower central series} of $L$ is the sequence of ideals of $L$ defined as follows: $L^1=L, L^{i+1}=[L, L^i]$ for $i\geq 1$. Then 
\[
L=L^1\supseteq L^2\supseteq\dots\supseteq L^{n}\supseteq\dots
\]
We say that $L$ is \textnormal{nilpotent} if $L^{m}=0$ for some $m\in \N$. 
\end{defn}

\begin{defn}\cite[Sec.~1.4, Chap.~1]{S88}
Let $L$ be a Lie algebra over $k$ (not necessarily restricted). A subalgebra $\fh$ of $L$ is called a \textnormal{Cartan subalgebra} if it is nilpotent and equal to its own normalizer, i.e.
\[
\fh=\text{N}_{L}(\fh)=:\{x\in L\,|\,[x, h]\in \fh\, \text{for all $h\in \fh$}\}.
\]
\end{defn}

\begin{thm}\cite[Theorem 4.1, Sec.~2.4, Chap.~2]{S88}\thlabel{CSAmaxitori}
Let ($\fg, [p]$) be a restricted Lie algebra over $k$. Let $\fh$ be a subalgebra of $\fg$. The following statements are equivalent:
\begin{enumerate}[\upshape(i)]
\item $\fh$ is a Cartan subalgebra.
\item There exists a maximal torus $\ft$ in $\fg$ such that $\fh=\fc_{\fg}(\ft)$, the centralizer of $\ft$ in $\fg$.
\end{enumerate}
\end{thm}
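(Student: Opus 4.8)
The plan is to prove the two implications separately. The common tool is the simultaneous eigenspace decomposition of $\fg$ under a torus, together with the decomposition $x=x_{s}+x_{n}$ (with $x_{s}$ $p$-semisimple, $x_{n}$ $p$-nilpotent, both lying in the commutative restricted subalgebra $(kx)_{p}$, and $[x_{s},x_{n}]=0$) available in any restricted Lie algebra.

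\emph{(ii) $\Rightarrow$ (i).} Let $\ft$ be a maximal torus and set $\fh:=\fc_{\fg}(\ft)$. Since $\ft$ is abelian we have $\ft\subseteq\fh$, and $\ft$ is a central $p$-subalgebra of $\fh$. Each $t\in\ft$ is semisimple, so the operators $\ad t$ form a commuting family of semisimple endomorphisms of $\fg$; hence $\fg=\bigoplus_{\alpha}\fg_{\alpha}$ (simultaneous eigenspaces) with $\fg_{0}=\fc_{\fg}(\ft)=\fh$. Self-normalization of $\fh$ is then immediate: for $x=\sum_{\alpha}x_{\alpha}\in\Nor_{\fg}(\fh)$ and $t\in\ft\subseteq\fh$ one has $[x,t]=-\sum_{\alpha}\alpha(t)x_{\alpha}\in\fg_{0}$, which forces $x_{\alpha}=0$ for $\alpha\neq0$, i.e. $x\in\fh$. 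The substantive point is nilpotency of $\fh$: if $\fh$ were not nilpotent, Engel's theorem would give $x\in\fh$ with $\ad_{\fh}x$ not nilpotent; writing $x=x_{s}+x_{n}$ inside $(kx)_{p}\subseteq\fh$, the part $\ad_{\fh}x_{s}$ is the semisimple component of $\ad_{\fh}x$, hence nonzero, so $x_{s}\notin\ft$ because $\ft$ is central in $\fh$. But then $\ft+kx_{s}$ is an abelian subalgebra whose elements are all semisimple (\thref{sselemt}), so by \thref{computeSp} the $p$-subalgebra $(\ft+kx_{s})_{p}$ is again a torus and strictly contains $\ft$ --- contradicting maximality. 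Thus $\fh$ is nilpotent and self-normalizing, i.e. a Cartan subalgebra.

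\emph{(i) $\Rightarrow$ (ii).} Let $\fh$ be a Cartan subalgebra. First, $\fh$ is automatically a $p$-subalgebra: by \thref{computeSp} its $p$-envelope $\fh_{p}$ is spanned by iterated $p$-th powers $e_{j}^{[p]^{i}}$ of a basis $\{e_{j}\}$ of $\fh$, and $[e_{j}^{[p]^{i}},\fh]=(\ad e_{j})^{p^{i}}(\fh)\subseteq\fh$, so $\fh_{p}\subseteq\Nor_{\fg}(\fh)=\fh$. Hence $\fh$ is a nilpotent restricted Lie algebra; let $\ft$ be a maximal torus of $\fh$. Nilpotency forces every semisimple element of $\fh$ to be central (its adjoint action on $\fh$ is simultaneously semisimple and nilpotent), so $\ft\subseteq\fz(\fh)$, $\ft$ is the unique maximal torus of $\fh$, and each $h\in\fh$ has $h_{s}\in\ft$. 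I claim $\fh=\fc_{\fg}(\ft)$: the inclusion $\subseteq$ is clear, and for $\supseteq$ I use the standard description of a Cartan subalgebra as its own Fitting-null component, $\fh=\{x\in\fg:(\ad h)^{N}x=0\text{ for }N\gg0,\ \forall h\in\fh\}$. Indeed, given $x\in\fc_{\fg}(\ft)$ and $h=h_{s}+h_{n}\in\fh$ with $h_{s}\in\ft$, the operator $\ad h_{s}$ annihilates $\fc_{\fg}(\ft)$, so on $\fc_{\fg}(\ft)$ one has $\ad h=\ad h_{n}$, which is nilpotent since $h_{n}$ is $p$-nilpotent; thus $x$ lies in the Fitting-null component, i.e. $x\in\fh$. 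Finally $\ft$ is maximal in $\fg$: any torus $\ft'\supseteq\ft$ satisfies $\ft'\subseteq\fc_{\fg}(\ft')\subseteq\fc_{\fg}(\ft)=\fh$, whence $\ft'=\ft$ by maximality of $\ft$ in $\fh$.

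I expect the main obstacle to be the nilpotency step in (ii) $\Rightarrow$ (i): it is the only place where maximality of $\ft$ is exploited directly, and it rests on (a) the restricted Jordan decomposition placing $x_{s}$ inside $\fh$, and (b) checking that adjoining $x_{s}$ to $\ft$ genuinely yields a torus --- i.e. that $(\ft+kx_{s})_{p}$ still consists of semisimple elements --- which is exactly where \thref{sselemt} and \thref{computeSp} do the work. The purely Lie-theoretic input (a nilpotent self-normalizing subalgebra coincides with its Fitting-null component) and the existence of the decomposition $x=x_{s}+x_{n}$ I would quote from the literature rather than reprove.
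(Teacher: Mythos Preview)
The paper does not give its own proof of this theorem; it is simply quoted from \cite[Theorem 4.1, Sec.~2.4, Chap.~2]{S88} as background material. So there is no ``paper's proof'' to compare against.

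That said, your argument is correct and is essentially the standard proof one finds in the reference. A few points worth noting explicitly: in (ii)$\Rightarrow$(i) you silently use that $\fh=\fc_{\fg}(\ft)$ is a $p$-subalgebra (needed for $(kx)_p\subseteq\fh$), which follows since $[x^{[p]},t]=(\ad x)^p(t)=0$ for $t\in\ft$; and that the Jordan components $x_s,x_n$ actually lie in $(kx)_p$ rather than just in $\fg$, which is part of the construction behind \thref{JCthm} in \cite{S88} but is not stated in the thesis's formulation. In (i)$\Rightarrow$(ii) you invoke the identification of a Cartan subalgebra with its Fitting-null component $\fh=\fg_0(\fh)$; this is standard but is itself a nontrivial input, so flagging it as a citation (as you do in your final paragraph) is appropriate. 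None of these are gaps---just places where the reader has to supply a line or a reference.
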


\begin{defn}\cite[Sec.~2.1, Chap.~2]{S88}
Let ($\fg, [p]$) be a restricted Lie algebra over $k$. An element $x \in \fg$ is called \textnormal{nilpotent} (or \textnormal{$p$-nilpotent}) if there is $n\in \N$ such that $x^{[p]^{n}}=0$ .
\end{defn}

We denote by $\cN(\fg)$ the variety of all nilpotent elements in $\fg$. It is well known that $\cN(\fg)$ is a Zariski closed, conical subset of $\fg$. 

\begin{mythm}[Jordan-Chevalley Decomposition \citetext{\citealp[Theorem 3.5, Chap.~2]{S88}}] \thlabel{JCthm} 
Let ($\fg, [p]$) be a restricted Lie algebra over $k$. For any $x\in\fg$, there exist a unique semisimple element $x_s\in \fg$ and a unique nilpotent element $x_n\in \fg$ such that $x=x_s+x_n$ and $[x_s, x_n]=0$.
\end{mythm}

It follows from the above theorem that $\fg=\cN(\fg)$ if and only if $\MT(\fg)=0$. 

\section{$p$-envelopes}
Let $L$ be a Lie algebra over $k$. It is useful to embed $L$ into a restricted Lie algebra. 

\begin{defn}\cite[Sec.~2.5, Chap.~2]{S88}
Let $L$ be a Lie algebra over $k$. A triple $(\cL, [p], i)$ consisting of a restricted Lie algebra $(\cL, [p])$ and a Lie algebra homomorphism $i: L\to \cL$ is called a \textnormal{$p$-envelope} of $L$ if $i$ is injective and the $p$-subalgebra generated by $i(L)$, denoted $(i(L))_p$, coincides with $\cL$.
\end{defn}

We often identify $L$ with $i(L) \subset \cL$. Let us review some properties of $p$-envelopes.

\begin{thm}\cite[Theorem 1.1.7]{S04}
Let $(\cL_1, [p]_1, i_1)$ and $(\cL_2, [p]_2, i_2)$ be two \\$p$-envelopes of $L$. Then there exists an isomorphism $\psi$ of restricted Lie algebras 
\[
\psi: \cL_1/\fz(\cL_1)\xrightarrow{\sim} \cL_2/\fz(\cL_2)
\]
such that $\psi \circ \pi_1 \circ i_1=\pi_2 \circ i_2$, where $\pi_1: \cL_1 \to \cL_1/\fz(\cL_1)$ and $\pi_2: \cL_2 \to \cL_2/\fz(\cL_2)$ are the canonical homomorphisms of restricted Lie algebras.
\end{thm}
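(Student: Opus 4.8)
The plan is to prove that, for \emph{every} $p$-envelope $(\cL,[p],i)$ of $L$, the quotient $\cL/\fz(\cL)$ is canonically isomorphic to the restricted Lie algebra $\bar L:=(\ad_{L}L)_{p}$, the $p$-subalgebra of $\Der L$ generated by the image of the adjoint representation $\ad_{L}\colon L\to\Der L$. Since $\bar L$ depends only on $L$ (and is finite-dimensional, being a subspace of $\Der L$), once this is carried out for $\cL_{1}$ and $\cL_{2}$ --- giving isomorphisms $\bar\rho_{j}\colon\cL_{j}/\fz(\cL_{j})\xrightarrow{\sim}\bar L$ with $\bar\rho_{j}\circ\pi_{j}\circ i_{j}=\ad_{L}$, where $\pi_{j}\colon\cL_{j}\to\cL_{j}/\fz(\cL_{j})$ is the canonical map --- the isomorphism $\psi:=\bar\rho_{2}^{-1}\circ\bar\rho_{1}$ satisfies $\psi\circ\pi_{1}\circ i_{1}=\bar\rho_{2}^{-1}\circ\ad_{L}=\pi_{2}\circ i_{2}$, as required.

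The technical heart is the claim that $i(L)$ is an \emph{ideal} of $\cL$. Fix a basis $\{e_{j}\}_{j\in J}$ of $L$; by \thref{computeSp} we have $\cL=i(L)_{p}=\sum_{j\in J,\,m\geq 0}k\,i(e_{j})^{[p]^{m}}$, so it suffices to show $[\,i(e_{l}),\,i(e_{j})^{[p]^{m}}\,]\in i(L)$ for all $j,l\in J$ and all $m\geq 0$. Iterating $\ad x^{[p]}=(\ad x)^{p}$ from \thref{defres} gives $\ad_{\cL}\bigl(i(e_{j})^{[p]^{m}}\bigr)=(\ad_{\cL}i(e_{j}))^{p^{m}}$, so this bracket equals $-(\ad_{\cL}i(e_{j}))^{p^{m}}(i(e_{l}))$. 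Now $i(e_{j})\in i(L)$ and $i(L)$ is a subalgebra, so $\ad_{\cL}i(e_{j})$ preserves $i(L)$; since moreover $(\ad_{\cL}i(e_{j}))(i(e_{l}))=i([e_{j},e_{l}])\in i(L)$, the element $(\ad_{\cL}i(e_{j}))^{p^{m}}(i(e_{l}))$ lies in $i(L)$. Hence $[i(L),\cL]\subseteq i(L)$.

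Granting this, $\ad_{\cL}$ makes $\cL$ act on the ideal $i(L)$ by derivations, and restriction yields a homomorphism of restricted Lie algebras $\rho_{\cL}\colon\cL\to\Der(i(L))$, $z\mapsto(\ad_{\cL}z)|_{i(L)}$ (it is restricted because $\ad_{\cL}z$ preserves $i(L)$, so restriction commutes with forming $p$-th powers). Identifying $\Der(i(L))$ with $\Der L$ via $i$, one has $\rho_{\cL}\circ i=\ad_{L}$; since $\rho_{\cL}$ is restricted and $\cL=i(L)_{p}$, its image is $(\ad_{L}L)_{p}=\bar L$. Its kernel is the centraliser $\fc_{\cL}(i(L))$, and I claim this equals $\fz(\cL)$: one inclusion is trivial, and if $z$ centralises $i(L)$ then $(\ad_{\cL}i(e_{j}))(z)=-[z,i(e_{j})]=0$, so $[z,i(e_{j})^{[p]^{m}}]=-(\ad_{\cL}i(e_{j}))^{p^{m}}(z)=0$ for every $j$ and $m$, whence $z$ annihilates the spanning set $\{i(e_{j})^{[p]^{m}}\}$ of $\cL$ and so $z\in\fz(\cL)$. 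As $\fz(\cL)$ is a $p$-ideal, $\cL/\fz(\cL)$ is a restricted Lie algebra and $\rho_{\cL}$ induces an isomorphism $\bar\rho_{\cL}\colon\cL/\fz(\cL)\xrightarrow{\sim}\bar L$ with $\bar\rho_{\cL}\circ\pi_{\cL}\circ i=\ad_{L}$. Applying this to $\cL_{1}$ and $\cL_{2}$ and composing as in the first paragraph completes the argument.

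The step I expect to require genuine work is the ideal claim $[i(L),\cL]\subseteq i(L)$: it is the only place where one manipulates iterated $[p]$-th powers, and it is what makes the whole scheme function, since it is precisely what allows $\cL$ --- and not merely $i(L)$ --- to act on $L$ by derivations, thereby producing the $\cL$-independent target $\bar L$. The remaining points (that $\rho_{\cL}$ is a restricted homomorphism, the computation of its image, and $\fc_{\cL}(i(L))=\fz(\cL)$) are routine, again resting only on $\ad x^{[p]}=(\ad x)^{p}$ and the description of $i(L)_{p}$ in \thref{computeSp}.
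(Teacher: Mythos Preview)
The paper does not supply its own proof of this statement; it is quoted verbatim as \cite[Theorem 1.1.7]{S04} without argument. Your proof is correct and is in fact the standard one (and essentially what Strade does): realise every $p$-envelope modulo its centre as the canonical object $(\ad_L L)_p\subseteq\Der L$, so that any two become isomorphic through it. The key technical points --- that $i(L)$ is an ideal of $\cL$ because $\ad(i(e_j)^{[p]^m})=(\ad i(e_j))^{p^m}$ preserves the subalgebra $i(L)$, that the resulting restriction map $\cL\to\Der L$ is a restricted homomorphism with image $(\ad_L L)_p$, and that its kernel $\fc_\cL(i(L))$ equals $\fz(\cL)$ --- are all handled correctly.
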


\begin{defn}\cite[Sec.~2.5, Chap.~2]{S88}
A $p$-envelope $(\cL, [p], i)$ of $L$ is called \textnormal{minimal} if $\fz(\cL)\subset \fz(i(L))$.
\end{defn}

\begin{thm}\citetext{\citealp[Theorem 1.1.6 and Corollary 1.1.8]{S04}; \citealp[Theorem 5.8, Sec.~2.5, Chap.~2]{S88}}\thlabel{sspenvelope}
\begin{enumerate}[\upshape(i)]
\item If $(\cL, [p], i)$ is a $p$-envelope of $L$, then there exists a minimal $p$-envelope $(H, [p]_1, i_1)$ of $L$ and an ideal $J \subset \fz(\cL)$ such that $\cL=H\oplus J$ and $i_1=i$ (i.e. $H\subset \cL$).
\item Any two minimal $p$-envelopes of $L$ are isomorphic as ordinary Lie algebras.
\item Suppose $L$ is semisimple. Then every minimal $p$-envelope of $L$ is semisimple, and all minimal $p$-envelopes of $L$ are isomorphic as restricted Lie algebras.
\end{enumerate}
\end{thm}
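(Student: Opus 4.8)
The plan is to establish (i) by an explicit construction, deduce (ii) from it by a ``gluing'' argument inside $\cL_1\oplus\cL_2$, and obtain (iii) from (ii) together with the uniqueness of the $[p]$-operation on a centreless Lie algebra. Throughout I would first record two elementary facts valid for any $p$-envelope $(\cL,[p],i)$: writing $\cL=(i(L))_p=i(L)+\sum_{j\ge 1}\langle i(L)^{[p]^j}\rangle$ via \thref{computeSp}, and using $\ad(x^{[p]^j})=(\ad x)^{p^j}$ together with the observation that for $c\in i(L)$ the operator $\ad c$ carries $i(L)$ into itself (hence carries $\cL$ into $i(L)$ upon iteration), one sees that $i(L)$ is an ideal of $\cL$ and that $\cL/i(L)$ is abelian, i.e. $[\cL,\cL]\subseteq i(L)$. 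Two consequences I would use repeatedly: every subspace of $\cL$ containing $i(L)$ is a Lie subalgebra, and $\fz(\cL)\cap i(L)=\fz(i(L))$.

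For (i): put $\fz_0:=\fz(\cL)\cap i(L)=\fz(i(L))$, choose a vector space complement $J$ with $\fz(\cL)=\fz_0\oplus J$, and then a complement $H$ of $J$ in $\cL$ with $i(L)\subseteq H$ (possible since $i(L)\cap J=0$). Then $J$ is a central ideal with $J\cap i(L)=0$, $H$ is a subalgebra, and $\cL=H\oplus J$ as Lie algebras. I would equip $H$ with the operation $[p]_1(x):=\pi_H(x^{[p]})$ for $x\in H$, where $\pi_H\colon\cL\to H$ is the projection along $J$, and check that $(H,[p]_1)$ is a restricted Lie algebra: semilinearity and the $\ad$-axiom are immediate because $J$ is central, and Jacobson's formula survives because for $x,y\in H$ the correction terms $s_i(x,y)$ are iterated brackets of $x,y$ and hence already lie in $H$. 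Next, $(H,[p]_1)$ is a $p$-envelope of $L$: if the smallest $[p]_1$-stable subspace $H'$ of $H$ containing $i(L)$ were proper in $H$, then $H'+J$ would be a proper $[p]$-stable subspace of $\cL$ containing $i(L)$, contradicting $\cL=(i(L))_p$. Finally it is minimal because $\fz(H)=H\cap\fz(\cL)=(\fz_0\oplus J)\cap H=\fz_0=\fz(i(L))$. I would stress that $\cL=H\oplus J$ is only a decomposition of \emph{ordinary} Lie algebras and that $[p]_1$ need not be the restriction of $[p]$; this is exactly what makes the statement correct even when $\fz(i(L))$ fails to be $[p]$-stable in $\cL$.

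For (ii): given minimal $p$-envelopes $(\cL_1,i_1)$ and $(\cL_2,i_2)$, let $\cL$ be the $p$-subalgebra of $\cL_1\oplus\cL_2$ generated by $\hat i(L):=\{(i_1(x),i_2(x)):x\in L\}$. Then $(\cL,\hat i)$ is a $p$-envelope of $L$, and the coordinate projections $\rho_j\colon\cL\to\cL_j$ are surjective restricted homomorphisms with $\rho_j\circ\hat i=i_j$; since $\ker\rho_j\cap\hat i(L)=0$ and $\hat i(L)$ is an ideal generating $\cL$ as a $p$-algebra, one gets $\ker\rho_j\subseteq\fz(\cL)$. Applying (i) to $\cL$ produces $\cL=H\oplus J$ with $(H,[p]_1)$ a minimal $p$-envelope and $\hat i(L)\subseteq H$. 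Then $\rho_j$ restricted to $H$ is a Lie isomorphism onto $\cL_j$: it is surjective because $\rho_j(J)\subseteq\rho_j(\fz(\cL))\subseteq\fz(\cL_j)\subseteq i_j(L)\subseteq\rho_j(H)$ (minimality of $\cL_j$), and injective because its kernel lies in $H\cap\ker\rho_j\subseteq\fz(H)\cap\ker\rho_j=\fz(\hat i(L))\cap\ker\rho_j\subseteq\hat i(L)\cap\ker\rho_j=0$. Composing these two isomorphisms gives a Lie isomorphism $\cL_1\to\cL_2$ intertwining $i_1$ and $i_2$.

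For (iii): a semisimple $L$ has $\fz(L)=0$ (an abelian ideal is solvable, hence contained in $\Rad L=0$), so any minimal $p$-envelope $\cL$ satisfies $\fz(\cL)\subseteq\fz(i(L))=i(\fz(L))=0$; then $\Rad\cL\cap i(L)$ is a solvable ideal of $i(L)\cong L$, hence $0$, and since $i(L)$ is an ideal of $\cL$ the ideal $\Rad\cL$ centralizes $i(L)$, forcing $\Rad\cL\subseteq\fz(\cL)=0$ — so $\cL$ is semisimple and centreless. Given two such, $\cL_1,\cL_2$, part (ii) supplies a Lie isomorphism $\theta\colon\cL_1\to\cL_2$; transporting $[p]_2$ along $\theta$ yields a $[p]$-operation on $\cL_1$, which must equal $[p]_1$ because a centreless Lie algebra admits at most one $[p]$-operation (\thref{centrelesslem}), so $\theta$ is an isomorphism of restricted Lie algebras. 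I expect the main obstacle to be getting (i) exactly right: one must split off $J$ only as a Lie-algebra summand and re-install the projected restricted structure $[p]_1$ on $H$, and then verify carefully that $(H,[p]_1)$ is a genuine restricted $p$-envelope whose centre is precisely $\fz(i(L))$ — insisting instead on a splitting of restricted Lie algebras would make the statement false. Once (i) is set up correctly, (ii) and (iii) are short.
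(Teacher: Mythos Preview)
The paper does not actually supply a proof of this theorem: it is quoted as background from \cite{S04} and \cite{S88}, so there is no in-paper argument to compare against. Your proposal is correct and is essentially the standard proof one finds in those references: split off a central complement $J$ to obtain a minimal $p$-envelope inside a given one, glue two minimal envelopes through the diagonal of $\cL_1\oplus\cL_2$, and use uniqueness of the $[p]$-map on centreless algebras for~(iii). Your care in part~(i)---noting that $H$ is only a Lie-algebra direct summand and that $[p]_1$ must be the \emph{projected} operation rather than the restriction of $[p]$---is exactly the point that makes the construction work, and your verification that $(H,[p]_1)$ is a genuine $p$-envelope (via the $[p]$-closure of $H'+J$) is sound.
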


\begin{rmk}\citetext{\citealp[p. 22]{S04}; \citealp[p. 97]{S88}}\thlabel{ssminpenvelope}
If $L$ is semisimple, then we can easily describe its minimal $p$-envelope. Since $L$ is semisimple, there is an embedding $L\cong \ad L \hookrightarrow \Der L$ via the adjoint representation. Then the minimal $p$-envelope of $L$ is the $p$-subalgebra of $\Der L$ generated by $\ad L$, i.e. $(\ad L)_p$. To compute $(\ad L)_p$, we often identify $L$ with $\ad L$.
\end{rmk}

\section{Gradations and standard filtrations}
\begin{defn}\cite[Sec.~3.2, Chap.~3]{S88}
Let $L$ be a Lie algebra over $k$. A \textnormal{$\Z$-grading} of $L$ is a collection of subspaces $(L_i)_{i\in \Z}$ such that 
\begin{enumerate}[\upshape(i)]
\item $L =\bigoplus_{i\in \Z} L_i$ and 
\item $[L_i, L_j] \subset L_{i+j}$ for all $i, j \in \Z$.
\end{enumerate}
If there exist $r, s\in \Z$ such that $L =\bigoplus_{-r}^{s} L_i$, then $r$ (respectively $s$) is called the \textnormal{depth} (respectively \textnormal{height}) of this gradation. 
\end{defn}

Note that $L_0$ is a Lie subalgebra of $L$ and each subspace $L_i$ obtains an $L_0$-module structure via the adjoint representation.

\begin{defn}\cite[Sec.~3.2, Chap.~3]{S88}
Let $(\fg, [p])$ be a restricted Lie algebra over $k$. A gradation $(\fg_i)_{i\in\Z}$ of $\fg$ is called \textnormal{restricted} if $\fg_{i}^{[p]}\subset \fg_{pi}$ for all $i\in\Z$.
\end{defn}

\begin{defn}\cite[Sec.~1.9, Chap.~1]{S88}\thlabel{fildefn}
Let $L$ be a Lie algebra over $k$. A \textnormal{descending filtration} of $L$ is a collection of subspaces $(L_{(i)})_{i\in\Z}$  such that 
\begin{enumerate}[\upshape(i)]
\item $L_{(i)}\supset L_{(j)}$ if $i\leq j$.
\item $[L_{(i)}, L_{(j)}]\subset L_{(i+j)}$ for all $i, j\in\Z$.
\end{enumerate}
A filtration is called \textnormal{separating} if $\cap_{i\in\Z} L_{(i)}=\{0\}$ and \textnormal{exhaustive} if $\cup_{i\in \Z} L_{(i)}=L$. The notion of \textnormal{ascending filtration} is defined similarly.
\end{defn}

It is common to use descending filtrations for Lie algebras. Note that $L_{(0)}$ is a Lie subalgebra of $L$ and each subspace $L_{(i)}$ obtains an $L_{(0)}$-module structure via the adjoint representation. If the filtration is exhaustive, then $\cap_{i\in\Z} L_{(i)}$ is an ideal of $L$. If in addition that $L$ is simple, then either $L_{(i)}=L$ for all $i$, or the filtration is separating; see \cite[p. 100]{S88}. By a result of Weisfeiler \cite{Wei68}, we can define standard filtrations.

\begin{defn}\citetext{\citealp[Sec.~2.4]{PS2006}; \citealp[Definition 3.5.1]{S04}}\thlabel{sfiltrations}
Let $L$ be a Lie algebra over $k$ and $L_{(0)}$ be a maximal subalgebra of $L$. Let $L_{(-1)}$ be an $L_{(0)}$-invariant subspace of $L$ which contains $L_{(0)}$. Moreover, assume that $L_{(-1)}/L_{(0)}$ is an irreducible $L_{(0)}$-module.
Set
\begin{align*}
L_{(i+1)}&:=\{x\in L_{(i)}\,|\,[x, L_{(-1)}]\subset L_{(i)}\}, \quad i\geq 0, \\
L_{(-i-1)}&:=[L_{(-i)}, L_{(-1)}]+L_{(-i)}, \quad \quad \quad \quad i\geq 1.
\end{align*}
The sequence of subspaces $(L_{(i)})_{i\in\Z}$ defines a \textnormal{standard filtration} on $L$.
\end{defn}

Since $L_{(0)}$ is a maximal subalgebra of $L$ this filtration is exhaustive. If $L$ is simple, then this filtration is separating. So there are $s_1>0$ and $s_2\geq0$ such that 
\begin{equation}\label{sfnotation}
L=L_{(-s_1)}\supset \dots\supset L_{(0)}\supset \dots \supset L_{(s_{2}+1)}=(0).
\end{equation}

\begin{thm}\cite[Theorem 1.3, Sec.~3.1, Chap.~3]{S88}
Let $L$ be a simple Lie algebra over an algebraically closed field of characteristic $p>3$. If there is $x\in L$ such that $(\ad x)^{p-1}=0$, then there exists a standard filtration as above \eqref{sfnotation}.
\end{thm}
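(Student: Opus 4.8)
The plan is to reduce the theorem to the Weisfeiler construction recorded in Definition~\ref{sfiltrations}: it suffices to produce a single proper maximal subalgebra $L_{(0)}\subsetneq L$, chosen suitably. Indeed, once $L_{(0)}$ is fixed a subspace $L_{(-1)}$ with $L_{(0)}\subsetneq L_{(-1)}$ and $L_{(-1)}/L_{(0)}$ irreducible is obtained for free: regard $L$ as a finite-dimensional $L_{(0)}$-module under the adjoint action; it properly contains the submodule $L_{(0)}$, so any $L_{(0)}$-submodule $L_{(-1)}$ minimal among those strictly containing $L_{(0)}$ has irreducible quotient $L_{(-1)}/L_{(0)}$. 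Feeding $L_{(0)}$ and $L_{(-1)}$ into the recursion of Definition~\ref{sfiltrations} defines all the remaining $L_{(i)}$; maximality of $L_{(0)}$ makes the filtration exhaustive, simplicity of $L$ makes it separating, and finite-dimensionality then gives the shape \eqref{sfnotation} with $s_1>0$ and $s_2\geq 0$, exactly as stated there. So the whole content is concentrated in the choice of $L_{(0)}$.

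This is where the hypothesis $(\ad x)^{p-1}=0$ enters. Since $p>3$, the factorials $1!,\dots,(p-2)!$ are invertible in $k$, so the series
\[
\exp(\ad(tx)):=\sum_{i=0}^{p-2}\frac{t^{i}}{i!}(\ad x)^{i}
\]
is a polynomial in $t$ and, for each $t\in k$, a Lie algebra automorphism of $L$; hence $t\mapsto\exp(\ad(tx))$ is a one-parameter subgroup of $\Aut L$, and the connected algebraic subgroup $G\leq\Aut L$ generated by all such one-parameter subgroups attached to elements $y\in L$ with $(\ad y)^{p-1}=0$ is non-trivial. I would use $G$, together with a $p$-envelope $\widehat L$ of $L$ and a maximal torus $\widehat T$ of $\widehat L$, to single out $L_{(0)}$: one natural choice is a maximal subalgebra of $L$ containing the centraliser $\fc_L(\widehat T)$, or a maximal triangulable subalgebra of $L$ built around $\widehat T$ together with the nilpotent element $x$. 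The point of $(\ad x)^{p-1}=0$ and $p>3$ is to guarantee that such an $L_{(0)}$ is a genuine proper subalgebra which is not an ideal and is \emph{adapted} to $x$ --- for instance, so that $x$ lies in a strictly negative term of the filtration --- which is what makes the resulting standard filtration \eqref{sfnotation} usable in later applications. (As context: by Premet's theorem on sandwich elements every simple $L$ with $p>3$ already contains a nonzero $c$ with $(\ad c)^{2}=0$, so the hypothesis holds automatically; it is kept explicit because the proof uses $x$ directly.)

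In order, the steps are: (i) record that, given $L_{(0)}$, the term $L_{(-1)}$ and all higher terms are forced, and that exhaustiveness, the separating property and the shape \eqref{sfnotation} are formal consequences of maximality of $L_{(0)}$ and simplicity of $L$; (ii) use $(\ad x)^{p-1}=0$ and $p>3$ to build the automorphisms $\exp(\ad(tx))$ and the connected group $G$ they generate; (iii) with the aid of $G$ and of a maximal torus of a $p$-envelope of $L$, exhibit a proper maximal subalgebra $L_{(0)}$ that is not an ideal and sits non-degenerately with respect to $x$; (iv) take this $L_{(0)}$ as the degree-zero term and invoke step~(i). I expect step~(iii) to be the main obstacle --- proving that the hypothesis really forces a proper, non-ideal, well-positioned maximal subalgebra rather than merely some maximal subalgebra; step~(ii) is a short $\exp$-computation and steps~(i) and~(iv) are formal.
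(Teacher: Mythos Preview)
The paper does not give its own proof of this statement: it is quoted verbatim from \cite[Theorem~1.3, Sec.~3.1, Chap.~3]{S88} and immediately followed only by the remark that Premet's theorem \cite{P86} guarantees the hypothesis is always satisfied. So there is no in-paper argument to compare your proposal against.

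Evaluating your outline on its own merits, there is one concrete error and one genuine gap. The error is in step~(ii): from $(\ad x)^{p-1}=0$ alone it does \emph{not} follow that $\exp(\ad(tx))=\sum_{i=0}^{p-2}\frac{t^{i}}{i!}(\ad x)^{i}$ is a Lie algebra automorphism. Expanding $[\exp(\ad tx)a,\exp(\ad tx)b]$ and comparing with $\exp(\ad tx)[a,b]$, the difference consists of terms $\frac{t^{i+j}}{i!j!}[(\ad x)^{i}a,(\ad x)^{j}b]$ with $i+j\geq p-1$ and $i,j\leq p-2$; these have no reason to vanish. (Compare the careful verification in \thref{autg} of this thesis, where an extra structural hypothesis is used to kill exactly such cross terms.) One needs roughly $(\ad x)^{\lceil p/2\rceil}=0$, not merely $(\ad x)^{p-1}=0$, for the truncated exponential to be multiplicative.

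The gap is that step~(iii), which you yourself flag as the crux, is not really a plan: ``use $G$ and a maximal torus of a $p$-envelope to exhibit $L_{(0)}$'' does not indicate which subalgebra you take or why it is proper. The argument in \cite{S88} does not go through automorphism groups at all; it builds $L_{(0)}$ directly from $x$ by taking a maximal subalgebra containing a specific subalgebra manufactured out of the nilpotency of $\ad x$ (so that, in particular, $x\in L_{(0)}$ and the resulting filtration is nondegenerate). Your step~(i) --- that once $L_{(0)}$ is fixed the Weisfeiler recursion and the simplicity of $L$ do the rest --- is correct and is exactly how the formal part goes; but the substantive construction of $L_{(0)}$ needs to be redone along these more direct lines rather than via exponentials.
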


It was proved by A.~Premet that such $x\neq 0$ with $(\ad x)^{p-1}=0$ always exists in simple Lie algebras over algebraically closed fields of characteristic $p>3$; see \cite[Theorem 1]{P86} . Hence they admit a standard filtration.

We can define restricted filtrations in a similar way.

\begin{defn}\cite[Sec.~3.1, Chap.~3]{S88}
Let $(\fg, [p])$ be a restricted Lie algebra over $k$. A filtration $(\fg_{(i)})_{i\in\Z}$ of $\fg$ is called \textnormal{restricted} if $\fg_{(i)}^{[p]}\subset \fg_{(pi)}$ for all $i\in\Z$.
\end{defn}

It is useful to note the interrelation between gradations and filtrations; see \cite[Sec.~3.3, Chap.~3]{S88}. Given any $\Z$-graded Lie algebra $L=\bigoplus_{i\in \Z} L_i$, set $L_{(j)}:= \bigoplus _{i\geq j} L_i$. Then this $\Z$-grading induces a filtration on $L$. Conversely, suppose $L$ has a descending filtration $(L_{(i)})_{i\in \Z}$. We can define the graded Lie algebra $\gr L$ associated with $L$. Put $L_j:=L_{(j)}/L_{(j+1)}$ for all $j\in\Z$. Then $\gr L:=\bigoplus_{j\in \Z} L_j$. The Lie bracket in $\gr L$ is given by 
\[
[x+L_{(j+1)}, y+L_{(l+1)}]:=[x, y]+L_{(j+l+1)}
\]
for all $x\in L_{(j)}$ and $y\in L_{(l)}$.

\section{Graded Lie algebras of Cartan type}\label{Cartan}
In a series of papers, A.~Premet and H.~Strade have completed the classification of finite dimensional simple modular Lie algebras and proved the following: 
\begin{mythm}[Classification Theorem \citetext{\citealp[Theorem 1.1]{PS08}}] \label{classthm}
Any finite dimensional simple Lie algebra over an algebraically closed field of characteristic $p>3$ is of classical, Cartan or Melikian type.
\end{mythm}
The \textit{classical} simple modular Lie algebras include both classical simple (modulo its centre for $\fsl_{mp}$) and exceptional Lie algebras over $\C$. They were constructed using a Chevalley basis by reduction modulo $p$ \cite{C56}. The \textit{Melikian} algebras $\mathcal{M}(m, n)$ depend on two parameters $m, n \in \N$, and they only occur in characteristic $5$ \cite{M80}. The Lie algebras of \textit{Cartan} type provide a large class of nonclassical simple Lie algebras. They are finite dimensional modular analogues of the four families Witt, special, Hamiltonian, contact of infinite dimensional complex Lie algebras. Their construction was motivated by Cartan's work on pseudogroups. The formal power series algebras over $\C$ were replaced by divided power algebras over $k$; see \cite{KS66} and \cite{KS69}. Our work relates to Lie algebras of Cartan type, particularly the general Cartan type Lie algebras. So let us give a detailed description of these Lie algebras. All definitions and theorems can be found in \cite{BGP05}, \cite{S04} and \cite{S88}.

\begin{no}\citetext{\citealp[Sec.~2.1, Chap.~2]{S04}; \citealp[Sec.~3.5, Chap.~3]{S88}}
Let $\N_0^{m}$ denote the set of all $m$-tuples of nonnegative integers. For $a=(a_1, \dots, a_m), b=(b_1, \dots, b_m)\in \N_0^{m}$, we write
\begin{align*}
x_i^{(a_i)}&:=\frac{1}{a_i!} x_i^{a_i}, \,\quad\quad\quad\quad x^{(a)}:=\prod_{i=1}^{m} x_i^{(a_i)},\\
{a+b}\choose{b}&:=\prod_{i=1}^{m}{{a_i+b_i}\choose{b_i}},\quad\quad a!:=\prod_{i=1}^{m}a_i!, \\
|a|&:=\sum_{i=1}^{m}a_i.
\end{align*}
\end{no}

\begin{defn}\cite[Sec.~2.1, Chap.~2]{S04}
Let $\cO(m)$ denote the commutative associative algebra with unit element over $k$ defined by generators $x_i^{(r)}, 1\leq i\leq m, r\geq 0$, and relations
\begin{align*}
x_i^{(0)}=1, \quad x_i^{(r)}x_i^{(s)}={{r+s}\choose {r}}x_i^{(r+s)}, \quad 1\leq i\leq m, \quad r, s\geq 0.
\end{align*}
Then $\{x^{(a)}\,|\, a\in \N_0^{m}\}$ forms a basis of $\cO(m)$, and $\cO(m)$ is called the \textnormal{divided power algebra}.
\end{defn}

For simplicity, we write $x_i^{(1)}$ as $x_i$. Some results on binomial coefficients may be useful.
\begin{lem}\cite[Lemma 2.1.2(1)]{S04}\thlabel{bcmodp}
For $a,b\in \N$, let $a=\sum_{i\geq 0}a_ip^{i}$, $b=\sum_{i\geq 0}b_ip^{i}$, $0\leq a_i, b_i\leq p-1$, be the $p$-adic expansions of $a$ and $b$. Then the following congruence holds:
\begin{align*}
{{a}\choose {b}}\equiv \prod_{i\geq 0} {{a_i}\choose {b_i}}\quad (\modd p).
\end{align*}
\end{lem}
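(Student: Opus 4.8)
The statement is the classical theorem of Lucas, and the plan is to prove it by a generating-function argument in the polynomial ring $\F_p[X]$. The starting point is the identity $(1+X)^{p}=1+X^{p}$ in $\F_p[X]$, which holds because $\binom{p}{j}\equiv 0 \pmod p$ for $0<j<p$; iterating it gives $(1+X)^{p^{i}}=1+X^{p^{i}}$ for every $i\geq 0$.

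Using the $p$-adic expansion $a=\sum_{i\geq 0}a_ip^{i}$ with $0\leq a_i\leq p-1$, I would compute, in $\F_p[X]$,
\begin{align*}
(1+X)^{a}&=\prod_{i\geq 0}\bigl((1+X)^{p^{i}}\bigr)^{a_i}=\prod_{i\geq 0}\bigl(1+X^{p^{i}}\bigr)^{a_i}\\
&=\prod_{i\geq 0}\;\sum_{c=0}^{a_i}\binom{a_i}{c}X^{c\,p^{i}}.
\end{align*}
Multiplying out the last product, a monomial $X^{b}$ arises by choosing an exponent $c_ip^{i}$ with $0\leq c_i\leq a_i$ in the $i$-th factor so that $\sum_{i\geq 0}c_ip^{i}=b$. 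Since the $p$-adic expansion of $b$ is unique and each $c_i$ satisfies $0\leq c_i\leq p-1$, the only such choice is $c_i=b_i$ for all $i$; hence the coefficient of $X^{b}$ on the right-hand side is $\prod_{i\geq 0}\binom{a_i}{b_i}$. (Here one uses the convention $\binom{a_i}{b_i}=0$ when $b_i>a_i$; note that if $b>a$ then some $b_i>a_i$, for otherwise $b=\sum b_ip^{i}\leq\sum a_ip^{i}=a$, so in that case both sides of the asserted congruence vanish, consistently with $\binom{a}{b}=0$.)

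On the other hand the ordinary binomial theorem gives $(1+X)^{a}=\sum_{b\geq 0}\binom{a}{b}X^{b}$, so the coefficient of $X^{b}$ is the residue of $\binom{a}{b}$ in $\F_p$. Equating the two expressions for the coefficient of $X^{b}$ in $(1+X)^{a}\in\F_p[X]$ yields the desired congruence $\binom{a}{b}\equiv\prod_{i\geq 0}\binom{a_i}{b_i}\pmod p$. There is no serious obstacle here; the only point I would be careful to spell out is the uniqueness of the $p$-adic digits, which is exactly what forces the single surviving term when extracting the coefficient of $X^{b}$ on the product side.
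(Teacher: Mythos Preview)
Your proof is correct and follows essentially the same route as the paper: both establish $(1+X)^{p^{i}}\equiv 1+X^{p^{i}}\pmod p$ by induction, factor $(1+X)^{a}$ via the $p$-adic expansion of $a$, and then compare the coefficient of $X^{b}$ on each side. Your write-up is in fact a bit more careful than the paper's sketch in spelling out why uniqueness of the $p$-adic digits forces a single surviving term and in handling the case $b>a$.
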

\begin{myproof}[Sketch of proof]
Let $a$ and $b$ be as in the lemma. Let $Y$ be a variable. Note that for any integer $n$ such that $1\leq n\leq p-1$, 
\[
{{p}\choose {n}}\equiv 0\quad(\modd p).
\]
Hence 
\[
(1+Y)^{p}\equiv 1+Y^p\quad (\modd p).
\]
In general, one can show by induction that for any $i\geq 1$, 
\[
(1+Y)^{p^{i}}\equiv 1+Y^{p^{i}}\quad (\modd p).
\]
Consider $(1+Y)^{a}$ and expand it over $\Z$, we get
\[
(1+Y)^{a}=\prod_{i\geq 0}(1+Y)^{a_ip^{i}}\equiv \prod_{i\geq 0}(1+Y^{p^{i}})^{a_i}\quad (\modd p).
\]
Compare the coefficients of $Y^{b}$ on both sides, we get
\begin{align*}
{{a}\choose {b}}\equiv \prod_{i\geq 0} {{a_i}\choose {b_i}}\quad (\modd p).
\end{align*} 
This completes the sketch of proof.
\end{myproof}

Applying the above result, we can prove that
\begin{cor}\thlabel{bcmodpapp}
Let $r\in \N$ be such that $r\geq 2$ and $1\leq s\leq r-1$. Then for any $0\leq i\leq p^{s}-1$, the following congruence holds:
\begin{align*}
{{p^r-p^s+i}\choose {p^r-p^s}}\equiv 1 \quad (\modd p).
\end{align*}
\end{cor}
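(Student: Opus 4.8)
The plan is to deduce the congruence directly from \thref{bcmodp} (Lucas' theorem) by writing down the $p$-adic expansions of the top and bottom entries of the binomial coefficient and checking that each digitwise factor is $1$.

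First I would record the $p$-adic expansion of $b:=p^r-p^s$. Factoring gives $p^r-p^s=p^s(p^{r-s}-1)$, and since $p^{r-s}-1=(p-1)(1+p+\dots+p^{r-s-1})$, one obtains
\[
p^r-p^s=\sum_{j=s}^{r-1}(p-1)\,p^{j}.
\]
Hence the $j$-th $p$-adic digit $b_j$ of $b$ equals $p-1$ for $s\leq j\leq r-1$ and equals $0$ otherwise, in particular for $0\leq j\leq s-1$ and for $j\geq r$. Next I would expand $a:=p^r-p^s+i$. Because $0\leq i\leq p^s-1$, the integer $i$ has a $p$-adic expansion $i=\sum_{j=0}^{s-1}i_jp^{j}$ supported in digit positions $0,\dots,s-1$, which are exactly the positions where $b$ has digit $0$; therefore adding $i$ to $p^r-p^s$ causes no carrying, and the digits of $a$ are $a_j=i_j$ for $0\leq j\leq s-1$, $a_j=p-1$ for $s\leq j\leq r-1$, and $a_j=0$ for $j\geq r$.

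Finally I would invoke \thref{bcmodp} to get
\[
{{p^r-p^s+i}\choose{p^r-p^s}}\equiv\prod_{j\geq 0}{{a_j}\choose{b_j}}\quad(\modd p),
\]
and examine the three ranges of $j$: for $0\leq j\leq s-1$ we have ${a_j\choose b_j}={i_j\choose 0}=1$; for $s\leq j\leq r-1$ we have ${a_j\choose b_j}={p-1\choose p-1}=1$; and for $j\geq r$ we have ${a_j\choose b_j}={0\choose 0}=1$. Since every factor equals $1$, the product is $1$, which is the desired congruence. The only point requiring care — hardly a real obstacle — is verifying that the addition $p^r-p^s+i$ produces no carrying, which is precisely the role of the hypothesis $i\leq p^s-1$; once this is noted, the digit strings of $a$ and $b$ are immediate and the conclusion is a one-line application of Lucas' theorem.
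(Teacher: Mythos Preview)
Your proposal is correct and follows essentially the same approach as the paper: both write down the $p$-adic expansions of $p^r-p^s$ and of $i$, observe that they occupy disjoint digit positions so that the expansion of $p^r-p^s+i$ is simply their concatenation, and then apply \thref{bcmodp} (Lucas' theorem) to see that every digitwise factor is $1$. Your explicit remark about the absence of carrying is a nice touch that the paper leaves implicit.
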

\begin{proof}
Let $r\in \N$ be such that $r\geq 2$ and $1\leq s\leq r-1$. Then the $p$-adic expansion of $p^r-p^s$ is  
\begin{equation}\label{paeofrs1}
p^r-p^s=\sum_{j=s}^{r-1}(p-1)p^j.
\end{equation}
Since $0\leq i\leq p^{s}-1$ and the $p$-adic expansion of $p^s-1$ is $p^s-1=\sum_{j=0}^{s-1}(p-1)p^j$, it follows that the $p$-adic expansion of $i$ is
\begin{equation}\label{paeofrs2}
i=\sum_{j=0}^{s-1}a_{j}p^j,
\end{equation}
where $0\leq a_j\leq p-1$ and $0\leq \sum_{j=0}^{s-1}a_j\leq s(p-1)$. By \eqref{paeofrs1} and \eqref{paeofrs2}, the $p$-adic expansion of $p^r-p^s+i$ is 
\begin{align*}
p^r-p^s+i=\sum_{j=0}^{s-1}a_{j}p^j+\sum_{j=s}^{r-1}(p-1)p^j.
\end{align*}
It follows from \thref{bcmodp} that 
\begin{align*}
{{p^r-p^s+i}\choose {p^r-p^s}}\equiv \prod_{j=0}^{s-1}{{a_j}\choose {0}}\cdot \prod_{j=s}^{r-1}{{p-1}\choose {p-1}}\quad (\modd p)= 1\quad (\modd p).
\end{align*}
This completes the proof.
\end{proof}

Note that there is a $\Z$-grading on $\cO(m)$ given by $\cO(m)_i:=\spn\{x^{(a)}\,|\, |a|=i\}$. Hence $\cO(m)=\bigoplus_{i=0}^{\infty}\cO(m)_i$. Put $\cO(m)_{(j)}:=\bigoplus_{i\geq j} \cO(m)_i$. Then this $\Z$-grading induces a descending filtration on $\cO(m)$, called the {\it standard filtration}.

\begin{defn}\cite[Definition 2.1.1]{S04}\thlabel{dpower}
A system of \textnormal{divided powers} on $\cO(m)_{(1)}$ is a sequence of maps 
\[
\gamma_{r}: \cO(m)_{(1)}\to \cO(m),\quad f\mapsto f^{(r)}\in \cO(m),
\]
where $r\geq 0$, satisfying 
\begin{enumerate}[\upshape(i)]
\item \text{$f^{(0)}=1, \,\,f^{(r)}\in \cO(m)_{(1)}$  \,\,\,\quad\quad for all $f \in \cO(m)_{(1)},\,\, r>0$},
\item \text{$f^{(1)}=f$ \,\,\,\,\,\,\,\quad\quad\qquad\qquad\qquad for all $f \in \cO(m)_{(1)}$},
\item \text{$f^{(r)}f^{(s)}=\frac{(r+s)!}{r!s!}f^{(r+s)}$ \,\,\,\,\quad \quad\quad for all $f \in \cO(m)_{(1)}$,\,\, $r, s\geq 0$},
\item \text{$(f+g)^{(r)}=\sum_{l=0}^{r}f^{(l)}g^{(r-l)}$  \,\quad for all $f, g\in \cO(m)_{(1)}, \,\,r\geq 0$},
\item \text{$(fg)^{(r)}=f^{r}g^{(r)}$  \qquad\qquad\qquad for all $f\in \cO(m), g\in \cO(m)_{(1)}, \,\,r\geq 0$}, 
\item \text{$(f^{(s)})^{(r)}=\frac{(rs)!}{r!(s!)^{r}}f^{(rs)}$ \qquad\qquad for all $f\in \cO(m)_{(1)}, \,\,r\geq 0,\,\, s>0$}.
\end{enumerate}
\end{defn}

\begin{defn}\cite[Definition 2.1.1(2)]{S04}\thlabel{specialderivation}
A derivation $D$ of $\cO(m)$ is called \textnormal{special} if $D (f^{(r)})=f^{(r-1)}D(f)$ for all $f \in \cO(m)_{(1)}$ and $r>0$.
\end{defn}
For $1\leq i\leq m$, set $\epsilon_i=(\delta_{i1}, \dots, \delta_{im})$. Let $\del_i$ denote the \textnormal{$i$th partial derivative} defined by $\del_i(x^{(a)})=x^{(a-\epsilon_i)}$ if $a_i>0$ and $0$ otherwise; see \cite[p. 132]{S88}. We denote by $W(m)$ the set of all special derivations of $\cO(m)$. This is a Lie subalgebra of $\Der \cO(m)$ and it obtains an $\cO(m)$-module structure via $(fD)(g):=fD(g)$ for all $f, g\in \cO(m)$ and $D\in W(m)$. Since each $D\in W(m)$ is uniquely determined by its effects on $x_1, \dots, x_m$, the Lie algebra $W(m)$ is a free $\cO(m)$-module of rank $m$ generated by the partial derivatives $\del_1, \dots, \del_m$; see \cite[Proposition 2.1.4]{S04}. By \cite[Lemma 2.1(1), Sec.~4.2, Chap.~4]{S88}, we know that for any $f, g\in \cO(m)$ and $D, E\in W(m)$, 
\begin{equation}\label{LbracketWm}
[fD, gE]=fD(g)E-gE(f)D+fg[D, E].
\end{equation}
Since $\{x^{(a)}\del_i\,|\, a\in \N_0^{m}, 1\leq i\leq m\}$ forms a basis for $W(m)$ and $[\del_i, \del_j]=0$ for any $1\leq i, j\leq m$, it follows from \eqref{LbracketWm} that the Lie bracket in $W(m)$ is given by 
\begin{equation}\label{Lbracketinbasis}
[x^{(a)}\del_i, x^{(b)}\del_j]={{a+b-\epsilon_i}\choose{a}}x^{(a+b-\epsilon_i)}\del_j-{{a+b-\epsilon_j}\choose{b}}x^{(a+b-\epsilon_j)}\del_i.
\end{equation}
Note that $W(m)$ inherits a grading and descending filtration from $\cO(m)$:
\begin{align*}
W(m)_i:=\bigoplus_{j=1}^{m} \cO(m)_{i+1}\del_j, \quad W(m)_{(i)}:=\bigoplus_{j=1}^{m} \cO(m)_{(i+1)}\del_j
\end{align*}
for $i\geq -1$. Both are called {\it standard}.

For any $m$-tuple $\underline{n}:=(n_1, \dots, n_m) \in \N^{m}$, define 
\[
\cO(m;\underline{n}):=\spn\{x^{(a)}\,|\, 0\leq a_i <p^{n_i}\}.
\] 
It is easy to see that $\cO(m;\underline{n})$ is a subalgebra of $\cO(m)$ invariant under $\del_i$ for all $1\leq i\leq m$. Moreover, $\dim \cO(m;\underline{n})=p^{|\underline{n}|}$. The {\it general} Cartan type Lie algebra $W(m; \underline{n})$ is the Lie subalgebra of $W(m)$ which normalizes $\cO(m; \underline{n})$. Since $\cO(m; \underline{n})$ is a subalgebra of $\cO(m)$, the grading and filtration on $\cO(m; \underline{n})$ induce a grading and filtration on $W(m;\underline{n})$:
\begin{equation}\label{wmnfiltration}
W(m; \underline{n})_i:=\bigoplus_{j=1}^{m} \cO(m; \underline{n})_{i+1}\del_j, \quad W(m; \underline{n})_{(i)}:=\bigoplus_{j=1}^{m} \cO(m; \underline{n})_{(i+1)}\del_j
\end{equation} 
for $i\geq -1$. In particular, 
\begin{align*}
W(m; \underline{n})=\bigoplus_{i=-1}^{s}W(m; \underline{n})_i,
\end{align*}
where $s=(\sum_{i=1}^{m} p^{n_i})-m-1$; see \cite[Proposition 2.2(3), Sec.~4.2, Chap.~4]{S88}.

\begin{thm}\cite[Proposition 5.9, Sec.~3.5, Chap.~3; Proposition 2.2 and Theorem 2.4, Sec.~4.2, Chap.~4]{S88}\thlabel{wittthm}
\begin{enumerate}[\upshape(i)]
\item $W(m; \underline{n})$ is a free $\cO(m; \underline{n})$-module with basis $\{\del_1, \dots, \del_m\}$. 
\item The set $\{x^{(a)}\del_i\,|\, 0\leq a_i <p^{n_i}, 1\leq i\leq m\}$ forms a basis for $W(m; \underline{n})$. Hence $\dim W(m; \underline{n})=mp^{|\underline{n}|}$.
\item $W(m; \underline{n})$ is simple unless $m=1$ and $p=2$.
\item $W(m; \underline{n})$ is a subalgebra of the restricted Lie algebra $\Der \cO(m;\underline{n})$.
\item $W(m; \underline{n})$ is restricted if and only if $\underline{n}=(1, \dots, 1)$, and in that case $D^{[p]}=D^p$ for all $D \in W(m; \underline{n})$ and the gradation is restricted.
\end{enumerate}
\end{thm}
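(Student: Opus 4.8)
The plan is to establish the five assertions in turn; (i), (ii) and (iv) are essentially linear algebra, (v) is short but slightly delicate, and (iii) carries the real content.

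For (i) and (ii): since $W(m)$ is a free $\cO(m)$-module on $\del_1,\dots,\del_m$, every $D\in W(m)$ is uniquely $D=\sum_j f_j\del_j$ with $f_j=D(x_j)$, and I would show that $D$ normalizes $\cO(m;\underline n)$ precisely when every $f_j\in\cO(m;\underline n)$. One implication is immediate; for the converse, $D(x^{(a)})=\sum_j f_j\, x^{(a-\epsilon_j)}$ lies in $\cO(m;\underline n)$ because that space is a subalgebra of $\cO(m)$ stable under each $\del_j$ (the subalgebra property being the base-$p$ carry criterion behind \thref{bcmodp}) and is spanned by the $x^{(a)}$ with $0\le a_i<p^{n_i}$. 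Hence $W(m;\underline n)=\bigoplus_j\cO(m;\underline n)\del_j$ is free of rank $m$ over $\cO(m;\underline n)$ on $\{\del_j\}$, which is (i); pairing with the monomial basis of $\cO(m;\underline n)$ gives the basis in (ii) and $\dim W(m;\underline n)=m\,p^{|\underline n|}$. For (iv), restriction of derivations is a Lie homomorphism $W(m;\underline n)\to\Der\cO(m;\underline n)$, injective since $D\mapsto 0$ forces $f_j=D(x_j)=0$, and $\Der\cO(m;\underline n)$ is restricted by \thref{exres}.

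For (iii), write $\fg=W(m;\underline n)=\bigoplus_{i=-1}^{s}\fg_i$ for the standard gradation \eqref{wmnfiltration}. From \eqref{Lbracketinbasis} I would record: (a) \emph{transitivity} — if $0\neq x\in\fg_i$ with $i\ge 0$ then $[\del_j,x]\neq 0$ for some $j$, since $[\del_j,x^{(a)}\del_l]=x^{(a-\epsilon_j)}\del_l$ and the resulting basis vectors cannot cancel; (b) $\fg_0=\spn\{x_i\del_j\}\cong\fgl_m$ acts on $\fg_{-1}=\spn\{\del_1,\dots,\del_m\}$ as the natural module, which is irreducible in every characteristic; (c) $[\fg_{-1},\fg_i]=\fg_{i-1}$ for $-1\le i\le s$, from $[\del_j,x^{(b+\epsilon_j)}\del_l]=x^{(b)}\del_l$ (some $j$ with $b_j<p^{n_j}-1$ exists unless $b$ is the maximal exponent $\tau=(p^{n_1}-1,\dots,p^{n_m}-1)$). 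Now for a nonzero ideal $I$ and $0\neq x\in I$, applying the $\ad\del_j$ of (a) to the top component repeatedly lowers the top degree while keeping the element in $I$, so some $0\neq y\in I\cap\fg_{-1}$; then (b) forces $\fg_{-1}\subseteq I$, and (c) gives $\fg_{-1},\fg_0,\dots,\fg_{s-1}\subseteq I$. It remains to place the top piece $\fg_s=\spn\{x^{(\tau)}\del_l\}$ into $I$: for $m\ge 2$ a direct bracket computation (e.g.\ with $x_j^{(2)}\del_j$ and $x^{(\tau-\epsilon_j)}\del_l$, $l\neq j$) does it, whereas for $m=1$ the bracket recovering $\fg_s$ carries the coefficient $2\pmod p$, which vanishes precisely when $p=2$ — and then $W(1;\underline n)$ is genuinely not simple (indeed not even perfect, e.g.\ $[\fg,\fg]\subsetneq\fg$ for $W(1;(2))$), which is exactly the excluded case.

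Finally (v). If $\underline n=(1,\dots,1)$, then $\cO(m;\underline 1)=k[x_1,\dots,x_m]/(x_1^p,\dots,x_m^p)$ as an ordinary commutative algebra, and by (i) $W(m;\underline 1)=\bigoplus_j\cO(m;\underline 1)\del_j$ is precisely $\Der\cO(m;\underline 1)$ (a derivation being determined by arbitrary images of $x_1,\dots,x_m$, since $D(x_j^p)=px_j^{p-1}D(x_j)=0$ imposes nothing), so it is restricted with $D^{[p]}=D^p$ by \thref{exres}; the gradation is restricted since a degree-$i$ homogeneous derivation has $p$-th power of degree $pi$, which lies in $W(m;\underline 1)_{pi}$ when $i\ge 0$, while for $i=-1$ the pairwise commuting $\del_j$ give $(\sum_j c_j\del_j)^p=\sum_j c_j^p\del_j^p=0=W(m;\underline 1)_{-p}$. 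Conversely, suppose some $n_l\ge 2$. The algebra $W(m;\underline n)$ is centreless (an element commuting with all $\del_i$ lies in $W(m;\underline n)_{-1}$, and then commuting with the $x_i\del_i$ forces it to $0$), so any $[p]$-map is unique with $\ad x^{[p]}=(\ad x)^p$, and since $\ad$ is injective and compatible with the gradation, $x^{[p]}$ is homogeneous of degree $p\cdot\deg x$ for homogeneous $x$. Taking $x=\del_l$ would force $\del_l^{[p]}\in W(m;\underline n)_{-p}=0$, i.e.\ $(\ad\del_l)^p=0$; but $(\ad\del_l)^p(x_l^{(p+1)}\del_l)=x_l\del_l\neq 0$ with $x_l^{(p+1)}\del_l\in W(m;\underline n)$ since $p+1<p^{n_l}$ — a contradiction. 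Hence $W(m;\underline n)$ carries no $[p]$-map, proving (v). The main obstacle throughout is part (iii) — the transitivity/irreducibility package and, above all, the slightly fiddly identity recovering the top graded piece $\fg_s$, which is the single point where the hypothesis $(m,p)\neq(1,2)$ is used.
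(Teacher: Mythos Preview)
The paper does not prove this theorem; it is quoted as background from Strade--Farnsteiner with a citation and no argument, so there is no ``paper's own proof'' to compare against. Your outline is essentially the standard textbook proof and is correct in structure: freeness over $\cO(m;\underline n)$ via $D\mapsto (D(x_1),\dots,D(x_m))$, simplicity via transitivity of the grading together with irreducibility of $\fg_{-1}$ as a $\fg_0\cong\fgl_m$--module, and the restrictedness dichotomy via centrelessness and the degree argument for $\del_l^{[p]}$.

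There is one genuine gap in part~(iii). Your bracket for $m\ge 2$ uses $x_j^{(2)}\del_j$, but when $p=2$ and $\underline n=(1,\dots,1)$ the element $x_j^{(2)}$ does not lie in $\cO(m;\underline 1)$ (indeed $x_j^{(2)}=0$ there), so the proposed computation is vacuous precisely in the sub-case $p=2$, $m\ge 2$, $\underline n=\underline 1$, which must still be covered since $W(m;\underline 1)$ \emph{is} simple for $m\ge 2$ in characteristic~$2$. The fix is immediate: once $\fg_0\subseteq I$, compute
\[
[x_j\del_j,\,x^{(\tau)}\del_l]
= x_j\,x^{(\tau-\epsilon_j)}\del_l-\delta_{jl}\,x^{(\tau)}\del_j
=(p^{n_j}-1)\,x^{(\tau)}\del_l-\delta_{jl}\,x^{(\tau)}\del_j
=-x^{(\tau)}\del_l
\]
for $j\ne l$, which is nonzero in every characteristic and places $x^{(\tau)}\del_l\in I$ for all $l$ as soon as $m\ge 2$. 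For $m=1$ one is forced to take $j=l$ and the same bracket gives $-2\,x^{(\tau)}\del_1$, recovering your observation that the coefficient~$2$ is exactly what fails when $p=2$. With this adjustment the argument is complete; everything else you wrote is fine.
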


We refer to the Lie algebras $W(m)$ or $W(m; \underline{n})$ as Lie algebras of {\it Witt} type. In Chapter 3 we will spell out $W(m; \underline{1})$ in more details. By \cite[Lemma 2.1(3), Sec.~4.2, Chap.~4]{S88}, we know that $\cO(m; \underline{1})$ is isomorphic to the truncated polynomial ring $k[X_1, \dots, X_m]/(X_1^{p}, \dots, X_{m}^{p})$ in $m$ variables. Hence $W(m; \underline{1})\cong\Der \cO(m; \underline{1})$, and it is called the \textit{$m$th Jacobson-Witt algebra}. In Chapter 4 we will study the \textit{Zassenhaus algebra} $W(1; n)$. Note that if $\chari k=p>2$ and $n=1$, then $W(1; n)$ coincides with the \textit{Witt algebra} $W(1; 1)$, a simple and  restricted Lie algebra. If $\chari k=p>2$ and $n\geq 2$, then $W(1; n)$ provides the first example of a simple, non-restricted Lie algebra. In this case it is useful to consider its minimal $p$-envelope. 

Let us determine the minimal $p$-envelope of the simple, non-restricted Witt algebra $W(m;\underline{n})$. Since $W(m; \underline{n})$ is simple, it follows from \thref{sspenvelope}(iii) that all its minimal $p$-envelopes are isomorphic as restricted Lie algebras. Moreover, there is an embedding $W(m;\underline{n})\cong \ad W(m;\underline{n}) \hookrightarrow \Der W(m;\underline{n})$ via the adjoint representation. By \thref{ssminpenvelope}, the minimal $p$-envelope of $W(m;\underline{n})$, denoted $W(m; \underline{n})_{[p]}$, is the \\$p$-subalgebra $(\ad W(m;\underline{n}))_p$ of $\Der W(m;\underline{n})$ generated by $\ad W(m;\underline{n})$, i.e.
\begin{align*}
W(m;\underline{n})\cong \ad W(m;\underline{n})\hookrightarrow  W(m; \underline{n})_{[p]}=(\ad W(m;\underline{n}))_p\hookrightarrow \Der W(m;\underline{n});
\end{align*}
see \thref{Spdefn} and \thref{computeSp} for notations. In \cite[Sec.~7.1 and 7.2]{S04}, H.~Strade computed $W(m; \underline{n})_{[p]}$. He first proved the following:
\begin{thm}\cite[Theorems 7.1.2(1)]{S04}\thlabel{DerW(m;n)ans}
\begin{align*}
\Der W(m; \underline{n})\cong W(m; \underline{n})+\sum_{i=1}^{m}\sum_{0<j_i<n_i}k\del_i^{p^{j_i}}.
\end{align*}
The isomorphism is given by the adjoint representation, $W(m;\underline{n})\cong \ad W(m;\underline{n}) \hookrightarrow \Der W(m;\underline{n})$.
\end{thm}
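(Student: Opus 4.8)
The plan is to compute $\Der W$, where $W:=W(m;\underline{n})$, by exploiting the standard $\Z$-grading $W=\bigoplus_{i=-1}^{s}W_i$ of \eqref{wmnfiltration}. This grading is \emph{transitive}: if $x\in W_i$ with $i\geq 0$ and $[x,W_{-1}]=0$, then $x=0$ (immediate from the bracket formula \eqref{Lbracketinbasis} and \thref{wittthm}(ii), since $|a|\geq 1$ for every $x^{(a)}\partial_j$ occurring in $x$). Because $W$ is finite dimensional and $\Z$-graded, so is $\Der W$, with finite support: $\Der W=\bigoplus_{d}(\Der W)_d$, where $(\Der W)_d=\{D:D(W_i)\subseteq W_{i+d}\text{ for all }i\}$, and $\ad\colon W\to\Der W$ is a graded map, injective because $W$ is centreless. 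So it suffices to determine each homogeneous piece $(\Der W)_d$, and I would organize the argument into three ranges of $d$.

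\textbf{The easy inclusion.} First I would verify that each $\partial_i^{p^{j_i}}$, $1\leq i\leq m$, $0<j_i<n_i$, lies in $\Der W\setminus\ad W$. Since $\partial_i$ is a derivation of $\cO(m)$ and $\Der\cO(m)$ is restricted with $p$-th power map $D\mapsto D^{p}$ (\thref{exres}), the iterate $\partial_i^{p^{j_i}}$ is again a derivation of $\cO(m)$; it stabilizes $\cO(m;\underline{n})$, and as $\ad(\partial_i^{p^{j_i}})=(\ad\partial_i)^{p^{j_i}}$ on $\Der\cO(m)$ while $\partial_i\in W$, one gets $[\partial_i^{p^{j_i}},W]\subseteq W$; hence $\partial_i^{p^{j_i}}\in\Der W$, homogeneous of degree $-p^{j_i}<-1$. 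Applying the definition of a special derivation (\thref{specialderivation}) to $f=x_i$, $r=p^{j_i}$ gives $1$ on the left and $0$ on the right, so $\partial_i^{p^{j_i}}$ is not special, hence not in $W$; and it is not inner, since every nonzero homogeneous component of $\ad W$ has degree $\geq-1$. These operators are linearly independent modulo $\ad W$ (distinct degrees for distinct $j_i$; for fixed $j_i$, separated by their action on the elements $x_i^{(p^{j_i})}\partial_j$). This yields $W+\sum_i\sum_{0<j_i<n_i}k\partial_i^{p^{j_i}}\subseteq\Der W$ with the stated identifications.

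\textbf{Degrees $d\geq-1$ are inner.} For $d\geq 0$, a transitivity induction up the grading shows that a homogeneous derivation of degree $\geq 0$ killing $W_{-1}$ vanishes; so it suffices to realize $D|_{W_{-1}}$ as $\ad w|_{W_{-1}}$ for some $w\in W_d$, and applying the derivation identity to $[\partial_k,\partial_l]=0$ shows that the components $D(\partial_k)\in W_{d-1}$ satisfy precisely the compatibility condition (read off from \eqref{Lbracketinbasis}) needed to assemble such a $w$. For $d=-1$ one has $D(W_{-1})\subseteq W_{-2}=0$ automatically, $D|_{W_0}$ is then a $1$-cocycle of $W_0\cong\fgl_m$ with values in $W_{-1}$, and subtracting the inner derivation attached to the corresponding coboundary (a low-degree cohomology vanishing) reduces to the previous case. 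Hence $(\Der W)_d=\ad W_d$ for all $d\geq-1$.

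\textbf{Degrees $d\leq-2$: the crux.} Since $W_i=0$ for $i<-1$, such a $D$ annihilates $W_i$ for every $i\leq|d|-2$, in particular $W_{-1}$ and $W_0$, and is thus pinned down by its heavily constrained values on $\bigoplus_{i\geq|d|-1}W_i$. On the bottom nonzero piece $W_{|d|-1}$ (which maps into $W_{-1}$), such a $D$ is, up to constants, a linear combination of the functionals ``coefficient of $x_k^{(|d|)}$'' — which is exactly how $\partial_k^{|d|}$ acts there when $|d|$ is a $p$-power — and $x_k^{(p^{l})}$ survives the truncation $\cO(m;\underline{n})$ precisely when $l<n_k$. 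Making this rigorous by feeding the brackets $[\partial_k,x^{(a)}\partial_j]$ and $[x_k\partial_k,x^{(a)}\partial_j]$ into the derivation identity and tracking the divided-power binomial coefficients modulo $p$ (\thref{bcmodp}, \thref{bcmodpapp}), I would conclude that $(\Der W)_d=0$ unless $d=-p^{l}$ for some $l\geq 1$, and that $(\Der W)_{-p^{l}}$ is spanned by exactly those $\partial_k^{p^{l}}$ with $l<n_k$. I expect this step to be the main obstacle: one must rule out all spurious homogeneous derivations of negative degree and show that the constraints propagate the ``$p$-power partial derivative'' coefficient pattern consistently across all graded pieces. Combining the three ranges gives $\Der W=\ad W\oplus\bigoplus_{i=1}^{m}\bigoplus_{0<j_i<n_i}k\partial_i^{p^{j_i}}$, and since $\ad W\cong W$ this is precisely the asserted isomorphism.
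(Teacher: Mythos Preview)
The paper does not prove this statement; it quotes it verbatim from Strade's monograph \cite[Theorem~7.1.2(1)]{S04} and uses it as input to obtain \thref{wittpenvelope}. So there is no in-paper proof to compare against. Your outline is the standard route (and essentially the one Strade takes): grade $\Der W$ by the $\Z$-grading on $W$, show degrees $\geq -1$ are inner via transitivity, and analyse degrees $\leq -2$ directly.

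Two places in your sketch are thinner than they look. For $d\geq 0$, the condition $[\partial_k,\partial_l]=0$ gives you only a \emph{closedness} constraint $\partial_l f_{kj}=\partial_k f_{lj}$ on the components of $D(\partial_k)=\sum_j f_{kj}\partial_j$; to produce $w=\sum_j g_j\partial_j\in W_d$ with $\partial_k g_j=f_{kj}$ you need an exactness (Poincar\'e-lemma) statement for the truncated divided-power de~Rham complex, which is true here but is a lemma in its own right, not a tautology. For $d\leq -2$, you have the right picture---$D$ is determined by its values on $W_{|d|-1}$, and the binomial arithmetic modulo $p$ forces $|d|$ to be a $p$-power with the correct bound---but ``propagating the pattern consistently across all graded pieces'' is exactly where the argument lives; one typically uses the $W_0\cong\fgl_m$-module structure of each $W_i$ together with transitivity to run an induction, and this is several pages of bookkeeping rather than a single observation. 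Your $d=-1$ step is fine: the centre of $\fgl_m$ acts on $W_{-1}$ by the nonzero scalar $-1$, which kills $H^1$.
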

Then H.~Strade computed $W(m; \underline{n})_{[p]}=(\ad W(m;\underline{n}))_p$. He identified $W(m;\underline{n})$ with $\ad W(m;\underline{n})$. By \thref{wittthm}(iv), we know that $W(m;\underline{n})$ is a subalgebra of the restricted Lie algebra $\Der \cO(m;\underline{n})$. So instead of computing the $p$-subalgebra $(\ad W(m;\underline{n}))_p$ of $\Der W(m;\underline{n})$ generated by $\ad W(m;\underline{n})$, H.~Strade computed the $p$-subalgebra $(W(m;\underline{n}))_p$ of $\Der \cO(m;\underline{n})$ generated by $W(m;\underline{n})$. By \thref{Spdefn} and \thref{computeSp}, 
\[
(W(m;\underline{n}))_p=\sum_{i\geq 0}\langle W(m;\underline{n})^{p^{i}}\rangle,
\]
where $W(m;\underline{n})^{p^{i}}:=\{D^{p^{i}}\,|\, D\in W(m;\underline{n})\}$ is the image of $W(m;\underline{n})$ under the iterated application of the $[p]$-th power map of $\Der \cO(m;\underline{n})$. H.~Strade first observed that 

\begin{lem}\cite[Lemma 7.1.1(3)]{S04}\thlabel{W(m;n)_(0)restricted}
$W(m;\underline{n})_{(0)}$ is a restricted Lie subalgebra of $\Der \cO(m;\underline{n})$.
\end{lem}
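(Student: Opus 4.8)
The plan is to reduce the assertion to a one-line check on each basis vector of $W(m;\underline{n})_{(0)}$. By \eqref{wmnfiltration}, $W(m;\underline{n})_{(0)}=\bigoplus_{j=1}^{m}\cO(m;\underline{n})_{(1)}\del_j$, where $\cO(m;\underline{n})_{(1)}=\bigoplus_{i\geq1}\cO(m;\underline{n})_i$ is the unique maximal ideal of $\cO(m;\underline{n})$. It is a Lie subalgebra, being the $0$-th term of the standard filtration of $W(m;\underline{n})$; concretely, for $f,g\in\cO(m;\underline{n})_{(1)}$ formula \eqref{LbracketWm} gives $[f\del_i,g\del_j]=f\del_i(g)\del_j-g\del_j(f)\del_i\in\bigoplus_l\cO(m;\underline{n})_{(1)}\del_l$. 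By \thref{wittthm}(iv), $W(m;\underline{n})_{(0)}$ sits inside the restricted Lie algebra $\Der\cO(m;\underline{n})$, whose $[p]$-th power map is the ordinary operator power $D\mapsto D^{p}$ (\thref{exres}); so the only thing left to prove is that $D^{p}\in W(m;\underline{n})_{(0)}$ for every $D\in W(m;\underline{n})_{(0)}$. Since $W(m;\underline{n})_{(0)}$ is a Lie subalgebra, the general Jacobson formula \thref{generalJacobF} taken with $n=1$ — which reads $\big(\sum_\alpha x_\alpha\big)^{[p]}=\sum_\alpha x_\alpha^{[p]}+v_0$ with $v_0$ a linear combination of commutators in the $x_\alpha$ — together with $(\lambda x)^{[p]}=\lambda^{p}x^{[p]}$, reduces this to verifying $(x^{(a)}\del_j)^{p}\in W(m;\underline{n})_{(0)}$ for each basis vector $x^{(a)}\del_j$ of $W(m;\underline{n})_{(0)}$, i.e.\ for $1\leq j\leq m$ and $a\in\N_0^{m}$ with $0\leq a_l<p^{n_l}$ and $|a|\geq1$.

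For this last step I would use the well-known formula $(fD)^{[p]}=f^{p}D^{[p]}+\big((fD)^{p-1}(f)\big)D$, valid for any derivation $D$ of a commutative algebra over $k$, with $f=x^{(a)}$ and $D=\del_j$:
\[
(x^{(a)}\del_j)^{p}=(x^{(a)})^{p}\,\del_j^{\,p}+\big((x^{(a)}\del_j)^{p-1}(x^{(a)})\big)\del_j .
\]
Two observations then close the argument. First, $(x^{(a)})^{p}=0$: writing $x^{(a)}=\prod_{i}x_i^{(a_i)}$ and iterating the relation $x_i^{(r)}x_i^{(s)}=\binom{r+s}{r}x_i^{(r+s)}$ gives, whenever $a_i\geq1$, $(x_i^{(a_i)})^{p}=\big(\prod_{k=2}^{p}\binom{ka_i}{a_i}\big)x_i^{(pa_i)}$, which vanishes since $\binom{pa_i}{a_i}\equiv0\ (\modd p)$ by \thref{bcmodp}; because $|a|\geq1$ some $a_i\geq1$, hence $(x^{(a)})^{p}=0$ and the first summand disappears. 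So $(x^{(a)}\del_j)^{p}=h\,\del_j$ with $h:=(x^{(a)}\del_j)^{p-1}(x^{(a)})$. Second, $h\in\cO(m;\underline{n})_{(1)}$: the operator $x^{(a)}\del_j$ shifts the standard grading of $\cO(m;\underline{n})$ by $|a|-1\geq0$, so it maps $\cO(m;\underline{n})_{(i)}$ into $\cO(m;\underline{n})_{(i+|a|-1)}\subseteq\cO(m;\underline{n})_{(i)}$ for all $i\geq1$; applying it $p-1$ times to $x^{(a)}\in\cO(m;\underline{n})_{(1)}$ keeps us in $\cO(m;\underline{n})_{(1)}$. Therefore $(x^{(a)}\del_j)^{p}=h\del_j\in\cO(m;\underline{n})_{(1)}\del_j\subseteq W(m;\underline{n})_{(0)}$ (and this is honestly an element of $W(m;\underline{n})$ by \thref{wittthm}(i)), which is what was needed.

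The real content of the argument is concentrated in the displayed identity and in the vanishing $(x^{(a)})^{p}=0$ — equivalently, that every element of the maximal ideal of $\cO(m;\underline{n})$ has zero $p$-th power — the grading bookkeeping being routine. The point to keep in mind is that the relevant $[p]$-map is that of the \emph{ambient} algebra $\Der\cO(m;\underline{n})$, i.e.\ plain $p$-th powers of operators: by \thref{wittthm}(v) the Lie algebra $W(m;\underline{n})$ is itself not restricted — hence not a restricted subalgebra of $\Der\cO(m;\underline{n})$ — once $\underline{n}\neq\underline{1}$, for instance because $\del_i^{\,p}\notin W(m;\underline{n})$ whenever $n_i\geq2$, so the entire substance of the lemma is that the smaller subalgebra $W(m;\underline{n})_{(0)}$ is nonetheless stable under that $[p]$-map. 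If one preferred to avoid quoting the $p$-th power formula above, the alternative would be to expand $(x^{(a)}\del_j)^{p}$ directly on the algebra generators $x_i^{(p^{l})}$ of $\cO(m;\underline{n})$ and verify that the outcome is again a special derivation lying in $W(m;\underline{n})_{(0)}$; this works but is essentially a re-derivation of that formula and is the only step requiring a genuine computation.
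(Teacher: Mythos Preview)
The paper does not prove this lemma; it merely quotes it from \cite[Lemma 7.1.1(3)]{S04} and moves on. So there is nothing in the paper to compare against.

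Your argument is correct and is essentially the standard one. The identity $(fD)^{p}=f^{p}D^{p}+\big((fD)^{p-1}(f)\big)D$ you invoke is indeed well known for a derivation $D$ of a commutative $k$-algebra and $f$ in that algebra (it is recorded, for instance, in \cite{S88} and used throughout \cite{S04}); once that is granted, the two observations you make --- the vanishing $(x^{(a)})^{p}=0$ for $|a|\geq1$ and the grading estimate showing $h=(x^{(a)}\del_j)^{p-1}(x^{(a)})\in\cO(m;\underline{n})_{(1)}$ --- finish the job cleanly. One tiny sharpening: your appeal to \thref{bcmodp} for $\binom{pa_i}{a_i}\equiv0\ (\modd p)$ is correct but terse; the quickest way to see it is to let $l_0$ be the least index with $p$-adic digit $a_{i,l_0}\neq0$, and note that the $l_0$-th digit of $pa_i$ is $0$, so the Lucas product contains the factor $\binom{0}{a_{i,l_0}}=0$. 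Also, if some intermediate power $(x_i^{(a_i)})^{k}$ already lands outside $\cO(m;\underline{n})$ (i.e.\ $ka_i\geq p^{n_i}$) then the product formula terminates earlier with $0$, which is harmless for your conclusion but worth saying.
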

It follows that $W(m; \underline{n})_{[p]}=(\ad W(m;\underline{n}))_p$ contains $W(m; \underline{n})$ and all iterated $p$-th powers of the partial derivatives $\del_1, \dots, \del_m$. Applying \thref{DerW(m;n)ans}, we get
\begin{thm}\cite[Theorem 7.2.2(1)]{S04}\thlabel{wittpenvelope}
The minimal $p$-envelope $ W(m; \underline{n})_{[p]}$ of \\$W(m; \underline{n})$ in $\Der W(m; \underline{n})$ is given by 
\begin{align*}
W(m; \underline{n})_{[p]}=W(m; \underline{n})+\sum_{i=1}^{m}\sum_{0<j_i<n_i}k\del_i^{p^{j_i}}.
\end{align*}
\end{thm}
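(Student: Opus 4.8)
The plan is to sandwich $W(m;\underline{n})_{[p]}$ between two subspaces of $\Der W(m;\underline{n})$ which, by \thref{DerW(m;n)ans}, coincide. By \thref{ssminpenvelope}, $W(m;\underline{n})_{[p]}$ is the $p$-subalgebra $(\ad W(m;\underline{n}))_p$ of $\Der W(m;\underline{n})$ generated by $\ad W(m;\underline{n})$; identifying $W(m;\underline{n})$ with $\ad W(m;\underline{n})$, this already gives $W(m;\underline{n})\subseteq W(m;\underline{n})_{[p]}\subseteq\Der W(m;\underline{n})$. Following the discussion preceding the statement, I would realise $W(m;\underline{n})_{[p]}$ inside the restricted Lie algebra $\Der\cO(m;\underline{n})$, which is legitimate by \thref{wittthm}(iv), and in which the $[p]$-operation is the ordinary $p$-th power of operators.

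For the lower bound: each $\del_i$ lies in $W(m;\underline{n})\subseteq W(m;\underline{n})_{[p]}$, and $W(m;\underline{n})_{[p]}$ is closed under the $[p]$-operation, so $\del_i^{[p]^{j}}=\del_i^{p^{j}}\in W(m;\underline{n})_{[p]}$ for every $j\geq 0$ and every $1\leq i\leq m$. Since $\del_i^{p^{j}}$ sends a basis vector $x^{(a)}$ of $\cO(m;\underline{n})$ to $x^{(a-p^{j}\epsilon_i)}$ if $a_i\geq p^{j}$ and to $0$ otherwise, and since $a_i<p^{n_i}$, the operator $\del_i^{p^{j}}$ is $0$ whenever $j\geq n_i$, while $\del_i^{p^{0}}=\del_i\in W(m;\underline{n})$. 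Hence the only new elements obtained this way are the $\del_i^{p^{j_i}}$ with $0<j_i<n_i$, and consequently $W(m;\underline{n})+\sum_{i=1}^{m}\sum_{0<j_i<n_i}k\,\del_i^{p^{j_i}}\subseteq W(m;\underline{n})_{[p]}$.

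For the reverse inclusion I would simply quote \thref{DerW(m;n)ans}, which gives $\Der W(m;\underline{n})=W(m;\underline{n})+\sum_{i=1}^{m}\sum_{0<j_i<n_i}k\,\del_i^{p^{j_i}}$; since $W(m;\underline{n})_{[p]}\subseteq\Der W(m;\underline{n})$, the two bounds agree and the theorem follows. The one point needing care is the compatibility of the identifications: that the operator $\del_i^{p^{j_i}}$ used above, viewed as a derivation of $\cO(m;\underline{n})$, is the element called $\del_i^{p^{j_i}}$ in $\Der W(m;\underline{n})$ in the statement of \thref{DerW(m;n)ans} --- this holds because $(\ad D)^{p}=\ad(D^{p})$ in the restricted Lie algebra $\Der\cO(m;\underline{n})$ and $W(m;\underline{n})$ is stable under $\del_i^{p^{j_i}}$ --- and that the $p$-subalgebra generated by $W(m;\underline{n})$ inside $\Der\cO(m;\underline{n})$ indeed lands in $\Der W(m;\underline{n})$. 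With \thref{DerW(m;n)ans} taken as known, this bookkeeping is the only work; there is no substantial obstacle.

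If one prefers not to use the full strength of \thref{DerW(m;n)ans} for the upper bound, there is a direct route. By \thref{Spdefn} and \thref{computeSp}, $W(m;\underline{n})_{[p]}=(W(m;\underline{n}))_p=\sum_{D\in W(m;\underline{n}),\ i\geq 0}k\,D^{p^i}$ in $\Der\cO(m;\underline{n})$, so it suffices to show $D^{p^i}\in E:=W(m;\underline{n})+\sum_{i=1}^{m}\sum_{0<j_i<n_i}k\,\del_i^{p^{j_i}}$ for all $D$ and $i$, by induction on $i$. Writing $D=\sum_{j}c_j\del_j+D_0$ with $c_j\in k$, $D_0\in W(m;\underline{n})_{(0)}$ (possible as $W(m;\underline{n})=\bigoplus_{j}k\del_j\oplus W(m;\underline{n})_{(0)}$), the generalized Jacobson formula \thref{generalJacobF} expands $D^{p^i}$ as $\sum_{j}c_j^{p^i}\del_j^{p^i}+D_0^{p^i}+\sum_{l=0}^{i-1}v_l^{p^l}$, where each $v_l$ is a linear combination of iterated brackets of $\del_1,\dots,\del_m,D_0$ and hence lies in $W(m;\underline{n})$. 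Now $\sum_{j}c_j^{p^i}\del_j^{p^i}\in E$ by the lower bound, $D_0^{p^i}\in W(m;\underline{n})_{(0)}\subseteq E$ by iterating \thref{W(m;n)_(0)restricted}, and $v_l^{p^l}\in E$ for $l<i$ by the induction hypothesis, so $D^{p^i}\in E$. Here the main obstacle is purely organisational: verifying that the mixed terms $v_l$ all lie in $W(m;\underline{n})$ and that the induction only ever calls on exponents $p^l$ with $l<i$ --- both immediate from the shape of \thref{generalJacobF}.
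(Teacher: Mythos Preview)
Your first approach --- the sandwich argument using \thref{DerW(m;n)ans} for the upper bound --- is exactly the paper's argument, which appears in the discussion immediately preceding the theorem rather than in a separate proof environment: identify $W(m;\underline{n})$ with $\ad W(m;\underline{n})$, work inside $\Der\cO(m;\underline{n})$, note the obvious lower bound from closure under $[p]$, and invoke \thref{DerW(m;n)ans} to close the sandwich.

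Your second approach via \thref{generalJacobF} and induction is a genuine alternative: it bypasses the full computation of $\Der W(m;\underline{n})$ (the harder input) and instead shows directly that $E$ is closed under $p$-th powers, using only \thref{W(m;n)_(0)restricted}. The paper does cite \thref{W(m;n)_(0)restricted} in its preamble, so this is likely the shape of Strade's original argument before \thref{DerW(m;n)ans} is available; once \thref{DerW(m;n)ans} is in hand, the sandwich is quicker, but your direct route is more self-contained and logically cleaner.
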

Since $W(m; \underline{n})$ is the Lie subalgebra of $W(m)$, the Lie bracket in $W(m; \underline{n})$ is given by \eqref{Lbracketinbasis}. By \eqref{LbracketWm}, we have that for any $1\leq i, j\leq m$, $0<r<n_i$ and $0\leq a_i<p^{n_i}$, the brackets $[\del_i^{p^{r}}, x^{(a)}\del_j]=x^{(a-p^{r}\epsilon_i)}\del_j$ if $a_i\geq p^r$ and $0$ otherwise.

It remains to describe special, Hamiltonian and contact Lie algebras of Cartan type. Consider the divergence map 
\begin{align*}
\diver: W(m; \underline{n}) &\to \cO(m; \underline{n})\\
\sum_{i=1}^{m}f_i\del_i &\mapsto \sum_{i=1}^{m}\del_i(f_i).
\end{align*} 
It is easy to check that $\diver ([D, E])=D(\diver(E))-E(\diver (D))$ for all $D, E\in W(m; \underline{n})$. As a result, the set 
\begin{equation}\label{Sdiv0}
S(m; \underline{n}):=\big\{D \in W(m; \underline{n})\,|\, \diver(D)=0\big\}
\end{equation}
is a Lie subalgebra of $W(m; \underline{n})$ \cite[Lemma 3.1, Sec.~4.3, Chap.~4]{S88}. It is not simple. But its derived subalgebra $S(m; \underline{n})^{(1)}$ is simple. We refer to the Lie algebra $S(m; \underline{n})^{(1)}$ as the simple \textit{special} Lie algebra of Cartan type. More generally, $S(m; \underline{n})$ or $S(m; \underline{n})^{(1)}$ is referred to as a special Cartan type Lie algebra. 

Let us describe the structure of $S(m; \underline{n})^{(1)}$ in more detail. Define 
\begin{align*}
D_{i, j}: \cO(m; \underline{n}) &\to W(m; \underline{n})\\
f&\mapsto \del_j(f)\del_i-\del_i(f)\del_j
\end{align*}

\begin{thm}\cite[Lemma 3.2, Proposition 3.3, Theorems 3.5 and 3.7, Sec.~4.3, Chap.~4]{S88}
Suppose $m\geq 3$.
\begin{enumerate}[\upshape(i)]
\item $D_{i, j}$ is a linear map of degree $-2$ satisfying $D_{i,i}=0$ and $D_{i,j}=-D_{j, i}$ for all $1\leq i, j\leq m$.
\item $D_{i, j}(\cO(m; \underline{n})) \subset S(m; \underline{n})$ for all $1\leq i, j\leq m$.
\item $S(m; \underline{n})^{(1)}$ is the subalgebra of $S(m; \underline{n})$ generated by 
\[
\text{\big\{$D_{i, j}(x^{(a)})\,|\, 0\leq a_l < p^{n_l}$ for $1\leq l \leq m$ and $1\leq i<j\leq m$\big\}.}
\]
\item $S(m; \underline{n})^{(1)}$ is a simple Lie algebra of dimension $(m-1)(p^{\sum_{i=1}^{m}n_i}-1)$.
\item $S(m; \underline{n})^{(1)}$ is a graded subalgebra of $W(m; \underline{n})$, i.e.
\[
S(m; \underline{n})^{(1)}=\bigoplus_{i=-1}^{s_1}(S(m; \underline{n})^{(1)})_i,
\]
where $s_1=(\sum_{i=1}^{m}p^{n_i})-m-2$ and $(S(m; \underline{n})^{(1)})_i= S(m; \underline{n})^{(1)}\cap W(m; \underline{n})_{i}$.
\item $S(m; \underline{n})^{(1)}$ is restricted if and only if $\underline{n}=(1, \dots, 1)$, and in that case $S(m; \underline{n})^{(1)}$ is a $p$-subalgebra of $W(m; \underline{n})$ with restricted gradation. 
\end{enumerate}
\end{thm}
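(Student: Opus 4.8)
\emph{Proof proposal.} Write $\cO=\cO(m;\underline n)$, $W=W(m;\underline n)$, $S=S(m;\underline n)$, and $\tau=\sum_{l=1}^{m}(p^{n_l}-1)\epsilon_l$, so that $x^{(\tau)}$ spans the top graded component of $\cO$. The plan is to reduce everything to four ingredients: the formula $\diver D_{i,j}(f)=0$; an explicit vector-space decomposition of $S$; a family of bracket identities for the operators $D_{i,j}$ coming from \eqref{LbracketWm}; and the standard recognition theorem for $\Z$-graded simple Lie algebras.

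Parts (i) and (ii) are direct. Linearity of $D_{i,j}$, $D_{i,i}=0$ and $D_{i,j}=-D_{j,i}$ are read off the definition; for $f\in\cO_a$ one has $\del_j(f)\del_i\in W_{a-2}$ by \eqref{wmnfiltration}, so $D_{i,j}$ is homogeneous of degree $-2$; and since $\cO$ is $\del_l$-stable, $D_{i,j}(f)\in W$ with $\diver D_{i,j}(f)=\del_i\del_j(f)-\del_j\del_i(f)=0$, whence $D_{i,j}(f)\in S$ by \eqref{Sdiv0}. I would next establish the backbone used below: $\diver\colon W\to\cO$ is onto $\spn\{x^{(b)}\,:\,b\ne\tau\}$ (as $x^{(b)}=\del_l x^{(b+\epsilon_l)}$ whenever $b_l<p^{n_l}-1$), so $\dim S=(m-1)p^{|\underline n|}+1$; moreover
\[
S=M\oplus E,\qquad M:=\textstyle\sum_{i<j}D_{i,j}(\cO),\quad E:=\textstyle\bigoplus_{l=1}^{m}k\,x^{(\tau-(p^{n_l}-1)\epsilon_l)}\del_l .
\]
Indeed, for $x^{(b)}\del_l\in S$ (so $b_l=0$) with $b\ne\tau-(p^{n_l}-1)\epsilon_l$ there is $r\ne l$ with $b_r<p^{n_r}-1$, and then $D_{l,r}(x^{(b+\epsilon_r)})=x^{(b)}\del_l$ (the other term dies on the exponent $b_l-1<0$); a downward induction on the grading upgrades this to $S\subseteq M+E$, and $M\cap E=0$ because no admissible $D_{i,j}(x^{(a)})$ can involve a basis vector $x^{(\tau-(p^{n_r}-1)\epsilon_r)}\del_r$ (that would force a forbidden exponent $p^{n_s}$ somewhere). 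Hence $\dim M=(m-1)(p^{|\underline n|}-1)$.

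For (iii): using \eqref{LbracketWm} one obtains identities of the shape $[D_{i,j}(f),D_{i,j}(g)]=D_{i,j}\!\bigl(\del_j(f)\del_i(g)-\del_i(f)\del_j(g)\bigr)$ and, more generally, $[D_{i,j}(f),D_{k,l}(g)]\in M$ and $[E_r,M]\subseteq M$, together with $[E,E]=0$ by degree; thus $M$ is a subalgebra (hence equals the subalgebra generated by the $D_{i,j}(x^{(a)})$), and $[S,S]=[M,M]+[M,E]\subseteq M$. Conversely $\del_k=D_{k,i}(x_i)\in S$, and for $k\notin\{i,j\}$ (possible since $m\geq3$) with $a_k<p^{n_k}-1$ one has $D_{i,j}(x^{(a)})=[\del_k,D_{i,j}(x^{(a+\epsilon_k)})]\in[S,S]$; the remaining (finitely many, low-degree) generators are recovered from these by a short direct argument. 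Hence $S^{(1)}=[S,S]=M=\langle D_{i,j}(x^{(a)})\rangle$, proving (iii), and $\dim S^{(1)}=\dim M=(m-1)(p^{\sum n_i}-1)$, the dimension claim of (iv). Part (v) is then immediate: $D_{i,j}$ being homogeneous of degree $-2$, $M=\sum_{i<j}D_{i,j}(\cO)$ is a graded subspace of $W$, so $S^{(1)}=\bigoplus_i(S^{(1)}\cap W_i)$; the range is $-1$ to $s_1=|\tau|-2=\sum p^{n_l}-m-2$, since $\del_k=D_{k,i}(x_i)$ gives $(S^{(1)})_{-1}=W_{-1}$, $D_{i,j}(x^{(\tau)})\ne0$ sits in degree $s_1$, and $W_{s_1+1}=\bigoplus_k k\,x^{(\tau)}\del_k$ has no divergence-free vector. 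For simplicity in (iv), view $L:=S^{(1)}=\bigoplus_{i=-1}^{s_1}L_i$ with $L_{-1}=\bigoplus_k k\del_k$ and $L_0\cong\fsl_m$ acting on $L_{-1}\cong k^m$ as the (irreducible) natural module; check transitivity (if $x\in L_{\geq0}$ and $[x,L_{-1}]=0$, its coefficient functions are of positive degree and killed by all $\del_k$, hence vanish, so $x=0$) and that $L$ is generated by its local part $L_{-1}\oplus L_0\oplus L_1$; then the standard simplicity criterion for transitive irreducible graded Lie algebras applies (or argue directly: the ideal generated by any nonzero element descends to meet $L_{-1}$, absorbs $L_0$, and then all of $L$).

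For (vi): if $\underline n=(1,\dots,1)$ then $W=\Der\cO$ is restricted with $D^{[p]}=D^p$ and restricted grading (\thref{wittthm}(v)); $S$ is a $p$-subalgebra because the $W$-action on the rank-one module $\Omega^m$ of top forms — which is restricted (built by exterior powers from the restricted module $\cO$) and sends $D$ to multiplication by $\diver D$ — forces $\diver D^{[p]}=0$ once $\diver D=0$. Then $(S^{(1)})_{-1}^{[p]}=0$ (as $\del_k^p=0$ on $\cO(m;\underline1)$), $(S^{(1)})_0^{[p]}\subseteq(S^{(1)})_0$ (as $\tr(A^p)=(\tr A)^p$ makes $\fsl_m\subset\fgl_m$ a $p$-subalgebra), and for $i\geq1$, $D\in(S^{(1)})_i$ has $D^{[p]}\in S\cap W_{pi}$, which lands in $S^{(1)}$ after comparison with $S=S^{(1)}\oplus E$ (the summand $E$ being concentrated in the single degree $(m-1)(p-1)$, with each $E_l^{[p]}=0$). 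Conversely, if some $n_i\geq2$ then $\del_i=D_{i,k}(x_k)\in S^{(1)}$ but $\del_i^p\ne0$ and $\del_i^p\notin W$ (it is not a special derivation of $\cO$); since $\fc_{\Der\cO}(S^{(1)})=0$ (a derivation commuting with all $\del_k$ and with $\fsl_m$ is zero) and $S^{(1)}$ is centreless — hence carries at most one $[p]$-map by \thref{centrelesslem}, necessarily the one induced from $\Der\cO$ — this would force $\del_i^p\in S^{(1)}$, a contradiction, so $S^{(1)}$ is not restricted. I expect the main obstacle to be the simplicity step in (iv), i.e.\ the graded-Lie-algebra recognition argument, together with the package of bracket identities for $[D_{i,j}(f),D_{k,l}(g)]$ and the decomposition $S=M\oplus E$ on which (iii)--(v) rest; the degree bookkeeping for $i\geq1$ in (vi) is the other fussy point.
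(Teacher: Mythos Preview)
The paper does not prove this theorem: it is stated as background material with a citation to Strade--Farnsteiner \cite{S88}, and no proof (not even a sketch) is given. So there is nothing in the paper to compare your argument against. Your outline follows the standard route taken in that reference --- the explicit complement $E$ to $M=\sum D_{i,j}(\cO)$ inside $S$, the bracket identities reducing $[S,S]$ to $M$, and the graded transitivity/irreducibility criterion for simplicity --- and is essentially the textbook argument.

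A few slips worth fixing if you want the sketch to be self-contained. First, the generators $D_{i,j}(x^{(a)})$ not covered by your identity $[\del_k,D_{i,j}(x^{(a+\epsilon_k)})]=D_{i,j}(x^{(a)})$ are those with $a_k=p^{n_k}-1$ for every $k\notin\{i,j\}$; these are \emph{high}-degree, not low-degree, so your parenthetical is backwards (the fix is still routine, e.g.\ via brackets with degree-zero elements of $S$). Second, $[E,E]=0$ is not ``by degree'' in general (the elements $E_l$ need not be homogeneous of the same degree unless all $n_l$ coincide); it holds because in the product $x^{(b^l)}x^{(b^{l'}-\epsilon_l)}$ some exponent exceeds $p^{n_r}-1$. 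Third, in (vi) the elements of $E$ sit in degree $(m-1)(p-1)-1$, not $(m-1)(p-1)$; more importantly, your argument that $D^{[p]}\in S\cap W_{pi}$ lands in $S^{(1)}$ ``after comparison with $S=S^{(1)}\oplus E$'' has a small gap when $p\mid m$, since then $(m-1)(p-1)-1$ is a multiple of $p$ and you must actually rule out an $E$-component. One clean way around this is to observe that $S^{(1)}$, being simple, equals $[S^{(1)},S^{(1)}]$, and derived subalgebras of restricted Lie algebras are $p$-ideals.
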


Alternatively, we can define special Lie algebras of Cartan type using differential forms on $\cO(m; \underline{n})$; see \cite[Sec.~3.2]{PS2006} and \cite[Sec.~4.2]{S04}. Set 
\begin{align*}
\Omega^{0}(m; \underline{n}):= \cO (m; \underline{n}), \quad \Omega^{1}(m; \underline{n}):= \Hom_{\cO(m; \underline{n})}(W (m; \underline{n}), \cO (m; \underline{n})).
\end{align*}
Then $\Omega^{1}(m; \underline{n})$ admits an $\cO(m; \underline{n})$-module structure via 
\[
\text{$(f\alpha)(D):=f\alpha(D)$ for all $f \in \cO(m; \underline{n}), \alpha \in \Omega^{1}(m; \underline{n}), D \in W (m; \underline{n})$},
\]
and a $W(m; \underline{n})$-module structure via 
\[
\text{$(D\alpha)(E)=D(\alpha(E))-\alpha([D, E])$ for all $D, E \in W(m; \underline{n}), \alpha \in \Omega^{1}(m; \underline{n})$}.
\]
Since $W(m; \underline{n})$ is a free $\cO(m; \underline{n})$-module with basis $\del_1, \dots, \del_m$, every $\alpha\in \Omega^{1}(m; \underline{n})$ is determined by its effects on $\del_1, \dots, \del_m$. It is easy to check that $\alpha=\sum_{i=1}^{m} \alpha(\del_i)d x_i$. This implies that $\Omega^{1}(m; \underline{n})$ is a free $\cO(m; \underline{n})$-module with basis $dx_1, \dots, dx_m$. 

Define $d: \Omega^{0}(m; \underline{n}) \to \Omega^{1}(m; \underline{n})$ by $df(D)=D(f)$ for all $f\in \cO(m; \underline{n}), \\D\in W(m; \underline{n})$. Then $d$ is a homomorphism of $W(m; \underline{n})$-modules. Set 
\[
\Omega^{r}(m; \underline{n}):=\bigwedge^{r} \Omega^{1}(m; \underline{n}),
\]
the $r$-fold exterior power algebra over $\cO(m; \underline{n})$. It is a free $\cO(m; \underline{n})$-module with basis $\{dx_{i_1}\wedge \dots\wedge dx_{i_r}\,|\, 1\leq i_1<\dots <i_r\leq m\}$. Let 
\[
\Omega(m; \underline{n}):=\bigoplus_{1\leq r\leq m} \Omega^{r}(m; \underline{n}).
\]
Then elements of $\Omega(m; \underline{n})$ are called \textit{differential forms} on $\cO(m; \underline{n})$. We can extend the above linear operator $d$ to $\Omega(m; \underline{n})$ by setting 
\[
d(\alpha_1 \wedge \alpha_2):= d(\alpha_1)\wedge\alpha_2 +(-1)^{\deg(\alpha_1)}\alpha_1 \wedge d(\alpha_2)
\] 
for all $\alpha_1, \alpha_2 \in \Omega(m; \underline{n})$. Then $d$ is a linear operator of degree $1$ satisfying 
\[
d^2(\alpha)=0,\, D(d\alpha)=dD(\alpha),\, d(f\alpha)=(df)\wedge\alpha+fd(\alpha), \,D(df)=dD(f)
\]
for all $f\in\cO(m; \underline{n}), D \in W(m; \underline{n}), \alpha\in \Omega(m; \underline{n})$. Note that $D(f\alpha)=(Df)\alpha+fD(\alpha)$ for every $D \in W(m; \underline{n})$. Hence $D$ extends to a derivation of $\Omega(m; \underline{n})$.

Recall the volume form
\[
\omega_S:=dx_1 \wedge\dots \wedge dx_m, \quad m\geq 3.
\]
Then 
\begin{align*}
\big\{D\in W(m; \underline{n})\,|\, D(\omega_S)=0\big\}
\end{align*}
coincides with $S(m; \underline{n})$ \eqref{Sdiv0}; see \cite[p. 161]{S88}. The simple special Lie algebra of Cartan type is the derived subalgebra of $S(m; \underline{n})$.

Suppose now $\chari k=p>2$ and $m=2r\geq 2$. The Hamiltonian form
\begin{align*}
\omega_H:=\sum_{i=1}^{r} dx_i\wedge dx_{i+r}, \quad m=2r\geq 2
\end{align*}
gives rise to a Lie subalgebra 
\begin{equation}\label{Hform}
H(2r; \underline{n}):=\big\{D \in W(2r; \underline{n})\,|\, D(\omega_H)=0\big\}
\end{equation}
of $W(2r; \underline{n})$. The second derived subalgebra $H(2r; \underline{n})^{(2)}$ is simple. We refer to $H(2r; \underline{n})^{(2)}$ as the simple \textit{Hamiltonian} Lie algebra of Cartan type. More generally, $H(2r; \underline{n})$ or $H(2r; \underline{n})^{(2)}$ is a Hamiltonian Cartan type Lie algebra.

Alternatively, we can define $H(2r; \underline{n})$ using a linear map. Let us first introduce some notations. Set
\begin{align}\label{sigmaj}
\sigma(j)&:=
\begin{cases}
    1   &\quad \quad\text{if $1\leq j\leq r$},\\
    -1  & \quad \quad\text{if $r<j\leq 2r$},\\
  \end{cases}\\
j'&:=
\begin{cases}\label{j'}
    j+r  & \quad\, \text{if $1\leq j\leq r$},\\
    j-r  & \quad \,\text{if $r<j\leq 2r$}.
  \end{cases}
\end{align}
Consider the set
\[
\bigg\{D=\sum_{i=1}^{2r}f_i\del_i\in W(2r; \underline{n})\,|\, \sigma(i)\del_{j'}(f_{i})=\sigma(j)\del_{i'}(f_{j}), 1\leq i, j\leq 2r\bigg\}.
\]
One can check that this is equivalent to $H(2r;\underline{n})$ \eqref{Hform}. To describe $H(2r; \underline{n})^{(2)}$, we define
\begin{align*}
D_{H}: \cO(2r; \underline{n})&\to W(2r; \underline{n})\\
f&\mapsto \sum_{i=1}^{2r}\sigma(i)\del_i(f)\del_{i'}.
\end{align*}
Denote the image of $D_{H}$ by $\tilde{H}(2r; \underline{n})$. Note that $\tilde{H}(2r; \underline{n})$ is a proper subset of $H(2r; \underline{n})$. Indeed derivations $x_{j}^{(p^{n_j}-1)}\del_{j'}$ for $1\leq j\leq 2r$ lie in $H(2r; \underline{n})$, but do not lie in $\tilde{H}(2r; \underline{n})$; see \cite[p. 163]{S88}.

\begin{thm}\cite[Lemma 4.1, Proposition 4.4 and Theorem 4.5, Sec.~4.4, Chap.~4]{S88}\thlabel{Hthm}
\begin{enumerate}[\upshape(i)]
\item $D_{H}$ is a linear map of degree $-2$ with $\Ker D_{H}=k$.
\item $[D_{H}(f), D_{H}(g)]=D_{H}(D_{H}(f)(g))$ for all $f, g\in \cO(2r; \underline{n})$.
\item $H(2r; \underline{n})^{(1)}\subseteq \tilde{H}(2r; \underline{n})$.
\item $H(2r; \underline{n})^{(2)}$ is a simple Lie algebra with basis
\[
\big\{D_{H}(x^{(a)})\,|\, (0, \dots, 0)<a <(p^{n_1}-1, \dots, p^{n_{2r}}-1)\big\}.
\]
Hence $\dim H(2r; \underline{n})^{(2)}= p^{\sum_{i=1}^{2r}n_i}-2$.
\item $H(2r; \underline{n})^{(2)}$ is a graded subalgebra of $W(2r; \underline{n})$, i.e.
\begin{align*}
H(2r; \underline{n})^{(2)}=\bigoplus_{i=-1}^{s_{2}} (H(2r; \underline{n})^{(2)})_{i},
\end{align*}
where $s_2=(\sum_{i=1}^{2r}p^{n_i})-2r-3$ and $(H(2r; \underline{n})^{(2)})_{i}=H(2r; \underline{n})^{(2)} \cap W(2r; \underline{n})_{i}$.
\item $H(2r; \underline{n})^{(2)}$ is restricted if and only if $\underline{n}=(1, \dots, 1)$, and in that case $H(2r; \underline{n})^{(2)}$ is a $p$-subalgebra of $W(2r; \underline{n})$ with restricted gradation.
\end{enumerate}
\end{thm}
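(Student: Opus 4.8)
Throughout I write $\tau:=(p^{n_1}-1,\dots,p^{n_{2r}}-1)$ and, for $f,g\in\cO(2r;\underline{n})$, abbreviate the Poisson bracket $\{f,g\}:=D_H(f)(g)=\sum_{i=1}^{2r}\sigma(i)\del_i(f)\del_{i'}(g)$. Part (i) is routine: $D_H$ is $k$-linear, it lowers the $\cO$-degree by $1$ and multiplication by $\del_{i'}$ lowers the $W$-degree by a further $1$ (so $D_H$ has degree $-2$), and $D_H(f)=0$ forces $\del_i(f)=0$ for all $i$ since $\{\del_1,\dots,\del_{2r}\}$ is an $\cO(2r;\underline{n})$-basis of $W(2r;\underline{n})$, whence $\Ker D_H=k$. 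For (ii) I would first record the antisymmetry $\{f,g\}=-\{g,f\}$ (re-index $i\mapsto i'$ and use $\sigma(i')=-\sigma(i)$); then, since every $x^{(b)}\del_{i'}$ is a special derivation normalising $\cO(2r;\underline{n})$, both $[D_H(f),D_H(g)]$ and $D_H(\{f,g\})$ lie in $W(2r;\underline{n})$ and are therefore special, hence determined by their values on $x_1,\dots,x_{2r}$; a short computation using $\del_{l'}(x_l)=1$, the commutativity $[\del_i,\del_j]=0$ and the antisymmetry shows both sides of (ii) agree on each $x_l$. For (iii) I would establish the explicit identity $[D,E]=D_H(\omega_H(D,E))$ for all $D,E\in H(2r;\underline{n})$ directly from \eqref{LbracketWm}; the right-hand side lies in $\im D_H=\tilde H(2r;\underline{n})$, so $H(2r;\underline{n})^{(1)}\subseteq\tilde H(2r;\underline{n})$ (equivalently, from $i_{D_H(f)}\omega_H=-df$, $d\omega_H=0$ and Cartan's formula one gets $i_{[D,E]}\omega_H=d(i_Di_E\omega_H)$, an exact form).

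The heart of the matter is (iv). By (ii), $[\tilde H,\tilde H]=D_H(\{\cO(2r;\underline{n}),\cO(2r;\underline{n})\})$, where $\{\cdot,\cdot\}$ now denotes the span of Poisson brackets. Rewriting $\{f,g\}=\sum_i\sigma(i)\del_i(f\,\del_{i'}g)$ as a sum of partials shows $x^{(\tau)}$ never occurs in a Poisson bracket, while $\{x^{(a+\epsilon_l)},x_{l'}\}=\sigma(l)x^{(a)}$ whenever $a_l<p^{n_l}-1$; hence $\{\cO(2r;\underline{n}),\cO(2r;\underline{n})\}=\spn\{x^{(a)}:a\neq\tau\}$, and since $\Ker D_H=k\,x^{(0)}$, the algebra $\tilde L:=[\tilde H,\tilde H]$ has the asserted basis $\{D_H(x^{(a)}):a\neq0,\tau\}$ and dimension $p^{\sum n_i}-2$. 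I would then prove $\tilde L$ is simple: it is $\Z$-graded with $\tilde L_i=\spn\{D_H(x^{(a)}):|a|=i+2\}$, $\tilde L_{-1}=W(2r;\underline{n})_{-1}$, and $\tilde L_0\cong\mathfrak{sp}_{2r}$ acting on $\tilde L_{-1}\cong k^{2r}$ as the natural module, which is irreducible because $p>2$; the identity $[D_H(f),\del_l]=-D_H(\del_l f)$ yields transitivity ($x\in\tilde L_i$, $i\ge0$, $[x,\tilde L_{-1}]=0\Rightarrow x=0$) and $[\tilde L_{-1},\tilde L_j]=\tilde L_{j-1}$ for $j\ge1$. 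A standard argument then finishes it: a nonzero ideal $I$, after repeatedly bracketing with $\tilde L_{-1}$ and invoking transitivity, meets $\tilde L_{-1}$ nontrivially; irreducibility forces $\tilde L_{-1}\subseteq I$, whence $\tilde L_{j-1}=[\tilde L_j,\tilde L_{-1}]\subseteq I$ for all $j\ge1$, i.e. $I=\tilde L$. Combining with (iii): $H(2r;\underline{n})^{(2)}\subseteq[\tilde H,\tilde H]=\tilde L$, while $\tilde L\subseteq\tilde H\subseteq H(2r;\underline{n})$ and perfectness of the simple algebra $\tilde L$ give $\tilde L=[\tilde L,\tilde L]\subseteq H(2r;\underline{n})^{(2)}$; hence $H(2r;\underline{n})^{(2)}=\tilde L$.

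Part (v) then follows immediately: the basis elements $D_H(x^{(a)})$ are $W(2r;\underline{n})$-homogeneous of degrees $-1,\dots,s_2$ with $s_2=|\tau|-3=(\sum p^{n_i})-2r-3$, the largest admissible $|a|\neq\tau$ being $|\tau|-1$. For (vi), in the case $\underline{n}=(1,\dots,1)$ I would use that $W(2r;\underline{1})=\Der\cO(2r;\underline{1})$ is restricted with $D^{[p]}=D^p$ and restricted grading (\thref{wittthm}(v)); since $D^{[p]}$ acts on forms as the $p$-th iterate of $D$, $D(\omega_H)=0$ implies $D^{[p]}(\omega_H)=0$, so $H(2r;\underline{1})$ is a $p$-subalgebra of $W(2r;\underline{1})$, and it remains to verify $D_H(x^{(a)})^p\in H(2r;\underline{1})^{(2)}$ on the basis --- the partials ($|a|=1$) are $p$-nilpotent; for $|a|=2$ one uses that $\tilde L_0=\mathfrak{sp}_{2r}$ is a restricted subalgebra of $\fgl_{2r}\cong W(2r;\underline{1})_0$; and for $|a|\ge3$ the restricted grading places $D_H(x^{(a)})^p$ in $W(2r;\underline{1})_{p(|a|-2)}$, where an explicit $p$-th power computation (or the vanishing of that space) returns it to $H(2r;\underline{1})^{(2)}$. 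Conversely, if some $n_j\ge2$, then $\del_j=\pm D_H(x_{j'})\in H(2r;\underline{n})^{(2)}$ but $\del_j^p$ is a nonzero, non-special derivation, hence $\del_j^p\notin W(2r;\underline{n})\supseteq H(2r;\underline{n})^{(2)}$; since the centraliser $\mathrm{C}_{\Der\cO(2r;\underline{n})}(H(2r;\underline{n})^{(2)})$ is trivial, $(\ad\del_j)^p\notin\ad H(2r;\underline{n})^{(2)}$, so $H(2r;\underline{n})^{(2)}$ is not restricted.

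The main obstacle is part (iv): the graded-simplicity argument for $\tilde L$ --- in particular the irreducibility of $\tilde L_{-1}$ as an $\mathfrak{sp}_{2r}$-module (where $p>2$ enters), transitivity, and $[\tilde L_{-1},\tilde L_j]=\tilde L_{j-1}$ --- together with the divided-power and binomial-coefficient bookkeeping (\thref{bcmodp}, \thref{bcmodpapp}) needed to identify the span of Poisson brackets. The remaining delicate point is the borderline-degree $p$-th power computation in (vi).
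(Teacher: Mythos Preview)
The thesis does not supply its own proof of this theorem; it is quoted as background from Strade--Farnsteiner \cite[Lemma~4.1, Proposition~4.4 and Theorem~4.5, Sec.~4.4, Chap.~4]{S88}, so there is no in-paper argument to compare against. Your outline is essentially the standard textbook proof and matches the cited source: compute $D_H$ and the Poisson bracket directly for (i)--(ii), use the identity $[D,E]=D_H(\omega_H(D,E))$ (equivalently $i_{D_H(f)}\omega_H=-df$) for (iii), identify $[\tilde H,\tilde H]$ via the span of Poisson brackets, and run a graded-simplicity argument through the $\mathfrak{sp}_{2r}$-structure on $\tilde L_0$ for (iv).

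Two small points to tighten. In (iv), from $\tilde L_{-1}\subseteq I$ your step ``$\tilde L_{j-1}=[\tilde L_j,\tilde L_{-1}]\subseteq I$ for all $j\ge1$'' only runs for $1\le j\le s_2$ and therefore misses the top piece $\tilde L_{s_2}$ (since $\tilde L_{s_2+1}=0$). One extra line suffices: $[\tilde L_0,\tilde L_{s_2}]=\tilde L_{s_2}$, because $\{x_m x_{m'},x^{(\tau-\epsilon_{m'})}\}=-\sigma(m)\,x^{(\tau-\epsilon_{m'})}$ shows each basis vector of $\tilde L_{s_2}$ is an $\ad D_H(x_mx_{m'})$-eigenvector with nonzero eigenvalue; once $\tilde L_0\subseteq I$ this gives $\tilde L_{s_2}\subseteq I$. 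In (vi), the ``vanishing of that space'' shortcut for $|a|\ge3$ does not always apply: $p(|a|-2)$ can equal the top degree $2r(p-1)-2$ precisely when $p\mid r+1$. A cleaner route is to note $D_H(x^{(a)})^p\in H\cap W(2r;\underline{1})_{p(|a|-2)}$ by the restricted grading of $W$, and that $H_i=(H^{(2)})_i$ except in degrees $p-2$ (never a multiple of $p$) and $2r(p-1)-2$; this reduces the verification to at most one degree.
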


\begin{rmk}\cite[p. 168]{S88}\thlabel{Pbracket}
Note that $D_{H}$ defines a Lie bracket on $\cO(2r; \underline{n})$. For any $f, g \in \cO(2r; \underline{n})$, define 
\begin{align*}
\{f, g\}:=\sum_{i=1}^{2r}\sigma(i)\del_i(f)\del_{i'}(g)=D_{H}(f)(g).
\end{align*}
It follows from \thref{Hthm} that $(\cO(2r; \underline{n}), \{,\})$ is a Lie algebra with centre $k$, and $(\cO(2r; \underline{n})/k)^{(1)}\cong H(2r; \underline{n})^{(2)}$; see \cite[p. 54]{BGP05}. The Lie bracket $\{,\}$ is referred to as the \textnormal{Poisson bracket}.
\end{rmk}

Suppose $\chari k=p>2$ and $m=2r+1\geq 3$. Consider the contact form
\begin{align*}
\omega_{K}:=dx_m+\sum_{i=1}^{r}(x_i dx_{i+r}-x_{i+r}dx_{i}), \quad m=2r+1 \geq 3.
\end{align*}
Set 
\begin{equation}\label{Kform}
\big\{D \in W(2r+1; \underline{n})\,|\, D(\omega_{K})\in \cO(2r+1; \underline{n})\omega_{K}\big\}.
\end{equation}
This gives a Lie subalgebra of  $W(2r+1; \underline{n})$, denoted $K(2r+1; \underline{n})$, called the \textit{contact} Cartan type Lie algebra. The derived subalgebra $K(2r+1; \underline{n})^{(1)}$ is simple.

Similarly, we can describe the structure of $K(2r+1; \underline{n})$ using a linear map. Let $\sigma(j)$ and $j'$ be as in \eqref{sigmaj} and \eqref{j'}. Define 
\begin{align*}
D_{K}: \cO(2r+1; \underline{n}) &\to W(2r+1; \underline{n})\\
f&\mapsto \sum_{j=1}^{2r}\big(\sigma(j)\del_{j}(f)+x_{j'}\del_{2r+1}(f)\big)\del_{j'}\\
&\quad \quad +\big(2f-\sum_{j=1}^{2r}x_j\del_j(f)\big)\del_{2r+1}.
\end{align*}
Then the image $D_{K}(\cO(2r+1; \underline{n}))$ gives a Lie subalgebra of $W(2r+1; \underline{n})$ which coincides with $K(2r+1; \underline{n})$ \eqref{Kform}; see \cite[p. 189]{S04}.

\begin{thm}\citetext{\citealp[Theorem 2.6.2]{BGP05}; \citealp[Proposition 5.3, Theorems 5.5 and 5.6, Sec.~4.5, Chap.~4]{S88}}
\begin{enumerate}[\upshape(i)]
\item $D_{K}$ is an injective linear map of degree $-2$.
\item $K(2r+1; \underline{n})$ is graded, i.e. 
\begin{align*}
K(2r+1; \underline{n})=\bigoplus_{i=-2}^{s_3} K(2r+1; \underline{n})_i,
\end{align*}
where $s_3=(\sum_{i=1}^{2r}p^{n_{i}})+2p^{n_{2r+1}}-2r-3$ and 
\[
K(2r+1; \underline{n})_i=\spn\bigg\{D_{K}(x^{(a)})\,|\, \sum_{i=1}^{2r+1}a_i+a_{2r+1}-2=i\bigg\}.
\]
Note that this grading has depth $2$.
\item $K(2r+1; \underline{n})^{(1)}$ is simple, and
\begin{align*}
K(2r+1; \underline{n})^{(1)}=&
\begin{cases}
    K(2r+1; \underline{n})   & \text{if $2r+4\not \equiv 0\,(\modd p)$},\\
    \spn\big\{D_{K}(x^{(a)})\,|\, 0\leq a< \tau(\underline{n})\big\}  & \text{if $2r+4\equiv 0\,(\modd p)$},
  \end{cases}\\
\text{where} \,\tau(\underline{n}):=(p^{n_1}-1, &\dots, p^{n_{2r+1}}-1). \,\text{Then}\\
\dim K(2r+1; \underline{n})^{(1)}=&
\begin{cases}
  p^{\sum_{i=1}^{2r+1} n_i}    &\quad \text{if $2r+4\not \equiv 0\,(\modd p)$},\\
  p^{\sum_{i=1}^{2r+1} n_i}-1    & \quad \text{if $2r+4\equiv 0\,(\modd p)$}.
  \end{cases}
\end{align*}
\item $K(2r+1; \underline{n})^{(1)}$ is restricted if and only if $\underline{n}=(1, \dots, 1)$, and in that case \\$K(2r+1; \underline{n})^{(1)}$ is a $p$-subalgebra of $W(2r+1; \underline{n})$ with restricted gradation. 
\end{enumerate}
\end{thm}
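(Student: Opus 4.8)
The plan is to establish the four assertions in order, following the template used for the Witt, special and Hamiltonian algebras in \cite[Chap.~4]{S88} and \cite{BGP05}; only the simplicity in~(iii) demands real work. Write $m=2r+1$ and give $\cO(m;\underline n)$ and $W(m;\underline n)$ the \emph{contact grading}, in which $x_1,\dots,x_{2r}$ have weight $1$, $x_m$ has weight $2$, and correspondingly $\del_i$ has weight $-1$ for $1\le i\le 2r$ while $\del_m$ has weight $-2$; thus $x^{(a)}$ has weight $|a|+a_m$. For~(i), $k$-linearity of $D_K$ is immediate from the formula. To see that $D_K$ is homogeneous of degree $-2$ I would inspect the three blocks of summands of $D_K(x^{(a)})$ one at a time: each lands in contact weight $|a|+a_m-2$. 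Injectivity follows from the identity $\omega_K\bigl(D_K(f)\bigr)=2f$, a short computation using the pairing $\omega_K\bigl(\sum_l g_l\del_l\bigr)=g_m+\sum_{i=1}^{r}(x_ig_{i+r}-x_{i+r}g_i)$; since $p>2$, $D_K(f)=0$ gives $f=0$. For~(ii), the excerpt already records $K(m;\underline n)=D_K(\cO(m;\underline n))$, so $\{D_K(x^{(a)})\}$ is a basis of $K(m;\underline n)$ (the image under the injective $D_K$ of the basis $\{x^{(a)}\}$ of $\cO(m;\underline n)$); grouping these vectors by weight yields the decomposition, and the minimal weight is $-2$ because $D_K(1)=2\del_m\ne 0$ and $|a|+a_m-2\ge -2$ always.

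For~(iii) I would first set up, as in \thref{Hthm}(ii), a \emph{contact bracket} $\langle\cdot,\cdot\rangle$ on $\cO(m;\underline n)$ satisfying $[D_K(f),D_K(g)]=D_K(\langle f,g\rangle)$ for all $f,g$; this is a direct calculation from \eqref{LbracketWm}. Hence $K(m;\underline n)^{(1)}=D_K(\langle\cO,\cO\rangle)$, and the point is to identify $\langle\cO,\cO\rangle$. Evaluating $\langle x_m,x^{(a)}\rangle=c_a\,x^{(a)}$ one finds that $c_a$ is a nonzero scalar for every multi-index $a$ except possibly the top one $a=\tau(\underline n)$, and that $c_{\tau(\underline n)}$ equals a unit times $2r+4$ modulo $p$; so $x^{(\tau(\underline n))}\in\langle\cO,\cO\rangle$ precisely when $2r+4\not\equiv 0\pmod p$, while in the exceptional case the top layer $K(m;\underline n)_{s_3}=kD_K(x^{(\tau(\underline n))})$ is not in the image of any bracket — which gives exactly the stated description of $K(m;\underline n)^{(1)}$; the dimension count then follows from $\dim\cO(m;\underline n)=p^{\sum n_i}$. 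Finally, to prove simplicity I would verify that $L:=K(m;\underline n)^{(1)}=\bigoplus_{i\ge -2}L_i$ meets the hypotheses of the simplicity argument of \cite[Sec.~4.5, Chap.~4]{S88}: $L_0\cong\mathfrak{csp}_{2r}$ acts irreducibly on the natural module $L_{-1}$; the pair $L_{-1}\oplus L_{-2}$ is a Heisenberg algebra, so $[L_{-1},L_{-1}]=L_{-2}$ and $L_{-1}$ has no central element; the grading is transitive, i.e. $x\in L_i$ with $i\ge 0$ and $[x,L_{-1}]=0$ forces $x=0$; $[L_1,L_{-2}]=L_{-1}$; and $L$ is generated by $\bigoplus_{i\le 1}L_i$. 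Together with $L=[L,L]$ this forces any nonzero ideal to meet $L_{-2}$ and hence to be all of $L$. The step I expect to be the main obstacle is precisely this one: extracting the exact derived subalgebra (in particular the $2r+4\equiv 0\pmod p$ dichotomy) from the contact bracket and then checking the transitivity and irreducibility hypotheses.

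For~(iv), suppose first $\underline n=(1,\dots,1)$. Then $W(m;\underline 1)=\Der\cO(m;\underline 1)$ is restricted with $D^{[p]}=D^p$ by \thref{wittthm}(v), and since $D(\omega_K)\in\cO(m;\underline 1)\omega_K$ implies $D\bigl(\cO(m;\underline 1)\omega_K\bigr)\subseteq\cO(m;\underline 1)\omega_K$, iterating shows $D^p(\omega_K)\in\cO(m;\underline 1)\omega_K$; hence $K(m;\underline 1)$ \eqref{Kform} is a $p$-subalgebra of $\Der\cO(m;\underline 1)$, with restricted gradation since $D_K$ has degree $-2$. If $2r+4\not\equiv 0\pmod p$ then $K(m;\underline 1)^{(1)}=K(m;\underline 1)$ and we are done; otherwise $K(m;\underline 1)^{(1)}$ is the codimension-one ideal, and for $D\in K(m;\underline 1)^{(1)}$ I would check $D^p\in K(m;\underline 1)^{(1)}$ by reducing via \thref{generalJacobF} to homogeneous $D$, where weight $d\ne 0$ sends $D^p$ to weight $pd\ne s_3$ (so into $K^{(1)}$ or zero) and the case $d=0$ is covered by restrictedness of $L_0$. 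Conversely, if $\underline n\ne(1,\dots,1)$, pick $k$ with $n_k\ge 2$. If $k\le 2r$, then $D_K(x_{k'})=\sigma(k')\del_k+x_{k'}\del_m$ lies in $K(m;\underline n)^{(1)}$, its two summands commute and $(x_{k'}\del_m)^p=0$, so its $[p]$-power in $\Der\cO(m;\underline n)$ is $\sigma(k')\del_k^{p}$; if $k=m$, then $\tfrac12 D_K(1)=\del_m\in K(m;\underline n)^{(1)}$ has $[p]$-power $\del_m^{p}$. In either case this $p$-power lies outside $W(m;\underline n)\supseteq K(m;\underline n)^{(1)}$ by \thref{DerW(m;n)ans}, and since the centralizer of $K(m;\underline n)^{(1)}$ in $\Der\cO(m;\underline n)$ is zero (it contains $\del_m$ and all $\sigma(j')\del_j+x_{j'}\del_m$, $j\le 2r$), the $p$-power cannot be brought into $K(m;\underline n)^{(1)}$ by subtracting an inner derivation; by \thref{centrelesslem} this means $K(m;\underline n)^{(1)}$ admits no $[p]$-map, so it is not restricted.
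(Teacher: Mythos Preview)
The paper does not prove this theorem at all; it is quoted without proof from \cite{BGP05} and \cite{S88} as background in Sec.~\ref{Cartan}. So there is no ``paper's own proof'' to compare against, and your outline is essentially the textbook route. Parts (i), (ii) and (iv) are fine as sketches; in particular the identity $\omega_K(D_K(f))=2f$ really does hold and gives injectivity, and your non-restrictedness argument in (iv) via the centralizer of $K^{(1)}$ in $\Der\cO(m;\underline n)$ is correct in spirit.

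There is, however, a genuine gap in your computation of the derived subalgebra in (iii). You claim that $\langle x_m,x^{(a)}\rangle=c_a\,x^{(a)}$ with $c_a\ne 0$ for every $a\ne\tau(\underline n)$. But a direct calculation (using $D_K(x_m)=\sum_{j=1}^{2r}x_j\del_j+2x_m\del_m$ and $x_j\del_j(x^{(a)})=a_jx^{(a)}$) gives
\[
c_a\;=\;|a|+a_m-2\pmod p,
\]
which is exactly the contact degree of $x^{(a)}$. This vanishes modulo $p$ for \emph{every} $a$ whose contact degree is a multiple of $p$, not only for $a=\tau(\underline n)$; so the single Euler-type bracket $\langle x_m,\cdot\rangle$ does not by itself place all non-top monomials in $\langle\cO,\cO\rangle$. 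Your observation that $c_{\tau(\underline n)}\equiv -(2r+4)\pmod p$ is correct and does isolate the exceptional top layer, but the complementary inclusion needs more brackets. The standard fix (as in \cite[Sec.~4.5]{S88}) is to combine $\langle x_m,\cdot\rangle$ with $\langle 1,x^{(a+\epsilon_m)}\rangle=2x^{(a)}$ (valid whenever $a_m<p^{n_m}-1$) and with the brackets $\langle x_j,\cdot\rangle$ for $j\le 2r$, and to bootstrap from low degree upward; only at the very top monomial do all of these simultaneously fail, and there the outcome is governed by $2r+4\pmod p$. Once you patch this step, the rest of your plan for (iii) --- transferring to the contact bracket and invoking the graded simplicity criterion with $L_0\cong\mathfrak{csp}_{2r}$, irreducible $L_{-1}$, Heisenberg $L_{-1}\oplus L_{-2}$, and transitivity --- is the standard argument and goes through.
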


\begin{rmk}\citetext{\citealp[p.~191(4.2.17)]{S04}; \citealp[p.~172]{S88}}\label{contactb}
As in the case of Hamiltonian Lie algebras, the linear map $D_{K}$ also defines a Lie bracket on $\cO(2r+1; \underline{n})$. For any $f, g \in\cO(2r+1; \underline{n})$, define
\begin{align*}
\langle f, g\rangle:=D_{K}(f)(g)-2g\del_{2r+1}(f).
\end{align*} 
By \cite[Proposition 5.2, Sec.~4.5, Chap.~4]{S88}, $D_{K}(\langle f, g\rangle)=[D_{K}(f), D_{K}(g)]$. Moreover, $D_{K}$ is injective. Hence $(\cO(2r+1; \underline{n}), \langle,\rangle)$ is a Lie algebra over $k$, and $\cO(2r+1; \underline{n})\cong K(2r+1; \underline{n})$. The Lie bracket $\langle,\rangle$ is referred to as the \textnormal{contact bracket}. 
\end{rmk}
These are the four families of Lie algebras of Cartan type. We finish this section by emphasizing that 
\begin{thm}\cite[Corollary 7.2.3]{S04}
The simple restricted Lie algebras of Cartan type are $W(m; \underline{1}), m\geq 1, S(m; \underline{1})^{(1)}, m\geq 3, H(2r; \underline{1})^{(2)}, r\geq 1$ and $K(2r+1; \underline{1})^{(1)}, r\geq 1$.
\end{thm}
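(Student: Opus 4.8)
The plan is to read this off directly from the four structure theorems for graded Cartan type Lie algebras recalled above; essentially all the work has already been done there, so the result is a compilation.

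First I would unwind the hypothesis. By the constructions of this section, a simple graded Lie algebra of Cartan type is isomorphic to one of $W(m; \underline{n})$ with $m\geq 1$ (the case $m=1$, $p=2$ being excluded by \thref{wittthm}(iii)), $S(m; \underline{n})^{(1)}$ with $m\geq 3$, $H(2r; \underline{n})^{(2)}$ with $r\geq 1$, or $K(2r+1; \underline{n})^{(1)}$ with $r\geq 1$, for a suitable multi-index $\underline{n}=(n_1,\dots,n_m)\in\N^m$; the parameter ranges in which simplicity holds are precisely those recorded in \thref{wittthm}(iii) and in the corresponding structure theorems for $S$, $H$ and $K$.

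Next I would invoke, one family at a time, the restrictedness clauses of those theorems: \thref{wittthm}(v), part~(vi) of the structure theorem for $S(m; \underline{n})^{(1)}$, \thref{Hthm}(vi), and part~(iv) of the structure theorem for $K(2r+1; \underline{n})^{(1)}$. Each asserts that the algebra in question is restricted if and only if $\underline{n}=(1,\dots,1)$. Hence a simple Cartan type Lie algebra that happens to be restricted must have $\underline{n}=(1,\dots,1)$, placing it in the asserted list; and conversely, the same four clauses state that for $\underline{n}=(1,\dots,1)$ each of $W(m; \underline{1})$, $S(m; \underline{1})^{(1)}$, $H(2r; \underline{1})^{(2)}$, $K(2r+1; \underline{1})^{(1)}$ is a $p$-subalgebra of the restricted Lie algebra $\Der\cO(m; \underline{1})\cong W(m; \underline{1})$ (see \thref{exres}), hence is itself restricted.

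There is no genuine obstacle here, but one subtlety deserves a sentence. The adjective ``restricted'' asserts only the \emph{existence} of some $[p]$-th power map, a priori independent of the $p$-th power operation inherited from $\Der\cO(m; \underline{n})$. This is harmless: every algebra on the list is simple, hence centreless, so by \thref{centrelesslem} it carries at most one $[p]$-th power map; the four structure theorems verify that such a map exists exactly when $\underline{n}=(1,\dots,1)$ and that it is then $D\mapsto D^p$. The only remaining bookkeeping — carrying over the bounds $m\geq 1$ (with $m=1$, $p=2$ excluded), $m\geq 3$, $r\geq 1$, $r\geq 1$ — is immediate from the simplicity clauses of those same theorems.
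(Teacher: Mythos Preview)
Your argument is correct. The paper does not supply its own proof of this statement; it is simply quoted from \cite[Corollary 7.2.3]{S04} as a summary of the section. Your deduction from the restrictedness clauses of the four structure theorems (\thref{wittthm}(v), and the analogous parts for $S$, $H$, $K$) is exactly the intended route, and your remark on \thref{centrelesslem} to rule out exotic $[p]$-structures is a nice touch, though strictly speaking the cited ``if and only if'' statements already absorb that issue.
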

\section{Nilpotent varieties}\label{nilpotentvarietyresultsWn}
In this section we review Premet's results on nilpotent variety of any finite dimensional restricted Lie algebra over $k$. Then we introduce Premet's conjecture and discuss what have been done so far. 

Let $(\fg, [p])$ be a finite dimensional restricted Lie algebra over $k$. By Jacobson's formula (see \thref{defres}(3)), we see that the $[p]$-th power map is a morphism given by homogeneous polynomial functions on $\fg$ of degree $p$. Recall the nilpotent variety $\cN(\fg)$ which is the set of all $x\in\fg$ such that $x^{[p]^{N}}=0$ for $N\gg 0$. It is well known that $\cN(\fg)$ is a Zariski closed, conical subset of $\fg$. Let $\fg_{ss}$ denote the set of all semisimple elements of $\fg$. By \thref{JCthm}, it is straightforward to see that $\fg^{[p]^{N}}=\fg_{ss}$ for $N\gg0$. Define $e=e(\fg)$ to be the smallest nonnegative integer such that $W^{[p]^{e}}\subseteq\fg_{ss}$ for some nonempty Zariski open subset $W$ of $\fg$. By \cite[Theorem 2]{P87}, we know that $e=0$ if and only if $\fg$ possesses a toral Cartan subalgebra. Let $s=\MT(\fg)$ denote the maximal dimension of tori in $\fg$. In Sec.~\ref{reLiealgebras}, we already observed that $s=0$ if and only if $\fg$ coincides with $\cN(\fg)$. We present the theorem on $\cN(\fg)$ which was proved by A.~Premet.

\begin{thm}\citetext{\citealp[Theorem 2 and Corollary 2]{P90}; \citealp[Theorem 4.2]{P03}}\thlabel{nvarietythm}
There exist homogeneous polynomials $\psi_0, \dots, \psi_{s-1} \in k[\fg]$ with $\deg\psi_i=p^{e+s}-p^{e+i}$ such that for any $x \in \fg$, $x^{[p]^{e+s}}=\sum_{i=0}^{s-1}\psi_i(x)x^{[p]^{e+i}}$. Moreover, the following are true:
\begin{enumerate}[\upshape(i)]
\item $\psi_i(x^{[p]})=\psi_i(x)^{p}$ for all $x\in\fg$ and $i\leq s-1$.
\item $\cN(\fg)=\cZ (\psi_0, \dots, \psi_{s-1})$, the set of all common zeros of $\psi_0, \dots, \psi_{s-1}$ in $\fg$.
\item All irreducible components of $\cN(\fg)$ have dimension $\dim\fg-s$, i.e. $\cN(\fg)$ is equidimensional.
\item For any $x\in \cN(\fg)$, $x^{[p]^{e+s}}=0$.
\end{enumerate}
\end{thm}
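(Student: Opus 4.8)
Assume $s\ge 1$; the case $s=0$ is immediate, since then $\fg=\cN(\fg)$ and, by definition of $e$, $x^{[p]^e}=0$ for every $x$. I would argue with the tautological element $\mathbf{x}\in\fg\otimes_k k[\fg]$, working over the function field $K=k(\fg)$ when convenient. The first step is to produce the identity $x^{[p]^{e+s}}=\sum_{i=0}^{s-1}\psi_i(x)x^{[p]^{e+i}}$. By the definition of $e$, the set $\{x\in\fg : x^{[p]^e}\in\fg_{ss}\}$ contains a nonempty Zariski-open, hence dense, subset of $\fg$. Invoking the structure theory of tori and Cartan subalgebras in restricted Lie algebras — in particular that maximal-dimensional tori sweep out a dense subset of $\fg$ — I would deduce that for generic $x$ the $p$-subalgebra $(k\,x^{[p]^e})_p$ is a torus of dimension exactly $s=\MT(\fg)$. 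Since by \thref{computeSp} this torus equals $\spn\{x^{[p]^e},x^{[p]^{e+1}},\dots\}$, it follows that $x^{[p]^e},\dots,x^{[p]^{e+s-1}}$ is a basis of it and that $x^{[p]^{e+s}}$ lies in its span; hence there are unique $c_0,\dots,c_{s-1}\in K$ with $\mathbf{x}^{[p]^{e+s}}=\sum_{i=0}^{s-1}c_i\,\mathbf{x}^{[p]^{e+i}}$.

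The second step — which I expect to be the main obstacle, along with part (iii) — is to prove that each $c_i$ is in fact a polynomial. On the dense open subset $U$ where $x^{[p]^e},\dots,x^{[p]^{e+s-1}}$ are linearly independent, Cramer's rule expresses $c_i$ as a ratio of $s\times s$ minors of the matrix with columns $x^{[p]^e},\dots,x^{[p]^{e+s-1}}$, and these ratios glue to a rational function that is regular on $U$. I would then show that the degeneracy locus $\fg\setminus U$ has codimension at least $2$; since $\fg\cong\A^{\dim\fg}$ is normal, this forces the rational function to be a regular function $\psi_i\in k[\fg]$ (alternatively, one excludes poles directly from the functional equation established below). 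Substituting $\lambda x$ for $x$ in the identity and using $(\lambda x)^{[p]^j}=\lambda^{p^j}x^{[p]^j}$ together with uniqueness gives $\psi_i(\lambda x)=\lambda^{\,p^{e+s}-p^{e+i}}\psi_i(x)$, so $\psi_i$ is homogeneous of degree $p^{e+s}-p^{e+i}$.

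Statements (i), (ii), (iv) are then formal. For (i), apply the $[p]$-map to $x^{[p]^{e+s}}=\sum_i\psi_i(x)x^{[p]^{e+i}}$: each summand $\psi_i(x)x^{[p]^{e+i}}$ lies in the subalgebra $(k\,x^{[p]^e})_p$, which is abelian (from $(\ad x^{[p]^e})(x^{[p]^{e+i}})=0$ one gets $[x^{[p]^{e+i}},x^{[p]^{e+j}}]=0$), so the correction terms in Jacobson's formula \thref{defres}(3) vanish and $x^{[p]^{e+s+1}}=\sum_i\psi_i(x)^p x^{[p]^{e+i+1}}$; comparing this with the identity evaluated at $x^{[p]}$ in place of $x$, and using that $x^{[p]^{e+1}},\dots,x^{[p]^{e+s}}$ are generically linearly independent, yields $\psi_i(x^{[p]})=\psi_i(x)^p$. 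For (ii): if $x\in\cN(\fg)$ then $x^{[p]^N}=0$ for some $N$, so iterating (i) gives $\psi_i(x)^{p^N}=\psi_i(x^{[p]^N})=\psi_i(0)=0$ (homogeneity of positive degree), whence $\psi_i(x)=0$; conversely, if $\psi_i(x)=0$ for all $i$ then $x^{[p]^{e+s}}=\sum_i\psi_i(x)x^{[p]^{e+i}}=0$, so $x\in\cN(\fg)$. Part (iv) is then immediate, since $x\in\cN(\fg)$ forces all $\psi_i(x)=0$ and hence $x^{[p]^{e+s}}=0$.

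It remains to prove (iii). Since $\cN(\fg)=\cZ(\psi_0,\dots,\psi_{s-1})$ is cut out by $s$ equations, Krull's height theorem gives $\dim Y\ge\dim\fg-s$ for every irreducible component $Y$ of $\cN(\fg)$; this is the easy half. The reverse inequality, hence equidimensionality, is the delicate point. I would establish it by showing that the morphism $\Psi\colon\fg\to\A^{s}$, $x\mapsto(\psi_0(x),\dots,\psi_{s-1}(x))$, has all fibres of dimension $\dim\fg-s$: dominance of $\Psi$ handles the generic fibre, and one controls the special fibre $\Psi^{-1}(0)=\cN(\fg)$ using the functional equation $\Psi(x^{[p]})=(\psi_0(x)^p,\dots,\psi_{s-1}(x)^p)$, the weighted homogeneity of $\Psi$, and a transversality analysis over a generic maximal torus, on which the $\psi_i$ are the coefficients of an additive polynomial whose roots may be prescribed arbitrarily. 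In summary, the genuine work is concentrated in: (a) the fact that a generic $x$ has $\dim(k\,x^{[p]^e})_p=s$ and that $\fg\setminus U$ has codimension $\ge 2$ (needed to construct the $\psi_i$), and (b) the upper bound in (iii); the remaining assertions follow routinely from Jacobson's formula and homogeneity.
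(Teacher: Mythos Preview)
First, a clarification: the paper does \emph{not} prove this theorem. It is quoted from Premet's papers \cite{P90} and \cite{P03} and used as a black box throughout the thesis; there is no ``paper's own proof'' to compare against. Your outline is a reasonable reconstruction of the overall shape of Premet's argument, but it contains a concrete gap at the polynomiality step, and the treatment of (iii) is too vague to count as a strategy.

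The claim that the degeneracy locus $\fg\setminus U$ (where $x^{[p]^e},\dots,x^{[p]^{e+s-1}}$ become linearly dependent) has codimension $\ge 2$ is false in general, so the Hartogs/normality route does not work. Take $\fg=\fgl_2(k)$ with $p>2$: here $e=0$ (the diagonal Cartan is toral) and $s=2$, so $U=\{x:\ x,x^p\ \text{linearly independent}\}$. The complement $\fg\setminus U=\{x:\ x^p\in kx\}$ contains, for each $t\in\F_p^{\ast}\setminus\{1\}$, the closure of $\{\,g\,\mathrm{diag}(a,ta)\,g^{-1}:a\in k^{\ast},\ g\in\GL_2\,\}$, which is $3$-dimensional; since $\dim\fgl_2=4$, the bad locus is a divisor. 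Nevertheless the $\psi_i$ \emph{do} extend to polynomials in this example (one computes $\psi_1=\sum_{j=0}^{p}a^{(p-1)(p-j)}b^{(p-1)j}$ in the eigenvalues $a,b$, a genuine polynomial), so the conclusion is correct but your mechanism is wrong. Your alternative (``exclude poles via the functional equation'') is not developed and is not straightforward either: the relation $c_i(x^{[p]})=c_i(x)^p$ compares the pole order along $D$ with the pole order along the pullback of $D$ under the $[p]$-map, and that map has no useful \'etale or finite structure in general. Premet's original argument proceeds differently, constructing the $\psi_i$ directly rather than as extensions of Cramer ratios; you would need to supply a genuine argument here.

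For (iii), the lower bound via Krull's height theorem is fine, but ``transversality analysis over a generic maximal torus'' is not yet an argument for the upper bound. What is needed is a uniform bound $\dim\Psi^{-1}(0)\le\dim\fg-s$, and the functional equation alone does not give this: it tells you that the fibre over $0$ is $[p]$-stable and conical, not that it is small. Premet's proof uses additional input (essentially that $\Psi$ is flat, established by exhibiting an $s$-dimensional linear subspace meeting $\cN(\fg)$ only at $0$, or an equivalent device); you should expect to need a comparable geometric ingredient.
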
 

The above theorem gives useful information on $\cN(\fg)$ and plays an important role in Chapters 3 and 4. It is well known that the nilpotent variety of any finite dimensional algebraic Lie algebra over $\C$ is irreducible. Then Premet conjectured that

\begin{conj}\cite[p. 563]{P90}\thlabel{pconj}
For any finite dimensional restricted Lie algebra $\fg$ over $k$ the variety $\cN(\fg)$ is irreducible.
\end{conj}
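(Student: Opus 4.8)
\emph{Remark on scope.} \thref{pconj} is stated here as an open problem, so what follows is a plan of attack --- the route this thesis follows --- rather than a complete proof. The overall idea is a three-layer reduction: arbitrary restricted $\Rightarrow$ semisimple $\Rightarrow$ simple, followed by a case-by-case verification for the simple restricted Lie algebras supplied by the Classification Theorem. For the first layer, I would pass to the quotient by the solvable radical: let $R=\Rad\fg$, which is a $p$-ideal, so $\fg/R$ is semisimple and carries a natural $[p]$-map. The plan is to show that the quotient morphism $\pi\colon\fg\to\fg/R$ restricts to a surjection $\cN(\fg)\tha\cN(\fg/R)$ --- given $\pi(x)$ nilpotent, the Jordan--Chevalley decomposition (\thref{JCthm}) forces $x_{s}\in R$, so $x_{n}=x-x_{s}\in\cN(\fg)$ is a nilpotent lift --- and then to analyse the fibres: using the equidimensionality of both nilpotent varieties (\thref{nvarietythm}(iii)) together with the additivity $\MT(\fg)=\MT(\fg/R)+\MT(R)$ (\thref{MTlem}), the fibres have constant dimension and turn out to be irreducible, whence irreducibility of $\cN(\fg/R)$ would yield that of $\cN(\fg)$. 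This step is the easy one.

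The serious difficulty is the second layer, the reduction of the semisimple case to the simple case. In characteristic $p$ a semisimple restricted Lie algebra need not be a direct sum of simple ideals: by Block's theorem it is sandwiched between $\bigoplus_{i} S_{i}\otimes\cO(m_{i};\underline{1})$ and $\bigoplus_{i}\bigl(\Der S_{i}\otimes\cO(m_{i};\underline{1})+1\otimes W(m_{i};\underline{1})\bigr)$ with each $S_{i}$ simple. The plan would be to describe $\cN$ of such an algebra in terms of the $\cN(S_{i})$, but the nilpotent elements of $S\otimes\cO(m;\underline{1})$ are far richer than $\cN(S)\otimes\cO(m;\underline{1})$, and the derivation part $1\otimes W(m;\underline{1})$ interferes. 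This thesis does not carry the reduction out in general; instead it handles, by an explicit study of the nilpotent cone, the case where a socle summand is $\fsl_{2}\otimes k[X]/(X^{p})$ (with $p>2$) and, more generally, $S\otimes\cO(m;\underline{1})$ for $m\ge 2$ with $S$ simple restricted satisfying $\ad S=\Der S$ and $\cN(S)$ irreducible.

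For the third layer, with $\fg$ simple, I would invoke the Classification Theorem: for $p>3$, $\fg$ is classical, of Cartan type $W$, $S$, $H$ or $K$, or Melikian, and the restricted ones among these are $W(m;\underline{1})$, $S(m;\underline{1})^{(1)}$, $H(2r;\underline{1})^{(2)}$ and $K(2r+1;\underline{1})^{(1)}$. For the classical restricted Lie algebras $\cN(\fg)$ is essentially the classical nilpotent cone, which is irreducible; for the series $W$, $S$ and $H$ irreducibility is known; and for the minimal $p$-envelope $W(1;n)_{p}$ of the Zassenhaus algebra (with $p>3$, $n\ge 2$) it is the main theorem of this thesis. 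The recurring technique is to write $\cN(\fg)$ as a finite union of images of irreducible varieties under morphisms coming from the standard grading/filtration and from the adjoint action of a unipotent group, and then to use the equidimensionality in \thref{nvarietythm}(iii) to show that these images all have the same closure, i.e.\ form a single component.

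\emph{Where the argument is blocked.} The genuine obstacle is the semisimple-to-simple reduction: controlling $\cN$ of an algebra built from $S\otimes\cO(m;\underline{1})$, where one must simultaneously handle the extra nilpotent directions inside $S\otimes\cO(m;\underline{1})$ and the derivation term $1\otimes W(m;\underline{1})$. Together with the still-open simple cases --- contact type $K$ and the Melikian algebras --- this is precisely what leaves \thref{pconj} open; the present thesis establishes the reduction to the semisimple case and settles a family of new instances within the second and third layers.
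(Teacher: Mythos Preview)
This statement is a conjecture; the paper does not prove it but takes it as the motivating open problem. Your plan correctly identifies the thesis's overall strategy: reduce to the semisimple case, then treat specific semisimple families and the minimal $p$-envelope $W(1;n)_p$.

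The one place where a direct comparison is possible is your first layer, since the reduction to the semisimple case \emph{is} proved in the paper (\thref{reductionthm}). Your proposed route---a single quotient by $R=\Rad\fg$ followed by a fibre analysis---differs from the paper's. The paper argues by induction on $\dim\fg$: if $\fg$ is not semisimple, pick a nonzero \emph{abelian} $p$-ideal $A$ and show that $\cN(\fg)$ is irreducible iff $\cN(\fg/A)$ is. Two cases arise: either $A$ contains a non-nilpotent element, whence the centre of $\fg$ has a nonzero toral part $\fz_s$ and $\cN(\fg)\to\cN(\fg/\fz_s)$ is a bijection; or $A\subset\cN(\fg)$, in which case the fibres of $\cN(\fg)\to\cN(\fg/A)$ are full cosets $x+A$, hence affine spaces. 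Either way the fibres are manifestly irreducible of constant dimension, \thref{l:irr} applies, and induction finishes.

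Your direct approach has a gap at ``the fibres \dots\ turn out to be irreducible'': over a general $\bar{x}\in\cN(\fg/\Rad\fg)$ the fibre is $(x+\Rad\fg)\cap\cN(\fg)$, and when $\Rad\fg$ is neither abelian nor contained in $\cN(\fg)$ this set has no evident structure---the dimension count from \thref{MTlem} gives only the generic fibre dimension, not constancy or irreducibility of every fibre. The paper's inductive peeling-off of abelian $p$-ideals is precisely what makes each individual fibre either a point or an affine space. Your Jordan--Chevalley argument for surjectivity is fine (and is morally the content of the paper's Case~1), but irreducibility of fibres over the full radical in one step needs justification you have not supplied.

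The remainder of your plan---the acknowledged difficulty of the semisimple-to-simple step, the specific families handled, and the open cases---agrees with the paper's account.
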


This conjecture is still open, but it is known that if $\fg$ is the Lie algebra of a connected algebraic group $G'$, and $\fn$ is the set of nilpotent elements in a Borel subalgebra of $\fg$, then $\cN(\fg) =\{g.n \,|\, g\in G', n \in \fn\}$. Since $G'$ is connected and $\fn$ is irreducible, the variety $\cN(\fg)$ is irreducible; see \cite[p.~64]{J04}. Moreover, this conjecture holds for the Jacobson-Witt algebras $W(n; \underline{1})$ \cite[Lemma 6]{P91}, for the Special Lie algebras $S(n; \underline{1})$ and $S(n; \underline{1})^{(1)}$ \cite[Theorem A]{WCL13}, for the Poisson Lie algebras $(\cO(2n; \underline{1}), \{,\})$ \cite[Theorem 6.4]{S2002} and for the Hamiltonian Lie algebras $H(2n; \underline{1})^{(2)}$ \cite[Theorem A]{W14}. There are no known counterexamples.

A good understanding of the proof in the case $W(n; \underline{1})$ is important. This is because some of the results will be used in Sec.~\ref{JacobWittinfo}, Chapter 3. Moreover, a similar idea works for the minimal $p$-envelope $W(1; n)_p$ of the Zassenhaus algebra $W(1; n)$; see Chapter 4. So let us first introduce some notations and state a few preliminary results. Then we show that $\cN(W(n; \underline{1}))$ is irreducible.

Let $k$ be an algebraically closed field of characteristic $p>2$. Let $\cO(n; \underline{1})=k[X_1, \dots, X_n]/(X_1^{p}, \dots, X_{n}^{p})$ denote the truncated polynomial ring in $n$ variables. Note that $\cO(n; \underline{1})$ is a local ring with its unique maximal ideal denoted $\fm$. Let $W(n; \underline{1})$ denote the derivation algebra of $\cO(n; \underline{1})$. This is a simple restricted Lie algebra. Moreover, $W(n; \underline{1})$ obtains an $\cO(n; \underline{1})$-module structure via $(fD)(g)=fD(g)$ for all $f, g\in \cO(n; \underline{1})$ and $D \in W(n; \underline{1})$. Since each $D\in W(n; \underline{1})$ is uniquely determined by its effects on $x_1, \dots, x_n$, it is easy to see that $W(n; \underline{1})$ is a free $\cO(n; \underline{1})$-module of rank $n$ generated by the partial derivatives $\del_1, \dots, \del_n$ such that $\del_i(x_j)=\delta_{ij}$ for all $1\leq i, j\leq n$. Hence $\dim W(n; \underline{1})=np^n$. Note that there is a standard filtration $\{W(n; \underline{1})_{(i)}\}_{-1\leq i \leq m(p-1)-1}$ defined on $W(n; \underline{1})$; see \eqref{wmnfiltration}. In particular, the subalgebra 
\[
W(n; \underline{1})_{(0)}:=\bigg\{\sum_{i=1}^{n}f_i\del_i\,|\, \text{$f_i\in \fm$ for all $i$}\bigg\}
\]
is referred to as the \textit{standard maximal subalgebra} of $W(n; \underline{1})$. Note that for any \\$D_1\in W(n; \underline{1})_{(0)}$, the following holds: $D_1(\cO(n; \underline{1}))\subseteq \fm$.

Let $G$ denote the automorphism group of $\cO(n; \underline{1})$. Each $\sigma\in G$ is uniquely determined by its effects on $x_1, \dots, x_n$. An assignment $\sigma(x_i)=f_i $ extends to an automorphism of $\cO(n; \underline{1})$ if and only if $f_i\in \fm$, and the Jacobian $\Jac(f_1, \dots, f_n):=\big|\big(\frac{\del f_i}{\del x_j}\big)_{1\leq i, j\leq n}\big|\notin \fm$. It follows that $G$ is a connected algebraic group of dimension $\dim W(n; \underline{1})-n$. It is well known that any automorphism of $W(n; \underline{1})$ is induced by an automorphism of $\cO(n; \underline{1})$ via the rule $D^{\sigma}=\sigma \circ D\circ \sigma^{-1}$ for all $\sigma \in G$ and $D \in W(n; \underline{1})$. So we can identify $G$ with the automorphism group of $W(n; \underline{1})$. Note that for any $f\in \cO(n; \underline{1})$, $D\in W(n; \underline{1})$ and $\sigma \in G$,
\begin{equation}\label{autconj1}
(fD)^{\sigma}=f^{\sigma}D^{\sigma}, 
\end{equation}
where $f^{\sigma}=f(\sigma(x_1), \dots, \sigma(x_n))$; see \cite[p.~154]{P91}. It follows from \eqref{autconj1} that if \\$D_1=\sum_{i=1}^{n}g_i\del_i$ is an element of $W(n; \underline{1})$, then
\begin{equation}\label{autoconjugationrule2}
D_1^{\sigma}=\sum_{i,\, j=1}^{n}g_i^{\sigma}\bigg(\del_i\big(\sigma^{-1}(x_j)\big)\bigg)^{\sigma}\del_j;
\end{equation}
see \cite[Sec.~3]{D70}. Note also that $G$ respects the standard filtration of $W(n; \underline{1})$. 

It is known that $\MT(W(n; \underline{1}))=n$ and $e(W(n; \underline{1}))=0$. By \thref{nvarietythm}, we know that $\dim \cN(W(n; \underline{1}))=\dim W(n; \underline{1})-n$ and the nilpotency index of any element in $\cN(W(n; \underline{1}))$ is at most $p^n$. 

We want to show that $\cN(W(n; \underline{1}))$ is irreducible. Let us start with the following result:

\begin{lem}\cite[Lemma 3]{P91}\thlabel{WnDresultslem3}
Let $\sD=\del_1+x_1^{p-1}\del_2+\dots+x_1^{p-1}\cdots x_{n-1}^{p-1}\del_n$. Then 
\begin{enumerate}[\upshape(i)]
\item 
\[
\sD^{p^{l}}=(-1)^{l}(\del_{l+1}+x_{l+1}^{p-1}\del_{l+2}+\dots+x_{l+1}^{p-1}\cdots x_{n-1}^{p-1}\del_n)
\] for all $0\leq l\leq n-1$ and $\sD^{p^{n}}=0$.
\item $\sD, \sD^{p}, \dots, \sD^{p^{n-1}}$ forms a basis of the $\cO(n; \underline{1})$-module $W(n; \underline{1})$.
\item $\sD^{p^{n}-1}(x_1^{p-1}\cdots x_n^{p-1})=(-1)^{n}$. Hence the matrix of the endomorphism \\$\sD : \cO(n; \underline{1})\to \cO(n; \underline{1})$ is similar to a Jordan block of size $p^{n}$ with zeros on the main diagonal.
\item The stabilizer of $\sD$ in $G$ is trivial.
\end{enumerate}
\end{lem}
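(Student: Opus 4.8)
The plan is to analyse the single special derivation $\sD = \del_1 + x_1^{p-1}\del_2 + \dots + x_1^{p-1}\cdots x_{n-1}^{p-1}\del_n$ directly, computing its iterated $p$-th powers explicitly and reading off the remaining assertions. For part (i), I would proceed by downward induction on $l$. Using the bracket formula $[\del_i^{p^r}, x^{(a)}\del_j] = x^{(a-p^r\epsilon_i)}\del_j$ if $a_i \geq p^r$ and $0$ otherwise, together with Jacobson's formula (\thref{generalJacobF}) applied to the expression for $\sD^{p^l}$, the key observation is that in $\cO(n;\underline{1})$ we have $x_i^{p-1}\del_i(x_i^{p-1}) = (p-1)x_i^{2p-3}$, but iterating the $[p]$-th power kills lower-order terms because $\del_i^p$ acts as zero on $\cO(n;\underline{1})$ while the mixed commutator terms $v_l$ in Jacobson's formula vanish by a degree/truncation count. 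Concretely, writing $\sD^{p^l} = (-1)^l\big(\del_{l+1} + \sum_{j>l+1} x_{l+1}^{p-1}\cdots x_{j-1}^{p-1}\del_j\big)$, one checks $\big(\sD^{p^l}\big)^{[p]} = (-1)^{l+1}\big(\del_{l+2} + \sum_{j>l+2} x_{l+2}^{p-1}\cdots x_{j-1}^{p-1}\del_j\big)$ by the same commutator bookkeeping; since $\sD^{p^{n-1}} = (-1)^{n-1}\del_n$ we get $\sD^{p^n} = (-1)^n\del_n^p = 0$.

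For part (ii): by part (i) the derivations $\sD, \sD^p, \dots, \sD^{p^{n-1}}$ are, up to signs, $\del_n + (\text{terms in }\del_1,\dots,\del_{n-1})$ in triangular form relative to the ordered $\cO(n;\underline{1})$-basis $\del_n, \del_{n-1}, \dots, \del_1$ — more precisely the change-of-basis matrix from $\{\del_i\}$ to $\{(-1)^{i-1}\sD^{p^{i-1}}\}$ is unitriangular over $\cO(n;\underline{1})$ (after reordering), hence invertible, so $\{\sD^{p^i}\}_{0\le i\le n-1}$ is again a free $\cO(n;\underline{1})$-basis of $W(n;\underline{1})$. For part (iii): since $\sD^{p^i}$ is a basis of $W(n;\underline{1})$ as an $\cO(n;\underline{1})$-module and each $\sD^{p^i}$ with $i\ge 1$ lies in a complementary summand, applying $\sD^{p^n-1} = \sD^{p^n-1}$ expanded in the multi-index monomial basis and tracking which monomial $x_1^{p-1}\cdots x_n^{p-1}$ can be hit, I would argue that $\sD^{p^n-1}$ acting on $\cO(n;\underline{1})$ is, in the basis ordered by a suitable term order, a nonzero scalar multiple of the single "shift all the way down" operator; evaluating on $x_1^{p-1}\cdots x_n^{p-1}$ and carefully tracking the sign (each application of a $\del_i$ contributes coefficients from the divided-power structure, which over $\cO(n;\underline{1})$ and with these specific exponents collapse to $\pm 1$) gives $\sD^{p^n-1}(x_1^{p-1}\cdots x_n^{p-1}) = (-1)^n$. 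This shows $\sD$ is a nilpotent endomorphism of the $p^n$-dimensional space $\cO(n;\underline{1})$ with $\sD^{p^n-1}\neq 0$, hence a single Jordan block.

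For part (iv): suppose $\sigma \in G = \Stab(\sD)$, i.e. $\sigma\circ\sD = \sD\circ\sigma$ on $\cO(n;\underline{1})$. By part (iii), $\cO(n;\underline{1})$ is a cyclic $k[\sD]$-module generated by any vector outside $\im\sD$, e.g. by $1$ (since $\sD^{p^n-1}(1)$ would need checking — better: generated by the element dual to the top of the Jordan block). An automorphism commuting with $\sD$ is determined by where it sends a cyclic generator, and the constraint $\sigma(x_i)\in\fm$ together with $\sigma\sD = \sD\sigma$ forces $\sigma(x_i) = x_i$ by descending through the filtration: since $\sD$ has depth-lowering behaviour matching the partial derivatives, commutation with $\sD$ plus preservation of the augmentation pins down $\sigma$ on each $x_i$ inductively. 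I expect \textbf{part (iv) to be the main obstacle}, since it requires genuinely exploiting the rigidity of the single Jordan block structure rather than just a computation; the cleanest route is probably to note that $\sigma$ induces an automorphism of the graded object $\gr\cO(n;\underline{1})$ commuting with the principal nilpotent induced by $\sD$, deduce $\sigma = \id$ on the associated graded, and then lift. Parts (i)--(iii) are essentially determined computations once the vanishing in Jacobson's formula is pinned down, which is itself the only delicate point there.
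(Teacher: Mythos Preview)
Your approach to (i) and (ii) matches the paper's: induction on $l$ via Jacobson's formula for (i), and the triangular change-of-basis matrix over $\cO(n;\underline{1})$ for (ii). For (iii) the paper is more concrete than your outline: it factors $p^n-1 = \sum_{l=0}^{n-1}(p-1)p^l$, checks directly that $(\sD^{p^l})^{p-1}(x_{l+1}^{p-1}\cdots x_n^{p-1}) = -\,x_{l+2}^{p-1}\cdots x_n^{p-1}$ for each $l$ using the formula from (i), and chains these together. That is what your ``carefully tracking the sign'' becomes once written out; your remarks about complementary summands are not needed.

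For (iv), however, there is a genuine error and a gap. First, $1$ is \emph{not} a cyclic generator of the $k[\sD]$-module $\cO(n;\underline{1})$: since $\sD(x_1)=1$ and $\sD(1)=0$, the element $1$ lies in $\im\sD\cap\ker\sD$, at the bottom of the Jordan block rather than the top. The correct cyclic generator is $x_1^{p-1}\cdots x_n^{p-1}$, and the paper exploits a feature of this particular vector that makes the argument immediate: it spans $\fm^{n(p-1)}$, a one-dimensional subspace that every algebra automorphism preserves. Hence any $c\in G$ stabilising $\sD$ satisfies $c(x_1^{p-1}\cdots x_n^{p-1}) = \lambda_c\,x_1^{p-1}\cdots x_n^{p-1}$; since $c$ commutes with $\sD$ and $\cO(n;\underline{1})=\bigoplus_{i} k\,\sD^{i}(x_1^{p-1}\cdots x_n^{p-1})$, one gets $c=\lambda_c\Id$, and then $c(1)=1$ forces $\lambda_c=1$.

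Your fallback via the associated graded is both unnecessary and does not work as stated. With respect to the degree filtration the operator induced by $\sD$ on $\gr\cO(n;\underline{1})$ is just $\del_1$ (the unique degree $-1$ summand of $\sD$), and the centraliser of $\del_1$ among automorphisms of the graded algebra is far from trivial: any automorphism fixing $x_1$ and moving only $x_2,\dots,x_n$ commutes with $\del_1$. So you cannot deduce $\bar\sigma=\id$ on the graded from commutation alone; the argument needs the specific $G$-invariant line, which is exactly what the paper uses.
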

\begin{myproof}[Sketch of proof]
(i) We prove (i) by induction on $l$. For $l=0$, the result is clear. For $l=1$, let $\sD'=x_1^{p-1}\del_2+\dots+x_1^{p-1}\cdots x_{n-1}^{p-1}\del_n$. Recall Jacobson's formula, 
\[
(a+b)^{p}=a^{p}+b^{p}+\sum_{i=1}^{p-1}s_{i}(a, b),
\]
where the terms $s_{i}(a, b)\in W(n; \underline{1})$ are such that
\[
(\ad(ta+b))^{p-1}(a)=\sum_{i=1}^{p-1}is_{i}(a, b)t^{i-1}.
\] 
Set $a=\sD'$ and $b=\del_1$. Then for any $s\leq p-2$, 
\begin{align*}
[a, (\ad b)^{s}(a)]=(-1)^ss![x_1^{p-1}\del_2+&\dots+x_1^{p-1}\cdots x_{n-1}^{p-1}\del_n,\\
&x_1^{p-1-s}\del_2+\dots+x_1^{p-1-s}x_2^{p-1}\cdots x_{n-1}^{p-1}\del_n]=0.
\end{align*}
Hence 
\begin{align*}
\sD^{p}&=\del_1^{p}+(\sD')^{p}+\sum_{i=1}^{p-1}s_i(\sD', \del_1)=s_1(\sD', \del_1)=(\ad \del_1)^{p-1}(\sD')\\
&=-(\del_{2}+x_{2}^{p-1}\del_{3}+\dots+x_{2}^{p-1}\cdots x_{n-1}^{p-1}\del_n).
\end{align*}
So the result holds for $l=1$. Continuing in this way and by induction, one can show that 
\[
\sD^{p^{l}}=(-1)^{l}(\del_{l+1}+x_{l+1}^{p-1}\del_{l+2}+\dots+x_{l+1}^{p-1}\cdots x_{n-1}^{p-1}\del_n)\]
for all $0\leq l\leq n-1$. In particular, $\sD^{p^{n-1}}=(-1)^{n-1}\del_{n}$. Then $\sD^{p^{n}}=0$. This proves (i).

(ii) Let $M: W(n; \underline{1})\to W(n; \underline{1})$ be the endomorphism such that $M(\del_i)=\sD^{p^{i-1}}$ for $1\leq i\leq n$. By (i), it is easy to check that the matrix of $M$ with respect to $\del_1, \dots, \del_n$ is  an upper triangular matrix with $1$ or $-1$ on the diagonal. Hence $M$ is invertible. Therefore, $\sD, \sD^{p}, \dots, \sD^{p^{n-1}}$ forms a basis of the $\cO(n; \underline{1})$-module $W(n; \underline{1})$. This proves (ii).

(iii) By (i), it is easy to check that for $0\leq l\leq n-2$, 
\begin{align*}
&(\sD^{p^{l}})^{p-1}(x_{l+1}^{p-1}\cdots x_{n}^{p-1})\\
=&\big((-1)^{l}(\del_{l+1}+x_{l+1}^{p-1}\del_{l+2}+\dots+x_{l+1}^{p-1}\cdots x_{n-1}^{p-1}\del_n)\big)^{p-1}(x_{l+1}^{p-1}\cdots x_{n}^{p-1})\\
=&(-1)^{l(p-1)}(-1)x_{l+2}^{p-1}\cdots x_{n}^{p-1}\\
=&-x_{l+2}^{p-1}\cdots x_{n}^{p-1}.
\end{align*}
For $l=n-1$, 
\begin{align*}
(\sD^{p^{n-1}})^{p-1}(x_{n}^{p-1})&=\big((-1)^{n-1}\del_n\big)^{p-1}(x_{n}^{p-1})=(-1)^{(n-1)(p-1)}(-1)=-1.
\end{align*}
Then 
\begin{align*}
\sD^{p^{n}-1}(x_{1}^{p-1}\cdots x_{n}^{p-1})=&\sD^{p^{n}-p}\sD^{p-1}(x_{1}^{p-1}\cdots x_{n}^{p-1})=-\sD^{p^{n}-p}(x_{2}^{p-1}\cdots x_{n}^{p-1})\\
=&-(\sD^{p})^{p^{n-1}-1}(x_{2}^{p-1}\cdots x_{n}^{p-1})\\
=& -(\sD^{p})^{p^{n-1}-p}(\sD^{p})^{p-1}(x_{2}^{p-1}\cdots x_{n}^{p-1})\\
=&(-1)^{2}(\sD^{p})^{p^{n-1}-p}(x_{3}^{p-1}\cdots x_{n}^{p-1})\\
=&(-1)^{2}(\sD^{p^{2}})^{p^{n-2}-1}(x_{3}^{p-1}\cdots x_{n}^{p-1})\\
=&\dots=(-1)^{n-1}(\sD^{p^{n-1}})^{p-1}(x_n^{p-1})=(-1)^{n-1}(-1)=(-1)^{n}.
\end{align*}
Hence $\sD^{p^{n}-1}\neq 0$. Since $\sD^{p^{n}}=0$, this implies that the matrix of $\sD$ is similar to a Jordan block of size $p^{n}$ with zeros on the diagonal. This proves (iii).

(iv) Let $C$ be the stabilizer of $\sD$ in $G$ and $c\in C$. By (iii), we know that $\cO(n; \underline{1})$ is an indecomposable $k\sD$-module, i.e.
\begin{equation}\label{ON1ksdindecomposoble}
\cO(n;\underline{1})=\bigoplus_{i=0}^{p^{n}-1}k\sD^{i}(x_{1}^{p-1}\cdots x_{n}^{p-1}).
\end{equation}
Since $G$ acts on the straight line $kx_{1}^{p-1}\cdots x_{n}^{p-1}$, we have that 
\[
c(x_{1}^{p-1}\cdots x_{n}^{p-1})=\lambda_c x_{1}^{p-1}\cdots x_{n}^{p-1}
\]
for some $\lambda_c \in k^{*}$. Moreover, $c$ commutes with $\sD$. Applying \eqref{ON1ksdindecomposoble}, we get
\begin{align*}
c\big(\sD^{i}(x_{1}^{p-1}\cdots x_{n}^{p-1})\big)&=\sD^{i} \big(c(x_{1}^{p-1}\cdots x_{n}^{p-1})\big)=\lambda_c\sD^{i}(x_{1}^{p-1}\cdots x_{n}^{p-1}).
\end{align*}
Hence $c=\lambda_c\Id$. Since $c(1)=1$, this implies that $\lambda_c=1$. So $c=\Id$. This proves (iv).
\end{myproof}

Set $O:=G.\sD$. Since $G$ is connected, it follows from the last result that the Zariski closure of $O$ is an irreducible variety of dimension $\dim G=\dim \cN(W(n; \underline{1}))$. Since $\sD$ is nilpotent, it follows that $\overline{O}$ is an irreducible component of $\cN(W(n; \underline{1}))$. We want to describe $O$ explicitly. For that, we need the following result:

\begin{lem}\cite[Lemma 6]{D70}\thlabel{D70Lem6part}
Let $z$ be an element of $W(n; \underline{1})$ such that $z\notin W(n; \underline{1})_{(0)}$. Then $z$ is conjugate under $G$ to $z_1=\del_1+x_1^{p-1}\sum_{i=1}^{n}\varphi_i\del_i$, where \\$\varphi_i\in k[X_2,\dots, X_{n}]/(X_2^{p}, \dots, X_{n}^{p})$ for all $i$. Moreover, $z_1^{p}=-(1+x_1^{p-1}\varphi_1)\sum_{i=1}^{n}\varphi_i\del_i$.
\end{lem}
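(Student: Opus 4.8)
The plan is to prove the two assertions in turn: first conjugate $z$ into the stated normal form by an automorphism, and then compute $z_{1}^{p}$ by evaluating it on the coordinate functions.

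\emph{Putting $z$ in normal form.} Since $z\notin W(n;\underline1)_{(0)}$, the image of $z$ in $W(n;\underline1)/W(n;\underline1)_{(0)}\cong k^{n}$ is nonzero; after a linear change of coordinates ($\GL_{n}\subseteq G$) and a rescaling of $x_{1}$ we may therefore assume $z(x_{1})=1+h$ with $h\in\fm$ and $z(x_{j})\in\fm$ for all $j$, so that $z(x_{1})$ is a unit and $z$ is transverse to the hyperplane $x_{1}=0$. Grading $\cO(n;\underline1)=\bigoplus_{m=0}^{p-1}x_{1}^{(m)}R$ by $x_{1}$-degree, where $R=k[X_{2},\dots,X_{n}]/(X_{2}^{p},\dots,X_{n}^{p})$, one checks that $z$ lowers $x_{1}$-degree by at most one and that the induced map $x_{1}^{(m)}R\to x_{1}^{(m-1)}R$ on associated graded pieces is multiplication by the $x_{1}$-degree-zero part $a_{0}$ of the unit $z(x_{1})$, hence an isomorphism, for every $1\le m\le p-1$; combining this graded picture with a dimension count gives the decomposition $\cO(n;\underline1)=z\big(\cO(n;\underline1)\big)\oplus x_{1}^{(p-1)}R$. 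Using it, I would solve $z(y_{1})\in 1+x_{1}^{(p-1)}R$ and $z(y_{j})\in x_{1}^{(p-1)}R$ for $j\ge2$, normalising the solutions to lie in $\fm$ and (after a harmless further linear adjustment) so that $y_{i}\equiv x_{i}$ modulo $\fm^{2}$; then $y_{1},\dots,y_{n}$ is a coordinate system, $(x_{i}\mapsto y_{i})$ defines an automorphism $\sigma\in G$ since $\Jac(y_{1},\dots,y_{n})$ is a unit, and conjugating $z$ by $\sigma$ and renaming the coordinates yields $z(x_{1})\in 1+x_{1}^{p-1}\cO(n;\underline1)$ and $z(x_{j})\in x_{1}^{p-1}\cO(n;\underline1)$. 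Because $x_{1}^{p}=0$ one has $x_{1}^{p-1}\cO(n;\underline1)=x_{1}^{p-1}R$, so writing $z=\sum_{i}z(x_{i})\del_{i}$ gives exactly $z_{1}=\del_{1}+x_{1}^{p-1}\sum_{i=1}^{n}\varphi_{i}\del_{i}$ with $\varphi_{i}\in R$. This straightening is the heart of the proof, and its delicate features are precisely why the normal form looks as it does: the residual terms are pinned at the single top $x_{1}$-degree $p-1$ (which is what forces the $\varphi_{i}$ to be independent of $X_{1}$), because $x_{1}^{(p)}=0$. The case $n=1$ is the model: there $z=a(x)\del$ with $a$ a unit, one seeks a new coordinate $y$ with $a(x)y'(x)=1+\beta y^{p-1}$, and this can be solved exactly because the only obstruction to integrating $a(x)^{-1}$ modulo $x^{p}$ is its coefficient of $x^{p-1}$; that residue is (a scalar multiple of) $\varphi_{1}=\beta$.

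\emph{Computing $z_{1}^{p}$.} Put $\Phi:=\sum_{i=1}^{n}\varphi_{i}\del_{i}$ and $c:=z_{1}(x_{1})=1+x_{1}^{p-1}\varphi_{1}$. Since $z_{1}^{p}$ is again a derivation of $\cO(n;\underline1)$, it is determined by its values on $x_{1},\dots,x_{n}$, so it suffices to show $z_{1}^{p}(x_{j})=-c\,\varphi_{j}$ for all $j$, which gives $z_{1}^{p}=-c\,\Phi$. Here $\del_{1}\varphi_{i}=0$ gives $z_{1}(\varphi_{i})\in x_{1}^{p-1}\cO(n;\underline1)$, and the Leibniz rule gives $z_{1}(x_{1}^{p-k})=(p-k)x_{1}^{p-k-1}c=-k\,x_{1}^{p-k-1}$ for $1\le k\le p-2$ (the correction term $(p-k)x_{1}^{2p-k-2}\varphi_{1}$ vanishes because $x_{1}^{p}=0$ and $p>2$). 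A straightforward induction on $k$ built from these two facts yields
\[
z_{1}^{k}(x_{j})=(-1)^{k-1}(k-1)!\,x_{1}^{p-k}\varphi_{j}\qquad(1\le k\le p-1),
\]
valid for $2\le k\le p-1$ when $j=1$, the remaining value being $z_{1}(x_{1})=1+x_{1}^{p-1}\varphi_{1}$. Applying $z_{1}$ once more to the case $k=p-1$, and using $x_{1}\cdot x_{1}^{p-1}=0$, $(-1)^{p-2}=-1$, and Wilson's theorem $(p-2)!\equiv 1\ (\modd p)$, one gets $z_{1}^{p}(x_{j})=-(p-2)!\,c\,\varphi_{j}=-c\,\varphi_{j}$ for every $j$, i.e. $z_{1}^{p}=-(1+x_{1}^{p-1}\varphi_{1})\sum_{i=1}^{n}\varphi_{i}\del_{i}$, as claimed; this last part is a routine computation with the Leibniz rule and the truncation relations.
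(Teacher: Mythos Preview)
Your computation of $z_{1}^{p}$ is correct and is essentially the same calculation as in the paper, just with the bookkeeping organised via $(-1)^{k-1}(k-1)!$ and Wilson's theorem rather than $(p-1)\cdots(p-\eta+1)$.

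For the normal form, your approach is genuinely different from the paper's. The paper argues by induction on the \emph{total} degree: after arranging $z\equiv\del_{1}\pmod{W(n;\underline1)_{(0)}}$, it repeatedly applies automorphisms $\Phi^{-1}(x_{j})=x_{j}+g_{j}$ with all $g_{j}$ homogeneous of a fixed degree $\nu$, which kills the degree-$(\nu-1)$ part of $z-\del_{1}$ modulo $W(n;\underline1)_{(\nu-1)}$; iterating over $\nu$ forces the residual terms into $x_{1}^{p-1}R$. Your argument instead filters by $x_{1}$-degree and solves for the new coordinates in one shot. This is more conceptual and explains cleanly why $x_{1}^{p-1}R$ is the obstruction space, whereas the paper's route is more elementary and avoids any subtlety about whether the $y_{i}$ really form a coordinate system and whether the normal form survives the change of variables.

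One point does need correction: the decomposition $\cO(n;\underline1)=z(\cO(n;\underline1))\oplus x_{1}^{p-1}R$ is \emph{not} a direct sum in general. For instance, if $z=\del_{1}+x_{1}^{p-1}\del_{2}$ then $\ker z=k[x_{3},\dots,x_{n}]$ has dimension $p^{n-2}$, so $\dim z(\cO)=p^{n}-p^{n-2}$ and the two summands overlap. What your filtration argument actually proves (and all you need) is the \emph{sum} $\cO=z(\cO)+x_{1}^{p-1}R$: the isomorphisms $F^{m}/F^{m+1}\to F^{m-1}/F^{m}$ give $F^{m-1}=z(F^{m})+F^{m}$ for $1\le m\le p-1$, and iterating yields $\cO=z(F^{1})+x_{1}^{p-1}R$. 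This is enough to find $y_{j}\in F^{1}$ with $z(y_{j})-\delta_{1j}\in x_{1}^{p-1}R$; the $y_{j}$ then satisfy $y_{j}\equiv x_{j}+\alpha_{j}x_{1}\pmod{\fm^{2}}$, which already makes them a coordinate system (the Jacobian is unipotent), so no ``further linear adjustment'' is needed. Replace the $\oplus$ by $+$, drop the phrase ``dimension count'', and your argument is complete.
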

\begin{myproof}[Sketch of proof]
In \cite[Sec.~3]{D70}, S.~P.~Demushkin used the convention that for any $D\in W(n; \underline{1})$ and $\Phi \in G$, $D^{\Phi}=\Phi^{-1}\circ D\circ\Phi$. Our convention is that $D^{\Phi}=\Phi\circ D\circ\Phi^{-1}$. So we need to slightly modify his proof.

Let $z=\sum_{i=1}^{n}f_i\del_i$ be an element of $W(n; \underline{1})$ such that $z\notin W(n;\underline{1})_{(0)}$. Then $f_\mu(0, \dots, 0)\neq 0$ for some $1\leq \mu\leq n$. If $\mu\neq 1$, then we show that $z$ is conjugate under $G$ to $\sum_{i=1}^{n}f_i\del_i$ with $f_1(0, \dots, 0)\neq 0$. Let $\Phi\in G$ be such that $\Phi(x_1)=x_\mu, \Phi(x_\mu)=x_1$ and $\Phi(x_j)=x_j$ for $j\neq 1, \mu$. Then $\Phi^{-1}(x_1)=x_\mu, \Phi^{-1}(x_\mu)=x_1$ and $\Phi^{-1}(x_j)=x_j$ for $j\neq 1, \mu$. By \eqref{autoconjugationrule2}, 
\begin{align*}
z^{\Phi}=\sum_{i,\, j=1}^{n}f_i^{\Phi}\bigg(\del_i\big(\Phi^{-1}(x_j)\big)\bigg)^{\Phi}\del_j=f_{\mu}^{\Phi}\del_1+f_1^{\Phi}\del_\mu+\sum_{i\neq 1,\,\mu}f_{i}^{\Phi}\del_i.
\end{align*}
Since $f_\mu^{\Phi}(0, \dots, 0)=f_\mu(0, \dots, 0)\neq 0$, we may assume from the beginning that \\$z=\sum_{i=1}^{n}f_i\del_i$ with $f_1(0, \dots, 0)\neq 0$. 

Let $\Phi_2\in G$ be such that $\Phi_2(x_1)=f_1(0, \dots, 0)x_1$ and $\Phi_2(x_j)=x_j$ for $2\leq j\leq n$. Then $\Phi_2^{-1}(x_1)=f_1^{-1}(0, \dots, 0)x_1$ and $\Phi_2^{-1}(x_j)=x_j$ for $2\leq j\leq n$. Applying $\Phi_2$ to $z$, we get 
\[
z^{\Phi_2}=\sum_{i,\, j=1}^{n}f_i^{\Phi_2}\bigg(\del_i\big(\Phi_2^{-1}(x_j)\big)\bigg)^{\Phi_2}\del_j=f_1^{\Phi_2}f_1^{-1}(0, \dots, 0)\del_1+\sum_{i=2}^{n}f_i^{\Phi_2}\del_i.
\]
Since $f_1^{\Phi_2}(0, \dots, 0)=f_1(0, \dots, 0)$, we may assume from the beginning that \\$z=\sum_{i=1}^{n}f_i\del_i$ with $f_1(0, \dots, 0)=1$. 

Let $\Phi_3\in G$ be such that $\Phi_3(x_1)=x_1$ and $\Phi_3(x_j)=x_j+\alpha_jx_1$, where $\alpha_j=f_j(0\, \dots, 0)$ for $2\leq j\leq n$. Then $\Phi_3^{-1}(x_1)=x_1$ and $\Phi_3^{-1}(x_j)=x_j-\alpha_jx_1$ for $2\leq j\leq n$. Applying $\Phi_3$ to $z$, we get 
\[
z^{\Phi_3}=\sum_{i,\, j=1}^{n}f_i^{\Phi_3}\bigg(\del_i\big(\Phi_3^{-1}(x_j)\big)\bigg)^{\Phi_3}\del_j=f_1^{\Phi_3}\del_1+\sum_{i=2}^{n}(f_i^{\Phi_3}-\alpha_if_1^{\Phi_3})\del_i.
\]
Hence we may assume from the beginning that $z=\sum_{i=1}^{n}f_i\del_i$ with $f_1(0, \dots, 0)=1$ and $f_i(0, \dots, 0)=0$ for all $2\leq i\leq n$. Then we can write
\begin{equation}\label{f1fi}
f_1=1+\sum_{l=1}^{m(p-1)}f_{1, l}\quad \text{and}\quad f_i=\sum_{l=1}^{m(p-1)}f_{i, l}
\end{equation}
for some $f_{1, l},f_{i, l}\in \fm$ with $\deg f_{1, l}=\deg f_{i, l}=l$. Let $\Phi_4\in G$ be such that $\Phi_4^{-1}(x_j)=x_j+g_j$ for $1\leq j\leq n$, where $g_j$ are elements of $\fm$ with the same degree $\nu\geq 2$. Applying $\Phi_4$ to $z$, we get
\begin{equation}\label{f1fiusing}
z^{\Phi_4}=\sum_{i,\, j=1}^{n}f_i^{\Phi_4}\bigg(\del_i\big(\Phi_4^{-1}(x_j)\big)\bigg)^{\Phi_4}\del_j=\sum_{i,\,j=1}^{n}f_i^{\Phi_4}\bigg( \del_i\big(x_j+g_j\big)\bigg)^{\Phi_4}\del_j.
\end{equation}
Substituting \eqref{f1fi} into \eqref{f1fiusing} and expanding out, we get 
\[
z^{\Phi_4}\equiv \sum_{i=1}^{n}\big(f_i+\del_1(g_i)\big)\del_i \quad (\modd W(n; \underline{1})_{(\nu-1)}).
\]
Since $f_i(0, \dots, 0)=\delta_{i1}$, we can rewrite the above as
\[
z^{\Phi_4}\equiv \del_1+\big(f_1-1+\del_1(g_1)\big)\del_1+\sum_{i=2}^{n}\big(f_i+\del_1(g_i)\big)\del_i \quad (\modd W(n; \underline{1})_{(\nu-1)}),
\]
where $f_1-1+\del_1(g_1), f_i+\del_1(g_i)\in \fm$ with degrees strictly less than $\nu$. It follows that $z$ is conjugate under $G$ to 
\[
z_1=\del_1+x_1^{p-1}\sum_{i=1}^{n}\varphi_i\del_i,
\]
where $\varphi_i\in k[X_2,\dots, X_{n}]/(X_2^{p}, \dots, X_{n}^{p})$.

It remains to show that $z_1^{p}=-(1+x_1^{p-1}\varphi_1)\sum_{i=1}^{n}\varphi_i\del_i$. Note that 
\begin{align*}
z_1(x_1)&=1+x_1^{p-1}\varphi_1,\\
z_1^{2}(x_1)&=(p-1)x_1^{p-2}\varphi_1,\\
\dots\\
z_1^{\eta}(x_1)&=(p-1)\cdots (p-\eta+1)x_1^{p-\eta}\varphi_1 \,\text{for $2\leq \eta\leq p-1$, and}\\
z_1^{p}(x_1)&=(p-1)!(1+x_1^{p-1}\varphi_1)\varphi_1=-(1+x_1^{p-1}\varphi_1)\varphi_1.
\end{align*}
Similarly, one can show that $z_1^{p}(x_i)=-(1+x_1^{p-1}\varphi_1)\varphi_i$ for $2\leq i\leq n$. Hence 
\[
z_1^{p}=-(1+x_1^{p-1}\varphi_1)\sum_{i=1}^{n}\varphi_i\del_i.
\] This completes the sketch of proof.
\end{myproof}

Recall that $\sD=\del_1+x_1^{p-1}\del_2+\dots+x_1^{p-1}\cdots x_{n-1}^{p-1}\del_n$. We can now describe $O=G.\sD$ as:
\begin{lem}\cite[Lemma 4]{P91}\thlabel{WnOresultslem4}
\[
O=\big\{D\in \cN(W(n; \underline{1}))\,|\, D^{p^{n-1}} \not \in W(n; \underline{1})_{(0)}\big\}.
\]
\end{lem}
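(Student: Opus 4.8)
The plan is to prove the set equality by establishing the two inclusions separately, recalling that $O=G.\sD$. The inclusion ``$\subseteq$'' is easy: by \thref{WnDresultslem3}(i), $\sD^{p^n}=0$, so $\sD$, hence every element of $O$, is nilpotent; and $\sD^{p^{n-1}}=(-1)^{n-1}\del_n$ has nonzero constant coefficient, so $\sD^{p^{n-1}}\notin W(n;\underline1)_{(0)}$. Since $G$ preserves the standard filtration and commutes with the $[p]$-th power map, $(\sigma.\sD)^{p^{n-1}}=\sigma.(\sD^{p^{n-1}})\notin W(n;\underline1)_{(0)}$ for every $\sigma\in G$, which gives ``$\subseteq$''.

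\textbf{The reverse inclusion, by induction on $n$.} I would first record a preliminary fact used repeatedly: if $z\in\cN(W(n;\underline1))$ satisfies $z\notin W(n;\underline1)_{(0)}$ and $z^{p}=0$, then $z$ is $G$-conjugate to $\del_1$. Indeed, by \thref{D70Lem6part} we may assume $z=\del_1+x_1^{p-1}\sum_i\varphi_i\del_i$ with $z^{p}=-(1+x_1^{p-1}\varphi_1)\sum_i\varphi_i\del_i$; as $1+x_1^{p-1}\varphi_1$ is a unit, $z^p=0$ forces all $\varphi_i=0$, and composing with coordinate permutations and scalings makes $z$ conjugate to $c\,\del_n$ for any $c\in k^{*}$. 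Now take $D\in\cN(W(n;\underline1))$ with $D^{p^{n-1}}\notin W(n;\underline1)_{(0)}$. Since $W(n;\underline1)_{(0)}$ is a restricted subalgebra (\thref{W(m;n)_(0)restricted}), this forces $D^{p^l}\notin W(n;\underline1)_{(0)}$ for all $0\le l\le n-1$, and by \thref{nvarietythm}(iv) (here $e=0$, $s=n$) we have $D^{p^n}=0$. For $n=1$ this already finishes: by \thref{D70Lem6part}, $D$ is conjugate to $\del_1+x_1^{p-1}\varphi_1\del_1$ with $\varphi_1\in k$, and $D^{p}=0$ forces $\varphi_1=0$, so $D\sim\del_1=\sD$.

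\textbf{Inductive step.} For $n\ge 2$, I would apply the preliminary fact to $D^{p^{n-1}}$ (nilpotent, outside $W(n;\underline1)_{(0)}$, with $p$-th power $D^{p^n}=0$): after conjugation assume $D^{p^{n-1}}=(-1)^{n-1}\del_n$, so $[D,\del_n]=0$. The centralizer is $\fc_{W(n;\underline1)}(\del_n)=W(n-1;\underline1)\oplus R'\del_n$, where $R'\subset\cO(n;\underline1)$ is the subalgebra on $x_1,\dots,x_{n-1}$ (a copy of $\cO(n-1;\underline1)$), $W(n-1;\underline1)=\Der R'$, and $R'\del_n$ is an abelian ideal. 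Write $D=\tilde D+f\del_n$. A routine computation with Jacobson's formula, using \thref{wittthm}(v) ($W(n-1;\underline1)$ is restricted with $[p]$-power equal to the $p$-th operator power) and the abelianness of $R'\del_n$, should give $D^{[p]^{l}}=\tilde D^{[p]^{l}}+\tilde D^{\,p^{l}-1}(f)\,\del_n$ for all $l$. Taking $l=n-1$ and comparing with $D^{p^{n-1}}=(-1)^{n-1}\del_n$ yields $\tilde D^{p^{n-1}}=0$ and $\tilde D^{\,p^{n-1}-1}(f)=(-1)^{n-1}$, so $\tilde D$ acts on $R'$ as a nilpotent operator with a cyclic vector, i.e.\ as a single Jordan block of size $p^{n-1}=\dim R'$. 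In particular $\ker(\tilde D|_{R'})=\im(\tilde D^{\,p^{n-1}-1}|_{R'})=k\cdot 1$; since any element of $W(n-1;\underline1)_{(0)}$ carries $R'$ into $\fm\cap R'$, this forces $\tilde D^{p^{n-2}}\notin W(n-1;\underline1)_{(0)}$, so $\tilde D$ meets the hypotheses of the lemma in $W(n-1;\underline1)$. By the induction hypothesis there is $\bar\sigma\in\Aut(R')$ with $\tilde D^{\bar\sigma}=\sD_{n-1}:=\del_1+x_1^{p-1}\del_2+\dots+x_1^{p-1}\cdots x_{n-2}^{p-1}\del_{n-1}$; extending $\bar\sigma$ by $x_n\mapsto x_n$ (which fixes $\del_n$) we may assume $D=\sD_{n-1}+f\del_n$. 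Finally, conjugating by $x_n\mapsto x_n+a$ with $a\in\fm\cap R'$ (other variables fixed) replaces $D$ by $\sD_{n-1}+(f-\sD_{n-1}(a))\del_n$, so $f$ may be changed by any element of $\im(\sD_{n-1}|_{R'})$; since $\sD_{n-1}^{\,p^{n-1}-1}(x_1^{p-1}\cdots x_{n-1}^{p-1})=(-1)^{n-1}=\tilde D^{\,p^{n-1}-1}(f)$ by \thref{WnDresultslem3}(iii) and $\sD_{n-1}|_{R'}$ is a single Jordan block, $f-x_1^{p-1}\cdots x_{n-1}^{p-1}\in\ker(\sD_{n-1}^{\,p^{n-1}-1}|_{R'})=\im(\sD_{n-1}|_{R'})$. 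Hence $D$ is $G$-conjugate to $\sD_{n-1}+x_1^{p-1}\cdots x_{n-1}^{p-1}\del_n=\sD$, completing the induction.

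\textbf{Main obstacle.} The delicate point is the inductive step, specifically ruling out the degenerate possibility that the ``lower part'' $\tilde D$ lies in $W(n-1;\underline1)_{(0)}$: the only route I see is to extract from the single identity $D^{p^{n-1}}=(-1)^{n-1}\del_n$ the fact that $\tilde D$ acts on $\cO(n-1;\underline1)$ as a regular nilpotent (one Jordan block), and then exploit $\ker(\tilde D)=k\cdot 1$. One must also keep careful track of which subgroup of $G$ is used at each stage — the stabiliser of $\del_n$, then $\Aut(R')$, then the unipotent family $x_n\mapsto x_n+a$ — so that successive normalizations remain compatible.
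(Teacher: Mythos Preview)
Your proposal is correct and follows essentially the same approach as the paper's proof: both establish the easy inclusion via \thref{WnDresultslem3}(i), then prove the reverse inclusion by induction on $n$, normalizing $D^{p^{n-1}}$ to (a scalar multiple of) $\del_n$ via \thref{D70Lem6part}, passing to the centralizer $\fc_{W(n;\underline{1})}(\del_n)=W(n-1;\underline{1})\ltimes R'\del_n$, using the formula $D^{p^{l}}=\tilde D^{p^{l}}+\tilde D^{\,p^{l}-1}(f)\del_n$ to apply induction to $\tilde D$, and finishing with the automorphism $x_n\mapsto x_n+a$. Your choice to normalize to $(-1)^{n-1}\del_n$ rather than $\del_n$ is a small streamlining that lets you skip the final rescaling $x_n\mapsto\alpha x_n$ the paper uses.
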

\begin{myproof}[Sketch of proof]
Set $S=\big\{D\in \cN(W(n; \underline{1}))\,|\, D^{p^{n-1}} \not \in W(n; \underline{1})_{(0)}\big\}$. It follows from \thref{WnDresultslem3}(i) that $\sD^{p^{n}}=0$ and $\sD^{p^{n-1}}=(-1)^{n-1}\del_n\notin W(n; \underline{1})_{(0)}$. Hence $O\subseteq S$. To show that $S\subseteq O$, we proceed by induction on $n$. 
Let $z$ be an element of $W(n; \underline{1})$ such that $z\notin W(n; \underline{1})_{(0)}$. By \thref{D70Lem6part}, $z$ is conjugate under $G$ to $\del_1+x_1^{p-1}\sum_{i=1}^{n}\varphi_i\del_i$, where $\varphi_i\in k[X_2,\dots, X_{n}]/(X_2^{p}, \dots, X_{n}^{p})$. Moreover, 
\[
\big(\del_1+x_1^{p-1}\sum_{i=1}^{n}\varphi_i\del_i\big)^{p}=-(1+x_1^{p-1}\varphi_1)\sum_{i=1}^{n}\varphi_i\del_i.
\]
If $z^p=0$, then $(1+x_1^{p-1}\varphi_1)\varphi_i=0$ for all $1\leq i\leq n$. Since $1+x_1^{p-1}\varphi_1$ is invertible in $\cO(n;\underline{1})$, we have that $\varphi_i=0$ for all $1\leq i\leq n$. Hence any $D\in W(n; \underline{1})$ with $D \not \in W(n; \underline{1})_{(0)}$ and $D^{p}=0$ is conjugate under $G$ to $\del_1$. Thus, the result is true for $n=1$. 

Suppose $n>1$. Let $y \in \cN(W(n; \underline{1}))$ be such that $y^{p^{n-1}} \not \in W(n; \underline{1})_{(0)}$. Set $z=y^{p^{n-1}}$. Since $z \not \in W(n; \underline{1})_{(0)}$ and $z^{p}=0$, there exists $\Phi\in G$ such that $\Phi z \Phi^{-1}=(\Phi y \Phi^{-1})^{p^{n-1}}=\del_n$. So we may assume that $z=y^{p^{n-1}}=\del_n$. Since $[y, y^{p^{n-1}}]=0$, we have that $y \in \fc_{W(n; \underline{1})}(\del_n)$. One can check that the centralizer $\fc_{W(n; \underline{1})}(\del_n)$ is isomorphic to the semidirect product of $W(n-1; \underline{1})$ and an abelian ideal $J=\{f_n\del_n\,|\, f_n\in \cO(n-1; \underline{1})\}$. Hence  
\[
y=f_1\del_1+\dots+f_{n-1}\del_{n-1}+f_n\del_n
\]
for some $f_i\in \cO(n-1; \underline{1})$. Set $y_1=f_1\del_1+\dots+f_{n-1}\del_{n-1}$ and $y_2=f_n\del_n$. We show that $y_1$ is conjugate under $G$ to $\sD_0=\del_1+x_1^{p-1}\del_2+\dots+x_1^{p-1}\cdots x_{n-2}^{p-1}\del_{n-1}$. Since $y_1\in \fc_{W(n; \underline{1})}(\del_n)$, it follows from \eqref{LbracketWm} and Jacobson's formula that for any $j\geq 1$, $y^{p^{j}}=y_1^{p^{j}}+y_1^{p^{j}-1}(f_n)\del_n$. In particular,
\[
y^{p^{n-1}}=y_1^{p^{n-1}}+y_1^{p^{n-1}-1}(f_n)\del_n.
\]
Since $y^{p^{n}}=0$, this implies that $y_1\in \cN(W(n-1; \underline{1}))$. Then $y_1^{p^{n-1}}=0$ and $y^{p^{n-1}}=y_1^{p^{n-1}-1}(f_n)\del_n$. By our assumption, $y^{p^{n-1}} \not \in W(n; \underline{1})_{(0)}$. Hence $y_1^{p^{n-1}-1}(f_n)$ is invertible in $\cO(n-1; \underline{1})$. Note that if $D_1\in W(n; \underline{1})_{(0)}$, then $D_1(\cO(n; \underline{1}))\subseteq \fm$. It follows that $y_1^{p^{n-2}}\notin W(n-1; \underline{1})_{(0)}$. Indeed, if $y_1^{p^{n-2}}\in W(n-1; \underline{1})_{(0)}$, then $y_1^{p^{n-1}-1}(f_n)=(y_1)^{p^{n-2}}(f')\in \fm$, a contradiction. So $y_1^{p^{n-2}}\notin W(n-1; \underline{1})_{(0)}$. 

By the induction hypothesis, $y_1$ is conjugate under $\Aut(W(n-1; \underline{1}))$ to $\sD_0=\del_1+x_1^{p-1}\del_2+\dots+x_1^{p-1}\cdots x_{n-2}^{p-1}\del_{n-1}$. Since $W(n-1; \underline{1})$ is a Lie subalgebra of $W(n; \underline{1})$, we may identify $\Aut(W(n-1; \underline{1}))$ with a subgroup of $\Aut(W(n; \underline{1}))$ by letting $\sigma(x_n)=x_n$ for all $\sigma \in \Aut(W(n-1; \underline{1}))$. Hence we may assume that $y=\sD_0+\psi\del_n$ for some $\psi\in \cO(n-1; \underline{1})$ with $\sD_0^{p^{n-1}-1}(\psi)$ invertible in $\cO(n-1; \underline{1})$. Since $\sD_0^{p^{n-1}}=0$, we have that $\Ker(\sD_0^{p^{n-1}-1}|_{\cO(n-1; \underline{1})})=\sD_0(\cO(n-1; \underline{1}))$. By \thref{WnDresultslem3}(iii), we have that 
\[
\text{$\sD_0^{p^{n-1}-1}(\cO(n-1; \underline{1}))=k$ and $\sD_0^{p^{n-1}-1}(x_1^{p-1}\cdots x_{n-1}^{p-1})=(-1)^{n-1}$}.
\] 
Hence there exist $\varphi \in \cO(n-1; \underline{1}) \cap \fm$ and $\alpha \in k^{*}$ such that 
\[
\psi=\sD_0(\varphi)+\alpha x_1^{p-1}\cdots x_{n-1}^{p-1}.
\] 

Next we show that $y=\sD_0+\psi\del_n$ is conjugate under $G$ to $\sD$.
Let $\Phi_1 \in G$ be such that $\Phi_1|_{\cO(n-1; \underline{1})}=\Id$ and $\Phi_1(x_n)=x_n+\varphi$. It is easy to check that $\Phi_1^{-1}(x_i)=x_i$ for $1\leq i\leq n-1$ and $\Phi_1^{-1}(x_n)=x_n-\varphi$. Then for $1\leq i\leq n-1$,
\[
\Phi_1 y\Phi_1^{-1}(x_i)=\Phi_1(\sD_0+\psi\del_n)(x_i)=\sD_0(x_i),
\]
and 
\[
\Phi_1 y\Phi_1^{-1}(x_n)=\Phi_1(\sD_0+\psi\del_n)(x_n-\varphi)=\Phi_1\big(-\sD_0(\varphi)+\psi\big)=\alpha x_1^{p-1}\cdots x_{n-1}^{p-1}.
\]
So $\Phi_1 y\Phi_1^{-1}=\sD_0+\alpha x_1^{p-1}\cdots x_{n-1}^{p-1}\del_n$. Let $\Phi_2\in G$ be such that $\Phi_2|_{\cO(n-1; \underline{1})}=\Id$ and $\Phi_2(x_n)=\alpha x_n$. Then one can show that $\Phi_2 (\Phi_1 y\Phi_1^{-1})\Phi_2^{-1}=\sD$. It follows that $y\in O$. Therefore, we proved by induction that $S \subseteq O$. As a result, $O=S$. This completes the sketch of proof.
\end{myproof}

Now we consider the complement of $O$ in $\cN(W(n; \underline{1}))$ and show that this complement has dimension strictly less than $\dim\cN(W(n; \underline{1}))=\dim W(n; \underline{1})-n$; see \thref{WnN0resultslem5}. For that, we need to introduce the notion of admissible $n$-tuples and define a lexicographic ordering on them; see also \thref{lexordering2.2.2} in Sec.~\ref{nileleminND}.

\begin{defn}\cite[Lemma 5]{P91}\thlabel{admissibletupleorderingdefn}
\begin{enumerate}[\upshape(i)]
\item We say that an $n$-tuple $A=(a_1, \dots, a_n)\in \N_0^{n}$ is \textnormal{admissible} if $0\leq a_i\leq p-1$ for any $1\leq i\leq n$. We write $\boldsymbol{x}^{A}:=x_1^{a_1}\cdots x_n^{a_n}$.
\item We define a \textnormal{lexicographic ordering} $\prec_{\text{lex}}$ on the set of admissible $n$-tuples by extending the ordering $(1, 0, \dots, 0)\prec_{\text{lex}} (0, 1, \dots, 0)\prec_{\text{lex}}\dots\prec_{\text{lex}}(0, 0, \dots, 1)$.
Explicitly, for any two non-equal admissible $n$-tuples $A=(a_1, \dots, a_n)$ and $A'=(a'_1, \dots, a'_n)$, 
\[
\text{$A=(a_1, \dots, a_n)\prec_{\text{lex}} (a'_1, \dots, a'_n)=A'$ if and only if $a_i<a'_i$},
\]
where $i$ is the largest number in $\{1, \dots, n\}$ for which $a_i\neq a'_i$.
\end{enumerate}
\end{defn}

\begin{lem}\cite[Lemma 5]{P91}\thlabel{Dcompatiblewithordering}
The action of $\sD=\del_1+x_1^{p-1}\del_2+\dots+x_1^{p-1}\cdots x_{n-1}^{p-1}\del_n$ on $\cO(n; \underline{1})$ is compatible with the lexicographic ordering $\prec_{\text{lex}}$ defined in \thref{admissibletupleorderingdefn}. Explicitly, for any admissible $n$-tuple $A\neq (0, \dots, 0)$, we have that $\sD(\boldsymbol{x}^{A})=\lambda_{A}\boldsymbol{x}^{A'}$, where $\lambda_{A}\in k^{*}$ and $A'\prec_{\text{lex}} A$. It follows from the explicit formulas for $A'$ in the proof that if $B$ and $C$ are admissible $n$-tuples such that $B, C\neq (0, \dots, 0)$ and $B\prec_{\text{lex}} C$, then $B'\prec_{\text{lex}} C'$.
\end{lem}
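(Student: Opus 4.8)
The plan is to compute $\sD(\boldsymbol{x}^{A})$ explicitly, read off the tuple $A'$ and the scalar $\lambda_{A}$ from the answer, and then deduce both ordering statements from one numerical identity. Write $\sD=\sum_{j=1}^{n}x_{1}^{p-1}\cdots x_{j-1}^{p-1}\del_{j}$, the empty product being $1$ when $j=1$. Applying $\sD$ to $\boldsymbol{x}^{A}=x_{1}^{a_{1}}\cdots x_{n}^{a_{n}}$, the $j$-th summand is $a_{j}\,x_{1}^{a_{1}+p-1}\cdots x_{j-1}^{a_{j-1}+p-1}\,x_{j}^{a_{j}-1}\,x_{j+1}^{a_{j+1}}\cdots x_{n}^{a_{n}}$. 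Since we are in $\cO(n;\underline{1})$, where $x_{i}^{p}=0$, the factor $x_{i}^{a_{i}+p-1}$ vanishes whenever $a_{i}\geq 1$; hence the $j$-th summand is nonzero only if $a_{j}\neq 0$ while $a_{1}=\dots=a_{j-1}=0$. As $A\neq(0,\dots,0)$, there is exactly one such index, namely $j_{0}:=\min\{\,i\mid a_{i}\neq 0\,\}$, and one obtains
\[
\sD(\boldsymbol{x}^{A})=a_{j_{0}}\;x_{1}^{p-1}\cdots x_{j_{0}-1}^{p-1}\;x_{j_{0}}^{a_{j_{0}}-1}\;x_{j_{0}+1}^{a_{j_{0}+1}}\cdots x_{n}^{a_{n}}.
\]
Thus $\lambda_{A}=a_{j_{0}}$, which is a unit in $k$ since $1\leq a_{j_{0}}\leq p-1$, and $A'=(a'_{1},\dots,a'_{n})$ is the admissible tuple with $a'_{i}=p-1$ for $i<j_{0}$, $a'_{j_{0}}=a_{j_{0}}-1$, and $a'_{i}=a_{i}$ for $i>j_{0}$.

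For the ordering statements I would use the $p$-adic value $N(A):=\sum_{j=1}^{n}a_{j}p^{j-1}$ of an admissible tuple. Because $0\leq a_{j}\leq p-1$, the standard base-$p$ estimate shows that for two distinct admissible tuples the sign of $N(A)-N(A')$ is determined by the largest index at which they differ; that is, $A\prec_{\text{lex}}A'$ if and only if $N(A)<N(A')$. From the explicit description of $A'$, together with $a_{j}=0$ for $j<j_{0}$,
\[
N(A)-N(A')=a_{j_{0}}p^{j_{0}-1}-\Big(\sum_{j=1}^{j_{0}-1}(p-1)p^{j-1}+(a_{j_{0}}-1)p^{j_{0}-1}\Big)=p^{j_{0}-1}-\bigl(p^{j_{0}-1}-1\bigr)=1.
\]
So $N(A')=N(A)-1$; in particular $N(A')<N(A)$, whence $A'\prec_{\text{lex}}A$. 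And if $B,C$ are nonzero admissible tuples with $B\prec_{\text{lex}}C$, then $N(B)<N(C)$ gives $N(B')=N(B)-1<N(C)-1=N(C')$, i.e. $B'\prec_{\text{lex}}C'$, which is the last assertion of the lemma.

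Alternatively one could argue via \thref{WnDresultslem3}(iii): there $\sD$ acts on $\cO(n;\underline{1})$ as a single Jordan block of size $p^{n}$ with cyclic vector $x_{1}^{p-1}\cdots x_{n}^{p-1}$, so every monomial $\boldsymbol{x}^{A}$ equals, up to a nonzero scalar, $\sD^{\,i(A)}(x_{1}^{p-1}\cdots x_{n}^{p-1})$ for a unique $i(A)\in\{0,\dots,p^{n}-1\}$, one checks $i(A)=(p^{n}-1)-N(A)$, and applying $\sD$ raises $i$ by $1$, recovering $N(A')=N(A)-1$. I do not expect a genuine obstacle: the lemma is in essence a bookkeeping computation, and the only step needing care is the truncation, where one must confirm that exactly one summand of $\sD$ survives on $\boldsymbol{x}^{A}$ (so that $\sD(\boldsymbol{x}^{A})$ is again a scalar times a monomial); once the explicit $A'$ is in hand, compatibility with $\prec_{\text{lex}}$ follows at once from $N(A')=N(A)-1$.
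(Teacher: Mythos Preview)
Your proof is correct. The computation of $\sD(\boldsymbol{x}^{A})$ matches the paper's exactly: both isolate the unique surviving summand at the first nonzero index and arrive at the same explicit description of $A'$. The difference lies in how the ordering statements are handled. The paper verifies $A'\prec_{\text{lex}}A$ directly from the definition (looking at the largest index where $A$ and $A'$ differ), and for the order-preservation claim $B\prec_{\text{lex}}C\Rightarrow B'\prec_{\text{lex}}C'$ it simply asserts that this ``follows from the explicit formulas for $A'$'' without spelling out a case analysis. Your route through the $p$-adic integer $N(A)=\sum a_{j}p^{j-1}$ and the identity $N(A')=N(A)-1$ is cleaner: it dispatches both ordering claims in one stroke and avoids any case distinction. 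In fact the paper introduces essentially this same $|\,\,|_{p}$-degree later in Chapter~3 (\thref{DegLexdefn}) for a closely related purpose, so you are anticipating a device the author eventually adopts. Your alternative remark via the Jordan block structure of \thref{WnDresultslem3}(iii) is also valid and gives a pleasant conceptual reason for the identity $N(A')=N(A)-1$.
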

\begin{myproof}[Sketch of proof]
Let $A=(a_1, \dots, a_n)\neq (0, \dots, 0)$ be any admissible $n$-tuple. We first show that $\sD(\boldsymbol{x}^A)=\lambda_{A}\boldsymbol{x}^{A'}$, where $\lambda_{A}\in k^{*}$ and $A'\prec_{\text{lex}} A$. If $a_1\geq 1$, then 
\begin{equation*}\label{sDaction1}
\sD(\boldsymbol{x}^{A})=a_1x_1^{a_1-1}x_2^{a_2}\cdots x_n^{a_n}=a_1\boldsymbol{x}^{A'},
\end{equation*}
where $A'=(a_1-1, a_2, \dots, a_n)$. It is easy to see that 
\[A'=(a_1-1, a_2, \dots, a_n)\prec_{\text{lex}} (a_1, \dots, a_n)=A.\]
If $a_1=\dots=a_{s-1}=0$ and $a_s\geq 1$, then 
\begin{equation*}\label{sDaction2}
\begin{aligned}
\sD(\boldsymbol{x}^{A})&=a_s x_1^{p-1}x_2^{p-1}\cdots x_{s-1}^{p-1}x_s^{a_s-1}x_{s+1}^{a_{s+1}}\cdots x_n^{a_n}=a_s\boldsymbol{x}^{A'},
\end{aligned}
\end{equation*}
where $A'=(p-1, \dots, p-1, a_s-1, a_{s+1}, \dots, a_n)$. It is easy to see that 
\[
A'=(p-1,  \dots, p-1, a_s-1, a_{s+1}, \dots, a_n)\prec_{\text{lex}}(0,  \dots, 0, a_s, a_{s+1}, \dots, a_n)= A.
\]
Hence $\sD(\boldsymbol{x}^{A})=\lambda_{A}\boldsymbol{x}^{A'}$ for some $\lambda_{A}\in k^{*}$ and $A'\prec_{\text{lex}} A$. If $A'\neq (0, \dots, 0)$, then applying $\sD$ to $\boldsymbol{x}^{A'}$, we get $\sD(\boldsymbol{x}^{A'})=\lambda_{A'}\boldsymbol{x}^{A''}$, where $\lambda_{A'}\in k^{*}$ and $A''\prec_{\text{lex}} A'$. It follows from the explicit formulas for $A'$ that if $B$ and $C$ are admissible $n$-tuples such that 
$B, C\neq (0, \dots, 0)$ and $B\prec_{\text{lex}} C$, then $B'\prec_{\text{lex}} C'$. Hence the action of $\sD$ is compatible with the lexicographic ordering $\prec_{\text{lex}}$. This completes the sketch of proof.
\end{myproof}

\begin{lem}\cite[Lemma 5]{P91}\thlabel{WnN0resultslem5}
Set
\[
\cN_0:=\big\{D\in \cN(W(n; \underline{1}))\,|\, D^{p^{n-1}} \in W(n; \underline{1})_{(0)}\big\}.
\]
Then
\[
\dim \cN_0<\dim W(n; \underline{1})-n.
\]
\end{lem}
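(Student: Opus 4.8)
I would argue by induction on $n$, first separating off the part of $\cN_0$ contained in the standard maximal subalgebra. Write $\cN_0 = \cN_0' \sqcup \cN_0''$ with $\cN_0'' = \cN_0 \cap W(n;\underline{1})_{(0)}$ and $\cN_0' = \cN_0 \setminus W(n;\underline{1})_{(0)}$. For $\cN_0''$: since $W(n;\underline{1})_{(0)}$ is a restricted subalgebra of $W(n;\underline{1})$ by \thref{W(m;n)_(0)restricted}, every $[p]$-power of an element of $W(n;\underline{1})_{(0)}$ stays inside it, so $\cN_0'' = \cN(W(n;\underline{1})) \cap W(n;\underline{1})_{(0)} = \cN\big(W(n;\underline{1})_{(0)}\big)$. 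The toral element $x_1\partial_1$ is a nonzero semisimple element of $W(n;\underline{1})_{(0)}$, so not every element of $W(n;\underline{1})_{(0)}$ is nilpotent and $\cN\big(W(n;\underline{1})_{(0)}\big)$ is a proper closed subvariety of the affine space $W(n;\underline{1})_{(0)}$; hence $\dim\cN_0'' < \dim W(n;\underline{1})_{(0)} = np^n - n$. (Using all the $x_i\partial_i$ and \thref{MTlem} one gets $\MT(W(n;\underline{1})_{(0)}) = n$ and so $\dim\cN_0'' = np^n-2n$ from \thref{nvarietythm}(iii), but the crude bound suffices.) For $n=1$ this is the whole statement, as then $\cN_0' = \emptyset$, so the base case is done; assume $n \geq 2$, so that the lemma --- and hence, by the argument of this section, the irreducibility of $\cN(W(n-1;\underline{1}))$ --- is available for $n-1$.

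The set $\cN_0'$ is $G$-stable. Given $D \in \cN_0'$, \thref{D70Lem6part} conjugates $D$ by an element of $G$ to a normal form $z = \partial_1 + x_1^{p-1}(\varphi_1\partial_1 + E_0)$, where $\varphi_1 \in \cO(n-1;\underline{1}) := k[X_2,\dots,X_n]/(X_2^p,\dots,X_n^p)$ and $E_0 := \sum_{i=2}^n \varphi_i\partial_i \in W(n-1;\underline{1})$, and moreover $z^p = -(1+x_1^{p-1}\varphi_1)(\varphi_1\partial_1 + E_0)$. Using the decomposition $\cO(n;\underline{1}) = \bigoplus_{a=0}^{p-1}\cO(n-1;\underline{1})\,x_1^a$, I would expand the iterated $[p]$-powers of $z$ from this formula and extract their ``leading part'' along $\cO(n-1;\underline{1})\cdot 1$, which behaves like the corresponding power of $-E_0$; this forces $E_0 \in \cN(W(n-1;\underline{1}))$ (from $D$ nilpotent) and $E_0^{p^{n-2}} \in W(n-1;\underline{1})_{(0)}$ (from $D^{p^{n-1}} \in W(n;\underline{1})_{(0)}$), i.e. $E_0$ lies in the $(n-1)$-variable analogue of $\cN_0$. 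Keeping this ``leading part'' analysis genuinely triangular is precisely the role of the lexicographic ordering of \thref{Dcompatiblewithordering} (applied in $n-1$ variables).

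By the inductive hypothesis the admissible $E_0$ sweep out a proper closed subvariety of $\cN(W(n-1;\underline{1}))$, of dimension $< (n-1)p^{n-1} - (n-1)$; in particular $E_0$ is never the regular nilpotent of $W(n-1;\underline{1})$, so it has positive-dimensional stabiliser in $\Aut(\cO(n-1;\underline{1}))$. Let $\cZ$ be the set of normal forms $z$ that occur, so $\cN_0' = G.\cZ$; sending $z$ to $E_0$ fibres $\cZ$ over this $E_0$-locus with fibres of dimension $\leq \dim\cO(n-1;\underline{1}) = p^{n-1}$ (the choices of $\varphi_1$), giving $\dim\cZ < np^{n-1}-n+1$. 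The subgroup of $G$ that fixes $\partial_1$ and acts through $\Aut(\cO(n-1;\underline{1}))$ on $x_2,\dots,x_n$ preserves $\cZ$, and, together with the positive-dimensional point-stabilisers above, it accounts for enough redundancy in $G.\cZ$ to bring $\dim\cN_0'$ strictly below $np^n-n$. Combining the two estimates gives $\dim\cN_0 < np^n - n = \dim W(n;\underline{1}) - n$.

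The delicate step is this last dimension count. Since $\dim G = np^n - n$ equals $\dim\cN(W(n;\underline{1}))$, the transparent inequality $\dim G.\cZ \leq \dim\cZ + \max_{z\in\cZ}\dim G.z$ is exactly critical, and the strict bound must be extracted from two real codimension gains delivered by the induction: that $E_0$ is confined to the smaller set $\cN_0(W(n-1;\underline{1}))$ (which shrinks $\cZ$), and that this forces $E_0$ to be non-regular (which makes the $G$-orbits met by $\cZ$ non-maximal, so that the redundancy from the $\Aut(\cO(n-1;\underline{1}))$-action and the point-stabilisers is strictly more than $\dim\cZ$). Turning the $[p]$-power formula of \thref{D70Lem6part} and the ordering of \thref{Dcompatiblewithordering} into this bookkeeping is the heart of the proof.
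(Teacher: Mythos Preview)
Your approach is genuinely different from the paper's, and the paper's route is much shorter. The paper does not induct on $n$; it constructs an explicit $(n+1)$-dimensional linear subspace $V = T_n \oplus k\sD$, where $T_n = \spn\{x_1\partial_1,\ldots,x_n\partial_n\}$ is the diagonal torus and $\sD$ is the regular nilpotent of \thref{WnDresultslem3}, and shows $V\cap\cN_0=\{0\}$. The Affine Dimension Theorem (\thref{ADthm}) then gives $\dim\cN_0 \leq np^n-(n+1) < np^n-n$ in one line. The intersection statement is proved by contradiction: if $t+\sD\in\cN_0$ with $t\in T_n$, the compatibility of $\sD$ with the lexicographic order (\thref{Dcompatiblewithordering}) forces $(t+\sD)^{p^n-1}(x_1^{p-1}\cdots x_n^{p-1})$ to be invertible in $\cO(n;\underline{1})$, contradicting $(t+\sD)^{p^{n-1}}\in W(n;\underline{1})_{(0)}$. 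That is the \emph{only} place \thref{Dcompatiblewithordering} enters.

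Your proposal has two real gaps. First, the ``leading part'' step: the $x_1^0$-component of $z^p=-(1+x_1^{p-1}\varphi_1)(\varphi_1\partial_1+E_0)$ is $-\varphi_1\partial_1-E_0$, not $-E_0$. The $\varphi_1\partial_1$ contamination means $z^p$ neither lies in $\fc(\partial_1)$ nor preserves the ideal $(x_1)$, so there is no Lie-algebra or $[p]$-map projection to $W(n-1;\underline{1})$ available, and the claims $E_0\in\cN(W(n-1;\underline{1}))$ and $E_0^{p^{n-2}}\in W(n-1;\underline{1})_{(0)}$ are not justified. Invoking \thref{Dcompatiblewithordering} here is a misapplication: that lemma controls how $\sD$ acts on monomials of $\cO(n;\underline{1})$, not how iterated $[p]$-powers of a general $z$ decompose.

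Second, the dimension count you call ``exactly critical'' is not completed, and the sketch you give does not close. You want $\dim G.\cZ < np^n-n = \dim G$, with $\dim\cZ \leq np^{n-1}-n$. The bound $\dim G.\cZ \leq \dim\cZ + \max_{z\in\cZ}\dim G.z$ then needs $\max\dim G.z < np^n-np^{n-1}$, i.e.\ stabilisers of dimension at least $np^{n-1}$; but knowing only that each $z\in\cN_0$ is singular gives stabiliser dimension $\geq 1$. The ``redundancy'' you invoke from $\mathrm{Stab}_{\Aut(\cO(n-1;\underline{1}))}(E_0)$ does not help: an automorphism fixing $E_0$ typically moves $\varphi_1$, hence moves $z$, so it does not contribute to $\mathrm{Stab}_G(z)$. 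Without a uniform lower bound on stabiliser dimensions across $\cZ$ --- which is essentially as hard as the original statement --- the inductive scheme does not yield the strict inequality.
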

\begin{myproof}[Sketch of proof]
By \thref{ADthm}, it is enough to construct an $(n+1)$-dimensional subspace $V$ in $W(n; \underline{1})$ such that $V \cap \cN_0=\{0\}$. Set $V:=T_n\oplus k\sD$, where $T_n$ is a maximal torus in $W(n; \underline{1})$ with basis $\{x_1\del_1, \dots, x_n\del_n\}$, and $\sD$ is the nilpotent element in \thref{WnDresultslem3}. We show that $V \cap \cN_0=\{0\}$. Suppose the contrary, i.e. $V \cap \cN_0\neq \{0\}$. Then $t+\sD\in \cN_0$ for some $0\neq t\in T_n$. Note that for any admissible $n$-tuple $A=(a_1, \dots, a_n)$, the straight line $k\boldsymbol{x}^{A}$ is invariant under $T_n$ and corresponds to the weight $\bar{c}_1\theta_1+\dots+\bar{c}_n\theta_n$, where $\{\theta_1, \dots, \theta_n\}$ is a basis of $T_n^{*}$ dual to the basis $\{x_1\del_1, \dots, x_n\del_n\}$ and $\bar{c_i}\equiv c_i\,(\modd p)$. By \thref{Dcompatiblewithordering}, we know the action of $\sD$ on $\cO(n; \underline{1})$ is compatible with the lexicographic ordering $\prec_{\text{lex}}$. Hence for the admissible $n$-tuple $\delta=(p-1, \dots, p-1)$, we have that 
\[
\text{$(t+\sD)^{p^{n}-1}(\boldsymbol{x}^\delta)=\sD^{p^{n}-1}(\boldsymbol{x}^\delta)+\sum_{A\succ_{\text{lex}} (0, \dots, 0)}\lambda_{A}\boldsymbol{x}^{A}$ for some $\lambda_{A}\in k^{*}$}.
\]
By \thref{WnDresultslem3}(iii), $\sD^{p^{n}-1}(\boldsymbol{x}^\delta)=(-1)^{n}$. Hence $(t+\sD)^{p^{n}-1}(\boldsymbol{x}^\delta)$ is invertible in $\cO(n; \underline{1})$. But $t+\sD\in \cN_0$ by our assumption. Hence $(t+\sD)^{p^{n}-1}\in W(n; \underline{1})_{(0)}$. Note that if $D_1\in W(n; \underline{1})_{(0)}$, then $D_1(\cO(n; \underline{1}))\subseteq \fm$. In particular, $(t+\sD)^{p^{n}-1}(\boldsymbol{x}^\delta) \in \fm$, which is not invertible. This is a contradiction. Hence $V \cap \cN_0=\{0\}$. Applying \thref{ADthm}, we get the desired result. This completes the sketch of proof.
\end{myproof}

We are now ready to prove that 
\begin{thm}\cite[Lemma 6]{P91}\thlabel{Wnproof}
The variety $\cN(W(n; \underline{1}))$ is irreducible.
\end{thm}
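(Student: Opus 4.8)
The plan is to assemble the three ingredients already in place. Write $d := \dim W(n;\underline{1}) - n$, which by \thref{nvarietythm}(iii) is the common dimension of every irreducible component of $\cN(W(n;\underline{1}))$. From \thref{WnOresultslem4} we have the set-theoretic decomposition $\cN(W(n;\underline{1})) = O \sqcup \cN_0$, where $O = G.\sD$; from \thref{WnDresultslem3}(iv) (trivial stabilizer) together with the connectedness of $G$, the closure $\overline{O}$ is irreducible of dimension $\dim G = d$, hence an irreducible component of $\cN(W(n;\underline{1}))$; and from \thref{WnN0resultslem5} we have $\dim \cN_0 < d$.

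The first step is to upgrade the set decomposition to one into closed pieces. Since $\cN(W(n;\underline{1}))$ is Zariski closed it contains $\overline{O}$ and $\overline{\cN_0}$, so $\cN(W(n;\underline{1})) = O \cup \cN_0 \subseteq \overline{O} \cup \overline{\cN_0} \subseteq \cN(W(n;\underline{1}))$, forcing $\cN(W(n;\underline{1})) = \overline{O} \cup \overline{\cN_0}$ with $\dim \overline{\cN_0} = \dim \cN_0 < d$.

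The second step is a dimension count. Let $Z$ be any irreducible component of $\cN(W(n;\underline{1}))$; it is irreducible, closed, and of dimension $d$. Being irreducible and contained in $\overline{O} \cup \overline{\cN_0}$, it lies wholly in $\overline{O}$ or wholly in $\overline{\cN_0}$; the latter is impossible since $\dim Z = d > \dim \overline{\cN_0}$. Hence $Z \subseteq \overline{O}$, and since $\overline{O}$ is itself irreducible and closed while $Z$ is a maximal irreducible closed subset, $Z = \overline{O}$. Thus $\overline{O}$ is the unique irreducible component, i.e.\ $\cN(W(n;\underline{1}))$ is irreducible.

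The substantive difficulty is not in this final assembly but in the preparatory lemmas: the explicit choice of the nilpotent $\sD$ and the verification that its $G$-stabilizer is trivial (\thref{WnDresultslem3}), the orbit description $O = \{D \in \cN(W(n;\underline{1})) : D^{p^{n-1}} \notin W(n;\underline{1})_{(0)}\}$ proved by induction on $n$ via Demushkin's normal form (\thref{D70Lem6part}, \thref{WnOresultslem4}), and above all the transversality estimate $\dim \cN_0 < d$ (\thref{WnN0resultslem5}), which exhibits an $(n+1)$-dimensional subspace $T_n \oplus k\sD$ meeting $\cN_0$ only in $0$ and invokes Premet's structural results. Granting these, the only point in the final argument requiring a moment's care is that replacing $\cN_0$ by its closure does not raise the dimension, so $\overline{\cN_0}$ still cannot contain a top-dimensional component.
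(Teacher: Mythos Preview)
Your proof is correct and follows essentially the same approach as the paper's: both use that $\overline{O}$ is an irreducible component of dimension $d$ (from \thref{WnDresultslem3} and equidimensionality), that $\cN(W(n;\underline{1}))\setminus O=\cN_0$ (from \thref{WnOresultslem4}), and that $\dim\cN_0<d$ (from \thref{WnN0resultslem5}) to conclude no second component can exist. The only cosmetic difference is that the paper argues directly that a putative second component $Z_2$ satisfies $Z_2\cap O=\emptyset$ and hence $Z_2\subseteq\cN_0$, whereas you pass through the closed decomposition $\cN=\overline{O}\cup\overline{\cN_0}$; since $\cN_0$ is in fact Zariski closed (the condition $D^{p^{n-1}}\in W(n;\underline{1})_{(0)}$ is closed), this amounts to the same thing.
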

\begin{myproof}[Sketch of proof] 
By \thref{nvarietythm}, we know that $\cN(W(n; \underline{1}))$ is equidimensional of dimension $\dim W(n; \underline{1})-n$. By \thref{WnDresultslem3}, we know that $\overline{O}=\overline{G.\sD}$ is an irreducible component of $\cN(W(n; \underline{1}))$. Let $Z_1, \dots, Z_t$ be pairwise distinct irreducible components of $\cN(W(n; \underline{1}))$, and set $Z_1=\overline{O}$. Suppose $t\geq 2$. Observe that $Z_2 \cap O=\emptyset$. Hence $Z_2 \subseteq \cN(W(n; \underline{1}))\setminus O=\cN_0$; see \thref{WnOresultslem4}. Then 
\[
\dim W(n; \underline{1})-n=\dim Z_2\leq \dim \cN_0<\dim W(n; \underline{1})-n
\] 
by \thref{WnN0resultslem5}. This is a contradiction. Hence $t=1$ and $\cN(W(n; \underline{1}))$ is irreducible. This completes the sketch of proof.
\end{myproof}

By a similar argument, the nilpotent variety of $S(n; \underline{1})$ (respectively $S(n; \underline{1})^{(1)}$) is proved to be irreducible. By \thref{Pbracket}, we see that the Poisson Lie algebra $(\cO(2n; \underline{1}), \{,\})$ is closely related to $H(2n; \underline{1})^{(2)}$. In fact, $(\cO(2n; \underline{1})/k)^{(1)}\cong H(2n; \underline{1})^{(2)}$. So the proof for $H(2n; \underline{1})^{(2)}$ relies heavily on Skryabin's work for $(\cO(2n; \underline{1}), \{,\})$; see \cite[Lemma 1.5 and Theorem 6.4]{S2002}.

\section{Some useful theorems}
We present three Block's theorems which will be used in Chapter 3. The first one is useful for part (b) of the sketch proof of \thref{regularderithm} which characterizes all regular elements of the $m$th Jacobson Witt algebra $W(m; \underline{1})$. Let us begin with some definitions.

\begin{defn}\cite[p. 433]{B68}\thlabel{defndfiffsimple}
Let $B$ be a ring (not necessarily associative or has a unit element). A \textnormal{derivation} of $B$ is an additive mapping $d: B\to B$ such that $d(ab)=d(a)b+a d(b)$ for all $a, b\in B$. Let $D$ be a set of derivations of $B$. By a \textnormal{$D$-ideal} of $B$ we mean an ideal of $B$ which is invariant under $D$. The ring $B$ is called \textnormal{$D$-simple} (\textnormal{$d$-simple} if $D$ consists of a single derivation $d$) if $B^2\neq 0$ and if $B$ has no proper $D$-ideals. Also $B$ is called \textnormal{differentiably simple} if it is $D$-simple for some set of derivations $D$ of $B$, and hence for the set of \textnormal{all} derivations of $B$.
\end{defn}
Note that the above definitions are also used for algebras over a ring $C$ and the derivations are assumed to be $C$-linear. Suppose $B$ is a differentiably simple commutative associative ring. At characteristic $0$, $B$ is an integral domain. In particular, if $B$ has a minimal ideal then $B$ is a field; see \cite[Sec.~4, p. 441]{B68}. Now suppose $B$ has prime characteristic. The following theorem determines $B$:

\begin{thm}\cite[Theorem 4.1]{B68}\thlabel{Blockthmdsimple}
Let $B$ be a differentiably simple commutative associative ring of prime characteristic $p$, and let $R=\{x\in B\,|\, x^p=0\}$. If $Rx=0$ for some $x\neq 0$ in $B$ (this will hold, e.g. if $B$ has a minimal ideal), then there is a subfield $E$ of $B$ and an $r\geq 0$ such that $B\cong \cO(r; \underline{1})$ as $E$-algebras. Here $E$ may be taken to be any maximal subfield of $B$ containing the subfield $F$ of differential constants (i.e. elements of $B$ which are annihilated by all derivations).
\end{thm}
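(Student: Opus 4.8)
The plan is to exhibit $B$ as a local ring whose residue field is realised inside $B$ as a coefficient field, and then to strip off one ``coordinate'' at a time by means of a derivation, each step peeling off a tensor factor $\cO(1;\underline{1})$, until nothing remains. First I would set up the local structure. It is standard that a differentiably simple commutative associative ring has an identity (one begins with the fact that $B^{2}$ is a nonzero $D$-ideal, hence $B^{2}=B$). Let $F\subseteq B$ be the set of differential constants. For $0\neq a\in F$ the set $aB$ is a $D$-ideal and contains $a\neq 0$, so $aB=B$; hence $a$ is invertible, and $0=d(a\,a^{-1})=a\,d(a^{-1})$ (since $d(a)=0$) forces $a^{-1}\in F$. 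Thus $F$ is a field. For any $b\in B$ and any derivation $d$ one has $d(b^{p})=p\,b^{p-1}d(b)=0$, so $B^{p}\subseteq F$; consequently every $x$ with $x^{p}\neq 0$ has invertible $p$-th power and is itself a unit, while $R=\{x:x^{p}=0\}$ is an ideal (characteristic $p$ and commutativity). Hence $B$ is a local ring with maximal ideal $\fm=R$. Fixing by Zorn a maximal subfield $E$ of $B$ containing $F$, the composite $E\hookrightarrow B\twoheadrightarrow B/\fm$ is injective, and it is surjective because a residue class not in the image would lift to some $\beta\in B$ with $\beta^{p}\in F\subseteq E$ having no $p$-th root in $E$, so that $E[\beta]\cong E[T]/(T^{p}-\beta^{p})$ would be a strictly larger subfield. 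Therefore $B=E\oplus\fm$, $B$ is an $E$-algebra, and every element of $\fm$ is a $p$-th root of $0$.

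Next the hypothesis $Rx_{0}=0$ enters, in the form $\ann_{B}(\fm)\neq 0$. For the ascending socle filtration $\fs_{i}:=\ann_{B}(\fm^{i})$, a short computation — evaluate $d(s)$, with $s\in\fs_{i}$, against a product of $i+1$ elements of $\fm$, move $d$ across the product, and use $\fm^{i}s=0$ — shows $d(\fs_{i})\subseteq\fs_{i+1}$ for every derivation $d$. Thus $\bigcup_{i}\fs_{i}$ is a nonzero $D$-ideal, hence equals $B$, and evaluating at $1$ gives $\fm^{N}=0$ for some $N$: the maximal ideal is nilpotent.

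Now I would peel off one coordinate. Since $B$ is $D$-simple and $\fm\neq 0$, the smallest $D$-ideal containing $\fm$ is $B$; as $B$ is local, some iterated derivative $d_{1}\cdots d_{l}(m)$ with $m\in\fm$ is a unit, and a minimality argument (if $l\geq 2$, then $d_{2}\cdots d_{l}(m)$ is either already a unit or lies in $\fm$, and both possibilities contradict minimality) forces $l=1$. After a normalisation legitimate because $\fm$ is nilpotent, I obtain $y\in\fm$ and a derivation $\partial$ with $\partial(y)=1$ and $y^{p}=0$. The core step is then to show that $\partial$ behaves like $\partial/\partial y$, i.e. that $B=B_{0}\oplus B_{0}y\oplus\dots\oplus B_{0}y^{p-1}$ with $B_{0}:=\ker\partial$, equivalently $B\cong B_{0}\otimes\cO(1;\underline{1})$. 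Granting this, $B_{0}$ is again commutative, associative and of characteristic $p$, has nonzero socle (from $\ann_{B}(\fm)=\ann_{B_{0}}(\fm_{B_{0}})\otimes Ey^{p-1}$), and is differentiably simple — a proper $\Der B_{0}$-ideal $I_{0}$ would yield the proper $\Der B$-ideal $I_{0}B$ of $B$, using that the ``$y^{j}$-coefficients'' of the restriction to $B_{0}$ of a derivation of $B$ are themselves derivations of $B_{0}$. Since the nilpotency degree of $\fm_{B_{0}}$ is strictly smaller than that of $\fm$, the recursion terminates after finitely many steps, say $r$ of them, with $B_{0}$ a field, and unwinding the tensor factors gives $B\cong\cO(r;\underline{1})$.

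The main obstacle is the decomposition $B\cong B_{0}\otimes\cO(1;\underline{1})$ — equivalently, that $\partial$ makes $B$ a free $B_{0}$-module with basis $1,y,\dots,y^{p-1}$; here one must genuinely use the nilpotence of $\fm$ (and, with it, $\partial^{p}=0$). The remaining points, routine but not automatic, are the existence and normalisation of the pair $(\partial,y)$ with $\partial(y)=1$, and the bookkeeping of coefficient fields needed for the last clause: one tracks that each $B_{0}$ is taken with a maximal subfield containing $F$, checks that it is then also a maximal subfield of $B$, and notes that any two maximal subfields of $\cO(r;\underline{1})$ containing $F$ are interchanged by an automorphism, so that the isomorphism $B\cong\cO(r;\underline{1})$ may be arranged over an arbitrary maximal subfield containing $F$.
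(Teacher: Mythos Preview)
The paper does not prove this theorem; it is merely quoted from Block's original paper \cite[Theorem~4.1]{B68} in the ``Some useful theorems'' section and then invoked once, in part~(b) of the sketch proof of \thref{regularderithm}. There is therefore nothing in the thesis to compare your proposal against.

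For what it is worth, your outline follows the architecture of Block's own argument: establish that $B$ is local with maximal ideal $R$, realise a coefficient field $E$, use the socle filtration together with the hypothesis $\ann_B(\fm)\neq 0$ to force $\fm$ nilpotent, then peel off one truncated-polynomial variable at a time and induct on the nilpotency degree. You have correctly identified the genuine content as the splitting $B\cong B_0\otimes_E \cO(1;\underline 1)$ with $B_0=\ker\partial$; this is where most of the work in \cite{B68} lies, and your sketch does not yet supply it. Two smaller points you flag are also real: the normalisation producing $\partial(y)=1$ is not immediate (scaling a derivation by a non-constant unit does not yield a derivation, so one must argue more carefully), and the claim that $E[\beta]$ is a field inside $B$ requires checking that no nontrivial $E$-linear combination of $1,\beta,\dots,\beta^{p-1}$ lies in $\fm$, which uses that $E\to B/\fm$ is injective and that $T^p-\beta^p$ is irreducible over $E$.
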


The second theorem describes the derivation algebra of a Lie algebra in the following form:

\begin{thm}\citetext{\citealp[Corollary 3.3.4]{S04}}\thlabel{blockthmder}
Let $S$ be a finite dimensional simple algebra such that $S^2\neq (0)$. Then 
\begin{align*}
\Der\big(S\otimes\cO(m;\underline{n})\big)&=\big((\Der S)\otimes\cO(m;\underline{n})\big) \rtimes\big(\Id_{S}\otimes \Der \cO(m; \underline{n})\big)\\
&\cong\big((\Der S)\otimes\cO(m';\underline{1})\big) \rtimes\big(\Id_{S}\otimes W (m'; \underline{1})\big)
\end{align*}
for some $m'\in \N$.
\end{thm}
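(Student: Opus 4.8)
The plan is to prove the first displayed equality and then obtain the second by a change of variables. Throughout write $R:=\cO(m;\underline{n})$, a finite-dimensional local commutative associative unital $k$-algebra with maximal ideal $\fm$, and $A:=S\otimes R$. First I would dispose of the inclusion ``$\supseteq$'': a direct computation, using only that $R$ is commutative, shows that every element of $(\Der S)\otimes R$ and of $\Id_S\otimes\Der R$ is a $k$-derivation of $A$, and that $[\theta_1\otimes r_1,\theta_2\otimes r_2]=[\theta_1,\theta_2]\otimes r_1r_2$ while $[\Id_S\otimes d,\theta\otimes r]=\theta\otimes d(r)$, so $(\Der S)\otimes R$ is an ideal of $\Der_k(A)$ and $\Id_S\otimes\Der R$ a subalgebra; the sum is direct because an equality $\sum_i\theta_i\otimes r_i=\Id_S\otimes d$ with $d\neq0$ would exhibit $\Id_S$ as a $k$-linear combination of the $\theta_i\in\Der S$, impossible since $S^2\neq0$ forces $\Id_S\notin\Der S$. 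This simultaneously yields the claimed $\rtimes$ structure.

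The substantial point is the reverse inclusion: every $D\in\Der_k(A)$ lies in $((\Der S)\otimes R)+(\Id_S\otimes\Der R)$. Here I would argue via the centroid. Since $S$ is simple with $S^2=S$, its centroid $\Gamma(S)$ is a commutative division algebra of finite dimension over $k$, hence $\Gamma(S)=k$ as $k$ is algebraically closed; thus $S$ is central simple and its multiplication algebra is all of $\End_k(S)$. A centralizer computation --- identifying the commutant of $\End_R(A)\cong\Mat_{\dim S}(R)$ inside $\End_k(A)$ --- then shows that $\Gamma(A)$ consists precisely of the scalar operators $\mu_r\colon s\otimes r'\mapsto s\otimes rr'$, so $\Gamma(A)\cong R$ as $k$-algebras. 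Next, for $D\in\Der_k(A)$ and $c\in\Gamma(A)$ one checks that the commutator $[D,c]$ again lies in $\Gamma(A)$, so $D$ induces a $k$-derivation of $\Gamma(A)\cong R$; let $d\in\Der_k(R)$ be the corresponding derivation. Since $[\Id_S\otimes d,\mu_r]=\mu_{d(r)}$, the derivation $\Id_S\otimes d$ induces the same derivation of $\Gamma(A)$, so $D':=D-\Id_S\otimes d$ commutes with all $\mu_r$, i.e. is $R$-linear. Finally, an $R$-linear derivation of $A=S\otimes_k R$ is determined by its restriction to $S\otimes1$, and $R$-bilinearity of the product turns the Leibniz identity into the assertion that this restriction is a derivation of $S$ into $S\otimes_k R$ with $S$ acting through the first factor; since the space of such derivations is additive in the coefficient module and $S\otimes_k R$ is a direct sum of $\dim_k R$ copies of $S$ as a module over the multiplication algebra of $S$, we get $\Der_R(A)=\Der_k(S)\otimes_k R$. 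Hence $D'\in(\Der S)\otimes R$ and $D=(\Id_S\otimes d)+D'$, proving the first equality. The hard part is precisely this step --- establishing that $S$ is \emph{central} simple over $k$ (so that $\Gamma(A)\cong R$) and the identification of $R$-linear derivations --- while everything else is bookkeeping.

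It remains to replace $\underline{n}$ by $\underline{1}$. For this I would show $\cO(m;\underline{n})\cong\cO(m';\underline{1})$ as $k$-algebras with $m':=|\underline{n}|=\sum_{i=1}^m n_i$. As $\cO(m;\underline{n})=\bigotimes_{i=1}^m\cO(1;n_i)$ it suffices to show $\cO(1;n)\cong\cO(n;\underline{1})$: one checks $(x^{(p^j)})^p=0$ for $0\le j\le n-1$, and that the products $\prod_{j=0}^{n-1}(x^{(p^j)})^{c_j}$ with $0\le c_j\le p-1$ equal, up to nonzero scalars, the basis vectors $x^{(a)}$ with $a=\sum_j c_jp^j$; a dimension count then gives $\cO(1;n)\cong k[Y_0,\dots,Y_{n-1}]/(Y_0^p,\dots,Y_{n-1}^p)\cong\cO(n;\underline{1})$, hence $\cO(m;\underline{n})\cong\cO(m';\underline{1})$. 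Substituting this isomorphism together with $\Der\cO(m';\underline{1})=W(m';\underline{1})$ (see \thref{wittthm}) into the first displayed equality yields the second.
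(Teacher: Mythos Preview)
Your proof is correct and in fact goes well beyond what the paper does. The paper does not prove the first equality at all: it simply cites it as \cite[Corollary~3.3.4]{S04} and then adds a one-line justification of the second isomorphism, namely that as a $k$-algebra $\cO(m;\underline{n})\cong\cO(m';\underline{1})$ with $m'=n_1+\dots+n_m$ (referencing \cite[p.~64]{S04}), whence $\Der\cO(m;\underline{n})\cong W(m';\underline{1})$. Your third paragraph recovers exactly this step, with the same $m'=|\underline{n}|$.

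For the first equality you supply a genuine proof via the centroid, which is essentially Block's original argument underlying the cited result: $S$ central simple over the algebraically closed field $k$ gives $\Gamma(S\otimes R)\cong R$, any $D\in\Der_k(A)$ normalizes $\Gamma(A)$ and hence induces a derivation $d$ of $R$, and $D-\Id_S\otimes d$ is then $R$-linear, forcing it into $(\Der S)\otimes R$. This is the standard route and is sound. One small point: your justification of directness of the sum is phrased a bit loosely. A cleaner way is to note that any element of $\Id_S\otimes\Der R$ kills $S\otimes 1$ (since $d(1)=0$), so if $\sum_i\theta_i\otimes r_i=\Id_S\otimes d$ with the $r_i$ linearly independent, evaluating on $S\otimes 1$ forces all $\theta_i=0$ and then $d=0$. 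This is a cosmetic fix; the substance of your argument is fine.
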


The last line follows from the fact that considered just as an algebra, $\cO(m; \underline{n})$ is isomorphic to the truncated polynomial ring $\cO(m';\underline{1})$ in $m'=n_1+\dots+n_m$ variables; see \cite[p. 64] {S04}. Hence $\Der \cO(m; \underline{n}) \cong \Der\cO(m';\underline{1})=W (m'; \underline{1})$.

The third theorem describes the structure of any finite dimensional semisimple Lie algebras. Recall the \textit{socle} of a finite dimensional semisimple Lie algebra $L$, denoted $\Soc(L)$, is the direct sum $\oplus_{j} I_{j}$ of minimal ideals $I_j$ of $L$. In particular, these ideals $I_j$ are irreducible $L$-modules.

\begin{thm}\citetext{\citealp[Theorem 9.3]{B68}\thlabel{blockthm}; \citealp[Corollary 3.3.6]{S04}} Let $L$ be a finite dimensional semisimple Lie algebra. Then there are simple Lie algebras $S_{i}$ and truncated polynomial rings $\cO(m_{i}; \underline{1})$ such that $\Soc(L)=\bigoplus _{i=1}^{t} S_{i}\otimes \cO(m_{i}; \underline{1})$ and $L$ acts faithfully on it. Moreover,
\[
\bigoplus _{i=1}^{t} S_{i}\otimes \cO(m_{i}; \underline{1}) \subset L \subset \bigoplus _{i=1}^{t}\big((\Der S_{i})\otimes \cO(m_{i}; \underline{1})\big)\rtimes \big(\Id_{S_{i}}\otimes W(m_{i}; \underline{1})\big).
\]
\end{thm}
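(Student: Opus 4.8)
The plan is to reduce the statement to the analysis of a single minimal ideal of $L$ and then feed the outcome into \thref{Blockthmdsimple} and \thref{blockthmder}. I begin with the elementary structural facts. Since $\Rad L=0$, the algebra $L$ has no nonzero abelian ideal; write $\Soc L=\bigoplus_{i=1}^{t}B_i$ with the $B_i$ minimal ideals (equivalently, irreducible $L$-modules). For $i\neq j$ the ideal $B_i\cap B_j$ lies in two distinct minimal ideals and so vanishes, whence $[B_i,B_j]\subseteq B_i\cap B_j=0$. Each $B_i$ is perfect, since $[B_i,B_i]$ is a submodule of the irreducible $L$-module $B_i$ which is nonzero (otherwise $B_i$ would be an abelian ideal of $L$). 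The kernel of the $L$-action on $\Soc L$ is the ideal $C:=\{x\in L\mid[x,\Soc L]=0\}$; were it nonzero it would contain a minimal ideal $J$ of $L$, and then $J\subseteq\Soc L$ and $J\subseteq C$ force $[J,J]\subseteq[J,\Soc L]=0$, contradicting $\Rad L=0$. Hence $L$ acts faithfully on $\Soc L$, i.e. $\Soc L\subseteq L\subseteq\Der(\Soc L)$; and since the $B_i$ are perfect and pairwise commuting, any derivation of $\Soc L$ preserves each $B_i$ (its $B_j$-component with $j\neq i$ annihilates $[B_i,B_i]=B_i$), so $\Der(\Soc L)=\bigoplus_i\Der B_i$. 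Thus, once we know $B_i\cong S_i\otimes\cO(m_i;\underline{1})$ with $S_i$ simple, \thref{blockthmder} (using $\Der\cO(m_i;\underline{1})=W(m_i;\underline{1})$) yields the displayed chain of inclusions.

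The substantive claim is therefore that \emph{a minimal ideal $B$ of a semisimple Lie algebra is isomorphic to $S\otimes\cO(m;\underline{1})$ for some simple Lie algebra $S$ and some $m\geq 0$}. I would attack this through the centroid $\Gamma:=\{\gamma\in\End_k B\mid\gamma[a,b]=[\gamma a,b]=[a,\gamma b]\}$, a commutative associative unital $k$-algebra (commutativity uses that $B$ is perfect) that is normalized by $\ad_B L$, so every $\ad_B(x)$, $x\in L$, induces a derivation of $\Gamma$. The key point is that $\Gamma$ admits no proper nonzero $\ad_B L$-invariant ideal: for such an ideal $\fc$, the subspace $\fc B$ is an ideal of $B$, is $L$-invariant, and is nonzero since $\Gamma$ acts faithfully on $B$, hence $\fc B=B$ by minimality of $B$; writing the Artinian ring $\Gamma$ as a product of local rings and applying Nakayama in each factor then forces $\fc=\Gamma$ (a factor in which $\fc$ were proper would make the corresponding summand of $B$ vanish, hence be absent by faithfulness). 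Consequently $\Gamma$ is, in particular, differentiably simple, and being finite-dimensional it has a minimal ideal, so \thref{Blockthmdsimple} applies; since $k$ is algebraically closed the maximal subfield $E$ appearing there must be $k$, and therefore $\Gamma\cong\cO(m;\underline{1})$ as $k$-algebras for some $m\geq 0$.

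It remains to pass from the centroid to a tensor decomposition. Now $\Gamma\cong\cO(m;\underline{1})$ is local with maximal ideal $\fm$ and residue field $k$, the bracket of $B$ is $\Gamma$-bilinear, and $S:=B/\fm B$ is a Lie algebra over $k$; a Wedderburn-type splitting argument should show that $S$ is simple and that $B$ is a free $\Gamma$-module carrying a $\Gamma$-algebra isomorphism $B\cong S\otimes_k\Gamma=S\otimes\cO(m;\underline{1})$, which together with the first two paragraphs finishes the proof. I expect this last step to be the main obstacle: proving that $B$ is free over its centroid and that its reduction modulo $\fm$ is simple is the technical heart of Block's structure theorem, whereas the passage to the centroid and the invocation of \thref{Blockthmdsimple} are comparatively formal. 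One should also beware that in characteristic $p$ the maximal ideal $\fm$ need not be stable under the derivations induced by $\ad_B L$ (only under automorphisms), so the argument cannot be shortcut by treating $\fm B$ as an $L$-invariant ideal of $B$.
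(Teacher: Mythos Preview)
The paper does not supply its own proof of this theorem: it is quoted without proof as a known result, with references to Block \cite[Theorem~9.3]{B68} and Strade \cite[Corollary~3.3.6]{S04}. Consequently there is no in-paper argument to compare your proposal against.

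That said, your outline is essentially the standard route taken in those references. The reductions in your first paragraph (perfectness of the $B_i$, faithfulness of the $L$-action on $\Soc L$, and the splitting $\Der(\Soc L)=\bigoplus_i\Der B_i$) are correct and are exactly how one reduces to analysing a single minimal ideal, after which \thref{blockthmder} gives the displayed inclusions. Your centroid approach in the second paragraph is also the right idea: the derivations $\ad_B(x)$ do normalise $\Gamma$ (this holds for any derivation of $B$), and your invariance argument together with \thref{Blockthmdsimple} correctly identifies $\Gamma\cong\cO(m;\underline{1})$. You are honest about the remaining gap---showing that $B$ is free over $\Gamma$ with simple quotient $B/\fm B$---and you are right that this is where the real work in Block's paper lies; it is not something one can extract from the tools stated in this thesis alone. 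Your Nakayama step for ``$\fc B=B\Rightarrow\fc=\Gamma$'' is a little brisk but salvageable (use that $B$ is a faithful finitely generated $\Gamma$-module and localise at a maximal ideal containing $\fc$).
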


We will see in Chapter 2 that Premet's conjecture can be reduced to the case where the Lie algebra is semisimple. The above theorem leads us to the study of nilpotent varieties for a particular class of semisimple restricted Lie algebras, namely the ones that are sandwiched between 
\[
\bigoplus _{i=1}^{t} S_{i}\otimes \cO(m_{i} ; \underline{1})
\]
and 
\[
\bigoplus _{i=1}^{t} \big((\Der S_i)\otimes \cO(m_{i}; \underline{1})\big)\rtimes \big(\Id_{S_{i}}\otimes W(m_{i}; \underline{1})\big).
\]

We finish this section with some useful theorems from algebraic geometry. 
\begin{defn}\cite[p. 91, 3.7]{H77}
A morphism $\psi: V \to W$ of affine varieties is called \textnormal{dominant} if the image $\psi(V)$ is dense in $W$, i.e. $\overline{\psi(V)}=W$.
\end{defn}
Given any morphism of irreducible affine varieties, proving directly its dominance may be difficult. However, we can show its differential map is surjective. More precisely,

\begin{mythm}[Differential Criterion for Dominance \citetext{\citealp[Proposition 1.4.15]{G13}}]\thlabel{dcdominant}
Let $\psi: V \to W$ be a morphism of irreducible affine varieties. Let $v$ be a smooth point in $V$ such that $\psi(v)$ is a smooth point in $W$. If the differential of $\psi$ at $v$, $d_{v}\psi: \T_v V \to \T_{\psi(v)}W$, is surjective, then the morphism $\psi$ is dominant. 
\end{mythm}

Once we know that a morphism is dominant, we can get a nonempty open set from the image of the morphism. Specifically,
 
\begin{thm}\cite[Corollary 2.2.8]{G13}\thlabel{zopen}
Let  $\psi: V \to W$ be a dominant morphism of irreducible affine varieties. Then the image of any nonempty open subset $U\subseteq V$ contains a nonempty open subset of $W$.
\end{thm}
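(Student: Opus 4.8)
The plan is to reduce the statement to the case $U=V$ and then use a relative form of Noether normalization. Since $V$ is irreducible, every nonempty open $U\subseteq V$ contains a nonempty principal open set $D(f)=\{v\in V\mid f(v)\neq 0\}$ for some $0\neq f\in k[V]$, and $\psi(U)\supseteq\psi(D(f))$; so it suffices to show that $\psi(D(f))$ contains a nonempty open subset of $W$. Moreover $D(f)$ is a dense open subset of the irreducible variety $V$, so continuity gives $\psi(V)=\psi(\overline{D(f)})\subseteq\overline{\psi(D(f))}$, whence $\overline{\psi(D(f))}\supseteq\overline{\psi(V)}=W$; thus $\psi|_{D(f)}\colon D(f)\to W$ is itself a dominant morphism of irreducible affine varieties, with $k[D(f)]=k[V]_f$. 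Hence we are reduced to proving: \emph{a dominant morphism $\psi\colon V\to W$ of irreducible affine varieties has image containing a nonempty open subset of $W$.}

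Write $A=k[W]$ and $B=k[V]$. Dominance means the comorphism $\psi^{*}\colon A\to B$ is injective, so $A\subseteq B$ is an inclusion of finitely generated $k$-algebras that are integral domains, with $B$ finitely generated as an $A$-algebra. The key step is to produce $0\neq a\in A$ together with $t_1,\dots,t_r\in B_a$, algebraically independent over $\mathrm{Frac}(A)$, such that $B_a$ is a finitely generated module over the polynomial ring $A_a[t_1,\dots,t_r]$ (so that $A_a[t_1,\dots,t_r]\subseteq B_a$ is integral). To obtain this, apply the classical Noether normalization lemma to the finitely generated $\mathrm{Frac}(A)$-domain $B\otimes_A\mathrm{Frac}(A)$, pull the resulting normalizing elements back into $B$ by clearing denominators to get the $t_i$, and let $a$ be the product of the finitely many denominators involved --- those needed to place the $t_i$ correctly together with the leading coefficients occurring in the integral-dependence relations over $A[t_1,\dots,t_r]$ of a fixed finite set of $A$-algebra generators of $B$.

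Granting this, consider the two maps on closed points
\[
\Spec_{\max}(B_a)\longrightarrow\Spec_{\max}(A_a[t_1,\dots,t_r])\longrightarrow\Spec_{\max}(A_a).
\]
The first is surjective because an integral extension of domains satisfies lying over, and all rings in sight are Jacobson (Nullstellensatz), so contractions of maximal ideals are maximal; the second is surjective because the fibre of the projection over any closed point of $\Spec_{\max}(A_a)$ is a nonempty affine space over $k$. Identifying $\Spec_{\max}(A_a)$ with $D_W(a)\subseteq W$ and $\Spec_{\max}(B_a)$ with $\psi^{-1}(D_W(a))\subseteq V$, this says precisely that $D_W(a)\subseteq\psi(V)$. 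Applying this to $\psi|_{D(f)}$ yields $0\neq a\in A$ with $D_W(a)\subseteq\psi(D(f))\subseteq\psi(U)$, which proves the theorem.

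I expect the main obstacle to be the bookkeeping in the second paragraph: checking that a single nonzero $a\in A$ can be chosen so that, after inverting it, all the finitely many denominators and leading coefficients simultaneously become units and $A_a[t_1,\dots,t_r]\subseteq B_a$ becomes genuinely module-finite. The remaining ingredients --- reduction to principal opens, lying over for integral ring maps, and surjectivity of affine-space projections on closed points --- are routine once this ``generic Noether normalization'' is available.
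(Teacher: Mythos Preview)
The paper does not prove this statement; it merely cites it from Geck's textbook \cite[Corollary~2.2.8]{G13} as a background fact, so there is no in-paper proof to compare against.

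Your argument is correct and is essentially the standard proof of this result. The reduction to principal opens is clean, and the ``generic Noether normalization'' you outline is exactly the right tool: apply ordinary Noether normalization over the function field $\mathrm{Frac}(A)$, then clear the finitely many denominators that appear in the normalizing elements and in the monic relations for a chosen set of algebra generators of $B$. A single $a\in A\setminus\{0\}$ taken as the product of these denominators does the job, since localization is exact and there are only finitely many conditions to enforce. Your concluding surjectivity argument is also fine: lying over for the integral extension $A_a[t_1,\dots,t_r]\subseteq B_a$ hits every maximal ideal downstairs (a prime lying over a maximal ideal in an integral extension of domains is itself maximal), and the projection $\mathbb{A}^r_{D_W(a)}\to D_W(a)$ is visibly surjective on closed points because $k$ is algebraically closed.

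The only cosmetic point: you don't really need to invoke the Jacobson property separately, since over an algebraically closed field the Nullstellensatz already identifies closed points with maximal ideals in all the rings involved. But this is harmless.
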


Finally, we present some theorems on dimensions. The first two relate to the dimension of fibres.

\begin{thm}\citetext{\citealp[Sec.~4.4, Chap.~2, II]{DS94}; \citealp[Theorem 1.25]{S94}}\thlabel{fdimthm}
Let $\psi: V \to W$ be a dominant morphism of irreducible varieties. Suppose that $\dim V=m$ and $\dim W=n$. Then $m\geq n$, and 
\begin{enumerate}[\upshape(i)]
\item $\dim F\geq m-n$ for any $w\in W$ and for any component $F$ of the fibre $\psi^{-1}(w)$;
\item there exists a nonempty open subset $U\subset W$ such that $\dim \psi^{-1}(u)=m-n$ for all $u\in U$.
\end{enumerate}
\end{thm}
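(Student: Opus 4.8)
This is the standard fibre-dimension theorem, and I would deduce it from Krull's principal ideal theorem together with the factorisation of a dominant morphism through an affine space. First, $m\ge n$: dominance makes the comorphism an embedding of function fields $k(W)\hookrightarrow k(V)$, so $n=\operatorname{tr.deg}_{k}k(W)\le\operatorname{tr.deg}_{k}k(V)=m$. Throughout I would work in affine charts: a given point $w$ (for (i)) or a dense open (for (ii)) lies in an affine open of $W$, over which one picks an affine open of $V$ meeting the relevant fibre; in (ii) one must in addition note that the components of a fibre missing the chosen chart lie in a proper closed subset of $V$ of dimension $<m$, so their contribution is controlled by a Noetherian induction on $\dim V$ — I will use such an induction twice below, and the base case $\dim V=0$ (whence $n=0$, $W$ a point) is trivial.

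For (i): the fibre $\psi^{-1}(w)$ is the closed subset of $V$ defined by $\fm_{w}k[V]$, where $\fm_{w}\subset k[W]$ is the maximal ideal of $w$. Since $k[W]$ is a finitely generated domain of Krull dimension $n$, $\fm_{w}$ has height $n$, hence is minimal over an ideal $(f_{1},\dots,f_{n})$; thus $\{w\}$ is an irreducible component of $V(f_{1},\dots,f_{n})$, say $V(f_{1},\dots,f_{n})=\{w\}\cup C$ with $w\notin C$, and pulling back gives $V(\psi^{\ast}f_{1},\dots,\psi^{\ast}f_{n})=\psi^{-1}(w)\cup\psi^{-1}(C)$. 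If $F$ is a component of $\psi^{-1}(w)$, then $F\not\subseteq\psi^{-1}(C)$ (otherwise $\{w\}=\psi(F)\subseteq C$), and since the right-hand side above is the decomposition of $V(\psi^{\ast}f_{1},\dots,\psi^{\ast}f_{n})$ into $\psi^{-1}(w)$ and $\psi^{-1}(C)$, a short argument forces $F$ itself to be a component of $V(\psi^{\ast}f_{1},\dots,\psi^{\ast}f_{n})$. Applying Krull's Hauptidealsatz $n$ times in the finitely generated domain $k[V]$ gives $\operatorname{codim}_{V}F\le n$, i.e.\ $\dim F\ge m-n$. (The assertion is vacuous when $\psi^{-1}(w)=\emptyset$.)

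For (ii): set $d:=m-n=\operatorname{tr.deg}_{k(W)}k(V)$, choose $g_{1},\dots,g_{d}\in k[V]$ forming a transcendence basis of $k(V)$ over $k(W)$, and let $\phi=(\psi,g_{1},\dots,g_{d})\colon V\to W\times\A^{d}$, so that $\psi=\pi\circ\phi$ with $\pi$ the projection, whose fibres are all $\cong\A^{d}$. Algebraic independence of the $g_{i}$ makes $\phi$ dominant, and $k(V)$ is algebraic, hence finite, over $k(W\times\A^{d})$; clearing denominators in integral-dependence relations for a finite generating set of $k[V]$ over $k[W\times\A^{d}]$ produces a nonzero $c\in k[W\times\A^{d}]$ for which $\phi$ restricts to a \emph{finite} morphism $\phi^{-1}(D(c))\to D(c)$. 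Writing $c=\sum_{\alpha}c_{\alpha}y^{\alpha}$ with $c_{\alpha}\in k[W]$ not all zero and putting $U:=\bigcap_{c_{\alpha}\ne0}D(c_{\alpha})$, a dense open of $W$, the restriction of $c$ to $\{w\}\times\A^{d}$ is a nonzero polynomial for every $w\in U$, so $D(c)\cap(\{w\}\times\A^{d})$ is a dense open of $\A^{d}$. For such $w$ I split
\[
\psi^{-1}(w)=\phi^{-1}\!\bigl(D(c)\cap(\{w\}\times\A^{d})\bigr)\ \cup\ \phi^{-1}\!\bigl(V(c)\cap(\{w\}\times\A^{d})\bigr).
\]
The first set maps by a finite morphism to a $d$-dimensional variety, hence has dimension $\le d$. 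The second set is a fibre over $w$ of $\psi$ restricted to the hypersurface $V(\phi^{\ast}c)\subsetneq V$ — note $\phi^{\ast}c\ne0$ because $\phi$ is dominant — which has pure dimension $m-1$; by the inductive hypothesis, applied to each irreducible component of $V(\phi^{\ast}c)$ together with its dominant map onto its image-closure in $W$, there is a dense open of $W$ over which this second set has dimension $\le(m-1)-n=d-1$. Intersecting the finitely many dense opens of $W$ produced along the way yields a dense open $U_{\ast}\subseteq W$ on which $\dim\psi^{-1}(w)\le d$; combined with (i) this gives $\dim\psi^{-1}(w)=d=m-n$ for $w\in U_{\ast}$.

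The facts quoted without comment are routine: Krull's principal ideal theorem, the equality height $=$ codimension $=$ dimension minus coheight for finitely generated domains over a field, and that finite morphisms have finite fibres and do not raise dimension. I expect the main obstacle to be the algebraic core of (ii) — proving that a dominant, generically finite morphism of affine varieties becomes finite after inverting a single function (the clearing-of-denominators step, resting on finiteness of integral extensions) — and, hand in hand with it, the bookkeeping that keeps the preimage of the ``bad locus'' $V(c)$ (and of the complement of the chosen affine chart) from contributing too much to the fibre dimension, which is precisely what forces the induction on $\dim V$. Everything else is a matter of assembling these pieces in the affine charts and transferring the conclusions back.
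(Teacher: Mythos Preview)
The paper does not supply its own proof of this statement: it is quoted as a standard result from algebraic geometry, with citations to Danilov--Shokurov and to Shafarevich's \emph{Basic Algebraic Geometry}, and is used later as a black box (notably in the proof of \thref{l:irr}). So there is nothing in the paper to compare your argument against.

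That said, your proposal is essentially the textbook proof as one finds it in Shafarevich (the very reference the paper cites). The inequality $m\ge n$ via transcendence degree, the lower bound on fibre components via Krull's Hauptidealsatz applied to a system of parameters for $\fm_w$, and the generic upper bound by factoring $\psi$ through $W\times\A^{d}$ with a generically finite first leg --- all of this is standard and correct. The one place I would tighten is your ``short argument'' in (i) that a component $F$ of $\psi^{-1}(w)$ is already a component of $V(\psi^{\ast}f_1,\dots,\psi^{\ast}f_n)$: you want to say that any component $F'$ of the latter containing $F$ is irreducible and contained in $\psi^{-1}(w)\cup\psi^{-1}(C)$; since $F\not\subseteq\psi^{-1}(C)$ forces $F'\not\subseteq\psi^{-1}(C)$, irreducibility gives $F'\subseteq\psi^{-1}(w)$, whence $F'=F$. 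In (ii) the inductive bookkeeping with $V(\phi^{\ast}c)$ and the passage to affine charts is exactly where the work lies, as you note; your outline handles it adequately, though in a fully written-up version you would want to be explicit that each irreducible component of $V(\phi^{\ast}c)$ either fails to dominate $W$ (so its image misses a dense open) or dominates a closed irreducible subvariety of $W$ of dimension $\le n$, and in either case the inductive hypothesis or a trivial bound applies.
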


\begin{mythm}[Chevalley's Semi-continuity Theorem \citetext{\citealp[Sec.~4.5, Chap.~2, II]{DS94}}]\thlabel{chethm}
Let $\psi: V\to W$ be a morphism of affine varieties. Then for every $r\in \N_0$, the set
\[
V_{r}=\big\{v\in V\,|\, \dim \psi^{-1}(\psi(v))\geq r\big\}
\]
is Zariski closed in $V$.
\end{mythm}

The last theorem in this section relates to the dimension of intersections in $\A^{n}$.

\begin{mythm}[Affine Dimension Theorem \citetext{\citealp[Proposition~7.1, Chap.~I]{H77}}]\thlabel{ADthm}
Let $V, W$ be varieties of dimensions $r, s$ in $\A^{n}$. Then every irreducible component $U$ of $V\cap W$ has dimension $\geq r+s-n$.
\end{mythm}
\section{Overview of results}

\textit{Chapter 2}. Let $k$ be an algebraically closed field of characteristic $p>0$. We start with Premet's conjecture which states that the nilpotent variety of any finite dimensional restricted Lie algebra over $k$ is irreducible. We prove that this conjecture can be reduced to the semisimple case.

\begin{mythm*}[see \thref{reductionthm}]
Let $(\fg, [p])$ be a finite dimensional restricted Lie algebra over $k$. Let $\Rad \fg$ denote the radical of $\fg$. Then $\cN(\fg)$ is irreducible if and only if $\cN(\fg/\Rad \fg)$ is irreducible.
\end{mythm*}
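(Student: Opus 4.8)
Write $R:=\Rad\fg$, which is a $p$-ideal of $\fg$, so that $\fg/R$ is a restricted semisimple Lie algebra and the projection $\pi\colon\fg\to\fg/R$ is a homomorphism of restricted Lie algebras. The plan is to prove the sharper assertion that $Z\mapsto\overline{\pi(Z)}$ is a bijection from the set of irreducible components of $\cN(\fg)$ onto the set of irreducible components of $\cN(\fg/R)$; the equivalence in the theorem is then immediate, as $\cN(\fg)$ (resp.\ $\cN(\fg/R)$) is irreducible exactly when it has a single component. Set $N:=\dim\fg-\MT(\fg)$ and $N':=\dim(\fg/R)-\MT(\fg/R)$; by \thref{nvarietythm} the varieties $\cN(\fg)$ and $\cN(\fg/R)$ are equidimensional of dimensions $N$ and $N'$, and by \thref{MTlem} one has $N-N'=\dim R-\MT(R)=\dim\cN(R)=:d$.

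The first step is that $\pi\colon\cN(\fg)\to\cN(\fg/R)$ is surjective. Since $\pi$ is a $p$-homomorphism it sends semisimple elements to semisimple elements and nilpotent elements to nilpotent elements, so by uniqueness in the Jordan--Chevalley decomposition (\thref{JCthm}) it commutes with $x\mapsto x_s$ and $x\mapsto x_n$; in particular $\pi(\cN(\fg))\subseteq\cN(\fg/R)$. Conversely, given $\bar y\in\cN(\fg/R)$, lift it to an arbitrary $y\in\fg$ and write $y=y_s+y_n$; then $\pi(y_s)=\pi(y)_s=0$ because $\pi(y)=\bar y$ is nilpotent, so $y_n\in\cN(\fg)$ with $\pi(y_n)=\bar y$.

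The heart of the matter is to identify the fibres. Fix $\bar y\in\cN(\fg/R)$ and a nilpotent lift $y_0\in\cN(\fg)$ as above, so that $\pi^{-1}(\bar y)\cap\cN(\fg)=(y_0+R)\cap\cN(\fg)$. Consider $\widetilde R:=(ky_0)_p+R$. Since $(ky_0)_p$ is abelian, $R$ is an ideal of $\widetilde R$, and since $R$ is a $p$-ideal of $\fg$ a short check with Jacobson's formula (every mixed term carries a factor lying in the ideal $R$) shows that $\widetilde R$ is a solvable restricted subalgebra of $\fg$. I would then invoke the standard structural fact that in a solvable restricted Lie algebra the $p$-nilpotent elements form a $p$-ideal, hence a linear subspace (proved by induction on dimension, the base case being an abelian restricted Lie algebra, where the $[p]$-map is $p$-semilinear). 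Applied to $\widetilde R$, using $y_0\in\cN(\widetilde R)$, $R\cap\cN(\widetilde R)=\cN(R)$, and the fact that the $[p]$-map of $\widetilde R$ is the restriction of that of $\fg$, this gives
\[
\pi^{-1}(\bar y)\cap\cN(\fg)=(y_0+R)\cap\cN(\widetilde R)=y_0+\cN(R),
\]
a coset of the fixed linear subspace $\cN(R)\subseteq\fg$; in particular every fibre of $\pi\colon\cN(\fg)\to\cN(\fg/R)$ is irreducible of dimension $d$, independently of $\bar y$.

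The bijection on components is now routine. For an irreducible component $Z$ of $\cN(\fg)$ we have $\dim Z=N$, and the generic fibre of $\pi|_Z$ lies in some $y_0+\cN(R)$ of dimension $d$, so $\dim\overline{\pi(Z)}\ge N-d=N'$; being an irreducible closed subset of the equidimensional variety $\cN(\fg/R)$, the set $\overline{\pi(Z)}$ must then be one of its components and $\pi|_Z$ has generic fibre dimension exactly $d$. By \thref{fdimthm}, for $\bar y$ in a dense open subset of $\overline{\pi(Z)}$ the fibre $(\pi|_Z)^{-1}(\bar y)$ is a closed subvariety of dimension $d$ of the irreducible $d$-dimensional $y_0+\cN(R)$, hence equals it, so $\pi^{-1}(\bar y)\cap\cN(\fg)\subseteq Z$ for generic such $\bar y$. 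If two components $Z_1,Z_2$ of $\cN(\fg)$ had $\overline{\pi(Z_1)}=\overline{\pi(Z_2)}$, they would both contain $\pi^{-1}(\bar y)\cap\cN(\fg)$ for $\bar y$ ranging over a dense open subset of the common image, hence a dense open subset of each $Z_i$, whence $Z_1=Z_2$; so $Z\mapsto\overline{\pi(Z)}$ is injective. It is surjective because a general point $\bar y$ of a component $W$ of $\cN(\fg/R)$ has nonempty irreducible fibre $y_0+\cN(R)$, which therefore lies in a single component $Z$ of $\cN(\fg)$, and then $\bar y\in\overline{\pi(Z)}$ forces $\overline{\pi(Z)}=W$ since $\bar y$ lies on only one component of $\cN(\fg/R)$. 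Thus the two varieties have equally many irreducible components, proving the theorem. The only non-formal ingredient is the fibre computation of the third paragraph, and within it the structural fact on solvable restricted Lie algebras; I expect that to be the single point requiring care, everything else being Jordan--Chevalley together with Premet's equidimensionality theorem and elementary dimension theory of morphisms.
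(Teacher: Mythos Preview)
Your proposal has a genuine gap: the ``standard structural fact'' you invoke---that in a solvable restricted Lie algebra the $p$-nilpotent elements form a $p$-ideal, or even just a linear subspace---is false. For any prime $p$, take $L$ with basis $t,x,y_1,\ldots,y_{p-1}$, relations $[x,y_i]=y_{i+1}$ for $1\le i\le p-2$, $[x,y_{p-1}]=t$, $[y_i,y_j]=0$, $t$ central, and $[p]$-map $t^{[p]}=t$, $x^{[p]}=y_i^{[p]}=0$. One checks that $L$ is a solvable restricted Lie algebra and that $(\alpha t+\beta x+\sum_i\gamma_i y_i)^{[p]}=(\alpha^p+\beta^{p-1}\gamma_1)\,t$, so $\cN(L)$ is the degree-$p$ hypersurface $\alpha^p+\beta^{p-1}\gamma_1=0$, not a linear subspace. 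Your fibre identification $(y_0+R)\cap\cN(\fg)=y_0+\cN(R)$ then fails: for $\fg=\fsl_2\times L$ (with $p>2$) one has $R=\Rad\fg=L$, and taking the nilpotent lift $y_0=(e,x)$ of $e\in\cN(\fsl_2)$, the actual fibre over $e$ is $\{e\}\times\cN(L)$, whereas $(e,x)+\cN(L)$ is a different set since $y_1\in\cN(L)$ but $x+y_1\notin\cN(L)$. The inductive argument you sketch for the structural fact breaks precisely when one quotients by a central torus $T_0$: the induced map $\cN(L)\to\cN(L/T_0)$ is a bijection of varieties, but not a linear one.

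The paper's proof takes a different route that sidesteps this issue entirely. Rather than passing directly to $\fg/\Rad\fg$, it inducts on $\dim\fg$, at each step quotienting by a single nonzero \emph{abelian} $p$-ideal $A$. If $A\subseteq\cN(\fg)$, the fibres of $\cN(\fg)\to\cN(\fg/A)$ are honest cosets of the linear space $A$; if $A\not\subseteq\cN(\fg)$, one extracts a nonzero torus $\fz_s\subseteq\fz(\fg)$ and shows $\cN(\fg)\to\cN(\fg/\fz_s)$ is bijective. In either case \thref{l:irr} applies with manifestly irreducible, equal-dimensional fibres, and the induction closes without ever needing $\cN$ of a solvable algebra to be linear. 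Your one-shot approach would require an independent proof that all fibres of $\cN(\fg)\to\cN(\fg/\Rad\fg)$ are irreducible of the same dimension; this is in fact true, but your proposed route to it does not work.
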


The proof is done by induction on $\dim \fg$ and it relies on a result from algebraic geometry (see \thref{l:irr}). Since semisimple Lie algebras are not always direct sums of simple ideals in prime characteristic, the reduction of Premet's conjecture to the simple case is very non-trivial. 

\textit{Chapter 3}. We start to look at a particular class of semisimple restricted Lie algebras and verify Premet's conjecture in that case. 

By \thref{blockthm}, we know that any finite dimensional semisimple Lie algebra is sandwiched between 
\[\bigoplus _{i=1}^{t} S_i\otimes \cO(m_{i} ; \underline{1})
\]
and 
\[\bigoplus _{i=1}^{t} \big((\Der S_i)\otimes \cO(m_{i}; \underline{1})\big)\rtimes \big(\Id_{S_{i}}\otimes W(m_{i}; \underline{1})\big)
\]
for some simple Lie algebras $S_i$ and truncated polynomial rings $\cO(m_{i} ; \underline{1})$ with\newline $\Der \cO(m_{i} ; \underline{1})=W(m_{i}; \underline{1})$. Thus, to verify Premet's conjecture, we begin with the simplest example, $\fg=(\fsl_2\otimes \cO(1; 1)) \rtimes (\Id_{\fsl_2}\otimes k \del)$, where $\fsl_2$ is the special linear Lie algebra, $\cO(1; 1)$ is the truncated polynomial ring $k[X]/(X^{p})$, and $\del=\frac{d}{dx}$ which acts on $\fsl_2\otimes \cO(1; 1)$ in the natural way. We assume further that $\chari k=p>2$. So $\fsl_2$ is a simple restricted Lie algebra over $k$ with all its derivations inner. We prove that the maximal dimension of tori in $\fg$ is $1$, and the nilpotency index of any element in $\cN(\fg)$ is at most $p^2$. It follows from \thref{nvarietythm} that $\dim \cN(\fg)=3p$. After gathering these pieces of information, we are ready to prove that 

\begin{mythm*}[see \thref{sl2thm}]
The variety $\cN(\fg)$ is irreducible.
\end{mythm*}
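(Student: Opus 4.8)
The plan is to imitate the proof of \thref{Wnproof}: exhibit one irreducible component of $\cN(\fg)$ as the closure of an orbit of a connected automorphism group, and show that whatever lies outside it has strictly smaller dimension. Write $x$ for the canonical generator of $\cO(1; 1)=k[X]/(X^{p})$, let $e,h,f$ be the standard basis of $\fsl_2$, put $\partial=\Id_{\fsl_2}\otimes\del\in\fg$ and $\fm=(x)$. As $\fg$ is semisimple, hence centreless, it embeds via the adjoint representation into $\Der(\fsl_2\otimes\cO(1; 1))=(\fsl_2\otimes\cO(1; 1))\rtimes(\Id_{\fsl_2}\otimes W(1; 1))$ (cf. \thref{blockthmder}), and its $[p]$-operation is the restriction of $D\mapsto D^{p}$. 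Let $\mathbf{G}=\Aut(\fg)^{\circ}$; it contains the copy of $\SL_2$ acting by $\Ad$ on the $\fsl_2$-factor and fixing $\partial$, the torus $\tau_\alpha\colon x\mapsto\alpha x,\ \partial\mapsto\alpha^{-1}\partial$, and the unipotent automorphisms $\exp(\ad(c\otimes x^{i}))$ for $c\in\fsl_2$, $1\le i\le p-1$ (genuine exponentials, since $(c\otimes x^{i})^{[p]}=0$).

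First I would isolate the ``small'' stratum. The projection $\fg\to\fg/(\fsl_2\otimes\cO(1; 1))\cong k$ is a restricted homomorphism whose kernel is a $p$-ideal, so $\cN(\fg)\cap(\fsl_2\otimes\cO(1; 1))=\cN(\fsl_2\otimes\cO(1; 1))$; and since $\fsl_2\otimes\fm$ is a nil $p$-ideal, \thref{MTlem} gives $\MT(\fsl_2\otimes\cO(1; 1))=\MT(\fsl_2)=1$, so by \thref{nvarietythm} this is a Zariski-closed subset of dimension $\dim(\fsl_2\otimes\cO(1; 1))-1=3p-1$.

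Next I would reduce an arbitrary $X=a+\mu\partial\in\cN(\fg)$ with $\mu\neq0$ to normal form, in the spirit of \thref{D70Lem6part}. Conjugating by $\tau_\alpha$ we may assume $\mu=1$. Conjugating successively by $\exp(\ad(c\otimes x^{j+1}))$ for $j=0,1,\dots,p-2$ — which alters the degree-$j$ part of $a$ by $-(j+1)c$ modulo terms of degree $>j$, and is solvable for $c$ precisely because $j+1\not\equiv0\ (\modd p)$ — brings $X$ to the form $\partial+\bar a\otimes x^{p-1}$ with $\bar a\in\fsl_2$, and then $\SL_2$ lets us take $\bar a\in\{0,\,e,\,\lambda h\ (\lambda\in k^{*})\}$. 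Jacobson's formula inside $\Der(\fsl_2\otimes\cO(1; 1))$ — in which the brackets $[\,\ad(\bar a\otimes x^{p-1}),\ad(\bar a\otimes x^{i})\,]$ vanish because $[\bar a,\bar a]=0$, so only one chain of $\partial$-brackets survives among the $s_i$ — yields $(\partial+\bar a\otimes x^{p-1})^{[p]}=-\,\bar a\otimes1$. Since $h$ is toral, the case $\bar a=\lambda h$ produces a non-zero semisimple $[p]$-power, so $X\notin\cN(\fg)$ there; the cases $\bar a=0$ and $\bar a=e$ give $X^{[p]}=0$ and $X^{[p]}\neq0=X^{[p]^{2}}$, respectively. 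Hence every $X\in\cN(\fg)$ with $\mu\neq0$ is $\mathbf{G}$-conjugate to $\partial$ or to $X_0:=\partial+e\otimes x^{p-1}$; moreover $\partial\in\overline{\mathbf{G}.X_0}$, since $\Ad(\diag(s,s^{-1}))X_0=\partial+s^{2}\,e\otimes x^{p-1}$ lies in $\mathbf{G}.X_0$ for every $s\in k^{*}$ and these specialize to $\partial$.

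Assembling the two strata, $\cN(\fg)=\overline{\mathbf{G}.X_0}\cup\cN(\fsl_2\otimes\cO(1; 1))$, with the first set irreducible (closure of a single $\mathbf{G}$-orbit, $\mathbf{G}$ connected) and the second of dimension $3p-1$. Since $\cN(\fg)$ is equidimensional of dimension $3p$ by \thref{nvarietythm} (using $\MT(\fg)=1$, established earlier in this chapter), no irreducible component of $\cN(\fg)$ can be contained in the $(3p-1)$-dimensional stratum; therefore $\cN(\fsl_2\otimes\cO(1; 1))\subseteq\overline{\mathbf{G}.X_0}$ and $\cN(\fg)=\overline{\mathbf{G}.X_0}$ is irreducible. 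One could equally follow \thref{Wnproof} to the letter: verify that $\overline{\mathbf{G}.X_0}$ is an irreducible component by computing $\dim\Stab_{\Aut(\fg)}(X_0)=2$, and bound $\cN_0:=\cN(\fg)\setminus\mathbf{G}.X_0$ by exhibiting the $2$-dimensional subspace $V=k(h\otimes1)\oplus kX_0$ with $V\cap\cN_0=\{0\}$ and invoking \thref{ADthm}. The main obstacle is the normal-form step of the third paragraph: making the inductive conjugation terminate in exactly these two forms, and computing the $[p]$-powers precisely enough to see that $X_0$ realises the maximal nilpotency index $p^{2}$ while the ``toral direction'' $\partial+\lambda h\otimes x^{p-1}$ falls outside $\cN(\fg)$.
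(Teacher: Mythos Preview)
Your proof is correct and follows essentially the same route as the paper: reduce any nilpotent element with nonzero $\partial$-component to the normal form $\lambda\partial+b\otimes x^{p-1}$ with $b\in\cN(\fsl_2)$ via the unipotent automorphisms $\exp(\ad(c\otimes x^{j+1}))$ (this is exactly \thref{autg1}), then compare with the $(3p-1)$-dimensional stratum $\cN(\fsl_2\otimes\cO(1;1))$ and invoke equidimensionality (\thref{nvarietythm}). The only difference is packaging: you bring in the extra torus $\tau_\alpha$ and the degeneration $\partial\in\overline{\mathbf{G}.X_0}$ to exhibit $\cN(\fg)$ as a single orbit closure, whereas the paper parametrizes the open stratum as $\tilde{G}.\{\lambda\partial+\mu e\otimes x^{p-1}:\lambda\in k^{*},\,\mu\in k\}$ and argues that this set is independent of the chosen component---a cosmetic variation on the same idea.
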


We will see that the argument in the proof is quite general. It works if we replace $\fsl_2$ by any Lie algebra $\fg_2=\Lie(G_2)$, where $G_2$ is a reductive algebraic group. Then we extend the example $\fg$ to the semisimple restricted Lie algebra 
\[
\cL:=(S\otimes \cO(m; \underline{1}))\rtimes (\Id_S\otimes \dd D),
\]
where $S$ is a simple restricted Lie algebra over $k$ such that $\ad S=\Der S$ and $\cN(S)$ is irreducible, $\cO(m; \underline{1})=k[X_1, \dots, X_m]/(X_1^{p}, \dots, X_{m}^{p})$ is the truncated polynomial ring in $m\geq 2$ variables, and $\dd{D}$ is a restricted transitive subalgebra of $W(m; \underline{1})=\Der \cO(m; \underline{1})$ such that $\cN(\dd{D})$ is irreducible. We split our study on $\cN(\cL)$ into three sections. In the first section, we study nilpotent elements of $\dd D$. Then we study nilpotent elements of $\cL$ and carry out some calculations using \thref{generalJacobF}. We finally prove that 

\begin{mythm*}[see \thref{NLirr}]
The variety $\cN(\cL)$ is irreducible.
\end{mythm*}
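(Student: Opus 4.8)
The plan is to follow Premet's scheme for $\cN(W(n;\underline 1))$ (Theorem~\thref{Wnproof}) and the model case $S=\fsl_2$, $\dd D=k\del$ (Theorem~\thref{sl2thm}), reducing everything to dimension counts via \thref{nvarietythm}.

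\emph{Dimension count and strategy.} The subalgebra $S\otimes\cO(m;\underline 1)$ is a $p$-ideal of $\cL$, and its toral elements are exactly the $s\otimes\lambda$ with $s$ toral in $S$ and $\lambda\in\F_p^*$ (since $f^p=f$ forces $f\in\F_p$ in $\cO(m;\underline 1)$); hence $\MT(S\otimes\cO(m;\underline 1))=\MT(S)$ and, by \thref{MTlem}, $\MT(\cL)=\MT(S)+\MT(\dd D)$. By \thref{nvarietythm}(iii), $\cN(\cL)$ is equidimensional of dimension $d:=\dim\cL-\MT(S)-\MT(\dd D)$. Let $\pi\colon\cL\tha\dd D$ be the projection with kernel $S\otimes\cO(m;\underline 1)$. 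As $\cL$ is centreless (being semisimple) it carries a unique $[p]$-operation (\thref{centrelesslem}), which is the restriction of that of $\Der(S\otimes\cO(m;\underline 1))$ under the faithful inclusion $\cL\hookrightarrow\Der(S\otimes\cO(m;\underline 1))$ provided by \thref{blockthmder} (using $\ad S=\Der S$); in particular $\pi$ is a homomorphism of restricted Lie algebras, $(\Id_S\otimes d)^{[p]^N}=\Id_S\otimes d^{[p]^N}$, and $S\otimes\fm\subseteq\cN(\cL)$. Thus $\pi$ restricts to a surjective morphism $\cN(\cL)\tha\cN(\dd D)$ with fibre $\cN(S\otimes\cO(m;\underline 1))$ over $0$. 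The plan is then: (a) show every irreducible component of $\cN(\cL)$ dominates the (given) irreducible variety $\cN(\dd D)$; (b) show the generic fibre of $\pi|_{\cN(\cL)}$ is irreducible. These together force $\cN(\cL)$ to have a single irreducible component.

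\emph{The fibres.} Faithfulness of $\cL$ on its socle means an element of $\cL$ is nilpotent iff it acts nilpotently on $S\otimes\cO(m;\underline 1)$, so the fibre over $d\in\cN(\dd D)$ is $\{x+\Id_S\otimes d : x\in S\otimes\cO(m;\underline 1),\ \ad x+\Id_S\otimes d\ \text{nilpotent on}\ S\otimes\cO(m;\underline 1)\}$. Over $0$ the fibre is $\cN(S\otimes\cO(m;\underline 1))$; since $S\otimes\fm$ is a $p$-nilpotent ideal and $S\otimes\cO(m;\underline 1)/(S\otimes\fm)\cong S$, this equals the preimage of $\cN(S)$ under the linear projection $S\otimes\cO(m;\underline 1)\to S$, hence is irreducible (as $\cN(S)$ is) of dimension $\dim S\cdot p^{m}-\MT(S)$. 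For $d$ in the dense (``transitive'') part of $\cN(\dd D)$ one conjugates $d$, by an automorphism of $\cL$ lifting one of $\cO(m;\underline 1)$, into a normal form $\sD$ as in \thref{WnDresultslem3}, acting on $\cO(m;\underline 1)$ as a single Jordan block of size $p^{m}$ and compatibly with the lexicographic ordering of \thref{Dcompatiblewithordering}; writing $x=\sum_j a_j\otimes e_j$ with $\sD(e_j)=e_{j-1}$, one then analyses via \thref{generalJacobF} when $\ad x+\Id_S\otimes\sD$ is nilpotent. The expected outcome — obtained by a Demushkin-type normal-form reduction (cf.\ \thref{D70Lem6part}) showing such an $x$ is conjugate to one whose ``leading term'' must lie in $\cN(S)$ — is that this fibre is irreducible of the same dimension $\dim S\cdot p^{m}-\MT(S)$. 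A uniform bound $\dim\bigl(\pi^{-1}(d)\cap\cN(\cL)\bigr)\le\dim S\cdot p^{m}-\MT(S)$ for all $d\in\cN(\dd D)$ then forces, via \thref{fdimthm} and \thref{chethm}, any component $Z$ of $\cN(\cL)$ to satisfy $\dim Z-\dim\overline{\pi(Z)}\le\dim S\cdot p^{m}-\MT(S)$, i.e.\ $\dim\overline{\pi(Z)}\ge\dim\cN(\dd D)$, so $Z$ dominates $\cN(\dd D)$; with (b) this yields irreducibility.

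\emph{Orbit-closure reformulation and the main obstacle.} In the form of Theorem~\thref{Wnproof} one takes the distinguished nilpotent $x_0:=e\otimes 1+\Id_S\otimes\sD$, with $e$ a regular nilpotent of $S$; it is nilpotent since $e\otimes 1$ and $\Id_S\otimes\sD$ commute, whence $x_0^{[p]^N}=e^{[p]^N}\otimes 1+\Id_S\otimes\sD^{[p]^N}$. One shows $\overline{G\dact x_0}$ is an irreducible component of dimension $d$ for a suitable connected automorphism group $G$, and that the complement $\cN(\cL)\setminus(G\dact x_0)$ has dimension $<d$ by exhibiting, via the Affine Dimension Theorem~\thref{ADthm}, a subspace $V=(\ft_S\otimes 1)\oplus(\Id_S\otimes\ft_{\dd D})\oplus k x_0$ of dimension $\MT(S)+\MT(\dd D)+1$ meeting the complement only in $0$; here $\ft_S,\ft_{\dd D}$ are maximal tori of $S$ and $\dd D$. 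Since $(t_S+e)\otimes 1$ and $\Id_S\otimes(t_{\dd D}+\sD)$ commute, membership of $t_S\otimes 1+\Id_S\otimes t_{\dd D}+\beta x_0$ in $\cN(\cL)$ forces (after rescaling $\beta=1$) both $t_S+e\in\cN(S)$ and $t_{\dd D}+\sD\in\cN(\dd D)$, so the verification reduces to the two facts: \emph{$e$ may be chosen with $e+t\notin\cN(S)$ for every $0\ne t\in\ft_S$} (a regularity statement for $S$), and \emph{$\sD+t\notin\cN(\dd D)$ for every $0\ne t\in\ft_{\dd D}$} (the lexicographic argument of \thref{WnN0resultslem5} transported into $\dd D$). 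The genuinely hard part is the fibre analysis over $\sD$: determining precisely when $\ad x+\Id_S\otimes\sD$ is a nilpotent endomorphism — which couples the Lie structure of $S$ (and the irreducibility of $\cN(S)$) with the multiplicative structure of $\cO(m;\underline 1)$ — and extracting from it both the irreducibility of the generic fibre and the uniform fibre-dimension bound; the case $S=\fsl_2$, $m=1$ of Theorem~\thref{sl2thm}, where $\Id\otimes\del+\fsl_2\otimes\cO(1;1)$ meets $\cN(\cL)$ in an irreducible hypersurface of dimension $3p-1$ (matching $\dim\cN(\fsl_2\otimes\cO(1;1))$), is the template to generalise.
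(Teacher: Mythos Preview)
Your overall strategy --- project $\pi\colon\cN(\cL)\to\cN(\dd D)$, show all fibres are irreducible of the same dimension, and invoke \thref{l:irr} --- matches the paper's exactly. But the fibre analysis over $d\notin W(m;\underline 1)_{(0)}$ has a genuine gap.

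You assert that such a $d$ can be conjugated ``into a normal form $\sD$ as in \thref{WnDresultslem3}, acting on $\cO(m;\underline 1)$ as a single Jordan block of size $p^m$''. This is false for general $\dd D$. A nilpotent $d\in\cN(\dd D)$ with $d\notin W(m;\underline 1)_{(0)}$ need not be regular in $W(m;\underline 1)$: for instance if $\dd D=k\del_1+\cdots+k\del_m$ then every element has $p$-th power zero, so no element is $G$-conjugate to $\sD$. The correct normal form (\thref{dformlemma2}) is $d_0+u$ with $d_0=\del_1+x_1^{p-1}\del_2+\cdots+x_1^{p-1}\cdots x_{s-1}^{p-1}\del_s$ for some $1\le s\le m$ depending on $d$, and $u$ lying in the subalgebra $W=I\del_1+\cdots+I\del_m$ where $I=(x_{s+1},\ldots,x_m)$. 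The paper's work in \thref{zpscalculationslemmma}--\thref{d-d0d_0image} and \thref{DconjugatetoD1}--\thref{D1psnilpotentcalculations} is precisely what handles this: one reduces $D$ by the group $H=\langle\exp(\ad(\tilde s\otimes f)):\tilde s\otimes f\in S\otimes\fm\rangle$ (which \emph{does} preserve $\cL$) to $D_1=s_0'\otimes x_1^{p-1}\cdots x_s^{p-1}+v'+d$ with $v'\in S\otimes I$, and then a careful computation using the $|\cdot|_p$-degree and the subspace $M=I+z(\fm)$ shows $D_1\in\cN(\cL)\iff s_0'\in\cN(S)$. None of this is routine, and your ``expected outcome --- obtained by a Demushkin-type normal-form reduction'' does not address it.

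A second issue: automorphisms of $\cO(m;\underline 1)$ do not in general preserve $\dd D$, so they do not lift to automorphisms of $\cL$. The paper uses $G=\Aut(\cO(m;\underline 1))$ only to bring $d$ to the form $d_0+u$ \emph{inside} $W(m;\underline 1)$; the fibre over $d$ is isomorphic to the fibre over $d_0+u$ because nilpotency in $\cL$ coincides with nilpotency in the ambient $\Der(S\otimes\cO(m;\underline 1))$, on which $G$ does act. Your orbit-closure reformulation compounds both problems: $\sD$ need not lie in $\dd D$, and the lexicographic argument of \thref{WnN0resultslem5} is tied to the specific torus $T_n\subset W(n;\underline 1)$, with no obvious transport to an arbitrary maximal torus $\ft_{\dd D}$ of $\dd D$. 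Finally, the equality-of-fibre-dimensions step is not automatic: the paper obtains the lower bound $\dim\tilde\psi^{-1}(d)\ge\dim\tilde\psi^{-1}(0)$ by a tangent-space computation at a smooth point, using that $M\oplus kx_1^{p-1}\cdots x_s^{p-1}=\cO(m;\underline 1)$ (\thref{d-d0d_0image}), and the upper bound from the $k^*$-stability argument via \thref{chethm}.
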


As a remark, we see from the above that Premet's conjecture holds for 
\[
\bigoplus _{i=1}^{t}(S_{i}\otimes \cO(m_{i}; \underline{1}))\rtimes (\Id_{S_{i}}\otimes \dd D_{i}),
\]
where each $S_{i}$ is a simple restricted Lie algebra over $k$ such that $\ad S_{i}=\Der S_{i}$ and $\cN(S_{i})$ is irreducible, $\cO(m_{i}; \underline{1})$ are truncated polynomial rings, and each $\dd D_{i}$ is a restricted transitive subalgebra of $W(m_i; \underline{1})=\Der \cO(m_{i}; \underline{1}) $ such that $\cN(\dd D_{i})$ is irreducible.

\textit{Chapter 4}. This is our final chapter. It corresponds to a paper \cite{C19} of the author which was published in the Journal of Algebra and Its Applications. We assume that $\chari k=p>3$ and $n\in\N_{\geq 2}$. Then the Zassenhaus algebra, denoted $\fL=W(1; n)$, provides the first example of a simple, non-restricted Lie algebra. We can embed $\fL$ into its minimal $p$-envelope $\fL_p=W(1;n)_p$. This restricted Lie algebra is semisimple. By \cite[Theorem 4.8(i)]{YS16}, the variety $\cN(\fL):=\cN(\fL_p)\cap \fL$ is reducible. So investigating the variety $\cN(\fL_p)$ becomes critical. 

Let $\cN$ denote the nilpotent variety of $\fL_p$. We split our study on $\cN$ into three sections. In the first section, we focus on nilpotent elements of $\fL_p$ and carry out some calculations using Jacobson's formula. This work enables us to identify an irreducible component $\cN_\text{reg}$ of $\cN$. Moreover, we can explicitly describe it as:
\begin{mypro*}[see \thref{l:l14} and \thref{l:l15}]
\[
\cN_\text{reg}=G.(\del+k\del^p+\dots+k\del^{p^{n-1}}),
\]
where $G=\Aut(\fL_p)$.
\end{mypro*}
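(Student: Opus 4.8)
\emph{Proof proposal.}
The plan is to establish the set-theoretic identity $\cN_\text{reg}=G.Y$, where $Y:=\del+k\del^{p}+\dots+k\del^{p^{n-1}}$ is the affine subspace $\{\del+\sum_{i=1}^{n-1}c_{i}\del^{p^{i}}\mid c_{i}\in k\}$ (which is just $\{\del\}$ when $n=1$), by proving the two inclusions separately. Both become transparent after the following reduction. Realising $\fL_p=W(1;n)_p$ inside $\Der\cO(1;n)$ as in \thref{wittpenvelope}, and using that as an ordinary algebra $\cO(1;n)$ is isomorphic to $\cO(n;\underline{1})$, one checks that this isomorphism carries $\del=\del_{1}$ to the operator $\sD=\del_{1}+x_{1}^{p-1}\del_{2}+\dots+x_{1}^{p-1}\cdots x_{n-1}^{p-1}\del_{n}$ of \thref{WnDresultslem3} (the formula $\del(x^{(a)})=x^{(a-1)}$ performs exactly the ``borrowing'' on the $p$-adic digits of $a$ that $\sD$ performs on the coordinates). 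Hence, by \thref{WnDresultslem3}(i), $\del^{[p]^{j}}=\del^{p^{j}}=\pm\sD^{p^{j}}$ is nonzero for $0\le j\le n-1$ and $\del^{[p]^{n}}=0$, and moreover $\del^{[p]^{n-1}}$ (corresponding to $\pm\del_{n}$) is not of the form $f\del_{1}$, hence lies outside the standard maximal subalgebra $W(1;n)_{(0)}$.

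\emph{Inclusion $G.Y\subseteq\cN_\text{reg}$.} Let $D=\del+\sum_{i=1}^{n-1}c_{i}\del^{p^{i}}\in Y$. All summands are composition powers of $\del$, hence commute, so in \thref{generalJacobF} every $v_{l}$ is a linear combination of commutators in pairwise commuting elements and therefore vanishes; this gives
\[
D^{[p]^{m}}=\del^{[p]^{m}}+\sum_{i=1}^{n-1}c_{i}^{\,p^{m}}\,\del^{[p]^{\,i+m}}\qquad(m\ge 0).
\]
Taking $m=n$ annihilates every term, so $D\in\cN(\fL_p)$ (consistently with \thref{nvarietythm}(iv)); taking $m=n-1$ leaves $D^{[p]^{n-1}}=\del^{[p]^{n-1}}\notin W(1;n)_{(0)}$, the regularity condition defining $\cN_\text{reg}$. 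Since $\Aut(\fL_p)$ preserves the characteristic ideal $\fL_p^{(1)}=W(1;n)$ and respects its standard filtration, this condition is $G$-stable, so $G.Y\subseteq\cN_\text{reg}$.

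\emph{Inclusion $\cN_\text{reg}\subseteq G.Y$.} I would proceed by induction on $n$, following the template of the proof of \thref{WnOresultslem4}. When $n=1$, $\fL_p=W(1;1)$ is the restricted Witt algebra and $Y=\{\del\}$; given a nilpotent $D\notin W(1;1)_{(0)}$, the one-variable case of \thref{D70Lem6part} conjugates $D$ to $(1+cx^{p-1})\del$, and a Jacobson-formula computation shows that for $c\ne0$ this element is non-nilpotent (its $[p]$-powers are nonzero scalar multiples of itself), forcing $c=0$ and $D\in G.\del$. For the inductive step, let $D\in\cN_\text{reg}$ and set $z:=D^{[p]^{n-1}}$; then $z^{[p]}=D^{[p]^{n}}=0$ and $z\notin W(1;n)_{(0)}$, so a first conjugation puts $z$ into the standard form $\del^{[p]^{n-1}}$ (the analogue of $\del_{n}$). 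Since $[D,z]=0$, $D$ lies in $\fc_{\fL_p}(z)$, which one computes to be a semidirect product of a copy of $W(1;n-1)_p$ with an abelian ideal contained in $\cO(n-1;\underline{1})\del_{n}$; in this decomposition the $W(1;n-1)_p$-component of $D$ is regular nilpotent in the smaller algebra, the induction hypothesis conjugates it into the $(n-1)$-variable version of $Y$, and a final conjugation of the kind used at the end of the proof of \thref{WnOresultslem4} absorbs the abelian-ideal part and normalizes the surviving coefficients of the $\del^{p^{i}}$, landing $D$ in $Y$. Together with the previous inclusion this yields $\cN_\text{reg}=G.Y$; since $\cN_\text{reg}$ is already known to be an irreducible component of $\cN$ (which can also be re-obtained here from \thref{nvarietythm}, the connectedness of $G$, and the irreducibility of the affine space $Y$), the description is complete.

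\emph{Main difficulty.} The heart of the matter is the normal-form lemma replacing \thref{D70Lem6part}: Demushkin's reduction for $W(n;\underline{1})$ exploits the completely explicit polynomial substitutions generating the automorphism group of $\cO(n;\underline{1})$, whereas here one must work with the automorphism group of the \emph{divided-power} algebra $\cO(1;n)$ together with its induced action on the ``extra'' generators $\del^{p},\dots,\del^{p^{n-1}}$, and verify both that conjugation by such automorphisms kills the unwanted lower-order terms degree by degree and that it acts triangularly on the $\del^{p^{j}}$-directions. Controlling this interaction between the divided-power multiplication of $\cO(1;n)$ and the iterated $p$-th powers $\del^{p^{j}}$ is the step with no literal counterpart in Premet's treatment of $W(n;\underline{1})$, and it is where the calculations with Jacobson's formula referred to in the chapter overview do most of their work.
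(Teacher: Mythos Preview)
Your easy inclusion is fine (though $\del^{p^{n-1}}\notin\fL_{(0)}$ is immediate: $\fL_{(0)}\subset\fL$ and $\del^{p^{n-1}}\notin\fL$ for $n\ge2$). For the hard inclusion, the induction scheme carries a structural error that does not disappear even after granting your ``main difficulty''. You describe $\fc_{\fL_p}(z)$ as a semidirect product of $W(1;n-1)_p$ with an abelian ideal ``contained in $\cO(n-1;\underline{1})\del_n$'', by analogy with $\fc_{W(n;\underline{1})}(\del_n)=W(n-1;\underline{1})\ltimes(\cO(n-1;\underline{1})\del_n)$. But in $\fL_p$ one checks $[\del^{p^{n-1}},x^{(a)}\del]=x^{(a-p^{n-1})}\del$, so $\fc_{\fL_p}(\del^{p^{n-1}})=k\del^{p^{n-1}}\oplus W(1;n-1)_p$: the extra piece is a single central line, not a $p^{n-1}$-dimensional ideal, and there is no ``absorption of the abelian-ideal part'' to carry out. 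Worse, $W(1;n-1)_p$ sitting inside $\fL_p$ is \emph{not} a restricted subalgebra, since $(\del^{p^{n-2}})^p=\del^{p^{n-1}}$ in $\fL_p$ but vanishes in the abstract $W(1;n-1)_p$. So you cannot hand the component $D'$ to the induction hypothesis without first comparing the two $[p]$-structures and then proving $(D')^{[p]^{n-2}}\notin W(1;n-1)_{(0)}$; the latter comes down to showing that the $\del$-coefficient of $D$ is nonzero, which is exactly the hard computation you were trying to bypass.

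The paper avoids induction on $n$ altogether. Using Tyurin's description of how admissible automorphisms act on the $\del^{p^i}$, it shows (\thref{l:l13}) that any $\ccD$ with top term $\del^{p^t}$ is $G$-conjugate to $\del^{p^t}+\sum_{i<t}\beta_i\del^{p^i}+x^{(p^n-p^t)}h(x)\del$. In this normal form, iterated Jacobson computations (\thref{p:p1} for $t=n-1$, \thref{c:c2} for $t<n-1$) pin down exactly when $\ccD^{p^{n-1}}\notin\fL_{(0)}$: it happens iff $\beta_0\neq0$, and then the same computations force $\ccD=\del^{p^t}+\sum_{i<t}\gamma_i\del^{p^i}$ with $\gamma_0\neq0$, which the rescaling $x\mapsto\gamma_0 x$ sends into $Y$. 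Your ``first conjugation'' of $z=D^{p^{n-1}}$ to $\del^{p^{n-1}}$ is in fact a \emph{consequence} of these calculations (part (ii)(b) of \thref{p:p1}), not a freestanding opening move; and the irreducible-component claim needs the dimension count of \thref{l:l15}, not just connectedness of $G$ and irreducibility of $Y$.
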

In the final section, we show that the complement of $\cN_\text{reg}$ in $\cN$, denoted $\cN_\text{sing}$, has $\dim \cN_\text{sing}<\dim \cN$. The proof is similar to Premet's proof for the Jacobson-Witt algebra $W(n; \underline{1})$; see \thref{WnN0resultslem5}. But we have to construct a new subspace $V$ of $\fL_p$ such that $\dim V=n+1$ and $V\cap \cN_\text{sing}=\{0\}$; see \thref{l:l16}. Combining all these results, we are able to prove the last theorem in the thesis:

\begin{mythm*}[see \thref{main}]
The variety $\cN=\cN(\fL_p)$ coincides with the Zariski closure of
\[
\cN_\text{reg}=G.(\del+k\del^p+\dots+k\del^{p^{n-1}})
\] and hence is irreducible.
\end{mythm*}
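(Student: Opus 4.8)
The plan is to transplant Premet's proof for the Jacobson--Witt algebra (\thref{Wnproof}) to the setting $\fL_p=W(1;n)_p$. Write $G=\Aut(\fL_p)$; this is a connected algebraic group, since the automorphisms of the simple Lie algebra $W(1;n)$ extend uniquely to its minimal $p$-envelope. By \thref{nvarietythm}(iii) the variety $\cN=\cN(\fL_p)$ is equidimensional of dimension $\dim\fL_p-\MT(\fL_p)=(p^{n}+n-1)-n=p^{n}-1$, and by \thref{nvarietythm}(iv) every element of $\cN$ is $[p]$-nilpotent of bounded index. The argument then splits into three parts: (i) show that $\overline{\cN_\text{reg}}$ is an irreducible component of $\cN$, where $\cN_\text{reg}=G.(\del+k\del^{p}+\dots+k\del^{p^{n-1}})$; (ii) show that $\cN_\text{sing}:=\cN\setminus\cN_\text{reg}$ has $\dim\cN_\text{sing}<p^{n}-1$; (iii) combine these with equidimensionality to see that $\overline{\cN_\text{reg}}$ is the only component.

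For part (i), one first checks $\del+k\del^{p}+\dots+k\del^{p^{n-1}}\subseteq\cN$: the iterated $p$-th powers $\del^{p^{i}}$ pairwise commute, $\del^{p^{n}}$ annihilates the divided power algebra $\cO(1;n)$ on which $\fL_p$ acts faithfully by derivations, and hence $\big(\del+\sum_{i=1}^{n-1}c_i\del^{p^{i}}\big)^{[p]^{n}}=\del^{p^{n}}+\sum_{i=1}^{n-1}c_i^{p^{n}}\del^{p^{n+i}}=0$ by Jacobson's formula. Since $\cN$ is Zariski closed and $G$-stable, and $\cN_\text{reg}$ is the image of the irreducible variety $G\times\A^{n-1}$ under the orbit map $(\sigma,c)\mapsto\sigma.(\del+\sum c_i\del^{p^{i}})$, its closure $\overline{\cN_\text{reg}}$ is an irreducible closed subset of $\cN$. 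The content of \thref{l:l14} and \thref{l:l15} --- obtained, in parallel with \thref{WnDresultslem3} and \thref{WnOresultslem4}, by explicit Jacobson-formula computations together with a Demushkin-type normal form for nilpotent elements --- is that $\cN_\text{reg}$ is exactly the locus of those $D\in\cN$ whose iterate $D^{[p]^{n-1}}$ escapes the standard maximal subalgebra of $\fL_p$. This is an open condition (the $[p]$-power map is a morphism), so $\cN_\text{reg}$ is a nonempty open subset of $\cN$ with irreducible closure; being open and nonempty it is dense in a single component of $\cN$, whence $\overline{\cN_\text{reg}}$ is an irreducible component, necessarily of dimension $p^{n}-1$.

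For part (ii), $\cN_\text{sing}$ is closed in $\cN$ (the complement of the open set $\cN_\text{reg}$) and contains $0$, so by the Affine Dimension Theorem \thref{ADthm} it suffices to exhibit a linear subspace $V\subseteq\fL_p$ with $\dim V=\MT(\fL_p)+1=n+1$ and $V\cap\cN_\text{sing}=\{0\}$: then every component of $V\cap\cN_\text{sing}$ has dimension at least $\dim\cN_\text{sing}+(n+1)-(p^{n}+n-1)$, forcing $\dim\cN_\text{sing}\le p^{n}-2$. This is \thref{l:l16}. The subspace should be taken of the form $V=\ft\oplus k\sD$, with $\ft$ a maximal torus of $\fL_p$ (so $\dim\ft=n$) and $\sD$ a suitably chosen $p$-nilpotent element, in exact analogy with $V=T_n\oplus k\sD$ in \thref{WnN0resultslem5}. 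The verification that $V\cap\cN_\text{sing}=\{0\}$ should proceed as there: choosing $\sD$ so that its action on a suitable truncated polynomial ring is compatible with a lexicographic ordering (cf. \thref{Dcompatiblewithordering}), one shows that for $0\neq t\in\ft$ with $t+\sD$ nilpotent a top-degree monomial is still moved invertibly by $(t+\sD)^{[p]^{n}-1}$, whereas $t+\sD\in\cN_\text{sing}$ would make this operator factor through $(t+\sD)^{[p]^{n-1}}$, which lies in the standard maximal subalgebra and hence maps everything into the maximal ideal --- a contradiction. I expect the choice of the correct $\ft$ and $\sD$ to be the main technical obstacle: unlike $W(n;\underline1)$, the algebra $\fL_p$ is built from a \emph{one-variable} divided power algebra yet carries an $n$-dimensional torus coming from the iterated $p$-th powers $\del^{p^{i}}$, and constructing the new $V$ is precisely the point flagged as novel in \thref{l:l16}.

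For part (iii), let $Z$ be any irreducible component of $\cN$ with $Z\neq\overline{\cN_\text{reg}}$. Then $Z\not\subseteq\overline{\cN_\text{reg}}$, so $Z\setminus\overline{\cN_\text{reg}}$ is a nonempty --- hence dense --- open subset of the irreducible set $Z$, and it is contained in $\cN\setminus\cN_\text{reg}=\cN_\text{sing}$ (which is closed); consequently $\dim Z=\dim\overline{Z\setminus\overline{\cN_\text{reg}}}\le\dim\cN_\text{sing}<p^{n}-1$. This contradicts $\dim Z=p^{n}-1$, which holds by \thref{nvarietythm}(iii). Hence $\overline{\cN_\text{reg}}$ is the unique irreducible component, i.e. $\cN=\overline{\cN_\text{reg}}=\overline{G.(\del+k\del^{p}+\dots+k\del^{p^{n-1}})}$; in particular $\cN$ is irreducible.
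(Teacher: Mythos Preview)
Your overall architecture---parts (i), (ii), (iii)---is exactly the paper's, and your part (iii) matches \thref{main} essentially verbatim. Your argument for (i) is also fine: once \thref{l:l14} gives the characterisation $\cN_\text{reg}=\{\ccD\in\cN:\ccD^{p^{n-1}}\notin\fL_{(0)}\}$, openness plus the irreducibility of $G\times\A^{n-1}$ forces $\overline{\cN_\text{reg}}$ to be a component. (The paper's \thref{l:l15} instead computes $\dim\overline{\cN_\text{reg}}=p^n-1$ directly via a tangent-space argument, but your route is equally valid.)

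The genuine gap is in (ii), and it is not merely that you have not constructed $V$: your guess at its \emph{shape} is wrong. You propose $V=\ft\oplus k\sD$ with $\sD$ nilpotent and plan to run a lexicographic-ordering argument on the action of $\sD$ on a truncated polynomial ring, as in \thref{WnN0resultslem5}. The paper does something quite different. It takes the Zassenhaus presentation $\fL=\spn\{e_\alpha:\alpha\in\F_q\}$ with $[e_\alpha,e_\beta]=(\beta-\alpha)e_{\alpha+\beta}$, builds the $n$-dimensional torus $T=(ke_0)_p$, and then adjoins not a nilpotent element but a \emph{toral} element $u\in\fL_{(0)}$ (so $ku$ is a $1$-dimensional torus). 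Thus $V=T\oplus ku$ is a sum of two tori. The glue is a diagonalisable automorphism $\sigma\in\Aut(\fL_p)$, $\sigma(e_\alpha)=\xi^{-1}e_{\xi\alpha}$ with $\xi$ a generator of $\F_q^*$: one shows $\sigma$ acts on $\fL_{(i)}/\fL_{(i+1)}$ by $\xi^i$, hence the $\sigma$-eigenvalues on $\fL_{(0)}$ are simple, and $u$ is taken to be a $\sigma$-fixed vector in $\fL_{(0)}\setminus\fL_{(1)}$. The verification $V\cap\cN_\text{sing}=\{0\}$ is then a Jacobson-formula computation tracking $\sigma$-eigenvalues of iterated commutators, not a monomial-ordering argument. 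So while you correctly flagged this step as the novel one, the template you carried over from $W(n;\underline{1})$ does not survive the passage to $\fL_p$; the replacement uses the second (Zassenhaus) realisation of $\fL$ in an essential way.
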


\chapter{Reduction of Premet's conjecture to the semisimple case}
Let $(\fg, [p])$ be a finite dimensional restricted Lie algebra over $k$. Recall Premet's conjecture which states that the variety $\cN(\fg)=\{x\in\fg\, |\, x^{[p]^{N}}=0 \,\text{for $N\gg 0$}\}$ is irreducible. In this short chapter we show that this conjecture can be reduced to the case where $\fg$ is semisimple. For that, we need the following result from algebraic geometry. It gives a criterion for a variety to be irreducible.

\begin{lem}\thlabel{l:irr}
Let $\psi: X \to Y$ be a surjective morphism of algebraic varieties such that
\begin{enumerate}[\upshape(i)]
\item Y is irreducible,
\item all fibres of $\psi$ are irreducible and have the same dimension $d$, and
\item X is equidimensional.
\end{enumerate}
Then $X$ is irreducible.
\end{lem}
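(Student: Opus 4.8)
The plan is to argue by contradiction through the irreducible components of $X$. Write $X = X_1 \cup \dots \cup X_r$ with the $X_i$ its pairwise distinct irreducible components (so $r\geq 1$, as $X\ne\emptyset$ because $\psi$ is surjective onto the nonempty $Y$); by hypothesis (iii) every $X_i$ has the same dimension $m := \dim X$, and put $n := \dim Y$. Suppose $r \geq 2$. For each $i$ set $Y_i := \overline{\psi(X_i)}$, an irreducible closed subvariety of $Y$. Since $\psi$ is surjective, $Y = \bigcup_i Y_i$, so by irreducibility of $Y$ at least one $X_i$ dominates $Y$. The first real step is to pin down the dimensions: choosing $y$ in a nonempty open subset of $Y$ that lies in each of the loci furnished by \thref{fdimthm}(ii) for the dominating components and avoids the proper closed sets $Y_i$ of the non-dominating ones, the fibre $\psi^{-1}(y)$ becomes a finite union of the pieces $\psi^{-1}(y)\cap X_i$ with $X_i$ dominating, each nonempty of dimension $m-n$; since $\psi^{-1}(y)$ is irreducible of dimension $d$ by (ii), this forces $m - n = d$.

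Next I would show that in fact \emph{every} component dominates $Y$. If some $X_i$ did not, then $\dim Y_i \le n-1$, and applying \thref{fdimthm}(i) to the dominant morphism $\psi|_{X_i}\colon X_i \to Y_i$, every component of a nonempty fibre of $\psi|_{X_i}$ would have dimension at least $m - (n-1) = d+1$; but such a fibre sits inside $\psi^{-1}(y)$, which has dimension $d$ --- a contradiction. Now all $X_i$ dominate $Y$, so by \thref{fdimthm}(ii) there is a nonempty open $U^\ast \subseteq Y$ on which $\dim\big(\psi^{-1}(y)\cap X_i\big) = m-n = d$ simultaneously for every $i$. For $y \in U^\ast$ the set $\psi^{-1}(y)\cap X_i$ is closed in the irreducible $d$-dimensional variety $\psi^{-1}(y)$ and has dimension $d$, hence equals $\psi^{-1}(y)$; as this holds for all $i$ we get $\psi^{-1}(y) \subseteq \bigcap_i X_i$, and therefore $\psi^{-1}(U^\ast) \subseteq \bigcap_i X_i$.

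To finish, observe that $\psi^{-1}(U^\ast)\cap X_1$ is a nonempty open --- hence dense --- subset of the irreducible variety $X_1$, so it has dimension $m$; but it is contained in $X_1 \cap X_2$, a proper closed subvariety of $X_1$, which therefore has dimension $<m$. This contradiction gives $r = 1$, i.e.\ $X$ is irreducible. The one step that genuinely uses all three hypotheses at once --- and hence is the crux --- is the equality $m = n+d$: surjectivity makes the fibres over $Y$ cover $X$, the common fibre dimension $d$ controls them from inside, and equidimensionality of $X$ forbids a stray low-dimensional component hiding over a proper closed subset of $Y$. Everything afterwards is the routine principle that a closed subset of full dimension in an irreducible variety is the whole variety, together with the bookkeeping needed to keep the various open subsets of $Y$ nonempty (finite intersections in the irreducible $Y$) and the fibres one restricts to nonempty (automatic on the loci supplied by \thref{fdimthm}(ii)).
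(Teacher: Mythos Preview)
Your proof is correct and takes a genuinely different route from the paper's. The paper argues that \emph{exactly one} component $X_j$ dominates $Y$: since each irreducible fibre must lie entirely in some component, over a dense open $Z\subseteq Y$ all fibres lie in $X_j$, so every other $X_i$ maps into $Y\setminus Z$ and hence fails to dominate; then the formula $\dim X_i=\dim\overline{\psi(X_i)}+d$ (from \thref{fdimthm}) makes the non-dominating components strictly smaller than $X_j$, and equidimensionality is invoked only at this final line. You invert the logic: you bring equidimensionality in first to pin down $m=n+d$, then use the lower bound in \thref{fdimthm}(i) to show that \emph{every} component must dominate (else its fibres would have dimension $\geq d+1$), and finally use irreducibility of the fibre only at the end, to conclude that each full-dimensional closed piece $\psi^{-1}(y)\cap X_i$ is all of $\psi^{-1}(y)$, so the components share a dense open and cannot be distinct. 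The paper front-loads fibre irreducibility and defers equidimensionality; you do the opposite. Both arguments are clean and rest on the same tool (\thref{fdimthm}); neither is obviously more elementary, though yours is a bit more purely dimension-theoretic while the paper's makes the role of fibre irreducibility visible from the outset.
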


\begin{proof}
Let $X=X_{1}\cup \dots\cup X_{t}$ be the decomposition of $X$ into pairwise distinct irreducible components $X_i$. Suppose $t \geq 2$. Then for any $y \in Y$, 
\[
\psi^{-1}(y)= \bigcup_{i=1}^{t} \big(\psi^{-1}(y) \cap X_{i}\big).
\]
Since $\psi^{-1}(y)$ is irreducible, we have that $\psi^{-1}(y)=\psi^{-1}(y) \cap X_{i}$ for some $i$.

For every $i$, define $O_{i}:=X_{i}\setminus \bigcup_{j\neq i} (X_i\cap X_{j})$. Then $O_{i}$ is a nonempty open subset of $X_{i}$. If $y\in \psi(O_{i})$, then the fibre $\psi^{-1}(y)$ is not contained in $\psi^{-1}(y)\cap X_{j}$ for every $j \neq i$. Hence for any $y \in \psi(O_{i})$,
\begin{equation}\label{e:e1}
\text{$\psi^{-1}(y) =\psi^{-1}(y) \cap X_{i}$}.
\end{equation}

Next we show that there is a unique irreducible component $X_{j}$ of $X$ such that $Y=\overline{\psi(X_{j})}$. Since $\psi$ is surjective, we have that $Y=\bigcup_{i=1}^{t}\overline{\psi(X_i)}$. But $Y$ is irreducible, so that $Y= \overline{\psi(X_{j})}$ for some $j$. By definition of $O_j$, we also have that $Y= \overline{\psi(O_{j})}$, i.e. $\psi(O_{j})$ is dense in $Y$. Thus, $\psi(O_{j})$ contains a nonempty open subset $Z$ of $Y$. By~\eqref{e:e1}, we have that for any $y\in Z$, $\psi^{-1}(y) =\psi^{-1}(y) \cap X_{j}$. This means that if $i\neq j$, then $X_i\setminus (X_i\cap X_j) \subseteq  \psi^{-1}(Y\setminus Z)$. As a result, $X_i=\overline{X_{i}\setminus (X_i\cap X_{j})}$ is contained in $\psi^{-1}(Y\setminus Z)$. This implies that $\overline{\psi(X_{i})}\neq Y$ for every $i\neq j$. So there is a unique irreducible component $X_{j}$ of $X$ such that $Y=\overline{\psi(X_{j})}$.

We now show that for every $i$, $\dim X_{i}=\dim \overline{\psi(X_i)}+d$. Consider the restriction of $\psi$ to $X_i$, $\psi: X_{i}\to \overline{\psi(X_{i})}$. By \thref{fdimthm}, there exists a nonempty open subset $U\subset \overline{\psi(X_i)}$ such that for every $y \in U$,
\begin{equation*}\label{e:ereduce3}
\dim X_{i} = \dim \overline{\psi(X_{i})}+\dim (\psi^{-1}(y) \cap X_{i}).
\end{equation*}
Since $U\cap \psi (O_i)\neq \emptyset$, then \eqref{e:e1} implies that for every $i$,
\begin{equation}\label{e:dimxi}
\text{$\dim X_{i}= \dim \overline{\psi(X_{i})}+d$}.
\end{equation}

By the previous argument, we know that there is a unique $X_{j}$ that dominates $Y$. So if $i\neq j$, then $\overline{\psi(X_i)}$ is a proper subset of $Y$. This and \eqref{e:dimxi} imply that
\[
\dim X_{i}= \dim \overline{\psi(X_{i})}+d < \dim Y+d =\dim \overline{\psi(X_{j})}+d= \dim X_{j}.
\]
But this contradicts the fact that $X$ is equidimensional. Hence $t=1$ and $X$ is irreducible. This completes the proof.
\end{proof}

\begin{thm}\thlabel{reductionthm}
Let $(\fg, [p])$ be a finite dimensional restricted Lie algebra over $k$. We denote by $\Rad \fg$ the radical of $\fg$. Then \thref{pconj} can be reduced to the case where $\fg$ is semisimple. Explicitly, $\cN(\fg)$ is irreducible if and only if $\cN(\fg/\Rad \fg)$ is irreducible.
\end{thm}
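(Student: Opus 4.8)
The plan is to push $\cN(\fg)$ forward along the linear projection $\pi\colon\fg\to\fg/\Rad\fg$ and invoke \thref{l:irr}. Since $\Rad\fg$ is a $p$-ideal, $\fg/\Rad\fg$ is restricted, $\pi$ is a morphism of the underlying affine spaces, and $\pi(\cN(\fg))\subseteq\cN(\fg/\Rad\fg)$. In fact $\pi$ maps $\cN(\fg)$ \emph{onto} $\cN(\fg/\Rad\fg)$: given $\bar y\in\cN(\fg/\Rad\fg)$, lift it to $y\in\fg$ and write $y=y_s+y_n$ as in \thref{JCthm}; then $\pi(y_s)$ is the semisimple part of $\bar y$, which is $0$, so $y_s\in\Rad\fg$ and $y_n$ is a nilpotent lift. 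One implication is then immediate: if $\cN(\fg)$ is irreducible, so is its image $\cN(\fg/\Rad\fg)$.

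For the converse I would induct on $\dim\fg$, the case $\Rad\fg=0$ being vacuous. If $\Rad\fg\neq0$, first choose a nonzero \emph{abelian} $p$-ideal $I$ of $\fg$ with $I\subseteq\Rad\fg$: take $I=\fz(\fg)$ if $\fz(\fg)\neq0$, and otherwise the last nonzero term of the derived series of $\Rad\fg$, which is an abelian ideal of $\fg$ and is a $p$-ideal because an abelian ideal $A$ satisfies $(\ad a)^2=0$ on $\fg$ for $a\in A$, forcing $a^{[p]}\in\fz(\fg)=0$. As $\Rad(\fg/I)=\Rad(\fg)/I$ and $(\fg/I)/\Rad(\fg/I)\cong\fg/\Rad\fg$, the inductive hypothesis applied to $\fg/I$ reduces everything to one step: \emph{if $I$ is a nonzero abelian $p$-ideal of $\fg$ and $\cN(\fg/I)$ is irreducible, then $\cN(\fg)$ is irreducible}.

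For this step I apply \thref{l:irr} to the surjection $\pi_1\colon\cN(\fg)\to\cN(\fg/I)$ (surjective as above); the target is irreducible by hypothesis, $\cN(\fg)$ is equidimensional by \thref{nvarietythm}(iii), so it remains to show the fibres of $\pi_1$ are irreducible of one fixed dimension. Fix $\bar y\in\cN(\fg/I)$, a nilpotent lift $y_0$, and $N$ with $z^{[p]^{N}}=0$ for all $z\in\cN(\fg)$ (e.g.\ $N=e(\fg)+\MT(\fg)$, by \thref{nvarietythm}(iv)); since $\pi_1$ restricts the linear map $\fg\to\fg/I$, one has $\pi_1^{-1}(\bar y)=(y_0+I)\cap\cN(\fg)$. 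Put $T:=\ad y_0|_I$, a nilpotent operator. Expanding $(y_0+a)^{[p]^{N}}$ by \thref{generalJacobF} and using that $I$ is an abelian $p$-ideal — every commutator in $y_0,a$ containing $a$ twice vanishes, and the others reduce to $k$-linear combinations of $T^{j}(a)$, $j\ge1$ — one sees that $\Phi(a):=(y_0+a)^{[p]^{N}}$ is an \emph{additive} polynomial morphism $I\to I$ and that $y_0+a\in\cN(\fg)$ precisely when $\Phi(a)\in\cN(I)=I\cap\cN(\fg)$; hence $\pi_1^{-1}(\bar y)=y_0+\Phi^{-1}(\cN(I))$. Now $\ad y_0|_I$ is a restricted derivation of the abelian $p$-ideal $I$, so it kills every toral element of $I$, hence all of $I_{ss}$ (\thref{toralbasis}); as the correction terms have no $T^{0}$-summand, $\Phi$ restricts on $I_{ss}$ to $s\mapsto s^{[p]^{N}}$, which is bijective on the torus $I_{ss}$. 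Writing $I=I_{ss}\oplus\cN(I)$ and projecting $\Phi$ to $I/\cN(I)\cong I_{ss}$, the equations defining $\Phi^{-1}(\cN(I))$ inside $I_{ss}\oplus\cN(I)$ become $c_i^{p^{N}}=-G_i(m)$ ($1\le i\le\dim I_{ss}$), with $c_i$ the toral coordinates and $G_i$ polynomials in the $\cN(I)$-coordinate $m$; thus $\Phi^{-1}(\cN(I))$ is finite and bijective over $\cN(I)$, hence homeomorphic to the irreducible variety $\cN(I)$, hence irreducible of dimension $\dim I-\MT(I)$, which does not depend on $\bar y$. So \thref{l:irr} gives that $\cN(\fg)$ is irreducible.

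The heart of the proof, and what I expect to require the most care, is this fibre computation: showing that on the abelian $p$-ideal $I$ the map $a\mapsto(y_0+a)^{[p]^{N}}$ is additive and bijective on the semisimple part, and then extracting irreducibility and the base-point-independent dimension of $\Phi^{-1}(\cN(I))$. It is exactly to linearize the Jacobson correction terms in $a$ that one must first descend to an \emph{abelian} $p$-ideal; for a general $p$-ideal these terms are nonlinear and the argument collapses. The remaining ingredients — surjectivity of $\pi$, equidimensionality of $\cN(\fg)$, existence of the abelian $p$-ideal, and the inductive bookkeeping — are routine.
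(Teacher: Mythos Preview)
Your argument is correct and follows the same inductive skeleton as the paper --- pass to a nonzero abelian $p$-ideal $I\subseteq\Rad\fg$ and apply \thref{l:irr} to $\cN(\fg)\to\cN(\fg/I)$ --- but the fibre analysis is organised differently. The paper splits into two cases according to whether the chosen abelian $p$-ideal contains a non-nilpotent element: if it does, one shows $\fz_s\neq 0$ and that $\cN(\fg)\to\cN(\fg/\fz_s)$ is \emph{bijective} (fibres are points); if it is $p$-nilpotent, the fibres of $\cN(\fg)\to\cN(\fg/R)$ are entire cosets $x+R$. Each branch has a one-line fibre description. You instead run a single computation valid for an arbitrary abelian $p$-ideal: your key extra observation --- that $\ad y_0$ kills every toral element of $I$ because $(\ad t)^2=0$ on $\fg$ forces $[y_0,t]=[y_0,t^{[p]}]=-(\ad t)^p(y_0)=0$ --- makes the Jacobson correction terms depend only on the nilpotent coordinate $m$, and the resulting equations $c_i^{p^{N}}=-G_i(m)$ exhibit the fibre as finite and bijective (hence, since finite morphisms are closed, homeomorphic) over the affine space $\cN(I)$. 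This buys you a uniform treatment that would work for any abelian $p$-ideal, at the cost of invoking a little more machinery than the paper's elementary case split. Amusingly, for your particular choices of $I$ (either $I=\fz(\fg)$, which forces $T=0$ and hence $G_i=0$, or $I$ with $I^{[p]}=0$, which forces $I_{ss}=0$ so there are no $c_i$ at all) the general equations degenerate and your computation collapses exactly to the paper's two easy cases.
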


\begin{proof}
We proceed by induction on $\dim \fg$. If $\dim \fg =0$ or $1$, the result is trivially true. Suppose $\dim\fg>1$ and ``$\cN(\fg)$ is irreducible if and only if $\cN(\fg/\Rad \fg)$ is irreducible'' holds for all $\fg$ of dimension less than $m$. Let $\dim \fg =m$. If $\fg$ is semisimple, then there is nothing to prove. If $\fg$ is not semisimple, i.e. $\Rad \fg \neq 0$, then there exists a nonzero abelian ideal. Let $R$ be such a $p$-ideal. 

\textbf{\textit{Case 1}}. If $R$ contains a non-nilpotent element, say $y$, then it follows from\thref{JCthm} that there exist a unique semisimple element $y_s\in \fg$ and a unique nilpotent element $y_n\in \fg$ such that $y=y_s+y_n$ and $[y_s,y_n]=0$. Since $y$ is non-nilpotent, then $y_s\neq 0$. Replace $y$ by its semisimple part $y_s$. We show that $y^{[p]^{N}}$ lies in the centre $\fz(\fg)$ for $N\gg 0$. Indeed, since $R$ is a $p$-ideal we have that $[y^{[p]^{N}}, \fg] \subseteq R$. Moreover, $R$ is abelian, then $[y^{[p]^{N}}, [y^{[p]^{N}}, \fg]]=0$. Since $y$ is semisimple, so is $y^{[p]^{N}}$ by \thref{sselemt}(iii). This implies that $[y^{[p]^{N}}, \fg]=0$. Hence $y^{[p]^{N}} \in \fz(\fg)$ and $\fz(\fg)\neq 0$. 

Let $\fz(\fg)=\fz_{s}\bigoplus\fz_{n}$, where $\fz_{s}$ is a subalgebra with a one-to-one $[p]$-th power map and $\fz_{n}$ is the subspace in $\fz(\fg)$ consisting of nilpotent elements of $\fz(\fg)$. By above, we already know that $y^{[p]^{N}}\in\fz_{s}$ and so $\fz_{s} \neq 0$. We prove that the canonical homomorphism $\psi: \fg \to \fg/\fz_{s}, x\mapsto x+\fz_{s}$ induces a bijective morphism $\tilde{\psi}: \cN(\fg)\to \cN(\fg/\fz_{s})$. If $x+\fz_{s} \in \cN(\fg/\fz_{s})$, then $x^{[p]^{N}}\in \fz_{s}$ for $N\gg0$. Since the $[p]$-th power map on $\fz_{s}$ is one-to-one, there exists $z\in \fz_{s}$ such that $x^{[p]^{N}}=z^{[p]^{N}}$. Since $z$ is an element of $\fz_{s}$, it commutes with $x$. As a result, $(x-z)^{[p]^{N}}=0$. This implies that $x-z \in \cN(\fg)$. So the morphism $\tilde{\psi}$ is surjective. For injectivity, suppose $\tilde{\psi}(x)=\tilde{\psi}(y)$ for some $x, y \in\cN(\fg)$. Then $y=x+z_1$ for some $z_1\in \fz_{s}$. Since $x$ and $y$ are nilpotent, then for $N\gg 0$ we get
\[
0=y^{[p]^{N}}=(x+z_1)^{[p]^{N}}=x^{[p]^{N}}+z_{1}^{[p]^{N}}=z_{1}^{[p]^{N}}.
\]
Since the $[p]$-th power map on $\fz_{s}$ is one-to-one, we have that $z_1=0$ and so $y=x$. Therefore, the morphism $\tilde{\psi}$ is bijective. Next we claim that $\cN(\fg)$ is irreducible if and only if $\cN(\fg/\fz_{s})$ is irreducible. The ``if'' part is trivial because surjective morphism preserves irreducibility. For the ``only if'' part, suppose $\cN(\fg/\fz_{s})$ is irreducible. By \thref{nvarietythm}(iii), we know that $\cN(\fg)$ is equidimensional. Since $\tilde{\psi}$ is bijective, it follows that all fibres of $\tilde{\psi}$ are single points. Applying \thref{l:irr} with $d=0$ we get $\cN(\fg)$ is irreducible.

\textbf{\textit{Case 2}}. If $R$ is contained in $\cN(\fg)$, then we may assume that $R^{[p]^{r}}=0$ for some $r\in \N$. Note that the canonical homomorphism $\psi: \fg \to \fg/R,\, x \mapsto \overline{x}=x+R$ induces a surjective mapping $\tilde{\psi}: \cN(\fg)\to \cN(\fg/R)$. We show that $\cN(\fg)$ is irreducible if and only if $\cN(\fg/R)$ is irreducible. The ``if'' part is trivial. For the ``only if'' part, suppose $\cN(\fg/R)$ is irreducible. We show that $\cN(\fg)=\tilde{\psi}^{-1}\big(\cN(\fg/R)\big)$, i.e. $x$ is nilpotent in $\fg$ if and only if $x+R$ is nilpotent in $\fg/R$. By Jacobson's formula and that $R$ is a $p$-ideal of $\fg$, we have that for any $v\in R$,
\[
\text{$(x+v)^{[p]}=x^{[p]}+v_1$ for some $v_1 \in R$}.
\]By Jacobson's formula again, we have that 
\[
\text{$(x+v)^{[p]^{N}}=x^{[p]^{N}}+v_{N}$ for some $v_{N} \in R$}.
\]
If $x\in \cN(\fg)$, then we can choose $N$ sufficiently large so that $(x+v)^{[p]^{N}}=\overline{0}$. Hence $x+v \in \cN(\fg/R)$. Conversely, if $x+R\in \cN(\fg/R)$, then for $N\gg 0$, $x^{[p]^{N}}\in R$. By our assumption, $R^{[p]^{r}}=0$. Hence $x^{[p]^{N+r}}=0$ and so $x \in \cN(\fg)$. Therefore, for any $\overline{x} \in \cN(\fg/R)$, the fibre $\tilde{\psi}^{-1}(\overline{x})$ consists of elements of the form $x+R$. But $x+R\cong R$ as an affine space, we conclude that all fibres of $\tilde{\psi}$ are irreducible and have the same dimension. It follows from \thref{nvarietythm}(iii) and \thref{l:irr} that $\cN(\fg)$ is irreducible.

To sum up, if $\Rad \fg \neq 0$, then there exists an abelian $p$-ideal $A$, i.e. $A=\fz_{s}$ or $A=R$ with $R^{[p]^{r}}=0$ for some $r\in \N$. We proved that $\cN(\fg)$ is irreducible if and only if $\cN(\fg/A)$ is irreducible. Since $\dim (\fg/A) < \dim \fg$, the induction hypothesis implies that $\cN(\fg/A)$ is irreducible if and only if $\cN\big((\fg/A)/\Rad (\fg/A)\big)\cong\cN(\fg/\Rad \fg)$ is irreducible. Therefore, we proved that $\cN(\fg)$ is irreducible if and only if $\cN(\fg/\Rad \fg)$ is irreducible. This completes the proof.
\end{proof}
It is natural to ask if we could reduce Premet's conjecture to the simple case. Unfortunately, this is very non-trivial in prime characteristic as semisimple Lie algebras are not always direct sums of simple ideals. Hence to prove Premet's conjecture, we shall focus on the semisimple case.

\chapter{Nipotent varieties of some semisimple restricted Lie algebras}\label{ssexamples}
In this chapter, we assume that $\chari k=p>2$. We prove Premet's conjecture holds for a class of semisimple restricted Lie algebras whose socle involves any simple restricted Lie algebra with all its derivations inner. We start with the semisimple restricted Lie algebra whose socle involves $\fsl_2$ tensored by the truncated polynomial ring $k[X]/(X^{p})$. Then we extend it to the semisimple restricted Lie algebra whose socle involves $S\otimes \cO(m; \underline{1})$, where $S$ is any simple restricted Lie algebra such that $\ad S=\Der S$ and $\cN(S)$ is irreducible, and $\cO(m; \underline{1})=k[X_1, \dots, X_m]/(X_1^p, \dots, X_m^p)$ is the truncated polynomial ring in $m\geq 2$ variables.

\section{Socle involves $\fsl_2$}\label{sl2section2.2.1}
By Block's theorem (\thref{blockthm}) on finite dimensional semisimple Lie algebras, the first example (perhaps the easiest one) we shall consider is $(\fsl_2\otimes \cO(1; 1)) \rtimes (\Id_{\fsl_2}\otimes k \del)$\footnote{This problem was introduced by A.~Premet in the workshop \textit{Lie Theory and Representation Theory} in Pisa, Italy, January-February 2015. In \cite{PD17}, A.~Premet and D.~I.~Stewart studied semisimple restricted Lie subalgebras of this type (referred to as \textit{exotic semidirect products}) in the Lie algebra $\fg_1=\Lie(G_1)$, where $G_1$ is a simple algebraic group of exceptional type over $k$.}. Let $k$ be an algebraically closed field of characteristic $p>2$. Recall the special linear Lie algebra $\fsl_2$ of $2\times 2$ matrices with trace $0$, and its standard basis
\[
e= \left(\begin{matrix}
 0 & 1  \\
0 & 0
\end{matrix}\right) , \quad f= \left(\begin{matrix}
 0 &0  \\
1 & 0
\end{matrix}\right), \quad h= \left(\begin{matrix}
 1 & 0  \\
0 & -1
\end{matrix}\right)
\]such that
\[
[e, f]=h, \quad [h, e]=2e, \quad [h, f]=-2f.
\]
Since $p>2$, we know that $\fsl_2$ is simple and all its derivations are inner \cite[Sec.~5, Chap.~V]{S67}. By~\thref{exres} and \thref{centrelesslem}, we know that $\fsl_2$ is a restricted Lie algebra with the unique $[p]$-th power map given by $p$-th power of matrices. An easy calculation shows that $e^{p}=f^{p}=0$ and $h^{p}=h$. Hence $e, f$ are nilpotent and $h$ is toral.

Let $\cO(1; 1)$ denote the $p$-dimensional truncated polynomial ring $k[X]/(X^{p})$. We write $x$ for the image of $X$ in $\cO(1; 1)$. Note that $\cO(1; 1)$ is a local ring. Let $\fm$ denote its unique maximal ideal. For every $g\in \cO(1; 1)$, $g^p=g(0)^p$. Then $g^{p}=0$ for all $g\in\fm$.

Consider the algebra $\fsl_2\otimes\cO(1; 1)$. The restricted Lie algebra structure on $\fsl_2$ induces a restricted Lie algebra structure on $\fsl_2\otimes\cO(1; 1)$. Explicitly, the Lie bracket is given by
\[
[y\otimes g_{1}, z\otimes g_{2}]:=[y, z]\otimes  g_{1} g_{2},
\] and the $[p]$-th power map is given by
\[
(y\otimes g_1)^{[p]}:=y^{p}\otimes g_1^{p}
\]
for all $y\otimes g_1, z\otimes g_2 \in \fsl_{2}\otimes\cO(1; 1)$. Note that $\dim (\fsl_2\otimes\cO(1; 1))=3p$. 

Let $W(1; 1)$ denote the derivation algebra of $\cO(1; 1)$. It is known as the \textit{Witt algebra}. By \thref{wittthm} and \thref{exres}, we know that $W(1; 1)$ is a simple restricted Lie algebra with the $[p]$-th power map given by $D^{[p]}=D^p$ for all $D \in W(1; 1)$. Moreover, $W(1; 1)$ is a free $\cO(1; 1)$-module of rank $1$ generated by $\del=\frac{d}{dx}$. Note that $\del$ acts on $\fsl_2\otimes\cO(1; 1)$ by differentiating truncated polynomials in $\cO(1; 1)$, i.e. $\del(y\otimes g)=y\otimes \del(g)$ for all $y\otimes g\in \fsl_2\otimes\cO(1; 1)$. 

Let $\fg:=(\fsl_2\otimes \cO(1; 1) )\rtimes (\Id_{\fsl_2}\otimes k \del)$. To ease notation we identify $\Id_{\fsl_2}\otimes k \del$ with $k \del$. It is easy to check that $\fg$ is a ($3p+1$)-dimensional Lie algebra with the Lie bracket defined in the natural way:
\[
[y\otimes g,\del]:=-y \otimes\del(g) 
\] 
for all $y\otimes g\in \fsl_2\otimes\cO(1; 1)$. By \thref{blockthmder}, we see that $\fg$ embeds in the restricted Lie algebra $\Der(\fsl_2\otimes \cO(1; 1))=\big(\fsl_2\otimes \cO(1; 1)\big) \rtimes \big(\Id_{\fsl_2} \otimes W(1; 1)\big)$. Then $\fg$ is restricted with the $[p]$-th power map given by $p$-th power of derivations. Note that $\del^p=0$. Let us look at the structure of $\fg$ in more detail.

\begin{lem}\thlabel{sl2noD}
\begin{enumerate}[\upshape(i)]
\item $[\fg, \fg]=\fsl_2\otimes \cO(1; 1)$ which is not semisimple.
\item $\fsl_2\otimes \cO(1; 1)$ is the unique minimal ideal of $\fg$.
\item The maximal dimension of tori in $\fsl_2\otimes \cO(1; 1)$ is $1$, i.e. $\MT(\fsl_2\otimes \cO(1; 1))=1$.
\item All irreducible components of $\cN(\fsl_2\otimes \cO(1; 1))$ have dimension $3p-1$. 
\end{enumerate}
\end{lem}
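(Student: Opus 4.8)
The plan is to handle the four assertions in turn; parts (iii) and (iv) will follow directly from the structure theory already recorded (\thref{MTlem}, \thref{CSAmaxitori}, \thref{nvarietythm}), so the real content is in (i)--(ii).

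For (i), I would read $[\fg,\fg]$ off the defining brackets: since $\fsl_2$ is perfect and $\cO(1;1)$ has an identity, $[\fsl_2\otimes\cO(1;1),\fsl_2\otimes\cO(1;1)]=[\fsl_2,\fsl_2]\otimes\cO(1;1)=\fsl_2\otimes\cO(1;1)$, while $[\fsl_2\otimes\cO(1;1),k\del]\subseteq\fsl_2\otimes\cO(1;1)$ and $[\del,\del]=0$; as every bracket lands in $\fsl_2\otimes\cO(1;1)$ this gives $[\fg,\fg]=\fsl_2\otimes\cO(1;1)$. It is not semisimple because $\fsl_2\otimes\fm$ is an ideal of it (using $\cO(1;1)\cdot\fm=\fm$) whose lower central series is $(\fsl_2\otimes\fm)^{i}=\fsl_2\otimes\fm^{i}$, which vanishes as soon as $i\geq p$ since $\fm^{p}=0$; hence $\fsl_2\otimes\fm$ is a nonzero nilpotent ideal and $\Rad(\fsl_2\otimes\cO(1;1))\neq 0$.

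For (ii), the strategy is first to show that every nonzero ideal $I$ of $\fg$ contains a nonzero element of $\fsl_2\otimes k1$, and then to bootstrap. Given $0\neq z\in I$, write $z=\sum_{i=0}^{p-1}y_i\otimes x^i+c\del$. If $z=c\del$ with $c\neq 0$, then $[c\del,a\otimes x]=ca\otimes1\in I$ for any $0\neq a\in\fsl_2$. Otherwise some $y_i\neq 0$; as $\fsl_2$ is centreless one can choose $a$ with $[a\otimes1,z]=\sum_i[a,y_i]\otimes x^i$ nonzero, and writing this element of $I\cap(\fsl_2\otimes\cO(1;1))$ as $w=\sum_i y_i'\otimes x^i$ with $y_j'\neq 0$ for $j=\max\{i:y_i'\neq 0\}$, one gets $(\ad\del)^{j}(w)=j!\,y_j'\otimes1$, a nonzero element of $I$ since $0\leq j\leq p-1$. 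Now set $J:=\{v\in\fsl_2\mid v\otimes1\in I\}$; this is an ideal of $\fsl_2$ (if $v\otimes1\in I$ then $[a\otimes1,v\otimes1]=[a,v]\otimes1\in I$), and it is nonzero by the above, so $J=\fsl_2$ by simplicity of $\fsl_2$. Hence $\fsl_2\otimes1\subseteq I$, and then $\fsl_2\otimes\cO(1;1)=[\fsl_2\otimes1,\fsl_2\otimes\cO(1;1)]\subseteq I$. Since by (i) $\fsl_2\otimes\cO(1;1)=[\fg,\fg]$ is itself an ideal of $\fg$ and we have just shown it lies inside every nonzero ideal, it is the unique minimal ideal of $\fg$.

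For (iii) I would apply \thref{MTlem} to the $p$-ideal $\fsl_2\otimes\fm$ of $\fsl_2\otimes\cO(1;1)$: since $g^{p}=0$ for all $g\in\fm$ we have $(\fsl_2\otimes\fm)^{[p]}=0$, so $\MT(\fsl_2\otimes\fm)=0$; and $\cO(1;1)/\fm\cong k$ gives $(\fsl_2\otimes\cO(1;1))/(\fsl_2\otimes\fm)\cong\fsl_2$ as restricted Lie algebras. Therefore $\MT(\fsl_2\otimes\cO(1;1))=\MT(\fsl_2)=1$, the last equality because $kh$ (with $h^{[p]}=h$) is a Cartan subalgebra of $\fsl_2$, hence a maximal torus by \thref{CSAmaxitori}. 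Part (iv) is then immediate from \thref{nvarietythm}(iii): every irreducible component of $\cN(\fsl_2\otimes\cO(1;1))$ has dimension $\dim(\fsl_2\otimes\cO(1;1))-\MT(\fsl_2\otimes\cO(1;1))=3p-1$. The step demanding the most care is the extraction of a nonzero element of $\fsl_2\otimes k1$ from an arbitrary $z\in I$ in (ii) --- separating off a pure $\del$-component and using $\ad\del$ to strip away the higher powers of $x$ --- after which simplicity of $\fsl_2$ and perfectness do the rest.
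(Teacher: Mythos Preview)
Your proof is correct and follows essentially the same strategy as the paper's. The minor differences are: for (i) you exhibit the nilpotent ideal $\fsl_2\otimes\fm$ rather than the paper's abelian ideal $\fsl_2\otimes(x^{p-1})$; and for (ii) you argue directly that every nonzero ideal contains $\fsl_2\otimes\cO(1;1)$, whereas the paper first invokes faithfulness of the $\fg$-action to show a minimal ideal lies inside $\fsl_2\otimes\cO(1;1)$ before running the same $\ad\del$-stripping argument. Your decomposition $z=\sum_i y_i\otimes x^i + c\del$ is in fact cleaner than the paper's, which writes a general element of $I$ as a simple tensor $y\otimes g$; your formulation avoids that imprecision. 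Parts (iii) and (iv) match the paper exactly.
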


\begin{proof}
(i) Write $\fsl_2\otimes \cO(1; 1)=(\fsl_2\otimes 1)\oplus(\fsl_2\otimes \fm)$. Since $\fsl_2$ is simple, it is easy to check that $[\fsl_2\otimes \cO(1; 1), \fsl_2\otimes \cO(1; 1)]=\fsl_2\otimes \cO(1; 1)$. As $\fsl_2\otimes \cO(1; 1)$ is a Lie subalgebra of $\fg$, we have that $\fsl_2\otimes \cO(1; 1)=[\fsl_2\otimes \cO(1; 1), \fsl_2\otimes \cO(1; 1)] \subseteq [\fg, \fg]$. On the other hand, 
\begin{align*}
[y\otimes g_1+\lambda_1 \del, z\otimes g_2+\lambda_2 \del]
=[y,z]\otimes g_1g_2+\lambda_1z\otimes \del(g_2)-\lambda_2y\otimes \del(g_1)
\end{align*}
for all $y, z \in \fsl_2, g_1, g_2 \in \cO(1; 1)$ and $\lambda_1, \lambda_2 \in k$. Clearly, this is an element of \\$\fsl_2\otimes \cO(1; 1)$. Hence $[\fg, \fg]\subseteq\fsl_2\otimes \cO(1; 1)$. As a result, $[\fg, \fg]=\fsl_2\otimes \cO(1; 1)$. 

Now suppose $\fsl_2\otimes \cO(1; 1)$ is semisimple. This means the only abelian ideal in $\fsl_2\otimes \cO(1; 1)$ is the zero ideal. Since $\fsl_2$ is simple, any ideal $I$ of $\fsl_2\otimes \cO(1; 1)$ has the form $\fsl_2\otimes J$ for some ideal $J$ of $\cO(1; 1)$. Take $J=(x^{p-1})$, the principal ideal generated by $x^{p-1}$ in $\cO(1; 1)$. Then we find a nonzero ideal $I=\fsl_2\otimes (x^{p-1})$ in $\fsl_2\otimes \cO(1; 1)$ such that $[I, I]=0$, a contradiction. Hence $\fsl_2\otimes \cO(1; 1)$ is not semisimple. This proves (i).

(ii) Observe that if $w=y\otimes g_1+\lambda_1 \del$ is an element of $\fg$ such that $[w, z\otimes g_2]=0$ for all $z\otimes g_2$ in $\fsl_2 \otimes \cO(1; 1)$, then we must have that either $y=0$ or $g_1=0$, and $\lambda_1=0$. As a result, $w=0$. This shows that $\fg$ acts faithfully on $\fsl_2\otimes \cO(1; 1)$ via the adjoint representation, and embeds $\fsl_2\otimes \cO(1; 1)$ into its derivation algebra $\Der(\fsl_2\otimes \cO(1; 1))$. Hence for any nonzero ideal $J$ of $\fg$ we must have that $[J, \fsl_2\otimes \cO(1; 1)]\neq 0$. In particular, for any minimal ideal $I$ of $\fg$, $[I, \fsl_2\otimes \cO(1; 1)]\neq 0$. By (i) of this lemma, $\fsl_2\otimes \cO(1; 1)$ is an ideal of $\fg$, so is $[I, \fsl_2\otimes \cO(1; 1)]$. Since $[I, \fsl_2\otimes \cO(1; 1)]\subseteq I$, the minimality of $I$ implies that $[I, \fsl_2\otimes \cO(1; 1)]=I$. As $I=[I, \fsl_2\otimes \cO(1; 1)]\subseteq I \cap (\fsl_2\otimes \cO(1; 1))$, we have that $I \subseteq \fsl_2\otimes \cO(1; 1)$.

Let $v$ be any nonzero element of $I$. Since $I \subseteq \fsl_2\otimes \cO(1; 1)$, we can write $v=y\otimes (a_0+a_1x+\dots+a_jx^j+\dots +a_{p-1}x^{p-1})$ for some $0\neq y\in\fsl_2$ and $a_j \in k^{*}$. Then   
\[
(\ad \del)^{j}(v)=y\otimes (j!a_j+xg_3)
\]for some $g_3\in \cO(1; 1)$. This shows that $I \cap (\fsl_2\otimes 1)\neq \{0\}$. By the simplicity of $\fsl_2$, we have that $\fsl_2\otimes 1 \subset I$. Now consider the adjoint endomorphisms from $\fsl_2\otimes x$ to $\fsl_2\otimes 1$, one can deduce that $\fsl_2\otimes x \subset I$. Continuing in this way and by induction, we get $\fsl_2\otimes x^l\subset I$ for all $0\leq l\leq p-1$. Hence $I=\fsl_2\otimes \cO(1; 1)$. This proves (ii).

(iii) Since any element $y\otimes g$ in $\fsl_2 \otimes \fm$ satisfies $(y\otimes g)^{p}=0$, it follows that $\fsl_2 \otimes \fm$ is a $p$-ideal of $\fsl_2 \otimes \cO(1; 1)$ consisting of nilpotent elements. As a results, $\MT(\fsl_2 \otimes \fm)=0$. By \thref{MTlem}, we have that $\MT(\fsl_2\otimes \cO(1; 1))=\MT(\fsl_2\otimes 1)$. It is known that the maximal dimension of tori in $\fsl_2$ is $1$. Hence $\MT(\fsl_2\otimes \cO(1; 1))=1$. This proves (iii).

(iv) By \thref{nvarietythm} and (iii) of this lemma, we know that all irreducible components of $\cN(\fsl_2\otimes \cO(1; 1))$ have dimension $\dim (\fsl_2\otimes \cO(1; 1))-\MT(\fsl_2\otimes \cO(1; 1))=3p-1$. This proves (iv).
\end{proof}

Before we are going to prove a similar result for $\fg$, we need to construct some automorphisms of $\fg$.

\begin{lem}\thlabel{sl2automorphism}
Let $y\otimes q$ be an element of $\fsl_2 \otimes \cO(1;1)$. For any $\sigma \in \Aut(\fsl_2)$, define $\sigma(y\otimes q):=\sigma(y)\otimes q$. Then $\sigma$ extends to an automorphism of $\fg$.
\end{lem}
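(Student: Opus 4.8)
The plan is to write down the only reasonable candidate for the extension and check that it respects the bracket on the nose. With respect to the vector space decomposition $\fg=(\fsl_2\otimes\cO(1;1))\oplus k\del$, define $\hat\sigma\colon\fg\to\fg$ by
\[
\hat\sigma\Bigl(\textstyle\sum_i y_i\otimes q_i+\lambda\del\Bigr):=\textstyle\sum_i\sigma(y_i)\otimes q_i+\lambda\del ,
\]
that is, $\hat\sigma=(\sigma\otimes\id_{\cO(1;1)})\oplus\id_{k\del}$. Since $\sigma$ is a linear bijection of $\fsl_2$, the map $\sigma\otimes\id_{\cO(1;1)}$ is a linear bijection of $\fsl_2\otimes\cO(1;1)$, so $\hat\sigma$ is a linear bijection of $\fg$, and it restricts to $\sigma$ under the identification $y\leftrightarrow y\otimes 1$.

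It then remains to verify bracket compatibility, and by bilinearity it suffices to do this on the pairs spanning $\fg\times\fg$. First, using $\sigma\in\Aut(\fsl_2)$,
\[
[\hat\sigma(y_1\otimes q_1),\hat\sigma(y_2\otimes q_2)]=[\sigma(y_1),\sigma(y_2)]\otimes q_1q_2=\sigma([y_1,y_2])\otimes q_1q_2=\hat\sigma([y_1\otimes q_1,y_2\otimes q_2]).
\]
Second, from the defining relation $[y\otimes q,\del]=-y\otimes\del(q)$ of the semidirect product,
\[
[\hat\sigma(y\otimes q),\hat\sigma(\del)]=[\sigma(y)\otimes q,\del]=-\sigma(y)\otimes\del(q)=\hat\sigma(-y\otimes\del(q))=\hat\sigma([y\otimes q,\del]),
\]
and third $[\hat\sigma(\del),\hat\sigma(\del)]=0=\hat\sigma([\del,\del])$. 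Hence $\hat\sigma$ is a Lie algebra homomorphism, and being bijective it is an automorphism of $\fg$.

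Finally I would add the remark that $\hat\sigma$ is in fact an automorphism of the \emph{restricted} Lie algebra $\fg$: since $q^p=q(0)^p\in k$ for every $q\in\cO(1;1)$ and $\sigma$ preserves the (unique, as $\fsl_2$ is centreless) $[p]$-map of $\fsl_2$, one gets $\hat\sigma\bigl((y\otimes q)^{[p]}\bigr)=\sigma(y)^p\otimes q^p=\bigl(\hat\sigma(y\otimes q)\bigr)^{[p]}$ and $\hat\sigma(\del^{[p]})=0=\del^{[p]}$, with the general case following from Jacobson's formula. I do not expect any genuine obstacle here; the only point requiring a moment's care is that $\hat\sigma$ really does preserve the semidirect-product structure of $\fg$, which is precisely what the second computation above records.
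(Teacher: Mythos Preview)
Your proof is correct. You take a slightly different route from the paper, though. The paper first checks that $\sigma\otimes\id$ is an automorphism of $\fsl_2\otimes\cO(1;1)$, then invokes the general fact that an automorphism of an algebra induces an automorphism of its derivation algebra by conjugation, and finally computes $\sigma\circ\del\circ\sigma^{-1}=\del$ to conclude that the induced automorphism of $\Der(\fsl_2\otimes\cO(1;1))$ preserves the subalgebra $\fg$. You instead define $\hat\sigma$ directly on $\fg$ and verify the bracket relations by hand on the three types of generating pairs. Your approach is more self-contained and avoids the detour through the ambient derivation algebra; the paper's approach has the advantage of establishing a template that is reused verbatim in the very next lemma (for $\exp(\ad u)$), where passing through $\Der(\fsl_2\otimes\cO(1;1))$ is more natural. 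Your additional remark on the restricted structure is correct but not needed for how the lemma is used later.
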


\begin{proof}
We first show that $\sigma$ extends to an automorphism of $\fsl_2 \otimes \cO(1;1)$. For any $y_1\otimes g_1, y_2\otimes g_2 \in \fsl_2 \otimes \cO(1;1)$, we have that 
\begin{align*}
\sigma([y_1\otimes g_1, y_2\otimes g_2 ])&= \sigma ([y_1, y_2]\otimes g_1g_2)=[\sigma(y_1), \sigma(y_2)]\otimes g_1g_2.
\end{align*}
On the other hand, 
\begin{align*}
[\sigma(y_1\otimes g_1), \sigma(y_2\otimes g_2)]=[\sigma(y_1)\otimes g_1, \sigma(y_2)\otimes g_2]=[\sigma(y_1), \sigma(y_2)]\otimes g_1g_2.
\end{align*}
Hence $\sigma$ is an automorphism of $\fsl_2 \otimes \cO(1;1)$. Then it induces an automorphism of the derivation algebra 
\begin{align*}
\Der(\fsl_2\otimes \cO(1; 1))=\big(\fsl_2\otimes \cO(1; 1)\big) \rtimes \big(\Id_{\fsl_2} \otimes W(1; 1)\big)
\end{align*}
via conjugation. Note that $\fg \subset \Der(\fsl_2\otimes \cO(1; 1))$. It remains to check $\sigma \circ \del \circ \sigma^{-1}$.  For any $y_1\otimes g_1 \in \fsl_2\otimes \cO(1; 1)$, we have that 
\begin{align*}
\sigma \circ \del \circ \sigma^{-1}(y_1\otimes g_1)&=\sigma\circ \del\big(\sigma^{-1}(y_1)\otimes g_1\big)\\
&=\sigma\big(\sigma^{-1}(y_1)\otimes \del(g_1)\big)\\
&=\sigma \sigma^{-1}(y_1) \otimes \del(g_1)\\
&=y_1 \otimes \del(g_1).
\end{align*}
Thus, $\sigma \circ \del \circ \sigma^{-1}=\del$. As a result, $\sigma$ preserves $\fg$ and it extends to an automorphism of $\fg$. This completes the proof.
\end{proof}

\begin{lem}\thlabel{autg}
Let $u=y\otimes g$ be an element of $\fsl_2\otimes \fm$. Then $\exp(\ad u)$ is an automorphism of $\fg$. Similarly, if $v=z\otimes q$ is an element of $\cN(\fsl_2)\otimes \cO(1;1)$, then $\exp(\ad v)$ is an automorphism of $\fg$.
\end{lem}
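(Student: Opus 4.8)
The plan is to realise $\exp(\ad u)$ and $\exp(\ad v)$ as exponentials of nilpotent derivations of $\fg$ and to invoke the standard fact that the exponential of a nilpotent derivation is a Lie algebra automorphism. First I would verify nilpotency, so that the exponential series makes sense. Since $\fg$ is restricted, with $[p]$-operation inherited from $\Der(\fsl_2\otimes\cO(1;1))$, condition (1) in \thref{defres} gives $(\ad w)^{p}=\ad(w^{[p]})$ for any $w\in\fg$, so it is enough to show $u^{[p]}=0$ and $v^{[p]}=0$. For $u=y\otimes g$ with $g\in\fm$ one has $g^{p}=g(0)^{p}=0$, hence $u^{[p]}=y^{p}\otimes g^{p}=0$. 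For $v=z\otimes q$ with $z\in\cN(\fsl_2)$, a nilpotent element of $\fsl_2$ is a nilpotent $2\times 2$ matrix and so squares to $0$; thus $z^{[p]}=z^{p}=0$, whence $v^{[p]}=z^{p}\otimes q^{p}=0$ (equivalently, one can read off $(\ad v)^{3}=0$ on $\fg$ directly from $(\ad z)^{3}=0$ on $\fsl_2$ together with $(\ad v)^{2}\del=0$). Therefore $\exp(\ad u)=\sum_{i\ge 0}\tfrac{1}{i!}(\ad u)^{i}$ and $\exp(\ad v)$ are well-defined $k$-linear endomorphisms of $\fg$: the sums are finite and the only factorials involved are the units $0!,1!,\dots,(p-1)!$.

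Next I would apply the general principle: if $D$ is a derivation of a (not necessarily associative) $k$-algebra $A$ for which $\exp(D)$ is defined, then $\exp(D)$ is an automorphism of $A$ with inverse $\exp(-D)$. Multiplicativity $\exp(D)(ab)=\exp(D)(a)\,\exp(D)(b)$ follows by expanding both sides and using the Leibniz formula $D^{n}(ab)=\sum_{i+j=n}\binom{n}{i}D^{i}(a)D^{j}(b)$, while $\exp(D)\exp(-D)=\exp(-D)\exp(D)=\id$ follows from the binomial theorem applied to the commuting operators $D$ and $-D$. Taking $A=\fg$ (as a Lie algebra) and $D=\ad u$, respectively $D=\ad v$, then yields the two claims at once; note that $\ad u$ and $\ad v$ are derivations of $\fg$ itself, so the automorphisms they produce preserve $\fg$ and not merely the ambient $\Der(\fsl_2\otimes\cO(1;1))$.

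The only genuinely non-formal ingredient is the nilpotency of $\ad u$ and $\ad v$, which is precisely where the restricted structure of $\fg$, the identity $g^{p}=0$ on $\fm$, and the vanishing $z^{2}=0$ for $z\in\cN(\fsl_2)$ enter; everything afterwards is bookkeeping with the exponential series, uniform in the two cases. The point demanding the most care is the behaviour of $\exp$ in characteristic $p$: one wants the relevant nilpotency indices low enough that no $p$-th factorial is inverted and that the Leibniz expansion of $\exp(D)(ab)$ genuinely reassembles as $\exp(D)(a)\exp(D)(b)$.
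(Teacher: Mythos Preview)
Your ``general principle'' --- that the exponential of a nilpotent derivation is automatically a Lie algebra automorphism --- fails in characteristic $p$ as soon as the nilpotency index reaches $p$, and that is exactly the situation for $u$. The obstruction to multiplicativity is
\[
[\exp(D)a,\exp(D)b]-\exp(D)[a,b]\;=\;\sum_{\substack{0\le i,j\le p-1\\ i+j\ge p}}\frac{1}{i!\,j!}\,[D^{i}a,\,D^{j}b],
\]
and $D^{p}=0$ alone does not kill it. Concretely, take $u=h\otimes x$, $a=\partial$, $b=e\otimes 1$: then $[a,b]=0$, but a short computation gives $\exp(\ad u)(a)=\partial-h\otimes 1$, $\exp(\ad u)(b)=\sum_{i=0}^{p-1}\tfrac{2^{i}}{i!}\,e\otimes x^{i}$, and hence $[\exp(\ad u)a,\exp(\ad u)b]=2e\otimes x^{p-1}\ne 0$. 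So $\exp(\ad u)$, read literally as the exponential of the inner derivation of $\fg$, is \emph{not} a Lie algebra automorphism of $\fg$; the step you flag as ``the point demanding the most care'' is the one that actually fails.

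The paper's route is therefore not an alternative packaging but the substance of the proof. It first checks that the obstruction above vanishes on the ideal $\fsl_{2}\otimes\cO(1;1)$ --- there every term carries a factor $g^{i+j}$ with $i+j\ge p$, which is zero since $g\in\fm$ --- so $\exp(\ad u)\in\Aut(\fsl_{2}\otimes\cO(1;1))$. This automorphism is then lifted to $\Der(\fsl_{2}\otimes\cO(1;1))$ by conjugation, and the paper computes explicitly that $\exp(\ad u)\circ\partial\circ\exp(-\ad u)=\partial-y\otimes\partial(g)-y^{p}\otimes g^{p-1}\partial(g)\in\fg$; the extra $y^{p}$-term is precisely what separates this lifted automorphism from the naive exponential on $\fg$, and the formula is reused in \thref{autg1}. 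Your treatment of $v=z\otimes q$ fares better: since $z^{p}=0$ the $y^{p}$-term vanishes and the two maps agree, and your observation $(\ad v)^{3}=0$ on $\fg$ does dispose of the obstruction when $p\ge 5$; but for $p=3$ you still owe the check that the surviving $i+j\in\{3,4\}$ terms cancel (they do, using $(\ad z)^{2}(\fsl_{2})\subseteq kz$).
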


\begin{proof}
For any $u=y\otimes g \in\fsl_2\otimes \fm$, we first show that $\exp (\ad u)$ is an automorphism of $\fsl_2\otimes \cO(1; 1)$. Suppose that $L$ is a Lie algebra over $k$ and $D\in \Der L$ satisfies $D^{p}=0$. In order to show that $\exp (D)=\sum_{i=0}^{p-1}\frac{1}{i!}D^{i}$ is an automorphism of $L$ it suffices to show that 
\[
\text{$\sum_{0\leq i, j\leq p,\, i+j\geq p}\frac{1}{i!j!}[D^{i}(w_1), D^{j}(w_2)]= 0$}
\]
for all $w_1, w_2\in L$. Note that $\ad u$ with $u$ above is a derivation of $\fsl_2\otimes \cO(1; 1)$ such that $(\ad u)^p=\ad u^{p}=\ad (y^p\otimes g^p)=0$; see \thref{defres}(1). Set $D=\ad u=\ad (y\otimes g)$. Then for any $w_1=y_1\otimes g_1$ and $w_2=y_2\otimes g_2$ in $\fsl_2 \otimes \cO(1;1)$, we have that 
\begin{align*}
D^{i}(w_1)=(\ad y)^{i}(y_1)\otimes g^{i}g_1,\,\text{and}
\end{align*}
\begin{align*}
D^{j}(w_2)=(\ad y)^{j}(y_2)\otimes g^{j}g_2.
\end{align*}
Hence 
\begin{align*}
&\sum_{0\leq i, j\leq p,\, i+j\geq p}\frac{1}{i!j!}[D^{i}(w_1), D^{j}(w_2)]\\
=&\sum_{0\leq i, j\leq p,\, i+j\geq p}\frac{1}{i!j!}[(\ad y)^{i}(y_1), (\ad y)^{j}(y_2)]\otimes g^{i+j}g_1g_2.
\end{align*}
Since $g\in \fm$ and $i+j\geq p$, we have that $g^{i+j}=0$. As a result, 
\[
\sum_{0\leq i, j\leq p, \,i+j\geq p}\frac{1}{i!j!}[D^{i}(w_1), D^{j}(w_2)]= 0.
\] 
Therefore, $\exp(\ad u)$ is an automorphism of $\fsl_2\otimes \cO(1; 1)$. Then it induces an automorphism of the derivation algebra 
\[
\Der(\fsl_2\otimes \cO(1; 1))=\big(\fsl_2\otimes \cO(1; 1)\big) \rtimes \big(\Id_{\fsl_2} \otimes W(1; 1)\big)
\]
via conjugation. Note that 
\[
\fg=(\fsl_2\otimes \cO(1; 1) )\rtimes k \del \subset \Der(\fsl_2\otimes \cO(1; 1)).\]
To conclude that $\exp(\ad u)$ is an automorphism of $\fg$, we need to check that $\exp(\ad u)$ preserves $\fg$, i.e. $\exp (\ad u)\circ \del \circ \exp (\ad (-u))\in \fg$. Recall that $u=y\otimes g \in\fsl_2\otimes \fm$. We show that for any $y_1\otimes g_1\in \fsl_2\otimes \cO(1; \underline{1})$, 
\begin{equation}\label{expdelexpprecal}
\begin{aligned}
&\bigg(\exp (\ad (y\otimes g))\circ \del \circ \exp (\ad (-y\otimes g))\bigg)(y_1\otimes g_1)\\
=&y_1\otimes \del(g_1)-\big(\ad \big(y\otimes \del(g)\big)\big)(y_1\otimes g_1)-\big(\ad \big(y^{p}\otimes g^{p-1}\del(g)\big)\big)(y_1\otimes g_1).
\end{aligned}
\end{equation}
We first compute $\del \circ \exp (\ad (-y\otimes g))(y_1\otimes g_1)$. Then we apply $\exp (\ad (y\otimes g))$ to it. We will use the following in our computations:
\begin{enumerate}[\upshape(i)]
\item Since $\del$ is a derivation of $\cO(1; \underline{1})$, we have that $\del(g^{i}g_1)=g^{i}\del(g_1)+ig^{i-1}\del(g)g_1$ for all $0\leq i\leq p-1$.
\item $\exp (\ad (y\otimes g))\exp (\ad (-y\otimes g))=\Id$.
\item Since $g\in \fm$, we have that $g^{p}=0$. 
\item $(p-1)!\equiv -1 (\modd p)$.
\end{enumerate} 
Let us compute $\del \circ \exp (\ad (-y\otimes g))(y_1\otimes g_1)$.
\begin{align*}
&\del \circ \exp (\ad (-y\otimes g))(y_1\otimes g_1)\\
=&\del\bigg(\sum_{i=0}^{p-1}\frac{1}{i!} (\ad (-y))^{i}(y_1)\otimes g^i g_1\bigg)\\
=&\sum_{i=0}^{p-1}\frac{1}{i!} (\ad (-y))^{i}(y_1)\otimes \bigg(g^{i}\del(g_1)+ig^{i-1}\del(g)g_1\bigg)\quad \text{(by (i))}\\
=&\exp (\ad (-y\otimes g))(y_1\otimes \del(g_1))+\bigg(\sum_{i=1}^{p-1}\frac{1}{i!} (\ad (-y))^{i}(y_1)\otimes ig^{i-1}\del(g)g_1\bigg)\\
=&\exp (\ad (-y\otimes g))(y_1\otimes \del(g_1))+\bigg(\sum_{i=1}^{p-1}\frac{1}{(i-1)!} (\ad (-y))^{i}(y_1)\otimes g^{i-1}\del(g)g_1\bigg)\\
=&\exp (\ad (-y\otimes g))(y_1\otimes \del(g_1))\\
&+\bigg(\ad (-y\otimes\del(g))\sum_{i=1}^{p-1}\frac{1}{(i-1)!} (\ad (-y\otimes g))^{i-1}\bigg)(y_1\otimes g_1)\\
=&\exp (\ad (-y\otimes g))(y_1\otimes \del(g_1))\\
&+\bigg(\ad (-y\otimes\del(g))\bigg(\exp(\ad (-y\otimes g))-\frac{1}{(p-1)!} (\ad (-y\otimes g))^{p-1}\bigg)\bigg)(y_1\otimes g_1).
\end{align*}
Applying $\exp (\ad (y\otimes g))$ to the above, we get
\begin{align*}
&\bigg(\exp (\ad (y\otimes g))\circ \del \circ \exp (\ad (-y\otimes g))\bigg)(y_1\otimes g_1)\\
=&y_1\otimes \del(g_1) \\
&+\bigg(\ad (-y\otimes\del(g))\bigg(\Id-\big(\frac{1}{(p-1)!}(\ad (-y\otimes g))^{p-1}\big)\exp (\ad (y\otimes g))\bigg)\bigg)(y_1\otimes g_1)\quad \\
&\quad \quad \quad \quad \quad \quad \quad \quad \quad \quad \quad \quad\quad \quad \quad \quad \quad \quad \quad \quad \quad \quad \quad \quad\quad \quad \quad \quad \quad \quad \quad \quad \quad \text{(by (ii))}\\
=&y_1\otimes \del(g_1)+\bigg(\ad (-y\otimes\del(g))\bigg(\Id+(\ad (-y\otimes g))^{p-1}\bigg)\bigg)(y_1\otimes g_1)\quad \text{(by (iii) and (iv))}\\
=&y_1\otimes \del(g_1)+\bigg(\ad (-y\otimes\del(g))+(\ad (-y\otimes\del(g))(\ad (-y\otimes g))^{p-1} \bigg)(y_1\otimes g_1)\\
=&y_1\otimes \del(g_1)+\big(\ad (-y\otimes\del(g))\big)(y_1\otimes g_1)+(\ad (-y\otimes\del(g))(\ad (-y\otimes g))^{p-1} (y_1\otimes g_1)\\
=&y_1\otimes \del(g_1)-\big(\ad (y\otimes \del(g))\big)(y_1\otimes g_1)+(-1)^p(\ad y)^p(y_1)\otimes g^{p-1}\del(g) (g_1)\\
=&y_1\otimes \del(g_1)-\big(\ad \big(y\otimes \del(g)\big)\big)(y_1\otimes g_1)-\big(\ad \big(y^{p}\otimes g^{p-1}\del(g)\big)\big)(y_1\otimes g_1).
\end{align*}
The last line follows from \thref{defres}(1) that $(\ad y)^p=\ad y^p$. Hence we get \eqref{expdelexpprecal} as desired. It follows that
\begin{equation}\label{edele-1a}
\begin{aligned}
&\exp (\ad (y\otimes g))\circ \del \circ \exp (\ad (-y\otimes g))\\
=&\del-\ad \big(y\otimes \del(g)\big)-\ad \big(y^{p}\otimes g^{p-1}\del(g)\big)\\
=&\del-y\otimes \del(g)-y^{p}\otimes g^{p-1}\del(g).
\end{aligned}
\end{equation}
The last line follows from that $\fsl_2\cong \ad (\fsl_2)$ via the adjoint representation and hence we may identify $y\otimes \del(g)$ (respectively $y^{p}\otimes g^{p-1}\del(g)$) with its image $\ad \big(y\otimes \del(g)\big)$ (respectively $\ad \big(y^{p}\otimes g^{p-1}\del(g)\big)$) in $\fgl(\fsl_2\otimes \cO(1; \underline{1}))$ under $\ad$. As a result, we see that $\exp (\ad (y\otimes g))\circ \del \circ \exp (\ad (-y\otimes g))\in \fg$. Therefore, $\exp (\ad (y\otimes g))$ preserves $\fg$ and it is an automorphism of $\fg$. 

Now let $v=z\otimes q$ be an element of $\cN(\fsl_2)\otimes \cO(1;1)$. Note that $\fsl_2$ contains a self-centralizing maximal torus, namely $kh$. Hence $e(\fsl_2)=0$. Also, $\MT(\fsl_2)=1$. It follows from \thref{nvarietythm} that any nilpotent element $z$ of $\fsl_2$ satisfies $z^{p}=0$. As a result, $\ad v$ is a derivation of $\fsl_2\otimes \cO(1; 1)$ such that $(\ad v)^{p}=\ad v^{p}=\ad (z^p\otimes q^p)=0$. Since $(\ad z)^{p}=0$, one can show similarly that $\exp (\ad v)$ is an automorphism of \\$\fsl_2\otimes \cO(1; 1)$. Then it induces an automorphism of the derivation algebra \\$\Der(\fsl_2\otimes \cO(1; 1))$ via conjugation. By \eqref{edele-1a}, we have that 
\begin{align*}
\exp (\ad (z\otimes q))\circ \del \circ \exp (\ad (-z\otimes q))=\del-z\otimes \del(q)\in \fg.
\end{align*}
Hence $\exp (\ad (z\otimes q))$ preserves $\fg$ and it is an automorphism of $\fg$. This completes the proof.
\end{proof}

\begin{lem}\thlabel{sl2Dlem}
\begin{enumerate}[\upshape(i)]
\item $\fg$ is semisimple.
\item The maximal dimension of tori in $\fg$ equals $\MT(\fsl_2\otimes \cO(1; 1))$ which is $1$.
\item Let $\fg_{ss}$ denote the set of all semisimple elements of $\fg$. Then
\[
\text{$1=e(\fg):=\min\big\{r\in \Z_{\geq 0} \,|\, W^{p^{r}} \subseteq \fg_{ss}$ for some nonempty Zariski open $W \subset \fg$\big\}}.
\]
\item All irreducible components of $\cN(\fg)$ have dimension $3p$. 
\item Any $a \in \cN(\fg)$ satisfies $a^{p^{2}}=0$.
\end{enumerate}
\end{lem}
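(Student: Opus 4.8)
The plan is to take the five parts in the order (i), (ii), (iv), (iii), (v): parts (iv) and (v) are formal consequences of (ii) and (iii) via \thref{nvarietythm}, and the only real work is in (iii).

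For (i): by \thref{sl2noD} the ideal $M:=\fsl_2\otimes\cO(1;1)$ is perfect and is the unique minimal ideal of $\fg$, and the proof of \thref{sl2noD}(ii) shows $\fg$ acts faithfully on $M$ via $\ad$. I would note that $\Rad\fg\cap M$ is an ideal of $\fg$ contained in $M$, hence $0$ or $M$ by minimality; being solvable it cannot be $M$, so $\Rad\fg\cap M=0$, whence $[\Rad\fg,M]\subseteq\Rad\fg\cap M=0$, and faithfulness forces $\Rad\fg=0$. For (ii): $M$ is an ideal closed under the $[p]$-th power map, hence a $p$-ideal, and $\fg/M\cong k\del$ with $\del^{[p]}=\del^{p}=0$, so $\MT(\fg/M)=0$; then \thref{MTlem} and \thref{sl2noD}(iii) give $\MT(\fg)=\MT(M)=1$. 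Granting (iii), part (iv) is immediate from \thref{nvarietythm}(iii) with $s=\MT(\fg)=1$ ($\cN(\fg)$ equidimensional of dimension $\dim\fg-1=3p$), and part (v) from \thref{nvarietythm}(iv) applied with $e=e(\fg)=1$: every $a\in\cN(\fg)$ has $a^{[p]^{e+s}}=a^{[p]^{2}}=0$.

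Now (iii), that $e(\fg)=1$. For the lower bound $e(\fg)\ge1$: by \cite[Theorem 2]{P87} it suffices that no Cartan subalgebra of $\fg$ be toral. By \thref{CSAmaxitori} a Cartan subalgebra is $\fc_\fg(kt)$ for a toral $t$ (maximal tori are $1$-dimensional by (ii) and have a toral basis by \thref{toralbasis}). As $t$ is semisimple it acts semisimply on $M$ and preserves the filtration of $M$ by the ideals $\fsl_2\otimes\fm^{j}$; on each graded quotient $\fsl_2\otimes(\fm^{j}/\fm^{j+1})$ it acts as $\ad\bar t$ (tensored with the identity), where $\bar t\neq0$ is the semisimple image of $t$ in $M/(\fsl_2\otimes\fm)\cong\fsl_2$, so the kernel is one-dimensional and $\dim\fc_M(t)=p$. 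Since every element of $\fsl_2\otimes\fm$ has vanishing $[p]$-th power (by \thref{generalJacobF}, as $\fm^{p}=0$), and the $p$-dimensional space $\fc_M(t)$ either lies in $\fsl_2\otimes\fm$ or meets it in dimension $p-1\ge1$, in all cases $\fc_\fg(t)\supseteq\fc_M(t)$ contains a nonzero nilpotent element, so it is not toral.

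For the upper bound $e(\fg)\le1$ I would first record that $x^{[p]}\in M$ for every $x\in\fg$, since the image of $x^{[p]}$ in $\fg/M\cong k\del$ is $(\bar x)^{[p]}$ and $\del^{[p]}=0$. It then suffices to show $x^{[p]}$ is semisimple on the dense open set $\fg\setminus\cN(\fg)$. For such $x$ write $x=x_s+x_n$ (\thref{JCthm}) with $x_s\neq0$; then $(kx_s)_p$ is a $1$-dimensional, hence maximal, torus, so $\fc_\fg(x_s)$ is a Cartan subalgebra and $x_n\in\cN(\fc_\fg(x_s))$. One checks that $x_s\in M$, and that $\fc_\fg(x_s)$ is $\Aut(\fg)$-conjugate to the explicit Cartan subalgebra $\fh_0:=(kh\otimes\cO(1;1))\oplus k\del$: a nonzero toral element of $M$, regarded as a matrix over $\cO(1;1)$ via $\fsl_2\subseteq\fgl_2$, has constant eigenvalues $\pm c$ with $c\in\F_p^{*}$, hence is conjugate to a scalar multiple of $h\otimes1$ under the copy of $\PGL_2(\cO(1;1))$ inside $\Aut(\fg)$ built from the automorphisms in \thref{sl2automorphism} and \thref{autg}. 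A direct Jacobson-formula computation in $\fh_0$ then gives, for $u=\lambda\del+h\otimes\gamma$ with $\gamma=\gamma_0+\dots+\gamma_{p-1}x^{p-1}$,
\[
u^{[p]}=\big(\gamma_0^{\,p}-\lambda^{p-1}\gamma_{p-1}\big)\,(h\otimes1),
\]
so each $u^{[p]^{j}}$ is a scalar multiple of $h\otimes1$ and $u$ is nilpotent exactly when $u^{[p]}=0$. Hence every nilpotent element of $\fh_0$, and so of any conjugate such as $\fc_\fg(x_s)$, has vanishing $[p]$-th power; therefore $x_n^{[p]}=0$ and $x^{[p]}=x_s^{[p]}$, which is semisimple. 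This yields $e(\fg)\le1$, and with the lower bound $e(\fg)=1$. I expect the main obstacle to be precisely this upper-bound step: getting enough control of the Cartan subalgebras — the classification of toral elements up to conjugacy and the realization of $\PGL_2(\cO(1;1))$ inside $\Aut(\fg)$ — to reduce to $\fh_0$, where the $[p]$-th power computation is routine; everything else is bookkeeping with \thref{nvarietythm} and the restricted-Lie-algebra lemmas already established.
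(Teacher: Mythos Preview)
Your proof is correct, but part (iii) takes a genuinely different route from the paper's.

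For the \emph{lower bound} $e(\fg)\ge1$, the paper argues much more quickly: since $w^{[p]}\in M:=\fsl_2\otimes\cO(1;1)$ for every $w\in\fg$ (as you note), a semisimple element $v\in\fg\setminus M$ would have to lie in $\spn\{v^{[p]^i}\mid i\ge1\}\subseteq M$, which is absurd; hence $\fg_{ss}$ cannot contain a nonempty open set. Your argument via \cite[Theorem~2]{P87} and the non-torality of Cartan subalgebras also works (note that $\cO(1;1)\cdot t\subseteq\fc_M(t)$ gives the dimension $p$ directly and supplies the nilpotent elements $x^j t$), but is more elaborate than needed.

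For the \emph{upper bound} $e(\fg)\le1$, the paper avoids classifying Cartan subalgebras altogether. It considers the action map $\theta:G\times\fh_0\to\fg$ with $\fh_0=(kh\otimes\cO(1;1))\oplus k\del$ and $G$ the group generated by the $\exp(t\ad(z\otimes q))$ from \thref{autg}, and shows $\theta$ is dominant by checking that $(d\theta)_{(1_G,\,h\otimes1)}$ is surjective (\thref{dcdominant}). Then the same Jacobson-formula computation you use, $(h\otimes g+\lambda\del)^{p}=h\otimes(g(0)^{p}-\lambda^{p-1}a_{p-1})$, shows that on a dense open subset of $\fh_0$ the $[p]$-th power is semisimple, and \thref{zopen} transports this to a dense open subset of $\fg$. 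Your route instead proves that every maximal torus is $\Aut(\fg)$-conjugate to $k(h\otimes1)$, hence every Cartan subalgebra to $\fh_0$, and then deduces $x_n^{[p]}=0$ for all non-nilpotent $x$. This is a stronger structural statement and yields $e(\fg)\le1$ on the entire open set $\fg\setminus\cN(\fg)$, not just a dense subset; the cost is the conjugacy step. That step is correct---your observation that a toral matrix in $\fsl_2(\cO(1;1))$ has constant eigenvalues $\pm c$ is right (from $(-\det t)^{(p-1)/2}=1$ in the local ring $\cO(1;1)$), and $\SL_2(\cO(1;1))$ does act on $\fg$ as automorphisms (conjugation by $g$ sends $\del\mapsto\del-\del(g)g^{-1}$, which lands in $\fg$ since $\tr(\del(g)g^{-1})=0$ when $\det g=1$); the elementary matrices $\exp(e\otimes a),\exp(f\otimes a)$ from \thref{autg} generate this copy of $\SL_2(\cO(1;1))$---but this all needs to be spelled out, and the paper's dominance argument bypasses it entirely.

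Parts (i), (ii), (iv), (v) are handled the same way in both.
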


\begin{proof}
(i) By \thref{sl2noD}(ii), if $\fg$ is not semisimple, then any nonzero abelian \\$p$-ideal would have to contain the unique minimal ideal $\fsl_2\otimes \cO(1; 1)$. But $\fsl_2\otimes \cO(1; 1)$ is nonabelian, we get a contradiction. This proves (i).

(ii) Since $\fsl_2\otimes \cO(1; 1)$ is a $p$-ideal of $\fg$ and $\del^{p}=0$, it follows from \thref{MTlem} and \thref{sl2noD}(iii) that $\MT(\fg)=\MT(\fsl_2\otimes \cO(1; 1))+\MT(k\del)=1+0=1$. This proves (ii).

(iii) By \thref{autg}, we know that $\exp(t\ad (z\otimes q))$ is an automorphism of $\fg$ for any $t\in k$ and $z\otimes q\in \cN(\fsl_2)\otimes \cO(1;1)$. Let $G$ denote the group generated by these $\exp(t\ad (z\otimes q))$. Then $G$ is a connected algebraic group. Consider the set $G.(h\otimes \cO(1; 1)^{*}\rtimes k\del)$. Here $\cO(1; 1)^{*}$ denotes the set of all invertible elements of $\cO(1; 1)$. We want to show that this set contains a nonempty Zariski open subset of $\fg$. This can be done by showing that the morphism 
\begin{align*}
\theta: G \times (h\otimes \cO(1;1)\rtimes k\del)&\to \fg\\
(\tilde{g},v)&\mapsto \tilde{g}(v) 
\end{align*}
is dominant. Consider the differential of $\theta$ at $(1_G, h\otimes 1)$
\begin{align*}
(d\theta)_{(1_G, h\otimes 1)}: \Lie(G) \oplus (h\otimes \cO(1;1)\rtimes k\del)&\to \fg\\
(X, Y)&\mapsto[X, h\otimes 1]+Y.
\end{align*}
It is easy to see that $\Ima ((d \theta)_{(1_G, h\otimes 1)})$ contains $h\otimes \cO(1;1)\rtimes k\del$ and $[\Lie(G), h\otimes 1]$. Since $\exp(t\ad(e\otimes q))$ is in $G$ for any $t\in k$ and $q \in \cO(1; 1)$, we have that $e\otimes \cO(1; 1)\subset \Lie(G)$. Here we identify $\fsl_2\otimes \cO(1; 1)$ with its image in $\fgl(\fsl_2\otimes \cO(1; 1))$ under $\ad$. By our assumption, $p>2$. So 
\[
[e\otimes \cO(1; 1), h\otimes 1]=-2e\otimes \cO(1;1)\subset [\Lie(G), h\otimes 1].
\]
Similarly, one can show that $f\otimes \cO(1;1)\subset [\Lie(G), h\otimes 1]$. Hence $\Ima ((d\theta)_{(1_G, h\otimes 1)})$ contains $h\otimes \cO(1; 1)\rtimes k\del+e\otimes \cO(1;1)+f\otimes \cO(1; 1)$ which is $\fg$. Thus, $\Ima ((d\theta)_{(1_G, h\otimes 1)})=\fg$ and so $(d\theta)_{(1_G, h\otimes 1)}$ is surjective. It follows from \thref{dcdominant} that the morphism $\theta$ is dominant.

Note that for any $g=\sum_{i=0}^{p-1}a_i x^i \in \cO(1,1)$ and $\lambda \in k$, 
\[
(h\otimes g +\lambda \del)^p = h\otimes (a_0^p-\lambda^{p-1}a_{p-1})
\]
by Jacobson's formula. It follows that there is a nonempty Zariski open subset $U_0=h\otimes O(1,1)^{*}\rtimes k\del$ in  $h\otimes O(1,1)\rtimes k\del$ consisting of elements $u_0$ such that $u_{0}^p$ is semisimple. By \thref{zopen}, $G.U_0$ contains a nonempty Zariski open subset $U$ of $\fg$. We have that $u^p\in \fg_{ss}$ for all $u\in U$. Therefore, $e(\fg)\le 1$. If $e(\fg)=0$, then it follows from the definition of $e(\fg)$ that $\fg_{ss}$ contains a nonempty Zariski open subset $V$ of $\fg$. Then there is an element $v \in V\setminus \fsl_2\otimes \cO(1,1)$ such that $v\in \spn\big\{v^{p^{i}}\,|\,i\ge 1\big\}$. This is impossible as $w^{p^{i}}\in \fsl_2\otimes O(1,1)$ for all $w \in \fg$. Hence $e(\fg)=1$. This proves (iii).

(iv) By (ii) of this lemma and \thref{nvarietythm}, all irreducible components of $\cN(\fg)$ have dimension $\dim\fg-\MT(\fg)=3p$. This proves (iv).

(v) By \thref{nvarietythm} and the fact that $e(\fg)=1=\MT(\fg)$, we have that $a^{p^{2}}=0$ for all $a\in \cN(\fg)$. This proves (v).
\end{proof}

We need another result which shows that applying suitable automorphisms of $\fg$, some nilpotent elements of $\fg$ can be reduced to a nice form.

\begin{lem}\thlabel{autg1}
Let $a=\lambda \del+v$ be an element of $\fg$, where $\lambda \in k^{*}$ and $v \in \fsl_2\otimes \cO(1; 1)$. Then there exist automorphisms of $\fg$ such that $a$ is conjugate to $a'=\lambda \del+v'$, where $v'\in \fsl_2 \otimes \fm^{p-1}$. Moreover, if $a$ is nilpotent, then $v'=b\otimes x^{p-1}$ for some $b\in \cN(\fsl_2)$ (possibly $0$).
\end{lem}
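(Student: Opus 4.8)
\textbf{Plan and Step 1 (reducing $v$ into $\fsl_2\otimes\fm^{p-1}$).} The idea is to kill the $x$-coefficients of $v$ one $\cO(1;1)$-degree at a time using the automorphisms $\exp(\ad(w\otimes g))$ supplied by \thref{autg}, and then to identify the surviving term by a direct Jacobson-formula computation of $(a')^{[p]}$. Write $v=\sum_{j=0}^{p-1}v_j\otimes x^j$ with $v_j\in\fsl_2$. I would prove by induction on $r\in\{0,1,\dots,p-1\}$ that $a$ is conjugate under $\Aut(\fg)$ to an element $\lambda\del+v^{(r)}$ with $v^{(r)}\in\fsl_2\otimes\fm^{r}$; the case $r=0$ is vacuous. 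For the inductive step write $v^{(r)}=w\otimes x^r+u$ with $w\in\fsl_2$ and $u\in\fsl_2\otimes\fm^{r+1}$; since $r\le p-2$, the integer $r+1$ is invertible mod $p$, so $g:=\tfrac{1}{\lambda(r+1)}x^{r+1}\in\fm$ and $\phi:=\exp(\ad(w\otimes g))\in\Aut(\fg)$ by \thref{autg}. Here $[w\otimes g,w\otimes x^r]=[w,w]\otimes(gx^r)=0$, so $\phi(w\otimes x^r)=w\otimes x^r$, and $\phi(u)\in\fsl_2\otimes\fm^{r+1}$ because $\ad(w\otimes g)$ raises the $x$-degree by $r+1$. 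By \eqref{edele-1a}, $\phi(\del)=\del-w\otimes\del(g)-w^{p}\otimes g^{p-1}\del(g)$; as $\lambda\,\del(g)=x^{r}$ while $g^{p-1}\del(g)$ has $x$-degree $(r+1)p-1$ (which is $\ge p$ for $r\ge1$, so the term vanishes, and equals $p-1$ for $r=0$), we get
\[
\phi\big(\lambda\del+v^{(r)}\big)=\lambda\del-w\otimes x^{r}+w\otimes x^{r}+\big(\text{terms in }\fsl_2\otimes\fm^{r+1}\big)=\lambda\del+v^{(r+1)}
\]
with $v^{(r+1)}\in\fsl_2\otimes\fm^{r+1}$. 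Taking $r=p-1$ and using $\fm^{p-1}=kx^{p-1}$ yields $a'=\lambda\del+v'$ with $v'=b\otimes x^{p-1}$ for some $b\in\fsl_2$, which already gives the first assertion.

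\textbf{Step 2 (nilpotency forces $b\in\cN(\fsl_2)$).} Assume $a$, hence $a'=\lambda\del+b\otimes x^{p-1}$, is nilpotent. Put $a_0=\lambda\del$, $c_0=b\otimes x^{p-1}$; then $a_0^{[p]}=\lambda^{p}\del^{p}=0$ and $c_0^{[p]}=b^{p}\otimes x^{p(p-1)}=0$. To compute $(a')^{[p]}=(a_0+c_0)^{[p]}$ I would evaluate the polynomial $(\ad(ta_0+c_0))^{p-1}(a_0)$ from Jacobson's formula. One has $(\ad(ta_0+c_0))(a_0)=[c_0,a_0]=\lambda b\otimes x^{p-2}$; moreover, as operators on $\fsl_2\otimes\cO(1;1)$, $\ad c_0$ annihilates $\fsl_2\otimes\fm$ (since $x^{p-1}\fm=0$) while $\ad a_0$ lowers the $x$-degree by $1$. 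Hence in the expansion of $(\ad(ta_0+c_0))^{p-2}$ applied to $\lambda b\otimes x^{p-2}\in\fsl_2\otimes\fm$, every word in which an $\ad c_0$ occurs before the $x$-degree has dropped to $0$ contributes $0$, so the only surviving term is $(t\,\ad a_0)^{p-2}(\lambda b\otimes x^{p-2})=t^{p-2}\lambda^{p-1}(p-2)!\,b\otimes 1=t^{p-2}\lambda^{p-1}\,b\otimes 1$ by Wilson's theorem. Comparing with $\sum_{i=1}^{p-1}i\,s_i(a_0,c_0)t^{i-1}$ forces $s_i(a_0,c_0)=0$ for $i<p-1$ and $s_{p-1}(a_0,c_0)=-\lambda^{p-1}\,b\otimes 1$, whence
\[
(a')^{[p]}=a_0^{[p]}+c_0^{[p]}+\sum_{i=1}^{p-1}s_i(a_0,c_0)=-\lambda^{p-1}\,b\otimes 1 .
\]
Since $a'$ is nilpotent so is $(a')^{[p]}$, so $b\otimes 1$ is a nilpotent element of $\fsl_2\otimes\cO(1;1)$; as $(b\otimes 1)^{[p]^{N}}=b^{[p]^{N}}\otimes 1$, this gives $b^{[p]^{N}}=0$, i.e. $b\in\cN(\fsl_2)$ (in fact $b^{p}=0$).

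\textbf{Main obstacle.} Step 1 is essentially bookkeeping once \eqref{edele-1a} is available. The crux is Step 2: one must justify that all the lower terms $s_i(a_0,c_0)$, $i<p-1$, vanish. This is precisely where the structural fact that $\ad(b\otimes x^{p-1})$ kills $\fsl_2\otimes\fm$, combined with a careful $x$-degree count inside the iterated bracket, does the work; the residual coefficient $-\lambda^{p-1}$ then falls out of Wilson's theorem, and one should double-check the signs and the use of the standing hypothesis $p>2$ (which makes $x^{p-2}\in\fm$ and $(-1)^p=-1$).
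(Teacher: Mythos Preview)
Your proof is correct and takes essentially the same route as the paper's: clear the $x^{r}$-coefficients of $v$ one degree at a time via the automorphisms $\exp(\ad(w\otimes g))$ of \thref{autg} together with \eqref{edele-1a}, and then compute $(a')^{[p]}=(p-1)!\,\lambda^{p-1}\,b\otimes 1$ by Jacobson's formula. The only cosmetic differences are that the paper presents Step~1 as if $v$ were a pure tensor, and in Step~2 invokes \thref{sl2Dlem}(v) (namely $a^{p^{2}}=0$) rather than your cleaner observation that $(a')^{[p]}$ inherits nilpotency directly from $a'$.
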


\begin{proof}
Let $a=\lambda \del+v$ be an element of $\fg$, where $\lambda \in k^{*}$ and $v \in \fsl_2\otimes \cO(1; 1)$. Suppose $v=z_1\otimes \sum_{i=0}^{p-1}a_ix^{i}$ for some $z_1 \in \fsl_2$ and $a_i \in k$. By \thref{autg}, we know that $\exp(\ad (y\otimes g))$ with $y\otimes g\in \fsl_2\otimes \fm$ is an automorphism of $\fg$. Let $H$ denote the subgroup of $\Aut(\fg)$ generated by these $\exp(\ad (y\otimes g))$. Then $H$ is a connected algebraic group. We show that choosing suitable element $y\otimes g\in \fsl_2\otimes \fm$, $\big(\exp(\ad (y\otimes g))\big)(a)=\lambda \del+v_1$ for some $v_1 \in \fsl_2\otimes \fm$. If $a_0=0$, then $a$ is of the desired form and there is nothing to do. Suppose $a_0\neq 0$. By \eqref{edele-1a},
\begin{align*}
\big(\exp(\ad (y\otimes g))\big)(a)&=\exp(\ad (y\otimes g))\circ\lambda \del\circ\exp(\ad (-y\otimes g))+\big(\exp(\ad (y\otimes g))\big)(v)\\
=&\lambda \del -\lambda y\otimes \del(g)-\lambda y^{p}\otimes g^{p-1}\del(g)\\
&+z_1\otimes \sum_{i=0}^{p-1}a_ix^{i}+\sum_{j=1}^{p-1}\frac{1}{j!}(\ad y)^{j}(z_1)\otimes g^{j}\sum_{i=0}^{p-1}a_ix^{i}.
\end{align*} 
Choose $y=z_1/\lambda\in\fsl_2$ and $g=a_0x \in \fm$. Then 
\[
-\lambda y\otimes \del(g)+z_1\otimes a_0=-z_1\otimes a_0+z_1\otimes a_0=0,
\]
\[-\lambda y^{p}\otimes g^{p-1}\del(g)= -\lambda y^{p}\otimes a_0^{p}x^{p-1},
\] 
and 
\[
\sum_{j=1}^{p-1}\frac{1}{j!}(\ad y)^{j}(z_1)\otimes g^{j}\sum_{i=0}^{p-1}a_ix^{i}=0.
\]
Hence 
\begin{align*}
\big(\exp(\ad (y\otimes g))\big)(a)&=\lambda \del-\lambda y^{p}\otimes a_0^{p}x^{p-1}+z_1\otimes \sum_{i=1}^{p-1}a_ix^{i}.
\end{align*}
Since $-\lambda y^{p}\otimes a_0^{p}x^{p-1}+z_1\otimes \sum_{i=1}^{p-1}a_ix^{i}\in \fsl_2\otimes \fm$, we see that $\big(\exp(\ad (y\otimes g))\big)(a)=\lambda \del+v_1$ for some $v_1 \in \fsl_2\otimes \fm$. Continuing in this way and by induction we get $a$ is conjugate under $H$ to $\lambda \del+v_{p-2}$ for some $v_{p-2} \in \fsl_2 \otimes \fm^{p-2}$. Suppose 
\[
v_{p-2}=z_{p-2} \otimes (a_{p-2}x^{p-2}+a_{p-1}x^{p-1})
\]
for some $z_{p-2} \in \fsl_2$ and $a_{p-2}, a_{p-1} \in k$. If $a_{p-2}=0$, then $v_{p-2} \in \fsl_2 \otimes \fm^{p-1}$ and there is nothing to do. Suppose $a_{p-2}\neq 0$. Then for any $\exp (\ad (y\otimes g)) \in H$, we have that
\begin{align*}
&\big(\exp (\ad (y\otimes g))\big)(\lambda \del+v_{p-2})\\
=&\exp (\ad (y\otimes g))\circ \lambda \del\circ\exp (\ad (-y\otimes g))+\big(\exp (\ad (y\otimes g))\big)(v_{p-2})\\
=&\lambda \del -\lambda y\otimes \del(g)-\lambda y^{p}\otimes g^{p-1}\del(g)\\
&+z_{p-2}\otimes (a_{p-2}x^{p-2}+a_{p-1}x^{p-1})+\sum_{j=1}^{p-1}\frac{1}{j!}(\ad y)^{j}(z_{p-2})\otimes g^{j}(a_{p-2}x^{p-2}+a_{p-1}x^{p-1}).
\end{align*}
Choose $y=z_{p-2}/\lambda \in \fsl_2$ and $g=a_{p-2}x^{p-1}/(p-1) \in \fm$. Then 
\[
-\lambda y\otimes \del(g)+z_{p-2}\otimes a_{p-2}x^{p-2}=-z_{p-2}\otimes a_{p-2}x^{p-2}+z_{p-2}\otimes a_{p-2}x^{p-2}=0.
\]
Moreover, $-\lambda y^{p}\otimes g^{p-1}\del(g)=0$, and 
\[
\sum_{j=1}^{p-1}\frac{1}{j!}(\ad y)^{j}(z_{p-2})\otimes g^{j}(a_{p-2}x^{p-2}+a_{p-1}x^{p-1})=0.
\]
Hence
\[
\big(\exp (\ad (y\otimes g))\big)(\lambda \del+v_{p-2})=\lambda \del+z_{p-2}\otimes a_{p-1}x^{p-1}.
\] 
Therefore, we proved that if $a=\lambda \del+v \in \fg$ with $\lambda \in k^{*}$ and $v \in \fsl_2\otimes \cO(1; 1)$, then $a$ is conjugate under $H$ to $a'=\lambda \del+v'$, where $v'\in \fsl_2 \otimes \fm^{p-1}$.

By above, we may assume that $a=\lambda \del+b\otimes x^{p-1}$ for some $\lambda \in k^{*}$ and $b \in \fsl_2$. If $a$ is nilpotent, then \thref{sl2Dlem}(v) implies that $a^{p^{2}}=0$. By Jacobson's formula,  $a^{p} =(p-1)!\lambda^{p-1}b$. So $a^{p^{2}}=0$ implies that $b^p=0$. As a result, $b \in \cN(\fsl_2)$. This completes the proof.  
\end{proof}

We are now ready to prove that
\begin{thm}\thlabel{sl2thm}
The variety $\cN(\fg)$ is irreducible.
\end{thm}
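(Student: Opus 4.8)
The plan is to split $\cN(\fg)$ into the locus of nilpotent elements with nonzero $\del$-component and its complement, prove the former is irreducible, and prove the latter is too small to contain an irreducible component. Write $\fs:=\fsl_2\otimes\cO(1;1)$, a $p$-ideal of $\fg$; since the $[p]$-map of $\fs$ is the restriction of that of $\fg$, we have $\cN(\fg)\cap\fs=\cN(\fs)$. Set
\[
\cN_0:=\cN(\fg)\cap\fs,\qquad Y:=\cN(\fg)\setminus\fs .
\]
Because $\fs$ is Zariski closed in $\fg$, the subset $Y$ is open in $\cN(\fg)$. By \thref{sl2Dlem}(iv) the variety $\cN(\fg)$ is equidimensional of dimension $3p$, while $\dim\cN_0=\dim\cN(\fs)=3p-1$ by \thref{sl2noD}(iv). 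Hence no irreducible component of $\cN(\fg)$ is contained in $\cN_0=\cN(\fg)\setminus Y$; so every component meets the open set $Y$, a component meeting $Y$ equals the closure of that (open, dense) intersection, and therefore $\cN(\fg)=\overline{Y}$. Thus it suffices to prove that $Y$ is irreducible.

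For that I would invoke the normal form of \thref{autg1}. Any $a\in Y$ may be written $a=\lambda\del+v$ with $\lambda\in k^{*}$, $v\in\fs$, and, being nilpotent, is conjugate under the connected subgroup $H\subseteq\Aut(\fg)$ generated by the automorphisms $\exp(\ad(y\otimes g))$, $y\otimes g\in\fsl_2\otimes\fm$ (cf.\ the proof of \thref{autg1}), to an element $\lambda\del+b\otimes x^{p-1}$ with $b\in\cN(\fsl_2)$. Conversely every such element lies in $Y$ and is nilpotent, since Jacobson's formula gives $(\lambda\del+b\otimes x^{p-1})^{[p]}=-\lambda^{p-1}(b\otimes 1)$ and $(b\otimes 1)^{[p]}=b^{p}\otimes 1=0$ as $b\in\cN(\fsl_2)$. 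Because each automorphism in $H$ preserves both $\cN(\fg)$ and the ideal $\fs$, it preserves $Y=\cN(\fg)\setminus\fs$, and hence
\[
Y=\big\{\,\phi\big(\lambda\del+b\otimes x^{p-1}\big)\ :\ \phi\in H,\ \lambda\in k^{*},\ b\in\cN(\fsl_2)\,\big\},
\]
i.e.\ $Y$ is the image of the morphism $H\times k^{*}\times\cN(\fsl_2)\to\fg$, $(\phi,\lambda,b)\mapsto\phi(\lambda\del+b\otimes x^{p-1})$. The group $H$ is connected, hence irreducible as a variety; $k^{*}$ is irreducible; and $\cN(\fsl_2)$ is irreducible, being the nilpotent variety of $\fsl_2=\Lie(\SL_2)$ with $\SL_2$ connected. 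So the source is irreducible and therefore so is its image $Y$; combined with the previous paragraph, $\cN(\fg)=\overline{Y}$ is irreducible.

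The only substantive ingredient is the conjugacy normal form \thref{autg1}, which has already been established; granting it, the remaining steps are formal, the points deserving a little care being that $Y$ is open in $\cN(\fg)$ (so that a component meeting $Y$ is its closure) and that equidimensionality of $\cN(\fg)$ together with $\dim\cN(\fs)<\dim\cN(\fg)$ forces every component to meet $Y$. I do not anticipate a serious obstacle here; moreover the scheme is robust and, as remarked, carries over verbatim with $\fsl_2$ replaced by $\Lie(G_2)$ for a reductive group $G_2$, using the irreducibility of $\cN(\Lie G_2)$ and the fact that $\MT(\Lie(G_2)\otimes\cO(1;1))>0$ keeps the analogue of $\cN_0$ of strictly smaller dimension.
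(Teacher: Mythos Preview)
Your proof is correct and follows essentially the same approach as the paper: both use the dimension drop $\dim\cN(\fs)=3p-1<3p=\dim\cN(\fg)$ to force every component to meet the open set $Y=\cN(\fg)\setminus\fs$, then invoke \thref{autg1} to exhibit $Y$ as the image of an irreducible variety. The only cosmetic difference is that the paper goes one step further, using $\Aut(\fsl_2)$ to conjugate $b\in\cN(\fsl_2)$ to $\mu e$ and hence parametrises $Y$ by $\tilde G\times k^{*}\times k$ with $\tilde G=\Aut(\fsl_2)H$, whereas you parametrise directly by $H\times k^{*}\times\cN(\fsl_2)$ and appeal to the irreducibility of $\cN(\fsl_2)$.
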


\begin{proof}
Let $X$ be any irreducible component of $\cN(\fg)$. Put $X_{0}:=X\cap (\fsl_2\otimes \cO(1; 1))$. Then $X_0\subset \cN(\fsl_2\otimes \cO(1; 1))$. By \thref{sl2noD}(iv) and \thref{sl2Dlem}(iv), we have that $\dim \cN(\fsl_2\otimes \cO(1; 1))=3p-1$ and $\dim X=3p$. It follows that $X_0$ is a proper Zariski closed subset of $X$. Then the complement $X\setminus X_0$ is Zariski open and nonempty in $X$. As a result, $X$ equals the Zariski closure of $X\setminus X_0$. 

Let us describe $X\setminus X_0$ explicitly. If $a \in X\setminus X_0$, then $a=\lambda \del+v$ for some $\lambda \in k^{*}$ and $v\in \fsl_2 \otimes \cO(1; 1)$. By \thref{autg1}, we know that there exists a subgroup $H$ of $\Aut(\fg)$ generated by all $\exp(\ad u)$ with $u \in \fsl_2\otimes \fm$ such that $a$ is $H$-conjugate to $\lambda \del+b \otimes x^{p-1}$ for some $b \in \cN(\fsl_2)$ (possibly $0$). Since $\dim H=\dim \Lie (H)=\dim (\fsl_2\otimes \fm)=3(p-1)$, $X\setminus X_0$ is not contained in the union of $H.(\lambda \del)$. As a result, there are some $\lambda \del+b\otimes x^{p-1}$ with $b\neq 0$ in $X\setminus X_0$.

Let $\tilde{G}=\Aut(\fsl_2)H$. Note that $H$ is a normal subgroup of $\tilde{G}$ and $\tilde{G}\subset \Aut(\fg)$ is connected. Moreover, it is known that all nonzero nilpotent elements of $\fsl_2$ are conjugate under $\Aut (\fsl_2)$. By \thref{sl2automorphism}, we know that $\Aut (\fsl_2)$ fixes $\del$. Hence any $a \in X\setminus X_0$ is conjugate under $\tilde{G}$ to $\lambda \del+\mu e\otimes x^{p-1}$ for some $\lambda \in k^{*}$ and $\mu \in k$. This shows that $X\setminus X_0 \subseteq \tilde{G}.(\lambda \del+\mu e\otimes x^{p-1}\,|\, \lambda \in k^{*}, \mu \in k)$. It is clear that $\tilde{G}.(\lambda \del+\mu e\otimes x^{p-1}\,|\, \lambda \in k^{*}, \mu \in k) \subseteq X\setminus X_0$. Thus $X\setminus X_0=\tilde{G}.(\lambda \del+\mu e\otimes x^{p-1}\,|\, \lambda \in k^{*}, \mu \in k)$. This quasi-affine variety is the image of a morphism from the irreducible variety $\tilde{G}\times k^{*}\times k$, hence $X\setminus X_{0}$ is irreducible.  

The above result holds for any irreducible component $X$ of $\cN(\fg)$ and shows that $X\setminus X_{0}$ is independent of the choice of $X$. Therefore, there is only one $X$ and so $\cN(\fg)$ is irreducible. This completes the proof.
\end{proof}

\begin{rmk}
\begin{enumerate}
\item We see that the above proof relies heavily on the structure and automorphisms of $\fg$, and Premet's theorem on $\cN(\fg)$ (\thref{nvarietythm}). Since we are familiar with the special linear Lie algebra $\fsl_2$ and the truncated polynomial ring $\cO(1; 1)$, it is easy to do calculations using Jacobson's formula and construct such a proof. But for other semisimple restricted Lie algebras with little known structural information, it may be more difficult to verify Premet's conjecture.
\item Another useful thing to point out is that the above proof works if we replace $\fsl_2$ by any Lie algebra $\fg_2=\Lie (G_2)$, where $G_2$ is a reductive algebraic group. This is because 
\begin{enumerate}[\upshape(i)]
\item $\fg_2$ is a restricted Lie algebra by \thref{exres}, so is $(\fg_2 \otimes \cO(1; 1)) \rtimes k\del$.
\item One can show similarly that $\MT(\fg_2 \otimes \cO(1; 1))=\MT \big((\fg_2 \otimes \cO(1; 1)) \rtimes k\del\big)$.\\ Hence $\dim \cN(\fg_2\otimes \cO(1; 1))<\dim \cN\big((\fg_2\otimes \cO(1; 1)) \rtimes k\del\big)$.
\item We know that $\cN(\fg_2)$ is irreducible and it is the Zariski closure of a single nilpotent $G_2$-orbit. Hence all nonzero nilpotent elements of $\fg_2$ are conjugate under $G_2$; see \cite[Theorem 1, Sec.~2.8 and Sec.~6.3-6.4]{J04}.
\end{enumerate}
\end{enumerate}
\end{rmk}

\section{Socle involves $S$}\label{JacobWittinfo}
We would like to extend the previous example by replacing $\cO(1; 1)$ with the truncated polynomial ring $\cO(m; \underline{1})$ in $m$ variables. Then we need to replace $\del$. A recent result proved by A.~Premet and D.~I.~Stewart shows that $\del$ is one of the representatives of transitive subalgebras of the derivation algebra $\Der \cO(1; 1)$ under the action of $\Aut(\cO(1; 1))$; see \cite[Lemma 2.1]{PD17}. This suggests that a transitive subalgebra of $\Der \cO(m; \underline{1})$ will do the job. Let us properly explain the set up.

Assume that $k$ is an algebraically closed field of characteristic $p>2$. Suppose $m\geq 2$. Let $\cO(m; \underline{1})=k[X_1, \dots, X_m]/(X_1^{p}, \dots, X_{m}^{p})$ denote the truncated polynomial ring in $m$ variables. We write $x_{i}$ for the image of $X_{i}$ in $\cO(m; \underline{1})$. Note that $\cO(m; \underline{1})$ is a local ring with $\dim \cO(m; \underline{1}) =p^m$. The degree function on the polynomial ring $k[X_1, \dots, X_m]$ induces a grading on $\cO(m; \underline{1})$. For each $i\in \N_0$, set 
\begin{equation}\label{Omgrading2.2.2}
\cO(m; \underline{1})_{i}:=\{f \in \cO(m; \underline{1})\,|\, \deg f =i\}.
\end{equation}
Then $\cO(m; \underline{1})=\bigoplus_{i=0}^{m(p-1)}\cO(m; \underline{1})_{i}$. For each $j\in \N_0$, set 
\[
\cO(m; \underline{1})_{(j)}:=\bigoplus_{i\geq j}\cO(m; \underline{1})_{i}=\{f \in \cO(m; \underline{1})\,|\, \deg f \geq j\}.
\]
Then the $\Z$-grading on $\cO(m; \underline{1})$ induces a descending filtration on $\cO(m; \underline{1})$. Note that each $\cO(m; \underline{1})_{(j)}$ is an ideal of $\cO(m;\underline{1})$. If $j>m(p-1)$, then $\cO(m; \underline{1})_{(j)}=0$. The unique maximal ideal of $\cO(m; \underline{1})$, denoted $\fm$, is $\cO(m; \underline{1})_{(1)}$. Since $f^{p}=f(0)^p$ for any $f\in \cO(m; \underline{1})$, we have that $f^{p}=0$ for all $f\in\fm$.

Let $W(m;\underline{1})$ denote the derivation algebra of $\cO(m; \underline{1})$. It is called the \textit{$m$th Jacobson-Witt algebra}. By \thref{wittthm}, this restricted Lie algebra is simple. Note that any derivation $\ccD$ of $\cO(m;\underline{1})$ is uniquely determined by its effects on $x_1, \dots, x_m$. 
It is easy to see that $W(m;\underline{1})$ is a free $\cO(m; \underline{1})$-module of rank $m$ generated by the partial derivatives $\del_1,\dots, \del_m$ such that $\del_i(x_j)=\delta_{ij}$ for all $1\leq i, j\leq m$. Hence $\dim W(m; \underline{1})=mp^m$. Put
\begin{align}\label{wmgrading2.2.2}
W(m;\underline{1})_{l}:=\sum_{i=1}^{m}\cO(m;\underline{1})_{l+1}\del_i
\end{align}
for $-1\leq l\leq m(p-1)-1$. Then the $\Z$-grading on $\cO(m; \underline{1})$ induces a $\Z$-grading on $W(m; \underline{1})$, i.e.
\[
W(m; \underline{1})=W(m; \underline{1})_{-1}\oplus W(m; \underline{1})_{0}\oplus\dots\oplus W(m; \underline{1})_{m(p-1)-1}.
\]
Note that $W(m; \underline{1})_{-1}=\sum_{i=1}^{m}k\del_i$. Similarly, put 
\begin{align}\label{wmfiltration2.2.2}
W(m;\underline{1})_{(l)}:=\sum_{i=1}^{m}\cO(m;\underline{1})_{(l+1)}\del_i
\end{align}
for $-1\leq l\leq m(p-1)-1$. Then the natural filtration on $\cO(m;\underline{1})$ induces a descending filtration on $W(m;\underline{1})$, i.e.
\begin{align*}
W(m;\underline{1})=W(m;\underline{1})_{(-1)}\supset W(m;\underline{1})_{(0)}\supset\dots\supset W(m;\underline{1})_{(m(p-1)-1)}\supset 0.
\end{align*}
It is called the \textit{standard filtration}; see \thref{sfiltrations}. Note that each $W(m;\underline{1})_{(l)}$ is a Lie subalgebra of $W(m;\underline{1})$. The subalgebra $W(m;\underline{1})_{(0)}=\sum_{i=1}^{m}\fm\del_i$ is often referred to as the \textit{standard maximal subalgebra} of $W(m;\underline{1})$. This is because it can be characterized as the unique proper subalgebra of smallest codimension in $W(m;\underline{1})$. By \thref{W(m;n)_(0)restricted}, we know that $W(m;\underline{1})_{(0)}$ is a restricted subalgebra of $W(m; \underline{1})$. Moreover, for any $\ccD_1\in W(m; \underline{1})_{(0)}$, it is easy to check that $\ccD_1(\cO(m; \underline{1}))\subseteq \fm$. It is also easy to see that $W(m;\underline{1})_{(1)}=\sum_{i=1}^{m}\fm^{2}\del_i$ is the nilradical of $W(m;\underline{1})_{(0)}$.

A restricted subalgebra $Q'$ of $W(m; \underline{1})$ is called \textit{transitive} if it does not preserve any proper nonzero ideals of $\cO(m; \underline{1})$. Equivalently, $Q'+ W(m; \underline{1})_{(0)}=W(m; \underline{1})$; see \cite[Sec.~2.1]{PD17} and \cite[Definition 2.3.1]{S04}. 

Let $G$ denote the automorphism group of $\cO(m; \underline{1})$. Each $\sigma\in G$ is uniquely determined by its effects on $x_1, \dots, x_m$. An assignment $\sigma(x_i)=f_i $ extends to an automorphism of $\cO(m; \underline{1})$ if and only if $f_i\in \fm$, and the Jacobian $\Jac(f_1, \dots, f_m):=\big|\big(\frac{\del f_i}{\del x_j}\big)_{1\leq i, j\leq m}\big|\notin \fm$. It follows that $G$ is a connected algebraic group of dimension $mp^m-m$. By \cite[Theorem 10]{J43}, every automorphism of $W(m; \underline{1})$ is induced by a unique automorphism of $\cO(m; \underline{1})$ via the rule $\ccD^{\sigma}=\sigma \circ \ccD\circ \sigma^{-1}$ for all $\sigma \in G$ and $\ccD \in W(m; \underline{1})$. So we can identify $G$ with the automorphism group of $W(m; \underline{1})$. Note that for any $f\in \cO(m; \underline{1})$, $\ccD\in W(m; \underline{1})$ and $\sigma \in G$, 
\[
(f\ccD)^{\sigma}=f^{\sigma}\ccD^{\sigma},
\]
where $f^{\sigma}=f(\sigma(x_1), \dots, \sigma(x_m))$; see \eqref{autconj1} in Sec.~\ref{nilpotentvarietyresultsWn}. It follows that if $\ccD_2=\sum_{i=1}^{m}g_i\del_i\in W(m; \underline{1})$, then 
\[
\ccD_2^{\sigma}=\sum_{i,\, j=1}^{m}g_i^{\sigma}\bigg(\del_i\big(\sigma^{-1}(x_j)\big)\bigg)^{\sigma}\del_j;
\]
see \eqref{autoconjugationrule2} in Sec.~\ref{nilpotentvarietyresultsWn}. Note also that $G$ respects the standard filtration of $W(m; \underline{1})$, and $\Lie(G)=W(m; \underline{1})_{(0)}$. 

Let us introduce the Lie algebra that we are going to work with. Let $\dd{D}$ be any restricted transitive subalgebra of $W(m; \underline{1})$ such that $\cN(\dd D)$ is irreducible. Let $S$ be any simple restricted Lie algebra over $k$ such that all its derivations are inner. We assume further that $\cN(S)$ is irreducible. It is natural to form the semidirect product
\[
\cL:=(S\otimes \cO(m; \underline{1}))\rtimes (\Id_S\otimes \dd D).
\]
Then $\cL$ is known to be a semisimple restricted Lie algebra over $k$; see also \cite[Sec.~2.1]{PD17}. To ease notation we identify $\Id_S\otimes \dd D$ with $\dd D$. 

Let us explain the reason that we assumed that $\dd D$ is a transitive subalgebra of $W(m; \underline{1})$. Note that if $\dd{D}$ is not transitive, then $\cO(m; \underline{1})$ contains a proper nonzero $\dd{D}$-invariant ideal, say $\fn$. Since $\cO(m; \underline{1})$ is a local ring, this ideal is contained in the maximal ideal $\fm$ of $\cO(m; \underline{1})$. Hence $f^{p}=0$ for all $f\in \fn$. Now consider $S\otimes \fn$, a nonzero subspace of $\cL$. Since $\cO(m; \underline{1})\fn \subseteq \fn$, we have that $[S\otimes \cO(m; \underline{1}), S\otimes \fn]\subseteq S\otimes \fn$. Since $\fn$ is $\dd{D}$-invariant, it follows that $S\otimes \fn$ is a nonzero ideal of $\cL$ consisting of nilpotent elements of $\cL$. Hence $\cL$ is not semisimple and we are not interested in such $\cL$ in this chapter. Therefore, for $\cL$ to be semisimple, it is necessary that $\dd{D}$ is transitive. This is the reason that we assumed that $\dd D$ is transitive when we introduced the Lie algebra $\cL$.

We want to prove \thref{NLirr} which states that the variety $\cN(\cL)$ is irreducible. We will consider the surjective morphism $\tilde{\psi}: \cN(\cL)\to \cN(\dd D)$ and need some preliminary results. Let us outline these results in each section.

In Sec.~\ref{nileleminND} we consider nilpotent elements of $\dd D$. Since $\dd D$ is a restricted transitive subalgebra of $W(m; \underline{1})$, there exists $d\in \dd D$ such that $d\notin W(m; \underline{1})_{(0)}$. If $d$ is also nilpotent, then we show that $d$ is conjugate under $G=\Aut(W(m; \underline{1}))$ to an element in a nice form. For that, we need a few results. Thanks to \thref{D70Lem6part} in Sec.~\ref{nilpotentvarietyresultsWn} and \thref{dformlemma1} which state that any $z\in W(m; \underline{1})$ such that $z\notin W(m; \underline{1})_{(0)}$ is conjugate under $G$ to an element in a nice form. Then we show that any such nilpotent element $z$ is conjugate under $G$ to $d_0+u$, where $d_0=\del_1+x_1^{p-1}\del_2+\dots+x_1^{p-1}\cdots x_{s-1}^{p-1}\del_s$ with $1\leq s\leq m$, $u\in (I\del_1+\dots+I\del_m)\cap W(m; \underline{1})_{(p-1)}$ and $I$ is the ideal of $\cO(m; \underline{1})$ generated by $x_{s+1}, \dots, x_m$; see \thref{dformlemma2} for notations. For the proof, we need \thref{regularderithm} which was proved by A.~Premet in \cite{P2015}. Then we show that $(d_0+u)^{p^{s}}\in W(m; \underline{1})_{(0)}$; see \thref{zpscalculationslemmma}. We show in \thref{GpreservesND} that any $d\in\cN(\dd D)$ such that $d\notin W(m; \underline{1})_{(0)}$ is conjugate under $G$ to $d_0+u$. Moreover, $d^{p^{s}}\in W(m; \underline{1})_{(0)}$ by \thref{zpscalculationslemmma}. We shall denote $d_0+u\in \cN(\dd D)$ by $z$.

Next we define a monomial ordering DegLex on $\cO(m; \underline{1})$; see \thref{DegLexdefn}. Then we 
consider the subspace 
\[
M=I+z(\fm),
\] 
where $\fm$ is the maximal ideal of $\cO(m; \underline{1})$, $I$ is the ideal of $\cO(m; \underline{1})$ generated by $x_{s+1}, \dots, x_m$ with $s\geq 1$, and $z=d_0+u$ as above. We show $M$ has the property that $M\oplus kx_1^{p-1}\cdots x_s^{p-1}=\cO(m; \underline{1})$; see \thref{zcalculations} and \thref{d-d0d_0image}.

In Sec.~\ref{nileleminNL} we consider nilpotent elements of $\cL$. Let $D'=\sum_{i=0}^{m(p-1)}s_i\otimes f_i+d$ be an element of $\cL$, where $s_i\in S$, $f_i\in \cO(m;\underline{1})$ with $\deg f_i=i$, and $d\in \cN(\dd D)$ with $d\in W(m; \underline{1})_{(0)}$. In \thref{D1pNcalculations}, we compute $p$-th powers of $D'$ and show that $D'\in \cN(\cL)$ if and only if $s_0\in \cN(S)$. 

We want to prove a similar result for the element $D=\sum_{i=0}^{m(p-1)}s_i\otimes f_i+z$ of $\cL$, where $s_i\in S$, $f_i\in \cO(m;\underline{1})$ with $\deg f_i=i$, and $z=d_0+u \in \cN(\dd D)$ as above. We first construct some automorphisms of $\cL$, i.e. we show that $\exp(\ad(\tilde{s}\otimes f))$ with $\tilde{s}\otimes f \in S \otimes \fm$ is an automorphism of $\cL$; see \thref{Sautoexps}. Let $H$ denote the subgroup of $\Aut(\cL)$ generated by these $\exp(\ad(\tilde{s}\otimes f))$. Then we show that $D=\sum_{i=0}^{m(p-1)}s_i\otimes f_i+z$ is conjugate under $H$ to 
\begin{align*}
D_1=s_0'\otimes x_1^{p-1}\cdots x_{s}^{p-1}+v'+z, 
\end{align*}
where $s_0'\in S$ (possibly $0$), $v'\in S\otimes I$, $I$ is the ideal of $\cO(m; \underline{1})$ generated by $x_{s+1}, \dots, x_m$ with $s\geq 1$, and $z=d_0+u\in \cN(\dd D)$ as above; see \thref{DconjugatetoD1}. Then $D$ is nilpotent if and only if $D_1$ is nilpotent. In \thref{D1psnilpotentcalculations}, we show that $D_1\in \cN(\cL)$ if and only if $s_0'\in \cN(S)$.

In Sec.~\ref{socleSmainthmproof}, we prove that the variety $\cN(\cL)$ is irreducible.

\subsection{Nilpotent elements of $\dd D$}\label{nileleminND}
Since $\dd D$ is a restricted transitive subalgebra of $W(m; \underline{1})$, there are elements in $\dd D$ which are not in $W(m; \underline{1})_{(0)}$. In this section we consider these elements. If they are nilpotent, we show that they are conjugate under $G=\Aut(W(m; \underline{1}))$ to an element in a nice form; see \thref{GpreservesND}. For that, we start with elements of $W(m; \underline{1})$ which are not in $W(m; \underline{1})_{(0)}$.

\begin{lem}\cite[p.~154, line 17]{P91}\thlabel{dformlemma1}
Let $z$ be an element of $W(m; \underline{1})$ such that \\$z\notin W(m; \underline{1})_{(0)}$. Then there exists an $1\leq s\leq m$ such that $z$ is conjugate under $G$ to 
\begin{align*}
\sum_{i=1}^{s}x_1^{p-1}\cdots x_{i-1}^{p-1}(1+x_i^{p-1}\psi_i)\del_i+x_1^{p-1}\cdots x_s^{p-1}\sum_{i=s+1}^{m}\psi_i\del_i,
\end{align*}
where $\psi_i\in k[X_{i+1}, \dots, X_{m}]/(X_{i+1}^{p}, \dots, X_m^{p})$ for $1\leq i\leq s$ and $\psi_i\in \fm_{m-s}$, the maximal ideal of $k[X_{s+1}, \dots, X_{m}]/(X_{s+1}^{p}, \dots, X_m^{p})$, for $s+1\leq i\leq m$. 
\end{lem}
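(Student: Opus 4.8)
The plan is to induct on the number of variables $m$, using \thref{D70Lem6part} both as the first reduction step and (in the one-variable case) as the base of the induction.

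For $m=1$, an element $z\notin W(1;\underline{1})_{(0)}$ is by \thref{D70Lem6part} already $G$-conjugate to $\del_1+x_1^{p-1}\varphi_1\del_1=(1+x_1^{p-1}\varphi_1)\del_1$ with $\varphi_1\in k$ (the ring $k[X_2,\dots,X_1]/(\dots)$ being $k$), which is exactly the asserted form with $s=1$. For the inductive step, suppose $m\geq 2$ and the statement is known in fewer variables. Given $z\notin W(m;\underline{1})_{(0)}$, I would first invoke \thref{D70Lem6part} to conjugate $z$ under $G$ to
\[
z_1=\del_1+x_1^{p-1}\sum_{i=1}^{m}\varphi_i\del_i=(1+x_1^{p-1}\varphi_1)\del_1+x_1^{p-1}D,
\]
where each $\varphi_i$ lies in the truncated polynomial subring $A:=k[x_2,\dots,x_m]\cong\cO(m-1;\underline{1})$ of $\cO(m;\underline{1})$, and $D:=\sum_{i=2}^{m}\varphi_i\del_i$ is regarded as an element of $\Der A=W(m-1;\underline{1})$ (on the variables $x_2,\dots,x_m$).

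If every $\varphi_i$ with $2\leq i\leq m$ lies in the maximal ideal of $A$, then $z_1$ already has the required shape with $s=1$. Otherwise $D\notin W(m-1;\underline{1})_{(0)}$, so by the inductive hypothesis there are $\sigma'\in\Aut(A)$ and an integer $s_0$ with $1\leq s_0\leq m-1$ such that $D^{\sigma'}$ has the prescribed form in the variables $x_2,\dots,x_m$. I would then extend $\sigma'$ to an automorphism of $\cO(m;\underline{1})$ by setting $\sigma'(x_1)=x_1$; the Jacobian remains a unit since its matrix is block triangular with a $1$ in the $x_1$-corner. Because $\sigma'$ involves only $x_2,\dots,x_m$ it commutes with $\del_1$, so $\del_1^{\sigma'}=\del_1$ and $(x_1^{p-1})^{\sigma'}=x_1^{p-1}$; hence, by \eqref{autconj1},
\[
z_1^{\sigma'}=(1+x_1^{p-1}\varphi_1^{\sigma'})\del_1+x_1^{p-1}D^{\sigma'}.
\]
Substituting the form of $D^{\sigma'}$ and distributing the factor $x_1^{p-1}$ over its summands turns each leading monomial $x_2^{p-1}\cdots x_{i-1}^{p-1}$ (for $i\geq 3$) into $x_1^{p-1}x_2^{p-1}\cdots x_{i-1}^{p-1}$, while the $\del_2$-coefficient becomes $x_1^{p-1}$ and the $\del_1$-term stays $(1+x_1^{p-1}\varphi_1^{\sigma'})\del_1$; one then reads off the asserted expression with $s=s_0+1$, with $\psi_1:=\varphi_1^{\sigma'}$ and the remaining $\psi_i$ inherited from the hypothesis. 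The containments $\psi_i\in k[X_{i+1},\dots,X_m]/(X_{i+1}^p,\dots,X_m^p)$ for $1\leq i\leq s$ (for $i=1$ note $\varphi_1^{\sigma'}\in A=k[X_2,\dots,X_m]/(\dots)$) and $\psi_i\in\fm_{m-s}$ for $s+1\leq i\leq m$ are precisely those provided by the inductive hypothesis after the index shift.

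Once \thref{D70Lem6part} is in hand the argument is essentially bookkeeping; the one point demanding care is the interface between the inductively produced automorphism $\sigma'$ of $\cO(m-1;\underline{1})$ and the extra variable $x_1$ — namely verifying that $\sigma'$ extends to a genuine automorphism of $\cO(m;\underline{1})$ fixing $x_1$, and that it therefore fixes $\del_1$ and $x_1^{p-1}$, so that conjugation by it does not disturb the already-normalized $\del_1$-component. The other thing to track carefully is which truncated polynomial ring each coefficient $\psi_i$ belongs to as the recursion peels off one variable at a time.
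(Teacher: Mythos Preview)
Your proposal is correct and follows essentially the same approach as the paper: both arguments first apply \thref{D70Lem6part} to normalize the $\del_1$-component, then recurse on the remaining derivation $D=\sum_{i\ge 2}\varphi_i\del_i$ in the subring $k[x_2,\dots,x_m]$, extending the resulting automorphism to fix $x_1$. The only cosmetic difference is that the paper unfolds this as an explicit iteration (successively producing $\Phi_1,\Phi_2,\dots$), whereas you package it as an induction on $m$; the content is identical.
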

\begin{myproof}[Sketch of proof]
Recall that if $\sigma\in G$ and $\ccD=\sum_{i=1}^{m}g_i\del_i\in W(m; \underline{1})$, then 
\begin{equation}\label{lem3.2.1eq}
\ccD^{\sigma}=\sigma \circ \ccD\circ \sigma^{-1}=\sum_{i,\, j=1}^{m}g_i^{\sigma}\big(\del_i(\sigma^{-1}(x_j))\big)^{\sigma}\del_j,
\end{equation}
where $g_i^{\sigma}=g_i(\sigma(x_1), \dots, \sigma(x_m))$; see \eqref{autconj1} and \eqref{autoconjugationrule2} in Sec.~\ref{nilpotentvarietyresultsWn}.

Let $z$ be an element of $W(m; \underline{1})$ such that $z\notin W(m; \underline{1})_{(0)}$. By \thref{D70Lem6part}, there exists $\Phi_1 \in G$ such that
\[
z^{\Phi_1}=\del_1+x_1^{p-1}\sum_{i=1}^{m}\varphi_i\del_i=(1+x_1^{p-1}\varphi_1)\del_1+x_1^{p-1}\sum_{i=2}^{m}\varphi_i\del_i,
\]
where $\varphi_i\in k[X_2,\dots, X_{m}]/(X_2^{p}, \dots, X_{m}^{p})$ for all $1\leq i\leq m$. Set $z_2=\sum_{i=2}^{m}\varphi_i\del_i$. If $z_2\in W(m; 1)_{(0)}$, then we find an $s=1$ such that $z^{\Phi_1}$ is of the desired form. If $z_2\notin W(m; 1)_{(0)}$, then \thref{D70Lem6part} implies that there exists $\Phi_2 \in G$ such that
\[
z_2^{\Phi_2}=(1+x_2^{p-1}\varphi'_{2})\del_2+x_2^{p-1}\sum_{i=3}^{m}\varphi'_{i}\del_i,
\]
where $\varphi'_{i} \in k[X_3, \dots, X_m]/(X_3^{p}, \dots, X_m^{p})$ for all $2\leq i\leq m$. Here we may assume that $\Phi_2(x_1)=x_1$ and $\Phi_2(x_j)\in  k[X_2, \dots, X_m]/(X_2^{p}, \dots, X_m^{p})\cap \fm$, i.e. the intersection of $k[X_2, \dots, X_m]/(X_2^{p}, \dots, X_m^{p})$ with the maximal ideal $\fm$ of $\cO(m; \underline{1})$, for $2\leq j\leq m$. By \eqref{lem3.2.1eq}, it is easy to check that
\[
z^{\Phi_1\Phi_2}=(z^{\Phi_1})^{\Phi_2}=(1+x_1^{p-1}\varphi_1^{\Phi_2})\del_1+x_1^{p-1}(1+x_2^{p-1}\varphi'_{2})\del_2+x_1^{p-1}x_2^{p-1}\sum_{i=3}^{m}\varphi'_{i}\del_i.
\]
Set $z_3=\sum_{i=3}^{m}\varphi'_{i}\del_i$. If $z_3\in W(m; 1)_{(0)}$, then we find an $s=2$ such that  $z^{\Phi_1\Phi_2}$ is of the desired form. If $z_3\notin W(m; 1)_{(0)}$, then \thref{D70Lem6part} implies that there exists $\Phi_3 \in G$ such that
\[
z_3^{\Phi_3}=(1+x_3^{p-1}\varphi''_{3})\del_3+x_3^{p-1}\sum_{i=4}^{m}\varphi''_{i}\del_i,
\]
where $\varphi''_{i} \in k[X_4, \dots, X_m]/(X_4^{p}, \dots, X_m^{p})$ for all $3\leq i\leq m$. Here we may assume that $\Phi_3(x_1)=x_1$, $\Phi_3(x_2)=x_2$ and $\Phi_3(x_j)\in  k[X_3, \dots, X_m]/(X_3^{p}, \dots, X_m^{p})\cap \fm$ for $3\leq j\leq m$. By \eqref{lem3.2.1eq}, it is easy to check that
\begin{align*}
z^{\Phi_1\Phi_2\Phi_3}=(z^{\Phi_1\Phi_2})^{\Phi_3}=&(1+x_1^{p-1}(\varphi_1^{\Phi_2})^{\Phi_3})\del_1+x_1^{p-1}(1+x_2^{p-1}(\varphi_{2}')^{\Phi_3})\del_2\\
&+x_1^{p-1}x_2^{p-1}(1+x_3^{p-1}\varphi''_{3})\del_3+x_1^{p-1}x_2^{p-1}x_3^{p-1}\sum_{i=4}^{m}\varphi''_{i}\del_i.
\end{align*}
Continue doing the above process until we find an $1\leq s\leq m$ and $\Phi_1, \dots, \Phi_s\in G$ such that 
\begin{align*}
z^{\Phi_1\Phi_2\Phi_3\dots\Phi_s}=&(1+x_1^{p-1}\psi_1)\del_1+x_1^{p-1}(1+x_2^{p-1}\psi_2)\del_2
+x_1^{p-1}x_2^{p-1}(1+x_3^{p-1}\psi_3)\del_3+\dots\\
&+x_1^{p-1}\cdots x_{s-1}^{p-1}(1+x_s^{p-1}\psi_s)\del_s+x_1^{p-1}\cdots x_s^{p-1}\sum_{i=s+1}^{m}\psi_i\del_i\\
=&\sum_{i=1}^{s}x_1^{p-1}\cdots x_{i-1}^{p-1}(1+x_i^{p-1}\psi_i)\del_i+x_1^{p-1}\cdots x_s^{p-1}\sum_{i=s+1}^{m}\psi_i\del_i,
\end{align*}
where $\psi_i\in k[X_{i+1}, \dots, X_{m}]/(X_{i+1}^{p}, \dots, X_m^{p})$ for $1\leq i\leq s$ and $\psi_i\in \fm_{m-s}$, the maximal ideal of $k[X_{s+1}, \dots, X_{m}]/(X_{s+1}^{p}, \dots, X_m^{p})$, for $s+1\leq i\leq m$. Note that if $s=m$, then the above process shows that $z$ is $G$-conjugate to $\sum_{i=1}^{m}x_1^{p-1}\cdots x_{i-1}^{p-1}(1+x_i^{p-1}\psi_i)\del_i$, where $\psi_i\in k[X_{i+1}, \dots, X_{m}]/(X_{i+1}^{p}, \dots, X_m^{p})$ for $1\leq i\leq m$. Note that $\psi_m=0$. This completes the sketch of proof.
\end{myproof}

Next we assume that $z$ is a nilpotent element of $W(m; \underline{1})$ such that $z\notin W(m; \underline{1})_{(0)}$. We want to prove that $z$ is conjugate under $G$ to an element in a nicer form; see \thref{dformlemma2}. For that, we need \thref{regularderithm} which was proved by A.~Premet in \cite{P2015}. This theorem characterizes all regular elements of $W(m; \underline{1})$. To state that theorem, we need to introduce some notations used in \cite[Sec.~2 and 3]{P2015}. Then we state a few preliminary results and give a sketch proof of that theorem.

Let $\fg$ be any finite dimensional restricted Lie algebra over $k$. Given $x\in \fg$, let $\fg_{x}^{0}$ denote the set of all $y\in \fg$ for which $(\ad x)^{N}(y)=0$, where $N\gg 0$. It is known that $\fg_{x}^{0}$ is a restricted subalgebra of $\fg$ containing the centralizer $\fc_{\fg}(x)$. Define $\rk(\fg):=\min_{x\in\fg}\dim\fg_{x}^{0}$. We say that $x\in \fg$ is \textit{regular} if its centralizer $\fc_{\fg}(x)$ has the smallest possible dimension. Note that $\dim\fg_{x}^{0}=\rk(\fg)$ if and only if $x$ is regular and in this case $\fg_{x}^{0}$ is a Cartan subalgebra of minimal dimension in $\fg$. It is known that the set of all regular elements of $\fg$ is Zariski open in $\fg$. We consider the restricted Lie algebra $W(m; \underline{1})$. In \cite{P91}, A.~Premet proved that 
\begin{lem}\cite[Lemma 9]{P91}\thlabel{SingP91}
The set of singular elements of $\cN(W(m; \underline{1}))$ coincides with 
\[
\cN_{0}=\{\ccD_1 \in \cN(W(m; \underline{1}))\,|\, \ccD_{1}^{p^{m-1}}\in W(m; \underline{1})_{(0)}\};
\]
see also \thref{WnN0resultslem5}. 
\end{lem}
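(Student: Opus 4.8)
The plan is to prove the two inclusions separately: every $\ccD\in O:=G.\sD=\cN(W(m;\underline{1}))\setminus\cN_{0}$ (the displayed equality being \thref{WnOresultslem4}) is a regular element of $W(m;\underline{1})$, and every $\ccD\in\cN_{0}$ is singular. First I record the soft facts. The set $\cN_{0}$ is Zariski closed in $\cN(W(m;\underline{1}))$, being the preimage of the linear subspace $W(m;\underline{1})_{(0)}$ under the polynomial map $\ccD\mapsto\ccD^{p^{m-1}}$; hence $O$ is open in $\cN(W(m;\underline{1}))$ and, by \thref{Wnproof}, open and dense. Since $\MT(W(m;\underline{1}))=m$ and $e(W(m;\underline{1}))=0$, \thref{nvarietythm} gives $\dim\cN(W(m;\underline{1}))=mp^{m}-m=\dim G$, while \thref{WnN0resultslem5} gives $\dim\cN_{0}<mp^{m}-m$; also $\rk(W(m;\underline{1}))=m$ (the minimal centralizer dimension, attained for instance at a generic element of a maximal torus).

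For the first inclusion it suffices, since regularity is $G$-invariant and $O=G.\sD$, to check that $\sD$ is regular, i.e. that $\dim\fc_{W(m;\underline{1})}(\sD)=m$. By \thref{WnDresultslem3}(ii) the derivations $\sD,\sD^{p},\dots,\sD^{p^{m-1}}$ form an $\cO(m;\underline{1})$-module basis of $W(m;\underline{1})$; writing an arbitrary $E=\sum_{i=1}^{m}g_{i}\sD^{p^{i-1}}$ one computes $[\sD,E]=\sum_{i=1}^{m}\sD(g_{i})\,\sD^{p^{i-1}}$, so $E$ centralizes $\sD$ exactly when $\sD(g_{i})=0$ for all $i$; since $\sD$ acts on $\cO(m;\underline{1})$ as a single Jordan block of size $p^{m}$ (\thref{WnDresultslem3}(iii)) its kernel is $k\cdot 1$, whence $\fc_{W(m;\underline{1})}(\sD)=\spn\{\sD,\sD^{p},\dots,\sD^{p^{m-1}}\}$, of dimension $m$. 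Thus $\sD$, and with it every element of $O$, is regular, so all singular elements of $\cN(W(m;\underline{1}))$ lie in $\cN_{0}$.

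For the reverse inclusion take $\ccD\in\cN_{0}$. Since $W(m;\underline{1})_{(0)}$ is $G$-stable and $\ccD\mapsto\ccD^{p^{m-1}}$ is $G$-equivariant, $\cN_{0}$ is $G$-stable, so $G.\ccD\subseteq\cN_{0}$ and $\dim G.\ccD\le\dim\cN_{0}<\dim G$; hence $\dim\Stab_{G}(\ccD)=\dim G-\dim G.\ccD>0$, and since $\Lie(G)=W(m;\underline{1})_{(0)}$ we get $\fc_{W(m;\underline{1})}(\ccD)\cap W(m;\underline{1})_{(0)}\supseteq\Lie(\Stab_{G}(\ccD))\ne 0$. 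To turn this into $\dim\fc_{W(m;\underline{1})}(\ccD)\ge m+1$ (which forces $\ccD$ to be singular) one invokes the normal form: when $\ccD\notin W(m;\underline{1})_{(0)}$, \thref{dformlemma1} conjugates $\ccD$ to a derivation supported, up to a term in the ideal generated by $x_{s+1},\dots,x_{m}$, on only the first $s<m$ variables; one then counts the iterated $p$-th powers $\ccD,\ccD^{p},\dots$ together with the additional commuting derivations coming from the unused variables and from $\fc_{W(m;\underline{1})}(\ccD)\cap W(m;\underline{1})_{(0)}$ to exhibit at least $m+1$ linearly independent elements of $\fc_{W(m;\underline{1})}(\ccD)$. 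The case $\ccD\in W(m;\underline{1})_{(0)}$ is treated directly: any nilpotent element of the restricted subalgebra $W(m;\underline{1})_{(0)}$ has centralizer in $W(m;\underline{1})$ of dimension strictly larger than $m$.

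The main obstacle is precisely this last step — verifying that the centralizer dimension strictly exceeds the generic value $m$ at \emph{every} point of $\cN_{0}$, i.e. that non-regularity genuinely propagates across all of $\cN_{0}$ and not merely that $\cN_{0}$ has dimension below $\dim\cN(W(m;\underline{1}))$. Here the explicit structure of $W(m;\underline{1})$ and the normal-form reductions behind \thref{D70Lem6part}, \thref{WnOresultslem4} and \thref{dformlemma1} do the work that a soft dimension count cannot.
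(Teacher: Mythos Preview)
The paper does not give its own proof of this lemma: it is stated with a bare citation to \cite[Lemma~9]{P91}. So there is nothing in the paper to compare your argument to line by line; one can only judge your proposal on its own merits.

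Your first inclusion is fine. Computing $\fc_{W(m;\underline{1})}(\sD)$ via the $\cO(m;\underline{1})$-basis $\sD,\sD^{p},\dots,\sD^{p^{m-1}}$ is exactly the right move, and it shows every element of $O$ is regular; hence every singular nilpotent lies in $\cN_{0}$.

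The second inclusion has a genuine gap, and you have correctly identified where it is. From $\dim G.\ccD\le\dim\cN_{0}<\dim G$ you only get $\dim\fc_{W(m;\underline{1})_{(0)}}(\ccD)\ge\dim G-\dim G.\ccD>0$. There is no a~priori inequality turning ``the centralizer meets $W(m;\underline{1})_{(0)}$ nontrivially'' into ``the full centralizer has dimension $>m$''; indeed for $\sD$ itself one has $\fc_{W(m;\underline{1})_{(0)}}(\sD)=0$ while $\dim\fc_{W(m;\underline{1})}(\sD)=m$, so the two quantities are genuinely independent. Your sketch via the normal form of \thref{dformlemma1}/\thref{dformlemma2} is the right \emph{shape} of argument, but the sentence ``one then counts \dots\ to exhibit at least $m+1$ linearly independent elements'' is exactly the content of the lemma and is not carried out. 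Likewise the blanket claim for $\ccD\in W(m;\underline{1})_{(0)}$ is asserted, not proved.

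One warning: do not try to close the gap by invoking \thref{regularderithm} (saying ``regular nilpotent $\Rightarrow$ conjugate to $\sD$ $\Rightarrow$ in $O$''). In the paper's sketch of \thref{regularderithm}, the step (i)$\Rightarrow$(ii) explicitly appeals to \thref{SingP91}, so that route is circular. The original argument in \cite{P91} proceeds through a direct analysis of centralizers (Lemmas~7--9 there), which this thesis does not reproduce; to make your proof self-contained you would need to supply that analysis.
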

In \cite{P2015}, A.~Premet characterized all regular elements of $W(m; \underline{1})$. The following result on tori will be used in part (a) of the proof.
\begin{thm}\cite[Theorem 7.5.1]{S04}\thlabel{S03thm7.5.1}
Let $\ft\subset W(m; \underline{1})$ be a torus and set $\ft_0:=\ft\cap W(m; \underline{1})_{(0)}$. Let $t_1, \dots, t_s$ be toral elements of $\ft$ linearly independent $(\modd \ft_0)$. Then there exists $\Phi \in G=\Aut(W(m; \underline{1}))$ such that $\Phi(t_i)=(1+x_i)\del_i$ for $1\leq i\leq s$ and $\Phi(\ft_0) \subset \sum_{i=s+1}^{m}kx_i\del_i$. 
\end{thm}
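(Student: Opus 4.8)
The plan is to induct on $s$. First a harmless reduction: replacing $\ft$ by $\spn_k(t_1,\dots,t_s)+\ft_0$ changes nothing in the statement, and this sum is an abelian $p$-subalgebra of $\ft$ (using that $W(m;\underline1)_{(0)}$ is a restricted subalgebra, \thref{W(m;n)_(0)restricted}, together with Jacobson's formula, \thref{defres}) consisting of semisimple elements, hence a torus; so we may assume $\ft=\spn_k(t_1,\dots,t_s)\oplus\ft_0$. Note each $t_i\notin W(m;\underline1)_{(0)}$, since otherwise $t_i\in\ft\cap W(m;\underline1)_{(0)}=\ft_0$. The base case $s=0$ is the assertion that a torus contained in the standard maximal subalgebra $W(m;\underline1)_{(0)}$ can be conjugated by $G$ into $\sum_{i=1}^m kx_i\del_i$; I would deduce this from the grading of $W(m;\underline1)_{(0)}$ (its nilradical is $W(m;\underline1)_{(1)}$, with reductive quotient $\fgl_m$), diagonalising the image of $\ft$ in $\fgl_m$ and lifting the conjugation through the unipotent radical by the usual conjugacy-of-tori argument, or simply quote the corresponding earlier result.

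For the inductive step I would first normalise $t_1$. By Demushkin's lemma (\thref{D70Lem6part}) there is $\sigma\in G$ with $t_1^\sigma=\del_1+x_1^{p-1}\sum_{i=1}^m\varphi_i\del_i$ and $(t_1^\sigma)^p=-(1+x_1^{p-1}\varphi_1)\sum_{i=1}^m\varphi_i\del_i$; since $W(m;\underline1)$ is restricted with $D^{[p]}=D^p$ (\thref{wittthm}(v)) and $t_1$ is toral, $(t_1^\sigma)^p=t_1^\sigma$, and comparing coefficients of $\del_1$ gives $\varphi_1=-1$, whence comparing coefficients of $\del_i$ ($i\ge2$) gives $\varphi_i=0$, i.e. $t_1^\sigma=(1-x_1^{p-1})\del_1$. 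Applying a further automorphism fixing $x_2,\dots,x_m$ and sending $x_1\mapsto g(x_1)$, where $g$ is the polynomial in the maximal ideal with $g'(x_1)=(1-x_1^{p-1})(1+x_1)^{-1}$ and $g(0)=0$ (so $g^{p-1}=x_1^{p-1}$ in $\cO(m;\underline1)$, $g'(0)=1$, and the change of variable is admissible), carries $(1-x_1^{p-1})\del_1$ to $(1+x_1)\del_1$. After renaming we may assume $t_1=(1+x_1)\del_1$.

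Next I would pass to the centraliser $\fc:=\fc_{W(m;\underline1)}(t_1)$. A direct computation (conveniently written using $u:=1+x_1$, with $\del_1(u)=1$) shows $\fc=R\oplus\widetilde W$, where $R$ is the abelian $p$-ideal spanned by the derivations $f\,u\del_1$ with $f$ a truncated polynomial in $x_2,\dots,x_m$ (so $R^{[p]}\subseteq kt_1$, with nilradical $\fn$ the span of those $f\,u\del_1$ with $f$ in the maximal ideal), and $\widetilde W\cong W(m-1;\underline1)$ is the Witt algebra on $x_2,\dots,x_m$ realised as derivations annihilating $x_1$. As $\ft$ is abelian, $\ft\subseteq\fc$; the only toral elements of $R$ are the scalar multiples of $t_1$, so $\ft\cap R=kt_1$. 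The inner automorphisms $\exp(\ad n)$, $n\in\fn$, lie in $G$ and fix $t_1$ (because $[\fn,t_1]=0$), and by the standard conjugacy of maximal tori in a Lie algebra with nilradical $\fn$ we may, after applying one of them, assume $\ft\subseteq kt_1\oplus\widetilde W$. Writing $t_i=a_it_1+\tilde t_i$ with $\tilde t_i\in\widetilde W$, torality forces $a_i\in\F_p$, so replacing $t_i$ by $t_i-a_it_1$ (which preserves all hypotheses) we may assume $t_2,\dots,t_s\in\widetilde W$; one then checks $\ft_0=\ft\cap\widetilde W\cap W(m;\underline1)_{(0)}$ and that $t_2,\dots,t_s$ are linearly independent modulo $\ft_0$ in $\widetilde W$. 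Applying the inductive hypothesis to the $s-1$ toral elements $t_2,\dots,t_s$ of $\widetilde W\cong W(m-1;\underline1)$ yields $\widetilde\Phi\in\Aut(\widetilde W)$ with $\widetilde\Phi(t_i)=(1+x_i)\del_i$ for $2\le i\le s$ and $\widetilde\Phi(\ft_0)\subseteq\sum_{i=s+1}^m kx_i\del_i$; extending $\widetilde\Phi$ to $\Phi'\in G$ by declaring it to fix $x_1$, one checks $\Phi'$ fixes $t_1$ and induces $\widetilde\Phi$ on $\widetilde W$, so $\Phi'$ realises the required normal form. Composing $\Phi'$ with $\sigma$, the variable change, and the $\exp(\ad n)$'s gives the desired $\Phi$; the degenerate case $m=1$ (where $\widetilde W=0$, $s=1$, $\ft_0=0$) is covered by the normalisation of $t_1$ alone.

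The main obstacle is the torus bookkeeping inside the centraliser $\fc$: one must establish that $\fc=R\oplus\widetilde W$ with $R$ as described, that $R$ contributes only the toral line $kt_1$, and — the crucial point — that $\ft$ can be moved into $kt_1\oplus\widetilde W$ by automorphisms fixing $t_1$; this is where the restricted structure and a conjugacy-of-tori result really enter, and it would have to be cited or proved carefully. A secondary but unavoidable point is checking that every coordinate change used is a genuine automorphism of $\cO(m;\underline1)$ (the Jacobian condition) and that the extension of $\widetilde\Phi$ fixing $x_1$ genuinely induces $\widetilde\Phi$ on $\widetilde W$ while fixing $t_1=(1+x_1)\del_1$; the base case $s=0$ is standard but likewise not entirely trivial.
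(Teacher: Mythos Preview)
The paper does not give its own proof of this theorem: it is quoted verbatim from \cite[Theorem~7.5.1]{S04} and used as a black box inside the sketch proof of \thref{regularderithm}. So there is nothing in the thesis to compare your argument against; what follows is an assessment of your sketch on its own merits.

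Your overall architecture --- induct on $s$, normalise $t_1$ to $(1+x_1)\del_1$ via \thref{D70Lem6part} and a one-variable change of coordinates, then pass to the centraliser $\fc=\fc_{W(m;\underline1)}(t_1)=R\oplus\widetilde W$ with $R\cong\cO(m-1;\underline1)$ and $\widetilde W\cong W(m-1;\underline1)$ --- is exactly the standard route, and your computations for the normalisation of $t_1$ and the description of $\fc$ are correct.

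The genuine gap is the step ``by the standard conjugacy of maximal tori in a Lie algebra with nilradical $\fn$ we may assume $\ft\subseteq kt_1\oplus\widetilde W$''. Two things go wrong here. First, $\fn=\{f(1+x_1)\del_1: f\in\fm_{m-1}\}$ is \emph{not} an ideal of $\fc$: for example $[\del_2,\,x_2(1+x_1)\del_1]=(1+x_1)\del_1=t_1\notin\fn$. So no ``conjugacy modulo the nilradical'' theorem applies directly. Second, and more seriously, the maps $\exp(\ad n)$ for $n\in\fn$ need not be Lie algebra automorphisms of $W(m;\underline1)$ at all. A concrete counterexample in characteristic $3$, $m=2$: take $n=x_2(1+x_1)\del_1$, then one computes $\exp(\ad n)(\del_1)=(1-x_2-x_2^2)\del_1$ and $\exp(\ad n)(\del_2)=\del_2-(1+x_1)\del_1$, but $[\exp(\ad n)(\del_1),\exp(\ad n)(\del_2)]=x_2^2\del_1\neq 0=\exp(\ad n)([\del_1,\del_2])$. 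So $\exp(\ad n)\notin G$.

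The fix is to use the genuine unipotent subgroup $U=\{\sigma_g: g\in\fm_{m-1}\}\subset C_G(t_1)$, where $\sigma_g(x_1)=(1+x_1)(1+g)-1$ and $\sigma_g(x_j)=x_j$ for $j\ge2$; one checks $\sigma_g$ fixes $R$ pointwise and $\sigma_g(w)=w-(1+g)^{-1}w(g)\cdot(1+x_1)\del_1$ for $w\in\widetilde W$, so $\Lie(U)=\fn$ and $U$ does lie in $G$. What then remains is to prove that for the given torus $\ft\ni t_1$ one can choose $g$ so that $\sigma_g(\ft)\subseteq kt_1\oplus\widetilde W$. This amounts to showing that a certain $1$-cocycle $\bar\ft\to R/kt_1$ (encoding the $\fn$-components of elements of $\ft$) is a coboundary; the cocycle condition comes from commutativity of $\ft$, but triviality genuinely uses torality, and the argument is not a one-liner. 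Once that is done, your reductions $a_i\in\F_p$ and the inductive call to $\widetilde W$ go through as you wrote. The base case $s=0$ (tori in $W_{(0)}$ are $G$-conjugate into $\sum kx_i\del_i$) is standard --- project to $W_{(0)}/W_{(1)}\cong\fgl_m$, diagonalise there, then use that $W_{(1)}$ is a $p$-nilpotent ideal --- but should be stated rather than gestured at.
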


Let $\ft$ be any torus in $W(m; \underline{1})$. We denote by $(\ft ^{\text{tor}})^{*}$ the set of all linear functions $\alpha: \ft\to k$ such that $\alpha(t)\in \F_{p}$ for all toral elements $t\in \ft$. It is known that $(\ft ^{\text{tor}})^{*}$ is an $\F_p$-form of the dual space $\ft^{*}$ with $\Card ((\ft ^{\text{tor}})^{*})=p^l$, where $l=\dim \ft$. Note that $\cO(m; \underline{1})$ is tautologically a $W(m; \underline{1})$-module. Then $\cO(m; \underline{1})$ decomposes as \\$\cO(m; \underline{1})=\bigoplus_{\lambda \in \ft^{*}}\cO(m; \underline{1})^{\lambda}$, where 
\[
\cO(m; \underline{1})^{\lambda}=\{f\in \cO(m; \underline{1})\,|\, t.f=\lambda(t)f\,\,\text{for all $t\in \ft$}\}.
\]
If $\cO(m; \underline{1})^{\lambda}\neq \{0\}$, then we say that $\lambda \in \ft^{*}$ is a \textit{weight} of $\cO(m; \underline{1})$ with respect to $\ft$ or a $\ft$-weight. We denote the set of all $\ft$-weights of $\cO(m; \underline{1})$ by $\Lambda(\cO(m; \underline{1}))$. Note that $\Lambda(\cO(m; \underline{1}))\subseteq (\ft ^{\text{tor}})^{*}$. 
\begin{lem}\cite[Lemma 1]{P2015}\thlabel{P2015lem1}
Let $\ft$ be an $r$-dimensional torus in $W(m; \underline{1})$. Then $\Lambda(\cO(m; \underline{1}))= (\ft ^{\text{tor}})^{*}$ and $\dim \cO(m; \underline{1})^{\lambda}=p^{m-r}$ for all $\lambda \in\Lambda(\cO(m; \underline{1}))$.
\end{lem}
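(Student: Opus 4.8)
The plan is to move $\ft$ into a normal form under $G=\Aut(W(m;\underline 1))$, write down an explicit common eigenbasis of $\cO(m;\underline 1)$ for $\ft$, read off both assertions from a short counting argument, and finally transport the conclusion back along the conjugating automorphism.

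First I would set up an $\F_p$-structure. By \thref{toralbasis} fix a toral basis $h_1,\dots,h_r$ of $\ft$, and put $\ft_0=\ft\cap W(m;\underline 1)_{(0)}$, which is a subtorus since $W(m;\underline 1)_{(0)}$ is a restricted subalgebra (\thref{W(m;n)_(0)restricted}). Let $s=\dim(\ft/\ft_0)$ and reorder so that $h_1,\dots,h_s$ are independent modulo $\ft_0$; by \thref{S03thm7.5.1} there is $\Phi\in G$ with $\Phi(h_i)=(1+x_i)\del_i$ for $1\le i\le s$ and $\Phi(\ft_0)\subseteq\sum_{i>s}kx_i\del_i$, so after replacing $\ft$ by $\Phi(\ft)$ we may assume this. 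On the abelian restricted subalgebra $T'=\sum_{i>s}kx_i\del_i$ the $[p]$-map is the coordinatewise Frobenius, since the relevant commutators vanish and Jacobson's formula then gives $\bigl(\sum_{i>s}c_ix_i\del_i\bigr)^{[p]}=\sum_{i>s}c_i^{p}x_i\del_i$; hence every toral element of a torus inside $T'$ is an $\F_p$-combination of the $x_i\del_i$, and a toral basis of $\ft_0$ spans an $\F_p$-form $V_0\subseteq\bigoplus_{i>s}\F_p x_i\del_i$ with $\dim_{\F_p}V_0=r-s$. Writing $\ft(\F_p)$ for the $\F_p$-span of $(1+x_1)\del_1,\dots,(1+x_s)\del_s$ together with $V_0$, all toral elements of $\ft$ lie in $\ft(\F_p)$, and restriction of functionals identifies $(\ft^{\text{tor}})^{*}$ with $\Hom_{\F_p}(\ft(\F_p),\F_p)$, of cardinality $p^{r}$.

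Next I would produce the eigenbasis. With $y_i:=1+x_i$ for $i\le s$, the monomials $M_a:=y_1^{a_1}\cdots y_s^{a_s}x_{s+1}^{a_{s+1}}\cdots x_m^{a_m}$ ($0\le a_j\le p-1$) form a $k$-basis of $\cO(m;\underline 1)$, obtained from the standard monomial basis by an invertible unitriangular change. Since $\del_i(y_i)=1$, the element $(1+x_i)\del_i$ scales $M_a$ by $a_i$, and an element $\sum_{i>s}c_ix_i\del_i$ of $T'\supseteq\ft_0$ scales $M_a$ by $\sum_{i>s}c_ia_i$ (the $\del_i$ with $i>s$ annihilate $y_1,\dots,y_s$); so each $M_a$ is a $\ft$-eigenvector whose weight, viewed in $\Hom_{\F_p}(\ft(\F_p),\F_p)$, sends $(1+x_i)\del_i\mapsto\bar a_i$ and restricts on $V_0$ to $\sum_{i>s}c_ix_i\del_i\mapsto\sum_{i>s}c_i\bar a_i$. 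In particular $\cO(m;\underline 1)=\bigoplus_a kM_a$ is already the weight decomposition, so $\Lambda(\cO(m;\underline 1))\subseteq(\ft^{\text{tor}})^{*}$ with no extra weights. Now the map $a\mapsto\wt(M_a)$ is a bijection on the first $s$ coordinates and, on the last $m-s$ coordinates, is the composite $\F_p^{m-s}\xrightarrow{\sim}(\F_p^{m-s})^{*}\to V_0^{*}$ whose second arrow is restriction of $\F_p$-functionals to $V_0$; this is surjective with every fibre of size $p^{(m-s)-(r-s)}=p^{m-r}$. Hence every $\lambda\in(\ft^{\text{tor}})^{*}$ equals $\wt(M_a)$ for exactly $p^{m-r}$ values of $a$, which gives $\Lambda(\cO(m;\underline 1))=(\ft^{\text{tor}})^{*}$ and $\dim\cO(m;\underline 1)^{\lambda}=p^{m-r}$; applying $\Phi^{-1}$, an isomorphism of $\cO(m;\underline 1)$ intertwining the two torus actions, preserves these dimensions and matches the two copies of $(\ft^{\text{tor}})^{*}$, so the statement descends to the original $\ft$. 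I expect the only genuine obstacles to be invoking \thref{S03thm7.5.1} with the correct value of $s$ and establishing the $\F_p$-rationality of $\ft_0$; once those are in place the remainder is bookkeeping.
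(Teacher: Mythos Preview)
The paper does not supply its own proof of this lemma: it is quoted verbatim from \cite[Lemma~1]{P2015} and used as a black box in the sketch proof of \thref{regularderithm}. So there is no in-paper argument to compare against.

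That said, your argument is correct and is the natural one. Conjugating $\ft$ via \thref{S03thm7.5.1} into the split form $(1+x_i)\del_i$ ($i\le s$) together with a subtorus of $\sum_{i>s}kx_i\del_i$, and then diagonalising on the monomial basis $y_1^{a_1}\cdots y_s^{a_s}x_{s+1}^{a_{s+1}}\cdots x_m^{a_m}$, is exactly how one expects this to go; it is also essentially the argument in Premet's original paper. The two points you flagged as potential obstacles are handled: the hypotheses of \thref{S03thm7.5.1} are met once you pick a toral basis of $\ft$ and set $s=\dim(\ft/\ft_0)$, and the $\F_p$-rationality of $\ft_0$ inside $\sum_{i>s}kx_i\del_i$ follows, as you say, because on that abelian restricted subalgebra the $[p]$-map is the coordinatewise Frobenius, so $t^{[p]}=t$ forces $\F_p$-coefficients. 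The fibre count via the surjection $\F_p^{m-s}\twoheadrightarrow V_0^{*}$ with kernel of size $p^{m-r}$ is clean and gives both assertions at once.
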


Next we consider a particular torus $\ft_{\ccD}$ which is defined as follows: for any $\ccD \in  W(m; \underline{1})$, there exist a unique semisimple element $\ccD_s$ and a unique nilpotent element $\ccD_n$ such that $\ccD=\ccD_s+\ccD_n$ and $[\ccD_s, \ccD_n]=0$; see \thref{JCthm}. Let $\ft_{\ccD}$ denote the torus of $W(m; \underline{1})$ generated by the semisimple part $\ccD_s$ of $\ccD$. For any $\ccD \in  W(m; \underline{1})$, we also know that there exist homogeneous polynomials $\varphi_0, \dots, \varphi_{m-1}$ in $k[W(m; \underline{1})]$ with $\deg \varphi_i=p^{m}-p^{i}$ such that $\ccD^{p^{m}}+\sum_{i=0}^{m-1}\varphi_i(\ccD)\ccD^{p^{i}}=0$; see \thref{nvarietythm}. Define 
\begin{equation}\label{rddefnition}
\begin{aligned}
r=r(\ccD)&:=\min\{0\leq i\leq m-1\,|\, \varphi_i(\ccD)\neq 0\}\,\text{ for $\ccD\notin \cN(W(m; \underline{1}))$, and}\\
r=r(\ccD)&:=m \,\, \text{for $\ccD \in \cN(W(m; \underline{1}))$}.
\end{aligned}
\end{equation}
Then the following hold:
\begin{lem}\cite[Lemma 2(i)]{P2015}\thlabel{P2015lem2(i)}
$\dim \ft_{\ccD}=m-r(\ccD)=m-r$ and $\ccD_n^{p^{r}}=0$.
\end{lem}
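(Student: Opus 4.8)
The plan is to prove the two assertions in turn; the statement $\ccD_n^{p^r}=0$ will drop out along the way, and the real content is the equality $\dim\ft_{\ccD}=m-r$. First dispose of the trivial case $\ccD\in\cN(W(m;\underline{1}))$: then $r(\ccD)=m$ by definition and $\ccD_s=0$, so $\ft_{\ccD}=(0)$ has dimension $0=m-m$, while $\ccD_n^{p^r}=\ccD^{p^m}=0$ by \thref{nvarietythm}(iv). So from now on assume $\ccD\notin\cN(W(m;\underline{1}))$, hence $0\le r\le m-1$ and $\ccD_s\ne 0$.

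Next, split the identity $\ccD^{p^m}+\sum_{i=0}^{m-1}\varphi_i(\ccD)\ccD^{p^i}=0$ through the Jordan--Chevalley decomposition $\ccD=\ccD_s+\ccD_n$. As $\ccD_s$ and $\ccD_n$ commute, \thref{generalJacobF} gives $\ccD^{p^i}=\ccD_s^{p^i}+\ccD_n^{p^i}$ for all $i$; then $\ccD_s^{p^m}+\sum_i\varphi_i(\ccD)\ccD_s^{p^i}$ is semisimple by \thref{sselemt}, the element $\ccD_n^{p^m}+\sum_i\varphi_i(\ccD)\ccD_n^{p^i}$ is nilpotent, the two commute and sum to $0$, so both vanish by the uniqueness in \thref{JCthm}. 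Since $\varphi_i(\ccD)=0$ for $i<r$ and $\varphi_r(\ccD)\ne0$, dividing the nilpotent relation by $\varphi_r(\ccD)$ and writing $a:=\ccD_n^{p^r}$ (so $\ccD_n^{p^i}=a^{p^{i-r}}$ for $i\ge r$) gives
\[
a=-\varphi_r(\ccD)^{-1}\Bigl(a^{p^{m-r}}+\sum_{j=1}^{m-r-1}\varphi_{r+j}(\ccD)\,a^{p^j}\Bigr)\in\sum_{j\ge1}k\,a^{p^j},
\]
so $a=\ccD_n^{p^r}$ is $p$-semisimple; being also nilpotent, $a=0$. Hence $\ccD^{p^r}=\ccD_s^{p^r}$ is a nonzero semisimple element, and since a nonzero semisimple element lies in the $p$-subalgebra generated by any of its iterated $p$-th powers, $\ft_{\ccD}=(k\ccD_s)_p=(k\ccD_s^{p^r})_p=(k\ccD^{p^r})_p$.

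For the inequality $\dim\ft_{\ccD}\le m-r$: the semisimple relation, together with $\varphi_i(\ccD)=0$ for $i<r$, says that $\ccD^{p^r}=\ccD_s^{p^r}$ is killed by the additive polynomial $g(T)=T^{p^{m-r}}+\sum_{j=1}^{m-r-1}\varphi_{r+j}(\ccD)T^{p^j}+\varphi_r(\ccD)T$, which is separable because its linear coefficient $\varphi_r(\ccD)$ is nonzero. Regarding $W(m;\underline{1})=\Der\cO(m;\underline{1})$ as acting faithfully on $\cO(m;\underline{1})$, the diagonalizable operator $\ccD^{p^r}$ therefore has at most $p^{m-r}$ eigenvalues. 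On the other hand, put $l:=\dim\ft_{\ccD}$; by \thref{P2015lem1} applied to $\ft=\ft_{\ccD}$, the space $\cO(m;\underline{1})$ is a sum of $p^l$ weight spaces for $\ft_{\ccD}$, and a Moore/Vandermonde computation in a toral basis of $\ft_{\ccD}$ (using \thref{toralbasis}, and the fact that $\ft_{\ccD}=(k\ccD^{p^r})_p$, so the iterated $p$-th powers of $\ccD^{p^r}$ span $\ft_{\ccD}$ and hence the coordinates of $\ccD^{p^r}$ in a toral basis are $\F_{p}$-independent) shows the $p^l$ scalars $\lambda(\ccD^{p^r})$ are pairwise distinct, i.e. $\ccD^{p^r}$ has exactly $p^l$ eigenvalues. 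Thus $p^l\le p^{m-r}$, that is, $\dim\ft_{\ccD}\le m-r$.

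The main obstacle is the reverse inequality $\dim\ft_{\ccD}\ge m-r$, equivalently $\varphi_i(\ccD)=0$ for every $i<m-\dim\ft_{\ccD}$: for a fixed family of polynomials $\varphi_i$ the vanishing orders at a single point are not determined by the local structure of $\ccD$, so one genuinely needs the construction of the $\psi_i=-\varphi_i$ from \cite{P90}. There, on the dense open subset of $W(m;\underline{1})$ consisting of regular semisimple elements (recall $e(W(m;\underline{1}))=0$, so these are the $x$ generating an $m$-dimensional torus) the relation $x^{p^m}=\sum_i\psi_i(x)x^{p^i}$ is the characteristic $p$-polynomial relation of the torus generated by $x$, and each $\psi_i$ is the unique polynomial extension of the corresponding (Moore-determinant) coefficient; in particular $\psi_0$ is nonzero there. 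Feeding this into an induction on $m-\dim\ft_{\ccD}$ — reducing the corank by passing to the torus generated by $\ccD_s$, or to $\fc_{W(m;\underline{1})}(\ccD_s)$ — together with the inclusion of loci furnished by the inequality just proved, yields $r(\ccD)=m-\dim\ft_{\ccD}$. Combining this with $\ccD_n^{p^r}=0$ from the second step completes the proof; I expect essentially all the difficulty to sit in this last step, which is precisely where one leans on the explicit formulas for the $\psi_i$ from \cite{P90}.
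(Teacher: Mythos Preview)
The thesis does not prove this lemma; it is quoted verbatim from \cite[Lemma~2(i)]{P2015} and used as a black box in the sketch of \thref{regularderithm}. So there is no in-paper argument to compare against, and the question is simply whether your proposal stands on its own.

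Your treatment of the nilpotent case, the Jordan--Chevalley splitting, and the deduction $\ccD_n^{p^r}=0$ are clean and correct. The inequality $\dim\ft_{\ccD}\le m-r$ is also fine: the separable additive polynomial $g$ of degree $p^{m-r}$ bounds the number of eigenvalues of $\ccD_s^{p^r}$, and your Moore-determinant count that $\ccD_s^{p^r}$ genuinely has $p^l$ distinct eigenvalues on $\cO(m;\underline{1})$ (because its coordinates in a toral basis are $\F_p$-independent) is the right mechanism.

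The gap is the reverse inequality. You correctly diagnose that it cannot be read off from the bare relation $\ccD^{p^m}+\sum_i\varphi_i(\ccD)\ccD^{p^i}=0$: when $\ccD$ is semisimple with $\dim\ft_\ccD=l<m$, the vectors $\ccD,\ccD^p,\dots,\ccD^{p^{m-1}}$ are dependent, so that relation does not pin down the $\varphi_i(\ccD)$ individually, and many monic $p$-polynomials of degree $p^m$ vanish on the eigenvalue set. One really must use which polynomial the $\varphi_i$ single out. However, your final paragraph does not actually carry this out. The phrase ``induction on $m-\dim\ft_\ccD$, reducing the corank by passing to $\ft_\ccD$ or to $\fc_{W(m;\underline{1})}(\ccD_s)$'' does not describe a working induction: the centraliser of a non-maximal torus in $W(m;\underline{1})$ is not again a Jacobson--Witt algebra of smaller rank, and there is no evident compatibility between the global $\varphi_i$ and invariants of such a centraliser. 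Likewise ``the inclusion of loci furnished by the inequality just proved'' points the wrong way --- you have shown $\{r(x)\ge m-l\}\subseteq\{\dim\ft_x\le l\}$, and it is the opposite inclusion that is needed. What actually closes the argument is either the explicit Moore-determinant formulae for the $\psi_i$ from \cite{P90}, or the observation (using $\psi_i(x^{[p]})=\psi_i(x)^p$ from \thref{nvarietythm}(i) together with $\ccD^{p^r}=\ccD_s^{p^r}$) that $\psi_i(\ccD)=\psi_i(\ccD_s)$, reducing to the semisimple case, followed by a direct computation of $\psi_i$ on a maximal torus after conjugating via \thref{S03thm7.5.1}. As written, your last step is a pointer rather than a proof.
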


In \cite[Sec~3.3]{P2015}, A.~Premet first observed that
\begin{lem}\cite[Remark 1]{P2015}\thlabel{P2015rmk1}
If $\ccD$ is a regular element of $W(m; \underline{1})$ and $r=r(\ccD)$, then $(\ad\ccD)^{p^{r}-1}$ maps $W(m; \underline{1})_{\ccD}^{0}=\{\ccD_1\in W(m; \underline{1})\,|\, (\ad \ccD)^{N}(\ccD_1)=0\, \text{for $N\gg 0$}\}$ onto the centralizer $\fc_{W(m; \underline{1})}(\ccD)=\sum_{i=0}^{m-1}k\ccD^{p^{i}}$, and the derivations $\ccD, \ccD^{p}, \dots, \ccD^{p^{m-1}}$ are linearly independent.
\end{lem}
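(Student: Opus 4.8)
The plan is to reduce the assertion to a dimension count for $\fc_W(\ft_\ccD)$, where $W:=W(m;\underline 1)$, and to the observation that a regular element of $W$ is forced to have $r(\ccD)=0$. Throughout write $\ccD=\ccD_s+\ccD_n$ for the Jordan--Chevalley decomposition of \thref{JCthm}.

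\emph{Step 1: $W_\ccD^0=\fc_W(\ccD_s)=\fc_W(\ft_\ccD)$.} Since $\ccD_s$ is $p$-semisimple, $\ad\ccD_s$ lies in the $k$-span of its own $p$-th power iterates, i.e. $\ad\ccD_s=\sum_{i\ge 1}c_i(\ad\ccD_s)^{p^i}$; thus $\ad\ccD_s$ is a root of $t-\sum_{i\ge1}c_it^{p^i}$, a polynomial with simple roots, so $\ad\ccD_s$ is a semisimple operator on $W$. It commutes with the nilpotent operator $\ad\ccD_n$, so decomposing $W$ into eigenspaces of $\ad\ccD_s$ we get $\ad\ccD=c\cdot\id+\ad\ccD_n$ on the $c$-eigenspace, invertible for $c\neq 0$ and nilpotent for $c=0$. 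Hence $W_\ccD^0=\ker(\ad\ccD_s)=\fc_W(\ccD_s)$, and since $\ccD_s$ generates $\ft_\ccD$ this equals $\fc_W(\ft_\ccD)$; in particular $\dim W_\ccD^0$ depends only on $\ccD_s$.

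\emph{Step 2 (the crux): $\rk(W)=m$, and $r(\ccD)=0$ for regular $\ccD$.} Let $\ft\subseteq W$ be any torus, $\dim\ft=m-\rho$. Put $\ft$ into the normal form of \thref{S03thm7.5.1}: after a $G=\Aut(W)$-conjugation, $\ft$ is spanned by $(1+x_1)\del_1,\dots,(1+x_s)\del_s$ together with a torus $\ft_0\subseteq\sum_{i>s}kx_i\del_i$. Then $(1+x_1)\del_1,\dots,(1+x_s)\del_s,\del_{s+1},\dots,\del_m$ is a free $\cO(m;\underline 1)$-basis of $W$ consisting of $\ft$-weight vectors (the first $s$ of weight $0$, each $\del_i$ with $i>s$ of some weight $\lambda_i\in(\ft^{\mathrm{tor}})^*$). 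Hence $\fc_W(\ft)$, the zero-weight space $W^0$, is a direct sum of $m$ weight spaces of $\cO(m;\underline 1)$, each of dimension $p^{\rho}$ by \thref{P2015lem1}, so $\dim\fc_W(\ft)=mp^{\rho}$. As the standard torus $\sum_i kx_i\del_i$ is self-centralising of dimension $m$, we get $\rk(W)=\min_\ft\dim\fc_W(\ft)=m$, attained only when $\rho=0$. If $\ccD$ is regular, Step 1 gives $\dim W_\ccD^0=\rk(W)=m=\dim\fc_W(\ft_\ccD)$, forcing $\dim\ft_\ccD=m$; by \thref{P2015lem2(i)}, $\dim\ft_\ccD=m-r(\ccD)$, so $r(\ccD)=0$. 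Thus $\ccD=\ccD_s$ is $p$-semisimple, $\ft_\ccD$ is a self-centralising maximal torus of dimension $m$, and $\fc_W(\ccD)=\fc_W(\ft_\ccD)=\ft_\ccD$.

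\emph{Step 3: reading off the statement.} Since $r=r(\ccD)=0$ we have $(\ad\ccD)^{p^r-1}=(\ad\ccD)^0=\id$, and by Step 2 it carries $W_\ccD^0=\fc_W(\ccD)$ onto $\fc_W(\ccD)$ — the claimed surjectivity. Finally $\fc_W(\ccD)=\ft_\ccD=(k\ccD_s)_p=\sum_{i\ge 0}k\ccD_s^{[p]^i}$; on a torus the $[p]$-map is bijective, so $\ccD_s^{[p]^j}$ generates $\ft_\ccD$ for every $j$, and writing $d$ for the least index with $\ccD_s^{[p]^d}\in\spn\{\ccD_s^{[p]^i}:0\le i<d\}$, induction gives $\ft_\ccD=\spn\{\ccD_s^{[p]^i}:0\le i<d\}$, so $d=\dim\ft_\ccD=m$. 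Hence $\ccD^{[p]^0},\dots,\ccD^{[p]^{m-1}}$ are linearly independent and form a basis of $\ft_\ccD=\fc_W(\ccD)$, i.e. $\fc_W(\ccD)=\bigoplus_{i=0}^{m-1}k\ccD^{[p]^i}$.

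The only non-formal ingredient is the weight-space count of Step 2; everything else is bookkeeping with Jordan--Chevalley and \thref{P2015lem1}--\thref{P2015lem2(i)}, so that count is where the argument would be hardest to replace. For robustness, should regular elements with $r(\ccD)>0$ ever occur, the same conclusions follow in general: one always has $(\ad\ccD)^{p^r}=\ad(\ccD_n^{[p]^r})=0$ on $W_\ccD^0$ (by \thref{P2015lem2(i)}), giving the easy inclusion $(\ad\ccD)^{p^r-1}(W_\ccD^0)\subseteq\ker(\ad\ccD|_{W_\ccD^0})=\fc_W(\ccD)$; surjectivity and $\fc_W(\ccD)=\bigoplus_i k\ccD^{[p]^i}$ then reduce to showing that $\ad\ccD_n$ acts on $W_\ccD^0$ with every Jordan block of size $p^r$ (equivalently, $W_\ccD^0$ is free of rank $m$ over $k[\ad\ccD_n]/((\ad\ccD_n)^{p^r})$), which is extracted from the explicit normal form of regular elements just as the nilpotent case is handled via \thref{WnDresultslem3}; and linear independence of $\ccD,\dots,\ccD^{[p]^{m-1}}$ is obtained by splitting a dependence into its commuting $p$-semisimple and $p$-nilpotent parts.
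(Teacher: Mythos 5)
The thesis does not prove this lemma --- it is imported verbatim from \cite[Remark 1]{P2015} --- so your argument has to stand on its own, and its main line does not. Write $W=W(m;\underline{1})$. The step ``$\ccD$ regular $\Rightarrow\dim W_{\ccD}^{0}=\rk(W)=m\Rightarrow r(\ccD)=0$'' is false: regular elements with $r(\ccD)>0$ certainly exist. The nilpotent derivation $\sD=\del_1+x_1^{p-1}\del_2+\dots+x_1^{p-1}\cdots x_{m-1}^{p-1}\del_m$ of \thref{WnDresultslem3} is regular (its kernel on $\cO(m;\underline{1})$ is $k1$ and $\fc_{W}(\sD)=\sum_{i=0}^{m-1}k\sD^{p^{i}}$ has the minimal dimension $m$), yet $r(\sD)=m$ and $W_{\sD}^{0}=W$ has dimension $mp^{m}\neq m$. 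The same is visible in \thref{regularderithm}(iii), which exhibits regular elements with nonzero nilpotent part for every $r$, and in how \thref{P2015rmk1} is actually used in part (a) of that theorem's proof, where $(\ad\ccD_n)^{p^{r}-1}$ with $r\geq 1$ produces a preimage of $\ccD$. The sentence ``$\dim\fg_{x}^{0}=\rk(\fg)$ if and only if $x$ is regular'' in the preamble cannot be applied the way you do: regularity here means minimality of $\dim\fc_{W}(\ccD)$, and $\fc_{W}(\ccD)$ is in general a proper subspace of $W_{\ccD}^{0}$. Your conclusion $r=0$ collapses the lemma to the triviality that the identity maps $\fc_{W}(\ccD)$ onto itself.

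Your closing ``robustness'' paragraph correctly isolates the real content --- that $\ad\ccD_n$ acts on $W_{\ccD}^{0}$ with every Jordan block of size exactly $p^{r}$ --- but then defers it to ``the explicit normal form of regular elements'', which is \thref{regularderithm}(iii); since that normal form is proved \emph{using} the present lemma, this is circular. The non-circular argument is already contained in your own Step 2. You computed $\dim\fc_{W}(\ft)=mp^{\,m-\dim\ft}$ for any torus $\ft$ via \thref{S03thm7.5.1} and \thref{P2015lem1}; combined with Step 1 and \thref{P2015lem2(i)} this gives $\dim W_{\ccD}^{0}=\dim\fc_{W}(\ft_{\ccD})=mp^{r}$. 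Since $(\ad\ccD)^{p^{r}}=(\ad\ccD_n)^{p^{r}}=0$ on $W_{\ccD}^{0}$, every Jordan block has size at most $p^{r}$, so the number of blocks, which equals $\dim\fc_{W}(\ccD)$, is at least $mp^{r}/p^{r}=m$ for \emph{every} $\ccD$; generic toral elements achieve $\dim\fc_{W}=m$, so regularity forces $\dim\fc_{W}(\ccD)=m$ and hence all blocks have size exactly $p^{r}$, which is precisely the surjectivity of $(\ad\ccD)^{p^{r}-1}$ onto $\ker(\ad\ccD|_{W_{\ccD}^{0}})=\fc_{W}(\ccD)$. Your sketch of the independence of $\ccD,\dots,\ccD^{p^{m-1}}$ (split a dependence into semisimple and nilpotent parts, use $\ccD_n^{p^{r-1}}\neq 0=\ccD_n^{p^{r}}$ for the first $r$ coefficients and the bijectivity of the $p$-map on the $(m-r)$-dimensional torus $\ft_{\ccD}$ for the rest) is the right idea and, once written out, also shows these $m$ elements span $\fc_{W}(\ccD)$.
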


Then A.~Premet proved the following theorem which characterizes all regular elements of $W(m; \underline{1})$. We will use (ii) and (iii) in the proof of \thref{dformlemma2}.
\begin{thm}\cite[Theorem 2]{P2015}\thlabel{regularderithm}
Suppose $\ccD \in W(m; \underline{1})$ and let $r=r(\ccD)$ be defined as in \eqref{rddefnition}. Then the following are equivalent:
\begin{enumerate}[\upshape(i)]
\item $\ccD$ is a regular element of $W(m; \underline{1})$.
\item The kernel of $\ccD$ on $\cO(m; \underline{1})$ is $k1$ which is $1$-dimensional.
\item There exist $z_{r+1}, \dots, z_m \in \{\varepsilon_i+x_i\,|\, r+1\leq i\leq m\}$ for some $\varepsilon_i\in \{0, 1\}$ and $\sigma \in G$ such that $\ccD_s^{\sigma}=\sum_{i=r+1}^{m}\lambda_i(z_i\del_i)$ for some $\lambda_i\in k$, the torus $(\ft_\ccD)^{\sigma}$ is spanned by $z_{r+1}\del_{r+1}, \dots, z_m\del_m$, and $\ccD_n^{\sigma}=\del_1+x_1^{p-1}\del_2+\dots+x_1^{p-1}\cdots x_{r-1}^{p-1}\del_r$.
\item All Jordan blocks of $\ccD_n$ have size $p^r$.
\end{enumerate}
\end{thm}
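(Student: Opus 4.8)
Write $\ccD=\ccD_s+\ccD_n$ for the Jordan--Chevalley decomposition (\thref{JCthm}) and $\ft_\ccD=(k\ccD_s)_p$; by \thref{P2015lem2(i)}, $\dim\ft_\ccD=m-r$ and $\ccD_n^{p^r}=0$. The plan is to prove $(\mathrm{i})\Rightarrow(\mathrm{ii})\Rightarrow(\mathrm{iii})\Rightarrow(\mathrm{i})$ together with $(\mathrm{iii})\Rightarrow(\mathrm{iv})\Rightarrow(\mathrm{ii})$. The common thread is the analysis of $\ccD$ on the tautological module $\cO(m;\underline{1})$. By \thref{P2015lem1} this decomposes as $\bigoplus_\lambda\cO(m;\underline{1})^\lambda$ over the $p^{m-r}$ weights of $\ft_\ccD$, each summand of dimension $p^r$; since $[\ccD_s,\ccD_n]=0$, the derivation $\ccD_n$ preserves each summand and $\ccD$ acts on it as $\lambda(\ccD_s)+\ccD_n$. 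For $\lambda\ne 0$ one has $\lambda(\ccD_s)\ne 0$ (otherwise $\lambda$ vanishes on every $\ccD_s^{[p]^i}$, hence on $\ft_\ccD$), so $\ccD$ is invertible there and $\ker(\ccD|_{\cO(m;\underline{1})})=\ker(\ccD_n|_{\cO(m;\underline{1})^0})$ sits inside the $p^r$-dimensional space $\cO(m;\underline{1})^0$. Thus (ii) is equivalent to saying that $\ccD_n$ restricts to a single Jordan block of size $p^r$ on $\cO(m;\underline{1})^0$.

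\textbf{The main step $(\mathrm{ii})\Rightarrow(\mathrm{iii})$.} First normalise $\ft_\ccD$: by \thref{S03thm7.5.1} (and a permutation of coordinates) there is $\sigma\in G$ with $(\ft_\ccD)^\sigma=\bigoplus_{i=r+1}^m k\,z_i\del_i$, each $z_i=\varepsilon_i+x_i$ with $\varepsilon_i\in\{0,1\}$; then $\cO(m;\underline{1})^0$ becomes $\cO(r;\underline{1})$ in the variables $x_1,\dots,x_r$, and one computes $\fc_{W(m;\underline{1})}\big((\ft_\ccD)^\sigma\big)=W(r;\underline{1})\oplus\bigoplus_{i>r}\cO(r;\underline{1})z_i\del_i$. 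Since $\ccD_n^\sigma$ lies in this centraliser and is nilpotent, its image $E\in W(r;\underline{1})$ modulo the abelian ideal $\bigoplus_{i>r}\cO(r;\underline{1})z_i\del_i$ is nilpotent and equals $\ccD_n^\sigma|_{\cO(r;\underline{1})}$, so by (ii) it has a single Jordan block of size $p^r$ on $\cO(r;\underline{1})$; a short check using $\ccD_n^{p^r}=0$ shows that the ideal component of $\ccD_n^\sigma$ then lies in $E(\cO(r;\underline{1}))$ and can be absorbed by a further $\exp(\ad w)$ with $w\in\bigoplus_{i>r}\big(\fm\cap\cO(r;\underline{1})\big)z_i\del_i$, which fixes $(\ft_\ccD)^\sigma$ and $\ccD_s^\sigma$. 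Hence we may take $\ccD_n^\sigma=E$, and it remains to conjugate $E$ inside $\Aut(W(r;\underline{1}))$ to $\del_1+x_1^{p-1}\del_2+\dots+x_1^{p-1}\cdots x_{r-1}^{p-1}\del_r$. The crux is that $E^{p^{r-1}}\notin W(r;\underline{1})_{(0)}$: otherwise $E^{p^{r-1}}$ would preserve $\fm$, forcing $E^{(p-1)p^{r-1}}\big(\cO(r;\underline{1})\big)\subseteq\fm$, whereas for a single Jordan block of size $p^r$ we have $\Ima\big(E^{(p-1)p^{r-1}}\big)=\ker\big(E^{p^{r-1}}\big)\ni 1$ since $E$ is a derivation --- a contradiction. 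Now \thref{WnOresultslem4} gives that $E$ is $\Aut(W(r;\underline{1}))$-conjugate to that element; extending the conjugating automorphisms of $W(r;\underline{1})$ by the identity on $x_{r+1},\dots,x_m$ (which fixes $(\ft_\ccD)^\sigma$ and $\ccD_s^\sigma$) yields the form in (iii), with $\ccD_s^\sigma=\sum_{i>r}\lambda_i z_i\del_i$ and all $\lambda_i\ne 0$ because $(\ft_\ccD)^\sigma=(k\ccD_s^\sigma)_p$.

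\textbf{The remaining implications.} For $(\mathrm{iii})\Rightarrow(\mathrm{iv})$: in the normal form each $(\ft_\ccD)^\sigma$-weight space of $\cO(m;\underline{1})$ is a free rank-one $\cO(r;\underline{1})$-module on which $\ccD_n^\sigma$ acts as the derivation $\del_1+\dots+x_1^{p-1}\cdots x_{r-1}^{p-1}\del_r$ of $\cO(r;\underline{1})$, which is a single Jordan block of size $p^r$ by \thref{WnDresultslem3}(iii); hence every Jordan block of $\ccD_n$ has size $p^r$. $(\mathrm{iv})\Rightarrow(\mathrm{ii})$ is immediate on restricting to $\cO(m;\underline{1})^0$. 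For $(\mathrm{iii})\Rightarrow(\mathrm{i})$: using \thref{WnDresultslem3}(ii),(iii) one gets $\fc_{W(m;\underline{1})}(\ccD)=\fc_{W(r;\underline{1})}(\ccD_n^\sigma)\oplus\bigoplus_{i>r}k\,z_i\del_i$, of dimension $r+(m-r)=m$; since $\MT(W(m;\underline{1}))=m$ and $W(m;\underline{1})$ has a self-centralising maximal torus, $\rk(W(m;\underline{1}))=m$, so $\ccD$ is regular. For $(\mathrm{i})\Rightarrow(\mathrm{ii})$: after normalising $\ft_\ccD$ the same computation gives $\dim\fc_{W(m;\underline{1})}(\ccD)\ge\rk(W(r;\underline{1}))+(m-r)\dim\ker\big(\ccD_n|_{\cO(r;\underline{1})}\big)\ge r+(m-r)=m$, and regularity of $\ccD$ forces equality, whence $\dim\ker(\ccD_n|_{\cO(r;\underline{1})})=1$.

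\textbf{Main obstacle.} The technical heart is the bookkeeping in $(\mathrm{ii})\Rightarrow(\mathrm{iii})$ --- normalising the torus and then clearing the abelian-ideal part of $\ccD_n$ so that it genuinely lies in $W(r;\underline{1})$ --- together with the identification $\rk(W(m;\underline{1}))=m$ and the sharp centraliser estimate. The one essentially new ingredient, which makes \thref{WnOresultslem4} applicable and closes the whole argument, is the elementary observation that a nilpotent derivation of $\cO(r;\underline{1})$ with a one-dimensional kernel cannot have its $p^{r-1}$-st power inside $W(r;\underline{1})_{(0)}$.
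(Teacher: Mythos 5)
There is a genuine gap in your proof of $(\mathrm{i})\Rightarrow(\mathrm{ii})$, which is the hard direction of the theorem and the one the paper spends most of its effort on. Your inequality $\dim\fc_{W(m;\underline{1})}(\ccD)\ge\rk(W(r;\underline{1}))+(m-r)\dim\ker\big(\ccD_n|_{\cO(r;\underline{1})}\big)$ is neither justified nor strong enough. First, when $\ccD$ is nilpotent (so $r=m$ and the torus is trivial) the second summand is $0\cdot\dim\ker(\ccD|_{\cO(m;\underline{1})})=0$ and the bound collapses to $\dim\fc(\ccD)\ge m$, which is compatible with regularity no matter how large $\ker(\ccD|_{\cO(m;\underline{1})})$ is; so the argument is vacuous in precisely the case that matters for the applications. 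Second, even for $r<m$, writing $\ccD_n^\sigma=E+w$ with $E\in W(r;\underline{1})$ and $w$ in the abelian ideal $\bigoplus_{i>r}\cO(r;\underline{1})z_i\del_i$, a centralizing element $u+v$ must satisfy the coupled conditions $[u,E]=0$ \emph{and} $[E,v]=[u,w]$; the ideal direction does contribute $(m-r)\dim\ker(E|_{\cO(r;\underline{1})})$, but the $W(r;\underline{1})$-direction contributes only those $u\in\fc_{W(r;\underline{1})}(E)$ for which $[u,w]\in\im(\ad E)$, and nothing guarantees this space has dimension at least $r$. The paper's argument for this implication is of a different nature: it picks a nonzero $f\in\ker\ccD\cap\fm^{2}$, observes that $f\ccD$ is a $p$-nilpotent element of $\fc_{W(m;\underline{1})}(\ccD)$ and hence, by \thref{P2015rmk1} (which is where regularity enters), a scalar multiple of $\ccD_n^{p^{r-1}}$, and then reaches a contradiction via a case division on whether $\ccD\in W(m;\underline{1})_{(0)}$, using weight-multiplicity estimates on $W(m;\underline{1})_{-1}$ and \thref{SingP91}. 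You need an argument of this kind; a centraliser dimension count alone cannot close this direction.

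The remaining implications in your proposal are broadly sound and, for $(\mathrm{ii})\Rightarrow(\mathrm{iii})$, genuinely different from the paper: you work inside $\fc_{W(m;\underline{1})}\big((\ft_\ccD)^\sigma\big)=W(r;\underline{1})\oplus\bigoplus_{i>r}\cO(r;\underline{1})z_i\del_i$ and invoke \thref{WnOresultslem4}, justified by the correct and rather nice observation that $\im\big(E^{(p-1)p^{r-1}}\big)=\ker\big(E^{p^{r-1}}\big)\ni 1$ forces $E^{p^{r-1}}\notin W(r;\underline{1})_{(0)}$, whereas the paper goes through Block's theorem \thref{Blockthmdsimple} and an explicit choice of generators $y_1,\dots,y_m$ of $\cO(m;\underline{1})$. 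Be warned, though, that your ``short check'' absorbing the ideal component of $\ccD_n^\sigma$ is not short: the coefficients $f_j$ need not lie in $\im\big(E|_{\cO(r;\underline{1})}\big)$ outright, and one must split off their components along a complement of $\im(E)$ and only then use $(\ccD_n^\sigma)^{p^{r}}=0$ to kill them.
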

\begin{myproof}[Sketch of proof]
We prove the above statements are equivalent in the following order: (a) (i) implies (ii), (b) (ii) implies (iii), (c) (iii) implies (iv), (d) (iv) implies (ii) and finally (e) (ii) implies (i).

\textbf{(a)} Suppose $\ccD$ is a regular element of $W(m; \underline{1})$. We prove by contradiction that $\Ker \ccD=k1$. Suppose $\dim \Ker \ccD\geq 2$. Then there exists a nonzero $f\in \Ker \ccD \cap \fm$. Since $p>2$, it is easy to see that if $f\in \fm\setminus \fm^{2}$, then $f^{2}\neq 0$. Hence we may assume that $f\in \fm^2$. Note that $f\ccD \in \fc_{W(m; \underline{1})}(\ccD)$ and $(f\ccD)^{p}=f^{p}\ccD^{p}=0$. It follows from \thref{P2015rmk1} and \thref{P2015lem2(i)} that $f\ccD=\lambda \ccD_{n}^{p^{r-1}}$ for some $\lambda\in k$.

We split the proof into two cases: $\ccD \notin W(m; \underline{1})_{(0)}$ and $\ccD \in W(m; \underline{1})_{(0)}$. Suppose $\ccD \notin W(m; \underline{1})_{(0)}$. Then $f\ccD \neq 0$ and we may assume that $f\ccD=\ccD_n^{p^{r-1}}$. It follows that $\ccD_n^{p^{r-1}} \in W(m; \underline{1})_{(1)}$. As $\ad \ccD|_{W(m; \underline{1})_{\ccD}^{0}}=\ad \ccD_n|_{W(m; \underline{1})_{\ccD}^{0}}$, it follows from \thref{P2015rmk1} that there exists some $y \in W(m; \underline{1})_{\ccD}^{0}$ such that $(\ad \ccD_n)^{p^{r}-1}(y)=\ccD$. Since $p>2$, we have that
\begin{equation}\label{regeq1}
\begin{aligned}
\ccD=&(\ad \ccD_n^{p^{r-1}})^{p-1}((\ad \ccD_n)^{p^{r-1}-1}(y)) \\
\in& [W(m; \underline{1})_{(1)},[W(m; \underline{1})_{(1)}, W(m; \underline{1})]]\subseteq W(m; \underline{1})_{(1)}.
\end{aligned}
\end{equation}
This contradicts our assumption that $\ccD \notin W(m; \underline{1})_{(0)}$. So this case cannot occur.

Suppose $\ccD \in W(m; \underline{1})_{(0)}$. By \thref{S03thm7.5.1}, we may assume that $\ft_{\ccD}\subseteq T_m$, where $T_m$ is a maximal torus in $W(m; \underline{1})$ with basis $\{ x_1\del_1,\dots,  x_m\del_m\}$. Let $\{\theta_1, \dots, \theta_m\}$ be the corresponding dual basis in $T_m^{*}$. Let $\nu_i$ denote the restriction of $\theta_i$ to $\ft_\ccD$. Let $\ad_{-1}$ denote the representation of $W(m; \underline{1})_0$ in $\fgl(W(m; \underline{1})_{-1})$ induced by the adjoint action of $W(m; \underline{1})_0$ to $W(m; \underline{1})_{-1}$; see \eqref{wmgrading2.2.2} for the $\Z$-grading on $W(m; \underline{1})$. Then the set of $\ft_{\ccD}$-weights on $W(m; \underline{1})_{-1}$ coincides with $\Lambda:=\{-\nu_1, \dots, -\nu_m\}$. By \thref{P2015lem2(i)}, we know that $\dim \ft_{\ccD}=m-r$. Since $\Lambda$ spans $\ft_{\ccD}^{*}$, we have that $\Card (\Lambda)\geq m-r$. 

For $\nu \in \Lambda$, set $\eta(\nu):=\dim W(m; \underline{1})_{-1}^{\nu}$. Then $\eta(\nu)\geq1$ and $\sum_{\nu_\in \Lambda}\eta(\nu)=m$. Set $\eta:=\max_{\nu\in \Lambda}\eta(\nu)$. Then one can show that
\begin{equation}\label{regeq2}
\eta\leq r+1.
\end{equation}
Write $\ccD_n=\sum_{i\geq 0}\ccD_{n, i}$, where $\ccD_{n, i}\in W(m; \underline{1})_{i}$; see \eqref{wmgrading2.2.2} for notations. Since $\ft_{\ccD}\subseteq T_m$, we have that $\ccD_{n, i} \in \fc_{W(m; \underline{1})}(\ft_{\ccD})$ for all $i$. In particular, each weight space $W(m; \underline{1})_{-1}^{\nu}$ is invariant under $\ad _{-1}(\ccD_{n,0})$. As $\ccD_n$ is nilpotent, we get 
\begin{equation}\label{regeq3}
(\ad _{-1}(\ccD_{n,0}))^\eta=0.
\end{equation} 
Since $W(m; \underline{1})_{(1)}$ is a $p$-ideal of $W(m; \underline{1})_{(0)}$, then one can show using Jacobson's formula that 
\begin{equation}\label{inL1}
\ccD_n^{p^{K}}-\ccD_{n, 0}^{p^{K}} \in W(m; \underline{1})_{(1)}
\end{equation}
for all $K\geq 0$.

Suppose $r\geq 2$. Then one can show that $p^{r-1}\geq r+1$. By \eqref{regeq2} and \eqref{regeq3}, we have that $\ad_{-1}(\ccD_{n,0}^{p^{r-1}})=0$. Since $\ad_{-1}$ is a faithful representation of $W(m; \underline{1})_{0}$, we get $\ccD_{n,0}^{p^{r-1}}=0$. By \eqref{inL1}, we get $\ccD_{n}^{p^{r-1}}\in W(m; \underline{1})_{(1)}$. Applying \eqref{regeq1}, we get $\ccD \in W(m; \underline{1})_{(1)}$. In particular, $\ccD$ is nilpotent. By our assumption, $\ccD$ is regular. This contradicts \thref{SingP91}.

Suppose $r=1$. Then $\Card (\Lambda)=m-1$ or $m$. If $\Card (\Lambda)=m$, then $\sum_{\nu_\in \Lambda}\eta(\nu)=m$ implies that $\eta=1$. By \eqref{regeq3}, we have that $\ad_{-1}(\ccD_{n,0})=0$. This again gives that $\ccD_n^{p^{r-1}}\in  W(m; \underline{1})_{(1)}$. Arguing as above, we get a contradiction. If $\Card (\Lambda)=m-1$, then one can show that $\eta=2$. Since $\dim \ft_{\ccD}=m-1$, we may assume that $\ft_{\ccD}$ is spanned by elements $t_i=x_i\del_i+c_i(x_m\del_m)$, $1\leq i\leq m-1$, for some $c_i\in \F_{p}$. Note that $\nu_1, \dots, \nu_{m-1}$ are linearly independent and $\nu_m=c_1\nu_1+\dots+c_{m-1}\nu_{m-1}$. Since $\Card (\Lambda)=m-1$, we may assume that $\nu_m=\nu_{m-1}$, $\ccD_{n, 0}=\lambda (x_{m-1}\del_m)$ for some $\lambda \in k$, and 
\[
\ccD_s=\sum_{i=1}^{m-2}\alpha_i(x_i\del_i)+\alpha_{m-1}(x_{m-1}\del_{m-1}+x_m\del_m)
\]
for some $\alpha_i\in k^{*}$. Then $\fc_{W(m; \underline{1})}(\ft_{\ccD})\subset W(m;\underline{1})_{(0)}$. If $\lambda=0$, then $\ccD_n\in W(m;\underline{1})_{(1)}$. Applying \eqref{regeq1} with $r=1$, we get $\ccD\in W(m;\underline{1})_{(1)}$. By our assumption, $\ccD$ is regular. This contradicts \thref{SingP91}. If $\lambda \neq 0$, then $\ccD_n \notin W(m; \underline{1})_{(1)}$. Then one can show that $\fc_{W(m; \underline{1})}(\ccD) \cap W(m; \underline{1})_{(1)}\neq \{0\}$. By \thref{P2015rmk1}, any nilpotent element of $\fc_{W(m; \underline{1})}(\ccD)$ is a scalar multiple of $\ccD_n$. Since $\ccD_n \notin W(m; \underline{1})_{(1)}$, this is a contradiction. So this case cannot occur. 

If $r=0$, then $\ccD_n=0$ and $\ft_{\ccD}=T_m$. Hence $\Ker \ccD=k1$, the zero weight space of $T_m$ in $\cO(m; \underline{1})$. This shows that (i) implies (ii).

\textbf{(b)} Suppose (ii) holds for $\ccD$, i.e. $\Ker \ccD=k1$. We show that (ii) implies (iii). Let $B$ denote the zero weight space of $\ft_{\ccD}$ in $\cO(m; \underline{1})$. By \thref{P2015lem1}, $\dim B=p^r$. Note that the restriction of $\ccD$ to $B$, denoted $\ccD_{|B}$, is a nilpotent derivation of $B$. Let $\fm_{B}=B \cap \fm$ be the maximal ideal of the local ring $B$. By our assumption, $\fm_B$ is not $\ccD$-stable. Hence $B$ is differentiably simple; see \thref{defndfiffsimple}. By \thref{Blockthmdsimple}, $B\cong \cO(r; \underline{1})$ as $k$-algebras. Since $\ccD_{|B}$ is a nilpotent derivation of $B$ and $\Ker \ccD_{|B}=k1$, it follows from \thref{WnDresultslem3} and \thref{Wnproof} that there exist $y_1, \dots, y_r \in \fm_B$ whose cosets in $\fm_B/\fm_B^{2}$ are linearly independent such that
\[
\ccD_{|B}=\frac{\del}{\del y_1}+y_1^{p-1} \frac{\del}{\del y_2}+\dots+y_1^{p-1}\cdots y_{r-1}^{p-1}\frac{\del}{\del y_r}.
\]
By \thref{WnDresultslem3}(ii), $\Der B$ is a free $B$-module with basis $\ccD_{|B}, \dots \ccD^{p^{r-1}}_{|B}$. Hence there exists $\{b_{i, j}\,|\, 0\leq i, j\leq r-1\}\subset B$ such that 
\[\frac{\del}{\del y_i}=\bigg(\sum_{j=0}^{r-1}b_{i,j}\ccD_n^{p^{j}}\bigg)_{|B}\]
for $1\leq i\leq r$. This show that the set of partial derivatives $\{\frac{\del}{\del y_i}\,|\,1\leq i\leq r\}\subset \Der B$ can be lifted to a system of commuting derivations of $\cO(m; \underline{1})$.

Since each derivation $\frac{\del}{\del y_i}$ of $\cO(m; \underline{1})$ maps $B\cap \fm^{2}$ to $\fm_B=B\cap \fm$, it follows that $\fm_{B}^{2}=B\cap \fm^2$. Then $\fm_B/\fm_B^{2}$ embeds into $\fm/\fm^{2}$. As a result, there exist $y'_{r+1}, \dots, y_{m}' \in \fm$ such that the cosets of $y_1, \dots, y_r, y'_{r+1}, \dots, y_{m}'$ in $\fm/\fm^{2}$ are linearly independent. Since $\dim \ft_{\ccD}=m-r$ and $\ft_{\ccD}$ acts semisimply on $\cO(m; \underline{1})/k1$, we may assume that there exist $\gamma_{r+1}, \dots, \gamma_{m} \in (\ft_{\ccD}^{\text{tor}})^{*}$ such that $y_i'+k1 \in (\cO(m; \underline{1})/k1)^{\gamma_i}$. Since $B=\cO(m; \underline{1})^{0}$, then $\{\gamma_{r+1}, \dots, \gamma_{m}\}$ forms a basis of the dual space $\ft_{\ccD}^{*}$. It follows that $t(y_i')=\gamma_i(t)y_i'+\gamma'_i(t)1$ for all $t\in \ft_{\ccD}$, where $\gamma_i': \ft_{\ccD}\to k$ is a linear function. Then one can show that $\gamma_i'$ is proportional to $\gamma_i$. So for each $i\geq r+1$, there exists $\varepsilon_i\in k$ such that $t(y_i'+\varepsilon_i)=\gamma_{i}(t)(y_i'+\varepsilon_i)$. Rescaling the $y_i'$'s if need be, we may assume that $\varepsilon_i\in \{0, 1\}$. Set $y_i:=y_i'+\varepsilon_i$ for $r+1\leq i\leq m$.

By our choice of $y_1, \dots, y_m$, there is a unique $\sigma \in \Aut(\cO(m; \underline{1}))$ such that $\sigma^{-1}(x_i)=y_i$ for $1\leq i\leq r$ and $\sigma^{-1}(x_i)=y_i-\varepsilon_i$ for $r+1\leq i\leq m$. Then 
\begin{align*}
\ccD_n^\sigma &=\sigma\circ\ccD_n\circ \sigma^{-1}=\del_1+x_1^{p-1}\del_2+\dots+ x_1^{p-1}\cdots x_{r-1}^{p-1}\del_r,\, \text{and}\\
\ccD_s^\sigma&=\sigma\circ\ccD_s\circ \sigma^{-1}=\sum_{i=r+1}^{m}\lambda_i(\varepsilon_i+x_i)\del_i
\end{align*}
for some $\lambda_i \in k$. Since $\dim \ft_{\ccD}=m-r$ and each $(\varepsilon_i+x_i)\del_i$ is toral, it is easy to see that $(\ft_{\ccD})^\sigma$ is spanned by $(\varepsilon_{r+1}+x_{r+1})\del_{r+1}, \dots, (\varepsilon_m+x_m)\del_m$. This shows that (ii) implies (iii).

\textbf{(c)} Suppose (iii) holds for $\ccD$ and adapt the notations introduced in (b). Let $\gamma=\sum_{i=r+1}^{m} \alpha_i \gamma_i \in (\ft_{\ccD}^{\text{tor}})^{*}$. Then $\alpha_i \in \F_p$. By \thref{P2015lem1}, the weight space $\cO(m; \underline{1})^{\gamma}$ is a free $B$-module of rank $1$ generated by $y^{\gamma}=y_{r+1}^{\alpha_{r+1}}\cdots y_m^{\alpha_m}$. Moreover, $(\ccD_{|B})^{p^{r}-1}\neq 0$. Since $\ccD_n(y^{\gamma})=0$, then $\ccD_n$ acts on each $\cO(m; \underline{1})^{\gamma}$ as a Jordan block of size $p^r$. This shows that (iii) implies (iv).

\textbf{(d)} Suppose (iv) holds for $\ccD$. Consider the zero weight space $\cO(m; \underline{1})^{0}$ of $\ft_{\ccD}$. By \thref{P2015lem1}, we see that $\ccD_n$ acts on $\cO(m; \underline{1})^{0}$ for $\ft_{\ccD}$ as a single Jordan block of size $p^r$. Since $\Ker \ccD \subseteq \cO(m; \underline{1})^{0}$, we must have that $\Ker \ccD=k1$. This shows that (iv) implies (ii). 

\textbf{(e)} Suppose (ii) holds for $\ccD$. We show that (ii) implies (i). Since we have shown in (b) that (ii) implies (iii), we get an explicit description of $\ccD$. So we may assume that $\ccD=\ccD_s+\ccD_n$, where $\ccD_s=\sum_{i=r+1}^{m}\lambda_i z_i\del_i$ for some $\lambda_{i} \in k$ and $z_i=\varepsilon_i+x_i$ as in statement (iii), and $\ccD_n=\del_1+x_1^{p-1}\del_2+\dots+x_1^{p-1}\cdots x_{r-1}^{p-1}\del_r$. Due to the form of $\ccD$, it is easy to see that $\ccD$ preserves each direct summand $\bigoplus_{K=1}^{m}\cO(r; \underline{1})z_{r+1}^{a_{r+1}}\cdots z_{m}^{a_{m}}\del_{K}$ of $W(m; \underline{1})$, where $0\leq a_i\leq p-1$ are fixed. Note that there are $p^{m-r}$ of such summands. Moreover, $\ccD$ acts on each summand as a direct sum of Jordan blocks of size $p^{r}$ with eigenvalue $(a_{r+1}\lambda_{r+1}+\dots+a_{m}\lambda_{m})-\lambda_{K}$. Since the $\lambda_i$'s are linearly independent over the prime field $\F_{p}$, this eigenvalue is zero if and only if either $K\leq r$ and $a_i=0$ for all $r+1\leq i\leq m$ or $K\geq r+1, a_K=1$ and $a_i=0$ for all $r+1\leq i\leq m$ except for $i= K$. Due to the form of $\ccD_n$, this implies that the kernel of $\ad \ccD$ has dimension $r+(m-r)=m$. Hence $\ccD$ is regular and (ii) implies (i). This completes the sketch of proof.
\end{myproof}

Now we want to state \thref{dformlemma2}. But we need to define $W$ and understand the following:

\begin{lem}\thlabel{IidealofOW}
Let $I$ be the ideal of $\cO(m; \underline{1})$ generated by $x_{s+1}, \dots, x_{m}$, where $s\geq 1$. Define 
\begin{align*}
W:=I\del_1+\dots+I \del_m.
\end{align*}
Let $d_0=\del_1+x_1^{p-1}\del_2+\dots+x_1^{p-1}\cdots x_{s-1}^{p-1}\del_s$ be the derivation of $\cO(s; \underline{1})$. Then $W$ is an $\ad d_0$-invariant restricted Lie subalgebra of $W(m; \underline{1})$ contained in $W(m; \underline{1})_{(0)}$.
\end{lem}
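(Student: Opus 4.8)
The plan is to reduce every claim to one intrinsic description of $W$. Since $W(m;\underline{1})$ is a free $\cO(m;\underline{1})$-module with basis $\del_1,\dots,\del_m$ and $\del_i(x_j)=\delta_{ij}$, I claim
\[
W=\big\{E\in W(m;\underline{1})\ \big|\ E(\cO(m;\underline{1}))\subseteq I\big\}.
\]
Indeed, if $E=\sum_i f_i\del_i$ with $f_i\in I$, then $E(f)=\sum_i f_i\del_i(f)\in I$ for every $f\in\cO(m;\underline{1})$ because $I$ is an ideal; conversely, if $E=\sum_i g_i\del_i$ satisfies $E(\cO(m;\underline{1}))\subseteq I$, then its $j$-th coefficient $g_j=E(x_j)$ lies in $I$, so $E\in W$. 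Once this characterization is in place, the proof is purely formal, and the only input beyond ``$I$ is an ideal'' is the stability $d_0(I)\subseteq I$.

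That $W$ is a subspace is immediate from the characterization. For $W\subseteq W(m;\underline{1})_{(0)}$: the generators $x_{s+1},\dots,x_m$ lie in $\fm$, hence $I\subseteq\fm$, so $E(\cO(m;\underline{1}))\subseteq I\subseteq\fm$ for every $E\in W$; since $W(m;\underline{1})_{(0)}=\sum_i\fm\del_i$ equals $\{E\mid E(\cO(m;\underline{1}))\subseteq\fm\}$ (one inclusion is recorded in the text, the other being the coefficient argument above), we get $W\subseteq W(m;\underline{1})_{(0)}$. For closure under the Lie bracket and the $[p]$-map, observe first that $E(\cO(m;\underline{1}))\subseteq I$ forces $E(I)\subseteq I$, since $I\subseteq\cO(m;\underline{1})$. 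Then, for $D,E\in W$ and $f\in\cO(m;\underline{1})$,
\[
[D,E](f)=D(E(f))-E(D(f))\in D(I)+E(\cO(m;\underline{1}))\subseteq I,
\]
so $[D,E]\in W$; and $D^{[p]}=D^p$ (the $p$-fold compositional power, by \thref{wittthm}(v)) satisfies $D^p(\cO(m;\underline{1}))\subseteq I$ by iterating $D(I)\subseteq I$, so $D^{[p]}\in W$. Hence $W$ is a restricted Lie subalgebra contained in $W(m;\underline{1})_{(0)}$.

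It remains to prove $\ad d_0$-invariance. For $E\in W$ and $f\in\cO(m;\underline{1})$ we have $[d_0,E](f)=d_0(E(f))-E(d_0(f))$; the second term lies in $E(\cO(m;\underline{1}))\subseteq I$, so it suffices to show $d_0(I)\subseteq I$, which will also handle the first term since $E(f)\in I$. This is the one point requiring attention, and it hinges on $d_0=\del_1+x_1^{p-1}\del_2+\dots+x_1^{p-1}\cdots x_{s-1}^{p-1}\del_s$ involving only the partials $\del_1,\dots,\del_s$: consequently $d_0(x_k)=0$ for $k>s$, and writing an arbitrary $h\in I$ as $h=\sum_{k>s}x_k h_k$ and applying the Leibniz rule gives $d_0(h)=\sum_{k>s}x_k\,d_0(h_k)\in I$. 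Therefore $[d_0,E](f)\in I$ for all $f$, i.e. $[d_0,E]\in W$, which completes the proof. There is no substantive obstacle here: the argument is entirely formal once $W$ is described via its action on $\cO(m;\underline{1})$, the only place needing a moment's thought being $d_0(I)\subseteq I$, which reflects that $d_0$ does not differentiate in the variables $x_{s+1},\dots,x_m$ generating $I$.
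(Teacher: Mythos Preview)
Your proof is correct and follows essentially the same approach as the paper's: both rely on the characterization of $W$ via its values (the paper phrases it as $\ccD'\in W\iff\ccD'(x_i)\in I$ for all $i$, you use the equivalent $E(\cO(m;\underline{1}))\subseteq I$) together with $d_0(I)\subseteq I$. The only cosmetic difference is that the paper checks the Lie subalgebra and $\ad d_0$-invariance properties via the explicit bracket formula $[f\ccD,gE]=f\ccD(g)E-gE(f)\ccD+fg[\ccD,E]$ before switching to the value-characterization for restrictedness, whereas you deploy the characterization uniformly from the outset; you also spell out why $d_0(I)\subseteq I$, which the paper simply asserts.
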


\begin{proof}
Since $I$ is an ideal of $\cO(m; \underline{1})$ and $W(m; \underline{1})=\sum_{j=1}^{m}\cO(m; \underline{1})\del_j$, we can describe $W$ as $IW(m; \underline{1})$, i.e. the set of all $f\ccD$ with $f\in I$ and $\ccD\in W(m; \underline{1})$. Let $f\ccD, gE$ be any elements of $W$, where $f, g \in I$ and  $\ccD, E \in W(m; \underline{1})$. By \eqref{LbracketWm}, we have that 
\begin{align*}
[f\ccD, gE]=f\ccD(g)E-gE(f)\ccD+fg[\ccD, E]. 
\end{align*}
Since $f, g\in I$ and $I$ is an ideal of $\cO(m; \underline{1})$, it is easy to see that $[f\ccD, gE] \in W$. Hence $W$ is a Lie subalgebra of $W(m; \underline{1})$. Since $\cO(m; \underline{1})$ is a local ring, the ideal $I$ is contained in the maximal ideal $\fm$ of $\cO(m; \underline{1})$. It follows that $W \subset W(m; \underline{1})_{(0)}$.  

Next we show that $W$ is $\ad d_0$-invariant. Let $f\ccD$ be any element of $W$, where $f\in I$ and $\ccD\in W(m; \underline{1})$. Note that $I$ is $d_0$-invariant. Then $d_0(f)\in I$. By \eqref{LbracketWm} again, we have that 
\[
[d_0, f\ccD]=d_0(f)\ccD+f[d_0, \ccD]\in I\ccD+fW(m; \underline{1})\subseteq IW(m; \underline{1})=W.
\]
Hence $W$ is $\ad d_0$-invariant.

It remains to show that $W$ is restricted. Let $\ccD'$ be any element of $W(m; \underline{1})$. Note that $\ccD' \in W$ if and only if $\ccD'(x_i)\in I$ for all $1\leq i\leq m$. Let $\ccD_1\in W$. Then $\ccD_1(x_i)\in I$ for all $1\leq i\leq m$ and $\ccD_1$ preserves $I$. It follows that $\ccD_1^{n}$ preserves $I$ for all $n\in \Z_{>0}$. Hence 
\[
\ccD_1^{p}(x_i)=\ccD_1^{p-1}(\ccD_1(x_i))\in \ccD_1^{p-1}(I)\subseteq I
\]
for all $1\leq i\leq m$. Therefore, $\ccD_1^{p}\in W$ and $W$ is restricted.

It follows from the above that $W$ is an $\ad d_0$-invariant restricted Lie subalgebra of $W(m; \underline{1})$ contained in $W(m; \underline{1})_{(0)}$. This completes the proof.
\end{proof}

\begin{lem}\thlabel{dformlemma2}
Let $z$ be a nilpotent element of $W(m; \underline{1})$ such that $z\notin W(m; \underline{1})_{(0)}$. Then $z$ is conjugate under $G$ to $d_0+u$, where 
\[
d_0=\del_1+x_1^{p-1}\del_2+\dots+x_1^{p-1}\cdots x_{s-1}^{p-1}\del_s
\]
with $1\leq s\leq m$ and $u\in W\cap W(m; \underline{1})_{(p-1)}$; see \thref{IidealofOW} for $W=I\del_1+\dots+I \del_m$ and \eqref{wmfiltration2.2.2} for $W(m; \underline{1})_{(p-1)}=\{\sum_{j=1}^{m}f_j\del_j\,|\, \deg f_j\geq p \,\text{for all $j$}\}$ the component of the standard filtration of $W(m; \underline{1})$.
\end{lem}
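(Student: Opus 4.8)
The plan is to combine \thref{dformlemma1} with the regular-element theory of \thref{regularderithm}. First I would apply \thref{dformlemma1} to replace $z$ by a $G$-conjugate
\[
z'=\sum_{i=1}^{s}x_1^{p-1}\cdots x_{i-1}^{p-1}(1+x_i^{p-1}\psi_i)\del_i+x_1^{p-1}\cdots x_s^{p-1}\sum_{i=s+1}^{m}\psi_i\del_i
\]
with $1\le s\le m$ and the $\psi_i$ as there; thus $z'=d_0+v$, where $v$ collects the remaining summands and $d_0$ is as in the statement. Since $\psi_i\in\fm_{m-s}\subset I$ for $i>s$, we have $z'(x_i)\in I$ for all $i>s$, so $z'$ stabilises the ideal $I=(x_{s+1},\dots,x_m)$ and induces a derivation $\bar z'$ of $\cO(m;\underline1)/I\cong\cO(s;\underline1)$, i.e.\ an element of $W(s;\underline1)$. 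As reduction modulo $I$ is a homomorphism of restricted Lie algebras, $\bar z'$ is nilpotent, and $\bar z'\notin W(s;\underline1)_{(0)}$ because its coefficient of $\del_1$ is a unit.

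The core step is to normalise $\bar z'$ inside $W(s;\underline1)$. I would show that $\bar z'$ acts on $\cO(s;\underline1)$ as a single Jordan block of size $p^{s}$; concretely, that $(\bar z')^{p^{s}-1}(x_1^{p-1}\cdots x_s^{p-1})\ne 0$, which follows from the same telescoping computation as in \thref{WnDresultslem3}(iii), the perturbation terms $\psi_i(0)\,x_1^{p-1}\cdots x_i^{p-1}\del_i$ being harmless because $z'$, hence $\bar z'$, is nilpotent and so, by \thref{nvarietythm}, has sharply bounded iterated $p$-th powers. Hence $\Ker(\bar z'|_{\cO(s;\underline1)})=k1$, so $\bar z'$ is a regular element of $W(s;\underline1)$ with $r(\bar z')=s$ by \thref{regularderithm} (whose conditions (i)--(iv) are equivalent); part (iii) of that theorem, applied with zero semisimple part, then gives $\sigma_0\in\Aut\cO(s;\underline1)$ with $(\bar z')^{\sigma_0}=d_0$. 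Extending $\sigma_0$ to $G$ by letting it fix $x_{s+1},\dots,x_m$ and applying it to $z'$, I obtain that $z$ is $G$-conjugate to $d_0+w$ with $w$ in the kernel $W=I\del_1+\dots+I\del_m$ of reduction modulo $I$ (cf.\ \thref{IidealofOW}).

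It remains to clear the low-degree part of $w$. Working degree by degree in the standard grading: if $w\notin W(m;\underline1)_{(p-1)}$, let $\sum_j g_j\del_j$, with $g_j\in I$ homogeneous of degree $\le p-1$, be its lowest-degree homogeneous component. A homogeneous element of $I$ of degree $\le p-1$ has no $x_1^{p-1}$-part and therefore lies in $\del_1(I)$, so one may pick $f_j\in I$ with $\del_1(f_j)=g_j$ and conjugate $d_0+w$ by the substitution $\sigma\in G$ with $\sigma^{-1}(x_j)=x_j-f_j$ and $\sigma^{-1}(x_i)=x_i$ for $i\ne j$. Since the $f_j$ lie in $I$, $\sigma$ induces the identity on $\cO(s;\underline1)$, so $d_0+w$ is carried to $d_0+w'$ with $w'\in W$ again; moreover the conjugate of $\del_1$ absorbs $\sum_j g_j\del_j$ exactly, while all newly created terms (coming from the higher homogeneous parts of $d_0$, from $w$, and from the $f_j$) have strictly larger degree, so the lowest-degree component of $w'$ sits one step higher. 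Iterating finitely often lands $w'$ in $W\cap W(m;\underline1)_{(p-1)}$, and $u:=w'$ is as required.

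The step I expect to be the main obstacle is the single-Jordan-block assertion for $\bar z'$ that licenses the use of \thref{regularderithm}: one must check that the perturbations coming from the $\psi_i$ neither enlarge $\Ker(\bar z'|_{\cO(s;\underline1)})$ beyond $k1$ nor shorten the nilpotency length below $p^{s}$. The most transparent route is to evaluate $(\bar z')^{p^{s}-1}$ directly on $x_1^{p-1}\cdots x_s^{p-1}$, organised by the lexicographic ordering of \thref{Dcompatiblewithordering}, which exhibits the contribution of $d_0$ as the dominant term; the degree cleanup of the last paragraph is routine, if notationally heavy.
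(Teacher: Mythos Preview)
Your overall strategy matches the paper's: apply \thref{dformlemma1}, pass to the induced derivation $\bar z'$ (the paper calls it $y$) on $\cO(s;\underline 1)$, show it is regular via the kernel/Jordan-block criterion, and then invoke \thref{regularderithm}(iii) to conjugate to $d_0$ by an automorphism of $\cO(s;\underline 1)$ extended trivially to $G$. Two points are worth flagging.

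First, your justification for the single-Jordan-block claim is slightly off. Invoking nilpotency and \thref{nvarietythm} (``sharply bounded iterated $p$-th powers'') does not by itself control the perturbation terms $x_1^{p-1}\cdots x_i^{p-1}\bar\psi_i\del_i$; what actually makes them harmless is exactly the lex/$|\cdot|_p$-degree argument you mention at the end. The paper makes this the heart of the proof: $d_0$ has $|\cdot|_p$-degree $-1$ while every perturbation summand has strictly positive $|\cdot|_p$-degree (this uses $p>2$), so for a minimal monomial in a putative kernel element one sees $y(f)\neq 0$. Your formulation via $(\bar z')^{p^s-1}(x_1^{p-1}\cdots x_s^{p-1})\neq 0$ is equivalent and is proved by the same triangularity; just make that the argument rather than the appeal to \thref{nvarietythm}.

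Second, and more substantively, your final ``degree cleanup'' paragraph is unnecessary. The paper observes (equation \eqref{degreegeqp-1}) that already after \thref{dformlemma1} the decomposition $z'=y+u_0$ with $y\in W(s;\underline 1)$ and $u_0\in W$ has $u_0\in W(m;\underline 1)_{(p-1)}$: each summand $x_1^{p-1}\cdots x_i^{p-1}\psi_i'\del_i$ (with $\psi_i'\in I$) or $x_1^{p-1}\cdots x_s^{p-1}\psi_i\del_i$ (with $\psi_i\in\fm_{m-s}$) has coefficient of degree at least $p$. The extended automorphism $\sigma\in G$ fixes $x_{s+1},\dots,x_m$, hence preserves both $W$ and the standard filtration, so $u:=\sigma u_0\sigma^{-1}$ already lies in $W\cap W(m;\underline 1)_{(p-1)}$ with no further work. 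Your iterative cleanup would work, but it adds a layer of bookkeeping (in particular, the higher terms of $d_0$ feed back when you conjugate, and you would need to check those corrections land in high enough degree) that the paper's observation makes superfluous.
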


\begin{proof}
Let $z$ be a nilpotent element of $W(m; \underline{1})$ such that $z\notin W(m; \underline{1})_{(0)}$. By \thref{dformlemma1}, we may assume that 
\begin{align*}
z=&\sum_{i=1}^{s}x_1^{p-1}\cdots x_{i-1}^{p-1}(1+x_i^{p-1}\psi_i)\del_i+x_1^{p-1}\cdots x_s^{p-1}\sum_{i=s+1}^{m}\psi_i\del_i\\
=&d_0+\sum_{i=1}^{s}x_1^{p-1}\cdots x_i^{p-1}\psi_i\del_i+x_1^{p-1}\cdots x_s^{p-1}\sum_{i=s+1}^{m}\psi_i\del_i,
\end{align*}
where $1\leq s\leq m$, $d_0=\del_1+x_1^{p-1}\del_2+\dots+x_1^{p-1}\cdots x_{s-1}^{p-1}\del_s$,\\ $\psi_i\in k[X_{i+1}, \dots, X_{m}]/(X_{i+1}^{p}, \dots, X_m^{p})$ for $1\leq i\leq s$ and $\psi_i\in \fm_{m-s}$, the maximal ideal of $k[X_{s+1}, \dots, X_{m}]/(X_{s+1}^{p}, \dots, X_m^{p})$, for $s+1\leq i\leq m$. 

Let $I$ be the ideal of $\cO(m; \underline{1})$ generated by $x_{s+1}, \dots, x_{m}$. Since $\psi_i\in \fm_{m-s}$ for all $s+1\leq i\leq m$, it is easy to see that $z$ preserves the ideal $I$. Note that the factor ring $\cO(m;\underline{1})/I$ is isomorphic to $\cO(s; \underline{1})=k[X_1, \dots, X_s]/(X_1^{p}, \dots, X_s^{p})$, and $z$ acts on $\cO(s; \underline{1})$ as a derivation. This derivation, call it $y$, has the same form as $z$ except that we forget the last summand $x_1^{p-1}\cdots x_s^{p-1}\sum_{i=s+1}^{m}\psi_i\del_i$ and replace $\psi_i$, $1\leq i\leq s$, by their images $\bar{\psi_i}$ in $\cO(s; \underline{1})$, i.e. $\psi_i=\bar{\psi_i}+\psi_i'$ with $\psi_i'\in I$, and
\[
y=d_0+\sum_{i=1}^{s}x_1^{p-1}\cdots x_i^{p-1}\bar{\psi_i}\del_i.
\]
Note also that 
\begin{equation}\label{degreegeqp-1}
\sum_{i=1}^{s}x_1^{p-1}\cdots x_i^{p-1}\psi_i'\del_i+x_1^{p-1}\cdots x_s^{p-1}\sum_{i=s+1}^{m}\psi_i\del_i \in  W(m; \underline{1})_{(p-1)}.
\end{equation}

We show that the kernel of $y$ on $\cO(s; \underline{1})$ is $1$-dimensional, i.e. $\Ker y=k1$. Suppose the contrary. Then there exists a nonzero $f\in \Ker y$ with $f(0)=0$. We want to show that $y(f)\neq 0$. Hence we get a contradiction. Note that $f$ is a linear combination of monomials $\boldsymbol{x}^{A}=x_1^{a_1}\cdots x_{s}^{a_s}$, where $0\leq a_i\leq p-1$ and $\sum_{i=1}^{s}a_i>0$. We want to look at the ``smallest'' monomial involved in $f$. Note that the standard degree of monomials is not a good choice. This is because $y=d_0+\sum_{i=1}^{s}x_1^{p-1}\cdots x_i^{p-1}\bar{\psi_i}\del_i$ and applying $d_0=\del_1+x_1^{p-1}\del_2+\dots+x_1^{p-1}\cdots x_{s-1}^{p-1}\del_s$ to $\boldsymbol{x}^{A}$ in $f$, the standard degree either decreases or increases; see \eqref{d0eq1} and \eqref{d0eq2} below. So it is difficult to compare $d_0(\boldsymbol{x}^{A})$ with $\big(\sum_{i=1}^{s}x_1^{p-1}\cdots x_i^{p-1}\bar{\psi_i}\del_i\big)(\boldsymbol{x}^{A})$ and the effects of $y$ on the other monomials in $f$. This means that we cannot easily deduce that $y(f)\neq 0$. Hence we are going to use a monomial ordering introduced later; see \thref{DegLexdefn}\footnote{Note that this monomial ordering was defined after we have proved this lemma. Due to the forms of $d_0$ and $u$, we defined the $|\,\,|_p$-degree of monomials as in \thref{DegLexdefn}; see \thref{DegLexexample}(2) and (3) later for the reasons. Here we are working with the subring $\cO(s; \underline{1})$ of $\cO(m; \underline{1})$ and the $|\,\,|_p$-degree is the first part $\sum_{i=1}^{s}a_i''p^{i-1}$.}. Let $\boldsymbol{x}^{A''}=x_1^{a_1''}\cdots x_s^{a_s''}$ be any monomial in $\cO(s; \underline{1})$. Define the $|\,\,|_p$-degree of $\boldsymbol{x}^{A''}$ by 
\[
|A''|_p:=\sum_{i=1}^{s}a_i''p^{i-1}=a_1''+a_2''p+\dots +a_s''p^{s-1},
\]
i.e. the $p$-adic expansion of nonnegative integers with digits $a_i''$. It is known that for a fixed prime number $p$, every nonnegative integer has a unique $p$-adic expansion. Hence for $l=0, 1, \dots, p^s-1$, there is a unique $\boldsymbol{x}^{A''}\in\cO(s; \underline{1})$ with $|A''|_p=l$. Note that if the product of two monomials is nonzero, then the $|\,\,|_p$-degree of this product is given by the sum of $|\,\,|_p$-degrees of the monomials; see \thref{DegLexrmk}(iii) and (iv)(a) later. 

Consider $d_0=\del_1+x_1^{p-1}\del_2+\dots+x_1^{p-1}\cdots x_{s-1}^{p-1}\del_s$ in $y$. By \thref{zcalculations} later, we know that applying $d_0$ to any $\boldsymbol{x}^{A}$ in $f$, we get
\begin{equation}\label{d0lem3.2.7}
d_0(\boldsymbol{x}^{A})=\alpha\boldsymbol{x}^{A'},
\end{equation}
where $1\leq\alpha\leq p-1$ and $\boldsymbol{x}^{A'}\in \cO(s; \underline{1})$ with $|A'|_p=|A|_p-1$. 

Consider $\sum_{i=1}^{s}x_1^{p-1}\cdots x_i^{p-1}\bar{\psi_i}\del_i$ in $y$. We first show that all nonzero summands in $\sum_{i=1}^{s}x_1^{p-1}\cdots x_i^{p-1}\bar{\psi_i}\del_i$ have positive $|\,\,|_p$-degrees. Since $\bar{\psi_i}\in \cO(s; \underline{1})$, it is a linear combination of monomials in $\cO(s; \underline{1})$. Then we can write 
\[
\bar{\psi_i}=\sum_{A''}\lambda_{A''}\boldsymbol{x}^{A''},
\]
where $\lambda_{A''}\in k$ and $\boldsymbol{x}^{A''}\in \cO(s; \underline{1})$ with $|A''|_p=l\geq 0$.
Hence 
\[
\sum_{i=1}^{s}x_1^{p-1}\cdots x_i^{p-1}\bar{\psi_i}\del_i=\sum_{i=1}^{s}\sum_{A''}\lambda_{A''}x_1^{p-1}\cdots x_i^{p-1}\boldsymbol{x}^{A''}\del_i.
\]
Consider each summand $x_1^{p-1}\cdots x_i^{p-1}\boldsymbol{x}^{A''}\del_i$. By \thref{DegLexexample}(1) later, we know that for $1\leq i\leq s$, $\del_i$ has $|\,\,|_p$-degree $-p^{i-1}$. If $x_1^{p-1}\cdots x_i^{p-1}\boldsymbol{x}^{A''}\del_i\neq 0$, then \thref{DegLexrmk}(iii) implies that $x_1^{p-1}\cdots x_i^{p-1}\boldsymbol{x}^{A''}\del_i$ has $|\,\,|_p$-degree
\begin{align*}
&\big((p-1)+(p-1)p+\dots+(p-1)p^{i-1}\big)+l-p^{i-1}\\
=&(p^i-1)+l-p^{i-1}\\
=&(p^i-p^{i-1})+(l-1).
\end{align*}
By our assumption, $p\geq 3$. Since $1\leq i\leq s$, an easy induction on $i$ shows that $p^i-p^{i-1}\geq p-p^0=p-1\geq 2$.  
Since $l\geq 0$, we have that $l-1\geq -1$. It follows that 
\begin{align*}
&\big((p-1)+(p-1)p+\dots+(p-1)p^{i-1}\big)+l-p^{i-1}\\
=&(p^i-p^{i-1})+(l-1)\geq 2+(-1)=1>0.
\end{align*}
Hence all nonzero summands $x_1^{p-1}\cdots x_i^{p-1}\boldsymbol{x}^{A''}\del_i$ in $\sum_{i=1}^{s}x_1^{p-1}\cdots x_i^{p-1}\bar{\psi_i}\del_i$ have positive $|\,\,|_p$-degrees. Then applying $\sum_{i=1}^{s}x_1^{p-1}\cdots x_i^{p-1}\bar{\psi_i}\del_i$ to any $\boldsymbol{x}^{A}$ in $f$, we get 
\begin{equation}\label{d0lem3.2.72}
\bigg(\sum_{i=1}^{s}x_1^{p-1}\cdots x_i^{p-1}\bar{\psi_i}\del_i\bigg)(\boldsymbol{x}^{A})=\sum_{A'''}\mu_{A'''}\boldsymbol{x}^{A'''},
\end{equation}
where $\mu_{A'''}\in k$ and $\boldsymbol{x}^{A'''}\in \cO(s; \underline{1})$ with $|A'''|_p>|A|_p$. By \eqref{d0lem3.2.7}, $|A'|_p=|A|_p-1$. Hence $|A'''|_p>|A'|_p$. 

Now we look at $f$ and take the monomial of smallest $|\,\,|_p$-degree with nonzero coefficient, say it is $\boldsymbol{x}^{A_1}=x_1^{a_1}\cdots x_s^{a_s}$, where $0\leq a_i\leq p-1$ and $\sum_{i=1}^{s}a_i>0$. Rescaling $f$ if need be, we may assume that the coefficient of $\boldsymbol{x}^{A_1}$ is $1$. If $a_1\neq 0$, then 
\begin{align}\label{d0eq1}
d_0(\boldsymbol{x}^{A_1})=a_1\boldsymbol{x}^{A_1'}=a_1x_1^{a_1-1}\cdots x_s^{a_s}.
\end{align}
By our choice of $\boldsymbol{x}^{A_1}$ and \eqref{d0lem3.2.7}-\eqref{d0lem3.2.72}, we have that
\begin{align*}
y(f)=d_0(\boldsymbol{x}^{A_1})+\sum_{A_2}\gamma_{A_2}\boldsymbol{x}^{A_2}=a_1\boldsymbol{x}^{A_1'}+\sum_{A_2}\gamma_{A_2}\boldsymbol{x}^{A_2},
\end{align*}
where $\gamma_{A_2}\in k$ and $\boldsymbol{x}^{A_2}\in \cO(s; \underline{1})$ with $|A_2|_p>|A_1'|_p$.
Since all $\boldsymbol{x}^{A_2}$ in $y(f)$ have $|A_2|_p>|A_1'|_p$, they do not cancel $\boldsymbol{x}^{A_1'}$. As $a_1\boldsymbol{x}^{A_1'}\neq 0$, we have that $y(f)\neq 0$. If $a_1=\dots=a_{K-1}=0$ and $a_K\neq 0$, then $\boldsymbol{x}^{A_1}=x_K^{a_K}x_{K+1}^{a_{K+1}}\cdots x_s^{a_s}$ and 
\begin{align}\label{d0eq2}
d_0(\boldsymbol{x}^{A_1})=a_K\boldsymbol{x}^{A_1'}=a_K x_1^{p-1}\cdots x_{K-1}^{p-1}x_K^{a_K-1}x_{K+1}^{a_{K+1}}\cdots x_s^{a_s}.
\end{align}
Hence
\begin{align*}
y(f)=d_0(\boldsymbol{x}^{A_1})+\sum_{A_3}\gamma_{A_3}\boldsymbol{x}^{A_3}=a_K\boldsymbol{x}^{A_1'}+\sum_{A_3}\gamma_{A_3}\boldsymbol{x}^{A_3},
\end{align*}
where $\gamma_{A_3}\in k$ and $\boldsymbol{x}^{A_3}\in \cO(s; \underline{1})$ with $|A_3|_p>|A_1'|_p$. Arguing similarly as above, we get $y(f)\neq 0$. So $f\notin \Ker y$. This is a contradiction. Hence $\Ker y=k1$.

Note that since $z$ is nilpotent so is $y$. Since $\Ker y=k1$, it follows from \thref{regularderithm}(i) and (ii) that $y$ is a regular element of $W(s; \underline{1})$. Then \thref{regularderithm}(iii) implies that $y$ is conjugate under $\Aut(W(s; \underline{1}))$ to 
\[
d_0=\del_1+x_1^{p-1}\del_2+\dots+x_1^{p-1}\cdots x_{s-1}^{p-1}\del_s.
\]
Since $W(s; \underline{1})$ is a Lie subalgebra of $W(m; \underline{1})$, we may identify $\Aut(W(s; \underline{1}))$ with a subgroup of $G=\Aut(W(m; \underline{1}))$ by letting $\sigma(x_i)=x_i$ for all $\sigma \in \Aut(W(s; \underline{1}))$ and $s+1\leq i\leq m$. Since $y^{\sigma}=\sigma y\sigma^{-1}=d_0$ for some $\sigma \in \Aut(W(s; \underline{1}))$, then $z^{\sigma}=\sigma z\sigma^{-1}=d_0+u$, where $u\in W\cap W(m; \underline{1})_{(p-1)}=(I\del_1+\dots+I \del_m)\cap W(m; \underline{1})_{(p-1)}$.
Note that $u\in W(m; \underline{1})_{(p-1)}$ follows from \eqref{degreegeqp-1} and that $G$ preserves the standard filtration of $W(m; \underline{1})$, in particular, $G$ preserves the component $W(m; \underline{1})_{(p-1)}$. This completes the proof.
\end{proof}

\begin{lem}\thlabel{zpscalculationslemmma}
Let $z=d_0+u$ be as in \thref{dformlemma2}. Then $z^{p^{s}}\in W(m; \underline{1})_{(0)}$.
\end{lem}

\begin{proof}
Recall that $z=d_0+u$, where $d_0=\del_1+x_1^{p-1}\del_2+\dots+x_1^{p-1}\cdots x_{s-1}^{p-1}\del_s$ and $u\in W\cap W(m; \underline{1})_{(p-1)}$; see \thref{IidealofOW} for $W$.
By \thref{generalJacobF}, 
\[
z^{p^{s}}=d_0^{p^{s}}+u^{p^{s}}+\sum_{l=0}^{s-1}v_l^{p^{l}},
\]
where $v_l$ is a linear combination of commutators in $d_0$ and $u$. By Jacobi identity, we can rearrange each $v_l$ so that $v_l$ is in the span of $[w_t,[w_{t-1},[\dots,[w_2,[w_1, u]\dots]$, where $t=p^{s-l}-1$ and each $w_\alpha, 1\leq \alpha\leq t$, is equal to $d_0$ or to $u$.

We show that $[w_t,[w_{t-1},[\dots,[w_2,[w_1, u]\dots]\in W$ and so $v_l\in W$. By \thref{IidealofOW}, $W$ is an $\ad d_0$-invariant restricted Lie subalgebra of $W(m; \underline{1})$ contained in $W(m; \underline{1})_{(0)}$. Since $u\in W$, it follows that $[d_0, u]\in W$. It is clear that $[u, u]=0\in W$. This shows that $[w_1, u] \in W$. Consider $w_2$. If $w_2=d_0$, then by the same reason, we get $[d_0, [d_0, u]]\in W$. If $w_2=u$, then $W$ is a Lie subalgebra of $W(m; \underline{1})$  implies that $[u,[d_0, u]]\in W$. This shows that $[w_2,[w_1, u]] \in W$. Continuing in this way and using the same reasons, we get $[w_t,[w_{t-1},[\dots,[w_2,[w_1, u]\dots]\in W$ and so $v_l\in W$ for all $0\leq l\leq s-1$. Since $W$ is a restricted Lie subalgebra of $W(m; \underline{1})$, we have that $v_l^{p^{l}}\in W$ for all $1\leq l\leq s-1$. Hence $\sum_{l=0}^{s-1}v_l^{p^{l}}\in W \subset W(m; \underline{1})_{(0)}$. Similarly, since $u\in W$ and $W$ is restricted, we have that $u^{p^{s}}\in W\subset W(m; \underline{1})_{(0)}$. By \thref{WnDresultslem3}(i), we know that $d_0^{p^{s}}=0$. Hence 
\[
z^{p^{s}}=u^{p^{s}}+\sum_{l=0}^{s-1}v_l^{p^{l}}\in  W(m; \underline{1})_{(0)}.
\]
This completes the proof.
\end{proof}

Now we consider the restricted transitive subalgebra $\dd D$ of $W(m; \underline{1})$ and show the last two results hold for any $d\in \cN(\dd D)$ such that $d\notin W(m; \underline{1})_{(0)}$.

\begin{lem}\thlabel{GpreservesND}
Let $d$ be any element of $\cN(\dd D)$ such that $d\notin W(m; \underline{1})_{(0)}$. Then $d$ is conjugate under $G$ to $d_0+u$, where $d_0+u$ is the element in \thref{dformlemma2}. Moreover, $d^{p^{s}}\in W(m; \underline{1})_{(0)}$.
\end{lem}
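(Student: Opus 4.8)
The plan is to obtain this lemma directly from \thref{dformlemma2} and \thref{zpscalculationslemmma}, the only preliminary observation being that nilpotency of $d$ inside $\dd D$ forces nilpotency of $d$ inside $W(m;\underline{1})$. Indeed, since $\dd D$ is by hypothesis a \emph{restricted} subalgebra of $W(m;\underline{1})$, the $[p]$-th power map of $\dd D$ is the restriction of that of $W(m;\underline{1})$, which by \thref{wittthm}(v) is simply $\ccD\mapsto\ccD^{p}$. Hence $d\in\cN(\dd D)$, i.e. $d^{[p]^{N}}=0$ in $\dd D$ for $N\gg0$, yields $d^{p^{N}}=0$ in $W(m;\underline{1})$, so that $d\in\cN(W(m;\underline{1}))$. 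Together with the standing assumption $d\notin W(m;\underline{1})_{(0)}$, this puts us exactly in the situation of \thref{dformlemma2}, which therefore produces $\sigma\in G$ with
\[
d^{\sigma}=d_{0}+u,\qquad d_{0}=\del_1+x_1^{p-1}\del_2+\dots+x_1^{p-1}\cdots x_{s-1}^{p-1}\del_s,
\]
for some $1\leq s\leq m$ and $u\in W\cap W(m;\underline{1})_{(p-1)}$, i.e. precisely the element described in \thref{dformlemma2}.

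Next I would transport the statement $d^{p^{s}}\in W(m;\underline{1})_{(0)}$ back from $d_{0}+u$. By \thref{zpscalculationslemmma} we know $(d_{0}+u)^{p^{s}}\in W(m;\underline{1})_{(0)}$. Writing the $G$-conjugacy as $d=(d_{0}+u)^{\sigma^{-1}}$ and using that an automorphism of the restricted Lie algebra $W(m;\underline{1})$ commutes with the $p$-th power operation on derivations, we get $d^{p^{s}}=\big((d_{0}+u)^{p^{s}}\big)^{\sigma^{-1}}$. Since $G$ respects the standard filtration of $W(m;\underline{1})$, and in particular preserves the subalgebra $W(m;\underline{1})_{(0)}$, the right-hand side lies in $W(m;\underline{1})_{(0)}$; hence $d^{p^{s}}\in W(m;\underline{1})_{(0)}$, as claimed.

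I do not expect a genuine obstacle in this lemma: it is essentially a corollary that repackages \thref{dformlemma1}, \thref{dformlemma2} and \thref{zpscalculationslemmma} for elements of $\cN(\dd D)$. The only points that need care are (i) the inclusion $\cN(\dd D)\subseteq\cN(W(m;\underline{1}))$ justified above, which is what licenses the use of the $W(m;\underline{1})$-level results, and (ii) bookkeeping with the conjugation convention $d^{\sigma}=\sigma\circ d\circ\sigma^{-1}$ and with the compatibility of automorphisms with $p$-th powers, so that the conclusion of \thref{zpscalculationslemmma} is correctly pulled back along $\sigma^{-1}$.
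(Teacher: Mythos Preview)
Your proposal is correct and follows essentially the same route as the paper: apply \thref{dformlemma2} to conjugate $d$ to $d_0+u$, then pull back the conclusion of \thref{zpscalculationslemmma} via the conjugating automorphism, using that $G$ preserves the standard filtration. Your explicit justification that $\cN(\dd D)\subseteq\cN(W(m;\underline{1}))$ is a point the paper leaves implicit; conversely, the paper adds a remark (which you do not) that the $G$-action is used only to bring $d$ to a normal form and does not presuppose that $\dd D$ or $\cN(\dd D)$ is $G$-stable.
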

\begin{proof}
Let $d$ be any element of $\cN(\dd D)$ such that $d\notin W(m; \underline{1})_{(0)}$. By \thref{dformlemma2}, we know that $d$ is conjugate under $G$ to $d_0+u$. Here we want to point out that the use of the group action does not suppose that $\dd D$ or $\cN(\dd D)$ is $G$-stable, it is only used to bring elements of $\cN(\dd D)$ to a nice form. Since $d$ is conjugate under $G$ to $d_0+u$, there exists $\sigma \in G$ such that $d=\sigma(d_0+u) \sigma^{-1}$. Then
\[
d^{p^{s}}=\big(\sigma(d_0+u) \sigma^{-1}\big)^{p^{s}}=\sigma (d_0+u)^{p^{s}}\sigma^{-1}.
\]
By \thref{zpscalculationslemmma}, $(d_0+u)^{p^{s}} \in W(m; \underline{1})_{(0)}$. Since $\sigma \in G$, it preserves the standard filtration of $W(m; \underline{1})$. Hence $d^{p^{s}}\in W(m; \underline{1})_{(0)}$. This completes the proof.
\end{proof}

\begin{rmk}\label{notationzdsame}
Let us denote $d_0+u\in\cN(\dd D)$ by $z$. Note that $z^{p^{s}}\in W(m; \underline{1})_{(0)}$ implies that $z^{p^{s}}$ preserves the maximal ideal $\fm$ of $\cO(m; \underline{1})$. This will be used in the proof of \thref{D1psnilpotentcalculations}; see step 2. 
\end{rmk}

Define 
\[
M:=I+z(\fm)
\]
to be the subspace of $\cO(m; \underline{1})$, where $\fm$ is the maximal ideal of $\cO(m; \underline{1})$, $I$ is the ideal of $\cO(m; \underline{1})$ generated by $x_{s+1}, \dots, x_m$ with $s\geq 1$, and $z=d_0+u$ as in \thref{dformlemma2}. We are aiming for \thref{d-d0d_0image} which shows $M$ has the property that $M\oplus kx_1^{p-1}\cdots x_s^{p-1}=\cO(m; \underline{1})$. To prove that result, we need to consider the image of $\fm$ under $z=d_0+u$. In the proof of \thref{dformlemma2}, we have seen that applying $d_0$ to any monomial in $\fm$, the standard degree of monomials either decreases or increases. It is difficult to get much useful information. Therefore, we need to define a monomial ordering on $\cO(m; \underline{1})$. Let us first recall the definitions of total ordering and monomial ordering.

\begin{defn}\cite[p.~55]{ALOCOX2015}\thlabel{totalorderingdefn}
A binary relation $\leq$ is a \textnormal{total ordering} on a set $X$ if the following hold for all $a, b, c\in X$:
\begin{enumerate}[\upshape(i)]
\item Antisymmetry: if $a\leq b$ and $b\leq a$, then $a=b$.
\item Transitivity: if $a\leq b$ and $b\leq c$, then $a\leq c$.
\item Comparability: either $a\leq b$ or $b\leq a$.
\end{enumerate}
\end{defn}

Consider the polynomial ring $k[X_1, \dots, X_m]$. Note that there is a natural bijection between $\Z^{m}_{\geq 0}$ and the set of monomials in $k[X_1, \dots, X_m]$ given by
\[
(\alpha_1, \dots, \alpha_m)\longleftrightarrow x_1^{\alpha_1}\cdots x_m^{\alpha_m},
\]
and addition in $\Z^{m}_{\geq 0}$ corresponds to multiplication of monomials in $k[X_1, \dots, X_m]$. So if $\prec$ is an ordering on $\Z^{m}_{\geq 0}$, then it gives an ordering on monomials: if 
$(\alpha_1, \dots, \alpha_m)\prec(\beta_1, \dots, \beta_m)$, then $x_1^{\alpha_1}\cdots x_m^{\alpha_m}\prec x_1^{\beta_1}\cdots x_m^{\beta_m}$. Recall that

\begin{defn}\cite[Definition 1, Sec.~2 and Corollary 6, Sec.~4, Chap.~2]{ALOCOX2015}\thlabel{monomialordering}
A \textnormal{monomial ordering} on $k[X_1, \dots, X_m]$ is a relation $\prec$ on $\Z^{m}_{\geq 0}$, or equivalently, a relation $\prec$ on the set of monomials $x_1^{\alpha_1}\cdots x_m^{\alpha_m}$, $(\alpha_1, \dots, \alpha_m)\in \Z^{m}_{\geq 0}$, satisfying: 
\begin{enumerate}[\upshape(i)]
\item $\prec$ is a total ordering on $\Z^{m}_{\geq 0}$.
\item If $(\alpha_1, \dots, \alpha_m)\prec(\beta_1, \dots, \beta_m)$ and $(\gamma_1, \dots, \gamma_m)\in \Z^{m}_{\geq 0}$, then 
\[
(\alpha_1+\gamma_1, \dots, \alpha_m+\gamma_m)\prec(\beta_1+\gamma_1, \dots, \beta_m+\gamma_m).
\]
\item $\prec$ is a well-ordering on $\Z^{m}_{\geq 0}$, i.e. every nonempty subset of $\Z^{m}_{\geq 0}$ has a smallest element under $\prec$. This is equivalent to the condition that $(0, \dots, 0)\prec(\alpha_1, \dots, \alpha_m)$ for every $(\alpha_1, \dots, \alpha_m)\neq (0, \dots, 0)$.
\end{enumerate} 
\end{defn}
It follows from the above definition that $(0, \dots, 0)$ (or $1$) is the smallest element in $\Z^{m}_{\geq 0}$ (or $k[X_1, \dots, X_m]$) under any monomial ordering. 

We are interested in $\cO(m; \underline{1})=k[X_1, \dots, X_m]/(X_1^p, \dots, X_m^{p})$, where every monomial $x_1^{a_1}\cdots x_m^{a_m}$ in $\cO(m; \underline{1})$ has $0\leq a_i\leq p-1$ for all $1\leq i\leq m$. There are many monomial orderings on $\cO(m; \underline{1})$. In particular, the following is an example. 

\begin{defn}\cite[Definition 3 and Proposition 4, Sec.~2, Chap.~2]{ALOCOX2015}\thlabel{lexordering2.2.2}
The \textnormal{lexicographic ordering} $\prec_{\text{lex}}$ on $\cO(m; \underline{1})$  with 
\[
x_1 \prec_{\text{lex}}x_2\prec_{\text{lex}} \dots \prec_{\text{lex}}x_m
\]
is defined as follows: for any two non-equal monomials $x_1^{a_1}\cdots x_m^{a_m}$ and $x_1^{a'_1} \cdots x_m^{a'_m}$,
\[
\text{$x_1^{a_1}\cdots x_m^{a_m}\prec_{\text{lex}} x_1^{a'_1} \cdots x_m^{a'_m}$ if and only if $a_i<a'_i$},
\]
where $i$ is the largest number in $\{1, \dots, m\}$ for which $a_i\neq a'_i$. 
\end{defn}

Next we define the $|\,\,|_p$-degree of monomials in $\cO(m; \underline{1})$. Then it induces a monomial ordering on $\cO(m; \underline{1})$.
\begin{defn}\thlabel{DegLexdefn}
Let $A=(a_1,\dots, a_s, a_{s+1}, \dots, a_m)$, where $0\leq a_i\leq p-1$. Set
\[
\boldsymbol{x}^{A}:=x_1^{a_1}\cdots x_s^{a_s}x_{s+1}^{a_{s+1}}\cdots x_m^{a_m}.
\]
Define the $|\,\,|_p$-degree of $\boldsymbol{x}^{A}$ by
\[
|A|_p:=\sum_{i=1}^{s}a_ip^{i-1}+p^s\sum_{i=s+1}^{m}a_i.
\]
Then we have a version of the total ordering called \textnormal{DegLex}: \\if $A=(a_1,\dots, a_s, a_{s+1}, \dots, a_m)$ and $A'=(a_1', \dots, a_{s}', a_{s+1}', \dots, a_m')$ with $0\leq a_i, a_i'\leq p-1$, then we say that $A\prec_{\text{DegLex}} A'$ if either $|A|_p<|A'|_p$ or $|A|_p=|A'|_p$ and $A$ precedes $A'$ in the lexicographic ordering $\prec_{lex}$ defined in \thref{lexordering2.2.2}.
\end{defn}

\begin{rmk}\thlabel{DegLexrmk}
\begin{enumerate}[\upshape(i)]
\item Note that $0\leq a_i\leq p-1$ are well defined nonnegative integers, and the first summand $\sum_{i=1}^{s}a_ip^{i-1}=a_1+a_2p+\dots+a_sp^{s-1}$ in $|A|_p$ is the $p$-adic expansion of nonnegative integers with digits $a_i$. It is well known that for a fixed prime number $p$, every nonnegative integer has a unique $p$-adic expansion.
\item It is easy to check that DegLex is a monomial ordering on $\cO(m; \underline{1})$. Moreover, we see that DegLex first orders monomials by the $|\,\,|_p$-degree, then it uses the lexicographic ordering to break ties.
\item Let $\boldsymbol{x}^{A}$ and $\boldsymbol{x}^{A'}$ be any monomials in $\cO(m; \underline{1})$. If $\boldsymbol{x}^{A}\boldsymbol{x}^{A'}=\boldsymbol{x}^{A''}\neq 0$, i.e. all exponents of $x_1, \dots, x_m$ are strictly less than $p$, then it follows from \thref{DegLexdefn} that 
\[
|A''|_p=|A|_p+|A'|_p.
\]
\item Let $\boldsymbol{x}^{A}=x_1^{a_1}\cdots x_s^{a_s}x_{s+1}^{a_{s+1}}\cdots x_m^{a_m}$ be any monomial in $\cO(m; \underline{1})$. Then  
\[
|A|_p=a_1+a_2p+\dots+a_sp^{s-1}+p^s\sum_{i=s+1}^{m}a_i.
\]
Let 
\[
Q_0=(p-1)+(p-1)p+\dots+(p-1)p^{s-1}=p^s-1,
\]
and 
\[
Q=Q_0+p^s((m-s)(p-1)).
\]
\begin{enumerate}
\item If $0\leq |A|_p\leq Q_0$, then we must have that $\sum_{i=s+1}^{m}a_i=0$. Indeed, if $\sum_{i=s+1}^{m}a_i\geq 1$, then $|A|_p\geq p^s>Q_0$, a contradiction. Hence $\boldsymbol{x}^{A}=x_1^{a_1}\cdots x_s^{a_s}$ with $|A|_p=a_1+a_2p+\dots+a_sp^{s-1}$. By (i), we know that for a fixed prime $p$, the $p$-adic expansion of any nonnegative integer is unique. Hence for $l=0, 1, \dots, Q_0$, there is a unique monomial $\boldsymbol{x}^{A}=x_1^{a_1}\cdots x_s^{a_s}$ with $|A|_p=l$.
\item If $p^s=Q_0+1\leq |A|_p\leq Q$, then $1\leq \sum_{i=s+1}^{m}a_i\leq (m-s)(p-1)$ and $0\leq \sum_{i=1}^{s}a_i\leq s(p-1)$ by (a). Note that in this case, we could have distinct monomials with the same $|\,\,|_p$-degree and such monomials are easy to construct. We could simply fix $a_1, \dots, a_s$ and choose $a_{s+1}, \dots, a_m$ so that they have the same $\sum_{i=s+1}^{m}a_i$. For example, the monomials $x_1^{p-1}\cdots x_s^{p-1}x_{s+1}$ and $x_1^{p-1}\cdots x_s^{p-1}x_{s+2}$ have the same $|\,\,|_p$-degree which is $Q_0+p^s$. By \thref{lexordering2.2.2}, we have that $x_1^{p-1}\cdots x_s^{p-1}x_{s+1}\prec_{lex} x_1^{p-1}\cdots x_s^{p-1}x_{s+2}$.
\end{enumerate}
\item Let us describe $\cO(m; \underline{1})$ using the $|\,\,|_p$-degree. Keeping in mind the numbers $Q_0$ and $Q$ introduced in (iv). Since $\cO(m; \underline{1})$ is spanned by monomials 
\[
\text{$\boldsymbol{x}^{A}=x_1^{a_1}\cdots x_m^{a_m}$, \quad $0\leq a_i\leq p-1$,} 
\]
we can describe $\cO(m; \underline{1})$ as 
\[
\cO(m; \underline{1})=\spn\{\boldsymbol{x}^{A}\,|\, 0\leq |A|_p\leq Q\}.
\]
Let $I$ be the ideal of $\cO(m; \underline{1})$ generated by $x_{s+1}, \dots, x_m$, where $s\geq 1$. Let $f$ be any polynomial in $\cO(m; \underline{1})$. Then $f$ is a linear combination of monomials $\boldsymbol{x}^{A}$ in $\cO(m; \underline{1})$. Since $\cO(m; \underline{1})=I\oplus \cO(s; \underline{1})$, these monomials $\boldsymbol{x}^{A}$ are either in $I$ or in $\cO(s; \underline{1})$. If $\boldsymbol{x}^{A}\in I$, then $A=(a_1, \dots, a_s, a_{s+1}, \dots, a_m)$, where $0\leq \sum_{i=1}^{s}a_i\leq s(p-1)$ and $1\leq \sum_{i=s+1}^{m}a_i\leq (m-s)(p-1)$. It follows that $p^s\leq |A|_p\leq Q$. If $\boldsymbol{x}^{A}\notin I$, i.e. $\boldsymbol{x}^{A}\in \cO(s; \underline{1})$, then $A=(a_1, \dots, a_s, 0, \dots, 0)$, where $0\leq \sum_{i=1}^{s}a_i\leq s(p-1)$. It follows that $0\leq |A|_p\leq Q_0$. By (iv)(a), we can write $f$ as 
\[
f=\sum_{l=0}^{Q_0}\lambda_{A_l}\boldsymbol{x}^{A_l}+g, 
\]
where $\lambda_{A_l}\in k$, $\boldsymbol{x}^{A_l}=x_1^{a_1}\cdots x_s^{a_s}\in \cO(s; \underline{1})$ with $|A_l|_p=l$, and $g\in I$. We will come back to this in \thref{DconjugatetoD1}, Sec.~\ref{nileleminNL}.
\end{enumerate}
\end{rmk}

To get familiar with the $|\,\,|_p$-degree defined in \thref{DegLexdefn}, let us look at some examples.
Moreover, we want to explain the reason that we define $|\,\,|_p$-degree as $|A|_p=\sum_{i=1}^{s}a_ip^{i-1}+p^s\sum_{i=s+1}^{m}a_i$. Initially, we want to compute $z(\boldsymbol{x}^{A})$, where $z=d_0+u$ and $\boldsymbol{x}^{A}$ is any monomial in the maximal ideal $\fm$ of $\cO(m; \underline{1})$. Due to the form of $d_0$, the standard degree of monomials is not useful. Hence we want to define a monomial ordering so that $d_0$ decreases the ``degree'' of the monomial, i.e. $|\,\,|_p$-degree, whereas $u$ increases the $|\,\,|_p$-degree of the monomial; see \thref{DegLexexample}(2) and (3) below. As a result, $z(\boldsymbol{x}^{A})$ is nonzero. Then we want to use these results to show the subspace $M=I+z(\fm)$ has the property that $M\oplus kx_1^{p-1}\cdots x_s^{p-1}=\cO(m; \underline{1})$. But later we realize there is an alternative way to prove this and we only need to do computations for $d_0$; see \thref{zcalculations} and \thref{d-d0d_0image}. However, the $|\,\,|_p$-degree of monomials is still useful for the proof of \thref{DconjugatetoD1} as we may lost of control if we are using the standard degree of monomials. 
 
\begin{ex}\thlabel{DegLexexample}
\begin{enumerate}[\upshape (1)]
\item Let $\boldsymbol{x}^A=x_1^{a_1}\cdots x_s^{a_s}x_{s+1}^{a_{s+1}}\cdots x_m^{a_m}\in\cO(m; \underline{1})$. Let 
\[
|A|:=a_1+\dots+a_s+a_{s+1}+\dots+a_m
\]
be the standard degree of $\boldsymbol{x}^{A}$. Suppose $|A|>0$. Consider the partial derivatives $\del_1, \dots, \del_m$. Note that for $1\leq j\leq m$, if $a_j\neq 0$, then 
\[
\del_j(\boldsymbol{x}^A)=a_jx_1^{a_1}\cdots x_j^{a_j-1}\cdots x_m^{a_m}.
\]
Hence the degree of $\del_j$ with respect to the standard degree of monomials is $-1$. Consider the $|\,\,|_p$-degree.
Since 
\[
|A|_p=a_1+a_2p+\dots +a_j{p^{j-1}}+\dots+a_sp^{s-1}+p^s\sum_{j=s+1}^{m}a_j,
\]
we see that for $1\leq j\leq s$, the degree of $\del_j$ with respect to $|\,\,|_p$ is $-p^{j-1}$, and for $s+1\leq j\leq m$, the degree of $\del_j$ with respect to $|\,\,|_p$ is $-p^s$.
\item Consider $\boldsymbol{x}^A(x_i\del_j)$, where $\boldsymbol{x}^A\in \cO(m; \underline{1})$ with $|A|>0, s+1\leq i\leq m$ and \\$1\leq j\leq m$. Suppose $\boldsymbol{x}^A x_i\neq 0$. By (1), we know that the degree of $\boldsymbol{x}^A(x_i\del_j)$ with respect to the standard degree of monomials is 
\[
|A|+(1-1)=|A|>0.
\] 
Consider the $|\,\,|_p$-degree. By \thref{DegLexrmk}(iii) and (1), we know that for $1\leq j\leq s$, the degree of $\boldsymbol{x}^A(x_i\del_j)$ with respect to $|\,\,|_p$ is
\[
|A|_p+(p^s-p^{j-1}).
\]
Since $|A|>0$, we have that $|A|_p>0$. Since $1\leq j\leq s$, we have that $p^s-p^{j-1}>0$. Hence $|A|_p+(p^s-p^{j-1})>0$. By (1) again, we know that for $s+1\leq j\leq m$, the degree of $\boldsymbol{x}^A(x_i\del_j)$ with respect to $|\,\,|_p$ is 
\[
|A|_p+(p^s-p^s)=|A|_p,
\]
which is positive, too. Recall that $u\in (\sum_{j=1}^{m}I\del_j)\cap W(m; \underline{1})_{(p-1)}$, where $I$ is the ideal of $\cO(m; \underline{1})$ generated by $x_{s+1}, \dots, x_m$ with $s\geq 1$, and 
\[
W(m; \underline{1})_{(p-1)}=\bigg\{\sum_{j=1}^{m}f_j\del_j\,|\, \deg f_j\geq p \,\text{for all $j$}\bigg\}.
\] 
Then $u$ is a linear combination of $\boldsymbol{x}^{A}(x_i\del_j)$, where $\boldsymbol{x}^{A}x_i$, $s+1\leq i\leq m$, is a monomial in $I$ with $p\leq |A|+1\leq m(p-1)$, and $1\leq j\leq m$. By above, we know that each $\boldsymbol{x}^{A}(x_i\del_j)$ has degree $|A|$ with respect to the standard degree of monomials. Hence applying $\boldsymbol{x}^{A}(x_i\del_j)$ to any $\boldsymbol{x}^{A'}=x_1^{a_1'}\cdots x_m^{a_m'}\in\fm$ with $a_j'\neq 0$, the standard degree increases by $|A|\geq p-1$, and the $|\,\,|_p$-degree increases by at least $|A|_p>0$. Since $\boldsymbol{x}^{A}x_i\in I$ and $I$ is an ideal of $\cO(m; \underline{1})$, it follows that $\big(\boldsymbol{x}^{A}(x_i\del_j)\big)(\boldsymbol{x}^{A'})\in I$ and so
\begin{equation*}\label{sumxiDiaction}
u(\boldsymbol{x}^{A'})=\sum_{A''}\lambda_{A''}\boldsymbol{x}^{A''},
\end{equation*}
where $\lambda_{A''} \in k$ and $\boldsymbol{x}^{A''}\in I$ with $p-1\leq |A|= |A''|-|A'|\leq m(p-1)-1$ and $|A''|_p-|A'|_p\geq|A|_p>0$.
\item Consider $x_1^{p-1}\cdots x_{r-1}^{p-1}\del_{r}$, where $1\leq r\leq s$. By (1), the degree of $x_1^{p-1}\cdots x_{r-1}^{p-1}\del_{r}$ with respect to $|\,\,|_p$ is 
\[
\big((p-1)+(p-1)p+\dots+(p-1)p^{r-2}\big)-p^{r-1}=(p^{r-1}-1)-p^{r-1}=-1.
\]
Since $d_0=\del_1+x_1^{p-1}\del_2+\dots+x_1^{p-1}\cdots x_{s-1}^{p-1}\del_s$ and each summand has $|\,\,|_p$-degree $-1$, it follows that $d_0$ has $|\,\,|_p$-degree $-1$.
\end{enumerate}
\end{ex} 

Now we consider the action of $z=d_0+u$ on the maximal ideal $\fm$ of $\cO(m; \underline{1})$. Note that $\fm=\fm_s\oplus I$, where $\fm_s$ is the maximal ideal of $\cO(s; \underline{1})$ and $I$ is the ideal of $\cO(m; \underline{1})$ generated by $x_{s+1}, \dots, x_m$ with $s\geq 1$. Since $d_0=\del_1+x_1^{p-1}\del_2+\dots+x_1^{p-1}\cdots x_{s-1}^{p-1}\del_s$ is a derivation of $\cO(s; \underline{1})$, we first consider $d_0(\fm_s)$.

\begin{lem}\thlabel{zcalculations}
Let $d_0=\del_1+x_1^{p-1}\del_2+\dots+x_1^{p-1}\cdots x_{s-1}^{p-1}\del_s$ be the derivation of $\cO(s; \underline{1})$. Let $\fm_s$ be the maximal ideal of $\cO(s; \underline{1})$. Let $\boldsymbol{x}^{A_1}=x_r^{a_r}x_{r+1}^{a_{r+1}}\cdots x_s^{a_s}$ be any monomial in $\fm_s$, where $1\leq r\leq s$ is the smallest index such that $1\leq a_r\leq p-1$, and $0\leq a_i\leq p-1$ for $r+1\leq i\leq s$.
Then 
\[
d_0(\boldsymbol{x}^{A_1})=a_r \boldsymbol{x}^{A_2}=a_r x_1^{p-1}\cdots x_{r-1}^{p-1}x_r^{a_r-1}x_{r+1}^{a_{r+1}}\cdots x_s^{a_s}
\]
with $|A_2|_p=|A_1|_p-1$. Hence $d_0(\fm_s)$ is spanned by all monomials $\boldsymbol{x}^{A_2}\in \cO(s; \underline{1})$ except $x_1^{p-1}\cdots x_s^{p-1}$. In particular, $\dim d_0(\fm_s)=p^s-1$.
\end{lem}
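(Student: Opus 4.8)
The plan is to prove the explicit formula for $d_0(\boldsymbol{x}^{A_1})$ by a direct term-by-term computation in $\cO(s;\underline{1})$, then verify the $|\,\,|_p$-degree identity by elementary $p$-adic arithmetic, and finally deduce the spanning and dimension statements from the uniqueness of $p$-adic expansions.

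First I would write $d_0=\sum_{i=1}^{s}x_1^{p-1}\cdots x_{i-1}^{p-1}\del_i$ and apply it to $\boldsymbol{x}^{A_1}=x_r^{a_r}x_{r+1}^{a_{r+1}}\cdots x_s^{a_s}$ one summand at a time. For $i<r$ the summand vanishes because $x_i$ does not occur in $\boldsymbol{x}^{A_1}$, so $\del_i(\boldsymbol{x}^{A_1})=0$. For $i>r$ the coefficient $x_1^{p-1}\cdots x_{i-1}^{p-1}$ contains the factor $x_r^{p-1}$, while $\del_i(\boldsymbol{x}^{A_1})$ still contains the factor $x_r^{a_r}$; hence the product carries $x_r^{p-1+a_r}$ with $p-1+a_r\geq p$ (here $a_r\geq 1$), so it is zero in $\cO(s;\underline{1})$. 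Only the summand $i=r$ survives, giving $d_0(\boldsymbol{x}^{A_1})=a_r x_1^{p-1}\cdots x_{r-1}^{p-1}x_r^{a_r-1}x_{r+1}^{a_{r+1}}\cdots x_s^{a_s}=a_r\boldsymbol{x}^{A_2}$, which is the claimed formula.

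Next I would check the degree identity. Since $A_2=(p-1,\dots,p-1,a_r-1,a_{r+1},\dots,a_s)$ with exactly $r-1$ leading entries equal to $p-1$, and since $\sum_{i=1}^{r-1}(p-1)p^{i-1}=p^{r-1}-1$, I get $|A_2|_p=(p^{r-1}-1)+(a_r-1)p^{r-1}+\sum_{i=r+1}^{s}a_ip^{i-1}=a_rp^{r-1}+\sum_{i=r+1}^{s}a_ip^{i-1}-1=|A_1|_p-1$. The hypothesis $a_r\geq 1$ is exactly what makes both the identity $\sum_{i=1}^{r-1}(p-1)p^{i-1}=p^{r-1}-1$ applicable and the digit $a_r-1$ a legitimate element of $\{0,\dots,p-1\}$; note that in $\cO(s;\underline{1})$ the $|\,\,|_p$-degree reduces to the plain $p$-adic value $\sum_{i=1}^{s}a_ip^{i-1}$, as in \thref{DegLexdefn}.

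Finally I would deduce the spanning statement. As $\boldsymbol{x}^{A_1}$ runs over the monomial basis of $\fm_s$, the value $|A_1|_p$ runs over $\{1,\dots,p^s-1\}$ (attaining each value by existence and uniqueness of $p$-adic expansions; $1$ is smallest since $|A_1|_p=0$ forces $\boldsymbol{x}^{A_1}=1\notin\fm_s$, and $p^s-1$ is largest, realized by $x_1^{p-1}\cdots x_s^{p-1}$), hence $|A_2|_p$ runs over $\{0,\dots,p^s-2\}$. By \thref{DegLexrmk}(iv)(a), for each $l$ in this range there is precisely one monomial of $\cO(s;\underline{1})$ of $|\,\,|_p$-degree $l$, and it equals $\boldsymbol{x}^{A_2}=a_r^{-1}d_0(\boldsymbol{x}^{A_1})$ for the unique $\boldsymbol{x}^{A_1}\in\fm_s$ with $|A_1|_p=l+1$, appearing with the nonzero coefficient $a_r$. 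Therefore $d_0(\fm_s)$ is spanned exactly by those monomials of $\cO(s;\underline{1})$ of $|\,\,|_p$-degree at most $p^s-2$, i.e.\ by all monomials except the unique one of degree $p^s-1$, namely $x_1^{p-1}\cdots x_s^{p-1}$; counting these gives $\dim d_0(\fm_s)=p^s-1$. I do not expect a genuine obstacle here; the only points requiring care are the vanishing of the summands with $i>r$ and the bookkeeping of $p$-adic digits, after which the conclusion is pure linear algebra over the monomial basis.
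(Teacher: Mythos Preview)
Your proof is correct and follows essentially the same line as the paper's: both compute $d_0(\boldsymbol{x}^{A_1})$ directly (you give more detail on why the $i<r$ and $i>r$ summands vanish), and both verify $|A_2|_p=|A_1|_p-1$ (the paper invokes \thref{DegLexexample}(3), you do the $p$-adic arithmetic explicitly).

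The one genuine difference is in how the dimension $\dim d_0(\fm_s)=p^s-1$ is obtained. The paper appeals to \thref{WnDresultslem3}(iii), which says $d_0$ acts on $\cO(s;\underline{1})$ as a single Jordan block of size $p^s$, to conclude the image has dimension $p^s-1$, and then separately counts the possible $A_2$'s to confirm the spanning set. You bypass this external lemma entirely: by the bijection between $\{0,\dots,p^s-1\}$ and monomials via $|\,\,|_p$-degree, you read off directly that $d_0$ sends the monomial basis of $\fm_s$ bijectively (up to nonzero scalars) onto the monomials of $|\,\,|_p$-degree $\leq p^s-2$, which gives both spanning and dimension at once. Your route is slightly more self-contained; the paper's route reuses a structural fact already established for $\sD$ in the Jacobson--Witt setting.
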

\begin{proof}
Let $\boldsymbol{x}^{A_1}=x_r^{a_r}x_{r+1}^{a_{r+1}}\cdots x_s^{a_s}$ be any monomial in $\fm_s$, where $1\leq r\leq s$ is the smallest index such that $1\leq a_r\leq p-1$, and $0\leq a_i\leq p-1$ for $r+1\leq i\leq s$. Since 
\[
d_0=\del_1+x_1^{p-1}\del_2+\dots+x_1^{p-1}\cdots x_{r-1}^{p-1}\del_{r}+x_1^{p-1}\cdots x_{r}^{p-1}\del_{r+1}+\dots+x_1^{p-1}\cdots x_{s-1}^{p-1}\del_s,
\]
we have that 
\[
d_0(\boldsymbol{x}^{A_1})=a_r x_1^{p-1}\cdots x_{r-1}^{p-1}x_r^{a_r-1}x_{r+1}^{a_{r+1}}\cdots x_s^{a_s}=a_r \boldsymbol{x}^{A_2}.
\]
Since $0\leq a_r-1\leq p-2<p-1$, we see that $\boldsymbol{x}^{A_2}\neq x_1^{p-1}\cdots x_s^{p-1}$. By \thref{DegLexexample}(3), we know that the degree of $d_0$ with respect to $|\,\,|_p$ is $-1$. Hence $|A_2|_p=|A_1|_p-1$. By \thref{WnDresultslem3}(iii), we know that $d_0$ acts on $\cO(s; \underline{1})$ as a single Jordan block of size $p^{s}$ with zeros on the main diagonal. So $\dim d_0(\cO(s; \underline{1}))=p^s-1$. Since $\cO(s; \underline{1})=\fm_s\oplus k$ and $d_0(c)=0$ for all $c\in k$, we have that $\dim d_0(\fm_s)=p^s-1$. It follows from the above calculation that $d_0(\fm_s)$ is spanned by all monomials $\boldsymbol{x}^{A_2}\in \cO(s; \underline{1})$ except $x_1^{p-1}\cdots x_s^{p-1}$. Indeed, 
since $A_2=(p-1, \dots, p-1, a_r-1, a_{r+1}, \dots, a_s)$ with $0\leq a_r-1\leq p-2$ and $0\leq a_i\leq p-1$ for $r+1\leq i\leq s$, we see that as $r$ varies from $1$ to $s$, there are $\sum_{r=1}^{s} (p-1)p^{s-r}=p^s-1$ different possibilities for $A_2$ except $(p-1, \dots, p-1)$ (i.e. $p-1$ appears $s$ times). So there are $p^s-1$ distinct monomials $\boldsymbol{x}^{A_2}\in\cO(s; \underline{1})$ except $x_1^{p-1}\cdots x_s^{p-1}$. This completes the proof.
\end{proof}

\begin{lem}\thlabel{d-d0d_0image}
Let $z=d_0+u$ be as in \thref{dformlemma2}. Let 
\[
M=I+z(\fm)
\]
be the subspace of $\cO(m; \underline{1})$, where $\fm$ is the maximal ideal of $\cO(m; \underline{1})$ and $I$ is the ideal of $\cO(m; \underline{1})$ generated by $x_{s+1}, \dots, x_m$ with $s\geq 1$. Then $M$ is spanned by all monomials $\boldsymbol{x}^{A}=x_1^{a_1}\cdots x_m^{a_m}\in\cO(m; \underline{1})$ except $x_1^{p-1}\cdots x_s^{p-1}$. In particular, $\dim M=p^{m}-1$.
\end{lem}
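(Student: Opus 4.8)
The plan is to reduce the computation of $M=I+z(\fm)$ to the single, already-settled computation of $d_0$ acting on the maximal ideal $\fm_s$ of $\cO(s;\underline{1})$ carried out in \thref{zcalculations}. The starting point is the pair of vector space decompositions $\cO(m;\underline{1})=I\oplus\cO(s;\underline{1})$ and $\fm=\fm_s\oplus I$, where $\cO(s;\underline{1})=k[X_1,\dots,X_s]/(X_1^p,\dots,X_s^p)$ is the span of the monomials $\boldsymbol{x}^A$ with $a_{s+1}=\dots=a_m=0$, and $\fm_s$ is its maximal ideal. Since $I\subseteq M$, it suffices to understand $M$ "modulo $I$", i.e. the contribution of $z(\fm)$ that is not already in $I$.

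The first step is the key reduction. Since $z=d_0+u$ with $u\in W=I\del_1+\dots+I\del_m$, for every $f\in\cO(m;\underline{1})$ we have $u(f)\in I$; in particular $u(\fm)\subseteq I$. Moreover $d_0$ annihilates the generators $x_{s+1},\dots,x_m$ of $I$ and is a derivation of $\cO(m;\underline{1})$, so $d_0(I)\subseteq I$; and $d_0$ maps $\cO(s;\underline{1})$ into itself because its coefficients and its partial derivatives only involve $x_1,\dots,x_s$. Using $\fm=\fm_s\oplus I$ and $I\subseteq M$ we then get
\[
M=I+z(\fm)=I+d_0(\fm_s)+d_0(I)+u(\fm)=I+d_0(\fm_s).
\]

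The second step is to plug in \thref{zcalculations}: $d_0(\fm_s)$ is spanned by all monomials $\boldsymbol{x}^A\in\cO(s;\underline{1})$ except $x_1^{p-1}\cdots x_s^{p-1}$, and has dimension $p^s-1$. The ideal $I$ is spanned by the monomials $\boldsymbol{x}^A$ with $(a_{s+1},\dots,a_m)\neq(0,\dots,0)$, which are disjoint from the monomials spanning $d_0(\fm_s)$; hence $M=I\oplus d_0(\fm_s)$ is spanned precisely by all monomials $\boldsymbol{x}^A=x_1^{a_1}\cdots x_m^{a_m}$ of $\cO(m;\underline{1})$ except $x_1^{p-1}\cdots x_s^{p-1}$, and $\dim M=(p^m-p^s)+(p^s-1)=p^m-1$. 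In particular $M\oplus kx_1^{p-1}\cdots x_s^{p-1}=\cO(m;\underline{1})$.

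I do not expect a genuine obstacle here: the only thing to be careful about is checking that the various monomial-spanned subspaces meet in $0$ so that exactly the one monomial $x_1^{p-1}\cdots x_s^{p-1}$ is omitted, and this is immediate from linear independence of monomials. All of the real work — controlling the effect of $d_0$ via the $|\,\cdot\,|_p$-degree (\thref{DegLexexample}) and identifying $d_0(\fm_s)$ — has already been done upstream, so the present lemma is essentially a bookkeeping consequence of \thref{zcalculations}.
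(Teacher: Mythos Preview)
Your proposal is correct and follows essentially the same approach as the paper: reduce $M=I+z(\fm)$ to $I\oplus d_0(\fm_s)$ using $u(\fm)\subseteq I$ and $d_0(I)\subseteq I$, then invoke \thref{zcalculations} to identify $d_0(\fm_s)$ and count dimensions. The paper's argument is structured identically, so there is nothing to add.
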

\begin{proof}
Let $M=I+z(\fm)$ be the subspace of $\cO(m; \underline{1})$, where $\fm$ is the maximal ideal of $\cO(m; \underline{1})$ and $I$ is the ideal of $\cO(m; \underline{1})$ generated by $x_{s+1}, \dots, x_m$ with $s\geq 1$. Recall that $z=d_0+u$, where $d_0=\del_1+x_1^{p-1}\del_2+\dots+x_1^{p-1}\cdots x_{s-1}^{p-1}\del_s$ is a derivation of $\cO(s; \underline{1})$ and $u\in (\sum_{i=1}^{m}I\del_i)\cap W(m; \underline{1})_{(p-1)}$. Note that $d_0$ preserves the ideal $I$. Since $u\in \sum_{i=1}^{m}I\del_i$ and $I$ is an ideal of $\cO(m; \underline{1})$, we have that $u(\fm)\subseteq I$. Hence $M=I+z(\fm)=I+d_0(\fm)$. Consider $d_0(\fm)$. Note that $\fm=\fm_s\oplus I$, where $\fm_s$ is the maximal ideal of $\cO(s; \underline{1})$. Since $d_0(\fm_s)\subset \cO(s; \underline{1})$, $d_0(I)\subseteq I$ and $\cO(s; \underline{1})\cap I=\{0\}$, we have that $d_0(\fm)=d_0(\fm_s)\oplus d_0(I)$. Since $d_0(I) \subseteq I$, we have that 
\begin{equation}\label{Moplus}
M=I+z(\fm)=I+d_0(\fm)=I\oplus d_0(\fm_s).
\end{equation}
Since $I$ is the ideal of $\cO(m; \underline{1})$ generated by $x_{s+1}, \dots, x_m$ with $s\geq 1$, then $I$ is spanned by all monomials $x_1^{a_1}\cdots x_s^{a_s}x_{s+1}^{a_{s+1}}\cdots x_m^{a_m}$, where $0\leq \sum_{i=1}^{s}a_i\leq s(p-1)$ and \\$1\leq \sum_{i=s+1}^{m}a_i\leq (m-s)(p-1)$. So $\dim I=p^s(p^{m-s}-1)=p^m-p^s$. By \thref{zcalculations}, $d_0(\fm_s)$ is spanned by all monomials $x_1^{a_1}\cdots x_s^{a_s}\in \cO(s; \underline{1})$ except $x_1^{p-1}\cdots x_s^{p-1}$ and so $\dim d_0(\fm_s)=p^s-1$. Therefore, $M=I\oplus d_0(\fm_s)$ is spanned by all monomials $\boldsymbol{x}^{A}=x_1^{a_1}\cdots x_m^{a_m}\in\cO(m; \underline{1})$ except $x_1^{p-1}\cdots x_s^{p-1}$. In particular, $\dim M=(p^m-p^s)+(p^s-1)=p^{m}-1$. This completes the proof.
\end{proof}

\subsection{Nilpotent elements of $\cL$}\label{nileleminNL}
In the last section we have obtained all the results required for nilpotent elements of $\dd D$. Now let us consider nilpotent elements of $\cL=(S\otimes \cO(m; \underline{1}))\rtimes \dd D$.

\begin{lem}\thlabel{D1pNcalculations}
Let $D'=\sum_{i=0}^{m(p-1)}s_i\otimes f_i+d$ be an element of $\cL$, where $s_i\in S$, $f_i\in \cO(m;\underline{1})$ with $\deg f_i=i$, and $d\in \cN(\dd D)$ with $d\in W(m; \underline{1})_{(0)}$. Then $D'\in \cN(\cL)$ if and only if $s_0\in \cN(S)$.
\end{lem}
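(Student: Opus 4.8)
The plan is to pass from $\cL$ to a carefully chosen $p$-subalgebra containing $D'$ in which the ``$\fm$-part'' becomes a $p$-ideal consisting of $[p]$-nilpotent elements, and then to recognise the corresponding quotient as a direct sum of restricted Lie algebras. Concretely, I would work with
\[
\cL_{(0)}:=\big(S\otimes\cO(m;\underline{1})\big)\rtimes\big(\dd D\cap W(m;\underline{1})_{(0)}\big).
\]
Since $W(m;\underline{1})_{(0)}$ is a restricted subalgebra of $\Der\cO(m;\underline{1})$ by \thref{W(m;n)_(0)restricted}, the intersection $\dd D\cap W(m;\underline{1})_{(0)}$ is a $p$-subalgebra of $W(m;\underline{1})$; it normalises the ideal $S\otimes\cO(m;\underline{1})$, which is itself a $p$-ideal of $\cL$, and a short check using \thref{generalJacobF} (the cross terms $s_i(v,d_1)$ are iterated brackets involving an element of $S\otimes\cO(m;\underline{1})$, hence lie in that ideal) shows that $\cL_{(0)}$ is a $p$-subalgebra of $\cL$. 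As $d\in W(m;\underline{1})_{(0)}$ we have $D'\in\cL_{(0)}$, and the $[p]$-th power map of $\cL$ restricts to that of $\cL_{(0)}$; therefore $D'\in\cN(\cL)$ if and only if $D'\in\cN(\cL_{(0)})$.

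Next I would verify that $I:=S\otimes\fm$ is a $p$-ideal of $\cL_{(0)}$ with $I^{[p]^{r}}=0$ for $r\gg0$. It is an ideal because $\cO(m;\underline{1})\cdot\fm\subseteq\fm$ and every $d_1\in W(m;\underline{1})_{(0)}$ sends $\fm$ into $\fm$. For $v=\sum_j s_j\otimes f_j\in I$, Jacobson's formula gives $v^{[p]}=\sum_j s_j^{[p]}\otimes f_j^{p}+v_0$; here $f_j^{p}=f_j(0)^{p}=0$, while $v_0$ is a linear combination of $p$-fold brackets of the $s_j\otimes f_j$, hence lies in $S\otimes\fm^{p}$. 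Iterating, $v^{[p]^{r}}\in S\otimes\fm^{p^{r}}$, which is $0$ once $p^{r}>m(p-1)$. By the standard quotient argument (exactly as in Case 2 of \thref{reductionthm}), $D'\in\cN(\cL_{(0)})$ if and only if the image $\overline{D'}$ of $D'$ in $\cL_{(0)}/I$ is $[p]$-nilpotent.

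Finally I would identify the quotient. Since $\cO(m;\underline{1})/\fm\cong k$ and every derivation annihilates constants, we have $[\,d_1,\,s\otimes 1\,]=-s\otimes d_1(1)=0$ for $d_1\in\dd D\cap W(m;\underline{1})_{(0)}$, so $S\otimes 1$ and $\dd D\cap W(m;\underline{1})_{(0)}$ commute in $\cL_{(0)}$; the same computation makes the mixed Jacobson terms $s_i(s\otimes 1,d_1)$ vanish, so $\cL_{(0)}/I\cong S\oplus\big(\dd D\cap W(m;\underline{1})_{(0)}\big)$ as restricted Lie algebras, with the $[p]$-map componentwise. Under this isomorphism $\overline{D'}$ corresponds to $(s_0\otimes f_0,\ \overline d)$, which (using that $f_0\in\cO(m;\underline{1})_0=k$ and that $\cN(S)$ is conical) we may regard as $(s_0,\overline d)$ up to rescaling. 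Now $d\in\cN(\dd D)$ means $d^{[p]^{N}}=0$ for $N\gg0$, hence $\overline d$ is $[p]$-nilpotent; so $(s_0,\overline d)^{[p]^{N}}=(s_0^{[p]^{N}},0)$, which vanishes precisely when $s_0^{[p]^{N}}=0$. Thus $\overline{D'}$ is $[p]$-nilpotent if and only if $s_0\in\cN(S)$, and chaining the three reductions gives $D'\in\cN(\cL)\iff s_0\in\cN(S)$.

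The bulk of the work is routine bookkeeping: checking that $\cL_{(0)}$ is a $p$-subalgebra and that $I^{[p]^{r}}=0$. The one point that genuinely deserves care — and is the only real obstacle — is the identification of $\cL_{(0)}/I$ as an \emph{honest} direct sum of \emph{restricted} Lie algebras: one must confirm both that the $\dd D$-part decouples as a Lie algebra (because $d_1(1)=0$) and that it decouples compatibly with the $[p]$-operation (because the mixed terms in Jacobson's $p$-th power formula are built from brackets $[s\otimes 1,d_1]=0$ and hence disappear), so that $(s_0,\overline d)^{[p]^{N}}=(s_0^{[p]^{N}},\overline d^{[p]^{N}})$.
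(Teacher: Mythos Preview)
Your proof is correct and takes a genuinely different route from the paper's. The paper argues by direct computation: applying \thref{generalJacobF} to $D'=\big(\sum_i s_i\otimes f_i\big)+d$, it tracks all the Jacobson cross-terms and shows explicitly that
\[
(D')^{[p]^{N}}=d^{[p]^{N}}+\big(s_0^{[p]^{N}}\otimes f_0^{p^{N}}\big)+\text{(terms in }S\otimes\fm\text{)}
\]
for every $N\ge 1$, from which the conclusion is read off. You instead package the same information structurally: pass to the $p$-subalgebra $\cL_{(0)}$, kill the $p$-nilpotent ideal $S\otimes\fm$, and identify the quotient as the restricted direct sum $S\oplus\big(\dd D\cap W(m;\underline{1})_{(0)}\big)$. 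Your approach is cleaner for this lemma in isolation and avoids the commutator bookkeeping. The trade-off is that the paper's explicit formula for $(D')^{[p]^{N}}$ is itself re-used verbatim in Step~2 of the proof of \thref{D1psnilpotentcalculations} (applied with $D'=D_1^{p^{s}}$), so the computational route earns a dividend later; your argument proves the present lemma but would need to be supplemented by that formula when you reach the next one.
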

\begin{proof}
Let $D'=\sum_{i=0}^{m(p-1)}s_i\otimes f_i+d$ be as in the lemma. We first show that for every $N\geq 1$,
\begin{equation}\label{DpNresults}
\text{$(D')^{p^{N}}=d^{p^{N}}+(s_0^{p^{N}}\otimes f_{0}^{p^{N}})$ $+$ (other terms in $S\otimes\fm$)}.
\end{equation}
By \thref{generalJacobF}, 
\[
(D')^{p^{N}}=d^{p^{N}}+\bigg(\sum_{i=0}^{m(p-1)}s_i\otimes f_i\bigg)^{p^{N}}+\sum_{l=0}^{N-1}v_{l}^{p^{l}}, 
\]
where $v_l$ is a linear combination of commutators in $\sum_{i=0}^{m(p-1)}s_i\otimes f_i$ and $d$. By Jacobi identity, we can rearrange each $v_l$ so that $v_l$ is in the span of 
\[
\bigg[w_t,\bigg[w_{t-1},\bigg[\dots,\bigg[w_2,\bigg[w_1, \sum_{i=0}^{m(p-1)}s_i\otimes f_i\bigg]\dots\bigg],
\] 
where $t=p^{N-l}-1$ and each $w_\alpha, 1\leq \alpha\leq t$, is equal to $\sum_{i=0}^{m(p-1)}s_i\otimes f_i$ or to $d$. 

We show that $[w_t,[w_{t-1},[\dots,[w_2,[w_1, \sum_{i=0}^{m(p-1)}s_i\otimes f_i]\dots]\in S\otimes \fm$. If $w_1=\sum_{i=0}^{m(p-1)}s_i\otimes f_i$, then $[w_1, \sum_{i=0}^{m(p-1)}s_i\otimes f_i]=0$. Suppose $w_1=d$. By our assumption, $d\in W(m; \underline{1})_{(0)}=\sum_{j=1}^{m}\fm \del_j$. It is easy to see that $d(\cO(m; \underline{1}))\subseteq \fm$. Hence 
\[
\bigg[d, \sum_{i=0}^{m(p-1)}s_i\otimes f_i\bigg]=\sum_{i=0}^{m(p-1)}s_i\otimes d(f_i)\in S\otimes \fm. 
\]
This shows that $[w_1,\sum_{i=0}^{m(p-1)}s_i\otimes f_i]\in S\otimes \fm$. Consider $[w_2, [w_1, \sum_{i=0}^{m(p-1)}s_i\otimes f_i]]$. If $w_2=d$, then $[d,[w_1, \sum_{i=0}^{m(p-1)}s_i\otimes f_i]]\in [d, S\otimes \fm]=S\otimes d(\fm)\subseteq S\otimes \fm$. If $w_2=\sum_{i=0}^{m(p-1)}s_i\otimes f_i$, then 
\[
\bigg[\sum_{i=0}^{m(p-1)}s_i\otimes f_i,\bigg[w_1, \sum_{i=0}^{m(p-1)}s_i\otimes f_i\bigg]\bigg]\in[S\otimes \cO(m; \underline{1}), S\otimes \fm]\subseteq S\otimes \fm.
\]
This shows that $[w_2, [w_1, \sum_{i=0}^{m(p-1)}s_i\otimes f_i]] \in S\otimes \fm$. Continuing in this way, i.e. using that $d$ preserves $S\otimes \fm$ and $S\otimes \fm$ is an ideal of $S\otimes \cO(m; \underline{1})$, we eventually get \\$[w_t,[w_{t-1},[\dots,[w_2,[w_1, \sum_{i=0}^{m(p-1)}s_i\otimes f_i]\dots]\in S\otimes \fm$. Since each $v_l$ is a linear combination of these commutators, we have that $v_l\in S\otimes \fm$ for all $0\leq l\leq N-1$. Consider $v_l^{p^{l}}$, where $0<l\leq N-1$. Since $v_l$ is a linear combination of elements in $S\otimes \fm$, it follows from \thref{generalJacobF} and $[S\otimes \fm, S\otimes \fm]\subseteq S\otimes \fm$ that $v_l^{p}\in S\otimes \fm$. Continuing in this way, we get $v_l^{p^{l}}\in S\otimes \fm$ for all $0<l\leq N-1$. Therefore, 
\begin{equation}\label{DpNfirstsummandeq}
\sum_{l=0}^{N-1}v_{l}^{p^{l}}\in S\otimes \fm.
\end{equation}

We consider $\big(\sum_{i=0}^{m(p-1)}s_i\otimes f_i\big)^{p^{N}}$ and show that  
\begin{equation}\label{DpNlastsummandeq} 
\bigg(\text{$\sum_{i=0}^{m(p-1)}s_i\otimes f_i\bigg)^{p^{N}}=(s_0^{p^{N}}\otimes f_0^{p^{N}})$ $+$ (other terms in $S\otimes \fm$)}.
\end{equation}
Since $f_i\in \fm$ for $0< i\leq m(p-1)$, we have that $f_i^{p^{N}}=0$. It follows from \thref{generalJacobF} that 
\begin{equation*}
\bigg(\sum_{i=0}^{m(p-1)}s_i\otimes f_i\bigg)^{p^{N}}=\sum_{i=0}^{m(p-1)}s_i^{p^{N}}\otimes f_i^{p^{N}}+\sum_{l=0}^{N-1}u_l^{p^{l}}=(s_0^{p^{N}}\otimes f_0^{p^{N}})+\sum_{l=0}^{N-1}u_l^{p^{l}},
\end{equation*}
where $u_l$ is a linear combination of commutators in $s_i\otimes f_i$, $0\leq i\leq m(p-1)$. By Jacobi identity, we can rearrange each $u_l$ so that $u_l$ is in the span of 
\[
[\eta_t,[\eta_{t-1},[\dots,[\eta_2,[\eta_1, s_0\otimes f_0]\dots],
\] 
where $t=p^{N-l}-1$ and each $\eta_\alpha, 1\leq \alpha\leq t$, is equal to some $s_i\otimes f_i$, $0\leq i\leq m(p-1)$. Since $S\otimes\fm$ is an ideal of $S\otimes \cO(m; \underline{1})$, one 
can show similarly that $[\eta_t,[\eta_{t-1},[\dots,[\eta_2,[\eta_1, s_0\otimes f_0]\dots]\in S\otimes \fm$. Hence $u_l\in S\otimes \fm$ for all $0\leq l\leq N-1$ and $u_l^{p^{l}}\in S\otimes \fm$ for all $0<l\leq N-1$. Therefore, $\sum_{l=0}^{N-1}u_l^{p^{l}} \in S\otimes \fm$. This gives \eqref{DpNlastsummandeq} as desired.

It follows from \eqref{DpNlastsummandeq} and \eqref{DpNfirstsummandeq} that for every $N\geq 1$, the $p$-th powers of $D'$ is given by 
\begin{align*}
(D')^{p^{N}}&=d^{p^{N}}+\bigg(\sum_{i=0}^{m(p-1)}s_i\otimes f_i\bigg)^{p^{N}}+\sum_{l=0}^{N-1}v_{l}^{p^{l}}\\
&=\text{$d^{p^{N}}+(s_0^{p^{N}}\otimes f_0^{p^{N}})$ $+$ (other terms in $S\otimes \fm$)}. 
\end{align*}
By our assumption, $d$ is a nilpotent element of $\dd D$ such that $d\in W(m; \underline{1})_{(0)}$. Hence $d$ preserves $S\otimes \fm$. Therefore, for $N\gg0$, $D'$ is a nilpotent element of $\cL$ if and only if  $s_0$ is a nilpotent element of $S$. This completes the proof.
\end{proof}

Next we consider elements of the form $D=\sum_{i=0}^{m(p-1)}s_i\otimes f_i+z$, where $s_i\in S$, $f_i\in \cO(m;\underline{1})$ with $\deg f_i=i$, and $z=d_0+u \in \cN(\dd D)$ as in \thref{dformlemma2}. We first construct some automorphisms of $\cL$; see \thref{Sautoexps}. Then we show that $D$ can be reduced to an element in a nice form; see \thref{DconjugatetoD1}.

\begin{lem}\thlabel{Sautoexps}
Let $\tilde{s}\otimes f$ be an element of $S \otimes \fm$. Then $\exp(\ad(\tilde{s}\otimes f))$ is an automorphism of $\cL$.
\end{lem}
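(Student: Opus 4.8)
The plan is to imitate the proof of \thref{autg}. Set $D:=\ad(\tilde s\otimes f)$. Since $f\in\fm$ we have $f^{p}=0$, hence $(\tilde s\otimes f)^{[p]}=\tilde s^{[p]}\otimes f^{p}=0$ and therefore $D^{p}=\ad\!\big((\tilde s\otimes f)^{[p]}\big)=0$ by \thref{defres}(1). First I would verify that $\exp(D)=\sum_{i=0}^{p-1}\tfrac1{i!}D^{i}$ is an automorphism of the restricted Lie algebra $S\otimes\cO(m;\underline1)$. As recalled in the proof of \thref{autg}, for a derivation $D$ of a Lie algebra $L$ with $D^{p}=0$ it is enough to check that $\sum_{0\le i,j\le p,\ i+j\ge p}\tfrac1{i!j!}\big[D^{i}(w_{1}),D^{j}(w_{2})\big]=0$ for all $w_{1},w_{2}\in L$. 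Writing $w_{1}=s_{1}\otimes g_{1}$ and $w_{2}=s_{2}\otimes g_{2}$ one gets $D^{i}(w_{1})=(\ad\tilde s)^{i}(s_{1})\otimes f^{i}g_{1}$ and likewise for $w_{2}$, so the sum equals $\sum_{i+j\ge p}\tfrac1{i!j!}\big[(\ad\tilde s)^{i}(s_{1}),(\ad\tilde s)^{j}(s_{2})\big]\otimes f^{i+j}g_{1}g_{2}$, and every summand vanishes since $f\in\fm$ and $i+j\ge p$ force $f^{i+j}=0$.

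Conjugation by $\exp(D)$ then induces an automorphism of $\Der\big(S\otimes\cO(m;\underline1)\big)$, which by \thref{blockthmder} together with the standing hypothesis $\ad S=\Der S$ equals $\big(S\otimes\cO(m;\underline1)\big)\rtimes\big(\Id_{S}\otimes W(m;\underline1)\big)\supset\cL$. So it remains to show that $\exp(D)$ preserves $\cL=(S\otimes\cO(m;\underline1))\rtimes\dd D$, that is, that $\exp(D)\circ d\circ\exp(-D)\in\cL$ for every $d\in\dd D$. Since this expression is linear in $d$ and $d$ acts on $S\otimes\cO(m;\underline1)$ by $d(s\otimes g)=s\otimes d(g)$, the computation is the one behind \eqref{edele-1a}, now with $\del$ replaced by the arbitrary derivation $d$ of $\cO(m;\underline1)$: using the Leibniz rule $d(f^{i}g_{1})=f^{i}d(g_{1})+i\,f^{i-1}d(f)g_{1}$, the congruence $(p-1)!\equiv-1\ (\modd p)$, the relation $f^{p}=0$, and $(\ad\tilde s)^{p}=\ad\tilde s^{[p]}$ (again \thref{defres}(1)), I expect to arrive at
\[
\exp(D)\circ d\circ\exp(-D)=d-\tilde s\otimes d(f)-\tilde s^{[p]}\otimes f^{p-1}d(f),
\]
where $\tilde s\otimes d(f)$ and $\tilde s^{[p]}\otimes f^{p-1}d(f)$ are identified with their images under $\ad$ in $\fgl\big(S\otimes\cO(m;\underline1)\big)$ (legitimate since $\ad\colon S\to\Der S$ is injective, $S$ being simple). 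As $d\in\dd D$, the right-hand side lies in $(S\otimes\cO(m;\underline1))\rtimes\dd D=\cL$, so $\exp(D)$ is an automorphism of $\cL$.

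The main obstacle I anticipate is the explicit conjugation identity in the second paragraph. It is a direct analogue of the $\fsl_{2}$ computation preceding \eqref{edele-1a}, but one must be careful about two points specific to the present generality: that the bookkeeping with the local ring $\cO(m;\underline1)$ (in place of $\cO(1;\underline1)$) and an arbitrary derivation $d$ introduces no extra terms, and that the matrix power $y^{p}$ appearing in the $\fsl_{2}$ case is correctly replaced by the restricted power $\tilde s^{[p]}$, which is justified precisely because $\cL$ is restricted and $(\ad\tilde s)^{p}=\ad\tilde s^{[p]}$. Everything else reduces to the routine verifications already carried out for \thref{autg}.
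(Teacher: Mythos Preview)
Your proposal is correct and follows essentially the same approach as the paper: first show $\exp(\ad(\tilde s\otimes f))$ is an automorphism of $S\otimes\cO(m;\underline1)$ via the vanishing-sum criterion, pass to $\Der(S\otimes\cO(m;\underline1))$ by conjugation, and then verify preservation of $\cL$ by deriving the explicit formula $\exp(D)\circ d\circ\exp(-D)=d-\tilde s\otimes d(f)-\tilde s^{[p]}\otimes f^{p-1}d(f)$ exactly as in the $\fsl_2$ computation \eqref{edele-1a}. The paper's proof is the same, citing the same ingredients (Leibniz rule for $d$, $f^p=0$, Wilson's theorem, and the identification $S\cong\ad S$); your anticipated ``obstacles'' are handled just as you expect.
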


\begin{proof}
The proof is similar to \thref{autg} in Sec.~\ref{sl2section2.2.1}. Let $\tilde{s}\otimes f$ be an element of $S \otimes \fm$. We first show that $\exp(\ad(\tilde{s}\otimes f))$ is an automorphism of $S\otimes \cO(m; \underline{1})$. Since $f\in \fm$, we have that $f^{p}=0$. Hence 
$(\ad(\tilde{s}\otimes f))^{p}=\ad (\tilde{s}^{p}\otimes f^p)=0$. Moreover, it is easy to check that for any $w_1, w_2 \in S\otimes \cO(m; \underline{1})$,
\[
\sum_{0\leq i, j\leq p,\, i+j\geq p}\frac{1}{i!j!}\big[(\ad(\tilde{s}\otimes f))^{i}(w_1), (\ad(\tilde{s}\otimes f))^{j}(w_2)\big]=0.
\]
Hence $\exp(\ad(\tilde{s}\otimes f))$ is an automorphism of $S\otimes \cO(m; \underline{1})$. Then it induces an automorphism of the derivation algebra 
\[
\Der(S\otimes \cO(m; \underline{1}))=\big(S\otimes \cO(m; \underline{1})\big) \rtimes \big(\Id_S \otimes W(m; \underline{1})\big)
\]
via conjugation. Note that 
\[
\cL=(S\otimes \cO(m; \underline{1}))\rtimes \dd D\subset \Der(S\otimes \cO(m; \underline{1})).
\]
To conclude that $\exp(\ad(\tilde{s}\otimes f))$ is an automorphism of $\cL$, we need to check that $\exp(\ad(\tilde{s}\otimes f))$ preserves $\cL$. Explicitly, we need to show that for any $d_1\in \dd D$, \\$\exp(\ad(\tilde{s}\otimes f))\circ d_1\circ \exp(\ad(-\tilde{s}\otimes f)) \in \cL$. In the proof of \thref{autg} (see \eqref{edele-1a}), we did a similar computation. By the same computational method and the following reasons:
\begin{enumerate}[\upshape(i)]
\item $d_1$ is a derivation of $\cO(m; \underline{1})$,
\item $\exp(\ad(\tilde{s}\otimes f))\exp(\ad(-\tilde{s}\otimes f))=\Id$,
\item $f^{p}=0$,
\item $(p-1)!\equiv -1 (\modd p)$,\,\text{and}
\item $S\cong \ad S$ via the adjoint representation and hence we may identify $S\otimes \cO(m; \underline{1})$ with its image in $\fgl(S\otimes \cO(m; \underline{1}))$ under $\ad$,
\end{enumerate}
one can show that 
\begin{equation}\label{expformulasec2}
\exp(\ad(\tilde{s}\otimes f))\circ d_1\circ \exp(\ad(-\tilde{s}\otimes f)) =d_1-\tilde{s}\otimes d_1(f)-\tilde{s}^{p}\otimes f^{p-1}d_1(f).
\end{equation}
It is clear that $\exp(\ad(\tilde{s}\otimes f))\circ d_1\circ \exp(\ad(-\tilde{s}\otimes f)) \in \cL$. Hence $\exp(\ad(\tilde{s}\otimes f))$ preserves $\cL$ and it is an automorphism of $\cL$. This completes the proof.
\end{proof}

Let $H$ denote the subgroup of $\Aut(\cL)$ generated by $\exp(\ad(\tilde{s}\otimes f))$, where $\tilde{s}\otimes f \in S \otimes \fm$. Then $H$ is a connected algebraic group with $S\otimes \fm\subseteq\Lie(H)$. We show that applying suitable elements of $H$, the following element of $\cL$ can be reduced to an element in a nice form.

\begin{lem}\thlabel{DconjugatetoD1}
Let $D=\sum_{i=0}^{m(p-1)}s_i\otimes f_i+z$ be an element of $\cL$, where $s_i\in S$, $f_i\in \cO(m;\underline{1})$ with $\deg f_i=i$, and $z=d_0+u \in \cN(\dd D)$ as in \thref{dformlemma2}. Then $D$ is conjugate under $H$ to 
\begin{align*}
D_1=s_0'\otimes x_1^{p-1}\cdots x_{s}^{p-1}+v'+z,
\end{align*}
where $s_0'\in S$ (possibly $0$), $v'\in S\otimes I$, $I$ is the ideal of $\cO(m; \underline{1})$ generated by $x_{s+1}, \dots, x_m$ with $s\geq 1$, and $z=d_0+u\in \cN(\dd D)$ as in \thref{dformlemma2}.
\end{lem}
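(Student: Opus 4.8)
Write $v:=\sum_{i=0}^{m(p-1)}s_i\otimes f_i\in S\otimes\cO(m;\underline{1})$, so that $D=v+z$ with $z=d_0+u$. From \eqref{Moplus} in the proof of \thref{d-d0d_0image} we have the vector space decomposition $\cO(m;\underline{1})=I\oplus d_0(\fm_s)\oplus k\,x_1^{p-1}\cdots x_s^{p-1}$, where $\fm_s$ is the maximal ideal of $\cO(s;\underline{1})$; tensoring with $S$ gives $S\otimes\cO(m;\underline{1})=(S\otimes I)\oplus(S\otimes d_0(\fm_s))\oplus(S\otimes k\,x_1^{p-1}\cdots x_s^{p-1})$, and we let $\pi$ be the projection onto the middle summand. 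The plan is to produce $h$ in the group $H$ of \thref{Sautoexps} such that $\pi\bigl(h(D)-z\bigr)=0$. This suffices: since $H$ preserves the ideal $S\otimes\cO(m;\underline{1})$ and, by \eqref{expformulasec2}, each generator of $H$ sends any $d_1\in\dd D$ to $d_1$ plus an element of $S\otimes\cO(m;\underline{1})$, one gets $h(D)\in z+S\otimes\cO(m;\underline{1})$; and $\pi\bigl(h(D)-z\bigr)=0$ then forces $h(D)=s_0'\otimes x_1^{p-1}\cdots x_s^{p-1}+v'+z$ with $v'\in S\otimes I$, which is exactly $D_1$.

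First I would reduce $v$ modulo $S\otimes I$ and $S\otimes k\,x_1^{p-1}\cdots x_s^{p-1}$ one $|\,\,|_p$-degree at a time (in increasing order), using the $|\,\,|_p$-degree of \thref{DegLexdefn}. By \thref{DegLexrmk}(iv)(a), for each $l\in\{0,1,\dots,p^s-1\}$ there is a unique monomial of $\cO(s;\underline{1})$ of $|\,\,|_p$-degree $l$, and $d_0(\fm_s)$ is spanned by those with $0\le l\le p^s-2$. Suppose the current representative is $D'=z+w$ with $w\in S\otimes\cO(m;\underline{1})$ (so $w=v$ initially) and $\pi(w)$ has no component of $|\,\,|_p$-degree $<l$; write its degree-$l$ part as $y_l\otimes\boldsymbol{x}^{A_l}$ with $y_l\in S$, $|A_l|_p=l$. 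Let $\boldsymbol{x}^{B}$ be the unique monomial of $\cO(s;\underline{1})$ with $|B|_p=l+1$; by \thref{zcalculations}, $d_0(\boldsymbol{x}^{B})=\beta\,\boldsymbol{x}^{A_l}$ for some $\beta\in k^{*}$. Apply $h_l:=\exp\bigl(\ad(\beta^{-1}y_l\otimes\boldsymbol{x}^{B})\bigr)\in H$. Using \eqref{expformulasec2} for the action on $z$ together with $z(\boldsymbol{x}^{B})=d_0(\boldsymbol{x}^{B})+u(\boldsymbol{x}^{B})$ and $u(\boldsymbol{x}^{B})\in I$ (as $u\in\sum_i I\del_i$ and $I$ is an ideal), and the expansion $\exp(\ad(\tilde s\otimes f))(y\otimes g)=\sum_{j\ge0}\tfrac1{j!}(\ad\tilde s)^j(y)\otimes f^j g$ for the action on $w$, one checks that the degree-$l$ part of $\pi$ is cancelled and every new contribution lies either in $S\otimes I$ or in $S\otimes d_0(\fm_s)$ at a $|\,\,|_p$-degree strictly larger than $l$. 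Iterating over $l=0,\dots,p^s-2$ gives the desired $h=h_{p^s-2}\cdots h_1h_0$.

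The routine but delicate point — and the main obstacle — is the degree bookkeeping in this inductive step: one must verify that each of the three corrections introduced by $h_l$, namely $-\tilde s\otimes u(\boldsymbol{x}^{B})$, the term $-\tilde s^{\,p}\otimes(\boldsymbol{x}^{B})^{p-1}z(\boldsymbol{x}^{B})$ from \eqref{expformulasec2}, and the higher-order terms $\sum_{j\ge1}\tfrac1{j!}(\ad\tilde s)^j(\cdot)\otimes(\boldsymbol{x}^{B})^j(\cdot)$ from expanding $\exp$ on $w$, has vanishing $\pi$-component in $|\,\,|_p$-degrees $\le l$. This follows because $\boldsymbol{x}^{B}\in\fm_s$ with $|B|_p=l+1\ge1$, so by \thref{DegLexrmk}(iii) any nonzero product of a monomial with $\boldsymbol{x}^{B}$ or with $(\boldsymbol{x}^{B})^{p-1}$ has $|\,\,|_p$-degree at least $l+1$; and a monomial of $\cO(s;\underline{1})$ of $|\,\,|_p$-degree $\ge l+1$ is either $x_1^{p-1}\cdots x_s^{p-1}$ (which lands in the $s_0'$-slot, invisible to $\pi$) or lies in $d_0(\fm_s)$ at strictly higher degree, while $u(\boldsymbol{x}^{B})$ and all products landing in $I$ are also invisible to $\pi$; here the degree computations of Example \thref{DegLexexample} are what make $d_0$ lower, and multiplication by $\boldsymbol{x}^{B}$ raise, the $|\,\,|_p$-degree. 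One also records that $H$ preserves $S\otimes I$ and fixes $S\otimes k\,x_1^{p-1}\cdots x_s^{p-1}$ (since $\fm_s\cdot x_1^{p-1}\cdots x_s^{p-1}=0$), so junk pushed into those summands stays put; this is what guarantees the process terminates after finitely many steps with $w$ of the required shape.
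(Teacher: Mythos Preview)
Your proposal is correct and follows essentially the same approach as the paper: both iteratively apply automorphisms $\exp(\ad(\tilde s\otimes\boldsymbol{x}^B))$ with $\boldsymbol{x}^B\in\fm_s$ chosen so that $d_0(\boldsymbol{x}^B)$ is a scalar multiple of the lowest $|\,\,|_p$-degree monomial to be killed, and both verify via the $|\,\,|_p$-degree bookkeeping that all new terms either fall into $S\otimes I$ or have strictly higher $|\,\,|_p$-degree. The only difference is organizational: the paper first separates off the $S\otimes I$ piece and replaces $z$ by $d_0$ (its Step~1), reducing to conjugating $D_0=\sum_l s_{A_l}\otimes\boldsymbol{x}^{A_l}+d_0$, and then runs the induction; you keep $z=d_0+u$ throughout and absorb the $u$-contributions and the $S\otimes I$ noise via the projection $\pi$. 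One small sloppiness: your claim that ``$H$ fixes $S\otimes k\,x_1^{p-1}\cdots x_s^{p-1}$'' holds only for the specific generators with $f\in\fm_s$ that you actually use (for $f\in\fm\setminus\fm_s$ one only gets preservation modulo $S\otimes I$), but this does not affect your argument.
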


\begin{myproof}[Strategy of the proof]\renewcommand{\qedsymbol}{}
\textbf{\textit{Step 1}}. Let $D$ be as in the lemma. By \thref{DegLexrmk}(v), we can rewrite $D$ as 
\[
D=\sum_{l=0}^{Q_0}s_{A_l}\otimes \boldsymbol{x}^{A_l}+v+z,
\]
where $s_{A_l}\in S$, $\boldsymbol{x}^{A_l}=x_1^{a_1}\cdots x_s^{a_s}\in \cO(s; \underline{1})$ with $0\leq |A_l|_p=l\leq Q_0=p^s-1$, $v\in S\otimes I$, and $z=d_0+u \in \cN(\dd D)$ as in \thref{dformlemma2}. We want to show that $D$ is conjugate under $H=\langle\exp(\ad(\tilde{s}\otimes f))\,|\,\tilde{s}\otimes f\in S\otimes \fm\rangle$ to $D_1=s_0'\otimes x_1^{p-1}\cdots x_{s}^{p-1}+v'+z$ for some $s_0'\in S$ (possibly $0$) and $v'\in S\otimes I$. We first observe that for any $\exp(\ad(\tilde{s}\otimes f))$ in $H$, $\exp(\ad(\tilde{s}\otimes f))(v+u)=u+v_3$ for some $v_3=v_3(\tilde{s}, f)\in S\otimes I$. Note that $(S\otimes I)\cap (S\otimes \cO(s; \underline{1}))=\{0\}$. Moreover, all $\boldsymbol{x}^{A_l}$ are in $\cO(s; \underline{1})$ and $d_0$ is a derivation of $\cO(s; \underline{1})$. Hence to show that $D$ is conjugate under $H$ to $D_1$, we just need to show that $\sum_{l=0}^{Q_0}s_{A_l}\otimes \boldsymbol{x}^{A_l}+d_0$ is conjugate under $H$ to $s_0'\otimes x_1^{p-1}\cdots x_{s}^{p-1}+d_0$ for some $s_0'\in S$ (possibly $0$). For that, we only need to apply automorphisms $\exp(\ad(\tilde{s}\otimes f))$ with $\tilde{s}\otimes f\in S\otimes \fm_s$. Here $\fm_s$ denotes the maximal ideal of $\cO(s; \underline{1})$.

\textbf{\textit{Step 2}}. Let $D_0=\sum_{l=0}^{Q_0}s_{A_l}\otimes \boldsymbol{x}^{A_l}+d_0$. We show that $D_0$ is conjugate under $H$ to $s_0'\otimes x_1^{p-1}\cdots x_{s}^{p-1}+d_0$ for some $s_0'\in S$ (possibly $0$). If $s_{A_l}=0$ for all $l$, then $D_0$ is of the desired form. If not all $s_{A_l}$ are zero, then we look at $\boldsymbol{x}^{A_l}$'s with $s_{A_l}\neq 0$ and take the one with the smallest $|\,\,|_p$-degree, say it is $\boldsymbol{x}^{A_K}$ with $|A_K|_p=K$. Then 
\[
D_0=\sum_{l=K}^{Q_0}s_{A_l}\otimes \boldsymbol{x}^{A_l}+d_0.
\]
If $\boldsymbol{x}^{A_K}=x_1^{p-1}\cdots x_s^{p-1}$, then $|A_K|_p=K=Q_0$ and $D_0$ is of the desired form. If $\boldsymbol{x}^{A_K}=x_1^{a_1}\cdots x_s^{a_s}\neq x_1^{p-1}\cdots x_s^{p-1}$, then $0\leq |A_K|_p=K<Q_0$ and there exist $0\leq a_i<p-1$ for some $1\leq i\leq s$. Let $1\leq r\leq s$ be the smallest index such that $0\leq a_r< p-1$. Then $\boldsymbol{x}^{A_K}=x_1^{p-1}\dots x_{r-1}^{p-1}x_r^{a_r}x_{r+1}^{a_{r+1}}\cdots x_s^{a_s}$.
Let $f=\boldsymbol{x}^{\tilde{A_K}}=x_r^{a_r+1}x_{r+1}^{a_{r+1}}\cdots x_s^{a_s}\in \fm_s$ and $\tilde{s}=(a_{r}+1)^{-1}s_{A_K}\in S$. It follows from \thref{zcalculations} that applying $\exp(\ad(\tilde{s}\otimes f))$ to $D_0$, we may assume that 
\[
D_0=\sum_{l=K+1}^{Q_0}s'_{A_l}\otimes \boldsymbol{x}^{A_l}+d_0,
\]
where $s'_{A_l}\in S$, $\boldsymbol{x}^{A_l}=x_1^{a_1}\cdots x_s^{a_s}\in \cO(s; \underline{1})$ with $|A_l|_p=l$. Continue doing the above until we get $D_0$ is conjugate under $H$ to $s_0'\otimes x_1^{p-1}\cdots x_{s}^{p-1}+d_0$ for some $s_0'\in S$ (possibly $0$). It follows that $D$ is conjugate under $H$ to $D_1$.
\end{myproof}

\begin{proof}
\textbf{\textit{Step 1}}. Let $D=\sum_{i=0}^{m(p-1)}s_i\otimes f_i+z$ be as in the lemma. Let $I$ be the ideal of $\cO(m; \underline{1})$ generated by $x_{s+1}, \dots, x_m$, where $s\geq 1$. Since $f_i\in \cO(m; \underline{1})$, it follows from \thref{DegLexrmk}(v) that we can write 
\[
f_i=\sum_{l=0}^{Q_0}\lambda_{A_l, i}\boldsymbol{x}^{A_l}+g_i, 
\]
where $\lambda_{A_l, i}\in k$, $\boldsymbol{x}^{A_l}=x_1^{a_1}\cdots x_s^{a_s}\in \cO(s; \underline{1})$ with $0\leq |A_l|_p=l\leq Q_0=p^s-1$, and $g_i\in I$. Then we can rewrite $D$ as 
\begin{align}\label{newformD}
D=\sum_{l=0}^{Q_0}s_{A_l}\otimes \boldsymbol{x}^{A_l}+v+z,
\end{align}
where $s_{A_l}\in S$, $\boldsymbol{x}^{A_l}=x_1^{a_1}\cdots x_s^{a_s}\in \cO(s; \underline{1})$ with $0\leq |A_l|_p=l\leq Q_0=p^s-1$, $v\in S\otimes I$, and $z=d_0+u \in \cN(\dd D)$ as in \thref{dformlemma2}. We want to show that $D$ is conjugate under $H=\langle\exp(\ad(\tilde{s}\otimes f))\,|\,\tilde{s}\otimes f\in S\otimes \fm\rangle$ to $D_1=s_0'\otimes x_1^{p-1}\cdots x_{s}^{p-1}+v'+z$ for some $s_0'\in S$ (possibly $0$) and $v'\in S\otimes I$. We claim that this problem can be reduced to show that $\sum_{l=0}^{Q_0}s_{A_l}\otimes \boldsymbol{x}^{A_l}+d_0$ is conjugate under $H$ to $s_0'\otimes x_1^{p-1}\cdots x_{s}^{p-1}+d_0$ for some $s_0'\in S$ (possibly $0$). Take $D$ as in \eqref{newformD} and let $\exp(\ad (\tilde{s}\otimes f))$ be any element in $H$. By \thref{Sautoexps} (see \eqref{expformulasec2}), 
\begin{align*}
\exp(\ad(\tilde{s}\otimes f))(D)=&\exp(\ad(\tilde{s}\otimes f))\bigg(\sum_{l=0}^{Q_0}s_{A_l}\otimes \boldsymbol{x}^{A_l}+d_0\bigg)+\exp(\ad(\tilde{s}\otimes f))(v+u)\\
=&\sum_{l=0}^{Q_0}s_{A_l}\otimes \boldsymbol{x}^{A_l}+\sum_{j=1}^{p-1}\sum_{l=0}^{Q_0}\frac{1}{j!}(\ad \tilde{s})^{j}(s_{A_l})\otimes f^{j}\boldsymbol{x}^{A_l}\\
&+d_0-\tilde{s}\otimes d_0(f)-\tilde{s}^{p}\otimes f^{p-1}d_0(f)+\exp(\ad(\tilde{s}\otimes f))(v+u).
\end{align*}
We show that $\exp(\ad(\tilde{s}\otimes f))(v+u)=u+v_3$ for some $v_3=v_3(\tilde{s},f)\in S\otimes I$. Since $I$ is an ideal of $\cO(m; \underline{1})$, this implies that $S\otimes I$ is an ideal of $S\otimes \cO(m; \underline{1})$. Hence $S\otimes I$ is stabilized by all $\exp(\ad(\tilde{s}\otimes f))$ in $H$. In particular, $\exp(\ad(\tilde{s}\otimes f))(v)=v_1$ for some $v_1=v_1(\tilde{s}, f)\in S\otimes I$. By \thref{Sautoexps} (see \eqref{expformulasec2}) again,
\[
\exp(\ad (\tilde{s}\otimes f))(u)=u-\tilde{s}\otimes u(f)-\tilde{s}^{p}\otimes f^{p-1}u(f).
\]
Since $u\in (I\del_1+\dots+I\del_m)$, we have that $u(\fm)\subseteq I$. In particular, $u(f)\in I$. Hence 
$\exp(\ad (\tilde{s}\otimes f))(u)=u+v_2$ for some $v_2=v_2(\tilde{s},f)\in S\otimes I$. It follows that $\exp(\ad(\tilde{s}\otimes f))(v+u)=u+v_3$ for some $v_3=v_3(\tilde{s},f)=v_1(\tilde{s},f)+v_2(\tilde{s},f)\in S\otimes I$. Hence for any $\exp(\ad (\tilde{s}\otimes f))\in H$, 
\begin{align*}
\exp(\ad(\tilde{s}\otimes f))(D)=&\exp(\ad(\tilde{s}\otimes f))\bigg(\sum_{l=0}^{Q_0}s_{A_l}\otimes \boldsymbol{x}^{A_l}+d_0\bigg)+(u+v_3)\\
=&\sum_{l=0}^{Q_0}s_{A_l}\otimes \boldsymbol{x}^{A_l}+\sum_{j=1}^{p-1}\sum_{l=0}^{Q_0}\frac{1}{j!}(\ad \tilde{s})^{j}(s_{A_l})\otimes f^{j}\boldsymbol{x}^{A_l}\\
&+d_0-\tilde{s}\otimes d_0(f)-\tilde{s}^{p}\otimes f^{p-1}d_0(f)+(u+v_3).
\end{align*}
Applying another element $\exp(\ad (\tilde{s_1}\otimes f'))\in H$ to the above, we still get \\$\exp(\ad (\tilde{s_1}\otimes f'))(u+v_3)=u+v_4$ for some $v_4=v_4(\tilde{s_1}, f')\in S\otimes I$. Note that $(S\otimes I)\cap (S\otimes\cO(s; \underline{1}))=\{0\}$. Moreover, all $\boldsymbol{x}^{A_l}$ are in $\cO(s; \underline{1})$ and $d_0$ is a derivation of $\cO(s; \underline{1})$. Hence to show that $D=\sum_{l=0}^{Q_0}s_{A_l}\otimes \boldsymbol{x}^{A_l}+v+d_0+u$ is $H$-conjugate to $D_1=s_0'\otimes x_1^{p-1}\cdots x_{s}^{p-1}+v'+d_0+u$, we just need to show that $\sum_{l=0}^{Q_0}s_{A_l}\otimes \boldsymbol{x}^{A_l}+d_0$ is $H$-conjugate to $s_0'\otimes x_1^{p-1}\cdots x_{s}^{p-1}+d_0$. For that, we only need to apply automorphisms $\exp(\ad(\tilde{s}\otimes f))$ with $\tilde{s}\otimes f\in S\otimes \fm_s$ and do computations in $\cO(s; \underline{1})$ and $W(s; \underline{1})$. Here $\fm_s$ denotes the maximal ideal of $\cO(s; \underline{1})$.

\textbf{\textit{Step 2}}. Let $D_0=\sum_{l=0}^{Q_0}s_{A_l}\otimes \boldsymbol{x}^{A_l}+d_0$. We show that $D_0$ is $H$-conjugate to $s_0'\otimes x_1^{p-1}\cdots x_{s}^{p-1}+d_0$ for some $s_0'\in S$ (possibly $0$). If $s_{A_l}=0$ for all $l$, then $D_0$ is of the desired form. If not all $s_{A_l}$ are zero, then we look at $\boldsymbol{x}^{A_l}$'s with $s_{A_l}\neq 0$ and take the one with the smallest $|\,\,|_p$-degree, say it is $\boldsymbol{x}^{A_K}$ with $|A_K|_p=K$. Then 
\[
D_0=\sum_{l=K}^{Q_0}s_{A_l}\otimes \boldsymbol{x}^{A_l}+d_0.
\]
If $\boldsymbol{x}^{A_K}=x_1^{p-1}\cdots x_s^{p-1}$, then $|A_K|_p=K=Q_0$ and $D_0$ is of the desired form. If $\boldsymbol{x}^{A_K}\neq x_1^{p-1}\cdots x_s^{p-1}$, then $0\leq |A_K|_p=K<Q_0$ and we need to apply $\exp(\ad(\tilde{s}\otimes f))$ to $D_0$ and clear $s_{A_K}\otimes \boldsymbol{x}^{A_K}$. By \thref{Sautoexps} (see \eqref{expformulasec2}), we know that for any $\exp(\ad (\tilde{s}\otimes f))\in H$,
\begin{equation}\label{expclearform}
\begin{aligned}
\exp(\ad(\tilde{s}\otimes f))(D_0)=&\sum_{l=K}^{Q_0}s_{A_l}\otimes \boldsymbol{x}^{A_l}+\sum_{j=1}^{p-1}\sum_{l=K}^{Q_0}\frac{1}{j!}(\ad \tilde{s})^{j}(s_{A_l})\otimes f^{j}\boldsymbol{x}^{A_l}\\
&+d_0-\tilde{s}\otimes d_0(f)-\tilde{s}^{p}\otimes f^{p-1}d_0(f).
\end{aligned}
\end{equation}
Let $A_K=(a_1, \dots, a_s)$. Since $\boldsymbol{x}^{A_K}\neq x_1^{p-1}\cdots x_s^{p-1}$, there exist $0\leq a_i<p-1$ for some $1\leq i\leq s$. Note that if $|A_K|_p=0$, then $a_i=0$ for all $1\leq i\leq s$. Let $1\leq r\leq s$ be the smallest index such that $0\leq a_r< p-1$. Then 
\[
\boldsymbol{x}^{A_K}=x_1^{p-1}\dots x_{r-1}^{p-1}x_r^{a_r}x_{r+1}^{a_{r+1}}\cdots x_s^{a_s}.
\]
Let $f=\boldsymbol{x}^{\tilde{A_K}}=x_r^{a_r+1}x_{r+1}^{a_{r+1}}\cdots x_s^{a_s}\in \fm_s$. By \thref{zcalculations}, 
\begin{equation*}
d_0(f)=d_0(\boldsymbol{x}^{\tilde{A_K}})=(a_{r}+1)\boldsymbol{x}^{A_K}
\end{equation*}
with $|A_K|_p=|\tilde{A_K}|_p-1$. Let $\tilde{s}=(a_{r}+1)^{-1}s_{A_K}\in S$. Substituting $\tilde{s}$ and $f$ into \eqref{expclearform}, we get
\begin{equation}\label{expclearA_K}
\begin{aligned}
\exp(\ad(\tilde{s}\otimes f))(D_0)=&\sum_{l=K+1}^{Q_0}s_{A_l}\otimes \boldsymbol{x}^{A_l}\\
&+\sum_{j=1}^{p-1}\sum_{l=K+1}^{Q_0}\frac{1}{j!}(a_{r}+1)^{-j}(\ad s_{A_K})^{j}(s_{A_l})\otimes(\boldsymbol{x}^{\tilde{A_K}})^{j}\boldsymbol{x}^{A_l}\\
&+d_0-(a_{r}+1)^{-p+1}s_{A_K}^{p}\otimes (\boldsymbol{x}^{\tilde{A_K}})^{p-1}\boldsymbol{x}^{A_K}.
\end{aligned}
\end{equation}

We want to show that $\exp(\ad(\tilde{s}\otimes f))(D_0)=\sum_{l=K+1}^{Q_0}s'_{A_l}\otimes \boldsymbol{x}^{A_l}+d_0$, where $s'_{A_l}\in S$ and $\boldsymbol{x}^{A_l}=x_1^{a_1}\cdots x_s^{a_s}\in \cO(s; \underline{1})$ with $|A_l|_p=l$. Look at the second and the last summands in \eqref{expclearA_K}. Since 
$\boldsymbol{x}^{\tilde{A_{K}}}\in \fm_s$ and $\fm_s$ is an ideal of $\cO(s; \underline{1})$, we see that these two summands are in $S\otimes \fm_s$. This implies that every nonzero monomial $\boldsymbol{x}^{A}$ in these two summands has $|A|_p\leq Q_0$. Moreover, we show that every nonzero monomial $\boldsymbol{x}^{A}$ in these two summands has $|A|_p>K+1$. We start with the second summand. By \thref{DegLexrmk}(iii), we know that for any monomials $\boldsymbol{x}^{A'}$ and $\boldsymbol{x}^{A''}$ in $\cO(s; \underline{1})$, if $\boldsymbol{x}^{A'}\boldsymbol{x}^{A''}=\boldsymbol{x}^{A'''}\neq 0$, then $|A'''|_p=|A'|_p+|A''|_p$.
Consider $(\boldsymbol{x}^{\tilde{A_K}})^{j}$, where $1\leq j\leq p-1$. By our choice, $\boldsymbol{x}^{\tilde{A_K}}\in\fm_s$ with $|\tilde{A_K}|_p=|A_K|_p+1=K+1$. Hence for $2\leq j\leq p-1$, 
if $(\boldsymbol{x}^{\tilde{A_K}})^{j}\neq 0$, then $(\boldsymbol{x}^{\tilde{A_K}})^{j}\in\fm_s$ with $|\,\,|_p$-degree $j|\tilde{A_K}|_p\geq 2(K+1)>K+1$. Now look at $(\boldsymbol{x}^{\tilde{A_K}})^{j}\boldsymbol{x}^{A_l}$ in the second summand. Since $l\geq K+1\geq 1$, every monomial $\boldsymbol{x}^{A_l}$ is in $\fm_s$ and has $|A_l|_p=l\geq K+1$. If $(\boldsymbol{x}^{\tilde{A_K}})^{j}\boldsymbol{x}^{A_l}\neq 0$, then $(\boldsymbol{x}^{\tilde{A_K}})^{j}\boldsymbol{x}^{A_l}$ is in $\fm_s$ and has $|\,\,|_p$-degree $j|\tilde{A_K}|_p+|A_l|_p\geq |\tilde{A_K}|_p+|A_l|_p\geq(K+1)+(K+1)>K+1$.

Look at $(\boldsymbol{x}^{\tilde{A_K}})^{p-1}\boldsymbol{x}^{A_K}$ in the last summand. By above, if $(\boldsymbol{x}^{\tilde{A_K}})^{p-1}\neq 0$, then $(\boldsymbol{x}^{\tilde{A_K}})^{p-1}$ is in $\fm_s$ and has $|\,\,|_p$-degree $>K+1$. Note that $\boldsymbol{x}^{A_K}\in \cO(s; 1)$ and has $|A_K|_p=K\geq 0$. So if $(\boldsymbol{x}^{\tilde{A_K}})^{p-1}\boldsymbol{x}^{A_K}\neq 0$, then $(\boldsymbol{x}^{\tilde{A_K}})^{p-1}\boldsymbol{x}^{A_K}$ is in $\fm_s$ and has $|\,\,|_p$-degree $(p-1)|\tilde{A_K}|_p+|A_K|_p>(K+1)+K\geq K+1$.

Hence every nonzero monomial $\boldsymbol{x}^{A}$ in the second and the last summands has $K+1<|A|_p\leq Q_0$. Therefore, applying $\exp(\ad(\tilde{s}\otimes f))$ to $D_0$, we may assume that 
\[
D_0=\sum_{l=K+1}^{Q_0}s'_{A_l}\otimes \boldsymbol{x}^{A_l}+d_0
\]
for some $s'_{A_l}\in S$ and $\boldsymbol{x}^{A_l}=x_1^{a_1}\cdots x_s^{a_s}\in \cO(s; \underline{1})$ with $|A_l|_p=l$. 

Now look at $D_0$ and repeat the above process, i.e. take $\boldsymbol{x}^{A_l}$ with the smallest $|\,\,|_p$-degree for which $s'_{A_l}\neq 0$. If $A_l=(a_1, \dots, a_s)=(p-1, \dots, p-1)$, then $D_0$ is of the desired form. If not, then applying a similar automorphism $\exp(\ad(\tilde{s}\otimes f))$ to $D_0$ and clear $s'_{A_l}\otimes \boldsymbol{x}^{A_l}$. Continuing in this way, we eventually get $D_0$ is $H$-conjugate to $s_0'\otimes x_1^{p-1}\cdots x_{s}^{p-1}+d_0$ for some $s_0'\in S$ (possibly $0$). Therefore, $D=D_0+v+u$ is $H$-conjugate to $D_1=s_0'\otimes x_1^{p-1}\cdots x_{s}^{p-1}+d_0+v'+u$ for some $s_0'\in S$ (possibly $0$) and $v'\in S\otimes I$. This completes the proof.
\end{proof}

\begin{lem}\thlabel{D1psnilpotentcalculations}
Let $D_1=s_0'\otimes x_1^{p-1}\cdots x_{s}^{p-1}+v'+z$ be an element of $\cL$, where $s_0'\in S$, $v'\in S\otimes I$, $I$ is the ideal of $\cO(m; \underline{1})$ generated by $x_{s+1}, \dots, x_m$ with $s\geq 1$, and $z=d_0+u \in \cN(\dd D)$ as in \thref{dformlemma2}. Then $D_1\in \cN(\cL)$ if and only if $s_0'\in \cN(S)$.
\end{lem}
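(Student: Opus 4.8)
The plan is to compute the single power $D_1^{[p]^{s}}$ as explicitly as needed and then iterate it. First I would write $D_1=w+z$ with $w=s_0'\otimes x_1^{p-1}\cdots x_s^{p-1}+v'\in S\otimes\fm$ (recall $v'\in S\otimes I$) and $z=d_0+u$. Applying \thref{generalJacobF} to the two-term sum $w+z$ gives $D_1^{[p]^{s}}=w^{[p]^{s}}+z^{[p]^{s}}+\sum_{l=0}^{s-1}v_l^{[p]^{l}}$, where each $v_l$ is an iterated commutator containing $w\in S\otimes\cO(m;\underline{1})$; since $S\otimes\cO(m;\underline{1})$ is a restricted ideal of $\cL$, the element $a:=w^{[p]^{s}}+\sum_{l}v_l^{[p]^{l}}$ lies in $S\otimes\cO(m;\underline{1})$, while by \thref{zpscalculationslemmma} (and its proof) $z^{[p]^{s}}$ lies in $W=\sum_i I\del_i\subseteq W(m;\underline{1})_{(0)}$. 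So $D_1^{[p]^{s}}=a+z^{[p]^{s}}$, and the task of Step~1 is to determine $a$ modulo $S\otimes I$.

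Because $d_0(I)\subseteq I$ and $u(\cO(m;\underline{1}))\subseteq I$, the element $z$ preserves $I$, so $S\otimes I$ is an $\ad D_1$-invariant subspace of $S\otimes\cO(m;\underline{1})$; on the quotient $S\otimes\cO(m;\underline{1})/(S\otimes I)\cong S\otimes\cO(s;\underline{1})$ the operator $\ad D_1$ induces $\Delta:=(\ad_S s_0')\otimes m_{x^\delta}+\Id_S\otimes d_0$, since the $v'$- and $u$-parts vanish mod $S\otimes I$ (here $x^\delta=x_1^{p-1}\cdots x_s^{p-1}$ and $m_{x^\delta}$ is multiplication by $x^\delta$ on $\cO(s;\underline{1})$). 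Now $d_0$ acts on $\cO(s;\underline{1})$ as a single Jordan block of size $p^{s}$ with cyclic vector $x^\delta$ and $d_0^{p^{s}-1}(x^\delta)=(-1)^{s}$ (\thref{WnDresultslem3}(iii), \thref{zcalculations}), while $m_{x^\delta}$ annihilates $d_0^{i}(x^\delta)$ for $0\le i\le p^{s}-2$ (as $(x^\delta)^{2}=0$ and $d_0^{i}(x^\delta)\in\fm_s$ for $1\le i\le p^{s}-2$) and sends $d_0^{p^{s}-1}(x^\delta)=(-1)^{s}1$ to $(-1)^{s}x^\delta$; running through the $d_0$-basis $\{d_0^{i}(x^\delta)\}_{i=0}^{p^{s}-1}$ then yields $\Delta^{p^{s}}=(-1)^{s}(\ad_S s_0')\otimes\Id$. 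Since $z^{[p]^{s}}\in\sum_i I\del_i$ acts as $0$ on $S\otimes\cO(s;\underline{1})$, the operator $\ad(D_1^{[p]^{s}})=(\ad D_1)^{p^{s}}$ induces $\ad(\bar a)$ on this quotient, so $\ad(\bar a)=\Delta^{p^{s}}=\ad((-1)^{s}s_0'\otimes1)$; as $S\otimes\cO(s;\underline{1})$ has trivial centre, $\bar a=(-1)^{s}s_0'\otimes1$, i.e.
\[
D_1^{[p]^{s}}=(-1)^{s}\,s_0'\otimes1+a'+z^{[p]^{s}},\qquad a'\in S\otimes I,\quad z^{[p]^{s}}\in W(m;\underline{1})_{(0)}.
\]

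For Step~2, put $b=(-1)^{s}s_0'$ and apply Jacobson's formula to the three summands $b\otimes1$, $a'$, $z^{[p]^{s}}$: the cross-commutators either vanish ($[b\otimes1,z^{[p]^{s}}]=0$, a derivation killing a constant) or lie in $S\otimes I$ (every commutator involving $a'$, using that $z^{[p]^{s}}$ preserves $I$). Since $W$ is restricted (\thref{IidealofOW}) and $S\otimes I$ is a restricted ideal, an easy induction gives $D_1^{[p]^{s+j}}=b^{[p]^{j}}\otimes1+a_j+z^{[p]^{s+j}}$ with $a_j\in S\otimes I$ and $z^{[p]^{s+j}}\in\sum_i I\del_i$. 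As $z\in\cN(\dd{D})$ we have $z^{[p]^{s+j}}=0$ for $j\gg0$, so $D_1^{[p]^{s+j}}=b^{[p]^{j}}\otimes1+a_j$ for $j\gg0$. If $s_0'\in\cN(S)$ then $b^{[p]^{j}}=0$ for $j\gg0$, so $D_1^{[p]^{s+j}}=a_j\in S\otimes\fm$, which is $p$-nilpotent (because $(S\otimes\fm^{i})^{[p]}\subseteq S\otimes\fm^{ip}$ and $\fm$ is nilpotent), whence $D_1\in\cN(\cL)$. Conversely, if $D_1\in\cN(\cL)$ then $D_1^{[p]^{s+j}}=0$ for $j\gg0$, and since $(S\otimes1)\cap(S\otimes I)=0$ this forces $b^{[p]^{j}}=0$, i.e. $s_0'\in\cN(S)$.

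The main obstacle is Step~1 — identifying $D_1^{[p]^{s}}$ modulo $S\otimes I$ and the $\dd{D}$-direction. The quotient-module shortcut above avoids a lengthy coefficient chase in \thref{generalJacobF}, at the cost of invoking the explicit Jordan-block behaviour of $d_0$ on $\cO(s;\underline{1})$. An alternative, closer in spirit to \thref{autg1} and \thref{D1pNcalculations}, is to expand $D_1^{[p]^{s}}$ directly with \thref{generalJacobF} and to use the $|\,\,|_p$-degree bookkeeping of \thref{DegLexexample} — $d_0$ lowers $|\,\,|_p$-degree by $1$ while $u$ raises it and maps into $I$, together with \thref{zcalculations} — to check that the only contribution of $D_1^{[p]^{s}}$ outside $S\otimes I+W(m;\underline{1})_{(0)}$ is the scalar multiple of $s_0'\otimes1$ above.
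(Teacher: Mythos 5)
Your argument is correct, and while it follows the paper's overall architecture (split $D_1=w+z$ with $w=s_0'\otimes x_1^{p-1}\cdots x_s^{p-1}+v'$, apply \thref{generalJacobF}, isolate the constant term $(-1)^s s_0'\otimes 1$, then iterate), the way you identify that constant term is genuinely different. The paper's Step 1 is a hands-on analysis of every iterated commutator $[w_t,[\dots,[w_1,w]\dots]$: it computes $z^{l}(x_1^{p-1}\cdots x_s^{p-1})$ for all $0<l\le p^{s}-1$, extracts $(\ad z)^{p^{s}-1}(w)=(-1)^{s}s_0'\otimes 1+(\text{terms in }S\otimes\fm)$, and then checks case by case (parts (iv)--(vi) of its proof) that every other commutator lands in $S\otimes\fm$. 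You replace all of this by one structural observation: $S\otimes I$ is $\ad D_1$-stable, on the quotient $S\otimes\cO(s;\underline{1})$ the operator $\ad D_1$ induces $\Delta=(\ad s_0')\otimes m_{x^\delta}+\Id\otimes d_0$, and the twisted-cyclic-shift computation on the Jordan basis $\{d_0^{i}(x^\delta)\}$ gives $\Delta^{p^{s}}=(-1)^{s}(\ad s_0')\otimes\Id$; faithfulness of the adjoint representation of the centreless algebra $S\otimes\cO(s;\underline{1})$ then pins down $D_1^{[p]^{s}}$ modulo $S\otimes I$ without touching the individual $v_l$. This is cleaner and in fact yields a slightly sharper conclusion (remainder in $S\otimes I$ rather than merely $S\otimes\fm$); the cost is that you must know $z^{[p]^{s}}\in\sum_i I\del_i$, which is in the \emph{proof} of \thref{zpscalculationslemmma} rather than its statement — you flag this correctly. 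Your Step 2 redoes by a short direct induction what the paper gets by invoking \thref{D1pNcalculations} with $D'=D_1^{[p]^{s}}$, $d=z^{[p]^{s}}$, $f_0=(-1)^{s}$; the content is the same, and your observation that all commutators between $b\otimes 1$ and $z^{[p]^{s}}$ vanish outright (a derivation kills the constant $1$) is a nice simplification.
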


\begin{myproof}[Strategy of the proof]\renewcommand{\qedsymbol}{}
This is a computational proof with the following key steps:\\
\textbf{\textit{Step 1}}. Let $D_1=s_0'\otimes x_1^{p-1}\cdots x_{s}^{p-1}+v'+z$ be as in the lemma. We show that 
\begin{align*}
D_1^{p^{s}}=\text{$z^{p^{s}}+(s_0'\otimes (-1)^{s})$ $+$ (other terms in $S\otimes \fm$)}.
\end{align*}
Set $w=s_0'\otimes x_1^{p-1}\cdots x_{s}^{p-1}+v'$. By \thref{generalJacobF}, 
\begin{equation}\label{D1pscalculationfirst}
D_1^{p^{s}}=z^{p^{s}}+w^{p^{s}}+\sum_{r=0}^{s-1}u_r^{p^{r}}, 
\end{equation}
where $u_r$ is a linear combination of commutators in $z$ and $w$. We first show that $w^{p^{s}}\in S\otimes \fm$. Then we consider $\sum_{r=0}^{s-1}u_r^{p^{r}}$. By Jacobi identity, we can rearrange each $u_r$ so that $u_r$ is in the span of $[w_t,[w_{t-1},[\dots,[w_2,[w_1, w]\dots]$, where $t=p^{s-r}-1$ and each $w_\alpha, 1\leq \alpha\leq t$, is equal to $z$ or to $w$. We consider all such iterated commutators $[w_t,[w_{t-1},[\dots,[w_2,[w_1, w]\dots]$ and show that 
\begin{align*}
\text{$u_0=(\ad z)^{p^{s}-1}(w)=(s_0'\otimes (-1)^{s})$ $+$ (other terms in $S\otimes \fm$)},
\end{align*}
and for $p-1\leq t=p^{s-r}-1\leq p^{s-1}-1$,
\begin{align*}
\text{$[w_t,[w_{t-1},[\dots,[w_2,[w_1, w]\dots]\in S\otimes \fm$.}
\end{align*}
Hence for $1\leq r\leq s-1$, $u_r\in S\otimes \fm$ and $u_r^{p^{r}}\in S\otimes \fm$.

Note that $[z, w]=s_0'\otimes z(x_1^{p-1}\cdots x_{s}^{p-1})+[z, v']$. To show the above claims, we need to consider the action of $z=d_0+u$ on $x_1^{p-1}\cdots x_{s}^{p-1}$; see parts (i)-(iii). Then we consider commutators in $z$ and $w$; see parts (iv)-(vi).
\begin{enumerate}[\upshape(i)]
\item We show that for any $0<l<p^s-1$, $0\neq d_0^{l}(x_1^{p-1}\cdots x_{s}^{p-1})\in \fm_s$, where $\fm_s$ is the maximal ideal of $\cO(s; \underline{1})$. 
\item We show by induction that for any $0<l<p^s-1$,
\begin{equation*}
\text{$z^{l}(x_1^{p-1}\cdots x_{s}^{p-1})=d_0^{l}(x_1^{p-1}\cdots x_{s}^{p-1})$ $+$ (other terms in $I$)}\in \fm_s\oplus I.
\end{equation*}
\item We show that $\text{$z^{p^s-1}(x_1^{p-1}\cdots x_{s}^{p-1})=(-1)^s$ $+$ (other terms in $I$)}$.
\item We show that for any $0<l<p^s-1$, $\text{$(\ad z)^{l}(w)\in (S\otimes \fm_s)\oplus (S\otimes I)$}$, and\\
$\text{$(\ad z)^{p^{s}-1}(w)=(s_0'\otimes (-1)^{s})$ $+$ (other terms in $S\otimes \fm$)}$.
\item We show that for any $0<l<p^s-1$, $[w, (\ad z)^{l}(w)]\in S\otimes I$.
\item We show that for $p-1\leq t=p^{s-r}-1\leq p^{s-1}-1$, 
\[
[w_t,[w_{t-1},[\dots,[w_2,[w_1, w]\dots] \in S\otimes \fm,
\] 
where each $w_\alpha, 1\leq \alpha\leq t$, is equal to $z$ or to $w$. Hence for $1\leq r\leq s-1$, $u_r\in S\otimes \fm$ and $u_r^{p^{r}}\in S\otimes \fm$.
\end{enumerate}
It follows from the above that $D_1^{p^{s}}=\text{$z^{p^{s}}+(s_0'\otimes (-1)^{s})$ $+$ (other terms in $S\otimes \fm$)}$.

\textbf{\textit{Step 2}}. We show that $D_1\in\cN(\cL)$ if and only if $s_0' \in \cN(S)$. By the last step, $D_1^{p^{s}}=\text{$z^{p^{s}}+(s_0'\otimes (-1)^{s})$ $+$ (other terms in $S\otimes \fm$)}$. By our assumption, $z$ is nilpotent. By \thref{zpscalculationslemmma}, $z^{p^{s}}\in W(m; \underline{1})_{(0)}$. Hence $z^{p^{s}}$ preserves $S\otimes \fm$. Applying \thref{D1pNcalculations} with $D'=D_1^{p^{s}}$, $d=z^{p^{s}}$, $s_0=s_0'$ and $f_0=(-1)^s$, we get for $N\geq 1$ (see \eqref{DpNresults}), 
\[
\text{$D_1^{p^{s+N}}=z^{p^{s+N}}+((s_0')^{p^{N}}\otimes (-1)^{sp^{N}})$ $+$ (other terms in $S\otimes \fm$)}.
\]
Hence for $s+N\gg 0$, $D_1\in\cN(\cL)$ if and only if $s_0' \in \cN(S)$. 
\end{myproof}

\begin{proof}
\textbf{\textit{Step {1}}}. Let $D_1=s_0'\otimes x_1^{p-1}\cdots x_{s}^{p-1}+v'+z$ be as in the lemma. We show that 
\begin{align*}
D_1^{p^{s}}=\text{$z^{p^{s}}+(s_0'\otimes (-1)^{s})$ $+$ (other terms in $S\otimes \fm$)}.
\end{align*}
Set $w=s_0'\otimes x_1^{p-1}\cdots x_{s}^{p-1}+v'$. By \thref{generalJacobF}, 
\begin{equation}\label{D1pscalculationfirst}
D_1^{p^{s}}=z^{p^{s}}+w^{p^{s}}+\sum_{r=0}^{s-1}u_r^{p^{r}}, 
\end{equation}
where $u_r$ is a linear combination of commutators in $z$ and $w$. We first show that $w^{p^{s}}\in S\otimes \fm$. Since $x_1^{p-1}\cdots x_{s}^{p-1}\in \fm$, then $(x_1^{p-1}\cdots x_{s}^{p-1})^{p^{s}}=0$. By \thref{generalJacobF},
\[
w^{p^{s}}=(v')^{p^s}+\sum_{r=0}^{s-1}{\eta_r}^{p^{r}},
\]
where ${\eta_r}$ is a linear combination of commutators in $s_0'\otimes x_1^{p-1}\cdots x_{s}^{p-1}$ and $v'$. We show that $(v')^{p^s}\in S\otimes I$. Since $v'\in S\otimes I$, we can write $v'=\sum_{i=1}^{n}\tilde{s_i}\otimes g_i$ for some $\tilde{s_i}\in S$ and $g_i\in I$. Since $\cO(m; \underline{1})$ is a local ring, the ideal $I$ is contained in the maximal ideal $\fm$ of $\cO(m; \underline{1})$. Hence $g_i^{p}=0$ for all $i$. Moreover, $[S\otimes I, S\otimes I]\subseteq S\otimes I$. By \thref{generalJacobF}, we have that that $(v')^{p}\in S\otimes I$. Continuing in this way, we get $(v')^{p^s}\in S\otimes I$. Next we show that $\sum_{r=0}^{s-1}{\eta_r}^{p^{r}}\in S\otimes I$. Since $I\subset\fm$, we have that $\fm I\subseteq \fm\cap I=I$. Since $s_0'\otimes x_1^{p-1}\cdots x_{s}^{p-1}\in S\otimes \fm$ and $v'\in S\otimes I$, we have that 
\[
[s_0'\otimes x_1^{p-1}\cdots x_{s}^{p-1}, v']\in [S\otimes \fm, S\otimes I]\subseteq S\otimes \fm I\subseteq S\otimes I.
\]
It is clear that $[S\otimes I, S\otimes I]\subseteq S\otimes I$. So any iterated commutators in  $s_0'\otimes x_1^{p-1}\cdots x_{s}^{p-1}$ and $v'$ are in $S\otimes I$. Since ${\eta_r}$ is a linear combination of these commutators, we have that ${\eta_r}\in S\otimes I$ for all $0\leq r\leq s-1$. By a similar argument as above, one can show that ${\eta_r}^{p^{r}}\in S\otimes I$ for all $0< r\leq s-1$. Hence $\sum_{r=0}^{s-1}{\eta_r}^{p^{r}}\in S\otimes I$. Therefore, $w^{p^{s}}=(v')^{p^s}+\sum_{r=0}^{s-1}{\eta_r}^{p^{r}}\in S\otimes I\subset S\otimes \fm$.

Now we consider $\sum_{r=0}^{s-1}u_r^{p^{r}}$ in \eqref{D1pscalculationfirst}, where $u_r$ is a linear combination of commutators in $z$ and $w$. By Jacobi identity, we can rearrange each $u_r$ so that $u_r$ is in the span of $[w_t,[w_{t-1},[\dots,[w_2,[w_1, w]\dots]$, where $t=p^{s-r}-1$ and each $w_\alpha, 1\leq \alpha\leq t$, is equal to $z$ or to $w$. We consider all such iterated commutators and show that 
\begin{align}\label{claim2}
\text{$u_0=(\ad z)^{p^{s}-1}(w)=(s_0'\otimes (-1)^{s})$ $+$ (other terms in $S\otimes \fm$)},
\end{align}
and for $p-1\leq t=p^{s-r}-1\leq p^{s-1}-1$,
\begin{align}\label{claim1}
\text{$[w_t,[w_{t-1},[\dots,[w_2,[w_1, w]\dots]\in S\otimes \fm$.}
\end{align}
Hence for $1\leq r\leq s-1$, $u_r\in S\otimes \fm$ and $u_r^{p^{r}}\in S\otimes \fm$. 

Recall that $z=d_0+u$, where $d_0=\del_1+x_1^{p-1}\del_2+\dots +x_1^{p-1}\cdots x_{s-1}^{p-1}\del_s$ is a derivation of $\cO(s; \underline{1})$ and $u\in (I\del_1+\dots +I\del_m)\cap W(m; \underline{1})_{(p-1)}$; see \thref{dformlemma2} for notations. Note that $[z, w]=s_0'\otimes z(x_1^{p-1}\cdots x_{s}^{p-1})+[z, v']$. Since $d_0$ preserves the ideal $I$ and so does $z$, we have that $[z,v']\in[z, S \otimes I]=S\otimes z(I)\subseteq S\otimes I$. Hence to show \eqref{claim2} and \eqref{claim1}, we need to consider the action of $z$ on $x_1^{p-1}\cdots x_{s}^{p-1}$. We split our work into the following parts (i)-(vi):

\textbf{(i)} We show that for any $0<l<p^s-1$, $0\neq d_0^{l}(x_1^{p-1}\cdots x_{s}^{p-1})\in \fm_s$, where $\fm_s$ is the maximal ideal of $\cO(s; \underline{1})$. 

There are two ways to prove this result: (a) using the $|\,\,|_p$-degree of monomials or (b) using the standard degree of monomials and some results on $d_0$. Let us do both ways and see the difference.

(a) Let $\boldsymbol{x}^{A}=x_1^{a_1}\cdots x_s^{a_s}$ be any monomial in $\cO(s; \underline{1})$. Note that $\boldsymbol{x}^{A}\in \fm_s$ if and only if $|A|_p>0$. Consider $x_1^{p-1}\cdots x_{s}^{p-1}$, where $A=(p-1, \dots, p-1)$. Then $|A|_p=p^s-1$. By \thref{DegLexexample}(3), we know that the degree of $d_0$ with respect to $|\,\,|_p$ is $-1$. Hence for any $0<l<p^s-1$, $d_0^{l}(x_1^{p-1}\cdots x_{s}^{p-1})$ is a monomial in $\cO(s; \underline{1})$ with $|\,\,|_p$-degree $p^s-1-l>0$. Therefore, $0\neq d_0^{l}(x_1^{p-1}\cdots x_{s}^{p-1})\in \fm_s$.

(b) By \thref{WnDresultslem3}(i) and (iii), we know that 
\[
d_0^{p^{K}}=(-1)^{K}(\del_{K+1}+x_{K+1}^{p-1}\del_{K+2}+\dots+x_{K+1}^{p-1}\cdots x_{s-1}^{p-1}\del_s)
\] 
for all $0\leq K\leq s-1$ and $d_0^{p^{s}}=0$. Moreover, $d_0^{p^{s}-1}(x_1^{p-1}\cdots x_s^{p-1})=(-1)^{s}$. Hence $d_0^{l}(x_1^{p-1}\cdots x_{s}^{p-1})\neq 0$ for any $0<l<p^{s}-1$. We show that $d_0^{l}(x_1^{p-1}\cdots x_{s}^{p-1}) \in \fm_s$. Here we use a similar argument given in \cite[p.~151, line~-8 and p.~153, line 4]{P91}. By \eqref{Omgrading2.2.2}, $\cO(s; \underline{1})$ is a graded $W(s; \underline{1})$-module, i.e. 
\[
\cO(s;\underline{1})=\cO(s;\underline{1})_0\oplus\cO(s;\underline{1})_1\oplus \dots\oplus \cO(s;\underline{1})_{s(p-1)}.
\]
It is easy to see that $\cO(s;\underline{1})_{\eta}\subseteq \fm_s^{\eta}$ for any $\eta\geq 1$. Hence $x_1^{p-1}\cdots x_s^{p-1}\in \fm_s^{s(p-1)}$. Recall that $W(s; \underline{1})$ is a free $\cO(s; \underline{1})$-module with basis $\del_1, \dots, \del_s$. Then for any $\ccD \in W(s; \underline{1})$ and $0<c<s(p-1)$, we have that
\begin{equation}\label{generalDlequations}
\ccD^{c}(x_1^{p-1}\cdots x_s^{p-1}) \in \fm_s^{s(p-1)-c}.
\end{equation}
Since $l<p^s-1=\sum_{K=0}^{s-1}(p-1)p^K$, we have that $l=\sum_{K=0}^{s-1}a_Kp^K$, where $0\leq a_K\leq p-1$ and $\sum_{K=0}^{s-1}{a_K}<s(p-1)$. Then
\begin{equation*}
d_0^{l}(x_1^{p-1}\cdots x_s^{p-1})=\bigg(\prod_{K=0}^{s-1}{\big(d_0^{p^{K}}\big)}^{a_K}\bigg)(x_1^{p-1}\cdots x_s^{p-1}).
\end{equation*}
Applying \eqref{generalDlequations} with $\ccD=d_0^{p^{K}}$ and $c=a_K$ for $0\leq K\leq s-1$, we get 
\begin{equation*}
d_0^{l}(x_1^{p-1}\cdots x_s^{p-1})=\bigg(\prod_{K=0}^{s-1}{(d_0^{p^{K}})}^{a_K}\bigg)(x_1^{p-1}\cdots x_s^{p-1})\in \fm_s^{s(p-1)-\sum_{K=0}^{s-1}{a_K}}\subseteq \fm_s.
\end{equation*}
This proves (i). 

\textbf{(ii)} We show by induction that for any $0<l<p^s-1$,
\begin{equation*}
\text{$z^{l}(x_1^{p-1}\cdots x_{s}^{p-1})=d_0^{l}(x_1^{p-1}\cdots x_{s}^{p-1})$ $+$ (other terms in $I$)}\in \fm_s\oplus I.
\end{equation*}

For $l=1$, we have that $z(x_1^{p-1}\cdots x_{s}^{p-1})=d_0(x_1^{p-1}\cdots x_{s}^{p-1})+u(x_1^{p-1}\cdots x_{s}^{p-1})$. By (i), we know that $d_0(x_1^{p-1}\cdots x_{s}^{p-1})\in \fm_s$. Since $u\in\sum_{j=1}^{m}I\del_j$ and $I$ is an ideal of $\cO(m; \underline{1})$, we have that $u(x_1^{p-1}\cdots x_{s}^{p-1}) \in I$. It is clear that $\fm_s\cap I=\{0\}$. Hence the result holds for $l=1$. Suppose the result holds for $l=r_1<p^s-2$, i.e. $z^{r_1}(x_1^{p-1}\cdots x_{s}^{p-1})=d_0^{r_1}(x_1^{p-1}\cdots x_{s}^{p-1})+g$ for some $g\in I$. Applying $z$ again, we get 
\begin{align*}
z^{r_1+1}(x_1^{p-1}\cdots x_{s}^{p-1})&=d_0^{r_1+1}(x_1^{p-1}\cdots x_{s}^{p-1})+d_0(g)+u\big(d_0^{r_1}(x_1^{p-1}\cdots x_{s}^{p-1})\big)+u(g).
\end{align*}
It is clear that $u\big(d_0^{r_1}(x_1^{p-1}\cdots x_{s}^{p-1})\big)+u(g)\in I$. Since $d_0$ preserves $I$, we have that $d_0(g)\in I$. Since $r_1+1<p^s-1$, it follows from (i) that $d_0^{r_1+1}(x_1^{p-1}\cdots x_{s}^{p-1})\in \fm_s$. Hence the result holds for $l=r_1+1$. Therefore, for any $0<l<p^s-1$, the result holds. This proves (ii).

\textbf{(iii)} We show that $\text{$z^{p^s-1}(x_1^{p-1}\cdots x_{s}^{p-1})=(-1)^s$ $+$ (other terms in $I$)}$.

By (ii), we know that $z^{p^s-2}(x_1^{p-1}\cdots x_{s}^{p-1})=d_0^{p^{s}-2}(x_1^{p-1}\cdots x_{s}^{p-1})+g$ for some $g\in I$. Applying $z$ again and using a similar argument as in (ii), we get
\[
z^{p^s-1}(x_1^{p-1}\cdots x_{s}^{p-1})=d_0^{p^{s}-1}(x_1^{p-1}\cdots x_{s}^{p-1})+g'
\]
for some $g'\in I$. By \thref{WnDresultslem3}(iii), we know that $d_0^{p^{s}-1}(x_1^{p-1}\cdots x_{s}^{p-1})=(-1)^s$. Hence $z^{p^s-1}(x_1^{p-1}\cdots x_{s}^{p-1})=(-1)^s+g'$ as desired. This proves (iii).

Now we consider commutators in $z$ and $w$.

\textbf{(iv)} We show that for any $0<l<p^s-1$, $\text{$(\ad z)^{l}(w)\in (S\otimes \fm_s)\oplus (S\otimes I)$}$, and\\
$\text{$(\ad z)^{p^{s}-1}(w)=(s_0'\otimes (-1)^{s})$ $+$ (other terms in $S\otimes \fm$)}$.

Note that for any $0<l<p^s-1$, $(\ad z)^{l}(w)=s_0'\otimes z^{l}(x_1^{p-1}\cdots x_{s}^{p-1})+(\ad z)^{l}(v')$. Since $v'\in S\otimes I$ and $z$ preserves $I$, we have that $[z, v']\in [z,S\otimes I]=S\otimes z(I)\subseteq S\otimes I$. Then an easy induction on $l$ shows that $(\ad z)^{l}(v')\in S\otimes I$. By (ii), we know that $z^{l}(x_1^{p-1}\cdots x_{s}^{p-1})\in \fm_s\oplus I$. Hence 
\[
(\ad z)^{l}(w)=s_0'\otimes z^{l}(x_1^{p-1}\cdots x_{s}^{p-1})+(\ad z)^{l}(v')\in s_0'\otimes (\fm_s\oplus I)+S\otimes I \subseteq (S\otimes \fm_s)\oplus (S\otimes I).
\]
Similarly, 
\[(\ad z)^{p^{s}-1}(w)=s_0'\otimes z^{p^{s}-1}(x_1^{p-1}\cdots x_{s}^{p-1})+(\ad z)^{p^{s}-1}(v').
\]
Arguing as above, one can show that $(\ad z)^{p^{s}-1}(v')\in S\otimes I$. By (iii), we know that $\text{$z^{p^s-1}(x_1^{p-1}\cdots x_{s}^{p-1})=(-1)^s$ $+$ (other terms in $I$)}$. Since $I\subset \fm$, we have that
\[
\text{$(\ad z)^{p^{s}-1}(w)=(s_0'\otimes (-1)^s)$ $+$ (other terms in $S\otimes \fm$)}.
\]
This proves (iv) and \eqref{claim2}.

\textbf{(v)} We show that for any $0<l<p^s-1$, $[w, (\ad z)^{l}(w)]\in S\otimes I$.

By (iv), we know that $(\ad z)^{l}(w)\in (S\otimes \fm_s)\oplus (S\otimes I)$. Then 
\begin{align*}
[w, (\ad z)^{l}(w)]=&[s_0'\otimes x_1^{p-1}\cdots x_{s}^{p-1}+v', (\ad z)^{l}(w)]\\
\in &[s_0'\otimes x_1^{p-1}\cdots x_{s}^{p-1}+v', (S\otimes \fm_s)\oplus (S\otimes I)]\subseteq S\otimes I.
\end{align*}
This proves (v).

\textbf{(vi)} We show that for $p-1\leq t=p^{s-r}-1\leq p^{s-1}-1$, 
\[
[w_t,[w_{t-1},[\dots,[w_2,[w_1, w]\dots] \in S\otimes \fm,
\] 
where each $w_\alpha, 1\leq \alpha\leq t$, is equal to $z$ or to $w$. Hence for $1\leq r\leq s-1$, $u_r\in S\otimes \fm$ and $u_r^{p^{r}}\in S\otimes \fm$.

Let us consider all such iterated commutators $[w_t,[w_{t-1},[\dots,[w_2,[w_1, w]\dots]$. If $w_1=w$, then $[w_1, w]=0$. If $w_1=\dots= w_t=z$, then (iv) implies that 
\begin{align*}\label{adztw}
(\ad z)^{t}(w)\in (S\otimes \fm_s)\oplus (S\otimes I)= S\otimes \fm.
\end{align*}
Hence we need to consider commutators $(\ad w)^{\beta}(\ad z)^{\gamma}(w)$, where $1\leq \gamma<t$ and $\beta>0$. By (v), we know that $[w, (\ad z)^{\gamma}(w)]\in S\otimes I$. Since $w\in S\otimes \fm$, we have that 
\[
[w,[w, (\ad z)^{\gamma}(w)]]\in[S\otimes \fm, S\otimes I]\subseteq [S, S]\otimes \fm I\subseteq S\otimes I.
\]
Continuing in this way, we get $(\ad w)^{\beta}(\ad z)^{\gamma}(w)\in S\otimes I$. If $\beta+\gamma=t$, then we are done. If $\beta+\gamma<t$, then we need to consider $[w_\alpha, (\ad w)^{\beta}(\ad z)^{\gamma}(w)]$, where $w_\alpha=z$ or $w$. If $w_\alpha=z$, then $z$ preserves the ideal $I$. Hence
\[
[z,(\ad w)^{\beta}(\ad z)^{\gamma}(w)]\in [z, S\otimes I]=S\otimes z(I)\subseteq S\otimes I.
\]
If $w_\alpha=w\in S\otimes \fm$, then by the same reason as above, we get 
\[
[w,(\ad w)^{\beta}(\ad z)^{\gamma}(w)]\in S\otimes I.
\]
Hence $[w_\alpha, (\ad w)^{\beta}(\ad z)^{\gamma}(w)]\in S\otimes I$. If $1+\beta+\gamma=t$, then we are done. If $1+\beta+\gamma<t$, then we need to consider $[w_\nu,[w_\alpha, (\ad w)^{\beta}(\ad z)^{\gamma}(w)]]$, where $w_\nu=z$ or $w$. Arguing similarly, we get $[w_\nu,[w_\alpha, (\ad w)^{\beta}(\ad z)^{\gamma}(w)]]\in S\otimes I$. Continuing in this way, we eventually get $[w_t,[w_{t-1},[\dots,[w_2,[w_1, w]\dots] \in S\otimes \fm$ for all $p-1\leq t=p^{s-r}-1\leq p^{s-1}-1$.

Since $u_r$, $1\leq r\leq s-1$, is in the span of $[w_t,[w_{t-1},[\dots,[w_2,[w_1, w]\dots]$, we have that $u_r\in S\otimes \fm$ for all $1\leq r\leq s-1$. By \thref{generalJacobF} and $[S\otimes \fm, S\otimes \fm]\subseteq S\otimes \fm$, one can show that $u_r^{p^{r}}\in S\otimes \fm$ for all $1\leq r\leq s-1$. This proves (vi) and \eqref{claim1}.

It follows from \eqref{claim2} and \eqref{claim1} that
\begin{align*}
D_1^{p^{s}}=&z^{p^{s}}+w^{p^{s}}+\sum_{r=0}^{s-1}u_r^{p^{r}}=z^{p^{s}}+w^{p^{s}}+(\ad z)^{p^{s}-1}(w)+\sum_{r=1}^{s-1}u_r^{p^{r}}\\
=&\text{$z^{p^{s}}+(s_0'\otimes (-1)^{s})$ $+$ (other terms in $S\otimes \fm$)}.
\end{align*}

\textbf{\textit{Step 2}}. We show that $D_1$ is a nilpotent element of $\cL$ if and only if $s_0'$ is a nilpotent element of $S$. By step 1, we know that 
\[
\text{$D_1^{p^{s}}=z^{p^{s}}+(s_0'\otimes (-1)^{s})$ $+$ (other terms in $S\otimes \fm$)}.
\]
By our assumption, $z$ is nilpotent. By \thref{zpscalculationslemmma}, we know that $z^{p^{s}}\in W(m; \underline{1})_{(0)}$. Hence $z^{p^{s}}$ preserves $S\otimes\fm$. Applying \thref{D1pNcalculations} with $D'=D_1^{p^{s}}$, $d=z^{p^{s}}$, $s_0=s_0'$ and $f_0=(-1)^s$, we get for $N\geq 1$ (see \eqref{DpNresults}), 
\[
\text{$D_1^{p^{s+N}}=z^{p^{s+N}}+((s_0')^{p^{N}}\otimes (-1)^{sp^{N}})$ $+$ (other terms in $S\otimes \fm$)}.
\]
Hence for $s+N\gg 0$, $D_1$ is a nilpotent element of $\cL$ if and only if $s_0'$ is a nilpotent element of $S$. This completes the proof. 
\end{proof}

\subsection{The irreducibility of $\cN(\cL)$}\label{socleSmainthmproof}
We are now ready to prove that the nilpotent variety of $\cL=(S\otimes \cO(m; \underline{1}))\rtimes \dd D$ is irreducible. Recall our assumptions that $\dd{D}$ is a restricted transitive subalgebra of $W(m; \underline{1})$ such that $\cN(\dd D)$ is irreducible, $S$ is a simple restricted Lie algebra such that all its derivations are inner and $\cN(S)$ is irreducible. 
%Note that the simplicity of $S$ implies that $\cN(S)\neq 0$ (otherwise $S$ is a torus which is abelian).
\begin{thm}\thlabel{NLirr}
The variety $\cN(\cL)$ is irreducible.
\end{thm}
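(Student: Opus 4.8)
The plan is to realise $\cN(\cL)$ as the source of a surjective morphism onto the irreducible variety $\cN(\dd D)$ all of whose fibres are irreducible of one fixed dimension, and then to quote \thref{l:irr}. Let $\tilde\psi\colon\cN(\cL)\to\dd D$ be the restriction to $\cN(\cL)$ of the vector-space projection $\cL=(S\otimes\cO(m;\underline 1))\oplus\dd D\to\dd D$. Since $S\otimes\cO(m;\underline 1)$ is a $p$-ideal of $\cL$ with $\cL/(S\otimes\cO(m;\underline 1))\cong\dd D$ as restricted Lie algebras, this projection is a homomorphism of restricted Lie algebras, hence carries $\cN(\cL)$ into $\cN(\dd D)$; it is surjective because every $d\in\cN(\dd D)$, regarded inside $\cL$, is already a nilpotent element of $\cL$ (the $[p]$-operation of $\cL$ restricts on the $\dd D$-summand to that of $\dd D$). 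So $\tilde\psi\colon\cN(\cL)\to\cN(\dd D)$ is a surjective morphism onto an irreducible variety, and by \thref{nvarietythm}(iii) the source $\cN(\cL)$ is equidimensional of dimension $\dim\cL-\MT(\cL)$. Using \thref{MTlem} for the $p$-ideals $S\otimes\cO(m;\underline 1)\subset\cL$ and $S\otimes\fm\subset S\otimes\cO(m;\underline 1)$ (the latter consisting of $p$-nilpotent elements), together with $\dim\cN(S)=\dim S-\MT(S)$ and $\dim\cN(\dd D)=\dim\dd D-\MT(\dd D)$ from \thref{nvarietythm}(iii) and the hypotheses, one gets $\dim\cL-\MT(\cL)-\dim\cN(\dd D)=(\dim S)p^{m}-\MT(S)=\dim\cN(S)+(\dim S)(p^{m}-1)$; call this number $\delta$. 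It therefore suffices to show that every fibre of $\tilde\psi$ is irreducible of dimension $\delta$.

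Fix $d\in\cN(\dd D)$. Suppose first that $d\in W(m;\underline 1)_{(0)}$. Writing a variable element $v\in S\otimes\cO(m;\underline 1)$ by homogeneous degree, \thref{D1pNcalculations} says that $v+d$ is nilpotent precisely when the degree-zero component $s_{0}$ of $v$ lies in $\cN(S)$; hence
\[
\tilde\psi^{-1}(d)=\{\,s\otimes 1+v'+d\ \mid\ s\in\cN(S),\ v'\in S\otimes\fm\,\}\;\cong\;\cN(S)\times(S\otimes\fm),
\]
which is irreducible (as $\cN(S)$ is, by hypothesis) of dimension $\dim\cN(S)+(\dim S)(p^{m}-1)=\delta$.

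Now suppose $d\notin W(m;\underline 1)_{(0)}$. By \thref{GpreservesND} there is $\sigma\in G=\Aut(\cO(m;\underline 1))$ with $\sigma d\sigma^{-1}=z:=d_{0}+u$ as in \thref{dformlemma2}. Conjugation by $\Id_{S}\otimes\sigma$ is a restricted automorphism of $\Der(S\otimes\cO(m;\underline 1))$ carrying $\cL$ onto $\cL'=(S\otimes\cO(m;\underline 1))\rtimes\dd D'$ with $\dd D'=\sigma\dd D\sigma^{-1}$; since $\dd D'$ is again a restricted transitive subalgebra of $W(m;\underline 1)$ with $\cN(\dd D')$ irreducible, $\cL'$ is of exactly the same type as $\cL$, and this isomorphism identifies $\tilde\psi^{-1}(d)$ with the fibre of $\tilde\psi_{\cL'}$ over $z$. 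We may therefore assume $d=z=d_{0}+u$. Let $H=\langle\,\exp(\ad(\tilde s\otimes f))\mid\tilde s\otimes f\in S\otimes\fm\,\rangle\subseteq\Aut(\cL)$, a connected algebraic group by \thref{Sautoexps}; by \eqref{expformulasec2} each generator sends $z$ to $z$ plus an element of $S\otimes\cO(m;\underline 1)$ and preserves $S\otimes\cO(m;\underline 1)$, so $H$ preserves the $\dd D$-component, and hence the fibre $\tilde\psi^{-1}(z)$. By \thref{DconjugatetoD1} every $v+z\in\tilde\psi^{-1}(z)$ is $H$-conjugate to some $D_{1}=s_{0}'\otimes x_{1}^{p-1}\cdots x_{s}^{p-1}+v'+z$ with $s_{0}'\in S$ and $v'\in S\otimes I$, and by \thref{D1psnilpotentcalculations} such a $D_{1}$ is nilpotent iff $s_{0}'\in\cN(S)$. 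Consequently $\tilde\psi^{-1}(z)=H\cdot\Gamma$, where $\Gamma=\{\,s_{0}'\otimes x_{1}^{p-1}\cdots x_{s}^{p-1}+v'+z\mid s_{0}'\in\cN(S),\ v'\in S\otimes I\,\}$ is an affine translate of $\cN(S)\times(S\otimes I)$, hence irreducible; being the image of the morphism $H\times\Gamma\to\cN(\cL)$, $(h,\gamma)\mapsto h\cdot\gamma$, the fibre $\tilde\psi^{-1}(z)$ is irreducible.

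The remaining point, and the \emph{main obstacle}, is that this last fibre has dimension exactly $\delta$. The inequality $\dim\tilde\psi^{-1}(z)\ge\delta$ is automatic: applying \thref{fdimthm}(i) to the restriction of $\tilde\psi$ to each irreducible component of $\cN(\cL)$ meeting the fibre (every component has dimension $\dim\cL-\MT(\cL)$ and dominates a closed subvariety of $\cN(\dd D)$) shows that every component of every fibre of $\tilde\psi$ has dimension at least $\dim\cL-\MT(\cL)-\dim\cN(\dd D)=\delta$. For the reverse inequality one must analyse the morphism $H\times\Gamma\to\tilde\psi^{-1}(z)$ and check that its generic fibre has dimension $\dim(S\otimes I)$, so that $\dim\tilde\psi^{-1}(z)=\dim H+\dim\Gamma-\dim(S\otimes I)=(\dim S)(p^{m}-1)+\dim\cN(S)=\delta$; carrying out this bookkeeping—tracking how $H$ moves the $\cN(S)$- and $S\otimes I$-directions—is the one genuinely computational step. (Alternatively, the equality can be bypassed: the lower bound $\dim(\text{fibre})\ge\delta$ just obtained, combined with \thref{fdimthm}(ii) and the argument used in the proof of \thref{l:irr}, already forces $\cN(\cL)$ to have a single irreducible component.) Once every fibre of $\tilde\psi$ is known to be irreducible of dimension $\delta$, \thref{l:irr} applies with $X=\cN(\cL)$, $Y=\cN(\dd D)$ and $d=\delta$, and yields that $\cN(\cL)$ is irreducible.
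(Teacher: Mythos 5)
Your architecture is the paper's: push $\cN(\cL)$ onto $\cN(\dd D)$ by $\tilde{\psi}$, show the fibres are irreducible of one dimension $\delta=p^m\dim S-\MT(S)$, and invoke \thref{l:irr}. The fibre descriptions (Case $d\in W(m;\underline{1})_{(0)}$ via \thref{D1pNcalculations}; Case $d\notin W(m;\underline{1})_{(0)}$ via \thref{GpreservesND}, \thref{DconjugatetoD1}, \thref{D1psnilpotentcalculations}, giving $\tilde{\psi}^{-1}(z)=H\cdot\Gamma$) match the paper, and your lower bound $\dim\tilde{\psi}^{-1}(z)\geq\delta$ via \thref{fdimthm}(i) applied to $\tilde{\psi}$ restricted to the component of $\cN(\cL)$ containing the (irreducible) fibre is correct and is in fact \emph{simpler} than the paper's route to that inequality, which is a differential computation for the morphism $\theta:H\times\Gamma\to\tilde{\psi}^{-1}(z)$ resting on the structural fact $M\oplus kx_1^{p-1}\cdots x_s^{p-1}=\cO(m;\underline{1})$ from \thref{d-d0d_0image}.

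The genuine gap is the reverse inequality $\dim\tilde{\psi}^{-1}(z)\leq\delta$, which you neither prove nor successfully circumvent. Your proposed bypass --- ``the lower bound, combined with \thref{fdimthm}(ii) and the argument of \thref{l:irr}, already forces a single component'' --- fails: knowing only that the source is equidimensional, the target irreducible, and every fibre irreducible of dimension \emph{at least} $\delta$ does not exclude extra components sitting over proper closed subsets of $\cN(\dd D)$ with strictly larger fibres. (Take $X$ the union of the two coordinate axes in $\A^2$ projecting onto one axis: equidimensional source, irreducible target, every fibre irreducible of dimension $\geq 0$, yet $X$ is reducible. Running the \thref{l:irr} argument here merely shows the fibre over $0$ has dimension $>0$, which is no contradiction.) So an upper bound on the exceptional fibres is indispensable. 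The paper obtains it cheaply and without the ``bookkeeping'' you defer: by \thref{chethm} the locus $W_1=\{x\in\cN(\cL)\mid\dim\tilde{\psi}^{-1}(\tilde{\psi}(x))\geq\delta+1\}$ is Zariski closed, and it is $k^{*}$-stable because $\tilde{\psi}$ is homogeneous, so if nonempty it contains $0$; but $\tilde{\psi}^{-1}(0)=\cN(S)\otimes 1+S\otimes\fm$ has dimension exactly $\delta$, a contradiction. Splicing that one paragraph into your argument closes the gap and yields a complete proof that avoids both the paper's differential computation and your unexecuted fibre analysis of $H\times\Gamma\to\tilde{\psi}^{-1}(z)$.
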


\begin{proof}
Let $D$ be an element of $\cL=(S\otimes \cO(m; \underline{1}))\rtimes \dd D$. Then we can write 
\begin{equation}\label{Dform}
D=\sum_{i=0}^{m(p-1)}s_i\otimes f_i+d,
\end{equation}
where $s_i\in S$, $f_i\in \cO(m;\underline{1})$ with $\deg f_i=i$, and $d \in \dd D$. Note that the surjective Lie algebra homomorphism $\psi: \cL \to \dd D, D\mapsto d$ induces a surjective morphism 
\[
\tilde{\psi}: \cN(\cL)\to \cN(\dd D).
\]
By our assumption, $\cN(\dd D)$ is irreducible. By \thref{nvarietythm}, $\cN(\cL)$ is equidimensional. If we can prove that all fibres of $\tilde{\psi}$ are irreducible and have the same dimension, then the irreducibility of $\cN(\cL)$ follows from \thref{l:irr}.

Since $\dd D$ is a restricted transitive subalgebra of $W(m; \underline{1})$, i.e. $\dd D+ W(m; \underline{1})_{(0)}=W(m; \underline{1})$, there exist elements in $\dd D$ which are not in $W(m; \underline{1})_{(0)}$. Hence for $d\in\cN(\dd D)$, we have two cases to consider: either $d\in W(m; \underline{1})_{(0)}$ or $d\notin W(m; \underline{1})_{(0)}$. Let us compute $\tilde{\psi}^{-1}(d)$ in each case. 

\textbf{\textit{Case 1}}:  $d\in \cN(\dd D)$ and $d\in W(m; \underline{1})_{(0)}$. Let $D=\sum_{i=0}^{m(p-1)}s_i\otimes f_i+d$ be an element of $\cL$ such that $\tilde{\psi}(D)=d$; see \eqref{Dform} for notations. By \thref{D1pNcalculations}, we know that $D \in \cN(\cL)$ if and only if $s_0\in \cN(S)$. As a result, 
\[
\tilde{\psi}^{-1}(d)=\cN(S)\otimes 1+S\otimes \fm +d \cong \cN(S)\otimes 1+S\otimes \fm.
\]
Since $S\otimes \fm$ is irreducible and $\cN(S)$ is irreducible by our assumption, it follows that all fibres $\tilde{\psi}^{-1}(d)$ are irreducible. Moreover, 
\begin{align*}
\dim \tilde{\psi}^{-1}(d)&=\dim (\cN(S)\otimes 1)+\dim (S\otimes \fm)\\
&=\big(\dim(S\otimes 1)-\MT(S)\big)+\dim (S\otimes \fm)\quad (\text{by \thref{nvarietythm}(iii)})\\
&=\dim(S\otimes \cO(m;\underline{1}))-\MT(S)\\
&=p^m\dim S-\MT(S).
\end{align*}

\textbf{\textit{Case 2}}: $d\in \cN(\dd D)$ and $d\notin W(m; \underline{1})_{(0)}$. 

\textbf{\textit{Step 1}}. We compute $\tilde{\psi}^{-1}(d)$ for all $d\in \cN(\dd D)$ with $d\notin W(m; \underline{1})_{(0)}$. Then we deduce that they are irreducible. By \thref{dformlemma2} and \thref{GpreservesND}, we may assume that 
\begin{equation}\label{dform}
d=d_0+u,
\end{equation}
where 
\[
d_0=\del_1+x_1^{p-1}\del_2+\dots+x_1^{p-1}\cdots x_{s-1}^{p-1}\del_s
\]
with $1\leq s\leq m$, $u\in (I\del_1+\dots+I \del_m)\cap W(m; \underline{1})_{(p-1)}$ and $I$ is the ideal of $\cO(m; \underline{1})$ generated by $x_{s+1}, \dots, x_m$.

Let $D=\sum_{i=0}^{m(p-1)}s_i\otimes f_i+d$ be an element of $\cL$ such that $\tilde{\psi}(D)=d$; see \eqref{Dform} and \eqref{dform} for notations. Recall the subgroup $H$ of $\Aut(\cL)$ which is generated by all $\exp(\ad(\tilde{s}\otimes f))$, where $\tilde{s}\otimes f \in S \otimes \fm$; see \thref{Sautoexps}. Note that $H$ is a connected algebraic group with $S\otimes \fm\subseteq\Lie(H)$. Since $\exp(\ad(\tilde{s}\otimes f))=\sum_{j=0}^{p-1}\frac{1}{j!}(\ad (\tilde{s}\otimes f))^{j}$ and $D$ is of the form \eqref{Dform}, it is easy to see that $H$ stabilizes the fibres of $\tilde{\psi}$. By \thref{DconjugatetoD1}, we know that $D$ is conjugate under $H$ to 
\begin{align*}
D_1=s_0'\otimes x_1^{p-1}\cdots x_{s}^{p-1}+v'+d, 
\end{align*}
where $s_0'\in S$ (possibly $0$), $v'\in S\otimes I$ and $d=d_0+u$ as in \eqref{dform}. Then $D$ is nilpotent if and only if $D_1$ is nilpotent. By \thref{D1psnilpotentcalculations}, we know that $D_1\in \cN(\cL)$ if and only if $s_0'\in \cN(S)$. Hence all fibres of $\tilde{\psi}$ have the form 
\[
H.\big(\cN(S)\otimes x_1^{p-1}\cdots x_{s}^{p-1}+S\otimes I+d\big)\cong H.\big(\cN(S)\otimes x_1^{p-1}\cdots x_{s}^{p-1}+S\otimes I\big).
\] 
Since $H$ is connected, $\cN(S)$ and $S\otimes I$ are irreducible, it follows that all fibres $\tilde{\psi}^{-1}(d)$ are irreducible.

\textbf{\textit{Step 2}}. We show that all fibres of $\tilde{\psi}$ have the same dimension. By case 1, we know that $\dim \tilde{\psi}^{-1}(d)=p^m\dim S-\MT(S)$ for all $d \in \cN(\ccD)$ with $d\in W(m;\underline{1})_{(0)}$. In particular,
\begin{equation}\label{0dim}
\dim \tilde{\psi}^{-1}(0)=p^m\dim S-\MT(S).
\end{equation}
To finish the proof we just need to show that 
\[
\dim \tilde{\psi}^{-1}(d)=\dim \tilde{\psi}^{-1}(0)
\]
for all $d\in \cN(\dd D)$ with $d \notin W(m;\underline{1})_{(0)}$. 

\textbf{\textit{Step 2(i)}}. We first show that 
\[
\dim \tilde{\psi}^{-1}(0)\geq\dim \tilde{\psi}^{-1}(d)
\] 
for all $d\in \cN(\dd D)$ with $d \notin W(m;\underline{1})_{(0)}$. Suppose the contrary, i.e. $\dim \tilde{\psi}^{-1}(0)<\dim \tilde{\psi}^{-1}(d)$ for some $d\in \cN(\dd D)$ with $d \notin W(m;\underline{1})_{(0)}$. By \thref{chethm}, the set 
\[
W_1=\big\{x\in \cN(\cL)\,|\,\dim \tilde{\psi}^{-1}(\tilde{\psi}(x))\geq r\big\}
\]
is Zariski closed in $\cN(\cL)$ for every $r\in \N_0$. We now take $r=\dim\tilde{\psi}^{-1}(0)+1$. If $W_1$ is empty, then we are done. If $W_1$ is nonempty, then it contains $w+d\in \cN(\cL)$ with $w\in S\otimes \fm$ such that $\dim \tilde{\psi}^{-1}(\tilde{\psi}(w+d))\geq r$. Note that for all $\lambda\in k^{*}$, 
\[
\tilde{\psi}(\lambda (w+d))=\lambda\tilde{\psi}(w+d).
\]  
Then 
\[
\tilde{{\psi}}^{-1}\big(\tilde{\psi}(\lambda (w+d))\big)=\lambda\tilde{{\psi}}^{-1}\big(\tilde{\psi}(w+d)\big).
\]
So $W_1$ is $k^{*}$-stable. Since $W_1$ is Zariski closed, it contains $0$. But this contradicts our choice of $r$. As a result, 
\begin{equation}\label{samedim1}
\dim\tilde{\psi}^{-1}(0)\geq\dim \tilde{\psi}^{-1}(d)
\end{equation}
for all $d\in \cN(\dd D)$ with $d \notin W(m;\underline{1})_{(0)}$.

\textbf{\textit{Step 2(ii)}}. Next we show that 
\[
\dim \tilde{\psi}^{-1}(0)\leq\dim \tilde{\psi}^{-1}(d)
\]
for all $d\in \cN(\dd D)$ with $d \notin W(m;\underline{1})_{(0)}$. Consider the morphism 
\begin{align*}
\theta: H \times \big(\cN(S)\otimes x_1^{p-1}\cdots x_{s}^{p-1}+S\otimes I+d\big)&\to \tilde{\psi}^{-1}(d)\\
(h, D_1)&\mapsto h(D_1).
\end{align*}
By the work in step 1, we know that any point in the fibre $\tilde{\psi}^{-1}(d)$ has the form 
\[
\theta\big(h, s_0'\otimes x_1^{p-1}\cdots x_{s}^{p-1}+v'+d\big)
\]
for some $h\in H, s_0'\in \cN(S)$ and $v'\in S\otimes I$. As $H$ acts on the fibre it preserves smooth points. Hence we may assume that $h=1$. Instead of computing $\dim \tilde{\psi}^{-1}(d)$ which is difficult, we can compute the differential of $\theta$ at a smooth point of the fibre with $h=1$. Since $\cN(S)$ is irreducible, the set of smooth points in $\cN(S)$ is nonempty. Let $s_0'$ be a smooth point of $\cN(S)$. Then $\dim \T_{s_0'}(\cN(S))=\dim \cN(S)$. Take 
\[
D_1=s_0'\otimes x_1^{p-1}\cdots x_{s}^{p-1}+v'+d\in \tilde{\psi}^{-1}(d).
\]
Then the differential of $\theta$ at $(1, D_1)$ is 
\begin{align*}
(d\theta)_{(1, D_1)}: \Lie(H) \oplus\big(\T_{s_{0}'}(\cN(S))\otimes x_1^{p-1}\cdots x_{s}^{p-1}+S\otimes I\big)&\to \T_{D_1}(\tilde{\psi}^{-1}(d))\\
(X,Y)&\mapsto [X,D_1]+Y.
\end{align*}
Since $D_1$ is a smooth point, this implies that $\dim \T_{D_1}(\tilde{\psi}^{-1}(d))=\dim \tilde{\psi}^{-1}(d)$. In order to show that $\dim \tilde{\psi}^{-1}(0)\leq\dim \tilde{\psi}^{-1}(d)$, we just need to show that 
\[
\dim \tilde{\psi}^{-1}(0)\leq \dim \T_{D_1}(\tilde{\psi}^{-1}(d)).
\]
It is clear that 
\begin{equation}\label{diff1}
\T_{s_{0}'}(\cN(S))\otimes x_1^{p-1}\cdots x_{s}^{p-1}+S\otimes I\subseteq \Ima ((d\theta)_{(1, D_1)}). 
\end{equation}
Moreover, $[\Lie(H), D_1]\subseteq \Ima ((d\theta)_{(1, D_1)})$. Since $S\otimes \fm \subseteq \Lie(H)$, we have that 
\begin{equation}\label{diff2}
[S\otimes \fm, D_1]=[S\otimes \fm, s_0'\otimes x_1^{p-1}\cdots x_{s}^{p-1}+v'+d]\subseteq \Ima ((d\theta)_{(1, D_1)})
\end{equation}

Observe \eqref{diff1} and \eqref{diff2} carefully. We see that $\Ima ((d\theta)_{(1, D_1)})$ contains the following subspaces:
\begin{enumerate}[\upshape(1)]
\item $\T_{s_{0}'}(\cN(S))\otimes x_1^{p-1}\cdots x_{s}^{p-1}$ which has dimension equal to $\dim \cN(S)$.
\item $S\otimes I$.
\item $[S\otimes \fm, s_0'\otimes x_1^{p-1}\cdots x_{s}^{p-1}+v'+d]$.  
\end{enumerate} 
Look at the last two subspaces. Note that $\fm=\fm_s\oplus I$, where $\fm_s$ is the maximal ideal of $\cO(s; \underline{1})$. Moreover, $\fm I\subseteq I$. Since $S\otimes I$ is in $\Ima ((d\theta)_{(1, D_1)})$, 
\[
[S\otimes \fm, s_0'\otimes x_1^{p-1}\cdots x_{s}^{p-1}]=[S\otimes (\fm_s\oplus I), s_0'\otimes x_1^{p-1}\cdots x_{s}^{p-1}]\subseteq S\otimes I,
\]
and 
\[
[S\otimes \fm, v']\subseteq [S\otimes \fm, S\otimes I]\subseteq S\otimes \fm I\subseteq S\otimes I,
\]
we see that $\Ima ((d\theta)_{(1, D_1)})$ contains
\begin{equation*}
\begin{aligned}
&S\otimes I+[S\otimes \fm, s_0'\otimes x_1^{p-1}\cdots x_{s}^{p-1}+v'+d]\\
=&S\otimes I+[S\otimes \fm, d]\\
=&S\otimes I+S\otimes d(\fm).
\end{aligned}
\end{equation*}
Recall \eqref{dform} that $d=d_0+u$, where $d_0=\del_1+x_1^{p-1}\del_2+\dots+x_1^{p-1}\cdots x_{s-1}^{p-1}\del_s$ is a derivation of $\cO(s; \underline{1})$ and $u\in (I\del_1+\dots+I \del_m)\cap W(m; \underline{1})_{(p-1)}$. Since $u(\fm)\subseteq I$, we have that $S\otimes I+S\otimes d(\fm)=S\otimes I+S\otimes d_0(\fm)$. Since $\fm=\fm_s\oplus I$, $d_0(\fm_s)\subset \cO(s; \underline{1})$, $d_0(I)\subseteq I$ and $\cO(s; \underline{1})\cap I=\{0\}$, we have that $d_0(\fm)=d_0(\fm_s)\oplus d_0(I)$. Since $d_0(I)\subseteq I$, we have that 
\begin{equation}\label{lastwosubs}
S\otimes I+S\otimes d(\fm)=S\otimes I+S\otimes d_0(\fm)=(S\otimes I)\oplus (S\otimes d_0(\fm_s))\cong S\otimes (I\oplus d_0(\fm_s)).
\end{equation}
By \thref{d-d0d_0image} (see \eqref{Moplus}), we know the subspace $M=I+d(\fm)=I\oplus d_0(\fm_s)$ has the property that $M\oplus kx_1^{p-1}\cdots x_s^{p-1}=\cO(m; \underline{1})$. By \eqref{lastwosubs}, we see that the sum of the last two subspaces equals $S\otimes M$. This subspace of dimension $(p^m-1)\dim S$ is contained in $\Ima ((d\theta)_{(1, D_1)})$ and the complement of the first subspace $\T_{s_{0}'}(\cN(S))\otimes x_1^{p-1}\cdots x_{s}^{p-1}$. Therefore, 
\begin{align*}
\dim\Ima ((d\theta)_{(1, D_1)})&\geq \dim(\T_{s_{0}'}(\cN(S))\otimes x_1^{p-1}\cdots x_{s}^{p-1})+\dim (S\otimes M)\\
&=\dim \cN(S)+(p^m-1)\dim S\\
&=(\dim S-\MT(S))+ (p^m-1)\dim S\\
&=p^m\dim S-\MT(S)\\
&=\dim \tilde{\psi}^{-1}(0)\quad \quad \text{(see \eqref{0dim})}.
\end{align*}
Since $\Ima ((d\theta)_{(1, D_1)})\subseteq \T_{D_1}(\tilde{\psi}^{-1}(d))$ we have that
\begin{equation}\label{samedim2}
\dim \tilde{\psi}^{-1}(0)\leq \dim\T_{D_1}(\tilde{\psi}^{-1}(d))=\dim \tilde{\psi}^{-1}(d).
\end{equation}
It follows from \eqref{samedim1} and \eqref{samedim2} that all fibres of $\tilde{\psi}$ have the same dimension. 

By case 1 and 2, we see that all fibres of $\tilde{\psi}$ are irreducible and have the same dimension. Hence $\cN(\cL)$ is irreducible by \thref{l:irr}. This completes the proof.
\end{proof}

\begin{rmk}
\begin{enumerate}
\item We verified Premet's conjecture for a class of semisimple restricted Lie algebras $\cL=(S\otimes \cO(m; \underline{1}))\rtimes \dd D$ under the assumptions that $S$ is a simple restricted Lie algebra over $k$ with $\ad S=\Der S$ and $\cN(S)$ is irreducible, $\dd D$ is a restricted transitive subalgebra of $W(m;\underline{1})$ with $\cN(\dd D)$ is irreducible. 
\item A similar argument works for $\bigoplus _{i=1}^{t}(S_{i}\otimes \cO(m_{i}; \underline{1}))\rtimes (\Id_{S_{i}}\otimes \dd D_{i})$, where
\begin{enumerate}[\upshape(i)]
\item each $S_{i}$ is a simple restricted Lie algebra such that $\ad S_{i}=\Der S_{i}$ and $\cN(S_{i})$ is irreducible, and 
\item each $\dd D_{i}$ is a restricted transitive subalgebra of $W(m_i; \underline{1})$ such that $\cN(\dd D_{i})$ is irreducible.
\end{enumerate}
\item There are further cases to consider such as $\ad S \subsetneq \Der S$ and other semisimple restricted Lie algebras which are not of the form given in \thref{blockthm}. It is unclear to the author how to tackle these problems.
\end{enumerate}
\end{rmk}

\chapter{The nilpotent variety of $W(1;n)_p$ is irreducible}
In this chapter we assume that $k$ is an algebraically closed field of characteristic $p>3$, and $n\in \N_{\geq 2}$. We are interested in the minimal $p$-envelope $W(1;n)_p$ of the Zassenhaus algebra $W(1;n)$. This restricted Lie algebra is semisimple. Recent studies have shown that the variety $\cN(W(1;n)):=\cN(W(1;n)_p)\cap W(1;n)$ is reducible \cite[Theorem 4.8(i)]{YS16}. So investigating the variety $\cN(W(1;n)_p)$ becomes critical for verifying Premet's conjecture. Note that this chapter contains the main result of the research paper \cite{C19} written by the author of this thesis.

This chapter is organized as follows. We first recall some basic results on $W(1;n)$ and $W(1;n)_p$. Then we study some nilpotent elements of $W(1;n)_p$ and identify an irreducible component of $\cN(W(1; n)_p)$. Finally, we prove that $\cN(W(1; n)_p)$ is irreducible. The proof is similar to Premet's proof for the Jacobson-Witt algebra $W(n;\underline{1})$; see \thref{Wnproof}. But the $(n+1)$-dimensional subspace $V$ used in $W(n;\underline{1})$ has no obvious analogue for $W(1;n)_p$. Therefore, a new $V$ is constructed using the original definition of $W(1;n)$ due to H.~Zassenhaus. In general, constructing analogues of $V$ for the minimal $p$-envelopes of $W(n;\underline{m})$, where $\underline{m}=(m_1, \dots, m_n)$ and $m_i>1$ for some $i$, would enable one to check Premet's conjecture for this class of restricted Lie algebras.

\section{Preliminaries}
\subsection{$W(1;n)$ and $W(1;n)_p$}\label{2.1}
Let $k$ be an algebraically closed field of characteristic $p>3$ and $n\in \N$. The divided power algebra $\cO(1;n)$ has a $k$-basis $\{x^{(a)}=\frac{1}{a!}x^a\,|\, 0\leq a \leq p^{n}-1\}$, and the product in $\cO(1;n)$ is given by $x^{(a)}x^{(b)} = \binom {a + b}{a} x^{(a + b)}$ if $0\leq a+b \leq p^n-1$ and $0$ otherwise. In the following, we write $x^{(1)}$ as $x$. It is straightforward to see that $\cO(1;n)$ is a local algebra with the unique maximal ideal $\fm$ spanned by all $x^{(a)}$ such that $a\geq 1$. A system of divided powers is defined on $\fm$, $f\mapsto f^{(r)} \in \cO(1;n)$, where $r\geq0$; see \thref{dpower}.

A derivation $\ccD$ of $\cO(1; n)$ is called \textit{special} if $\ccD(x^{(a)})=x^{(a-1)}\ccD(x)$ for $1\leq a \leq p^{n}-1$ and $0$ otherwise; see \thref{specialderivation}. The set of all special derivations of $\cO(1;n)$ forms a Lie subalgebra of $\Der \cO(1;n)$ denoted $\fL=W(1;n)$ and called the \textit{Zassenhaus algebra}. When $n=1$, $\fL$ coincides with the Witt algebra $W(1;1):=\Der \cO(1;1)$, a simple and restricted Lie algebra. When $n\geq 2$, $\fL$ provides the first example of a simple, non-restricted Lie algebra; see \thref{wittthm}. From now on, we always assume that $n\geq 2$.

The Zassenhaus algebra $\fL$ admits an $\cO(1;n)$-module structure via $(f\ccD)(x)=f\ccD(x)$ for all $f\in \cO(1;n)$ and $\ccD\in \fL$. Since each $\ccD\in \fL$ is uniquely determined by its effect on $x$, it is easy to see that $\fL$ is a free $\cO(1;n)$-module of rank $1$ generated by the special derivation $\del$ such that $\del(x^{(a)})=x^{(a-1)}$ if $1\leq a \leq p^{n}-1$ and $0$ otherwise; see \thref{wittthm}. Hence the Lie bracket in $\fL$ is given by $[x^{(i)}\del,x^{(j)}\del]=\big(\binom {i+j-1}{i}-\binom {i+j-1}{j}\big)x^{(i+j-1)}\del$ if $1\leq i+j \leq p^{n}$ and $0$ otherwise; see formula \eqref{LbracketWm}.

There is a $\Z$-grading on $\fL$, i.e. $\fL=\bigoplus_{i=-1}^{p^{n}-2} kd_{i}$ with $d_{i}:=x^{(i+1)}\partial$. Put $\fL_{(i)}:=\bigoplus_{j \geq i}^{p^{n}-2} kd_{j}$ for $-1\leq i\leq p^{n}-2$. Then this  $\Z$-grading induces a natural filtration
\begin{equation}\label{filtrationoffL}
\fL=\fL_{(-1)}\supset\fL_{(0)}\supset\fL_{(1)}\supset\dots \supset\fL_{(p^n-2)}\supset 0
\end{equation}
on $\fL$. It is known that for $i\geq 0$, 
\begin{align}\label{eip}
d_{i}^p=
\begin{cases}
d_{i}, &\text{if $i=0$,}\\
d_{pi}, &\text{if $i=p^{t}-1$ for some $1\leq t\leq n-1$,}\\
0, &\text{otherwise}.
\end{cases}
\end{align}
In particular, $\fL_{(0)}$ is a restricted Lie subalgebra of $\Der \cO(1;n)$ and $\fL_{(1)}=\,\text{nil\,}(\fL_{(0)})$; see \cite[p.~3]{YS16}. We show that this implies that all nonzero tori in $\fL_{(0)}$ have dimension $1$. Consider the surjective map $\pi: \fL_{(0)}\twoheadrightarrow \fL_{(0)}/\fL_{(1)}$. Let $\ft$ be any nonzero torus in $\fL_{(0)}$. Let $\pi|_\ft$ be the restriction of $\pi$ to $\ft$. Then $\Ker \pi|_\ft \subset\fL_{(1)}$. Since $\fL_{(1)}$ is a nilpotent $p$-ideal of $\fL_{(0)}$, we have that $\Ker \pi|_\ft =0$. Hence any nonzero torus $\ft$ in $\fL_{(0)}$ has dimension $1$; see \cite[Sec.~2]{T98}. As an example, $kd_0=kx\del$ is a $1$-dimensional torus in $\fL_{(0)}$.

Note that the Zassenhaus algebra $\fL$ has another presentation. Let $q=p^n$ and let $\F_q \subset k$ be the set of all roots of $x^q-x=0$. This is a finite field with $q$ elements. Then $\fL$ has a $k$-basis $\{e_{\alpha}\,|\, \alpha\in \F_q\}$ with the Lie bracket given by $[e_\alpha, e_\beta]=(\beta-\alpha)e_{\alpha+\beta}$ \cite[Theorem~7.6.3(1)]{S04}. We will use this presentation in Sec.~\ref{3.2.3irredsection}.

It is also useful to mention that there is an embedding from $\fL$ into the Jacobson-Witt algebra $W(n; \underline{1})$; see \cite[Lemma 3.1 and Proposition 4.3]{YS16}. Recall that $W(n; \underline{1})$ is the derivation algebra of $\cO(n; \underline{1})$, where $\cO(n; \underline{1})=k[X_1, \dots, X_n]/(X_1^{p}, \dots, X_{n}^{p})$ is the truncated polynomial ring in $n$ variables. For each $1\leq i\leq n$, we write $x_i$ for the image of $X_i$ in $\cO(n; \underline{1})$. Note that $W(n; \underline{1})$ is a free $\cO(n; \underline{1})$-module of rank $n$ generated by the partial derivatives $\del_1, \dots, \del_n$ such that $\del_i(x_j)=\delta_{ij}$ for all $1\leq i, j\leq n$; see Sec.~\ref{nilpotentvarietyresultsWn} for details. The Zassenhaus algebra $\fL$ is the set of all special derivations of the divided power algebra $\cO(1;n)$. Note that $\cO(1;n)$ has a $k$-basis $\{x^{(a)}\,|\, 0\leq a\leq p^n-1\}$. For each $0\leq a \leq p^n-1$, let $a=\sum_{K=0}^{n-1}a_Kp^{K}$, $0\leq a_K\leq p-1$, be the $p$-adic expansion of $a$. Define 
\begin{equation}\label{embed1}
\begin{aligned}
\phi: \cO(1;n) &\rightarrow \cO(n; \underline{1})\\
x^{(a)}&\mapsto \prod_{i=1}^{n} \frac{x_i^{a_{i-1}}}{a_{i-1}!};
\end{aligned}
\end{equation}
see \cite[(3.1.1)]{YS16}. Then $\phi$ is an algebra isomorphism and it induces the following Lie algebra isomorphism: 
\begin{equation}\label{embed2}
\begin{aligned}
\varphi: \Der\cO(1;n) &\xrightarrow{\sim} W(n; \underline{1})=\Der\cO(n; \underline{1})\\
D &\mapsto \varphi(D),
\end{aligned}
\end{equation}
where $(\varphi(D))(u)=\phi\big(D(\phi^{-1}(u))\big)$ for all $u\in \cO(n; \underline{1})$. Moreover, $\varphi(D^{p})=\varphi(D)^{p}$ for all $D\in \Der\cO(1;n)$; see \cite[(3.1.2)]{YS16}. Since $\fL$ is a Lie subalgebra of $\Der\cO(1;n)$, the above $\varphi$ induces an embedding 
\begin{equation}\label{embed3}
\iota=\varphi|_{\fL}: \fL \hookrightarrow W(n; \underline{1});
\end{equation}
see \cite[(4.3.2)]{YS16}. More precisely, $\iota$ is induced by $\phi$ defined in \eqref{embed1}. By a direct computation, we have that 
\begin{equation*}
\iota(\del)=\sD_1=\del_1+\sum_{l=1}^{n-1}(-1)^{l}x_1^{p-1}\cdots x_l^{p-1}\del_{l+1}. 
\end{equation*}
Note that $\sD_1$ has a similar expression to $\sD$ in \thref{WnDresultslem3}. We will use the above embedding $\iota$ in the sketch proof of \thref{YS161}.

Let $\fL_p=W(1;n)_p$ denote the $p$-envelope of $\fL\cong \ad \fL$ in $\Der \fL$. This semisimple restricted Lie algebra is referred to as the \textit{minimal $p$-envelope} of $\fL$; see \thref{ssminpenvelope}. By \thref{wittpenvelope}, we have that $\fL_p=\fL +\sum_{i=1}^{n-1}k\partial^{p^{i}}$. Here we identify $\fL$ with $\ad \fL\subset \Der \fL$ and regard $\del^{p^{n}}$ as $0$. Then $\dim \fL_p=p^n+(n-1)$. By formula \eqref{LbracketWm}, we have that for any $1\leq i\leq n-1$ and $0\leq a\leq p^n-1$, the brackets $[\del^{p^{i}}, x^{(a)}\del]=x^{(a-p^{i})}\del$ if $a\geq p^i$ and $0$ otherwise.

Let $\cN$ denote the variety of nilpotent elements in $\fL_p$. It is well known that $\cN$ is Zariski closed in $\fL_p$. One should note that the maximal dimension of toral subalgebras in $\fL_p$ equals $n$ \cite[Theorem~7.6.3(2)]{S04}. Moreover, $\fL_p$ possesses a toral Cartan subalgebra; see \cite[p. 555]{P90}. Hence the set of all semisimple elements of $\fL_p$ is Zariski dense in $\fL_p$; see \cite[Theorem~2]{P87}. It follows from these facts and \thref{nvarietythm} that there exist nonzero homogeneous polynomial functions $\varphi_0, \dots, \varphi_{n-1}$ on $\fL_p$ such that $\cN$ coincides with the set of all common zeros of $\varphi_0, \dots, \varphi_{n-1}$. The variety $\cN$ is equidimensional of dimension $p^n-1$. Furthermore, any $\ccD \in \cN$ satisfies 
\begin{equation}\label{nilpotentconditioninL}
\ccD^{p^{n}}=0.
\end{equation}

\subsection{The automorphism group $G$}\label{2.2}
An automorphism $\Phi$ of $\cO(1;n)$ is called \textit{admissible} if $\Phi(f^{(r)})=\Phi(f)^{(r)}$ for all $f\in \fm$ and $r\geq 0$. By \cite[Lemma 8]{W71}, this is equivalent to the condition that $\Phi(x^{(p^{j})})=\Phi(x)^{(p^{j})}$ for any $1\leq j\leq n-1$. Let $G$ denote the group of all admissible automorphisms of $\cO(1;n)$. It is well known that $G$ is a connected algebraic group, and each $\Phi \in G$ is uniquely determined by its effect on $x$. By \cite[Theorem~2]{W71}, an assignment $\Phi(x):=\sum_{i=1}^{p^{n}-1}\alpha_{i}x^{(i)}$ with $\alpha_{i}\in k$ such that $\alpha_{1}\neq 0$ and $\alpha_{p^{j}}=0$ for $1\leq j\leq n-1$ extends to an admissible automorphism of $\cO(1;n)$. Conversely, if $\Phi$ is an admissible automorphism of $\cO(1;n)$, then $\Phi(x)$ has to be of this form. Hence $\dim G=p^{n}-n$.

Any automorphism of the Zassenhaus algebra $\fL$ is induced by a unique admissible automorphism $\Phi$ of $\cO(1;n)$ via the rule $\ccD^{\Phi}=\Phi \circ \ccD \circ \Phi^{-1}$, where $\ccD\in \fL$ \cite[Theorem~12.8]{R56}. So from now on, we shall identify $G$ with the automorphism group of $\fL$. It is known that $G$ respects the natural filtration of $\fL$. In \cite[Sec.~1]{T98}, Tyurin stated explicitly that if $\Phi\in G$ is such that $\Phi(x)=y$, then $\Phi(g(x)\partial)=(y')^{-1}g(y)\partial$ for any $g(x)\in \cO(1;n)$. Extend this by defining $\Phi(\del^{p^{i}})=\Phi(\del)^{p^{i}}$ for $1\leq i\leq n-1$, one gets an automorphism of $\fL_p$.

It follows from the above description of $G$ that $\Lie(G)\subseteq \fL_{(0)}$. More precisely,
\begin{lem}\thlabel{LieG}
The set $\{d_i=x^{(i+1)}\partial\,|\,\text{$0 \leq i \leq p^n-2$ and $i\neq p^l-1$ for $1\leq l\leq n-1$}\}$ forms a $k$-basis of $\Lie(G)$.
\end{lem}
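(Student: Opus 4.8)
The plan is to pin down $\Lie(G)$ by differentiating the concrete description of $G$ supplied in Section~\ref{2.2}, and then to exhibit the claimed set as a basis. Recall that $G$ is a connected algebraic group, that each $\Phi\in G$ is uniquely determined by $\Phi(x)$, and that the assignments $\Phi(x)=\sum_{i=1}^{p^n-1}\alpha_i x^{(i)}$ with $\alpha_1\neq 0$ and $\alpha_{p^j}=0$ for $1\leq j\leq n-1$ are exactly those that extend to admissible automorphisms. Thus $G$ is, as a variety, identified with the principal open subset $\{\alpha_1\neq 0\}$ of the affine space with coordinates $\{\alpha_i\mid 1\leq i\leq p^n-1,\ i\neq p^j\ (1\leq j\leq n-1)\}$, which has dimension $p^n-n$, matching $\dim G$ as stated. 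In particular $G$ is smooth and $\dim\Lie(G)=p^n-n$.

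First I would recall the standard identification of $\Lie(G)$ inside $\fL_{(0)}$: since every automorphism of $\fL$ is induced by $\ccD\mapsto\Phi\circ\ccD\circ\Phi^{-1}$ for a unique $\Phi\in G$, and $G$ respects the filtration \eqref{filtrationoffL}, we have $\Lie(G)\subseteq\fL_{(0)}=\bigoplus_{i\ge 0}kd_i$, as already noted in the excerpt. Next I would compute the tangent space directly. Consider a curve $\Phi_\varepsilon$ in $G$ through $\mathrm{id}$ of the form $\Phi_\varepsilon(x)=x+\varepsilon f(x)$ with $f\in\fm$ satisfying the admissibility constraint: writing $f=\sum_i c_i x^{(i)}$, the condition $\Phi_\varepsilon(x^{(p^j)})=\Phi_\varepsilon(x)^{(p^j)}$ forces, to first order in $\varepsilon$, $c_{p^j}=0$ for $1\le j\le n-1$ (one uses the divided-power identities of \thref{dpower}, in particular $(x+\varepsilon f)^{(p^j)}\equiv x^{(p^j)}+\varepsilon f x^{(p^j-1)}$ modulo $\varepsilon^2$, and compares coefficients; the only monomial that could obstruct is $x^{(p^j)}$, whose coefficient is precisely $c_{p^j}$). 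Differentiating $\ccD\mapsto\Phi_\varepsilon\circ\ccD\circ\Phi_\varepsilon^{-1}$ at $\varepsilon=0$ gives the inner derivation $-\ad(\text{something})$; concretely, using Tyurin's formula $\Phi(g(x)\partial)=(y')^{-1}g(y)\partial$ with $y=\Phi_\varepsilon(x)$, one finds that the derivative of $\Phi_\varepsilon$ acting on $\fL$ is $\ad(f\del)$ up to sign. Hence the image of the differential at the identity of the orbit map is exactly $\{\,\ad(f\del)\mid f\in\fm,\ c_{p^j}=0\ \text{for }1\le j\le n-1\,\}$, i.e. the span of $\{\ad(x^{(i+1)}\del)=\ad d_i\}$ for $0\le i\le p^n-2$ with $i\ne p^l-1$, $1\le l\le n-1$.

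It then remains to check two things. First, this spanning set is linearly independent: the $d_i$ are distinct elements of the standard basis of $\fL$, and $\fL\cong\ad\fL$, so their images under $\ad$ are linearly independent in $\Der\fL$. Second, the count is right: the indices $i$ with $0\le i\le p^n-2$ and $i\ne p^l-1$ for $1\le l\le n-1$ number $(p^n-1)-(n-1)=p^n-n=\dim\Lie(G)$. Since we have a linearly independent subset of $\Lie(G)$ of the correct cardinality, it is a basis. The one point that requires genuine care — and which I expect to be the main obstacle — is the first-order admissibility computation: verifying cleanly that the linearized constraint on a tangent vector $f\in\fm$ is exactly $c_{p^j}=0$ for $1\le j\le n-1$, and no more, using the divided-power relations rather than ordinary-power intuition. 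Everything else (the inclusion $\Lie(G)\subseteq\fL_{(0)}$, the identification of the differential of conjugation with $\ad$, the dimension bookkeeping) is routine given the results already quoted in the excerpt.
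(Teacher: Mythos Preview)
Your proposal is correct and follows essentially the same approach as the paper: both produce tangent vectors to one-parameter families $\Phi_t(x)=x+tx^{(i+1)}$ through the identity in $G$, identify them with the $d_i$, and conclude by the dimension count $\dim\Lie(G)=\dim G=p^n-n$. The only differences are cosmetic: the paper works directly with $\Lie(G)\subseteq\Der\cO(1;n)$ (checking the effect on $x$ and $x^{(p^j)}$) rather than passing through the conjugation action on $\fL$ and $\ad$, and your flagged ``first-order admissibility computation'' is a non-issue since Wilson's description of $G$ already gives the constraints $\alpha_{p^j}=0$ as linear equations, so they pass unchanged to the tangent space.
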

\begin{proof}
Let $\psi :\A^{1}\to G$ be the map defined by $t\mapsto (x\mapsto x+tx^{(i+1)})$, where $0 \leq i \leq p^n-2$ and $i\neq p^l-1$ for $1\leq l\leq n-1$. It is easy to check that $\psi$ is a morphism of algebraic varieties. Note that $\psi(0)=\Id$. So the differential of $\psi$ at $0$ is the map $d_0\psi: k \to \Lie(G)$. Hence $d_0\psi(k)\subseteq \Lie(G)$.

Let us compute $d_0\psi(k)$. The morphism $\psi$ sends $\A^{1}$ to the set of admissible automorphisms $X=\{\Phi_t\,|\, t \in \A^{1}\}$, where 
\[
\Phi_t(x)=x+tx^{(i+1)}.
\]
Since $\Phi_t$ is uniquely determined by its effect on $x$ and ``admissible'' is equivalent to the condition that $\Phi_t(x^{(p^{j})})=\Phi_t(x)^{(p^{j})}$ for all $1\leq j\leq n-1$ (see Sec.~\ref{2.2}), we have that
\begin{align*}
\Phi_t(x^{(p^{j})})&=(x+tx^{(i+1)})^{(p^{j})}=x^{(p^{j})}+tx^{(p^{j}-1)}x^{(i+1)}+\text{terms of higher degree in $t$}
\end{align*}
for all $1\leq j\leq n-1$; see \thref{dpower}(iv). Now consider $d_0\psi: k \to \T_{\Id}(X)$. Since $X\subset G$ is closed in $G$, we have that $\T_{\Id}(X)\subseteq \Lie(G)$. Passing to $\T_{\Id}(X)$, i.e. calculating $\frac{\del}{\del t}(\Phi_t(x))|_{t=0}$ and $\frac{\del}{\del t}(\Phi_t(x^{(p^{j})}))|_{t=0}$, we get
\begin{align*}
x&\mapsto x^{(i+1)},\\
x^{(p^{j})}&\mapsto x^{(p^{j}-1)}x^{(i+1)}.
\end{align*}
The above results are the same for $d_i=x^{(i+1)}\del$ acting on $x$ and $x^{(p^{j})}$, respectively. Hence $d_i\in \T_{\Id}(X)\subseteq \Lie(G)$. Note that the set
\[
\{d_i=x^{(i+1)}\partial\,|\,\text{$0 \leq i \leq p^n-2$ and $i\neq p^l-1$ for $1\leq l\leq n-1$}\}
\]
consists of $p^n-n$ linearly independent vectors. Since $\dim\Lie(G)=\dim G=p^n-n$, they form a basis of $\Lie(G)$. This completes the proof.
\end{proof}

\section{The variety $\cN$}
\subsection{Some elements in $\cN$}
In Sec.~\ref{2.1}, we observed that any elements of $\fL_{(1)}$ are nilpotent, but they do not tell us much information about $\cN$. The interesting nilpotent elements are contained in the complement of $\fL_{(1)}$ in $\cN$, denoted $\cN\setminus\fL_{(1)}$. They are of the form $\sum_{i=0}^{n-1}\alpha_{i}\partial^{{p^{i}}}+f(x)\partial$ for some $f(x)\in \fm$ and $\alpha_i\in k$ with at least one $\alpha_{i}\neq 0$. In this section, we study elements of this form in the following way: take $\ccD=\sum_{i=0}^{n-1}\alpha_{i}\partial^{{p^{i}}}+f(x)\partial$ (not necessarily nilpotent) and we show that $\ccD$ is conjugate under $G$ to an element in a nice form; see \thref{YS161pre} and \thref{l:l13}. Then we assume  that $\ccD$ is nilpotent, i.e. $\ccD^{p^{n}}=0$ by \eqref{nilpotentconditioninL}. We do some calculations to check if $\ccD^{p^{n-1}}\in \fL_{(0)}$. If $\ccD^{p^{n-1}}\notin \fL_{(0)}$, then we show further that  $\ccD$ is conjugate under $G$ to an element in a nicer form; see \thref{YS161}, \thref{p:p1} and \thref{c:c2}. In doing this, we get the  results required to prove \thref{l:l14} in the next section, i.e. 
\[
\big\{\ccD\in\cN \,|\,\ccD^{p^{n-1}}\notin \fL_{(0)}\big\}=G.(\del+k\del^p+\dots+k\del^{p^{n-1}}).
\]

Let us begin with elements of the form $\alpha_0\del+f(x)\del$, where $\alpha_0\neq 0$ and $f(x)\in \fm$. In \cite{YS16}, Y.-F.~Yao and B.~Shu proved the following:

\begin{lem}\cite[Proposition 4.1]{YS16}\thlabel{YS161pre}
Let $\ccD=\alpha_0\del+f(x)\del$ be an element of $\fL\subset\fL_p$, where $\alpha_0\neq 0$ and $f(x)\in \fm$. Then $\ccD$ is conjugate under $G$ to $\del+\sum_{i=1}^{n}l_ix^{(p^{i}-1)}\del$ for some $l_i\in k$.
\end{lem}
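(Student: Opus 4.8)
\textbf{Proof plan for \thref{YS161pre}.}

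The plan is to use the explicit description of the automorphism group $G$ from Sec.~\ref{2.2}: an admissible automorphism $\Phi$ is determined by $\Phi(x)=\sum_{i\geq 1}\alpha_i x^{(i)}$ with $\alpha_1\neq 0$ and $\alpha_{p^j}=0$ for $1\leq j\leq n-1$, and it acts on $\fL$ by $\Phi(g(x)\partial)=(y')^{-1}g(y)\partial$ where $y=\Phi(x)$ (Tyurin's formula). First I would reduce to the case $\alpha_0=1$: since $f(x)\in\fm$, rescaling via the torus element $x\partial$ (or rather by composing with the admissible automorphism $x\mapsto \alpha_0 x$, which multiplies $\partial$ by $\alpha_0^{-1}$) brings $\ccD$ to the form $\partial + g(x)\partial$ with $g(x)\in\fm$. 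Write $g(x)=\sum_{a\geq 1} c_a x^{(a)}$.

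The core of the argument is an iterative ``degree-clearing'' procedure, entirely analogous to the proof of \thref{D70Lem6part} for $W(n;\underline 1)$. I would show: given $\partial + g(x)\partial$ with the lowest-degree term of $g$ of degree $\nu\geq 1$, one can find an admissible automorphism $\Phi$ (of the form $\Phi(x)=x+h(x)$ with $h$ homogeneous of degree $\nu+1$, provided $\nu+1$ is not a power of $p$) such that $\Phi(\partial+g(x)\partial)$ has the form $\partial + \tilde g(x)\partial$ with $\tilde g$ having lowest-degree term of degree $\geq \nu+1$. The computation is: applying $\Phi$ to $\partial$ produces $\partial - h'(x)\partial + (\text{higher order})$, so choosing $h$ to cancel the degree-$\nu$ part of $g$ works, because $h\mapsto h'$ maps the degree-$(\nu+1)$ homogeneous part of $\fm$ onto the degree-$\nu$ homogeneous part whenever $\nu+1\not\equiv 0$; precisely, $\partial(x^{(\nu+1)})=x^{(\nu)}$, so every homogeneous $x^{(\nu)}$ is hit. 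The obstruction is exactly at degrees $\nu$ with $\nu+1=p^j$: there the admissibility constraint $\alpha_{p^j}=0$ forbids using $x^{(p^j)}$ in $\Phi(x)$, so the coefficient of $x^{(p^j-1)}$ in $g$ cannot be removed. Running this procedure from $\nu=1$ up through $\nu=p^n-2$, the only surviving terms are the coefficients of $x^{(p^i-1)}$ for $1\leq i\leq n$ (with $x^{(p^n-1)}$ being the top-degree basis element of $\fm$, which never gets a chance to be cleared), which is exactly the claimed normal form $\partial + \sum_{i=1}^n l_i x^{(p^i-1)}\partial$.

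The main obstacle, and the place that needs care rather than routine computation, is the bookkeeping of how applying $\Phi$ interacts with the already-cleared lower-degree terms and with the factor $(y')^{-1}$ in Tyurin's formula: one must check that the clearing at degree $\nu$ does not reintroduce terms of degree $<\nu$, and that the residual ``uncleanable'' terms at degrees $p^j-1$ are genuinely stable under all the automorphisms used at higher degrees. This is handled by always choosing $\Phi(x)=x+(\text{terms of degree}\geq\nu+1)$, so that modulo degree $\nu$ nothing changes, and by an induction on $\nu$; the expansion $(x+h(x))^{(r)}=x^{(r)}+x^{(r-1)}h(x)+\cdots$ from \thref{dpower}(iv) controls the higher-order corrections. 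I would assume the reader accepts the elementary fact that $(y')^{-1}$ for $y=x+(\text{higher order})$ is $1+(\text{higher order})$ in $\cO(1;n)$, so it only affects strictly higher-degree coefficients. With these observations the induction closes and the normal form follows.
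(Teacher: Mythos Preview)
Your proposal is correct and follows essentially the same approach as the paper's proof: normalize $\alpha_0=1$ via $x\mapsto\alpha_0 x$, then iteratively clear the coefficient of $x^{(\nu)}$ using the admissible automorphism $x\mapsto x+c\,x^{(\nu+1)}$, skipping the obstructed degrees $\nu=p^j-1$ where admissibility forbids an $x^{(p^j)}$-term. Your phrasing ``lowest-degree term of $g$ of degree $\nu$'' is slightly imprecise once the uncleanable $x^{(p^j-1)}$-terms have accumulated, but you correctly identify and handle this in your discussion of the bookkeeping, which matches what the paper does.
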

\begin{myproof}[Sketch of proof]
Take $\ccD$ as in the lemma. Then we can write $\ccD=\sum_{i=0}^{p^{n}-1}\alpha_ix^{(i)}\del$ for some $\alpha_i\in k$ with $\alpha_0\neq 0$. Let $\Phi\in G$ be such that $\Phi(x)=\alpha_0 x$. Then 
\[
\Phi(\ccD)=\del+\sum_{i=1}^{p^n-1}\alpha_0^{i-1}\alpha_i x^{(i)}\del.
\]
So we may assume that  
\[
\ccD=\del+\sum_{i=1}^{p^n-1}\beta_i x^{(i)}\del=\del+\beta_1 x \del+\beta_2 x^{(2)}\del+\dots+\beta_{p^n-1} x^{(p^n-1)}\del
\]
for some $\beta_i\in k$. Let $\Phi_0\in G$ be such that $\Phi_0(x)=x+\beta_1 x^{(2)}$. Then one can check that 
\[
\Phi_0(\ccD)=\Phi_0\big(\del+\sum_{i=1}^{p^n-1}\beta_i x^{(i)}\del\big)=
\del+\beta_2'x^{(2)}\del+\beta_3'x^{(3)}\del+\dots+\beta_{p^n-1}'x^{(p^n-1)}\del
\]
for some $\beta_2', \dots,\beta_{p^n-1}' \in k$. In general, if 
\[
\Phi_{i-1}\cdots\Phi_0(\ccD)=\del+\mu_{i+1}x^{(i+1)}\del+\mu_{i+2}x^{(i+2)}\del+\dots +\mu_{p^{n}-1}x^{(p^n-1)}\del
\]
for $1\leq i\leq p-3$, then take $\Phi_i\in G$ with $\Phi_i(x)=x+\mu_{i+1}x^{(i+2)}$. Applying $\Phi_i$ to the above, we get
\[
\Phi_i\Phi_{i-1}\cdots\Phi_0(\ccD)=\del+\mu_{i+2}'x^{(i+2)}\del+\mu_{i+3}'x^{(i+3)}\del+\dots+\mu_{p^n-1}'x^{(p^n-1)}\del
\]
for some $\mu_{i+2}', \dots, \mu_{p^n-1}'\in k$. Consequently, we get
\[
\Phi_{p-3}\Phi_{p-4}\cdots\Phi_0(\ccD)=\del+\nu_{p-1}x^{(p-1)}\del+\nu_px^{(p)}\del+\dots+\nu_{p^n-1}x^{(p^n-1)}\del
\]
for some $\nu_{p-1}, \dots\nu_{p^n-1}\in k$. Set $\Phi_{p-2}=\Id$ and take $\Phi_{p-1}\in G$ with $\Phi_{p-1}(x)=x+\nu_p x^{(p+1)}$. Then one can check that 
\[
\Phi_{p-1}\Phi_{p-2}\Phi_{p-3}\cdots\Phi_0(\ccD)=\del+\nu_{p-1}x^{(p-1)}\del+\nu_{p+1}'x^{(p+1)}\del+\dots+\nu_{p^n-1}'x^{(p^n-1)}\del
\]
for some $\nu_{p-1}, \nu_{p+1}',\dots, \nu_{p^n-1}'\in k$. Continuing in this way, we finally get 
\[
\Phi_{p^{n}-3}\Phi_{p^{n}-4}\cdots\Phi_0(\ccD)=\del+\sum_{i=1}^{n}l_i x^{(p^{i}-1)}\del
\]
for some $l_i\in k$. This completes the sketch of proof.
\end{myproof}

Then Y.-F~Yao and B.~Shu assumed that $\ccD$ is nilpotent and they proved that 
\begin{lem}\cite[Proposition~4.3]{YS16}\thlabel{YS161}
Let $\ccD=\alpha_0\del+f(x)\del$ be a nilpotent element of $\fL\subset\fL_p$, where $\alpha_0\neq 0$ and $f(x)\in \fm$. Then 
\begin{enumerate}[\upshape(i)]
\item $\ccD^{p^{n-1}}\notin \fL_{(0)}$.
\item $\ccD$ is conjugate under $G$ to $\del$.
\end{enumerate}
\end{lem}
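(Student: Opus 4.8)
The plan is to combine the normal form of \thref{YS161pre} with the embedding $\iota\colon\fL\hookrightarrow W(n;\underline1)$ and the structure of regular elements of $W(n;\underline1)$ recorded in \thref{WnDresultslem3}, \thref{regularderithm} and \thref{SingP91}. By \thref{YS161pre} we may replace $\ccD$ by a $G$-conjugate and assume $\ccD=\del+\sum_{i=1}^{n}l_ix^{(p^i-1)}\del$, i.e.\ $\ccD=g\del$ with $g=1+\sum_{i=1}^{n}l_ix^{(p^i-1)}\in\cO(1;n)^{*}$; the whole lemma then says that a nilpotent $\ccD$ of this shape has $g=1$ (so $\ccD=\del$) and, for such $\ccD$, $\ccD^{p^{n-1}}\notin\fL_{(0)}$. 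Since $\iota(f\del)=\phi(f)\,\iota(\del)$ and $\iota(\del)=\sD_1$, we have $\iota(\ccD)=\phi(g)\,\sD_1$ with $\phi(g)\in\cO(n;\underline1)^{*}$, and $\iota$ commutes with $[p]$-th powers.

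For (i), I would first note that $\sD_1$ acts on $\cO(n;\underline1)$ as a single Jordan block of size $p^n$: the variable change $x_j\mapsto(-1)^{j-1}x_j$ (an automorphism of $\cO(n;\underline1)$, legitimate because $p-1$ is even) carries $\sD_1$ to the derivation $\sD$ of \thref{WnDresultslem3}, and \thref{WnDresultslem3}(iii) gives the claim; in particular $\ker\sD_1=k\cdot1$. Multiplying by the unit $\phi(g)$ does not alter the kernel, so $\ker\iota(\ccD)=k\cdot1$, and transporting through the algebra isomorphism $\phi$, the derivation $\ccD$ of $\cO(1;n)$ has $\ker\ccD=k\cdot1$. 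As $\ccD$ is nilpotent, $\ccD^{p^n}=0$ on $\cO(1;n)$ (use \eqref{nilpotentconditioninL}), so $\ccD$ is a single Jordan block of size $p^n=\dim\cO(1;n)$; hence some $v\in\cO(1;n)$ has $\ccD^{p^{n-1}}(v)\in\ker\ccD\setminus\{0\}$, i.e.\ a unit. Since every element of $\fL_{(0)}$ maps $\cO(1;n)$ into $\fm$, this forces $\ccD^{p^{n-1}}\notin\fL_{(0)}$. (Equivalently: $\ker\iota(\ccD)=k\cdot1$ makes $\iota(\ccD)$ regular by \thref{regularderithm}, hence a regular element of $\cN(W(n;\underline1))$, hence not in $\cN_0$ by \thref{SingP91}, so $\iota(\ccD)^{p^{n-1}}\notin W(n;\underline1)_{(0)}$; and $\varphi(\fL_{(0)})\subseteq W(n;\underline1)_{(0)}$ because $\varphi(x^{(a)}\del)=\phi(x^{(a)})\sD_1$ with $\phi(x^{(a)})\in\fm$ for $a\ge1$.)

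For (ii) it remains to show that nilpotency forces $g=1$; then $\ccD=\del$ and we are done (and one sees that the nilpotent elements of $\fL\setminus\fL_{(0)}$ form the single orbit $G.\del$). Assume $g\ne1$ and let $j$ be least with $l_j\ne0$, so $\ccD=\del+\sum_{i\ge j}l_id_{p^i-2}$ in $\fL_p$, where $d_m=x^{(m+1)}\del$. The plan is to expand $\ccD^{p^n}$ by Jacobson's formula (\thref{generalJacobF}): $\del^{p^n}=0$, and $\big(\sum_{i\ge j}l_id_{p^i-2}\big)^{p^n}=0$ since that summand lies in $\fL_{(1)}=\text{nil}(\fL_{(0)})$ and acts on $\cO(1;n)$ strictly raising the grading, which has fewer than $p^n$ levels; so $\ccD^{p^n}$ collapses to a sum of iterated brackets of $\del$ and $\sum_{i\ge j}l_id_{p^i-2}$. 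Inside that sum one isolates a nonzero toral contribution proportional to $d_0$ — the mechanism being that $(\ad\del)^{p^j-2}(d_{p^j-2})=d_0$ while $d_0^{[p]}=d_0$ by \eqref{eip} — which cannot be cancelled by the remaining terms (all of which lie in $\fL_{(1)}$ and are $p$-nilpotent); hence $\ccD^{p^n}\ne0$, contradicting nilpotency. Thus $g=1$. Equivalently one may argue on $\cO(1;n)$: $\ccD$ being $G$-conjugate to $\del$ forces, via $\ccD=(\del(y))^{-1}\del$, that $\del(y)=g^{-1}$ for an admissible $y$, which upon inspecting the coefficients of $g^{-1}$ at the elements $x^{(p^i-1)}$ demands $l_1=\dots=l_n=0$; a single $p^j$-th power computation then shows this obstruction is genuine.

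The main obstacle is the $p$-power bookkeeping in (ii): tracking through \thref{generalJacobF} exactly which bracket of $\del$ and $\sum_{i\ge j}l_id_{p^i-2}$ produces the toral term $d_0$ and verifying that it survives all cancellations. Part (i) and the reduction are short once \thref{YS161pre}, $\iota$ and \thref{WnDresultslem3} are available.
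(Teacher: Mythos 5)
Your part (i) is correct, and it takes a genuinely different route from the paper: the paper simply expands $\ccD^{p^{n-1}}$ by Jacobson's formula and reads off the nonzero coefficient $\alpha_0^{p^{n-1}}$ of $\del^{p^{n-1}}$, whereas you use $\Ker\ccD=k1$ (which holds already because $\ccD=g\del$ with $g$ a unit, so for this step you need neither \thref{YS161pre} nor the embedding) together with the single-Jordan-block observation. Both arguments work; yours has the advantage of feeding directly into the regularity criterion \thref{regularderithm}.

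Part (ii), however, has a genuine gap. The entire content of (ii) is the claim that nilpotency forces $g=1$, and the mechanism you propose --- a surviving toral component along $d_0$ coming from $(\ad\del)^{p^j-2}(d_{p^j-2})=d_0$ --- is neither carried out nor, as stated, correct. Jacobson's formula only produces iterated commutators whose length is a power of $p$, so $(\ad\del)^{p^j-2}(d_{p^j-2})$ does not occur as one of the $v_l$; and the term that does occur linearly in $E=\sum_{i\ge j}l_id_{p^i-2}$ is $(\ad\del)^{p-1}(E)=\sum_i l_ix^{(p^i-p)}\del$, whose part outside $\fL_{(1)}$ is $l_1\del$. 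Concretely, \thref{D70Lem6part} gives $(\del_1+cx_1^{p-1}\del_1)^p=-c(1+cx_1^{p-1})\del_1$ for $n=1$: the obstruction to nilpotency lives along $\del$ and, upon iteration, along the $\del^{p^i}$ --- exactly the delicate bookkeeping the paper defers to \thref{p:p1} and \thref{c:c2} --- not along $d_0$, so the ``toral, hence survives $p$-th powers'' argument does not attach to the actual terms, and ``cannot be cancelled by the remaining terms'' is precisely the hard part you have not done. Your closing alternative is circular: it assumes $\ccD\in G.\del$ and deduces $l_i=0$, which proves uniqueness of the normal form within the orbit, not that a nilpotent normal form lies in the orbit. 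The paper's route sidesteps all of this: it checks $D(n)^{p^{n-1}}(x_n)\equiv\pm1\,(\modd\fM)$, so $D(n)^{p^{n-1}}\notin W(n;\underline{1})_{(0)}$, invokes the orbit description \thref{WnOresultslem4} to get $\iota(\ccD)\in\Aut(W(n;\underline{1})).\sD_1$, and then applies the orbit-transfer result \cite[Lemma A.3]{YS16} to conclude $\ccD\in G.\del$. Your parenthetical in (i) already yields $\iota(\ccD)^{p^{n-1}}\notin W(n;\underline{1})_{(0)}$ and hence $\iota(\ccD)\in\Aut(W(n;\underline{1})).\sD_1$; what is missing, and cannot be waved away, is the transfer back to a $G$-orbit in $\fL$, since an automorphism of $W(n;\underline{1})$ conjugating $\iota(\ccD)$ to $\sD_1$ need not preserve the proper subalgebra $\iota(\fL)$.
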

 
\begin{myproof}[Sketch of proof]
(i) Take $\ccD$ as in the lemma. Then we can write $\ccD=\sum_{i=0}^{p^{n}-1}\alpha_ix^{(i)}\del$ for some $\alpha_i\in k$ with $\alpha_0\neq 0$. By Jacobson's formula, one can show that
\[
\ccD^{p^{n-1}}=\alpha_0^{p^{n-1}}\del^{p^{n-1}}+\sum_{j=0}^{n-2}\alpha_j'\del^{p^{j}}+w
\]
for some $\alpha_j'\in k$ and $w\in \fL_{(0)}$. Since $\alpha_0\neq 0$, this implies that $\ccD^{p^{n-1}} \not \in \fL_{(0)}$. This proves (i).

(ii) The proof splits into three steps:

\textbf{\textit{Step 1}}. Take $\ccD$ as in the lemma. By \thref{YS161pre}, $\ccD$ is conjugate under $G$ to $\del+\sum_{i=1}^{n}l_i x^{(p^{i}-1)}\del$ for some $l_i\in k$. Since $\ccD$ is nilpotent, it follows from \eqref{nilpotentconditioninL} that 
\[
\big(\del+\sum_{i=1}^{n}l_i x^{(p^{i}-1)}\del\big)^{p^n}=0.
\]
We show that this implies that $\del+\sum_{i=1}^{n}l_i x^{(p^{i}-1)}\del$ is conjugate under $G$ to $\del$. Here we will embed the Zassenhaus algebra $\fL$ into the Jacobson-Witt algebra $W(n; \underline{1})$ and use A.~Premet's results on $\cN(W(n; \underline{1}))$. In Sec.~\ref{2.1}, we have described the construction process of this embedding. Recall the algebra isomorphism 
\[
\phi: \cO(1;n) \rightarrow \cO(n; \underline{1})=k[X_1, \dots, X_n]/(X_1^{p}, \dots, X_{n}^{p})
\]
and the induced Lie algebra isomorphism 
\[
\varphi: \Der\cO(1;n) \xrightarrow{\sim} W(n; \underline{1})=\Der\cO(n; \underline{1});
\]
see \eqref{embed1} and \eqref{embed2} for their definitions. Then $\varphi$ gives rise to the following embedding:
\begin{align*}
\iota=\varphi|_{\fL}: \fL &\hookrightarrow W(n; \underline{1});
\end{align*}
see \eqref{embed3}. It follows from the definition of $\phi$ that 
\begin{equation}\label{iotadeland2}
\begin{aligned}
\iota(\del)&=\sD_1=\del_1+\sum_{l=1}^{n-1}(-1)^{l}x_1^{p-1}\cdots x_l^{p-1}\del_{l+1},\,\text{and}\\
\iota\big(\del+\sum_{i=1}^{n}l_i x^{(p^{i}-1)}\del\big)&=\sD_1+\sum_{i=1}^{n}(-1)^{i}l_i x_1^{p-1}\cdots x_i^{p-1}\del_1.
\end{aligned}
\end{equation}
Note that $\sD_1$ has a similar expression to $\sD$ in \thref{WnDresultslem3}. Since $\varphi(D^{p})=\varphi(D)^{p}$ for all $D\in \Der\cO(1;n)$, we have that 
\begin{align*}
\varphi\big((\del+\sum_{i=1}^{n}l_i x^{(p^{i}-1)}\del)^{p^n}\big)=\big(\sD_1+\sum_{i=1}^{n}(-1)^{i}l_i x_1^{p-1}\cdots x_i^{p-1}\del_1\big)^{p^{n}}.
\end{align*}
Since $\big(\del+\sum_{i=1}^{n}l_i x^{(p^{i}-1)}\del\big)^{p^n}=0$, the above implies that 
\[
\big(\sD_1+\sum_{i=1}^{n}(-1)^{i}l_i x_1^{p-1}\cdots x_i^{p-1}\del_1\big)^{p^{n}}=0,
\]
i.e. $\sD_1+\sum_{i=1}^{n}(-1)^{i}l_i x_1^{p-1}\cdots x_i^{p-1}\del_1$ is a nilpotent element of $W(n; \underline{1})$.

\textbf{\textit{Step 2}}. Set 
\[
D(n)=\sD_1+\sum_{i=1}^{n}(-1)^{i}l_i x_1^{p-1}\cdots x_i^{p-1}\del_1.
\] 
We show that $D(n)^{p^{n}}=0$ implies that $D(n)$ is conjugate under $\Aut(W(n; \underline{1}))$ to $\sD_1$. For $n=1$, $\sD_1=\del_1$ and $D(1)=\del_1-l_1 x_1^{p-1}\del_1$. By considering $D(1)^{p}(x_1)$,  it is easy to show that $D(1)^{p}=l_1(1-l_1x_1^{p-1})\del_1$. By our assumption, $D(1)^{p}=0$. Since $1-l_1x_1^{p-1}$ is invertible in $\cO(n; \underline{1})$, this implies that $l_1=0$. Hence $D(1)=\del_1=\Id(\sD_1)$. Suppose now $n\geq 2$. Then one can check that 
\begin{align*}
D(n)^{p^{n-1}}(x_n)=&D(n)^{p^{n-1}-1}((-1)^{n-1} x_1^{p-1}\cdots x_{n-1}^{p-1})\\
\equiv&(\del_1-x_1^{p-1}\del_2+\dots+(-1)^{n-2}x_1^{p-1}\cdots x_{n-2}^{p-1}\del_{n-1})^{p^{n-1}-1}\\&\cdot((-1)^{n-1} x_1^{p-1}\cdots x_{n-1}^{p-1})\quad (\modd \fM)\\
\equiv& \pm 1 \quad (\modd \fM)\quad \text{(by \thref{WnDresultslem3}(iii))}. 
\end{align*}
Here $\fM$ denotes the unique maximal ideal of $\cO(n; \underline{1})$ generated by $x_1, \dots, x_n$. It follows that $D(n)^{p^{n-1}}\notin W(n; \underline{1})_{(0)}=\{\sum_{i=1}^{n}f_i\del_i\,|\, \text{$f_i\in \fM$ for all $i$}\}$. Since $D(n)^{p^{n}}=0$ and $D(n)^{p^{n-1}}\notin W(n; \underline{1})_{(0)}$, it follows from \thref{WnOresultslem4} that $D(n)\in \Aut(W(n; \underline{1})).\sD_1$.

\textbf{\textit{Step 3}}. Let $u_1, u_2\in \fL$. Then \cite[Lemma A.3]{YS16} states that $u_1, u_2$ are in the same $G$-orbit if and only if $\iota(u_1), \iota(u_2)$ are in the same $\Aut(W(n; \underline{1}))$-orbit. Set $u_1=\del+\sum_{i=1}^{n}l_i x^{(p^{i}-1)}\del$. By \eqref{iotadeland2}, $\iota(u_1)=D(n)$. 
Set $u_2=\del$. By \eqref{iotadeland2}, $\iota(u_2)=\sD_1$.
Since $D(n)\in \Aut(W(n; \underline{1})).\sD_1$, the above result implies that $\del+\sum_{i=1}^{n}l_i x^{(p^{i}-1)}\del\in G.\del$. This proves (ii).
\end{myproof}

Now we consider the other elements of $\cN\setminus\fL_{(1)}$, i.e. elements of the form \\$\del^{p^{t}}+\sum_{i=0}^{t-1}\beta_i\del^{p^{i}}+g(x)\del$, where $1\leq t\leq n-1, \beta_i\in k$ and $g(x)\in\fm$. We want to show that they are conjugate under $G$ to elements in nice forms. For that, we need a result proved by S.~Tyurin which tells us how admissible automorphisms of $\cO(1; n)$ with identical linear part act on these elements.

\begin{lem}\cite[Theorem 1]{T98}\thlabel{Tyurinthm1}
Let $\Phi\in G$ be such that $\Phi(x)=y=x+\sum_{j=2}^{p^{n}-1}\nu_jx^{(j)}$, where $\nu_j\in k$ with $\nu_{p^{i}}=0$ for $1\leq i\leq n-1$. Then 
\begin{align*}
\Phi(\del)&=(y')^{-1}\del\equiv \del \,(\modd \fL_{(0)}),\,\text{and}\\
\Phi(\del^{p^{i}})&=\del^{p^{i}}-(y')^{-1}(\del^{p^{i}} y)\del\equiv \del^{p^{i}}\,(\modd \fL_{(0)}) \,\text{for $1\leq i\leq n-1$}.
\end{align*}
Hence for any $\ccD=\partial^{p^{t}}+\sum_{i=0}^{t-1}\beta_{i}\partial^{p^{i}}+g(x)\partial\in \fL_p$ with $1\leq t\leq n-1$, $\beta_{i} \in k$ and $g(x)\in \fm$, 
\[
\Phi(\ccD)=\partial^{p^{t}}+\sum_{i=1}^{t-1}\beta_{i}\partial^{p^{i}}+(y')^{-1}\bigg(\beta_{0}+\Phi(g(x))-\sum_{i=1}^{t-1}\beta_{i}\partial^{p^{i}}y-\partial^{p^{t}}y\bigg)\partial.
\]
\end{lem}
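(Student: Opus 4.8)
\textbf{Proof proposal for \thref{Tyurinthm1}.}

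The plan is to compute directly how an admissible automorphism $\Phi$ with $\Phi(x)=y=x+\sum_{j\geq 2}\nu_j x^{(j)}$ (with $\nu_{p^i}=0$) acts on $\del$ and on the higher $p$-th powers $\del^{p^i}$, and then assemble these into the formula for a general $\ccD=\del^{p^t}+\sum_{i=0}^{t-1}\beta_i\del^{p^i}+g(x)\del$. First I would recall from Sec.~\ref{2.2} (Tyurin's description) that for any $g(x)\in\cO(1;n)$ one has $\Phi(g(x)\del)=(y')^{-1}g(y)\del$, where $y'=\del(y)$; this is the fundamental transformation rule. Applying it with $g(x)=1$ gives immediately $\Phi(\del)=(y')^{-1}\del$. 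Since $y=x+\sum_{j\geq 2}\nu_j x^{(j)}$, we have $y'=1+\sum_{j\geq 2}\nu_j x^{(j-1)}\in 1+\fm$, so $(y')^{-1}\in 1+\fm$, and hence $(y')^{-1}\del\equiv\del\pmod{\fL_{(0)}}$, using $\fm\del=\fL_{(0)}$. This handles the first congruence.

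Next I would treat $\Phi(\del^{p^i})$ for $1\leq i\leq n-1$. Here the key point is that $\del^{p^i}$ is again a (special) derivation of $\cO(1;n)$ — indeed an element of $\fL_p$ — and conjugation by $\Phi$ is an algebra automorphism of $\Der\cO(1;n)$, so $\Phi(\del^{p^i})=\Phi\circ\del^{p^i}\circ\Phi^{-1}$ is the derivation sending $x\mapsto \Phi(\del^{p^i}(\Phi^{-1}(x)))=\Phi(\del^{p^i}(y^{-1}\text{-expansion}))$. The cleaner route: write $\Phi(\del^{p^i})$ as $h(x)\del$ for the unique $h(x)=\Phi(\del^{p^i})(x)$, and compute $\Phi(\del^{p^i})(x)$. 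Since $\Phi$ is admissible, $\Phi$ commutes with the divided-power structure, and the standard identity $\Phi\circ\del^{p^i}\circ\Phi^{-1}$ applied to $x$ yields, after using $\Phi^{-1}(x)$ and the chain rule for special derivations, the expression $(y')^{-1}\bigl(\del^{p^i}\,y\bigr)$ evaluated appropriately; this gives $\Phi(\del^{p^i})=\del^{p^i}-(y')^{-1}(\del^{p^i}y)\del$. One then checks $(y')^{-1}(\del^{p^i}y)\in\fm$: because $\nu_{p^i}=0$, the term $\del^{p^i}y=\del^{p^i}\bigl(x+\sum_{j\geq 2}\nu_j x^{(j)}\bigr)$ has no constant term (the constant term would be $\nu_{p^i}$, which vanishes), so $\del^{p^i}y\in\fm$ and hence $(y')^{-1}(\del^{p^i}y)\del\in\fL_{(0)}$, giving $\Phi(\del^{p^i})\equiv\del^{p^i}\pmod{\fL_{(0)}}$.

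Finally, for a general $\ccD=\del^{p^t}+\sum_{i=0}^{t-1}\beta_i\del^{p^i}+g(x)\del$ I would simply apply $\Phi$ termwise using linearity and the formulas just established: $\Phi(\del^{p^t})=\del^{p^t}-(y')^{-1}(\del^{p^t}y)\del$, $\Phi(\beta_i\del^{p^i})=\beta_i\del^{p^i}-\beta_i(y')^{-1}(\del^{p^i}y)\del$ for $1\leq i\leq t-1$, $\Phi(\beta_0\del)=\beta_0(y')^{-1}\del$, and $\Phi(g(x)\del)=(y')^{-1}g(y)\del=(y')^{-1}\Phi(g(x))\del$. Collecting the coefficient of $\del$ gives $(y')^{-1}\bigl(\beta_0+\Phi(g(x))-\sum_{i=1}^{t-1}\beta_i\del^{p^i}y-\del^{p^t}y\bigr)$, which is exactly the claimed formula. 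The main obstacle I anticipate is getting the transformation rule for $\Phi(\del^{p^i})$ exactly right — in particular verifying carefully that conjugating $\del^{p^i}$ by $\Phi$ produces the correction term $-(y')^{-1}(\del^{p^i}y)\del$ with the correct sign and no further terms, which requires using admissibility ($\Phi(x^{(p^j)})=\Phi(x)^{(p^j)}$) to control how $\Phi$ interacts with iterated $p$-th powers; the rest is bookkeeping once that identity is in hand.
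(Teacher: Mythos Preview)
Your treatment of $\Phi(\del)$ and the final assembly are fine and match the paper. The gap is in your handling of $\Phi(\del^{p^i})$ for $i\geq 1$. You write ``write $\Phi(\del^{p^i})$ as $h(x)\del$ for the unique $h(x)=\Phi(\del^{p^i})(x)$'', but this is false: $\del^{p^i}$ lies in $\fL_p\setminus\fL$ (it is a derivation of $\cO(1;n)$ but not a \emph{special} one), and its image under $\Phi$ likewise cannot be of the form $h(x)\del$. So the derivation you are trying to compute is not determined by its value on $x$ alone, and the ``chain rule'' argument you gesture at does not produce the formula without further input.

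The paper supplies the missing step as follows. By definition (Sec.~\ref{2.2}) the extension of $\Phi$ to $\fL_p$ satisfies $\Phi(\del^{p^i})=\Phi(\del)^{p^i}$. Since you have already shown $\Phi(\del)=\del+\phi_0(x)\del$ with $\phi_0(x)\in\fm$, Jacobson's formula together with the fact that $\fL_{(0)}$ is a restricted subalgebra forces $\Phi(\del^{p})=\del^{p}+\phi_1(x)\del$ for some $\phi_1(x)\in\cO(1;n)$; this is the structural fact that justifies the shape of the answer. To identify $\phi_1$, evaluate at $y=\Phi(x)$ rather than at $x$: since $\del^p x=0$ one has $0=\Phi(\del^{p})(y)=\del^{p}y+\phi_1(x)y'$, giving $\phi_1(x)=-(y')^{-1}(\del^{p}y)$. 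The same argument runs inductively for higher $i$. Once you have this, your congruence check (using $\nu_{p^i}=0$) and your termwise assembly of $\Phi(\ccD)$ are correct as written.
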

\begin{myproof}[Sketch of proof]
Recall from Sec.~\ref{2.2} that if $\Phi$ is any admissible automorphism of $\cO(1; n)$ with $\Phi(x)=y$ (not necessarily with identical linear part), then 
\begin{equation}\label{thm1st}
\text{$\Phi(q(x)\del)=(y')^{-1}\Phi(q(x))\partial$ for any $q(x)\in \cO(1;n)$},
\end{equation} and 
\begin{equation}\label{thm2nd}
\text{$\Phi(\del^{p^{i}})=\Phi(\del)^{p^{i}}$ for $1\leq i\leq n-1$}.
\end{equation}
In this lemma, we only consider admissible automorphisms of $\cO(1; n)$ with identical linear part, i.e. $\Phi\in G$ with $\Phi(x)=y=x+\sum_{j=2}^{p^{n}-1}\nu_jx^{(j)}$, where $\nu_j\in k$ with $\nu_{p^{i}}=0$ for $1\leq i\leq n-1$. By \eqref{thm1st}, we have that $\Phi(\del)=(y')^{-1}\del$. We show that $\Phi(\del)\equiv \del\,(\modd \fL_{(0)})$. This is equivalent to show that 
\begin{equation}\label{thm3rd}
(y')^{-1}\del-\del \in \fL_{(0)}.
\end{equation}
Note that $y'=1+\sum_{j=2}^{p^{n}-1}\nu_jx^{(j-1)}$ which is invertible in $\cO(1;n)$. Since $\fL_{(0)}$ is invariant under multiplication of invertible elements of $\cO(1;n)$, we can multiply both sides of \eqref{thm3rd} by $y'$ and show that $(1-y')\del \in \fL_{(0)}$. This is clearly true. Hence
\[\Phi(\del)=(y')^{-1}\del\equiv \del\,(\modd \fL_{(0)}).\] 

It remains to compute $\Phi(\del^{p^{i}})$ for $1\leq i\leq n-1$. Since $\Phi(\del)\equiv \del\,(\modd \fL_{(0)})$, we may write $\Phi(\del)=\del+\phi_0(x)\del$ for some $\phi_0(x) \in\fm$. By \eqref{thm2nd}, $\Phi(\del^{p})=\Phi(\del)^{p}$. By Jacobson's formula (\thref{defres}(3)) and the fact that $\fL_{(0)}$ is a restricted Lie subalgebra of $\Der \cO(1;n)$, we get $\Phi(\del^{p})=\del^{p}+\phi_1(x)\del$ for some $\phi_1(x)\in \cO(1;n)$. Since $\del^{p}x=0$, it follows that 
\begin{align*}
0=\Phi(\del^p x)=\Phi(\del^p)y=(\del^{p}+\phi_1(x)\del)y=\del^p y+\phi_1(x)y'.
\end{align*}
Hence $\phi_1(x)=-(y')^{-1}(\del^p y)$ and so $\Phi(\del^{p})=\del^{p}-(y')^{-1}(\del^p y)\del$. We show that $\Phi(\del^{p})\equiv\del^{p}\,(\modd \fL_{(0)})$. This is equivalent to show that $-(y')^{-1}(\del^p y)\del \in \fL_{(0)}$. By the same reason as above, we can multiply both sides by $-y'$ and show that $(\del^p y)\del \in \fL_{(0)}$. Due to the form of $y$, i.e. $\nu_p=0$, this is clearly true. Hence  
\[\Phi(\del^{p})=\del^{p}-(y')^{-1}(\del^p y)\del\equiv \del^{p}\,(\modd \fL_{(0)}).\]
In a similar way, one can show that 
\[
\text{$\Phi(\del^{p^{i}})=\del^{p^{i}}-(y')^{-1}(\del^{p^{i}} y)\del\equiv\del^{p^{i}}\,(\modd \fL_{(0)})$ for $2\leq i\leq n-1$}.
\]

Let $\ccD=\partial^{p^{t}}+\sum_{i=0}^{t-1}\beta_{i}\partial^{p^{i}}+g(x)\partial$ be an element of $\fL_p$, where $1\leq t\leq n-1$, $\beta_{i} \in k$ and $g(x)\in \fm$.  It follows from \eqref{thm1st} and the above that 
\[
\Phi(\ccD)=\partial^{p^{t}}+\sum_{i=1}^{t-1}\beta_{i}\partial^{p^{i}}+(y')^{-1}\bigg(\beta_{0}+\Phi(g(x))-\sum_{i=1}^{t-1}\beta_{i}\partial^{p^{i}}y-\partial^{p^{t}}y\bigg)\partial.
\]
This completes the sketch of proof.
\end{myproof}

\begin{lem}\cite[Theorem 1]{T98}\thlabel{l:l13}
Let $\ccD=\partial^{p^{t}}+\sum_{i=0}^{t-1}\beta_{i}\partial^{p^{i}}+g(x)\partial$ be an element of $\fL_p$, where $1\leq t\leq n-1$, $\beta_{i} \in k$ and $g(x)\in \fm$. Then $\ccD$ is conjugate under $G$ to
\begin{align*}
\partial^{p^{t}}+\sum_{i=0}^{t-1}\beta_{i}\partial^{p^{i}}+x^{(p^n-p^{t})}h(x)\partial
\end{align*}
for some $h(x)=\sum_{\eta=0}^{p^{t}-1}\mu_{\eta}x^{(\eta)}$ with $\mu_{\eta}\in k$.
\end{lem}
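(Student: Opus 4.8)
The plan is to use \thref{Tyurinthm1}, which already tells us precisely how an admissible automorphism $\Phi$ with identical linear part acts on $\ccD$: namely
\[
\Phi(\ccD)=\partial^{p^{t}}+\sum_{i=1}^{t-1}\beta_{i}\partial^{p^{i}}+(y')^{-1}\bigg(\beta_{0}+\Phi(g(x))-\sum_{i=1}^{t-1}\beta_{i}\partial^{p^{i}}y-\partial^{p^{t}}y\bigg)\partial,
\]
where $y=\Phi(x)=x+\sum_{j\geq2,\,j\neq p^{1},\dots,p^{n-1}}\nu_j x^{(j)}$. Since $(y')^{-1}$ is an invertible element of $\cO(1;n)$ and the subspace $x^{(p^n-p^t)}h(x)\partial$ with $\deg h\leq p^t-1$ is exactly $\cO(1;n)_{(p^n-p^t)}\partial=\fL_{(p^n-p^t-1)}$, and since multiplication by an invertible element preserves this subspace, it suffices to choose the $\nu_j$ so that the truncated-polynomial coefficient $g_1(x):=\beta_0+\Phi(g(x))-\sum_{i=1}^{t-1}\beta_i\partial^{p^i}y-\partial^{p^t}y$ lies in $\cO(1;n)_{(p^n-p^t)}$, i.e. all its coefficients of $x^{(a)}$ for $a< p^n-p^t$ vanish. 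Equivalently, I must kill the ``low-degree part'' of $g_1(x)$ by a suitable choice of $y$.

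First I would expand $g_1(x)$ in the basis $\{x^{(a)}\}$. The dominant term controlling the low-degree behaviour is $-\partial^{p^t}y=-\sum_j \nu_j x^{(j-p^t)}$ (with the convention that $x^{(c)}=0$ for $c<0$), which ranges over all $x^{(a)}$ with $0\leq a\leq p^n-1-p^t$ as $j$ ranges over the admissible indices, except that $j$ is not allowed to equal $p^1,\dots,p^{n-1}$, so the coefficients of $x^{(a)}$ with $a=p^i-p^t$ ($i>t$) and $a=p^i-p^t<0$ are momentarily inaccessible from this term. I would therefore set up the elimination degree by degree (increasing $a$ from $0$ upward): at each step the coefficient of $x^{(a)}$ in $g_1(x)$ depends, besides on $\nu_{a+p^t}$ appearing linearly with invertible coefficient $-1$, only on $\nu_j$ with $j<a+p^t$ (through $\Phi(g(x))$, through $\partial^{p^i}y$ for $i<t$, and through the invertible factor once I incorporate it) — this is the standard triangular structure already exploited in the sketch of \thref{YS161pre}. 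Hence I can solve recursively for the $\nu_j$ with $j\notin\{p^1,\dots,p^{n-1}\}$ to force every coefficient of $x^{(a)}$, $0\le a<p^n-p^t$, to vanish; the indices $j=p^i$ ($i>t$) that we are forbidden to use correspond to degrees $a=p^i-p^t\geq p^{t+1}-p^t=p^t(p-1)\geq p^t$, and one checks $p^i-p^t\ge p^n-p^t$ fails in general, so I must verify these particular coefficients also get cleared; in fact they are cleared automatically because by the time $a$ reaches $p^i-p^t$ the previously chosen $\nu$'s have already been fixed and the remaining degrees of freedom suffice — this is the point requiring care. Once all coefficients below degree $p^n-p^t$ are zero, multiplying back by $(y')^{-1}$ (which preserves $\cO(1;n)_{(p^n-p^t)}\partial$) leaves $\Phi(\ccD)$ of the asserted form, with $h(x)=\sum_{\eta=0}^{p^t-1}\mu_\eta x^{(\eta)}$ reading off the surviving top coefficients.

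The main obstacle I anticipate is bookkeeping the forbidden indices $j\in\{p^1,\dots,p^{n-1}\}$: the automorphism is only admissible if $\nu_{p^i}=0$, so the elimination must succeed \emph{without} using those $\nu$'s, and I must confirm the corresponding coefficients of $g_1(x)$ (at degrees $p^i-p^t$) either are not in the range to be killed, or are killed by the earlier choices. Concretely I would check that for $i\le t$ the term $\partial^{p^i}y$ does not produce an obstruction because $\beta_i\partial^{p^i}y$ contributes at degree $j-p^i$ which, for the relevant $j$, is already handled; and for the leading operator $\partial^{p^t}$, the missing index $j=p^t$ would contribute at degree $0$, but that degree is absorbed into the constant $\beta_0$, which is exactly why $\beta_0$ appears and why no $\nu_{p^t}$ is needed. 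After isolating this finite list of special degrees and verifying each is consistent, the rest is the routine triangular recursion, and the existence of $\Phi\in G$ with the required $y$ follows from the description of $G$ in Section~\ref{2.2} (any such $y$ with $\nu_1=1$, $\nu_{p^i}=0$ extends to an admissible automorphism).
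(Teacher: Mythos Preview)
Your overall approach is the same as the paper's: apply the formula from \thref{Tyurinthm1} and kill the coefficients of $x^{(a)}$ for $1\le a<p^n-p^t$ by a triangular recursion. The paper carries this out as an iterated composition of one-term automorphisms $\Phi_l(x)=x+\gamma_l\,x^{(p^t+l)}$ for $l=1,\dots,p^n-p^t-1$, which is just the step-by-step version of your single-automorphism recursion; the final rewriting $\sum_\eta\mu_\eta x^{(p^n-p^t+\eta)}=x^{(p^n-p^t)}h(x)$ is then the binomial identity of \thref{bcmodpapp}.

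The genuine gap in your proposal is precisely the point you flag as ``requiring care'', namely the forbidden indices $j=p^i$ with $t<i\le n-1$, and your suggested resolution is not correct. In your recursion the degree-$a$ equation has the form $-\nu_{a+p^t}+P_a(\nu_2,\dots,\nu_{a+p^t-1})=0$; when $a=p^i-p^t$ the unknown is $\nu_{p^i}$, which admissibility forces to be $0$, so the equation becomes the constraint $P_a(\dots)=0$ on variables you have \emph{already} fixed. This is not ``cleared automatically'': take for instance $g(x)=x^{(p^i-p^t)}$ and set all earlier $\nu_j$ to $0$ in your recursion; then $P_a=1\ne0$. (The paper's written proof has the same lacuna---it simply asserts $\Phi_l(x)=x+\gamma_l x^{(p^t+l)}\in G$ at every step without noting that this fails for $p^t+l=p^i$---and ultimately defers to Tyurin's original argument.) To repair the argument you need an actual mechanism for these $n-1-t$ special degrees. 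The natural candidate is the block of parameters $\nu_2,\dots,\nu_{p^t-1}$ (with the $\nu_{p^s}$ removed), which never appear as the ``pivot'' $\nu_{a+p^t}$ in your recursion and hence remain free; you must then check that the dependence of the constraints $P_{p^i-p^t}=0$ on these free parameters is non-degenerate enough to allow a solution.
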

\begin{myproof}[Strategy of the proof]\renewcommand{\qedsymbol}{}
The proof splits into two parts. The first part (a) was proved by S.~Tyurin; see \cite[Theorem 1, p.~68, line -8]{T98}. The second part (b) follows from (a) and \thref{bcmodpapp}. Explicitly, we prove the following:\\
\textbf{(a)} Take $\ccD$ as in the lemma. We show that for $1\leq j\leq p^n-p^t-1$, if $g(x)\del\equiv \gamma_{j}x^{(j)}\del \,(\modd \fL_{(j)})$ for some $\gamma_j \in k$ and $\Phi_j\in G$ is such that $\Phi_j(x)=y_j=x+\gamma_{j}x^{(p^{t}+j)}$, then
\begin{align*}
\Phi_j(\ccD)\equiv \del^{p^{t}}+\sum_{i=0}^{t-1}\beta_{i}\del^{p^{i}}\quad (\modd \fL_{(j)}).
\end{align*}
We start with $j=1$ and continue checking the above equivalence until we get $\ccD$ is conjugate under $G$ to 
\[
\del^{p^{t}}+\sum_{i=0}^{t-1}\beta_{i}\del^{p^{i}}\quad (\modd \fL_{(p^{n}-p^{t}-1)}).
\]
\textbf{(b)} By part (a), we may assume that
\begin{align*}
\ccD=\partial^{p^{t}}+\sum_{i=0}^{t-1}\beta_{i}\partial^{p^{i}}+\sum_{\eta=0}^{p^{t}-1}\mu_{\eta}x^{(p^{n}-p^{t}+\eta)}\del
\end{align*}
for some $\mu_\eta\in k$. Note that for any $0\leq \eta\leq p^{t}-1$,
\begin{equation*}
x^{(p^{n}-p^{t})}x^{(\eta)}={{p^{n}-p^{t}+\eta}\choose{p^{n}-p^{t}}}x^{(p^{n}-p^{t}+\eta)}.
\end{equation*}
The result then follows from \thref{bcmodpapp} which states that for any $0\leq \eta\leq p^{t}-1$, the following congruence holds:
\begin{align*}
{{p^{n}-p^{t}+\eta}\choose{p^{n}-p^{t}}}\equiv 1\quad (\modd p).
\end{align*}
\end{myproof}
\begin{proof}
\textbf{(a)} Take $\ccD=\partial^{p^{t}}+\sum_{i=0}^{t-1}\beta_{i}\partial^{p^{i}}+g(x)\partial$ as in the lemma. By \thref{Tyurinthm1}, we know that if $\Phi(x)=y$ is any admissible automorphism of $\cO(1; n)$ with identical linear part, then 
\[
\Phi(\ccD)=\partial^{p^{t}}+\sum_{i=1}^{t-1}\beta_{i}\partial^{p^{i}}+(y')^{-1}\bigg(\beta_{0}+\Phi(g(x))-\sum_{i=1}^{t-1}\beta_{i}\partial^{p^{i}}y-\partial^{p^{t}}y\bigg)\partial.
\]
If $g(x)\partial\equiv \gamma_{1}x\partial\,(\modd \fL_{(1)})$ for some $\gamma_1\in k$ and $\Phi_1\in G$ is such that $\Phi_1(x)=y_1=x+\gamma_{1}x^{(p^{t}+1)}$, then we show that 
\[
\Phi_1(\ccD) \equiv \partial^{p^{t}}+\sum_{i=0}^{t-1}\beta_{i}\partial^{p^{i}}\quad(\modd \fL_{(1)}).
\]
If $\gamma_1=0$, then $g(x)\partial\in\fL_{(1)}$ and the result is clear. If $\gamma_1\neq 0$, then we show this congruence by proving that $\Phi_1(\ccD)- \partial^{p^{t}}-\sum_{i=0}^{t-1}\beta_{i}\partial^{p^{i}}\in \fL_{(1)}$, i.e.
\begin{align}\label{e:q1}
(y_1')^{-1}\bigg(\beta_{0}+\Phi_1(g(x))-\sum_{i=1}^{t-1}\beta_{i}\partial^{p^{i}}y_1-\partial^{p^{t}}y_1\bigg)\del- \beta_{0}\del \in \fL_{(1)}.
\end{align}
Note that $y_1'=1+\gamma_1x^{(p^{t})}$ which is invertible in $\cO(1; n)$. Since $\fL_{(1)}$ is invariant under multiplication of invertible elements of $\cO(1;n)$, we can multiply both sides of \eqref{e:q1} by $y_1'$ and show that
\begin{align*}
\bigg(\beta_{0}+\Phi_1(g(x))-\sum_{i=1}^{t-1}\beta_{i}\partial^{p^{i}}y_1-\partial^{p^{t}}y_1\bigg)\del- \beta_{0} y_1'\del\in\fL_{(1)}.
\end{align*}
Since $g(x)\partial\equiv \gamma_{1}x\partial\,(\modd \fL_{(1)})$ and $\Phi_1$ preserves the natural filtration of $\fL$, in particular, it preserves $\fL_{(1)}$, hence
\begin{equation*}
\begin{aligned}
&\bigg(\beta_{0}+\Phi_1(g(x))-\sum_{i=1}^{t-1}\beta_{i}\partial^{p^{i}}y_1-\partial^{p^{t}}y_1\bigg)\del- \beta_{0} y_1'\del\\
\equiv&\bigg(\beta_{0}+\gamma_{1}(x+\gamma_1 x^{(p^{t}+1)})-\sum_{i=1}^{t-1}\beta_{i}\gamma_{1}x^{(p^{t}-p^{i}+1)}-\gamma_1 x\bigg)\del\\
\quad &-\beta_0(1+\gamma_1x^{(p^{t} )})\del\\
\equiv & 0\quad(\modd \fL_{(1)}).
\end{aligned}
\end{equation*}
Here we used our assumption that $p>3$, i.e. $p^{t}-p^{i}\geq p^t-p^{t-1}\geq p-1>2$. Therefore, $\ccD$ is conjugate to $\del^{p^{t}}+\sum_{i=0}^{t-1}\beta_{i}\partial^{p^{i}}\, (\modd \fL_{(1)})$. 

In general, if $l<p^n-p^t-1$ and $\ccD \equiv \partial^{p^{t}}+\sum_{i=0}^{t-1}\beta_{i}\partial^{p^{i}}+\gamma_lx^{(l)}\del\,(\modd \fL_{(l)})$ for some $\gamma_l \in k$, i.e. $\ccD =\partial^{p^{t}}+\sum_{i=0}^{t-1}\beta_{i}\partial^{p^{i}}+g_l(x)\del$ with $g_l(x)\del\equiv\gamma_lx^{(l)}\del\,(\modd\fL_{(l)})$, then applying $\Phi_l\in G$ with $\Phi_l(x)=y_l=x+\gamma_l x^{(p^{t}+l)}$ to $\ccD$, we get
\[
\Phi_l(\ccD)=\partial^{p^{t}}+\sum_{i=1}^{t-1}\beta_{i}\partial^{p^{i}}+(y_{l}')^{-1}\bigg(\beta_{0}+\Phi_l(g_l(x))-\sum_{i=1}^{t-1}\beta_{i}\partial^{p^{i}}y_l-\partial^{p^{t}}y_l\bigg)\partial.
\]
We show that
\[
\Phi_l(\ccD)\equiv\partial^{p^{t}}+\sum_{i=0}^{t-1}\beta_{i}\partial^{p^{i}}\quad(\modd\fL_{(l)}).
\]
If $\gamma_l=0$, then $g_l(x)\del\in\fL_{(l)}$ and the result is clear. If $\gamma_l \neq 0$, then we show this congruence by proving that
\begin{align}
&(y_l')^{-1}\bigg(\beta_{0}+\Phi_l(g_l(x))-\sum_{i=1}^{t-1}\beta_{i}\partial^{p^{i}}y_l-\partial^{p^{t}}y_l\bigg)\partial-\beta_{0}\del\in \fL_{(l)}.\label{e:q3}
\end{align}
By the same reason as before, we can multiply both sides of \eqref{e:q3} by $y_l'$ and show that
\begin{align*}
\bigg(\beta_{0}+\Phi_l(g_l(x))-\sum_{i=1}^{t-1}\beta_{i}\partial^{p^{i}}y_l-\partial^{p^{t}}y_l\bigg)\del - \beta_{0}y_l'\del \in\fL_{(l)}.
\end{align*}
Indeed, since $\Phi_l$ preserves $\fL_{(l)}$, we have that
\begin{equation*}
\begin{aligned}
&\bigg(\beta_{0}+\Phi_l(g_l(x))-\sum_{i=1}^{t-1}\beta_{i}\partial^{p^{i}}y_l-\partial^{p^{t}}y_l\bigg)\del - \beta_{0}y_l'\del\\
\equiv & \bigg(\beta_{0}+\gamma_{l}(x+\gamma_{l}x^{(p^{t}+l)})^{(l)}-\sum_{i=1}^{t-1}\beta_{i}\gamma_{l}x^{(p^{t}-p^{i}+l)}-\gamma_{l}x^{(l)}\bigg)\del\\
\quad & -\beta_{0} (1+\gamma_{l}x^{(p^{t}+l-1)})\del\\
\equiv & 0 \quad(\modd\fL_{(l)}).
\end{aligned}
\end{equation*}
Hence $\ccD$ is conjugate to $\partial^{p^{t}}+\sum_{i=0}^{t-1}\beta_{i}\partial^{p^{i}}\, (\modd \fL_{(l)})$. Then
\begin{align*}
\ccD\equiv\partial^{p^{t}}+\sum_{i=0}^{t-1}\beta_{i}\partial^{p^{i}}+\gamma_{l+1}x^{(l+1)}\partial\quad(\modd \fL_{(l+1)})
\end{align*}
for some $\gamma_{l+1}\in k$. If $\gamma_{l+1}\neq 0$, then applying $\Phi_{l+1}\in G$ with $\Phi_{l+1}(x)=y_{l+1}=x+\gamma_{l+1}x^{(p^{t}+l+1)}$ to $\ccD$ we can show that $\ccD$ is conjugate to $\partial^{p^{t}}+\sum_{i=0}^{t-1}\beta_{i}\partial^{p^{i}}\,(\modd \fL_{(l+1)})$. Continue doing this until we get $\ccD$ is conjugate to $\partial^{p^{t}}+\sum_{i=0}^{t-1}\beta_{i}\partial^{p^{i}}\, (\modd \fL_{(p^{n}-p^{t}-1)})$. Then
\begin{align*}
\ccD\equiv\partial^{p^{t}}+\sum_{i=0}^{t-1}\beta_{i}\partial^{p^{i}}+\gamma_{p^{n}-p^{t}}x^{(p^{n}-p^{t})}\del\quad (\modd \fL_{(p^{n}-p^{t})})
\end{align*}
for some $\gamma_{p^{n}-p^{t}} \in k$. If $\gamma_{p^{n}-p^{t}}\neq 0$, then we were supposed to apply $\Phi_{p^{n}-p^{t}}\in G$ with $\Phi_{p^{n}-p^{t}}(x)=x+\gamma_{p^{n}-p^{t}}x^{(p^n)}$ to $\ccD$. But since $x^{(j)}=0$  for $j\geq p^n$ in $\cO(1; n)$, the automorphism $\Phi_{p^{n}-p^{t}}$ is the identity automorphism and we stop here. Therefore, $\ccD$ is conjugate under $G$ to
\[
\partial^{p^{t}}+\sum_{i=0}^{t-1}\beta_{i}\partial^{p^{i}}\quad (\modd \fL_{(p^{n}-p^{t}-1)}).
\]

\textbf{(b)} By part (a), we may assume that
\begin{align*}
\ccD=\partial^{p^{t}}+\sum_{i=0}^{t-1}\beta_{i}\partial^{p^{i}}+\sum_{\eta=0}^{p^{t}-1}\mu_{\eta}x^{(p^{n}-p^{t}+\eta)}\del
\end{align*}
for some $\mu_\eta\in k$. Note that for $0\leq \eta\leq p^{t}-1$,
\begin{equation*}
x^{(p^{n}-p^{t})}x^{(\eta)}={{p^{n}-p^{t}+\eta}\choose{p^{n}-p^{t}}}x^{(p^{n}-p^{t}+\eta)}\equiv x^{(p^{n}-p^{t}+\eta)}\quad (\modd p)
\end{equation*}
by \thref{bcmodpapp}. As a result,
\begin{align*}
\ccD&\equiv\partial^{p^{t}}+\sum_{i=0}^{t-1}\beta_{i}\partial^{p^{i}}+\sum_{\eta=0}^{p^{t}-1}\mu_{\eta}x^{(p^{n}-p^{t})}x^{(\eta)}\del\quad (\modd p)\\
&=\partial^{p^{t}}+\sum_{i=0}^{t-1}\beta_{i}\partial^{p^{i}}+x^{(p^{n}-p^{t})}\sum_{\eta=0}^{p^{t}-1}\mu_{\eta}x^{(\eta)}\del.
\end{align*}
Set $h(x)=\sum_{\eta=0}^{p^{t}-1}\mu_{\eta}x^{(\eta)}$, we get the desired result. This completes the proof.
\end{proof}

Next we assume that $\ccD=\partial^{p^{t}}+\sum_{i=0}^{t-1}\beta_{i}\partial^{p^{i}}+x^{(p^{n}-p^{t})}\sum_{\eta=0}^{p^{t}-1}\mu_{\eta}x^{(\eta)}\del$ is nilpotent. We check that if $\ccD^{p^{n-1}}\in \fL_{(0)}$. Let us first consider the case $t=n-1$.

\begin{lem}\thlabel{p:p1}
Let $\ccD=\partial^{p^{n-1}}+\sum_{i=0}^{n-2}\beta_{i}\partial^{p^{i}}+x^{(p^{n}-p^{n-1})}\sum_{\eta=0}^{p^{n-1}-1}\mu_{\eta}x^{(\eta)}\partial$ be a nilpotent element of $\fL_p$.
\begin{enumerate}[\upshape(i)]
\item If $\beta_{i}=0$ for all $i$, then $\mu_{0}=\mu_{1}=0$ and $\ccD^{p^{n-1}}\in \fL_{(1)}$.
\item
\begin{enumerate}[\upshape(a)]
\item Let $j\geq 0$ be the smallest index such that $\beta_{j}\neq 0$. Then $\mu_{0}=0$ and $\ccD^{p^{n-1-j}}$ is conjugate under $G$ to
\[
\del^{p^{n-1}}+x^{(p^{n}-p^{n-1})}\sum_{\eta=2}^{p^{n-1}-1}\nu_{\eta}x^{(\eta)}\del
\]
for some $\nu_{\eta}\in k$. Hence $\ccD^{p^{n-1}}\in \fL_{(1)}$ if $j\geq 1$.
\item In particular, if $\beta_{0}\neq 0$, then $\ccD^{p^{n-1}}$ is conjugate under $G$ to $\del^{p^{n-1}}$. Hence $\ccD=\del^{p^{n-1}}+\sum_{i=0}^{n-2}\gamma_{i}\del^{p^{i}}$ for some $\gamma_{i}\in k$ with $\gamma_{0}\neq 0$.
\end{enumerate}
\end{enumerate}
\end{lem}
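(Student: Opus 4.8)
The argument runs on the general Jacobson formula (\thref{generalJacobF}) organised by the restricted $\Z$-grading of $\fL_p$ in which $d_j=x^{(j+1)}\partial$ has degree $j$ (for $j\ge-1$) and $\partial^{p^{\ell}}$ has degree $-p^{\ell}$ (for $1\le\ell\le n-1$); using \eqref{eip}, the bracket formula in $\fL$, and the identity $\ad\partial^{p^{i}}=(\ad\partial)^{p^{i}}$, one checks that this is a well-defined restricted grading, that $-p^{n-1}$ is the lowest occurring degree, and that $\fL$, $\fL_{(0)}$, $\fL_{(1)}$ are exactly the parts of degrees $\ge-1$, $\ge0$, $\ge1$ respectively. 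First I would normalise $\ccD$. By \thref{bcmodpapp} with $r=n$, $s=n-1$ one has $x^{(p^{n}-p^{n-1})}x^{(\eta)}\equiv x^{(p^{n}-p^{n-1}+\eta)}\,(\modd p)$ for $0\le\eta\le p^{n-1}-1$, so the last summand of $\ccD$ is $v:=\sum_{\eta=0}^{p^{n-1}-1}\mu_{\eta}\,d_{p^{n}-p^{n-1}+\eta-1}\in\fL_{(p^{n}-p^{n-1}-1)}\subset\fL_{(1)}$, and $\ccD=u+v$ where $u:=\partial^{p^{n-1}}+\sum_{i=0}^{n-2}\beta_{i}\partial^{p^{i}}$ is a $p$-polynomial in $\partial$ with $[u,u]=0$ and $\ad u=P(\ad\partial)$, $P(T)=T^{p^{n-1}}+\sum_{i}\beta_{i}T^{p^{i}}$. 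Since the $\partial^{p^{i}}$ commute, $(\lambda\partial^{p^{i}})^{[p]}=\lambda^{p}\partial^{p^{i+1}}$, and $\partial^{p^{n}}=0$, iteration gives $u^{p^{N}}=\partial^{p^{n-1+N}}+\sum_{i}\beta_{i}^{p^{N}}\partial^{p^{i+N}}$; as $\beta_{i}=0$ for $i<j$ (and for all $i$ in case (i)), this yields $u^{p^{n-1-j}}=\beta_{j}^{p^{n-1-j}}\partial^{p^{n-1}}$ in case (ii) and $u^{p^{n-1}}=0$ in case (i).

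The technical core is to expand $\ccD^{p^{N}}=(u+v)^{p^{N}}$ (with $N=n-1-j$, or $N=n-1$ in case (i)) by \thref{generalJacobF} applied to the two summands $u$ and $v$, obtaining $u^{p^{N}}+v^{p^{N}}+\sum_{l}v_{l}^{[p]^{l}}$, and to locate each piece in the grading. The iterated commutators making up $v_{l}$ are compositions $(\ad w_{t})\cdots(\ad w_{1})(w_{0})$ with $w_{\alpha}\in\{u,v\}$ and at least one $v$; since $v\in\fL$ and $\ad u$, $\ad v$ both preserve $\fL$, all of them lie in $\fL$, hence carry no $\partial^{p^{\ell}}$-component with $\ell\ge1$ and their only possible negative-degree contribution is a multiple of $\partial=d_{-1}$. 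From $\ad\partial^{p^{i}}(x^{(a)}\partial)=x^{(a-p^{i})}\partial$, the operator $\ad v$ raises the grading degree by at least the lowest degree occurring in $v$, while $(\ad\partial^{p^{n-1}})^{p}=(\ad\partial)^{p^{n}}=0$ forces any run of consecutive $\ad\partial^{p^{n-1}}$'s in a surviving commutator to have length $\le p-1$. Tracking the degree of a surviving monomial path through a commutator of length $p^{N-l}$ with $b$ occurrences of $v$, one gets a lower bound which, once the lowest degree in $v$ is known (it is $p^{n}-p^{n-1}-1$, then $p^{n}-p^{n-1}$, then $p^{n}-p^{n-1}+1$ as the leading coefficients $\mu_{0},\mu_{1}$ are successively shown to vanish), pins the degree down; the outcome is that every commutator term of $v_{l}$ has degree $\ge-1$, with degree-$(-1)$ and degree-$0$ parts involving only $\mu_{0},\mu_{1}$ and coming essentially from the single chain $(\ad\partial^{p^{n-1}})^{p-1}(v)=\sum_{\eta}\mu_{\eta}d_{\eta-1}=\mu_{0}\partial+\mu_{1}d_{0}+(\text{an element of }\fL_{(1)})$, the longer ``extremal'' commutators that could reach those degrees being annihilated by the explicit identity $[\,\mu_{1}d_{p^{n}-p^{n-1}},\,\mu_{1}d_{0}\,]=-(p^{n}-p^{n-1})\mu_{1}^{2}d_{p^{n}-p^{n-1}}=0$.

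Now nilpotency closes each case. In case (i), $\ccD^{p^{n-1}}=\sum_{l=0}^{n-2}v_{l}^{[p]^{l}}$ (since $u^{p^{n-1}}=0$ and $v^{[p]}=0$), and only $v_{n-2}^{[p]^{n-2}}$ descends to degree $-p^{n-2}$, contributing exactly $\pm\mu_{0}^{p^{n-2}}\partial^{p^{n-2}}$ through $(\pm\mu_{0}\partial)^{[p]^{n-2}}$; hence the degree-$(-p^{n-1})$ component of $0=\ccD^{p^{n}}=(\ccD^{p^{n-1}})^{[p]}$ equals $\pm\mu_{0}^{p^{n-1}}\partial^{p^{n-1}}$, so $\mu_{0}=0$. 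Repeating one degree higher, the degree-$0$ component of $\ccD^{p^{n-1}}$ is $\pm\mu_{1}^{p^{n-2}}d_{0}$, whence $\mu_{1}=0$, and then every $v_{l}$ has degree $\ge1$, so $\ccD^{p^{n-1}}\in\fL_{(1)}$. The same scheme, applied to $\ccD^{p^{n-1-j}}=\beta_{j}^{p^{n-1-j}}\partial^{p^{n-1}}+(\text{higher-degree terms})$ and its power $(\ccD^{p^{n-1-j}})^{p^{j+1}}=\ccD^{p^{n}}=0$, gives $\mu_{0}=0$ in case (ii). The conjugacy statements are then a degree-by-degree cleanup in the spirit of \thref{l:l13}: admissible automorphisms $\Phi\in G$ with identical linear part satisfy $\Phi(\partial^{p^{n-1}})\equiv\partial^{p^{n-1}}\,(\modd\fL_{(0)})$ by \thref{Tyurinthm1}, so after rescaling $x$ to normalise the coefficient of $\partial^{p^{n-1}}$ one successively removes the components of $\ccD^{p^{n-1-j}}$ in $d_{0},\dots,d_{p^{n}-p^{n-1}}$, reaching $\partial^{p^{n-1}}+x^{(p^{n}-p^{n-1})}\sum_{\eta\ge2}\nu_{\eta}x^{(\eta)}\partial$ in general and $\partial^{p^{n-1}}$ when $\beta_{0}\ne0$ (where one further checks the surviving $\nu_{\eta}$ vanish); the form $\ccD=\partial^{p^{n-1}}+\sum_{i}\gamma_{i}\partial^{p^{i}}$ with $\gamma_{0}\ne0$ follows in that case, and when $j\ge1$ one gets $\ccD^{p^{n-1}}\in\fL_{(1)}$ by raising $\ccD^{p^{n-1-j}}$ to the $p^{j}$ (which kills $\partial^{p^{n-1}}$ while the remainder stays in the $p$-ideal $\fL_{(1)}$, and $\fL_{(1)}$ is $G$-stable).

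The step I expect to be the real obstacle is the one in the second paragraph: proving, uniformly in $l$, that no iterated commutator of $u$ and $v$ other than the distinguished chain $(\ad\partial^{p^{n-1}})^{p-1}(v)$ (and, in case (ii), the analogues already absorbed into $u^{p^{n-1-j}}$) leaves $\fL_{(1)}$. The crude bound ``degree $\ge-b$'' for a commutator with $b$ copies of $v$ is useless once $b\ge2$, so it must be combined with the run-length constraint $\le p-1$ — which forces $b$ to grow with the commutator length — and then the finitely many extremal commutators disposed of by the vanishing $[\,\mu_{1}d_{p^{n}-p^{n-1}},\,\mu_{1}d_{0}\,]=0$ and its relatives. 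The other ingredients (the $p$-polynomial evaluation of $u^{p^{n-1-j}}$, the formula for $(\ad\partial^{p^{i}})^{k}(v)$, and the filtration cleanup) are routine.
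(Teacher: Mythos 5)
Your part (i) is essentially the paper's argument and is fine: a single application of Jacobson's formula to $\ccD=\ccD_2+\ccD_1$ with $\ccD_1\in\fL_{(p^{n}-p^{n-1}-1)}$ gives $\ccD^{p}=(\ad\del^{p^{n-1}})^{p-1}(\ccD_1)+\mu_{(1)}x^{(p^{n}-1)}\del=\mu_0\del+\mu_1d_0+(\text{terms in }\fL_{(1)})$, and nilpotency kills $\mu_0$ and then $\mu_1$. The gap is in part (ii), exactly at the step you yourself flag as ``the real obstacle'', and your proposed fixes do not close it. Once some $\beta_i\neq0$ with $i$ small (in particular $\beta_0\neq0$, which is the case you need for (ii)(b)), the operator $\ad u$ contains $\beta_i(\ad\del)^{p^{i}}$, which lowers the grading degree by $p^{i}$ at a time; the run-length constraint on $(\ad\del^{p^{n-1}})$ is then beside the point, because in a one-shot expansion of $\ccD^{p^{n-1-j}}$ an iterated commutator of length up to $p^{n-1-j}$ containing two or more copies of $v$ can use a long string of these low-order components to tune its degree down to $-1$ or $0$. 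Such commutators contribute to the $\del$- and $d_0$-coefficients terms that are quadratic or higher in the $\mu_\eta$, so the coefficient of $\del^{p^{n-1}}$ in $\ccD^{p^{n}}$ is no longer visibly a unit times $\mu_0^{p^{n-1}}$, and ``$\ccD^{p^{n}}=0\Rightarrow\mu_0=0$'' does not follow. The identity $[d_{p^{n}-p^{n-1}},d_0]=0$ disposes of one degree-zero bracket and says nothing about these contaminations. The paper avoids the whole problem by never applying the generalized Jacobson formula across more than one $p$-th power at a time: after each single $p$-th power it conjugates by $G$ (a rescaling followed by \thref{l:l13}) so that the element is again of the shape $\del^{p^{t}}+\sum_i\beta_i\del^{p^{i}}+x^{(p^{n}-p^{t})}h(x)\del$, whose next $p$-th power has all cross terms confined to $k\del+\fL_{(1)}$; the quantity $\mu_0$ then survives each step as the $\del$-coefficient $\beta_{0,(1)}$ and is only killed at the last power. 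You would need either to adopt this iterate-and-renormalize scheme or to supply an actual combinatorial argument eliminating the multi-$v$ commutators at degrees $-1$ and $0$ uniformly in $l$; the latter is not in your sketch.

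A second omission is in (ii)(b): you assert that $\ccD=\del^{p^{n-1}}+\sum_{i=0}^{n-2}\gamma_i\del^{p^{i}}$ with $\gamma_0\neq0$ ``follows'', but this is a statement about $\ccD$ itself, not about $\ccD^{p^{n-1}}$, and it needs a separate argument. The paper gets it by observing that (after conjugating $\ccD^{p^{n-1}}$ to $\del^{p^{n-1}}$) the element $\ccD$ lies in $\fc_{\fL_p}(\del^{p^{n-1}})=k\del^{p^{n-1}}+W(1,n-1)_p$, and then using parts (i) and (ii)(a) applied inside $W(1,n-1)_p$ to rule out the cases where the $W(1,n-1)$-component of $\ccD$ vanishes or lies in $W(1,n-1)_{(0)}$. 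This centralizer step is absent from your proposal.
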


The proof is long as it involves many calculations. Let us first explain the strategy of the proof.
\begin{myproof}[Strategy of the proof]\renewcommand{\qedsymbol}{}
This is a computational proof and the following steps are crucial:

\textbf{\textit{Step 1}}. Take $\ccD=\partial^{p^{n-1}}+\sum_{i=0}^{n-2}\beta_{i}\partial^{p^{i}}+x^{(p^{n}-p^{n-1})}\sum_{\eta=0}^{p^{n-1}-1}\mu_{\eta}x^{(\eta)}\partial$ as in the lemma. Since $\ccD$ is nilpotent, then $\ccD^{p^{n}}=0$ by \eqref{nilpotentconditioninL}. We first calculate $\ccD^{p}$ which will be used in step 2. Set $\ccD_1=x^{(p^{n}-p^{n-1})}\sum_{\eta=0}^{p^{n-1}-1}\mu_{\eta}x^{(\eta)}\partial$ and $\ccD_2=\partial^{p^{n-1}}+\sum_{i=0}^{n-2}\beta_{i}\partial^{p^{i}}$. By Jacobson's formula,
\begin{align*}
\ccD^{p}&=\sum_{i=0}^{n-2}\beta_{i}^{p}\partial^{p^{i+1}}+(\ad \ccD_2)^{p-1}(\ccD_1)+\mu_{(1)} x^{(p^{n}-1)}\partial
\end{align*}
for some $\mu_{(1)}\in k$; see \eqref{cal} in the proof.

\textbf{\textit{Step 2}}. We consider the scalars $\beta_i$ in the following two cases and prove statements (i) and (ii) in the lemma.

\textbf{\textit{Step 2(i)}}. Suppose $\beta_{i}=0$ for all $i$. Then  $\ccD=\partial^{p^{n-1}}+x^{(p^{n}-p^{n-1})}\sum_{\eta=0}^{p^{n-1}-1}\mu_{\eta}x^{(\eta)}\partial$. We show that $\mu_{0}=\mu_{1}=0$ and $\ccD^{p^{n-1}}\in \fL_{(1)}$. By the calculation in step 1, we have that $\ccD^{p}=\sum_{\eta=0}^{p^{n-1}-1}\mu_{\eta}x^{(\eta)}\del+\mu_{(1)} x^{(p^{n}-1)}\partial$; see \eqref{betaiall0firsteq}. We show that if $\mu_0\neq0$, then $\ccD^{p^{n}}\neq 0$, a contradiction. Hence $\mu_0=0$. Similarly, we show that $\mu_1=0$. As a result, $\ccD^{p}\in \fL_{(1)}$. Since $\fL_{(1)}$ is restricted, we get $\ccD^{p^{n-1}}\in \fL_{(1)}$ as desired.

\textbf{\textit{Step 2(ii)(a)}}. Let $j\geq 0$ be the smallest index such that $\beta_{j}\neq 0$, and let $l$ be the largest index such that $\beta_{l}\neq 0$, i.e. $0\leq j\leq l\leq n-2$ and
\[
\ccD=\partial^{p^{n-1}}+\sum_{i=j}^{l}\beta_{i}\partial^{p^{i}}+x^{(p^{n}-p^{n-1})}\sum_{\eta=0}^{p^{n-1}-1}\mu_{\eta}x^{(\eta)}\partial.
\]
We show that $\mu_0=0$ and $\ccD^{p^{n-1-j}}$ is conjugate under $G$ to 
\[
\del^{p^{n-1}}+x^{(p^{n}-p^{n-1})}\sum_{\eta=2}^{p^{n-1}-1}\nu_{\eta}x^{(\eta)}\del
\] 
for some $\nu_{\eta}\in k$. Let us start with the special case $j=l$. We prove by induction that for any $1\leq r\leq n-1-j$, $\ccD^{p^{r}}$ is conjugate under $G$ to
\begin{align*}
\partial^{p^{j+r}}+\beta_{0,(1)}^{p^{r-1}}\del^{p^{r-1}}+x^{(p^n-p^{j+r})}\sum_{\eta=0}^{p^{j+r}-1}\mu_{\eta,(r)}x^{(\eta)}\del
\end{align*}
for some $\beta_{0,(1)}\in k^{*}\mu_0$ and $\mu_{\eta,(r)} \in k$. Here we will use Jacobson's formula and \thref{l:l13}. In particular, $\ccD^{p^{n-1-j}}$ is conjugate under $G$ to
\begin{align*}
\partial^{p^{n-1}}+\beta_{0,(1)}^{p^{n-2-j}}\del^{p^{n-2-j}}+x^{(p^n-p^{n-1})}\sum_{\eta=0}^{p^{n-1}-1}\mu_{\eta,(n-1-j)}x^{(\eta)}\del; 
\end{align*}
see \eqref{ee1}. Then we calculate $\ccD^{p^{n}}$ and use $\ccD^{p^{n}}=0$ to show that $\beta_{0,(1)}=\mu_0=0$ and $\mu_{0,(n-1-j)}=\mu_{1,(n-1-j)}=0$. This gives the desired result in this case.

For the general case, $j<l$, one can show similarly that $\ccD^{p^{n-1-j}}$ is conjugate under $G$ to
\begin{equation*}
\partial^{p^{n-1}}+\lambda\del^{p^{n-2-j}}+\sum_{i=0}^{n-3-j}\lambda_i\del^{p^{i}}+x^{(p^n-p^{n-1})}\sum_{\eta=0}^{p^{n-1}-1}\nu_\eta x^{(\eta)}\del
\end{equation*}
for some $\lambda\in k^{*}\mu_0$ and $\lambda_i, \nu_\eta \in k$; see \eqref{generaljlcase}. Then we show similarly that $\ccD^{p^{n}}=0$ implies that $\lambda=\mu_{0}=0$, $\lambda_i=0$ for $0\leq i\leq n-3-j$ and $\nu_0=\nu_1=0$. This gives the desired result.

Suppose now $j\geq 1$. We show that $(\del^{p^{n-1}}+x^{(p^{n}-p^{n-1})}\sum_{\eta=2}^{p^{n-1}-1}\nu_{\eta}x^{(\eta)}\del)^p\in \fL_{(1)}$; see \eqref{Dpn-jcaseiia}. Since $\ccD^{p^{n-1-j}}$ is conjugate under $G$ to $\del^{p^{n-1}}+x^{(p^{n}-p^{n-1})}\sum_{\eta=2}^{p^{n-1}-1}\nu_{\eta}x^{(\eta)}\del$ and $G$ preserves $\fL_{(1)}$, we have that $\ccD^{p^{n-j}}\in \fL_{(1)}$.
As $\fL_{(1)}$ is restricted, we get $\ccD^{p^{n-1}}\in \fL_{(1)}$ as desired.

\textbf{\textit{Step 2(ii)(b)}}. Suppose $\beta_0\neq 0$. By step 2(ii)(a), $\ccD^{p^{n-1}}$ is conjugate under $G$ to $\del^{p^{n-1}}+x^{(p^{n}-p^{n-1})}\sum_{\eta=2}^{p^{n-1}-1}\nu_{\eta}x^{(\eta)}\del$ for some $\nu_{\eta}\in k$. We show that $\ccD^{p^{n}}= 0$ implies that $\nu_{\eta}=0$ for all $\eta$. Hence $\ccD^{p^{n-1}}$ is conjugate to $\del^{p^{n-1}}$. Then we find an expression for $\ccD$. Let
\begin{align*}
\mathcal{S}:=\bigg\{\ccD\in \big(\del^{p^{n-1}}+\sum_{i=1}^{n-2}k\del^{p^{i}}+\fL\big)\cap \cN\,|\, \text{$\ccD^{p^{n-1}}$ is conjugate under $G$ to $\partial^{p^{n-1}}$}\bigg\}.
\end{align*}
Note that $\mathcal{S}$ is a subset of the centralizer $\fc_{\fL_{p}}(\del^{p^{n-1}})$. Then we consider elements of $\fc_{\fL_{p}}(\del^{p^{n-1}})$ and show that any $\ccD$ in $\mathcal{S}$ has the form $\ccD=\del^{p^{n-1}}+\sum_{i=0}^{n-2}\gamma_{i}\del^{p^{i}}$ for some $\gamma_{i}\in k$ with $\gamma_{0}\neq 0$.
\end{myproof}

\begin{proof}
\textbf{\textit{Step 1}}. Let $\ccD=\partial^{p^{n-1}}+\sum_{i=0}^{n-2}\beta_{i}\partial^{p^{i}}+x^{(p^{n}-p^{n-1})}\sum_{\eta=0}^{p^{n-1}-1}\mu_{\eta}x^{(\eta)}\partial$ be a nilpotent element of $\fL_p$. Then $\ccD^{p^{n}}=0$ by \eqref{nilpotentconditioninL}. Let us first calculate $\ccD^{p}$. Recall Jacobson's formula,
\begin{align}\label{J1}
(\ccD_1+\ccD_2)^{p}=\ccD_1^{p}+\ccD_2^{p}+\sum_{i=1}^{p-1}s_{i}(\ccD_1, \ccD_2)
\end{align}
for all $\ccD_1, \ccD_2\in \fL_p$, and $s_{i}(\ccD_1, \ccD_2)$ can be computed by the formula
\begin{align*}
\ad(t\ccD_1+\ccD_2)^{p-1}(\ccD_1)=\sum_{i=1}^{p-1}is_{i}(\ccD_1, \ccD_2)t^{i-1},
\end{align*}
where $t$ is a variable. Set 
\[
\ccD_1=x^{(p^{n}-p^{n-1})}\sum_{\eta=0}^{p^{n-1}-1}\mu_{\eta}x^{(\eta)}\partial
\]
and 
\[
\ccD_2=\partial^{p^{n-1}}+\sum_{i=0}^{n-2}\beta_{i}\partial^{p^{i}}.
\]
We first show that $\ccD_1^{p}=0$. By \thref{bcmodpapp}, $x^{(p^{n}-p^{n-1})}x^{(\eta)}\equiv x^{(p^{n}-p^{n-1}+\eta)}\,(\modd p)$. Then $\ccD_1=\sum_{\eta=0}^{p^{n-1}-1}\mu_{\eta}x^{(p^{n}-p^{n-1}+\eta)}\partial$. By \thref{generalJacobF},
\begin{align*}
\ccD_1^{p}=\sum_{\eta=0}^{p^{n-1}-1}\big(\mu_{\eta}x^{(p^{n}-p^{n-1}+\eta)}\partial\big)^{p}+w,
\end{align*}
where $w$ is a linear combination of commutators in $\mu_{\eta}x^{(p^{n}-p^{n-1}+\eta)}\partial$, $0\leq \eta\leq p^{n-1}-1$. By Jacobi identity, we can rearrange $w$ so that $w$ is in the span of commutators\\
$[w_{p-1},[w_{p-2},[\dots,[w_2,[w_1, w_0]\dots]$, where each $w_{\nu}, 0\leq \nu\leq p-1$, is equal to some \\
$\mu_{\eta}x^{(p^{n}-p^{n-1}+\eta)}\partial$, $0\leq \eta\leq p^{n-1}-1$. We show that such iterated commutator equals $0$ and so $w=0$. Recall \eqref{filtrationoffL} the natural filtration $\{\fL_{(\alpha)}\}_{\alpha\geq -1}$ of $\fL$, where $\fL_{(\alpha)}=0$ for all $\alpha>p^n-2$. Since $0\leq \eta\leq p^{n-1}-1$, we have that $\mu_{\eta}x^{(p^{n}-p^{n-1}+\eta)}\partial\in \fL_{(p^n-p^{n-1}-1)}$ for all $\eta$. Then $[w_{p-1},[w_{p-2},[\dots,[w_2,[w_1, w_0]\dots]\in \fL_{(p(p^n-p^{n-1}-1))}$ by \thref{fildefn}(ii). We show that $\fL_{(p(p^n-p^{n-1}-1))}=0$, i.e. $p(p^n-p^{n-1}-1)>p^n-2$. Note that 
\[
p(p^n-p^{n-1}-1)>p^n-2\iff p^{n+1}-p^{n}-p>p^n-2\iff p^{n+1}-p+2> 2p^n.
\]
We consider the $p$-adic expansions of these two numbers. By our assumption, $p>3$. Then $p-1>2$. So $2p^n+0p^{n-1}+\dots+0p+0$ is the $p$-adic expansion of $2p^n$. By \eqref{paeofrs1} in \thref{bcmodpapp}, the $p$-adic expansion of $p^{n+1}-p+2$ is 
\[
p^{n+1}-p+2=\sum_{j'=1}^{n}(p-1)p^{j'}+2=(p-1)p^n+(p-1)p^{n-1}+\dots+(p-1)p+2.
\]
Since $p-1>2$, it is clear that $p^{n+1}-p+2>2p^n$. Hence $\fL_{(p(p^n-p^{n-1}-1))}=0$ and so $[w_{p-1},[w_{p-2},[\dots,[w_2,[w_1, w_0]\dots]=0$. As a result, $w=0$. By \eqref{eip}, we have that $\big(\mu_{\eta}x^{(p^{n}-p^{n-1}+\eta)}\partial\big)^{p}=0$ for all $0\leq \eta\leq p^{n-1}-1$. Therefore, $\ccD_1^{p}=0$. 

By \thref{generalJacobF} again, we get $\ccD_2^{p}=\sum_{i=0}^{n-2}\beta_{i}^{p}\partial^{p^{i+1}}$. By \eqref{filtrationoffL} the natural filtration of $\fL$, we have that for any $1\leq s\leq p-2$,
\begin{align*}
[\ccD_1, (\ad \ccD_2)^{s}(\ccD_1)]&\in [\fL_{(p^{n}-p^{n-1}-1)}, \fL_{(p^{n}-(s+1)p^{n-1}-1)}]\\
&\subseteq[\fL_{(p^{n}-p^{n-1}-1)}, \fL_{(p^{n-1}-1)}]\\
&\subseteq \fL_{(p^{n}-2)}=\text{span$\{x^{(p^{n}-1)}\partial\}$}.
\end{align*}
This last term will appear if and only if $s=p-2$. So
\begin{align}\label{cal}
\ccD^{p}&=\sum_{i=0}^{n-2}\beta_{i}^{p}\partial^{p^{i+1}}+(\ad \ccD_2)^{p-1}(\ccD_1)+\mu_{(1)} x^{(p^{n}-1)}\partial
\end{align}
for some $\mu_{(1)}\in k$.

\textbf{\textit{Step 2}}. We consider the scalars $\beta_i$ in the following two cases.

\textbf{\textit{Step 2(i)}}. If $\beta_{i}=0$ for all $i$, then $\ccD_2=\del^{p^{n-1}}$. By \eqref{cal},  
\[
\ccD^{p}=(\ad \del^{p^{n-1}})^{p-1}(\ccD_1)+\mu_{(1)} x^{(p^{n}-1)}\partial.
\]
Since $\partial^{p^{n-1}}$ is a derivation of $\fL$ and $\partial^{p^{n-1}}(\sum_{\eta=0}^{p^{n-1}-1}\mu_{\eta}x^{(\eta)})=0$, we have that
\begin{align}\label{betaiall0firsteq}
\ccD^{p}
=\sum_{\eta=0}^{p^{n-1}-1}\mu_{\eta}x^{(\eta)}\del+\mu_{(1)} x^{(p^{n}-1)}\partial.
\end{align}
If $\mu_{0}\neq 0$, then $\ccD^{p}\equiv \mu_{0}\partial \,(\modd \fL_{(0)})$. By \thref{generalJacobF},
\[
\ccD^{p^{n}}\equiv \mu_{0}^{p^{n-1}}\partial^{p^{n-1}}+\sum_{i=0}^{n-2} \mu_{i}'\del^{p^{i}}\quad(\modd \fL_{(0)})
\]
for some $\mu_{i}'\in k$. As $\mu_0\neq 0$, this implies that $\ccD^{p^{n}}\not \equiv 0\,(\modd \fL_{(0)})$ and so it is not equal to $0$. This contradicts that $\ccD$ is nilpotent. Hence $\mu_{0}=0$. Similarly, if $\mu_{1}\neq 0$ then $\ccD^{p}\equiv \mu_{1}x\partial \,(\modd \fL_{(1)})$. But $\ccD^{p^{n}}\equiv \mu_{1}^{p^{n-1}}x\partial\not \equiv 0\,(\modd \fL_{(1)})$, a contradiction. Thus $\mu_{1}=0$. Therefore, $\ccD^{p}$ is an element of $\fL_{(1)}$. Since $\fL_{(1)}$ is restricted we have that $\ccD^{p^{n-1}}\in \fL_{(1)}$. This proves (i).

\textbf{\textit{Step 2(ii)(a)}}. Let $j\geq 0$ be the smallest index such that $\beta_{j}\neq 0$, and let $l$ be the largest index such that $\beta_{l}\neq 0$, i.e. $0\leq j\leq l\leq n-2$ and
\[
\ccD=\partial^{p^{n-1}}+\sum_{i=j}^{l}\beta_{i}\partial^{p^{i}}+x^{(p^{n}-p^{n-1})}\sum_{\eta=0}^{p^{n-1}-1}\mu_{\eta}x^{(\eta)}\partial.
\]
We first consider the special case $j=l$, i.e.
\begin{align*}
\ccD=\partial^{p^{n-1}}+\beta_{j}\partial^{p^{j}}+x^{(p^{n}-p^{n-1})}\sum_{\eta=0}^{p^{n-1}-1}\mu_{\eta}x^{(\eta)}\partial.
\end{align*}
We prove by induction that for any $1\leq r\leq n-1-j$, $\ccD^{p^{r}}$ is conjugate under $G$ to
\begin{align*}
\partial^{p^{j+r}}+\beta_{0,(1)}^{p^{r-1}}\del^{p^{r-1}}+x^{(p^n-p^{j+r})}\sum_{\eta=0}^{p^{j+r}-1}\mu_{\eta,(r)}x^{(\eta)}\del
\end{align*}
for some $\beta_{0,(1)}\in k^{*}\mu_0$ and $\mu_{\eta,(r)} \in k$. For $r=1$, the previous calculation \eqref{cal} gives
\begin{align*}
\ccD^{p}=\beta_{j}^{p}\partial^{p^{j+1}}+\ad \big(\partial^{p^{n-1}}+\beta_{j}\partial^{p^{j}}\big)^{p-1}\bigg(x^{(p^{n}-p^{n-1})}\sum_{\eta=0}^{p^{n-1}-1}\mu_{\eta}x^{(\eta)}\partial\bigg)+\mu_{(1)} x^{(p^{n}-1)}\partial.
\end{align*}
Note that
\begin{equation*}
\begin{aligned}
&\ad \bigg(\partial^{p^{n-1}}+\beta_{j}\partial^{p^{j}}\bigg)^{p-1}\bigg(x^{(p^{n}-p^{n-1})}\sum_{\eta=0}^{p^{n-1}-1}\mu_{\eta}x^{(\eta)}\partial\bigg)\\
=&\ad\bigg(\sum_{m=0}^{p-1}(-1)^{m}\beta_{j}^{p-1-m}\partial^{mp^{n-1}+(p-1-m)p^{j}}\bigg)\bigg(\sum_{\eta=0}^{p^{n-1}-1}\mu_{\eta}x^{(p^{n}-p^{n-1}+\eta)}\partial\bigg)\\
=&\sum_{\eta=0}^{p^{n-1}-1}\mu_{\eta}x^{(\eta)}\partial-\beta_{j}\sum_{\eta=0}^{p^{n-1}-1}\mu_{\eta}x^{(p^{n-1}-p^{j}+\eta)}\partial+\dots\\
\quad& +\beta_{j}^{p-1}\sum_{\eta=0}^{p^{n-1}-1}\mu_{\eta}x^{(p^{n}-p^{n-1}-(p-1)p^{j}+\eta)}\partial.
\end{aligned}
\end{equation*}
The above result can be rewritten as $\mu_{0}\del+g(x)\del$ for some $g(x)\in \fm$. Hence
\[
\ccD^{p}=\beta_{j}^{p}\partial^{p^{j+1}}+\mu_{0}\del+g(x)\del+\mu_{(1)} x^{(p^{n}-1)}\partial.
\]
Then the automorphism $\Phi(x)=\alpha x$ with $\alpha^{p^{j+1}}=\beta_j^{p}$ reduces $\ccD^{p}$ to the form
\[
\ccD^{p}=\partial^{p^{j+1}}+\beta_{0,(1)}\del+f_{1}(x)\del,
\]
where $\beta_{0,(1)}\in k^{*}\mu_{0}$ and $f_{1}(x)\in \fm$. Then \thref{l:l13} implies that $\ccD^{p}$ is conjugate under $G$ to
\begin{align*}
\partial^{p^{j+1}}+\beta_{0,(1)}\del+x^{(p^n-p^{j+1})}\sum_{\eta=0}^{p^{j+1}-1}\mu_{\eta,(1)}x^{(\eta)}\del
\end{align*}
for some $\mu_{\eta,(1)}\in k$. Thus, the result is true for $r=1$. Suppose the result is true for $r=K-1<n-1-j$, i.e. $\ccD^{p^{K-1}}$ is conjugate under $G$ to
\[
\partial^{p^{j+K-1}}+\beta_{0,(1)}^{p^{K-2}}\del^{p^{K-2}}+x^{(p^n-p^{j+K-1})}\sum_{\eta=0}^{p^{j+K-1}-1}\mu_{\eta,(K-1)}x^{(\eta)}\del
\]
for some $\beta_{0,(1)}\in k^{*}\mu_0$ and $\mu_{\eta,(K-1)}\in k$. Let us calculate $\ccD^{p^{K}}$. Set 
\[
\ccD_1=x^{(p^n-p^{j+K-1})}\sum_{\eta=0}^{p^{j+K-1}-1}\mu_{\eta,(K-1)}x^{(\eta)}\del
\] 
and 
\[\ccD_2=\partial^{p^{j+K-1}}+\beta_{0,(1)}^{p^{K-2}}\del^{p^{K-2}}\]
in the Jacobson's formula~\eqref{J1}. Then $\ccD_1^p\in\fL_{(1)}$ and $\ccD_2^p=\partial^{p^{j+K}}+\beta_{0,(1)}^{p^{K-1}}\del^{p^{K-1}}$. By \eqref{filtrationoffL} the natural filtration of $\fL$, we have that
\begin{align*}
(\ad \ccD_2)^{p-1}(\ccD_1)\in \fL_{(p^n-p^{j+K}-1)}\subseteq\fL_{(1)}.
\end{align*}
Similarly, for any $1\leq s\leq p-2$,
\begin{align*}
[\ccD_1, (\ad \ccD_2)^s(\ccD_1)]&\in [\fL_{(p^n-p^{j+K-1}-1)}, \fL_{(p^n-(s+1)p^{j+K-1}-1)}]\\
&\subseteq[\fL_{(1)}, \fL_{(p^n-(s+1)p^{j+K-1}-1)}]\\
&\subseteq\fL_{(p^n-(s+1)p^{j+K-1})},\\
&\subseteq \fL_{(p^n-(p-1)p^{j+K-1})},\\
&\subseteq \fL_{(p^n-(p-1)p^{n-2})}\quad\text{(since $j+K-1\leq n-2$)}\\
&\subseteq \fL_{(1)}.
\end{align*}
Hence $\ccD^{p^{K}}=\partial^{p^{j+K}}+\beta_{0,(1)}^{p^{K-1}}\del^{p^{K-1}}+f_{K}(x)\del$ for some $f_{K}(x)\in \fm$ with $f_{K}(x)\del \in \fL_{(1)}$. By \thref{l:l13}, $\ccD^{p^{K}}$ is conjugate under $G$ to
\[
\partial^{p^{j+K}}+\beta_{0,(1)}^{p^{K-1}}\del^{p^{K-1}}+x^{(p^n-p^{j+K})}\sum_{\eta=0}^{p^{j+K}-1}\mu_{{\eta}, (K)}x^{(\eta)}\del
\]for some $\mu_{{\eta}, (K)}\in k$, i.e. the result is true for $r=K$. Therefore, we proved by induction that for any $1\leq r\leq n-1-j$, $\ccD^{p^{r}}$ is conjugate under $G$ to
\begin{align*}
\partial^{p^{j+r}}+\beta_{0,(1)}^{p^{r-1}}\del^{p^{r-1}}+x^{(p^n-p^{j+r})}\sum_{\eta=0}^{p^{j+r}-1}\mu_{\eta,(r)}x^{(\eta)}\del
\end{align*}
for some $\beta_{0,(1)}\in k^{*}\mu_{0}$ and $\mu_{\eta,(r)} \in k$. In particular, $\ccD^{p^{n-1-j}}$ is conjugate under $G$ to
\begin{align}
\partial^{p^{n-1}}+\beta_{0,(1)}^{p^{n-2-j}}\del^{p^{n-2-j}}+x^{(p^n-p^{n-1})}\sum_{\eta=0}^{p^{n-1}-1}\mu_{\eta,(n-1-j)}x^{(\eta)}\del\label{ee1}.
\end{align}
By Jacobson's formula,
\begin{align}
\ccD^{p^{n-j}}=\beta_{0,(1)}^{p^{n-1-j}}\del^{p^{n-1-j}}+\sum_{\eta=0}^{p^{n-1}-1}\mu_{\eta,(n-1-j)}x^{(\eta)}\del+f_{n-j}(x) \del+\mu_{(n-j)}x^{(p^{n}-1)}\del\,\label{ee2}
\end{align} for some $f_{n-j}(x)\del \in \fL_{(1)}$ and $\mu_{(n-j)}\in k$. Then
\begin{align*}
\ccD^{p^{n}}\equiv \beta_{0,(1)}^{p^{n-1}}\del^{p^{n-1}}+\mu_{0,(n-1-j)}^{p^{j}}\del^{p^{j}}+\sum_{i=0}^{j-1}\mu_i''\del^{p^{i}}\quad(\modd \fL_{(0)})
\end{align*}
for some $\mu_i'' \in k$. But $\ccD^{p^{n}}=0$, this implies that $\beta_{0,(1)}=0$. Since $\beta_{0,(1)}\in k^{*}\mu_{0}$, we have that $\mu_{0}=0$. We must also have that $\mu_{0,(n-1-j)}=0$ and $\mu_i''=0$ for all $i$. Substituting these into \eqref{ee2}, we get
\begin{align*}
\ccD^{p^{n-j}}&=\sum_{\eta=1}^{p^{n-1}-1}\mu_{\eta,(n-1-j)}x^{(\eta)}\del +f_{n-j}(x) \del+\mu_{(n-j)}x^{(p^{n}-1)}\del\\
&\equiv \mu_{1,(n-1-j)}x\del \quad(\modd \fL_{(1)}).
\end{align*}
Then one can show similarly that $\mu_{1,(n-1-j)}=0$. Hence $\ccD^{p^{n-1-j}}$ \eqref{ee1} is conjugate under $G$ to
\begin{align*}
\partial^{p^{n-1}}+x^{(p^n-p^{n-1})}\sum_{\eta=2}^{p^{n-1}-1}\mu_{\eta,(n-1-j)}x^{(\eta)}\del.
\end{align*}

If $j<l$, i.e. $\ccD=\partial^{p^{n-1}}+\sum_{i=j}^{l}\beta_{i}\partial^{p^{i}}+x^{(p^{n}-p^{n-1})}\sum_{\eta=0}^{p^{n-1}-1}\mu_{\eta}x^{(\eta)}\partial$, then one can show similarly that $\ccD^{p^{n-1-j}}$ is conjugate under $G$ to
\begin{equation}\label{generaljlcase}
\partial^{p^{n-1}}+\lambda\del^{p^{n-2-j}}+\sum_{i=0}^{n-3-j}\lambda_i\del^{p^{i}}+x^{(p^n-p^{n-1})}\sum_{\eta=0}^{p^{n-1}-1}\nu_\eta x^{(\eta)}\del
\end{equation}
for some $\lambda\in k^{*}\mu_0$ and $\lambda_i, \nu_\eta \in k$. Then by the same arguments as above, one can show that $\lambda=\mu_{0}=0$, $\lambda_i=0$ for $0\leq i\leq n-3-j$ and $\nu_0=\nu_1=0$. As a result, $\ccD^{p^{n-1-j}}$ is conjugate under $G$ to
\begin{align*}
&\partial^{p^{n-1}}+x^{(p^{n}-p^{n-1})}\sum_{\eta=2}^{p^{n-1}-1}\nu_\eta x^{(\eta)}\partial.
\end{align*}

Suppose now $j\geq 1$. By a similar calculation as in \eqref{cal} and \eqref{betaiall0firsteq}, we get
\begin{equation}\label{Dpn-jcaseiia}
\begin{aligned}
\big(\partial^{p^{n-1}}+x^{(p^{n}-p^{n-1})}\sum_{\eta=2}^{p^{n-1}-1}\nu_\eta x^{(\eta)}\partial\big)^p=&\sum_{\eta=2}^{p^{n-1}-1}\nu_{\eta}x^{(\eta)}\partial+\mu_{(n-j)}x^{(p^n-1)}\del
\end{aligned}
\end{equation}
for some $\mu_{(n-j)}\in k$. This is an element of $\fL_{(1)}$. Since $\ccD^{p^{n-1-j}}$ is conjugate under $G$ to $\partial^{p^{n-1}}+x^{(p^{n}-p^{n-1})}\sum_{\eta=2}^{p^{n-1}-1}\nu_\eta x^{(\eta)}\partial$ and $G$ preserves $\fL_{(1)}$, we have that $\ccD^{p^{n-j}}\in \fL_{(1)}$. As $\fL_{(1)}$ is restricted, we have that $\ccD^{p^{n-1}}\in \fL_{(1)}$. This proves (ii)(a).

\textbf{\textit{Step 2(ii)(b)}}. If $\beta_{0}\neq 0$, then (ii)(a) implies that $\ccD^{p^{n-1}}$ is conjugate under $G$ to
\[
\partial^{p^{n-1}}+x^{(p^{n}-p^{n-1})}\sum_{\eta=2}^{p^{n-1}-1}\nu_{\eta}x^{(\eta)}\del
\]
for some $\nu_{\eta}\in k$. If $q$ is the smallest index such that $\nu_{q}\neq 0$, then
\begin{align*}
\ccD^{p^{n}}=&\sum_{\eta=q}^{p^{n-1}-1}\nu_{\eta}x^{(\eta)}\del+\mu_{(n)}x^{(p^{n}-1)}\del
\end{align*}
for some $\mu_{(n)}\in k$. As $\nu_{q}\neq 0$, this implies that $\ccD^{p^{n}}\neq 0$, a contradiction. Hence $\nu_{\eta}=0$ for all $\eta$. Therefore, we are interested in the set
\begin{align*}
\mathcal{S}:=\bigg\{\ccD\in \big(\del^{p^{n-1}}+\sum_{i=1}^{n-2}k\del^{p^{i}}+\fL\big)\cap \cN\,|\, \text{$\ccD^{p^{n-1}}$ is conjugate under $G$ to $\partial^{p^{n-1}}$}\bigg\}.
\end{align*}
Since $[\ccD, \ccD^{p^{n-1}}]=0$, the above set $\mathcal{S}$ is a subset of the centralizer $\fc_{\fL_{p}}(\del^{p^{n-1}})$. It is easy to verify that $\fc_{\fL_{p}}(\del^{p^{n-1}})$ is spanned by $\del^{p^{n-1}}$ and $W(1,n-1)_{p}$. Since $W(1,n-1)_{p}$ is a restricted Lie subalgebra of $\fL_{p}$, we may regard the automorphism group of $W(1,n-1)_{p}$ as a subgroup of $G$. Let $\ccD=\del^{p^{n-1}}+\sum_{i=1}^{n-2}\gamma_{i}\del^{p^{i}}+v$ be an element of $\fc_{\fL_{p}}(\del^{p^{n-1}})$, where $\gamma_{i}\in k$ and $v\in W(1,n-1)$. If $v=0$, then $\ccD^{p^{n-1}}=0$ which is not conjugate to $\del^{p^{n-1}}$. So $v\neq 0$. If $v\not\in W(1, n-1)_{(0)}$, then $v=\gamma_{0}\del$ for some $\gamma_{0}\neq 0$. It is easy to see that $\ccD^{p^{n-1}}$ is conjugate under $G$ to $\del^{p^{n-1}}$. If $v \in W(1, n-1)_{(0)}$, then $v=\sum_{i=1}^{p^{n-1}-1}\lambda_{i}x^{(i)}\del$ with $\lambda_{i}\neq 0$ for some $i$. It follows from \thref{l:l13} that $\ccD$ is conjugate under $G$ to
\begin{align*}
\del^{p^{n-1}}+\sum_{i=1}^{n-2}\gamma_{i}\del^{p^{i}}+x^{(p^n-p^{n-1})}\sum_{\eta=0}^{p^{n-1}-1}\lambda_{\eta}'x^{(\eta)}\del
\end{align*}
for some $\lambda_{\eta}'\in k$. If $\gamma_{i}=0$ for all $i$, then (i) of this lemma implies that $\ccD^{p^{n-1}} \in \fL_{(1)}$ which is not conjugate to $\del^{p^{n-1}}$. Similarly, if $j\geq 1$ is the smallest index such that $\gamma_{j}\neq 0$, then (ii)(a) of this lemma implies that $\ccD^{p^{n-1}}\in \fL_{(1)}$ which is again not conjugate to $\del^{p^{n-1}}$. Therefore, the set $\mathcal{S}$ consists of elements of the form $\ccD=\del^{p^{n-1}}+\sum_{i=0}^{n-2}\gamma_{i}\del^{p^{i}}$, where $\gamma_{i}\in k$ with $\gamma_{0}\neq 0$. This proves (ii)(b).
\end{proof}

We see that the last proof involves calculations using Jacobson's formula and applications of \thref{l:l13}. The only property of $\ccD$ that we used is $\ccD^{p^{n}}=0$. Hence using the same arguments, we can prove a very similar result for nilpotent elements $\partial^{p^{m}}+\sum_{i=0}^{m-1}\alpha_{i}\partial^{p^{i}}+x^{(p^n-p^{m})}\sum_{\eta=0}^{p^{m}-1}\mu_{\eta}x^{(\eta)}\del$, where $1\leq m \leq n-2$.

\begin{cor}\thlabel{c:c2}
Let $E=\partial^{p^{m}}+\sum_{i=0}^{m-1}\alpha_{i}\partial^{p^{i}}+x^{(p^n-p^{m})}\sum_{\eta=0}^{p^{m}-1}\mu_{\eta}x^{(\eta)}\del$ with $1\leq m \leq n-2$ be a nilpotent element of $\fL_p$.
\begin{enumerate}[\upshape(i)]
\item If $\alpha_{i}=0$ for all $i$, then $E^{p^{n-1}}\in \fL_{(1)}$.
\item
\begin{enumerate}[\upshape(a)]
\item Let $q\geq 0$ be the smallest index such that $\alpha_{q}\neq 0$. Then $E^{p^{n-1-q}}$ is conjugate under $G$ to
\[
\del^{p^{n-1}}+x^{(p^{n}-p^{n-1})}\sum_{\eta=2}^{p^{n-1}-1}\nu_{\eta}x^{(\eta)}\del
\]
for some $\nu_{\eta}\in k$. Hence $E^{p^{n-1}}\in \fL_{(1)}$ if $q \geq 1$.
\item In particular, if $\alpha_{0}\neq 0$, then $E^{p^{n-1}}$ is conjugate under $G$ to $\del^{p^{n-1}}$. Hence $E=\del^{p^{m}}+\sum_{i=0}^{m-1}\gamma_{i}\del^{p^{i}}$ for some $\gamma_{i}\in k$ with $\gamma_{0}\neq 0$.
\end{enumerate}
\end{enumerate}
\end{cor}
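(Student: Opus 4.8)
The plan is to run the computational induction from the proof of \thref{p:p1} essentially verbatim, with the leading $p$-power derivative $\del^{p^{m}}$ playing the role of $\del^{p^{n-1}}$. The one structural novelty is that, since now $m\le n-2$, the summand $\del^{p^{m}}$ (which carries coefficient $1$) does not vanish after a single application of the $[p]$-operation but instead climbs successively through $\del^{p^{m+1}},\dots,\del^{p^{n-1}}$, so one must iterate $n-1-m$ times before arriving at an element of the precise shape already handled by \thref{p:p1}.

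First I would establish the analogue of \eqref{cal}. Writing $E=\ccD_2+\ccD_1$ with $\ccD_2=\del^{p^{m}}+\sum_{i=0}^{m-1}\alpha_i\del^{p^{i}}$ and $\ccD_1=x^{(p^n-p^{m})}\sum_{\eta=0}^{p^{m}-1}\mu_\eta x^{(\eta)}\del$, Jacobson's formula (\thref{defres}(3), \thref{generalJacobF}), the identity $\ccD_1^{p}=0$ (proved as in Step~1 of \thref{p:p1} from the filtration \eqref{filtrationoffL} and $p>3$), and the fact that $\fL$ is an ideal of $\fL_p$ yield
\[
E^{p}=\del^{p^{m+1}}+\sum_{i=0}^{m-1}\alpha_i^{p}\del^{p^{i+1}}+(\ad\ccD_2)^{p-1}(\ccD_1)+\mu_{(1)}x^{(p^n-1)}\del
\]
for some $\mu_{(1)}\in k$; since $m+1\le n-1$ one has $(\ad\ccD_2)^{p-1}(\ccD_1)\in\fL_{(p^n-p^{m+1}-1)}\subseteq\fL_{(1)}$, so $E^{p}$ agrees with $\del^{p^{m+1}}+\sum_i\alpha_i^{p}\del^{p^{i+1}}$ modulo $\fL_{(1)}$, and \thref{l:l13} normalizes it to $\del^{p^{m+1}}+\sum_i\alpha_i^{p}\del^{p^{i+1}}+x^{(p^n-p^{m+1})}\sum_{\eta=0}^{p^{m+1}-1}\mu_{\eta,(1)}x^{(\eta)}\del$. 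Iterating this $n-1-m$ times — the same induction on $r$ as in Step~2(ii)(a) of \thref{p:p1}, including the notationally heavier general case where several $\alpha_i$ are nonzero — produces, after a suitable element of $G$, an element $E^{p^{n-1-m}}$ of the form $\del^{p^{n-1}}+\sum_{i=0}^{n-2}\beta_i'\del^{p^{i}}+x^{(p^n-p^{n-1})}\sum_{\eta=0}^{p^{n-1}-1}\mu_\eta' x^{(\eta)}\del$ treated by \thref{p:p1}. Because the cross-terms in Jacobson's formula never create new $p$-power derivatives, the $\del^{p^{i}}$-summands of $E^{p^{n-1-m}}$ are precisely the $p^{\,n-1-m}$-th powers of those of $E$, shifted up by $n-1-m$; in particular, if $q\ge0$ is the smallest index with $\alpha_q\ne0$, then the smallest index appearing in $\sum_i\beta_i'\del^{p^{i}}$ is $q+(n-1-m)\ge1$, while the coefficient of $\del^{p^{n-1}}$ is $1$.

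It then suffices to quote \thref{p:p1}. In case~(i) all $\beta_i'$ vanish, and the computation inside the proof of \thref{p:p1}(i) shows that the $p$-th power of such an element lies in $\fL_{(1)}$; hence $E^{p^{n-m}}=(E^{p^{n-1-m}})^{p}\in\fL_{(1)}$, and $E^{p^{n-1}}\in\fL_{(1)}$ because $\fL_{(1)}$ is a restricted subalgebra and $n-1\ge n-m$. In case~(ii), with $q$ as above, \thref{p:p1}(ii)(a) applied to $E^{p^{n-1-m}}$ — whose smallest $p$-power index is $j'=q+(n-1-m)$ — gives that $(E^{p^{n-1-m}})^{p^{n-1-j'}}=E^{p^{n-1-q}}$ is $G$-conjugate to $\del^{p^{n-1}}+x^{(p^n-p^{n-1})}\sum_{\eta=2}^{p^{n-1}-1}\nu_\eta x^{(\eta)}\del$, which is assertion~(ii)(a). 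If $q\ge1$ the $p$-th power of this element lies in $\fL_{(1)}$ (cf.\ \eqref{Dpn-jcaseiia}), so $E^{p^{n-q}}\in\fL_{(1)}$ and therefore $E^{p^{n-1}}\in\fL_{(1)}$. If $\alpha_0\ne0$, then $E^{p^n}=0$ by \eqref{nilpotentconditioninL}, and since $G$ commutes with the $[p]$-operation the conjugate $\del^{p^{n-1}}+x^{(p^n-p^{n-1})}\sum_{\eta\ge2}\nu_\eta x^{(\eta)}\del$ has vanishing $p$-th power, whence all $\nu_\eta=0$ and $E^{p^{n-1}}$ is conjugate to $\del^{p^{n-1}}$; the centralizer argument of \thref{p:p1}(ii)(b), carried out inside $\fc_{\fL_p}(\del^{p^{n-1}})=k\del^{p^{n-1}}\oplus W(1;n-1)_p$, then forces $E=\del^{p^{m}}+\sum_{i=0}^{m-1}\gamma_i\del^{p^{i}}$ with $\gamma_0\ne0$.

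The principal difficulty is purely one of bookkeeping. At each of the $n-1-m$ steps one must check that every error term thrown up by Jacobson's formula — the iterated commutator $(\ad\ccD_2)^{p-1}(\ccD_1)$ and the surviving cross-terms $s_i(\ccD_2,\ccD_1)$ — lands in $\fL_{(1)}$, which reduces to filtration inequalities of exactly the type used in \thref{p:p1} (for instance $p(p^n-p^{m+r}-1)>p^n-2$), all valid because $p>3$ and the exponents involved are $\le n-1$; and one must consistently keep track of which $\del^{p^{i}}$-summands are genuine elements of $\fL_p$ and which are to be read as $0$ once the exponent reaches $p^n$. No genuinely new phenomenon appears: once the leading term has climbed to $\del^{p^{n-1}}$ the situation is literally an instance of \thref{p:p1}, so \thref{c:c2} is that lemma applied after $n-1-m$ iterations of the $[p]$-operation.
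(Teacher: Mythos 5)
Your proposal is correct and follows essentially the same route as the paper: iterate the $[p]$-th power map $n-1-m$ times (normalizing at each step via \thref{l:l13} and checking the Jacobson error terms land in $\fL_{(1)}$ by the same filtration estimates) until the leading term has climbed to $\del^{p^{n-1}}$, and then invoke \thref{p:p1} with the smallest nonzero index shifted from $q$ to $q+(n-1-m)$. Your accounting of the exponents, in particular that $(E^{p^{n-1-m}})^{p^{m-q}}=E^{p^{n-1-q}}$, matches the paper's proof exactly.
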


\begin{myproof}[Strategy of the proof]\renewcommand{\qedsymbol}{}
By a similar argument as in step 2(ii)(a) of \thref{p:p1}, one can show that $E^{p^{n-1-m}}$ is conjugate under $G$ to \[
\ccD_1=\partial^{p^{n-1}}+\sum_{i=0}^{m-1}\alpha_{i}^{p^{n-1-m}}\del^{p^{i+n-1-m}}+x^{(p^n-p^{n-1})}\sum_{\eta=0}^{p^{n-1}-1}\mu_{\eta,(n-1-m)}x^{(\eta)}\del
\]
for some $\mu_{\eta,(n-1-m)}\in k$. Note that $\ccD_1$ has a similar expression to $\ccD$ in \thref{p:p1}. Applying that lemma to $\ccD_1$, we get most of the desired results. The other results such as showing that $E^{p^{n-1}}\in \fL_{(1)}$ in (i) and finding an expression for $E$ in (ii)(b) follow from the same arguments as in the proof of that lemma. 
\end{myproof}
\begin{proof}
Take $E=\partial^{p^{m}}+\sum_{i=0}^{m-1}\alpha_{i}\partial^{p^{i}}+x^{(p^n-p^{m})}\sum_{\eta=0}^{p^{m}-1}\mu_{\eta}x^{(\eta)}\del$ as in the corollary. By a similar argument as in step 2(ii)(a) of \thref{p:p1}, i.e. using induction on $r$, one can show that for any $1\leq r\leq n-1-m$, $E^{p^{r}}$ is conjugate under $G$ to
\begin{align*}
\partial^{p^{m+r}}+\sum_{i=0}^{m-1}\alpha_{i}^{p^{r}}\del^{p^{i+r}}+x^{(p^n-p^{m+r})}\sum_{\eta=0}^{p^{m+r}-1}\mu_{\eta,(r)}x^{(\eta)}\del
\end{align*}
for some $\mu_{\eta,(r)} \in k$. In particular, $E^{p^{n-1-m}}$ is conjugate under $G$ to
\begin{align*}
\ccD_1=\partial^{p^{n-1}}+\sum_{i=0}^{m-1}\alpha_{i}^{p^{n-1-m}}\del^{p^{i+n-1-m}}+x^{(p^n-p^{n-1})}\sum_{\eta=0}^{p^{n-1}-1}\mu_{\eta,(n-1-m)}x^{(\eta)}\del.
\end{align*}
Note that $\ccD_1$ has a similar expression to $\ccD$ in \thref{p:p1}. Since $E$ is nilpotent, we have that $E^{p^{n}}=0$ by \eqref{nilpotentconditioninL}. As $E^{p^{n-1-m}}$ is conjugate under $G$ to $\ccD_1$, this implies that $\ccD_1^{p^{m+1}}=0$.

(i) Suppose $\alpha_{i}=0$ for all $i$. Applying \thref{p:p1}(i) to $\ccD_1$, we get $\mu_{0,(n-1-m)}=\mu_{1,(n-1-m)}=0$. Then \eqref{betaiall0firsteq} in step 2(i) of \thref{p:p1} gives 
\[
\ccD_1^{p}=\sum_{\eta=2}^{p^{n-1}-1}\mu_{\eta,(n-1-m)}x^{(\eta)}\del+\mu_{(1)} x^{(p^{n}-1)}\partial
\]
for some $\mu_{(1)}\in k$. Hence $\ccD_1^{p}\in \fL_{(1)}$. Since $G$ preserves $\fL_{(1)}$, we have that $E^{p^{n-m}}\in\fL_{(1)}$. As $\fL_{(1)}$ is restricted, we have that $E^{p^{n-1}}\in \fL_{(1)}$. This proves (i).

(ii)(a) Let $q\geq 0$ be the smallest index such that $\alpha_{q}\neq 0$. Then $E^{p^{n-1-m}}$ is conjugate under $G$ to
\[
\ccD_1=\partial^{p^{n-1}}+\sum_{i=q}^{m-1}\alpha_{i}^{p^{n-1-m}}\del^{p^{i+n-1-m}}+x^{(p^n-p^{n-1})}\sum_{\eta=0}^{p^{n-1}-1}\mu_{\eta,(n-1-m)}x^{(\eta)}\del.
\]
Applying \thref{p:p1}(ii)(a) to $\ccD_1$, we get $\mu_{0,(n-1-m)}=0$ and $\ccD_1^{p^{m-q}}$ is 
conjugate under $G$ to
\begin{equation*}
\del^{p^{n-1}}+x^{(p^n-p^{n-1})}\sum_{\eta=2}^{p^{n-1}-1}\nu_{\eta}x^{(\eta)}\del
\end{equation*}
for some $\nu_{\eta}\in k$. Hence $(E^{p^{n-1-m}})^{p^{m-q}}=E^{p^{n-1-q}}$ is conjugate under $G$ to the above element. It follows from \thref{p:p1}(ii)(a) that $E^{p^{n-1}}\in \fL_{(1)}$ if $q\geq 1$. This proves (ii)(a).

(b) If $\alpha_{0}\neq 0$, then it follows from the above and \thref{p:p1}(ii)(b) that $E^{p^{n-1}}$ is conjugate under $G$ to $\del^{p^{n-1}}$. Here we used that $E^{p^{n}}=0$. By the same arguments as in step 2(ii)(b) of \thref{p:p1}, one can show that $E=\del^{p^{m}}+\sum_{i=0}^{m-1}\gamma_{i}\del^{p^{i}}$ for some $\gamma_{i}\in k$ with $\gamma_{0}\neq 0$. This proves (ii)(b).
\end{proof}

\subsection{An irreducible component of $\cN$}
The results in the last section enable us to prove the following: 

\begin{pro}\thlabel{l:l14}
Define $\cN_\text{reg}:=\big\{\ccD\in\cN \,|\,\ccD^{p^{n-1}}\notin \fL_{(0)}\big\}$. Then
\[
\cN_\text{reg}=G.(\del+k\del^p+\dots+k\del^{p^{n-1}}).
\]
\end{pro}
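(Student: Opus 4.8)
The plan is to prove the two inclusions separately, with the bulk of the analytic work borrowed from \thref{YS161}, \thref{l:l13}, \thref{p:p1} and \thref{c:c2}.

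\textbf{The inclusion $\supseteq$.} First I would observe that $\del=\del^{p^0},\del^p,\dots,\del^{p^{n-1}}$ pairwise commute in $\Der\fL$, being powers of a single operator, so they span an abelian subspace of $\fL_p$. Hence Jacobson's formula has no correction terms and, for any $\lambda_1,\dots,\lambda_{n-1}\in k$ and any $m$,
\[
\bigl(\del+\textstyle\sum_{i=1}^{n-1}\lambda_i\del^{p^i}\bigr)^{p^m}=\del^{p^m}+\textstyle\sum_{i=1}^{n-1}\lambda_i^{p^m}\del^{p^{i+m}}.
\]
Taking $m=n$ and using $\del^{p^n}=0$ shows $\del+\sum\lambda_i\del^{p^i}\in\cN$; taking $m=n-1$ shows its $(n-1)$st $p$-power equals $\del^{p^{n-1}}$, which lies outside $\fL$ (hence outside $\fL_{(0)}$) because $\fL_p=\fL\oplus\bigoplus_{i=1}^{n-1}k\del^{p^i}$ as one sees from $\dim\fL_p=p^n+(n-1)$. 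Since every $g\in G$ restricts to an automorphism of $\fL$ and commutes with the $[p]$-map, the orbit $G.(\del+k\del^p+\dots+k\del^{p^{n-1}})$ consists of nilpotent elements whose $(n-1)$st $p$-power $g(\del^{p^{n-1}})$ still lies outside $\fL\supseteq\fL_{(0)}$. This gives $\supseteq$.

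\textbf{The inclusion $\subseteq$.} Let $\ccD\in\cN_{\text{reg}}$. I would first argue $\ccD\notin\fL_{(0)}$: if $\ccD\in\fL_{(1)}$ then $\ccD^{p^{n-1}}\in\fL_{(1)}\subseteq\fL_{(0)}$ because $\fL_{(1)}$ is a restricted $p$-ideal, and if $\ccD\in\fL_{(0)}\setminus\fL_{(1)}$ then, $\fL_{(0)}/\fL_{(1)}$ being a one-dimensional torus, $\ccD$ has nonzero semisimple part modulo $\fL_{(1)}$ and cannot be nilpotent; both contradict $\ccD\in\cN_{\text{reg}}\subseteq\cN$. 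Thus (as recalled before \thref{YS161pre}) $\ccD=\sum_{i=0}^{n-1}\alpha_i\del^{p^i}+f(x)\del$ with $f(x)\in\fm$ and some $\alpha_i\neq0$. Let $t$ be the largest index with $\alpha_t\neq0$. If $t=0$ then $\ccD=\alpha_0\del+f(x)\del\in\fL$ with $\alpha_0\neq0$, and \thref{YS161}(ii) says $\ccD$ is $G$-conjugate to $\del$. If $t\geq1$, then after conjugating by an automorphism $\Phi$ with $\Phi(\del)=\beta\del$, where $\beta^{p^t}\alpha_t=1$ (possible since $k$ is algebraically closed, hence perfect), I may assume $\ccD=\del^{p^t}+\sum_{i=0}^{t-1}\beta_i\del^{p^i}+g(x)\del$ with $g(x)\in\fm$. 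For $t=n-1$, \thref{l:l13} brings $\ccD$ into the form treated in \thref{p:p1}; parts (i) and (ii)(a) of that proposition force $\ccD^{p^{n-1}}\in\fL_{(1)}\subseteq\fL_{(0)}$ as soon as $\beta_0=0$, which is excluded here, so $\beta_0\neq0$ and \thref{p:p1}(ii)(b) gives that $\ccD$ is $G$-conjugate to $\del^{p^{n-1}}+\sum_{i=0}^{n-2}\gamma_i\del^{p^i}$ with $\gamma_0\neq0$. For $1\leq t\leq n-2$, \thref{l:l13} together with \thref{c:c2} yields the same conclusion with $n-1$ replaced by $t$. In either case a final rescaling $\Phi(\del)=\gamma_0^{-1}\del$ normalizes the $\del$-coefficient to $1$ and places $\ccD$, up to $G$-conjugacy, in $\del+k\del^p+\dots+k\del^{p^{n-1}}$. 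This proves $\subseteq$, and hence the proposition.

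\textbf{Where the difficulty lies.} The heavy computations with Jacobson's formula that rule out the degenerate branches and that pin down the centralizer of $\del^{p^{n-1}}$ have already been done in \thref{YS161}, \thref{l:l13}, \thref{p:p1} and \thref{c:c2}, so the proof of the proposition itself is essentially an organized case split. The point needing the most care is exactly that case split: one must confirm that the three subcases ($t=0$; $t=n-1$; $1\leq t\leq n-2$) are exhaustive for $\ccD\in\cN_{\text{reg}}$, and that the hypothesis $\ccD^{p^{n-1}}\notin\fL_{(0)}$ is precisely what excludes the "$\beta_0=0$" (equivalently "$\alpha_0=0$") branches of \thref{p:p1} and \thref{c:c2}; the apparently harmless rescalings also have to be tracked so that the final output is literally a member of $\del+k\del^p+\dots+k\del^{p^{n-1}}$ rather than just some linear combination of the $\del^{p^i}$ with nonzero constant term.
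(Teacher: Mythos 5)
Your proposal is correct and follows essentially the same route as the paper: the same commuting-powers computation for the inclusion $\supseteq$, and for $\subseteq$ the same reduction of $\ccD\in\cN_{\text{reg}}$ to the form $\sum_{i=0}^{t}\alpha_i\del^{p^i}+f(x)\del$ followed by the identical case split on $t$ using \thref{YS161}, \thref{l:l13}, \thref{p:p1} and \thref{c:c2}, with the final rescaling $\Phi(x)=\gamma_0 x$. The only cosmetic difference is that you rule out $\cN_{\text{reg}}\cap\fL_{(0)}$ directly via the torus $\fL_{(0)}/\fL_{(1)}$, while the paper rules out $\cN_{\text{reg}}\cap\fL_{(1)}$ and appeals to its earlier description of $\cN\setminus\fL_{(1)}$; these are equivalent.
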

\begin{proof}
Since $(\del+\sum_{i=1}^{n-1}\alpha_{i}\del^{p^{i}})^{p^{n}}=0$ and $(\del+\sum_{i=1}^{n-1}\alpha_{i}\del^{p^{i}})^{p^{n-1}}=\del^{p^{n-1}}$, this shows that any elements which are conjugate under $G$ to $\del+\sum_{i=1}^{n-1}\alpha_{i}\del^{p^{i}}$ are contained in $\cN_\text{reg}$. So $G.(\del+k\del^p+\dots+k\del^{p^{n-1}})\subseteq \cN_\text{reg}$. To show that $\cN_\text{reg}$ is a subset of $G.(\del+k\del^p+\dots+k\del^{p^{n-1}})$, we observe that if $\ccD_1\in \fL_{(1)}$, then $\ccD_1$ is nilpotent and $\ccD_1^{p^{n-1}}\in \fL_{(1)}\subset \fL_{(0)}$. Hence $\ccD_1\not\in \cN_\text{reg}$. As a result, $\cN_\text{reg}\subseteq \cN\setminus\fL_{(1)}$.

Let $\ccD \in \cN_\text{reg}$. Note that elements of $\cN\setminus\fL_{(1)}$ have the form 
\[
\sum_{i=0}^{n-1}\alpha_{i}\partial^{{p^{i}}}+f(x)\partial,
\]
where $f(x)\in \fm$ and $\alpha_i \in k $ with at least one $\alpha_{i}\neq 0$. If $\alpha_{0}\neq 0$ and $\alpha_{i}= 0$ for all $i\geq 1$, then $(\alpha_0\del+f(x)\del)^{p^{n-1}}\not \in \fL_{(0)}$ by \thref{YS161}(i). Hence $\alpha_0\del+f(x)\del\in\cN_\text{reg}$. Take $\ccD$ to be such an element. It follows from \thref{YS161}(ii) that $\ccD$ is conjugate under $G$ to $\partial$. Thus $\ccD \in G.(\del+k\del^p+\dots+k\del^{p^{n-1}})$ in this case.

For the other elements of $\cN\setminus\fL_{(1)}$, let $1\leq t\leq n-1$ be the largest index such that $\alpha_{t}\neq 0$, i.e. we consider elements of the form $\sum_{i=0}^{t}\alpha_{i}\partial^{{p^{i}}}+f(x)\partial$. Let $\Phi\in G$ be such that $\Phi(x)=\alpha x$, where $\alpha^{p^{t}}=\alpha_t$. Then $\Phi$ reduces $\sum_{i=0}^{t}\alpha_{i}\partial^{{p^{i}}}+f(x)\partial$ to 
\[
\del^{p^{t}}+\sum_{i=0}^{t-1}\beta_{i}\partial^{{p^{i}}}+g(x)\del
\]
for some $\beta_{i}\in k^{*}\alpha_i$ and $g(x)\in \fm$. By \thref{l:l13}, $\del^{p^{t}}+\sum_{i=0}^{t-1}\beta_{i}\partial^{{p^{i}}}+g(x)\del$ is conjugate under $G$ to
\[
\del^{p^{t}}+\sum_{i=0}^{t-1}\beta_{i}\partial^{{p^{i}}}+x^{(p^n-p^t)}\sum_{\eta=0}^{p^t-1}\mu_{\eta}x^{(\eta)}\del\]
for some $\mu_{\eta}\in k$. If $\beta_{i}=0$ for all $i$, then \thref{p:p1} and \thref{c:c2}(i) imply that
\[
\big(\del^{p^{t}}+x^{(p^n-p^t)}\sum_{\eta=0}^{p^t-1}\mu_{\eta}x^{(\eta)}\del\big)^{p^{n-1}}\in \fL_{(1)}\subset \fL_{(0)},
\]
and so elements of this form are not in $\cN_\text{reg}$. Now let $j\geq 1$ be the smallest index such that $\beta_{j}\neq 0$. Then \thref{p:p1} and \thref{c:c2}(ii)(a) imply that 
\[
\big(\del^{p^{t}}+\sum_{i=j}^{t-1}\beta_{i}\partial^{{p^{i}}}+x^{(p^n-p^t)}\sum_{\eta=0}^{p^t-1}\mu_{\eta}x^{(\eta)}\del\big)^{p^{n-1}}\in \fL_{(1)}\subset \fL_{(0)},
\]
and so elements of this form are not in $\cN_\text{reg}$. But if $\beta_{0}\neq 0$, then it is easy to see that 
\[
\big(\del^{p^{t}}+\sum_{i=0}^{t-1}\beta_{i}\partial^{{p^{i}}}+x^{(p^n-p^t)}\sum_{\eta=0}^{p^t-1}\mu_{\eta}x^{(\eta)}\del\big)^{p^{n-1}} \not\in \fL_{(0)},
\]
and so elements of this form are in $\cN_\text{reg}$. Take $\ccD$ to be such an element. It follows from \thref{p:p1} and \thref{c:c2}(ii)(b) that $\ccD=\del^{p^{t}}+\sum_{i=0}^{t-1}\gamma_{i}\del^{p^{i}}$ for some $\gamma_{i}\in k$ with $\gamma_{0}\neq 0$. Let $\Phi_1\in G$ be such that $\Phi_1(x)=\gamma_0 x$. Applying $\Phi_1$ to $\ccD$, we get $\Phi_1(\ccD)=\del+\sum_{i=1}^{t-1}\gamma_{i}'\del^{p^{i}}+(\gamma_0)^{-p^{t}}\del^{p^{t}}$ for some $\gamma_{i}'\in k^{*}\gamma_i$. Hence $\ccD\in G.(\del+k\del^p+\dots+k\del^{p^{n-1}})$ in this case. Since we have exhausted all elements of $\cN_\text{reg}$, this completes the proof.
\end{proof}

Before we proceed to show that the Zariski closure of $\cN_\text{reg}$ is an irreducible component of $\cN$, we need the following results:

\begin{lem}\thlabel{kerd}
Let $\ccD=\del+\sum_{i=1}^{n-1}\lambda_i\del^{p^{i}}$ with $\lambda_i\in k$ and denote by $\fc_\fL(\ccD)$ (respectively $\fc_{\fL_{p}}(\ccD))$ the centralizer of $\ccD$ in $\fL$ $($respectively $\fL_p)$. Then
\begin{enumerate}[\upshape(i)]
\item $\fc_\fL(\ccD)=\text{span$\{\del\}$}$.
\item $\fc_{\fL_{p}}(\ccD)=\text{span$\{\del, \del^p, \dots, \del^{p^{n-1}}$\}}$.
\item $\fc_\fL(\ccD)\cap \Lie(G)=\{0\}$.
\end{enumerate}
\end{lem}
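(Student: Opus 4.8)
The plan is to reduce all three parts to a single computation of the kernel of $\ccD$, viewed as an operator on $\cO(1;n)$. First I would use that $\fL=W(1;n)$ is a free $\cO(1;n)$-module of rank one on $\del$, so every element of $\fL_p$ can be written uniquely as $f\del+\sum_{i=1}^{n-1}\mu_i\del^{p^i}$ with $f\in\cO(1;n)$ and $\mu_i\in k$. Using the bracket formula \eqref{LbracketWm}, the relation $[\del^{p^i},x^{(a)}\del]=x^{(a-p^i)}\del$ (for $a\ge p^i$, and $0$ otherwise) recalled in Sec.~\ref{2.1}, and the obvious facts that the operators $\del^{p^i}$ are mutually commuting powers of $\del$ (so $[\del^{p^i},\del]=0$ and $[\del^{p^i},\del^{p^j}]=0$), I would compute
\[
\Big[f\del+\sum_{i=1}^{n-1}\mu_i\del^{p^i},\ \ccD\Big]=-\big(\ccD(f)\big)\del ,
\]
where on the right $\ccD$ denotes the operator $\del+\sum_{i=1}^{n-1}\lambda_i\del^{p^i}$ acting on $\cO(1;n)$. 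The key observation is that the $\del^{p^i}$-part of the chosen element contributes nothing to the bracket with $\ccD$.

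Next I would identify $\ker\ccD$ on $\cO(1;n)$. Writing $\ccD=\del\circ\big(\Id+\sum_{i=1}^{n-1}\lambda_i\del^{p^i-1}\big)$ and noting that $\del$ is nilpotent on $\cO(1;n)$ (it sends $x^{(a)}\mapsto x^{(a-1)}$, so $\del^{p^n}=0$), the factor $\Id+\sum_{i=1}^{n-1}\lambda_i\del^{p^i-1}$ is unipotent, hence invertible; therefore $\ker\ccD=\ker\del=k\cdot 1$. Part (i) then follows: $f\del$ centralizes $\ccD$ precisely when $\ccD(f)=0$, i.e.\ $f\in k\cdot 1$, so $\fc_\fL(\ccD)=k\del$. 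For part (ii), the displayed bracket shows that $f\del+\sum_i\mu_i\del^{p^i}$ centralizes $\ccD$ exactly when $\ccD(f)=0$, with the $\mu_i$ unconstrained; since $\del,\del^p,\dots,\del^{p^{n-1}}$ are linearly independent in $\fL_p$, this gives $\fc_{\fL_p}(\ccD)=\mathrm{span}\{\del,\del^p,\dots,\del^{p^{n-1}}\}$. Part (iii) is then immediate from (i) together with the inclusion $\Lie(G)\subseteq\fL_{(0)}$ noted before \thref{LieG}: since $\del=d_{-1}\notin\fL_{(0)}$, we have $k\del\cap\Lie(G)=\{0\}$.

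I do not expect a genuine obstacle here; the argument is essentially bracket bookkeeping. The two points that need a little care are keeping track that $\del^{p^i}$ denotes the $p^i$-fold composite of $\del$ (which is exactly what makes all the cross-brackets among the $\del^{p^i}$ and with $\del$ vanish), and checking that $\Id+\sum_{i=1}^{n-1}\lambda_i\del^{p^i-1}$ is honestly invertible on the finite-dimensional local algebra $\cO(1;n)$ — both of which are routine.
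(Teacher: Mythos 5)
Your proof is correct, but it takes a genuinely different route from the paper's in part (i). The paper argues abstractly: it asserts that $(\ad\ccD)^{p^{n}-1}\neq 0$ while $(\ad\ccD)^{p^{n}}=0$, so that $\ad\ccD$ acts on $\fL$ as a single nilpotent Jordan block of size $p^n$, whence $\Ker(\ad\ccD)=\fc_\fL(\ccD)$ is one-dimensional and must equal $k\del$. You instead exploit the $\cO(1;n)$-module structure of $\fL$ to get the explicit formula $[f\del+\sum_i\mu_i\del^{p^i},\ccD]=-\ccD(f)\del$ and then reduce everything to $\Ker\ccD=k1$ on $\cO(1;n)$ via the factorization $\ccD=\del\circ(\Id+\sum_i\lambda_i\del^{p^{i}-1})$ with unipotent second factor. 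Your version has the advantage of being self-contained (the paper's claim $(\ad\ccD)^{p^{n}-1}\neq 0$ is not justified on the spot, whereas your invertibility check is immediate), and the same bracket formula hands you (ii) directly with the $\mu_i$ unconstrained; the paper instead proves (ii) by decomposing $v=\sum_{i=0}^{n-1}\alpha_i\del^{p^{i}}+v_1$ with $v_1\in\fL_{(0)}$ and invoking (i) together with $\del\notin\fL_{(0)}$ — essentially equivalent bookkeeping. Part (iii) is identical in both. One cosmetic point: strictly speaking $\Ker(\del\circ A)=A^{-1}(\Ker\del)$ for invertible $A$, not $\Ker\del$ itself; since $A(1)=1$ and both kernels are one-dimensional the conclusion $\Ker\ccD=k1$ stands, but the intermediate equality as you phrase it deserves that one extra word.
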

\begin{proof}
(i) Clearly, $\text{span$\{\del\}$} \subseteq \fc_\fL(\ccD)$. Since $(\ad\ccD)^{p^{n}-1}\neq 0$ and $(\ad\ccD)^{p^{n}}=0$, the theory of canonical Jordan normal form says that there exists a basis $\mathcal{B}$ of $\fL$ such that the matrix of $\ad\ccD$ with respect to $\mathcal{B}$ is a single Jordan block of size $p^n$ with zeros on the main diagonal. Hence the matrix of $\ad\ccD$ has rank $p^n-1$. This implies that $\Ker (\ad\ccD)$ has dimension $1$. By definition, $\Ker (\ad\ccD)=\fc_\fL(\ccD)$. Hence $\fc_\fL(\ccD)=\text{span$\{\del\}$}$.

(ii) It is clear that $\text{span$\{\del, \del^p, \dots, \del^{p^{n-1}}\}$}\subseteq \fc_{\fL_{p}}(\ccD)$. Suppose $v\in \fc_{\fL_p}(\ccD)$. Then we can write $v=\sum_{i=0}^{n-1}\alpha_i\del^{p^{i}}+v_1$ for some $v_1 \in \fL_{(0)}$. Since $\sum_{i=0}^{n-1}\alpha_i\del^{p^{i}}\in \fc_{\fL_p}(\ccD)$, we must have that $v_1 \in \fc_{\fL_p}(\ccD)$. By (i), the centralizer of $\ccD$ in $\fL$ is $k\del$ which is not in $\fL_{(0)}$. Hence $v_1=0$ and $\fc_{\fL_{p}}(\ccD)=\text{span$\{\del, \del^p, \dots, \del^{p^{n-1}}$\}}$.

(iii) It follows from (i) and \thref{LieG}. This completes the proof.
\end{proof}

\begin{lem}\thlabel{l:l15}
The Zariski closure of $\cN_\text{reg}$ is an irreducible component of $\cN$.
\end{lem}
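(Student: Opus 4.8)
The plan is to exhibit $\overline{\cN_\text{reg}}$ as the closure of the image of a morphism from an irreducible variety of dimension exactly $p^n-1$, and then to force this closure to be a component by checking that the relevant fibre is finite and invoking the equidimensionality of $\cN$.

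Concretely, put $V_0:=\del+k\del^p+\dots+k\del^{p^{n-1}}$, an affine subspace of $\fL_p$ of dimension $n-1$, and consider the action morphism $\theta\colon G\times V_0\to\fL_p$, $(\sigma,v)\mapsto\sigma(v)$. By \thref{l:l14} the image of $\theta$ is precisely $\cN_\text{reg}$, so $\cN_\text{reg}$ is irreducible, being the image of the irreducible variety $G\times V_0$, and hence $\overline{\cN_\text{reg}}$ is irreducible; moreover every $v\in V_0$ is nilpotent and $G$ preserves nilpotency, so $\cN_\text{reg}\subseteq\cN$ and therefore $\overline{\cN_\text{reg}}\subseteq\cN$ (recall $\cN$ is Zariski closed). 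We also have $\dim(G\times V_0)=\dim G+(n-1)=(p^n-n)+(n-1)=p^n-1$. Regarding $\theta$ as a dominant morphism $G\times V_0\to\overline{\cN_\text{reg}}$ of irreducible varieties, \thref{fdimthm}(i) applied to the non-empty fibre over $\del$ gives $\dim\theta^{-1}(\del)\ge(p^n-1)-\dim\overline{\cN_\text{reg}}$. Thus it is enough to show $\dim\theta^{-1}(\del)=0$: together with $\overline{\cN_\text{reg}}\subseteq\cN$ and the fact that all components of $\cN$ have dimension $p^n-1$ (see Sec.~\ref{2.1}, via \thref{nvarietythm}), this yields $\dim\overline{\cN_\text{reg}}=p^n-1$, and an irreducible closed subvariety of $\cN$ of this maximal dimension is a union of components, hence a single component.

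It remains to prove that $\theta^{-1}(\del)$ is finite, and here I would combine two observations. First, $\fL$ is an ideal of $\fL_p$ (from $[\del^{p^i},x^{(a)}\del]=x^{(a-p^i)}\del\in\fL$ together with $[\fL,\fL]\subseteq\fL$), and each $\sigma\in G$ acts on $\fL_p$ as an automorphism restricting to an automorphism of $\fL$, hence induces an automorphism $\bar\sigma$ of the $(n-1)$-dimensional quotient $\fL_p/\fL$. If $\sigma(v)=\del$ with $v\in V_0$, passing to $\fL_p/\fL$ gives $\bar\sigma(\bar v)=\bar\del=0$, so $\bar v=0$, i.e.\ $v-\del\in\fL$; since $\fL_p=\fL\oplus\spn\{\del^p,\dots,\del^{p^{n-1}}\}$ and $v-\del\in\spn\{\del^p,\dots,\del^{p^{n-1}}\}$, this forces $v=\del$. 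Consequently $\theta^{-1}(\del)=\Stab_G(\del)\times\{\del\}$. Second, $\Stab_G(\del)$ is finite: its Lie algebra is contained in $\fc_{\Lie(G)}(\del)=\Lie(G)\cap\fc_{\fL_p}(\del)$, which by \thref{kerd}(ii) equals $\Lie(G)\cap\spn\{\del,\del^p,\dots,\del^{p^{n-1}}\}$; since $\Lie(G)\subseteq\fL_{(0)}\subseteq\fL$ and $\del^{p^i}\notin\fL$ for $i\ge1$, this lies in $\Lie(G)\cap k\del$, which is $\{0\}$ by \thref{LieG} (equivalently \thref{kerd}(iii) with all $\lambda_i=0$). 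Hence $\dim\theta^{-1}(\del)=0$, which finishes the proof.

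The genuinely substantive point is the finiteness of the fibre $\theta^{-1}(\del)$, i.e.\ simultaneously that $\del$ is $G$-conjugate to no other element of $V_0$ and that its stabiliser in $G$ is finite; the first is what makes the passage to the quotient $\fL_p/\fL$ the key step, and the second is where \thref{kerd} and \thref{LieG} are indispensable. Everything else — constructibility and irreducibility of the image, the dimension identity $\dim G+\dim V_0=p^n-1$, and the fibre-dimension inequality combined with the equidimensionality of $\cN$ — is routine.
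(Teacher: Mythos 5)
Your proof is correct, and it reaches the dimension bound by a genuinely different route than the paper, so a comparison is worth recording. Both arguments share the same skeleton: by \thref{l:l14} one writes $\overline{\cN_\text{reg}}=\overline{G.X}$ with $X=\del+k\del^p+\dots+k\del^{p^{n-1}}$, notes that this is an irreducible closed subset of the equidimensional variety $\cN$, and reduces everything to showing $\dim\overline{G.X}\ge p^n-1$. The paper proves this bound infinitesimally: it locates a point $\ccD\in X$ that is smooth in $\overline{G.X}$ and shows that the differential $(Y,Z)\mapsto[Y,\ccD]+Z$ of the action morphism is injective, using that $[\Lie(G),\ccD]\subseteq\fL$, that $\fL\cap(k\del^p+\dots+k\del^{p^{n-1}})=\{0\}$, and that $\fc_{\Lie(G)}(\ccD)=\{0\}$ by \thref{kerd}(iii). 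You instead bound the dimension of one fibre and invoke \thref{fdimthm}(i): passing to the quotient $\fL_p/\fL$ (legitimate, since $\fL$ is a $G$-stable ideal of $\fL_p$ and $\fL_p=\fL\oplus\spn\{\del^p,\dots,\del^{p^{n-1}}\}$) pins the $X$-component of the fibre to $\del$ itself, and $\Lie(\Stab_G(\del))\subseteq\fc_{\Lie(G)}(\del)=\{0\}$ forces the stabiliser to be finite. The two proofs are morally parallel -- your quotient step is the integrated counterpart of the paper's observation that $[\Lie(G),\ccD]\in\fL$ meets $X_0$ trivially, and both ultimately rest on \thref{kerd} and \thref{LieG} -- but yours avoids having to produce a smooth point of $\overline{G.X}$ lying in $X$ (and the attendant appeal to $G$-invariance of the smooth locus), at the price of proving the slightly stronger, global statement that the whole fibre over $\del$ is finite rather than just that one differential is injective. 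Your closing reduction (an irreducible closed subset of maximal dimension in an equidimensional variety is a single component, not merely a union of components) is also fine as you use it.
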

\begin{proof}
By \thref{l:l14}, it suffices to show that the Zariski closure of \newline
$G.(\del+k\del^p+\dots+k\del^{p^{n-1}})$ is an irreducible component of $\cN$. Set 
\[
X=\del+X_0,
\]
where $X_0=k\del^p+\dots+k\del^{p^{n-1}}$. Note that $X \cong X_0\cong \A^{n-1}$. So $X$ is irreducible. Moreover, $G$ is a connected algebraic group and so $\overline{G.X}$ is an irreducible variety contained in $\cN$. Hence $\dim \overline{G.X} \leq \dim \cN$. If $\dim \overline{G.X} \geq \dim \cN$, then we get the desired result.

Define $\Psi$ to be the morphism
\begin{align*}
\Psi: G\times X &\to \overline{G.X} \\
(g, \ccD)&\mapsto g.\ccD
\end{align*}
Since $G.X$ is dense in $\overline{G.X}$, it contains smooth points of $\overline{G.X}$. As the set of smooth points is $G$-invariant, there exists $\ccD\in X$ such that $\Psi(1, \ccD)=\ccD$ is a smooth point in $\overline{G.X}$. We may assume that $\ccD=\del+\sum_{i=1}^{n-1}\lambda_i\del^{p^{i}}$ for some $\lambda_i\in k$. Then $\T_{\ccD} (X)=X_0$ and the differential of $\Psi$ at the smooth point $(1, \ccD)$ is the map
\begin{align*}
(d\Psi)_{(1,\ccD)}: \Lie(G)\oplus X_0 &\to \T_{\ccD}(\overline{G.X})\\
(Y, Z) &\mapsto [Y, \ccD]+Z.
\end{align*}
Since $\dim \T_{\ccD}(\overline{G.X})=\dim \overline{G.X}$, it is enough to show that $\dim \T_{\ccD}(\overline{G.X})\geq \dim \cN=p^n-1$. This can be done by showing that $(d\Psi)_{(1,\ccD)}$ is injective. By \thref{LieG}, we know that any $Y\in \Lie(G)$ has the form $Y=\sum_{j=1}^{p^n-1}\mu_jx^{(j)}\del$, where $\mu_j\in k$ with $\mu_{p^{l}}=0$ for all $1\leq l\leq n-1$. By \eqref{LbracketWm},
\begin{align*}
[Y, \ccD]&=\bigg[\sum_{j=1}^{p^n-1}\mu_jx^{(j)}\del, \del+\sum_{i=1}^{n-1}\lambda_i\del^{p^{i}}\bigg]\\
&=-\sum_{j=1}^{p^n-1}\mu_jx^{(j-1)}\del-\sum_{i=1}^{n-1}\sum_{j=p^i+1}^{p^n-1}\lambda_i\mu_j x^{(j-p^i)}\del.
\end{align*}
Hence $[Y, \ccD]\in \fL$ for all $Y \in \Lie(G)$. As $\fL\cap X_0=\{0\}$, we have that 
\[
\Ker((d\Psi)_{(1,\ccD)})\cong \fc_{\Lie(G)}(\ccD).
\]
By \thref{kerd}(iii), $\fc_{\Lie(G)}(\ccD)=\{0\}$. Hence $\Ker((d\Psi)_{(1,\ccD)})=0$ and $(d\Psi)_{(1,\ccD)}$ is injective. As a result, 
\begin{align*}
\dim \T_{\ccD}(\overline{G.X})&\geq \dim \Ima ((d\Psi)_{(1,\ccD)})\\
&= \dim \Lie(G)+\dim X_0\\
&=(p^n-n)+(n-1)=p^n-1=\dim \cN.
\end{align*}
This completes the proof.
\end{proof}

\subsection{The irreducibility of $\cN$}\label{3.2.3irredsection}
Our goal is to prove the irreducibility of the variety $\cN$. To achieve this, we need the following result:

\begin{pro}\thlabel{l:l16}
Define $\cN_{\text{sing}}:=\cN\setminus \cN_\text{reg}=\big \{\ccD \in \cN \,|\, \ccD^{p^{n-1}} \in \fL_{(0)} \big\}$. Then
\[
\dim \cN_{\text{sing}}<\dim \cN.
\]
\end{pro}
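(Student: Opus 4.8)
The plan is to mimic Premet's argument for the Jacobson--Witt algebra, in the form it took in \thref{WnN0resultslem5}: since $\cN$ is equidimensional of dimension $p^n-1$ by \thref{nvarietythm}, it suffices, by the Affine Dimension Theorem (\thref{ADthm}), to exhibit an $(n+1)$-dimensional subspace $V$ of $\fL_p$ with $V \cap \cN_{\text{sing}} = \{0\}$. Indeed, if such a $V$ exists, then every irreducible component $Z$ of $\cN_{\text{sing}}$ satisfies $\dim Z \ge \dim Z + \dim V - \dim\fL_p = \dim Z + (n+1) - (p^n + n - 1)$; rearranging, an irreducible component of $Z \cap V$ (which is just $\{0\}$, hence $0$-dimensional) has dimension $\ge \dim Z + (n+1) - \dim\fL_p$, forcing $\dim Z \le \dim\fL_p - (n+1) = p^n - 2 < p^n - 1 = \dim\cN$. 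So the whole problem reduces to constructing $V$.

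First I would record a spanning candidate. The obvious analogue of Premet's subspace — a maximal torus together with the ``regular'' nilpotent $\del$ — does not literally carry over, because $\fL_p$ has no visible $n$-dimensional torus realized by monomials of the shape $x^{(p^i)}\del$ (the elements $x^{(p^i-1)}\del = d_{p^i-1}$ are the ones whose $p$-th powers climb the filtration). Instead I would pass to the Zassenhaus presentation $\fL = \bigoplus_{\alpha\in\F_q} k e_\alpha$ with $[e_\alpha,e_\beta] = (\beta-\alpha)e_{\alpha+\beta}$, $q=p^n$, recalled in Sec.~\ref{2.1}; in this basis a natural toral-looking $n$-dimensional piece is spanned by $e_{\alpha}$ for $\alpha$ ranging over an $\F_p$-basis of $\F_q$ (compare \cite[Theorem~7.6.3]{S04}, which gives $\MT(\fL_p)=n$). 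I would then take
\[
V := \spn\{\del,\ \del^p,\ \dots,\ \del^{p^{n-1}}\} \oplus k\, e_{\alpha_0}
\]
for a suitable single $e_{\alpha_0}$ — or, dually, $V$ spanned by the toral Cartan subalgebra of $\fL_p$ together with $\del$ — so that $\dim V = n+1$. The precise choice of the extra generator is exactly the point where the argument must be tuned; I would pick it so that a generic element $t + \lambda\del$ of $V$ (with $t$ in the toral part, $\lambda\in k$) has $\ad$-action on $\cO(1;n)$, or equivalently on $\fL$, that does not drop into $\fL_{(0)}$ after applying the $p^{n-1}$-st power map.

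The key computation, and the main obstacle, is then to show $V \cap \cN_{\text{sing}} = \{0\}$: that if $0 \ne D \in V$ is nilpotent, then $D^{p^{n-1}} \notin \fL_{(0)}$. For this I would argue as in \thref{WnN0resultslem5}: fix the element $x^{(p^n-1)}$ (or the analogous ``top'' element of $\cO(1;n)$) and compute $D^{p^{n-1}}(x^{(p^n-1)})$; the toral summand acts diagonally and, by an ordering/weight argument on the basis $\{x^{(a)}\}$ analogous to \thref{Dcompatiblewithordering}, the nilpotent part $\del$ contributes an invertible leading term while the toral corrections cannot cancel it, so $D^{p^{n-1}}(x^{(p^n-1)})$ is invertible in $\cO(1;n)$; but any element of $\fL_{(0)}$ sends $\cO(1;n)$ into $\fm$, a contradiction. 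Handling the case where $D$ has no $\del$-component (i.e. $D$ lies in the toral part) is easy: such a nonzero semisimple element is not nilpotent at all, so it is not in $\cN_{\text{sing}}$. The genuine difficulty is controlling the ``$\sum_{i\ge 1}\lambda_i\del^{p^i}$'' terms: one must check that adding higher $p$-power Frobenius twists of $\del$ does not destroy the invertibility of the leading coefficient. This should follow because $\del^{p^i}$ acts on $x^{(a)}$ as $x^{(a-p^i)}$, strictly lowering degree, so in $D^{p^{n-1}}(x^{(p^n-1)})$ these terms only feed into lower-order corrections and never touch the unit coefficient produced by $\del^{p^{n-1}}$ — essentially the same bookkeeping that appears in the proof of \thref{YS161}(i). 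Once $V$ is pinned down and this invertibility is verified, \thref{ADthm} finishes the proof exactly as above.
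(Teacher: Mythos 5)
Your reduction is the right one and is exactly what the paper does: by \thref{nvarietythm} the variety $\cN$ is equidimensional of dimension $p^n-1$, so by \thref{ADthm} it suffices to produce an $(n+1)$-dimensional subspace $V\subset\fL_p$ with $V\cap\cN_{\text{sing}}=\{0\}$. But your concrete candidate for $V$ fails. The subspace $\spn\{\del,\del^p,\dots,\del^{p^{n-1}}\}\oplus ke_{\alpha_0}$ contains the nonzero elements $\sum_{i\ge 1}\lambda_i\del^{p^i}$, and every such element lies in $\cN_{\text{sing}}$: indeed $\big(\sum_{i\ge1}\lambda_i\del^{p^i}\big)^{p^{n-1}}=\sum_{i\ge1}\lambda_i^{p^{n-1}}\del^{p^{i+n-1}}=0$ since $\del^{p^j}=0$ for $j\ge n$. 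So $V\cap\cN_{\text{sing}}$ already contains an $(n-1)$-dimensional subspace, and no choice of the extra generator $e_{\alpha_0}$ can repair this, because these bad elements do not involve it. (Note also that $\spn\{\del,\dots,\del^{p^{n-1}}\}$ is not toral at all --- it consists entirely of nilpotent elements --- so the ``toral summand acts diagonally'' step of your key computation has nothing to act on.) The same obstruction defeats the invertibility argument you sketch: the leading term of $D^{p^{n-1}}(x^{(p^n-1)})$ produced by $\del^{p^{n-1}}$ is precisely what vanishes for the elements above.

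The missing idea, which is the actual content of the paper's construction, is to replace the $\del^{p^i}$ by the iterated $p$-th powers of a \emph{semisimple} element. In the basis $\{e_\alpha\}$ the element $e_0$ is semisimple, lies in $\fL\setminus\fL_{(0)}$ (so $e_0=\del+z_0$ with $0\ne z_0\in\fL_{(0)}$), and satisfies the periodicity $e_0^{p^n}=e_0$; its $p$-envelope $T=(ke_0)_p=\spn\{e_0,e_0^p,\dots,e_0^{p^{n-1}}\}$ is an $n$-dimensional torus. The paper takes $V:=T\oplus ku$ with $u$ a toral element of $\fL_{(0)}$ (a $\sigma$-eigenvector of eigenvalue $1$), and for $y=\sum_i\lambda_ie_0^{p^i}+\mu u$ argues in two cases: if $\lambda_0\ne0$ then $y^{p^{n-1}}=\lambda_0^{p^{n-1}}\del^{p^{n-1}}+\cdots\notin\fL_{(0)}$, while if $\lambda_0=0$ and $\lambda_s\ne0$ with $s$ maximal, the periodicity $e_0^{p^n}=e_0$ makes $y^{p^{n-s}}$ reacquire a nonzero $e_0$-component, whence $y^{p^{2n-s-1}}\ne0$ and $y$ is not nilpotent. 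This ``wrap-around'' via $e_0^{p^n}=e_0$ is exactly the mechanism that eliminates the degenerate combinations which sink your candidate, and the $\sigma$-eigenvalue bookkeeping of \thref{l:l18} is what controls the commutator correction terms in Jacobson's formula so that they stay in $\fL_{(0)}$. Without some substitute for these two ingredients your plan does not go through.
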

Clearly, $\cN_{\text{sing}}$ is Zariski closed in $\cN$. To prove this proposition, we need to construct an $(n+1)$-dimensional subspace $V$ in $\fL_{p}$ such that $V \cap \cN_{\text{sing}}=\{0\}$. Then the result follows from \thref{ADthm}; see \thref{WnN0resultslem5} for a similar proof. The way $V$ is constructed relies on the original definition of $\fL$ due to H.~Zassenhaus and the following lemmas. Recall that $\fL$ has a $k$-basis $\{e_{\alpha}\,|\, \alpha\in \F_q\}$ with the Lie bracket given by $[e_\alpha, e_\beta]=(\beta-\alpha)e_{\alpha+\beta}$. Here $\F_q\subset k$ is a finite field with $q=p^n$ elements. The multiplicative group $\F_q^{*}$ of $\F_q$ is cyclic of order $p^n-1$ with generator $\xi$; see Sec.~\ref{2.1} for detail. Since $\fL$ is simple, it follows from \thref{sspenvelope}(iii) that all its minimal $p$-envelopes are isomorphic as restricted Lie algebras. Moreover, $\fL\cong\ad\fL$ via the adjoint representation. It follows from \thref{ssminpenvelope} that the minimal $p$-envelope $\fL_p=(\ad \fL)_p$ is the $p$-subalgebra of $\Der \fL$ generated by $\ad \fL$. We identify $\fL$ with $\ad \fL$. Since $\fL$ is a subalgebra of the restricted Lie algebra $\Der \cO(1; n)$ and $\{e_\alpha\,|\, \alpha \in \F_q\}$ is a basis of $\fL$, it follows from \thref{computeSp} that 
\begin{equation}\label{Lplemma1.1.3}
\fL_p=\sum_{\alpha\in \F_q, i\geq 0}ke_{\alpha}^{p^{i}},
\end{equation}
i.e. $\fL_p$ consists of all iterated $p$-th powers of $e_\alpha$ and $\fL$.

\begin{lem}\thlabel{l:l17}
\begin{enumerate}[\upshape(i)]
\item Let $\sigma\in \GL(\fL)$ be such that $\sigma(e_\alpha):=\xi^{-1} e_{\xi\alpha}$ for any $\alpha \in\F_q$. Then $\sigma$ is a diagonalizable automorphism of $\fL$. 
\item Let $\sigma$ be as in (i). Define $\sigma(e_\alpha^{p^{i}}):=\xi^{-p^{i}} e_{\xi\alpha}^{p^{i}}$ for any $i\geq 1$ and $\alpha\in \F_q$. Then $\sigma$ extends to an automorphism of $\fL_p$.
\end{enumerate}
\end{lem}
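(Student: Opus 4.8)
Part~(i) is a direct verification; for part~(ii) the plan is to realize the prescribed map as the automorphism of $\fL_p$ induced by conjugating derivations of $\fL$ by the automorphism $\sigma$ of~(i). This routes around the only genuine difficulty, namely well-definedness.

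For~(i): First I would check that $\sigma$ is a homomorphism of Lie algebras by comparing both sides of $\sigma([e_\alpha,e_\beta])=[\sigma(e_\alpha),\sigma(e_\beta)]$ against the structure constants $[e_\alpha,e_\beta]=(\beta-\alpha)e_{\alpha+\beta}$: the left-hand side is $(\beta-\alpha)\xi^{-1}e_{\xi\alpha+\xi\beta}$ and the right-hand side is $\xi^{-2}(\xi\beta-\xi\alpha)e_{\xi\alpha+\xi\beta}$, and these coincide. Since $\alpha\mapsto\xi\alpha$ is a bijection of $\F_q$, $\sigma$ carries the basis $\{e_\alpha\}_{\alpha\in\F_q}$ onto the basis $\{\xi^{-1}e_{\xi\alpha}\}_{\alpha\in\F_q}$, so $\sigma\in\GL(\fL)$ and $\sigma$ is an automorphism of $\fL$. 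A short induction gives $\sigma^m(e_\alpha)=\xi^{-m}e_{\xi^m\alpha}$; taking $m=p^n-1$ and using that $\xi$ has order $p^n-1$ in $\F_q^{*}$ yields $\sigma^{p^n-1}=\id_\fL$. As $p\nmid p^n-1$, the polynomial $X^{p^n-1}-1$ has no repeated roots in $k$, so the minimal polynomial of $\sigma$ splits into distinct linear factors and $\sigma$ is diagonalizable.

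For~(ii): Consider the conjugation map $c_\sigma\colon\Der\fL\to\Der\fL$, $D\mapsto\sigma\circ D\circ\sigma^{-1}$. Because $\sigma$ is an automorphism of $\fL$, each $c_\sigma(D)$ is again a derivation of $\fL$, $c_\sigma$ is an automorphism of the associative algebra $\End(\fL)$, and hence $c_\sigma$ preserves the commutator bracket and the $p$-th power map $D\mapsto D^p$ on $\Der\fL$; thus $c_\sigma$ is an automorphism of the restricted Lie algebra $\Der\fL$. Moreover $c_\sigma(\ad x)=\ad(\sigma(x))$ for all $x\in\fL$, so $c_\sigma(\ad\fL)=\ad\fL$, and therefore $c_\sigma$ maps the $p$-subalgebra of $\Der\fL$ generated by $\ad\fL$ onto itself. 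By \thref{ssminpenvelope} (together with \thref{computeSp}) this $p$-subalgebra is $\fL_p$, so $c_\sigma$ restricts to an automorphism of $\fL_p$. It then remains to identify $c_\sigma|_{\fL_p}$ with the map in the statement. Under the identification $\fL\cong\ad\fL$ one has $c_\sigma(\ad e_\alpha)=\ad(\sigma(e_\alpha))=\xi^{-1}\ad(e_{\xi\alpha})$, so $c_\sigma|_\fL=\sigma$; and for $i\geq1$ the element $e_\alpha^{p^i}\in\fL_p$ is, by construction of the $p$-envelope inside $\Der\fL$, the $i$-fold composition power $(\ad e_\alpha)^{p^i}$ in $\End(\fL)$, whence
\[
c_\sigma\big(e_\alpha^{p^i}\big)=\sigma\circ(\ad e_\alpha)^{p^i}\circ\sigma^{-1}=\big(\sigma\circ\ad e_\alpha\circ\sigma^{-1}\big)^{p^i}=\big(\xi^{-1}\ad e_{\xi\alpha}\big)^{p^i}=\xi^{-p^i}\,e_{\xi\alpha}^{p^i}.
\]
Since by \eqref{Lplemma1.1.3} the elements $e_\alpha^{p^i}$ ($\alpha\in\F_q$, $i\geq0$) span $\fL_p$, this shows that $c_\sigma|_{\fL_p}$ is exactly the linear operator prescribed by~(i) and the formula in~(ii); in particular that operator is well defined and is an automorphism of $\fL_p$, as asserted.

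The step I expect to be the main obstacle — and the reason for passing through $c_\sigma$ rather than checking the formula directly — is well-definedness: for each fixed $i\geq1$ the $p^n$ elements $e_\alpha^{p^i}$ satisfy numerous linear relations in $\fL_p$, and it is not obvious a priori that the assignment $e_\alpha^{p^i}\mapsto\xi^{-p^i}e_{\xi\alpha}^{p^i}$ respects them; exhibiting the map as conjugation by $\sigma$ makes this automatic. The one identification that must be made carefully is that the $[p]$-operation on $\fL_p\subseteq\Der\fL$ is literally $D\mapsto D^p$ in $\End(\fL)$, so that $e_\alpha^{p^i}$ coincides with $(\ad e_\alpha)^{p^i}$; this is immediate from the definition of the $p$-envelope but should be recorded explicitly.
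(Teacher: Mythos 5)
Your part (i) is essentially the paper's argument (direct verification against the structure constants, then $\sigma^{p^n-1}=\Id$), though you are more explicit about why this forces diagonalizability: the paper simply invokes that $k$ is algebraically closed, whereas you correctly point out that the real reason is $p\nmid p^n-1$, so that $X^{p^n-1}-1$ is separable. Your part (ii), however, takes a genuinely different route. The paper defines the map on the spanning set $\{e_\alpha^{p^i}\}$ by the prescribed formula and verifies compatibility with the bracket, using $[e_\alpha^{p^{i}}, e_\beta^{p^{j}}]=-(\ad e_\alpha)^{p^i-1}\circ (\ad e_\beta)^{p^{j}}(e_\alpha)$ together with the relation $\sigma\circ\ad x\circ\sigma^{-1}=\ad\sigma(x)$; it does not address the fact that the $e_\alpha^{p^i}$ satisfy many linear relations in the $(p^n+n-1)$-dimensional space $\fL_p$, so well-definedness of the prescribed assignment is left implicit. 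You instead realize the map from the outset as the conjugation $c_\sigma\colon D\mapsto\sigma D\sigma^{-1}$ on $\Der\fL$, check that $c_\sigma$ preserves $\fL_p=(\ad\fL)_p$ and the $[p]$-map, and then compute that $c_\sigma$ agrees with the prescribed formula on each $e_\alpha^{p^i}=(\ad e_\alpha)^{p^i}$. This buys you well-definedness and compatibility with the restricted structure for free, at the modest cost of recording the identification of the $[p]$-operation on $\fL_p\subseteq\Der\fL$ with composition $p$-th powers, which you do. Both arguments are correct; yours is the more airtight of the two.
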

\begin{proof}
(i) By definition,
\begin{align*}
[\sigma(e_\alpha), \sigma(e_\beta)]&=[\xi^{-1} e_{\xi\alpha}, \xi^{-1} e_{\xi\beta}]=\xi^{-2}(\xi\beta-\xi\alpha)e_{\xi\alpha+\xi\beta}=\xi^{-1} (\beta-\alpha)e_{\xi(\alpha+\beta)}\\
&=\sigma([e_\alpha, e_\beta])
\end{align*}
for any $\alpha, \beta \in \F_q$. So the endomorphism $\sigma$ is an automorphism of $\fL$. Since $\xi^{p^n-1}=1$, we have that $\sigma^{p^n-1}=\Id$. As $k$ is an algebraically closed field, the automorphism $\sigma$ is diagonalizable. This proves (i).

(ii) Define $\sigma(e_\alpha^{p^{i}}):=\xi^{-p^{i}} e_{\xi\alpha}^{p^{i}}$ for any $i\geq 1$ and $\alpha\in \F_q$. We show that $\sigma$ extends to an automorphism of $\fL_p$. By definition,
\begin{align*}
[\sigma(e_\alpha^{p^{i}}), \sigma(e_\beta^{p^{j}})]=[\xi^{-p^{i}} e_{\xi\alpha}^{p^{i}}, \xi^{-p^{j}} e_{\xi\beta}^{p^{j}}]
\end{align*}
for any $i, j\geq 1$ and $\alpha, \beta \in \F_q$. On the other hand, it follows from \thref{defres}(1) that 
\begin{equation*}
[e_\alpha^{p^{i}}, e_\beta^{p^{j}}]=-(\ad e_\alpha)^{p^i-1}\circ (\ad e_\beta)^{p^{j}}(e_\alpha).
\end{equation*}
Since $\sigma$ is an automorphism of $\fL$ such that $\sigma(e_\alpha)=\xi^{-1} e_{\xi\alpha}$ for any $\alpha \in\F_q$, we have that 
\begin{align*}
\sigma[e_\alpha^{p^{i}}, e_\beta^{p^{j}}]&=\sigma\big(-(\ad e_\alpha)^{p^i-1}\circ (\ad e_\beta)^{p^{j}}(e_\alpha)\big)\\
&=-\xi^{-p^{i}}\xi^{-p^{j}}(\ad e_{\xi\alpha})^{p^i-1}\circ(\ad e_{\xi\beta})^{p^{j}}(e_{\xi\alpha})\\
&=[\xi^{-p^{i}}e_{\xi\alpha}^{p^i},\xi^{-p^{j}}e_{\xi\beta}^{p^{j}}]\\
&=[\sigma(e_\alpha^{p^{i}}), \sigma(e_\beta^{p^{j}})].
\end{align*}
Hence $\sigma$ extends to an automorphism of $\fL_p$. This proves (ii).
\end{proof}

Note that $[e_0, e_\beta]=\beta e_\beta$ for any $\beta \in \F_q$. So $\ad e_0$ is a semisimple endomorphism of $\fL$. Since $\ad \fL\cong \fL$, then $e_0$ generates a torus under the $[p]$-th power map.
\begin{lem}\thlabel{Ttorus}
Let $T$ denote the $p$-envelope $(ke_{0})_{p}$ in $\fL_p$. Then $T$ is an $n$-dimensional torus of $\fL_p$ such that $T\cap \fL=ke_0$.
\end{lem}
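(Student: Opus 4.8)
The plan is to pin down $T=(ke_0)_p$ completely by using the Zassenhaus presentation $\fL=\bigoplus_{\alpha\in\F_q}ke_\alpha$ with $[e_\alpha,e_\beta]=(\beta-\alpha)e_{\alpha+\beta}$, together with \eqref{Lplemma1.1.3}. Since $ke_0$ is a one-dimensional, hence abelian, subalgebra of $\fL_p$ with basis $\{e_0\}$, \thref{computeSp} gives $T=\sum_{i\ge 0}ke_0^{[p]^i}$. As $\fL_p$ is a restricted subalgebra of $\Der\fL$, the $[p]$-operation is iterated composition of derivations, and from $[e_0,e_\beta]=\beta e_\beta$ one reads off that $e_0^{[p]^i}$ is the (diagonal) derivation acting on each $e_\beta$ by the scalar $\beta^{p^i}$. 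This single observation, plus elementary facts about $\F_q$ with $q=p^n$, will drive the whole proof.

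First I would compute $\dim T=n$. Because $\beta^{p^n}=\beta^q=\beta$ for every $\beta\in\F_q$, we get $e_0^{[p]^n}=e_0$ as derivations, hence $e_0^{[p]^{n+j}}=e_0^{[p]^j}$ and $T=\spn_k\{e_0,e_0^{[p]},\dots,e_0^{[p]^{n-1}}\}$. These $n$ derivations are $k$-linearly independent: a relation $\sum_{i=0}^{n-1}c_ie_0^{[p]^i}=0$ would mean the additive polynomial $\sum_{i=0}^{n-1}c_iX^{p^i}$, of degree $\le p^{n-1}<q$, vanishes on all $q$ elements of $\F_q$, forcing all $c_i=0$. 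So $\dim T=n$. Next I would verify $T$ is a torus: it is abelian since the $e_0^{[p]^i}$ are simultaneously diagonal in the basis $\{e_\beta\}$; it is a $p$-subalgebra since $\bigl(\sum_i c_ie_0^{[p]^i}\bigr)^{[p]}$ acts on $e_\beta$ by $\bigl(\sum_i c_i\beta^{p^i}\bigr)^p=\sum_i c_i^p\beta^{p^{i+1}}$, i.e.\ equals $\sum_i c_i^pe_0^{[p]^{i+1}}\in T$; and $e_0$ is $p$-semisimple because the identity $e_0=e_0^{[p]^n}$ exhibits $e_0\in\sum_{i\ge1}ke_0^{[p]^i}=(ke_0^{[p]})_p$, whence by \thref{sselemt}(iii) every element of $(ke_0)_p=T$ is semisimple. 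Thus $T$ is an $n$-dimensional abelian $p$-subalgebra consisting of semisimple elements, i.e.\ a torus (necessarily of the maximal dimension $n=\MT(\fL_p)$).

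The remaining, and I expect most delicate, point is $T\cap\fL=ke_0$. The inclusion $ke_0\subseteq T\cap\fL$ is immediate. For the reverse, take $v=\sum_{i=0}^{n-1}c_ie_0^{[p]^i}\in T\cap\fL$; since $e_0\in\fL$ and $\fL$ is a subalgebra, $w:=v-c_0e_0=\sum_{i=1}^{n-1}c_ie_0^{[p]^i}$ also lies in $\fL$, and I would show $w=0$. Writing $w=\sum_{\gamma\in\F_q}a_\gamma e_\gamma$ and using that $w$ commutes with $e_0$ (each $e_0^{[p]^i}$ does), the equation $[w,e_0]=-\sum_\gamma\gamma a_\gamma e_\gamma=0$ forces $a_\gamma=0$ for $\gamma\ne0$, so $w=a_0e_0$. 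But then $w$ acts on $e_\beta$ both by $a_0\beta$ and by $\sum_{i=1}^{n-1}c_i\beta^{p^i}$, so the additive polynomial $a_0X-\sum_{i=1}^{n-1}c_iX^{p^i}$ of degree $<q$ vanishes on $\F_q$ and is therefore the zero polynomial, giving $a_0=0$ and $c_1=\dots=c_{n-1}=0$. Hence $w=0$, so $v=c_0e_0\in ke_0$, and $T\cap\fL=ke_0$.

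Along the way the structural inputs to be careful about are: that $\fL\cong\ad\fL$ is genuinely a subalgebra (indeed an ideal) of $\fL_p$, so that the bracket computations inside $\fL$ are legitimate; that the identification of $\fL_p$ with a restricted subalgebra of $\Der\fL$ makes "$e_0^{[p]^i}$" literally the $i$-fold composite of $\ad e_0$; and that a derivation of $\fL$ is determined by its values on the basis $\{e_\beta\}$, which is what lets me pass freely between an element of $T$ and the scalar function $\beta\mapsto\sum_i c_i\beta^{p^i}$. None of these requires new ideas beyond what is already set up in Sec.~\ref{2.1}, so the core of the argument is the field-theoretic fact $\beta^q=\beta$ together with the degree bound on additive polynomials.
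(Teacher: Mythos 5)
Your proof is correct, but it takes a genuinely different route from the paper's. The paper establishes the identity $e_0^{[p]^n}=e_0$ indirectly: it shows $e_0-e_0^{[p]^n}$ is killed by $\ad e_\alpha^{[p]^i}$ for all $\alpha,i$, hence lies in $\fz(\fL_p)$, and then invokes the semisimplicity of $\fL_p$ to conclude the difference vanishes; you get the same identity immediately from $\beta^q=\beta$, since both derivations act identically on the basis $\{e_\beta\}$ of $\fL$. For linear independence of $e_0,e_0^{[p]},\dots,e_0^{[p]^{n-1}}$ the paper uses the diagonalizable automorphism $\sigma$ of \thref{l:l17} and the fact that these vectors are $\sigma$-eigenvectors for the distinct eigenvalues $\xi^{-1},\xi^{-p},\dots,\xi^{-p^{n-1}}$; you instead observe that a relation would force an additive polynomial of degree $\le p^{n-1}<q$ to vanish on all of $\F_q$. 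Finally, for $T\cap\fL=ke_0$ the paper argues through the filtration: $e_0\notin\fL_{(0)}$ (else $T\subseteq\fL_{(0)}$, contradicting that nonzero tori there are one-dimensional), so $e_0=\del+z_0$ and Jacobson's formula gives $e_0^{[p]^i}=\del^{p^i}+\cdots\notin\fL$ for $i\ge 1$; you instead use that any element of $T\cap\fL$ centralizes $e_0$, hence is a multiple of $e_0$ inside $\fL$, and then the additive-polynomial degree bound again. Your version is more elementary and self-contained — it needs only the Zassenhaus presentation and $\beta^q=\beta$, and avoids $\sigma$, the semisimplicity of $\fL_p$, and the filtration. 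What the paper's route buys is that the $\sigma$-eigenvalue apparatus and the explicit expansions $e_0^{[p]^i}=\del^{p^i}+\sum_{j<i}\alpha_{i,j}\del^{p^j}+z_i$ recorded in \thref{e_0info} are reused immediately afterwards (in \thref{l:l18}, \thref{utoral}, \thref{rkV} and the proof of \thref{l:l16}), so the longer argument is not wasted effort in the context of the chapter.
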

\begin{proof}
Let $T$ denote the $p$-envelope $(ke_{0})_{p}$ in $\fL_p$. Then $T=\sum_{i\geq 0} ke_0^{p^{i}}$ is the $p$-subalgebra of $\fL_p$ generated by $ke_{0}$; see \thref{Spdefn} and \thref{computeSp}. We first show that $T$ is a torus of $\fL_p$. By \thref{defres}(1), 
\[
[e_0^{p^{i}}, e_0^{p^{j}}]=-(\ad e_0)^{p^{i}-1}\circ (\ad e_0)^{p^{j}}(e_0)=0
\]
for any $i, j\geq 0$. Hence $T$ is an abelian $p$-subalgebra of $\fL_p$. Then we show that $T$ consists of semisimple elements. By \thref{sselemt}(iii), it is enough to show that $e_0$ is semisimple. By \thref{l:l17}, we know that $\sigma$ acts on $T$ as 
\begin{equation}\label{sigmaaction}
\sigma(e_0^{p^{i}})=\xi^{-p^{i}}e_0^{p^{i}}
\end{equation}
for all $i\geq 0$. Since $\xi^{p^{n}-1}=1$, we see that $\xi^{-1}, \xi^{-p}, \dots, \xi^{-p^{n-1}}$ are the eigenvalues of $\sigma$ on $T$. Moreover, $\sigma(e_0)=\xi^{-1}e_0$ and $\sigma(e_0^{p^{n}})=\xi^{-p^{n}}e_0^{p^{n}}=\xi^{-1}e_0^{p^{n}}$. So $e_0$ and $e_0^{p^{n}}$ correspond to the same eigenvalue $\xi^{-1}$ and they are in the same eigenspace. We show that $e_0$ and $e_0^{p^{n}}$ are linearly dependent with $e_0=e_0^{p^{n}}$. Recall that for any $\alpha, \beta \in \F_q$, the Lie bracket of $\fL$ is given by $[e_\alpha, e_\beta]=(\beta-\alpha)e_{\alpha+\beta}$. Then for any $\alpha\in \F_q$ and $i>0$, we have that 
\begin{align*}
[e_0-e_0^{p^{n}}, e_\alpha^{p^{i}}]&=[e_0, e_\alpha^{p^{i}}]-[e_0^{p^{n}}, e_\alpha^{p^{i}}]\\
&=-(\ad e_\alpha)^{p^{i}}(e_0)+(\ad e_0)^{p^{n}-1}\circ (\ad e_\alpha)^{p^{i}}(e_0)\\
&=0+0=0.
\end{align*}
Similarly, for any $\alpha\in \F_q$ and $i=0$, we have that 
\[
[e_0-e_0^{p^{n}}, e_\alpha]=[e_0, e_\alpha]-(\ad e_0)^{p^{n}}(e_\alpha)=\alpha e_\alpha-\alpha^{p^{n}}e_\alpha=\alpha e_\alpha-\alpha e_\alpha=0.
\]
We show the above calculations imply that $e_0-e_0^{p^{n}}\in \fz(\fL_p)$. By \eqref{Lplemma1.1.3}, the minimal $p$-envelope $\fL_p$ of $\fL$ is given by $\fL_p=\sum_{\alpha\in \F_q, i\geq 0}ke_{\alpha}^{p^{i}}$, i.e. $\fL_p$ consists of all iterated $p$-th powers of $e_\alpha$ and $\fL$. It follows from the above calculations that $e_0-e_0^{p^{n}}\in \fz(\fL_p)$. Since $\fL$ is simple, it follows from \thref{sspenvelope}(iii) that $\fL_p$ is semisimple. Since $\fz(\fL_p)$ is an abelian $p$-ideal of $\fL_p$, we must have that $\fz(\fL_p)=0$. As a result, $e_0=e_0^{p^{n}}$. This implies that $e_0$ is semisimple. By \thref{sselemt}(iii), $e_0^{p^{i}}$ is semisimple for every $i\geq 1$ and $t$ is semisimple for every $t\in (ke_{0})_{p}=T$. Hence $T$ is an abelian $p$-subalgebra of $\fL_p$ consisting of semisimple elements, i.e. $T$ is a torus of $\fL_p$. 

Next we show that $\dim_k T=n$. Since $e_0=e_0^{p^{n}}$, this implies that the $[p]$-th power map on $T$ is periodic, i.e. $e_0^{p^{i}}=e_0^{p^{n+i}}$ for all $i\geq 0$. Hence any $t\in T$ has the form $t=\sum_{i=0}^{n-1}\mu_i e_0^{p^{i}}$ for some $\mu_i\in k$. Thus $\{e_0, e_0^{p}, \dots, e_0^{p^{n-1}}\}$ spans $T$. Moreover, it follows from \eqref{sigmaaction} that the eigenvectors $e_0, e_0^{p}, \dots, e_0^{p^{n-1}}$ correspond to distinct eigenvalues $\xi^{-1}, \xi^{-p}, \dots, \xi^{-p^{n-1}}$. Hence they are linearly independent. Therefore, $\{e_0, e_0^{p}, \dots, e_0^{p^{n-1}}\}$ is a basis for $T$ and $\dim_k T=n$.

It remains to show that $T\cap \fL=ke_0$. Note that $e_{0}\notin \fL_{(0)}$. Indeed, if $e_0\in \fL_{(0)}$, then $T$ is contained in $\fL_{(0)}$ as $\fL_{(0)}$ is restricted. But this contradicts that any nonzero torus of $\fL_{(0)}$ has dimension $1$; see \eqref{eip} in Sec.~\ref{2.1}. Therefore, $e_{0}\notin \fL_{(0)}$. Since $e_0$ is semisimple, we may assume that $e_0=\del+z_0$ for some $0\neq z_0\in \fL_{(0)}$. Let us compute $e_0^{p^{i}}$ for $1\leq i\leq n-1$. By Jacobson's formula, $e_0^p=\del^p+\alpha_{1, 0}\del+z_1$ for some $\alpha_{1, 0}\in k$ and $0\neq z_1\in \fL_{(0)}$ (otherwise $e_0^p$ is nilpotent). Continue doing this, one can show that for $1\leq i\leq n-1$, 
\[
e_0^{p^{i}}=\del^{p^{i}}+\sum_{j=0}^{i-1}\alpha_{i, j}\del^{p^{j}}+z_i
\]
for some $\alpha_{i, j}\in k$ and $0\neq z_i\in \fL_{(0)}$ (otherwise $e_0^{p^{i}}$ is nilpotent). It is clear that $e_0^{p^{i}} \notin \fL$ for all $1\leq i\leq n-1$. Hence $T\cap \fL=ke_0$. This completes the proof.
\end{proof}

\begin{rmk}\thlabel{e_0info}
It follows from \thref{l:l17} and the last proof that $e_0^{p^{i}}$ with $i\geq 0$ satisfies the following:
\begin{enumerate}[\upshape(i)]
\item $\sigma(e_0^{p^{i}})=\xi^{-p^{i}}e_0^{p^{i}}$ for all $i\geq 0$.
\item $e_0^{p^{i}}$ is semisimple with $e_0^{p^{i}}=e_0^{p^{n+i}}$ for all $i\geq 0$.
\item $e_0$ is an element of $\fL=\fL_{(-1)}$ such that $e_0\notin \fL_{(0)}$. Hence for any $0\leq i\leq n-1$, we may assume that
\[
e_0^{p^{i}}=\del^{p^{i}}+\sum_{j=0}^{i-1}\alpha_{i, j}\del^{p^{j}}+z_i
\]
for some $\alpha_{i, j}\in k$ and $0\neq z_i\in \fL_{(0)}$. Note that $e_0^{p^{i}} \notin \fL$ for all $1\leq i\leq n-1$. Moreover, $\{e_0, e_0^p, \dots, e_0^{p^{n-1}}\}$ forms a basis for the $n$-dimensional torus $T=(ke_0)_p$.
\end{enumerate}
\end{rmk}

Since we have proved in \thref{l:l17} that $\sigma$ is an automorphism of $\fL$, then it preserves the natural filtration $\{\fL_{(i)}\}_{i\geq -1}$ of $\fL$. Moreover, we can prove the following:
\begin{lem}\thlabel{l:l18}
For $-1\leq i\leq p^n-2$, the automorphism $\sigma$ acts on each $1$-dimensional vector space $\fL_{(i)}/\fL_{(i+1)}$ as $\xi^{i}\Id$.
\end{lem}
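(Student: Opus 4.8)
The plan is to use that $\sigma$, being an automorphism of $\fL$ by \thref{l:l17}(i), preserves the natural filtration $\fL=\fL_{(-1)}\supset\fL_{(0)}\supset\dots$ (this is exactly the property recalled just before the statement), so that $\sigma$ induces a scalar $\lambda_i\in k^{*}$ on each one-dimensional quotient $\fL_{(i)}/\fL_{(i+1)}=k\,\overline{d_i}$. The goal is to show $\lambda_i=\xi^{i}$ for all $-1\le i\le p^n-2$, and I would do this in two moves: first determine all the $\lambda_i$ in terms of the single scalar $\lambda_{-1}$ by a multiplicativity argument, then evaluate $\lambda_{-1}$ using the toral element $e_0$.

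For the first move I would pass to the associated graded Lie algebra $\gr\fL=\bigoplus_{i}\fL_{(i)}/\fL_{(i+1)}$, on which $\gr\sigma$ is a graded Lie algebra automorphism acting on $\overline{d_i}$ by $\lambda_i$. The only computation required is $[\del,x^{(j+1)}\del]=x^{(j)}\del$, i.e. $[d_{-1},d_j]=d_{j-1}$ for $0\le j\le p^n-2$ (immediate from \eqref{LbracketWm}, or from the fact that $\del$ is special with $\del(x^{(j+1)})=x^{(j)}$); this bracket is homogeneous of degree $j-1$, so it descends to $[\overline{d_{-1}},\overline{d_j}]=\overline{d_{j-1}}$ in $\gr\fL$ with $\overline{d_{j-1}}\neq 0$. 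Applying $\gr\sigma$ to this identity gives $\lambda_{j-1}=\lambda_{-1}\lambda_{j}$ for every $0\le j\le p^n-2$; in particular $\lambda_0=1$, and a one-line induction then yields $\lambda_i=\lambda_{-1}^{-i}$ for all $-1\le i\le p^n-2$. (If one prefers to avoid $\gr\fL$, the same recursion drops out of writing $\sigma(d_i)\equiv\lambda_i d_i\pmod{\fL_{(i+1)}}$ and tracking the filtration degrees of the correction terms in $[\sigma(d_{-1}),\sigma(d_j)]=\sigma(d_{j-1})$.)

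For the second move I would use that $e_0\in\fL=\fL_{(-1)}$ satisfies $e_0\notin\fL_{(0)}$ — this is recorded in \thref{e_0info}, where it is obtained from the fact that every nonzero torus of $\fL_{(0)}$ is one-dimensional while $\dim(ke_0)_p=n\ge 2$. Hence $\overline{e_0}$ spans $\fL_{(-1)}/\fL_{(0)}$, and since $\sigma(e_0)=\xi^{-1}e_0$ by \thref{l:l17}(i), we conclude $\lambda_{-1}=\xi^{-1}$. Combining with $\lambda_i=\lambda_{-1}^{-i}$ gives $\lambda_i=\xi^{i}$, which is the assertion.

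The argument is short and I do not expect a serious obstacle; the only points demanding a little care are the passage to $\gr\fL$ (ensuring the degree-$(j-1)$ identity genuinely descends and that $\overline{d_{j-1}}\ne 0$), and the input $e_0\notin\fL_{(0)}$, which I would quote from \thref{e_0info} rather than reprove. If the editor wants the statement to match the displayed action $\sigma(e_0^{p^{i}})=\xi^{-p^{i}}e_0^{p^{i}}$, one can note as a consistency check that $e_0^{p^{i}}\equiv\del^{p^{i}}\pmod{\fL_{(0)}}$ and that $\del^{p^{i}}$ raises the $\gr$-degree by a factor $p^{i}$, so the two descriptions of $\sigma$ agree on the relevant graded pieces.
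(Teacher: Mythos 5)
Your proof is correct and follows essentially the same route as the paper: both anchor the computation at $\lambda_{-1}=\xi^{-1}$ via $\sigma(e_0)=\xi^{-1}e_0$ together with $e_0\notin\fL_{(0)}$, and both propagate along the filtration through a degree $-1$ bracket giving the recursion $\lambda_{j-1}=\xi^{-1}\lambda_j$. The only cosmetic difference is that the paper brackets with $e_0$ itself (choosing $\sigma$-eigenvectors $u_i\in\fL_{(i)}$ with $[e_0,u_i]\notin\fL_{(i)}$ and inducting upward), whereas you bracket with $\overline{d_{-1}}=\overline{\del}$ in $\gr\fL$; since $e_0\equiv\del\pmod{\fL_{(0)}}$ these induce the same map on the graded pieces, so the two arguments coincide in substance.
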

\begin{proof}
We prove this result by induction on $i$. For $i=-1$, consider the surjective map $\pi_{-1}: \fL_{(-1)} \twoheadrightarrow \fL_{(-1)}/\fL_{(0)}$. By \thref{e_0info}(iii), we know that $e_0$ is an element of $\fL_{(-1)}$ such that $e_{0}\notin \fL_{(0)}$. Then the vector space $\fL_{(-1)}/\fL_{(0)}$ is spanned by $\pi(e_0)$. This implies that $\sigma$ acts on $\fL_{(-1)}/\fL_{(0)}$ as $\xi^{-1}\Id$. Hence the result holds for $i=-1$.

Suppose the result holds for $-1\leq i=j\leq p^n-3$, i.e. $\sigma$ acts on $\fL_{(j)}/\fL_{(j+1)}$ as $\xi^{j}\Id$. We want to show the result holds for $i=j+1\leq p^n-2$, i.e. $\sigma$ acts on $\fL_{(j+1)}/\fL_{(j+2)}$ as $\xi^{j+1}\Id$. Since $\fL=\fL_{(0)}+ke_0$ and $[\fL, \fL_{(i+1)}]=\fL_{(i)}$, we have that $[e_0, \fL_{(i+1)}]\nsubseteq \fL_{(i+1)}$. So we can find for each $i\in \{0, \dots, p^n-2\}$ a $u_i\in \fL_{(i)}$ such that $[e_0, u_i]\notin \fL_{(i)}$. By writing $u_i$ as a sum of $\sigma$-eigenvectors corresponding to the same eigenvalue, we see that we may assume that each $u_i$ is a $\sigma$-eigenvector corresponding to an eigenvalue, say $\lambda_i$. Then 
\begin{align*}
\sigma[e_0, u_i]=[\sigma(e_0), \sigma(u_i)]=[\xi^{-1}e_0, \lambda_i u_i]=\xi^{-1}\lambda_i[e_0, u_i].
\end{align*}
So $\xi^{-1}\lambda_i$ is the eigenvalue of $\sigma$ on $\fL_{(i-1)}/\fL_{(i)}$. Since $i\in \{0, \dots, p^n-2\}$, then $i-1\in \{-1, \dots, p^n-3\}$. By the induction hypothesis, $\sigma$ acts on $\fL_{(i-1)}/\fL_{(i)}$ as $\xi^{i-1}\Id$. So we must have that $\xi^{-1}\lambda_i=\xi^{i-1}$ and hence $\lambda_i=\xi^{i}$. Now consider the surjective map $\pi_i: \fL_{(i)} \twoheadrightarrow \fL_{(i)}/\fL_{(i+1)}$. Since $u_i\in \fL_{(i)}$ is such that $[e_0, u_i]\notin \fL_{(i)}$, i.e. $u_i\notin \fL_{(i+1)}$, then the vector space $\fL_{(i)}/\fL_{(i+1)}$ is spanned by $\pi_i(u_i)$. It follows that $\sigma$ acts on $\fL_{(i)}/\fL_{(i+1)}$ as $\xi^i\Id$. Therefore, we proved by induction that for $-1\leq i\leq p^n-2$, $\sigma$ acts on each $1$-dimensional vector space $\fL_{(i)}/\fL_{(i+1)}$ as $\xi^{i}\Id$. This completes the proof.
\end{proof}

\begin{lem}\thlabel{utoral}
Let $u$ denote the element $u_0$ in the last proof. Then $ku$ is a $1$-dimensional torus in $\fL_{(0)}$.
\end{lem}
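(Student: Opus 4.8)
The plan is to show that $u = u_0$ is a nonzero semisimple element of $\fL_{(0)}$ satisfying $u^{[p]} = u$ (up to scaling), so that $ku$ is a torus; since every nonzero torus in $\fL_{(0)}$ is $1$-dimensional (as recorded in Section~\ref{2.1}), this gives the claim. First I would recall from the proof of \thref{l:l18} that $u = u_0 \in \fL_{(0)}$ is a $\sigma$-eigenvector with eigenvalue $\lambda_0 = \xi^{0} = 1$, i.e. $\sigma(u) = u$. Since $u_0$ was chosen so that $[e_0, u_0] \notin \fL_{(0)}$, in particular $u \neq 0$. So $ku$ is a nonzero subspace of $\fL_{(0)}$, and it remains only to see that $u$ is semisimple (equivalently, that $ku$ is a $p$-subalgebra consisting of semisimple elements, since $ku$ is automatically abelian).

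The key point is to control $u^{[p]}$ using the $\sigma$-action. Write $u = d_i = x^{(i+1)}\del$ for the appropriate index; more usefully, since $u \in \fL_{(0)}$ and $u \notin \fL_{(1)}$ (because $[e_0,u] \notin \fL_{(0)}$ forces $u \notin \fL_{(1)}$ by the filtration property $[\fL,\fL_{(1)}] \subseteq \fL_{(0)}$... more carefully, $[e_0, \fL_{(1)}] \subseteq \fL_{(0)}$, so $u \notin \fL_{(1)}$), we have that $u$ has nonzero component in $\fL_{(0)}/\fL_{(1)}$. By \eqref{eip}, $d_0^p = d_0$ and $d_0$ spans $\fL_{(0)}/\fL_{(1)}$, so modulo $\fL_{(1)}$ the $[p]$-operation on the $d_0$-line is the identity. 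The strategy is then: since $\fL_{(0)}$ is a restricted subalgebra and $\fL_{(1)} = \mathrm{nil}(\fL_{(0)})$ is a nilpotent $p$-ideal, consider the projection $\pi : \fL_{(0)} \twoheadrightarrow \fL_{(0)}/\fL_{(1)} = k\bar d_0$, which is a homomorphism of restricted Lie algebras. The element $\pi(u)$ is a nonzero toral element of the $1$-dimensional restricted algebra $k\bar d_0$ (after rescaling $u$ so that $\pi(u) = \bar d_0$, which we may do since scaling a toral generator keeps it toral in dimension $1$ over an algebraically closed field — here I'd use $\lambda x$ toral iff $\lambda^{p-1} = 1$, available after rescaling). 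I would then lift: because $\pi|_{\ker} $ has nilpotent kernel and $u^{[p]} - u \in \fL_{(1)}$, and $u^{[p]}$ is again in $\fL_{(0)}$, apply the Jordan--Chevalley decomposition (\thref{JCthm}) in $\fL_p$ to $u$: write $u = u_s + u_n$ with $u_s$ semisimple, $u_n$ nilpotent, $[u_s,u_n]=0$. Since $\fL_{(0)}$ is a $p$-subalgebra containing $u$, both $u_s,u_n \in \fL_{(0)}$ (they are polynomials in $u$ under iterated $[p]$, so lie in $(ku)_p \subseteq \fL_{(0)}$). Then $u_n \in \fL_{(0)}$ is a nilpotent element; I claim $u_n \in \fL_{(1)}$. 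Indeed $\pi(u_n)$ is a nilpotent element of $k\bar d_0$, but $\bar d_0$ is toral hence $k\bar d_0$ has no nonzero nilpotents, so $\pi(u_n) = 0$, i.e. $u_n \in \fL_{(1)}$. Hence $u = u_s \pmod{\fL_{(1)}}$, so $u_s \notin \fL_{(1)}$, $u_s \neq 0$, and $u_s$ is a nonzero semisimple element of $\fL_{(0)}$ with $\pi(u_s) = \bar d_0$ (after our rescaling).

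Finally I would argue that in fact $u = u_s$, i.e. $u$ itself is semisimple, by a $\sigma$-weight argument: both $u$ and $u_s$ are fixed by $\sigma$ (for $u_s$ this holds because $\sigma$ commutes with the $[p]$-operation by \thref{l:l17}(ii), hence preserves the Jordan--Chevalley decomposition, so $\sigma(u_s) = (\sigma u)_s = u_s$), hence $u_n = u - u_s$ is $\sigma$-fixed and lies in $\fL_{(1)}$; but $u_n$ is also nilpotent in $(ku)_p$. One shows this forces $u_n = 0$: a $\sigma$-fixed element of $\fL_{(1)}$, decomposed along the filtration pieces $\fL_{(i)}/\fL_{(i+1)}$ on which $\sigma$ acts by $\xi^i \cdot \mathrm{Id}$ (\thref{l:l18}), can only have a nonzero component in the piece where $\xi^i = 1$; since $\xi$ has order $p^n - 1 > p^n - 2 \geq i$ for $i$ in the relevant range $1 \le i \le p^n-2$, there is no such $i$ with $1 \le i$, so all components vanish and $u_n = 0$. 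Therefore $u = u_s$ is semisimple, $ku = (ku)_p$ is an abelian $p$-subalgebra of semisimple elements, i.e. a torus; being nonzero and contained in $\fL_{(0)}$ it is $1$-dimensional. The main obstacle I anticipate is the bookkeeping needed to pin down $u_n = 0$ cleanly — making sure the $\sigma$-eigenvalue on each $\fL_{(i)}/\fL_{(i+1)}$ argument applies to a general element of $\fL_{(1)}$ (which requires decomposing $u_n$ into filtration-graded $\sigma$-eigencomponents and checking compatibility), rather than just to the graded pieces — though the order of $\xi$ being $p^n-1$ makes the conclusion forced once set up correctly.
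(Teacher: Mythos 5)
Your proof is correct, and it reaches the conclusion by a somewhat different route than the paper, though both arguments ultimately rest on the same two facts: that $u\in\fL_{(0)}\setminus\fL_{(1)}$ is $\sigma$-fixed, and that by \thref{l:l18} the eigenvalue $\xi^{0}=1$ of $\sigma$ occurs with multiplicity one in $\fL_{(0)}$ and not at all in $\fL_{(1)}$. The paper avoids Jordan--Chevalley entirely: it observes that $u$ is not nilpotent (its image in $\fL_{(0)}/\fL_{(1)}$ is a nonzero multiple of the toral element $x\del$, by \eqref{eip}), and that $u^{p}$ is again a $\sigma$-fixed element of $\fL_{(0)}$, hence a scalar multiple of $u$ by the multiplicity-one statement; this pins down $u^{[p]}\in ku$ in one stroke. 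You instead decompose $u=u_{s}+u_{n}$ via \thref{JCthm}, push $u_{n}$ into $\fL_{(1)}$ using the quotient $\fL_{(0)}/\fL_{(1)}$, and kill it with the eigenvalue argument --- all of which is sound. The one place to tighten the write-up is the final step: semisimplicity of $u$ by itself does not give $u^{[p]}\in ku$, so the assertion that $ku=(ku)_{p}$ needs a reason. Either apply your eigenvalue argument once more to $u^{[p]}$ (which is $\sigma$-fixed and lies in $\fL_{(0)}$, hence in $ku$ --- exactly the paper's move), or note that $(ku)_{p}$ is a nonzero torus in $\fL_{(0)}$ by \thref{sselemt}(iii) and is therefore $1$-dimensional by the observation in Sec.~\ref{2.1}, forcing $(ku)_{p}=ku$. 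Your opening sentence gestures at the second option, so this is a presentational rather than a substantive gap; what your detour buys is that the key computation is done on the nilpotent part $u_{n}$ sitting in $\fL_{(1)}$, where the eigenvalue $1$ simply does not occur, at the cost of invoking Jordan--Chevalley and one extra closing step.
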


\begin{proof}
Recall from the last proof that $u\in\fL_{(0)}$ is such that $[e_0, u]\notin \fL_{(0)}$. Moreover, $u$ is a $\sigma$-eigenvector corresponding to the eigenvalue $\xi^0=1$, i.e. $\sigma(u)=u$. We want to show that $ku$ is a $1$-dimensional torus in $\fL_{(0)}$. We first show that $u$ is not nilpotent. Note that 
$u\notin \fL_{(1)}$. Indeed, if $u\in \fL_{(1)}$, then \thref{e_0info}(iii) implies that $[e_0, u]\in [\fL_{(-1)}, \fL_{(1)}]\subseteq \fL_{(0)}$. But this contradicts that $[e_0, u]\notin \fL_{(0)}$. Hence $u\in \fL_{(0)}$ is such that $u\notin \fL_{(1)}$. If $u$ is nilpotent, then $\fL_{(0)}$ would be $p$-nilpotent. But this contradicts \eqref{eip}. Hence $u$ is not nilpotent. Next we show that $u^{p}$ is a multiple of $u$. Since $u\in\fL_{(0)}$ and $\fL_{(0)}$ is restricted, we have that $u^p\in \fL_{(0)}$. Since $\sigma$ is an automorphism of the restricted Lie algebra $\fL_{(0)}$, we have that $\sigma(u^p)=\sigma(u)^p=u^p$. So $u^p$ is $\sigma$-fixed. By \thref{l:l18}, the eigenvalues of $\sigma$ on $\fL_{(0)}$ are $\xi^0=1, \xi, \dots, \xi^{p^{n}-3}$ and $\xi^{p^{n}-2}=\xi^{-1}$, and they all have multiplicity $1$. Since both $u$ and $u^p$ are $\sigma$-fixed, it follows that $u^p$ is a multiple of $u$. Therefore, $ku$ is a $1$-dimensional torus in $\fL_{(0)}$. This completes the proof.
\end{proof}

Let us summarize what we know so far.

\begin{rmk}\thlabel{rkV}
It follows from \thref{l:l18} that
\begin{enumerate}[\upshape(i)]
\item the eigenvalues of $\sigma$ on $\fL=\spn\{\del, x\del, x^{(2)}\del, \dots, x^{(p^n-2)}\del, x^{(p^n-1)}\del\}$ are $\xi^{-1}, \\\xi^0=1, \xi, \dots, \xi^{p^{n}-3}$ and $\xi^{p^{n}-2}=\xi^{-1}$. All have multiplicity $1$ except $\xi^{-1}$ which has multiplicity $2$;
\item the eigenvalues of $\sigma$ on $\fL_{(0)}=\spn\{x\del, x^{(2)}\del, \dots, x^{(p^n-2)}\del, x^{(p^n-1)}\del\}$ are $\xi^0=1, \xi, \dots, \xi^{p^{n}-3}$ and $\xi^{p^{n}-2}=\xi^{-1}$. All have multiplicity $1$;
\item the eigenspace $\fL[i]:=\{\ccD\in \fL\,|\,\sigma(\ccD)=\xi^{i}\ccD\}$ corresponding to the eigenvalue $\xi^{i}$, where $0\leq i\leq p^{n}-3$, has dimension $1$. In particular, the eigenspace $\fL[0]=ku$ is a $1$-dimensional torus in $\fL_{(0)}$; see \thref{utoral}. Since any torus in a restricted Lie algebra has a basis consisting of toral elements (see \thref{toralbasis}), we may assume that $u$ is toral, i.e. $u^p=u$;
\item the eigenspace $\fL[-1]=\text{$\spn \{e_0, v\,|\, v\in \fL_{(p^n-2)}\}$}$ and it has dimension $2$; see \thref{e_0info}(i) and (iii).
\end{enumerate}
\end{rmk}

We are now ready to prove \thref{l:l16} which states that \\$\cN_{\text{sing}}:=\cN\setminus \cN_\text{reg}=\big \{\ccD \in \cN \,|\, \ccD^{p^{n-1}} \in \fL_{(0)} \big\}$ has $\dim \cN_{\text{sing}}<\dim \cN$. 

\begin{proof}[Proof of \thref{l:l16}]
Recall the $n$-dimensional torus $T=\sum_{i=0}^{n-1}ke_0^{p^{i}}$ in $\fL_p$ and the $1$-dimensional torus $ku$ in $\fL_{(0)}$; see \thref{Ttorus} and \thref{utoral}. Put $V:=T\oplus ku$. We want to show that $V\cap \cN_{\text{sing}}=\{0\}$. Then the result follows from \thref{ADthm}. Suppose for contradiction that $V\cap \cN_{\text{sing}}\neq\{0\}$. Then take a nonzero element $y$ in $V\cap \cN_{\text{sing}}$, we can write
\begin{align*}
y=\sum_{i=0}^{n-1}\lambda_{i}e_{0}^{p^{i}}+\mu u
\end{align*}
for some $\lambda_i, \mu\in k$ with at least one $\lambda_i\neq 0$. 

\textbf{\textit{Case 1}}. Suppose $\lambda_0\neq 0$. We want to show this implies that $y^{p^{n-1}}\notin\fL_{(0)}$. But $y \in \cN_{\text{sing}}$ by our assumption, this contradicts the definition of $\cN_{\text{sing}}$. 

By \thref{generalJacobF} and the facts that $e_0^{p^{n}}=e_0$ and $u^p=u$ (see \thref{e_0info}(ii) and \thref{rkV}(iii)), we have that
\begin{equation}\label{lambdaneq0ypn-1}
\begin{aligned}
y^{p^{n-1}}=&\bigg(\sum_{i=0}^{n-1}\lambda_{i}e_{0}^{p^{i}}+\mu u\bigg)^{p^{n-1}}\\
=&\lambda_{0}^{p^{n-1}}e_0^{p^{n-1}}+\lambda_{n-1}^{p^{n-1}}e_0^{p^{n-2}}+\lambda_{n-2}^{p^{n-1}}e_0^{p^{n-3}}+\dots+\lambda_2^{p^{n-1}}e_0^{p}+\lambda_1^{p^{n-1}}e_0\\
&+\mu^{p^{n-1}} u+\sum_{r=0}^{n-2}u_r^{p^{r}},
\end{aligned}
\end{equation}
where $u_r$ is a linear combination of commutators in $u$ and $e_0^{p^{i}}, 0\leq i\leq n-1$. We show that $u_r\in \fL$ for all $0\leq r\leq n-2$. By \thref{utoral}, $u\in \fL_{(0)}\subset \fL$. By \thref{e_0info}(iii), for any $0\leq i\leq n-1$, we may assume that $e_0^{p^{i}}=\del^{p^{i}}+\sum_{j=0}^{i-1}\alpha_{i, j}\del^{p^{j}}+z_i$ for some $\alpha_{i, j}\in k$ and $0\neq z_i\in \fL_{(0)}$. So $e_0\in \fL$ is such that $e_0\notin \fL_{(0)}$ and for $1\leq i\leq n-1$, $e_0^{p^{i}}\in \fL_p$ is such that $e_0^{p^{i}} \notin \fL$. Since $\fL$ is an ideal in $\fL_p$ and the $\del^{p^{i}}$, $0\leq i\leq n-1$, commute amongst each other, it follows immediately that $u_r\in \fL$ for all $0\leq r\leq n-2$. 

Next we show that $\sum_{r=0}^{n-2}u_r^{p^{r}}\in k\del^{p^{n-2}}+k\del^{p^{n-3}}+\dots+k\del^{p}+\fL$. By \thref{wittthm}, $\fL=\spn\{x^{(\gamma)}\del\,|\, 0\leq \gamma\leq p^n-1\}$ is a subalgebra of the restricted Lie algebra $\Der\cO(1;n)$. By \thref{Spdefn}(ii), $\fL^{p^{r}}:=\{\ccD^{p^{r}}\,|\, \ccD \in \fL\}$. Since $u_r\in \fL$ for all $0\leq r\leq n-2$, we have that $u_r^{p^{r}}\in \fL^{p^{r}}$. Hence $\sum_{r=0}^{n-2}u_r^{p^{r}}\in \sum_{r=0}^{n-2}\fL^{p^{r}}$. By \thref{computeSp} and the fact that $\fL_{(0)}\subset \fL$ is a restricted subalgebra of $\Der\cO(1;n)$, we get
\[
\sum_{r=0}^{n-2}\fL^{p^{r}}=k\del^{p^{n-2}}+k\del^{p^{n-3}}+\dots+k\del^{p}+\fL.
\]
Hence $\sum_{r=0}^{n-2}u_r^{p^{r}}\in k\del^{p^{n-2}}+k\del^{p^{n-3}}+\dots+k\del^{p}+\fL$ as desired.

Now look at $\mu^{p^{n-1}} u$ in \eqref{lambdaneq0ypn-1}. Since $u\in \fL_{(0)}$, it follows from the above that we can write
\[
\mu^{p^{n-1}} u+\sum_{r=0}^{n-2}u_r^{p^{r}}=\sum_{i=0}^{n-2}\lambda_i'\del^{p^{i}}+z
\]
for some $\lambda_i'\in k$ and $z\in \fL_{(0)}$. Substituting this and $e_0^{p^{i}}=\del^{p^{i}}+\sum_{j=0}^{i-1}\alpha_{i, j}\del^{p^{j}}+z_i$ ($0\leq i\leq n-1$) into \eqref{lambdaneq0ypn-1}, we get 
\begin{equation}\label{case1ypn-1prop3.2.2}
y^{p^{n-1}}=\lambda_{0}^{p^{n-1}}\del^{p^{n-1}}+\sum_{i=0}^{n-2}\lambda_i''\del^{p^{i}}+\tilde{z}
\end{equation}
for some $\lambda_i''\in k$ and $\tilde{z}\in \fL_{(0)}$. Since $\lambda_0\neq 0$, this shows that $y^{p^{n-1}}\notin \fL_{(0)}$. But $y\in \cN_{\text{sing}}=\big \{\ccD \in \cN \,|\, \ccD^{p^{n-1}} \in \fL_{(0)} \big\}$, this contradicts the definition of $\cN_{\text{sing}}$.

\textbf{\textit{Case 2}}. Suppose now $\lambda_0=0$ and let $1\leq s\leq n-1$ be the largest index such that $\lambda_s\neq 0$. Then
\[
y=\sum_{i=1}^{s}\lambda_{i}e_{0}^{p^{i}}+\mu u.
\]
We want to show this implies that $y^{p^{2n-s-1}}\neq 0$. But $y\in \cN_{\text{sing}}$, in particular, $y$ is nilpotent with $y^{p^{n}}=0$. As $2n-s-1\geq n$, this contradicts that $y$ is nilpotent. 

By \thref{generalJacobF} and the facts that $e_0^{p^{n}}=e_0$ and $u^p=u$ (see \thref{e_0info}(ii) and \thref{rkV}(iii)), we have that
\begin{align*}
y^{p^{n-s}}=\lambda_{s}^{p^{n-s}}e_{0}+\lambda_{s-1}^{p^{n-s}}e_{0}^{p^{n-1}}+\lambda_{s-2}^{p^{n-s}}e_0^{p^{n-2}}+\dots+\lambda_{1}^{p^{n-s}}e_{0}^{p^{n-s+1}}+\mu^{p^{n-s}}u+\sum_{l=0}^{n-s-1}v_l^{p^{l}},
\end{align*}
where $v_l$ is a linear combination of commutators in $u$ and $e_0^{p^{i}}, 1\leq i\leq s$. By Jacobi identity, we can rearrange each $v_l$ so that $v_l$ is in the span of $[w_t,[w_{t-1},[\dots,[w_1,u]\dots]$, where $t=p^{n-s-l}-1$ and each $w_{\nu}, 1\leq\nu\leq t$, is equal to $u$ or to some $e_0^{p^{i}}, 1\leq i\leq s$. Arguing similarly as in case 1, one can show that $[w_t,[w_{t-1},[\dots,[w_1,u]\dots]\in\fL$. More precisely, we show that $[w_t,[w_{t-1},[\dots,[w_1,u]\dots]\in\fL_{(0)}$. Since $\sigma(u)=u$ and $\sigma(e_0^{p^{i}})=\xi^{-p^{i}}e_0^{p^{i}}$ for $1\leq i\leq s$ (see \thref{e_0info}(i) and \thref{rkV}(iii)), the $\sigma$-eigenvalue of each such iterated commutator is $\xi^{-a}$, where
\[
1<p\leq a\leq tp^s=p^{n-l}-p^s\leq p^n-p<p^n-2.
\]
So this eigenvalue is not equal to $\xi^{-1}$. Then $[w_t,[w_{t-1},[\dots,[w_1,u]\dots]\in\fL_{(0)}\setminus \fL_{(p^n-2)}\subset \fL_{(0)}$; see \thref{rkV}. Hence $v_l \in \fL_{(0)}$. Since $\fL_{(0)}$ is restricted, we have that $v_l^{p^{l}}\in \fL_{(0)}$ and so $\sum_{l=0}^{n-s-1}v_l^{p^{l}}\in \fL_{(0)}$. Therefore,
\begin{equation*}\label{ypn-s}
y^{p^{n-s}}=\lambda_{s}^{p^{n-s}}e_{0}+\gamma_{n-1}e_{0}^{p^{n-1}}+\gamma_{n-2}e_0^{p^{n-2}}+\dots+\gamma_{n-s+1}e_{0}^{p^{n-s+1}}+f_1
\end{equation*}
for some $\gamma_{i}\in k, \lambda_s\neq 0$ and $f_1\in \fL_{(0)}$. Note that $y^{p^{n-s}}$ has a similar expression to $y$ in case 1. By \thref{generalJacobF} and a similar argument as in case 1, one can show that 
\begin{equation*}\label{case2bypn-sprop3.2.2}
(y^{p^{n-s}})^{p^{n-1}}=y^{p^{2n-s-1}}=\lambda_s^{p^{2n-s-1}}\del^{p^{n-1}}+\sum_{i=0}^{n-2}\gamma_i'\del^{p^{i}}+\tilde{f} 
\end{equation*}
for some $\gamma_i'\in k$ and $\tilde{f}\in \fL_{(0)}$; see \eqref{case1ypn-1prop3.2.2}. Since $\lambda_s\neq 0$, this shows that $y^{p^{2n-s-1}}\neq 0$. But $y\in \cN_{\text{sing}}$, in particular, $y$ is nilpotent and $y^{p^{n}}=0$ by \eqref{nilpotentconditioninL}. Since $2n-s-1\geq n$, the above contradicts that $y$ is nilpotent. Therefore, we proved by contradiction that $V\cap \cN_{\text{sing}}=\{0\}$. Then the result follows from \thref{ADthm}. This completes the proof.
\end{proof}

\begin{thm}\thlabel{main}
The variety $\cN$ coincides with the Zariski closure of
\[
\cN_\text{reg}=G.(\del+k\del^p+\dots+k\del^{p^{n-1}})
\] and hence is irreducible.
\end{thm}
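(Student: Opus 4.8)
The plan is to mimic the proof of \thref{Wnproof}: having already isolated the candidate component $\overline{\cN_\text{reg}}$ and controlled the dimension of its complement, I will run a short formal argument that rules out any further irreducible component. Concretely, first I would recall from \thref{nvarietythm} that $\cN=\cN(\fL_p)$ is equidimensional of dimension $p^n-1$ (using $\MT(\fL_p)=n$ and $\dim\fL_p=p^n+(n-1)$). By \thref{l:l14} we have $\cN_\text{reg}=G.(\del+k\del^p+\dots+k\del^{p^{n-1}})$, and by \thref{l:l15} its Zariski closure $Z_1:=\overline{\cN_\text{reg}}$ is an irreducible component of $\cN$. Since $\cN_\text{sing}=\cN\setminus\cN_\text{reg}$ is Zariski closed in $\cN$, the subset $\cN_\text{reg}$ is open in $\cN$, and $\cN_\text{reg}\subseteq Z_1$.

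Next I would write $\cN=Z_1\cup\dots\cup Z_t$ as the decomposition into pairwise distinct irreducible components and suppose $t\ge 2$. For each $i\neq 1$, the set $Z_i\cap\cN_\text{reg}$ is open in the irreducible variety $Z_i$; were it nonempty it would be dense in $Z_i$, whence $Z_i=\overline{Z_i\cap\cN_\text{reg}}\subseteq\overline{\cN_\text{reg}}=Z_1$, forcing $Z_i=Z_1$ and contradicting distinctness. Hence $Z_i\subseteq\cN\setminus\cN_\text{reg}=\cN_\text{sing}$, so that by \thref{l:l16},
\[
p^n-1=\dim Z_i\le\dim\cN_\text{sing}<p^n-1,
\]
where the first equality uses the equidimensionality of $\cN$. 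This contradiction gives $t=1$, so $\cN=Z_1=\overline{\cN_\text{reg}}$ is irreducible, and the explicit description $\cN_\text{reg}=G.(\del+k\del^p+\dots+k\del^{p^{n-1}})$ of \thref{l:l14} yields the stated form.

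As for the main obstacle: at this point there is essentially none — the theorem is a brief assembly of \thref{l:l14}, \thref{l:l15}, and \thref{l:l16}. All the genuine work has been front-loaded, and the hardest single step in the whole chapter is \thref{l:l16}, where one must produce an $(n+1)$-dimensional subspace $V=T\oplus ku$ of $\fL_p$ with $V\cap\cN_\text{sing}=\{0\}$; that construction, built from Zassenhaus's original presentation of $\fL$ and the diagonalizable automorphism $\sigma$ together with the Affine Dimension Theorem, has already been carried out above.
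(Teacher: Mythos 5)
Your proposal is correct and follows essentially the same route as the paper: both assemble \thref{l:l14}, \thref{l:l15} and \thref{l:l16}, use equidimensionality from \thref{nvarietythm}, and derive the contradiction $\dim Z_i \le \dim\cN_\text{sing} < p^n-1$ for any putative second component. Your phrasing (showing $Z_i\cap\cN_\text{reg}=\emptyset$ directly) is just a minor reorganization of the paper's argument that $Z_2\setminus Z_1$ is dense in $Z_2$ and lands in $\cN_\text{sing}$.
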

\begin{proof}
By \thref{nvarietythm}, we know that the variety $\cN$ is equidimensional of dimension $p^n-1$. The ideal defining $\cN$ is homogeneous, hence any irreducible component of $\cN$ contains $0$. It follows from \thref{l:l15} that the Zariski closure of $\cN_\text{reg}$ is an irreducible component of $\cN$. Let $Z_1, \dots ,Z_t$ be pairwise distinct irreducible components of $\cN$, and set $Z_1=\overline{\cN_\text{reg}}$. Suppose $t\geq 2$. Then $Z_2\setminus Z_1$ is contained in $\cN_{\text{sing}}$, which is Zariski closed in $\cN$ with $\dim \cN_{\text{sing}}<\dim \cN$  by \thref{l:l16}. Since $Z_2\setminus Z_1=Z_2 \setminus (Z_1\cap Z_2)$, this set is Zariski dense in $Z_2$. Then its closure $Z_2$ is also contained in $\cN_{\text{sing}}$, i.e. $\dim Z_2=\dim \cN \leq \dim \cN_{\text{sing}}$. This is a contradiction. Hence $t=1$ and the variety $\cN$ is irreducible. This completes the proof.
\end{proof}

% Comment the following THREE lines if you do NOT have an Appendix
%\appendix
%\chapter{A Long Proof}
%.........

% If you need more than one appendix, then just use another \chapter command
%\chapter{Yet Another Appendix}

\end{document}